\documentclass{amsart}
\usepackage[dvipsnames]{xcolor}
\input{main.sty}

\title{Affine Higher Bruhat Orders}

\author{Sara C. Billey}
\address{Department of Mathematics, University of Washington, Seattle,
WA, USA}
\email{billey@uw.edu}

\author{Herman Chau}
\address{Department of Mathematics, University of Washington, Seattle,
WA, USA}
\email{beijingherman@gmail.com}

\author{Kevin Liu}
\address{Department of Mathematics and Computer Science, The University of
the South, Sewanee, TN, USA}
\email{keliu@sewanee.edu}

\date{\today}

\subjclass[2020]{Primary 05E16; Secondary 05A05, 06A07, 20F55.}

\keywords{Affine symmetric group, Coxeter group, weak order, reduced
words, reflection orders, inversion sets, higher Bruhat orders}

\begin{document}
\begin{abstract}
The higher Bruhat orders are partial orders originally constructed by
Manin and Schechtman, with an equivalent construction from
Ziegler. These partial orders generalize the weak order on the
symmetric group $\mathfrak{S}_n$, and the second higher Bruhat order
is a poset on commutation classes of reduced words for the longest
element in ${\mathfrak{S}}_n$, where covering relations correspond to
braid relations. Like the weak order, each of the higher Bruhat orders
is a ranked poset with unique minimal and maximal
elements. Constructing analogs of the higher Bruhat orders in other
settings is an area of recent interest, including recent work of
Hothem for permutations in $S_{n}$.  We give a construction that
generalizes any interval $[\id,w]$ in the weak order where $w$ is an
element of the affine symmetric group
$\widetilde{\mathfrak{S}}_n$. Our construction recovers the classical
higher Bruhat orders when $w$ is the longest element in
$\mathfrak{S}_n$. Paralleling the classical case, we show that the
second higher Bruhat order is a ranked poset on commutation classes of
reduced words for $w\in \widetilde{\mathfrak{S}}_n$, allowing us to
prove a conjecture due to Elias.  When $w\in \mathfrak{S}_n$, we also
establish results for our higher Bruhat orders that are direct analogs
of numerous results in the classical higher Bruhat orders.
\end{abstract}

\vspace*{-0.25em}
\maketitle
\vspace{-2em}

\setcounter{tocdepth}{3}

 \tableofcontents

\section{Introduction}

The classical {higher Bruhat orders} $\B(n,k)$ are partial orders
introduced by Manin and Schechtman \cite{manin_schechtman_1989} in
their study of fundamental groups of certain hyperplane
arrangements. They were further characterized by Ziegler
\cite{ziegler_1993} {using single step inclusion of consistent
sets}. The first higher Bruhat order is isomorphic to the well-known
weak Bruhat order on the symmetric group $\S_n$.  The second higher
Bruhat order on $\S_{n}$ is a partial order on commutation classes of
reduced words for the longest permutation.  In the general case, these
partial orders can equivalently be described using hyperplane
arrangements \cite{felsnerweil_sweeps,ziegler_1993}, oriented matroids
\cite{sturmfels-oriented}, and sometimes zonotopal tilings
\cite{felsner-zonotopes}.  The higher Bruhat orders have connections
to areas such as Bott-Samelson bimodules \cite{Elias_2016}, Steenrod
algebras \cite{steenrod}, and the higher Stasheff-Tamari orders
\cite{thomas,williams}.

Generalizing the higher Bruhat orders to other settings has been an
area of recent interest
\cite{danilov2022BC,higherbruhatB}. In 2022, Ben
Elias generalized the second higher Bruhat orders to include
nonreduced words for permutations in order to prove a generalized
Bergman diamond lemma for Hecke-type algebras \cite{Elias.2022}.
Motivated by Elias's work, Daniel Hothem generalized the higher Bruhat
orders beginning from identity intervals $[\mathrm{id},w]$ in the weak
order on the symmetric groups
\cite{hothemthesis,hothem2021extendinghigherbruhatorders}.  Together, they posed
the following problem.

\begin{problem}\label{prob:EliasHothem}(Elias-Hothem, 2020)
    Generalize the higher Bruhat orders to all identity intervals in the affine symmetric groups $\affineS_n$.
\end{problem}

Based on the relationship between reduced words and the second higher Bruhat order,  Elias also proposed orienting edges between commutation classes of reduced words for each $w\in \affineS_n$ according to the directed braid relations $s_is_{i+1}s_i\to s_{i+1}s_is_{i+1}$. This results in a directed graph, denoted $\cG(w)$. 
\begin{conjecture}(Elias, 2021)\label{conj:Elias.n=2}
    For any $w\in \affineS_n$, the directed graph $\cG(w)$ is acyclic with a unique source vertex and a unique sink vertex.
\end{conjecture}

To briefly describe the classical higher Bruhat orders due to
Manin-Schechtman, let $[n]=\{1,2,\ldots,n\}$, and let ${[n]\choose
k}=\{X\subseteq [n]:|X|=k\}$ denote the $k$-subsets of $[n]$.  They
defined the set of \defn{admissible orders} $\A(n,k)$ to be certain
total orders on ${[n]\choose k}$ and defined commutation moves for
admissible orders, giving rise to equivalence classes of admissible
orders. The higher Bruhat order $\B(n,k)$ is the set of commutation
equivalence classes of admissible orders in $\A(n,k)$ with a partial
ordering generated by an operation called a \defn{packet flip}. When
$k=1$, the resulting partial order is isomorphic to the weak Bruhat
order on the symmetric group $\S_n$ ordered by containment of
inversion sets. Manin-Schechtman introduced the concept of a
generalized inversion set for each admissible order that is constant
on equivalence classes and increases exactly by one element with each
packet flip going up in the partial order.  They also showed that the
maximal chains in $\B(n,k)$ are in bijection with $\A(n,k+1)$, giving
rise to a recursive method for constructing $\B(n, k+1)$.
Later work by Ziegler \cite{ziegler_1993} using oriented matroids showed that the generalized inversion sets of admissible orders can be completely characterized by what he called \defn{consistent sets}, without regard to admissibility.  In his formulation, the higher Bruhat orders are partial orders on consistent sets ordered by single step inclusion.

Inspired by \cref{prob:EliasHothem} and the work of Elias-Hothem,
Manin-Schechtman, and Ziegler, we propose a generalization of the
higher Bruhat orders to all identity intervals in the affine symmetric
group, and hence, up to isomorphism, to all weak-order intervals.
We conjecture this generalization maintains all of the
elegant properties of the classical higher Bruhat orders.  Our main
theorems provide evidence for this conjecture.  In particular, we
answer Elias's \cref{conj:Elias.n=2} on the directed graph of
commutation classes of reduced expressions in the affirmative.

\bigskip

Our construction begins with an arbitrary element $w$ in the affine
symmetric group $\affineS_n$. We generalize the notion of
admissibility to define $\A_w(n,k)$, which consists of admissible
orders on the $k$-inversions of $w$. The $k$-inversions of $w$,
denoted $\Inv_k(w)$, are the length $k$ decreasing subsequences in the
one-line notation of $w$, up to a congruence shift.  In particular,
$\Inv_{2}(w)$ represents the familiar inversion set for $w$.  We
extend the notion of commutation equivalence on admissible orders to
$\A_w(n,k)$, denoted $\sim_{w}$.  The corresponding higher Bruhat
order $\B_w(n,k)$ is the poset on equivalence classes
$\A_{w}(n,k)/\hspace{-0.2em}\sim_{w}$, which can again be partially
ordered by packet flips.  Generalizing the classical $k=1$ case, we
prove that $\A_w(n,1)$ is exactly the set of affine permutations in
the interval $[\mathrm{id},w]$, and $\B_w(n,1)$ is isomorphic to the
weak order restricted to $[\mathrm{id},w]$ in \Cref{sub:firstAHB}.  We
associate to each admissible order a generalized inversion set related
to the classical packet flip operations, which is constant on its
equivalence class.  We call the generalized inversion set of an
admissible order the \defn{reversal set} to avoid confusion with
$k$-inversion sets in this context.  The reversal sets of
$k$-admissible orders satisfy certain necessary conditions depending
on $w$.  Using these necessary conditions, we extend Ziegler's
characterization of consistent sets to subsets of $\Inv_{k}(w)$. We
then define $\C_w(n,k)$ to be the poset of consistent subsets of
$\Inv_{k}(w)$ ordered by single step inclusion. For the symmetric
group, we prove a complete analogue of the elegant properties that
Manin-Schechtman and Ziegler proved for the classical higher Bruhat
orders and consistent sets.

\begin{theorem}\label{thm:nonlongest}
    Let $k,n$ be positive integers and $w\in \S_n$. The following
    hold.
    \begin{enumerate}[label=(\alph*)]
    \item The higher Bruhat order
    $\B_w(n,k)=\A_{w}(n,k)/\hspace{-0.2em}\sim_{w}$ partially ordered
    by packet flips is isomorphic 
    to the poset $\C_w(n,k+1)$, and the isomorphism sends an equivalence class of admissible orders to the reversal set of the class.
    \item $\B_w(n,k) \cong \C_w(n,k+1)$ is a ranked poset with 
    unique minimal and  maximal elements corresponding to reversal
    sets  $\emptyset$ and  $\Inv_{k+1}(w)$.  
    \item \label{item:chains}There is a natural bijection between
    maximal chains of $\B_w(n,k) \cong \C_w(n,k+1)$ and the admissible
    orders in $\A_w(n,k+1)$ using the single step partial order.
    \end{enumerate} 
\end{theorem}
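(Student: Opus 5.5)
The plan is to reduce to the classical Manin--Schechtman--Ziegler theory by realizing $\Inv_k(w)$ for $w\in\S_n$ inside the classical setting, and then adapt Ziegler's single-step-inclusion argument. For $w\in\S_n$, a $k$-inversion of $w$ is a $k$-subset of positions $i_1<\dots<i_k$ with $w(i_1)>\dots>w(i_k)$. Every $(k+1)$-subset $P$ of positions has a packet $P(K)=\{P\setminus\{p\}\}$. I expect $P$ to contribute a packet flip in $\A_w(n,k)$ only when $P\in\Inv_{k+1}(w)$, and in that case $P(K)\subseteq \Inv_k(w)$. For other $P$ with $P(K)\cap\Inv_k(w)\neq\emptyset$, the admissibility condition should only impose a fixed order (lexicographic or reverse-lexicographic, depending on $w$) on the elements present.

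\textbf{Step 1: reversal sets are consistent.} Given $\rho\in\A_w(n,k)$, define its reversal set $R(\rho)$ as the set of $P\in\Inv_{k+1}(w)$ whose packet appears in reverse-lexicographic order in $\rho$. This set is constant on $\sim_w$-classes, since commutations only swap elements that lie in no common packet. For any $Q\in\Inv_{k+2}(w)$, the restriction of $\rho$ to $Q(K)$ is an admissible order on the $k$-subsets of a $(k+2)$-set. So the classical fact (Ziegler, for $\B(k+2,k)$) shows that $R(\rho)\cap Q(K+1)$ is a prefix or suffix of $Q(K+1)$ in lexicographic order. Hence $R(\rho)\in\C_w(n,k+1)$.

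\textbf{Step 2: the map is injective and surjective.} I would show that an admissible order is determined up to $\sim_w$ by its reversal set, since it is determined by the relative order of every pair lying in a common packet. Surjectivity is the key step. Given a consistent $U\subseteq \Inv_{k+1}(w)$, I would build an admissible order with reversal set $U$ by induction on $|U|$. The approach is to exhibit some $P\in U$ such that $U\setminus\{P\}$ is consistent and $P$ is "last-flippable". Then, by induction, there is an order $\rho$ for $U\setminus\{P\}$ in which $P(K)$ can be made consecutive up to commutation, and flipping it yields $U$.

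I expect this step to be the main obstacle. Classically it is done via Ziegler's single-element-extension and oriented-matroid arguments, or Manin--Schechtman's induction on $n$ by deleting the element $n$. I would try the deletion approach. Restrict to positions $[n-1]$, where the restricted $w'$ lies in $\S_{n-1}$ with $\Inv_{k+1}(w')\subseteq\Inv_{k+1}(w)$. Obtain an order there by induction, then insert the $k$-inversions containing $n$ according to a consistent $(k-1)$-level order derived from $U$, checking that the inserted elements respect packets in $\Inv_{k+1}(w)$.

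\textbf{Step 3: ranking and chains.} With the bijection in hand, a packet flip changes the reversal set by adding exactly one $P$. So the poset order matches single step inclusion, provided every single step inclusion $U\subset U\cup\{P\}$ between consistent sets is realized by a flip. This is the same flippability statement used in Step 2, which gives (a). For (b), the rank function is $|U|$. The empty set is the unique minimum, coming from the lexicographic order, which is admissible because every packet restricted to $\Inv_k(w)$ is then lexicographic. The full set $\Inv_{k+1}(w)$ is the unique maximum, coming from the reverse-lexicographic order. Every consistent $U\neq\emptyset$ has a removable element and every $U\neq\Inv_{k+1}(w)$ has an addable element, by the argument in Step 2 applied to $U$ and to its complement, using the symmetry $U\mapsto\Inv_{k+1}(w)\setminus U$. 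For (c), a maximal chain $\emptyset=U_0\subset\dots\subset U_m=\Inv_{k+1}(w)$ gives a total order on $\Inv_{k+1}(w)$. By Step 1 applied one level up, the condition that each $U_i$ is consistent is exactly the statement that each packet in $\Inv_{k+2}(w)$ appears in lexicographic or reverse-lexicographic order. That is precisely admissibility in $\A_w(n,k+1)$, and the correspondence is reversible, giving the bijection.
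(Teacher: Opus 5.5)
Your proposal has two genuine gaps.

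\textbf{First gap: the permanent poset is never used.} Both $\A_w(n,k)$ and $\C_w(n,k+1)$ are defined through $\P_w(n,\cdot)$, and leaving it out breaks several steps.
\begin{enumerate}
\item In Step 1 you verify only the MSZ half of consistency. Your reduction to Ziegler on the second packet of $Q\in\Inv_{k+2}(w)$ is exactly what the paper does for that half. But $\Rev(\rho)\in\C_w(n,k+1)$ also requires $\Rev(\rho)$ to be a lower order ideal of $\P_w(n,k+1)$, and this does not follow from the classical theory. Take a $(k+2)$-quasi-inversion $Z$ with $P(Z)\cap\Inv_{k+1}(w)=\{Z_i,Z_{i+1}\}$ and $k+1-i$ odd. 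One must show that $Z_{i+1}\in\Rev(\rho)$ forces $Z_i\in\Rev(\rho)$. The paper does this by using the quasi-inversion relations of $\P_w(n,k)$ among the $k$-subsets of $Z$ to nest $P(Z_{i+1})$ between $(Z_i)_1$ and $(Z_i)_{k+1}$ in $\rho$ (\cref{lem:permanent_on_semipackets}, \cref{lem:reversal_is_consistent}). The parity convention is essential there.
\item The same omission affects your part (c). Membership in $\A_w(n,k+1)$ also requires a linear extension of $\P_w(n,k+1)$, which corresponds to each $U_i$ being a lower ideal, not to the packet condition alone.
\item The lexicographic order on $\Inv_k(w)$ is in general not admissible, because some quasi-inversion relations point in the antilex direction. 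For $w=(3,1,2)$ and $k=2$ we have $\Inv_2(w)=\{[1,3],[2,3]\}$. The quasi-inversion $[1,2,3]$ gives $[2,3]<_{\P_w(3,2)}[1,3]$, so the only admissible order is $([2,3],[1,3])$.
\item The map $U\mapsto\Inv_{k+1}(w)\setminus U$ does not preserve $\C_w(n,k+1)$, since complements of lower ideals are upper ideals. For the same $w$, $\{[2,3]\}\in\C_w(3,2)$ but $\{[1,3]\}\notin\C_w(3,2)$.
\end{enumerate}
So the minimum, the maximum, and the existence of addable and removable elements all need their own arguments. The paper gets them from maximal and minimal elements of the heap poset of the suffix set $S(U)$ (\cref{lem:suffix_set_consistent}, \cref{thm:Cnk_min_max}).

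\textbf{Second gap: surjectivity and flippability are only announced.} These are the real content of the theorem, and Step 3's proofs of (a) and (b) rest on them. The paper's key statement is that the reversal graph $G_U$ on $\Inv_k(w)$ is acyclic for every $U\in\C_w(n,k+1)$ (\cref{lem:GRacyclic}). Once that holds:
\begin{itemize}
\item the admissible orders with reversal set $U$ are exactly the linear extensions of the heap poset $\leq_U$;
\item each single step inclusion $U\subset U\cup\{X\}$ is realized by a flip, because $P(X)$ is convex in both heap posets (\cref{thm:Bnk_iso_Cnk});
\item removable and addable elements come from $\leq_{S(U)}$, which needs acyclicity one level up.
\end{itemize}
Your deletion-and-insertion idea matches the paper's partition of $\Inv_k(w)$ into three layers, placed in the order $V_0,V_1,V_2$. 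Here $V_0$ is the set of $X\not\ni n$ with $X\cup\{n\}\notin U$, $V_1$ is the set of $X$ containing $n$, and $V_2$ is the set of $X\not\ni n$ with $X\cup\{n\}\in U$. But the two facts that make the approach work are missing.
\begin{enumerate}
\item The inserted layer is not governed by the contraction $U/n$, since contraction need not preserve consistency (\cref{ex:contraction1}). The right object is the complement, inside the $k$-inversions of the truncation of $w$ at $n$, of the truncation of $U$ (\cref{def:truncation}). This set is consistent, and its reversal graph is the \emph{reverse} of the induced subgraph on $V_1$, because removing $n$ flips the parity of the quasi-inversion relations (\cref{lem:contract_n}, \cref{lem:V1}).
\item Every edge of $G_U$ must go from $V_a$ to $V_b$ with $a\le b$ (\cref{lem:partition}). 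The paper reduces this, via deletions and double contractions, to the case $n=k+2$ with $3\le n\le 6$, and then checks that case by computer.
\end{enumerate}
Your ``checking that the inserted elements respect packets in $\Inv_{k+1}(w)$'' is precisely this unproved step. It must also control the quasi-inversion edges, not only the full packets.
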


In the case where $w$ is the longest element $w_0\in \S_n$, our
construction recovers the classical higher Bruhat orders of
Manin-Schechtman.  Consequently, \Cref{thm:nonlongest} generalizes
several of the classical results in \cite{manin_schechtman_1989} and
\cite{ziegler_1993}. \Cref{thm:nonlongest} also implies that one can
recursively construct the higher Bruhat orders $\B_w(n,k+1)$ using
equivalence classes of maximal chains in $\B_w(n,k)$, analogous to the
recursive construction in the classical case. See \cref{cor:recursive}
for a detailed statement.  Combining this with Hothem's recursive
construction in \cite[Theorem
3.3.1]{hothem2021extendinghigherbruhatorders}, we know that for $w\in
\S_n$, our $\B_w(n,k)$ and Hothem's higher Bruhat orders are
isomorphic posets.  However, our construction differs from his in
several key ways. We introduce a new poset on $\Inv_k(w)$ that we call
the \defn{permanent poset} for $w$, denoted $\P_w(n,k)$.  The
permanent poset is used in our definitions of admissible orders in
$\A_w(n,k)$ and consistent sets in $\C_w(n,k+1)$ for any affine
permutation $w\in \affineS_n$.  

In the process of proving our results, we construct for each $R
\subseteq \Inv_{k+1}(w)$ a directed graph $G_{R}$ on vertex set
$\Inv_{k}(w)$ that generalizes the inversion graph of a permutation,
the inversion multigraph of an affine permutation due to
Bj\"orner-Brenti \cite{bjorner_brenti_1996}, and the heap poset of a
reduced word as described by Stembridge \cite{Stem4,Stem3}.  See
\Cref{def:classical.GR} and \Cref{def:affine.GR}.  Whenever $R$ is the
reversal set of an admissible order in $\A_w(n,k)$, we will show that
all admissible orders with reversal set $R$ form a unique
equivalence class under $\sim_{w}$, and they are exactly the linear
extensions of the heap poset given by the transitive closure on the
graph $G_{R}$.  In particular, representatives of the equivalence
classes with reversal sets $\emptyset$ and $\Inv_{k+1}(w)$ can be
found by considering $G_{\emptyset}$ and $G_{\Inv_{k+1}(w)}$ on
$\Inv_k(w)$.

Though \cref{thm:nonlongest} applies only to $w\in \S_n$, we
conjecture our construction of the affine higher Bruhat orders has all
of the same nice properties.
We have computationally verified the following conjecture for all
$1\leq k\leq n\leq 8$ and affine permutations $w\in \affineS_n$ 
of Coxeter lengths at most the values in \cref{tab:conjecture}.

\begin{conjecture}
    \label{conjecture:main}
  The statements in  \cref{thm:nonlongest} hold for all positive
  integers $k,n$ and $w \in \affineS_n$.
\end{conjecture}

\begin{table}[t]
\begin{tabular}{|c|c|c|c|c|c|} \hline 
    $n$ & 4 & 5 & 6 & 7 & 8 \\ \hline 
    length & 30 & 30 & 24 & 18 & 15 \\ \hline 
\end{tabular}
\caption{For each value of $n$, \cref{conjecture:main} has been verified for all $w\in\affineS_n$ up to and including the indicated length.}
\label{tab:conjecture}
\end{table}

While \cref{conjecture:main} remains open in general, we are able to
prove the affine analogs of most of the results in
\cref{thm:nonlongest} when $k\leq 2$ and when $k\geq n$. See
\Cref{thm:affine} below and \Cref{thm:ultimateAHB}, respectively.  Using these, we will show
\cref{conjecture:main} holds for all $w \in \affineS_n$ with $n\leq
3$ in \Cref{cor:n=3}.  

\begin{theorem}\label{thm:affine}
    For any positive integer $n$ and $w\in \affineS_n$, the following hold.
    \begin{enumerate}[label=(\alph*)]
\item The admissible orders in $\A_{w}(n,1)$ correspond with affine
permutations $u \leq w$ in weak order, and the first higher Bruhat
order $\B_{w}(n,1) \cong \C_w(n,2) \cong [\mathrm{id},w]$.

\item The consistent sets in $\C_w(n,2)$ are exactly the inversion
sets of affine permutations $u \in [\mathrm{id},w]$ ordered by
(single) step inclusion.

\item The admissible orders in $\A_w(n,2)$ are the reflection orders
for $w$, and there are natural bijections between maximal chains of
$\C_w(n,2)$, reduced words for $w$, and reflection orders for $w$.

\item The second higher Bruhat order $\B_w(n,2)$ is a poset on
commutation classes of reflection orders, and its Hasse diagram is
isomorphic to $\cG(w)$ as a directed graph.

\item $\B_w(n,2)\cong \C_w(n,3)$ is a ranked poset with  unique
minimal and maximal elements, and the isomorphism sends an equivalence
class of admissible orders to the reversal set of the class.

\item \label{part:com.equiv.rev.set} Two reduced words for $w$ are
commutation equivalent if and only if the reversal sets of their
associated reflection orders are equal.

\item There is a natural bijection between maximal chains of
$\B_w(n,2) \cong \C_w(n,3)$ and the admissible orders in $\A_w(n,3)$
using the single step partial order.
\end{enumerate}
\end{theorem}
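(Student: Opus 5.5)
We will prove the seven parts roughly in order, since each builds on the previous, using the standard dictionary between affine permutations and inversion sets (Björner–Brenti), and between reduced words and reflection orders (Dyer) for $\affineS_n$.

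\textbf{The case $k=1$ (parts (a) and (b)).} Here $\Inv_1(w)$ is the set of positions modulo shift. An admissible order in $\A_w(n,1)$ is a total order on these positions compatible with the permanent poset $\P_w(n,1)$. We will read such an order as the one-line notation of an affine permutation $u$. The compatibility conditions say exactly that every inversion of $u$ is an inversion of $w$, and that the inversion set of $u$ is closed and coclosed inside $\Inv_2(w)$. By the characterization of weak order via inversion sets, this is equivalent to $u\le w$.

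The reversal set of the order corresponding to $u$ is then $\Inv_2(u)$. Packet flips on $\A_w(n,1)$ become multiplication by simple reflections, each adding one inversion. This gives both $\B_w(n,1)\cong[\mathrm{id},w]$ and the description in (b): consistent subsets of $\Inv_2(w)$ are exactly the sets that are closed and coclosed relative to the permanent poset, i.e. the inversion sets $\Inv_2(u)$ for $u\le w$.

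\textbf{The case $k=2$ (parts (c), (d) and (f)).} By (b), a maximal chain in $\C_w(n,2)$ is a sequence $\emptyset=I_0\subset\cdots\subset I_\ell=\Inv_2(w)$ of inversion sets of prefixes $u_j$ of $w$. Such a chain is the same data as a reduced word for $w$. Listing the added inversions $I_j\setminus I_{j-1}$ in order gives a total order on $\Inv_2(w)$, which is the reflection order of that reduced word.

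We will check that admissibility with respect to $\P_w(n,2)$ is precisely Dyer's reflection-order condition on triples $a<b<c$. The point is that the three $2$-subsets of a $3$-inversion must appear in lexicographic or reverse-lexicographic order. This establishes (c).

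Next we compare moves:
\begin{itemize}
\item a commutation move $s_is_j=s_js_i$ swaps adjacent elements of $\Inv_2(w)$ not lying in a common $3$-inversion, which is a $\sim_w$ move;
\item a braid move $s_is_{i+1}s_i\to s_{i+1}s_is_{i+1}$ reverses a packet of three consecutive inversions, i.e. it is a packet flip, and it adds the corresponding $3$-inversion to the reversal set.
\end{itemize}
The oriented braid moves therefore correspond to upward flips, so the Hasse diagram is $\cG(w)$, giving (d). For (f), we will use the heap-poset description: admissible orders with a fixed reversal set $R$ are exactly the linear extensions of the transitive closure of $G_R$. Any two linear extensions of the same poset are connected by adjacent transpositions of incomparable elements, i.e. by commutations. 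Conversely, commutations preserve $R$.

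\textbf{Ranked poset and maximal chains (parts (e) and (g)).} These are the main obstacle. We must show the following:
\begin{enumerate}
\item every consistent $R\subseteq\Inv_3(w)$ arises as a reversal set;
\item whenever $R\ne\Inv_3(w)$, some packet in $G_R$ can be flipped upward;
\item whenever $R\ne\emptyset$, some packet can be flipped downward.
\end{enumerate}
Rank is then $|R|$, and the unique source and sink are $\emptyset$ and $\Inv_3(w)$.

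Our approach is to work with the finite directed graph $G_R$ on $\Inv_2(w)$. Given a non-maximal $R$, choose a $3$-inversion $T\notin R$ that is minimal in a suitable order, such as one induced by $\P_w(n,3)$ or by the position in a linear extension. We then argue that the three $2$-subsets of $T$ can be made consecutive in some linear extension of $G_R$, so the corresponding braid move is available. Acyclicity of $G_{R\cup\{T\}}$ should follow from consistency of $R\cup\{T\}$.

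The affine difficulty is that $\Inv_2(w)$ mixes classes of inversions modulo $n$. To handle this, we plan to reduce to a finite window: restrict the relevant inversions to a finite subset and argue as in the type A case via Ziegler's single-step-inclusion argument. This local-to-finite reduction is where we expect the most work.

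Part (g) then follows. A maximal chain in $\C_w(n,3)$ adds the $3$-inversions one at a time, and listing them in order gives a total order on $\Inv_3(w)$. This order is admissible in $\A_w(n,3)$ exactly because each step is a packet flip. Conversely, any order in $\A_w(n,3)$ gives consistent prefixes, and therefore a maximal chain.
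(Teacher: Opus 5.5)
Your treatment of (b), (c), (d) and (f) follows the paper. Admissibility in $\A_w(n,2)$ is compared with Dyer's characterization, which includes the congruence condition $[x,y]<_\rho[x,y+n]$ (not a condition on triples). Braids become packet flips, and commutation classes are the linear extensions of $(\Inv_2(w),\le_R)$.

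Part (a) is thinner than it looks. For $k=1$ one has $\Inv_1(w)=\binom{\Z}{1}_n\cong\Z$, not a finite set of positions modulo shift. The real content is showing that an admissible order on $\Z$ is $n$-periodic, so that it is the one-line notation of some $u$. The paper gets this in \cref{lem:An1.weak.order.affine} from three ingredients: each packet $P([x_1,x_2])$ must receive a well-defined lex/antilex label for all shifts $\{x_1+mn\},\{x_2+mn\}$; the congruence relations give $\{i\}<\{i+n\}$; and $\Inv_2(w)$ is finite. You assert the conclusion without this argument.

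The genuine gap is (e), and hence (g). Two facts are needed and you supply neither:
\begin{itemize}
\item $G_R$ is acyclic for every $R\in\C_w(n,3)$, i.e.\ $\Rev$ is surjective onto $\C_w(n,3)$ (\cref{thm:affine_GR});
\item some packet is flippable upward (resp.\ downward) whenever $R\neq\Inv_3(w)$ (resp.\ $R\neq\emptyset$).
\end{itemize}
Your sentence ``acyclicity of $G_{R\cup\{T\}}$ should follow from consistency of $R\cup\{T\}$'' is the first fact itself, so it is circular. Your rule for choosing $T$ also fails: minimality in $\P_w(n,3)$ is not enough even classically. For $w=w_0\in\S_4$ and $R=\emptyset$, the element $T=[1,3,4]$ is minimal, but $\{[1,3,4]\}$ violates MSZ on $P([1,2,3,4])$, so that packet cannot be flipped. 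The type~A route of \cref{thm:Cnk_min_max} uses minimal elements of $(\Inv_3(w),\le_{S(R)})$. That needs acyclicity of $G_{S(R)}$ with $S(R)\subseteq\Inv_4(w)$, which is exactly the open $k=4$ affine case. The paper instead picks a \emph{lex-unnested} packet and shows everything between its elements in $\rho$ can be commuted out (\cref{lem:flippable_Bn2_Cn3}).

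A ``finite window'' cannot do this work by itself, for two reasons. First, the deletion $\Fl_{\bar{A}}$ is faithful only when $A$ has distinct residues mod $n$ (compare \cref{example:extra.quasi.invs}). Such a window never contains both $[x,y]$ and $[x,y+n]$, so congruence edges, and configurations mixing congruent but distinct inversions, are invisible to it. Second, cycles in $G_R$ have unbounded length, so no fixed window need contain one.

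The paper handles this in stages:
\begin{itemize}
\item it absorbs congruence edges (\cref{lem:shorten1});
\item it shortens cycles until every residue that occurs does so in exactly two consecutive vertices;
\item it bounds path lengths by $3$ through an $\S_6$ verification, reducing to $4$-cycles;
\item it rules out by hand $4$-cycles such as $[x_2,x_3]\to[x_2,x_4]\to[x_1,x_4]\to[x_1,x_3']\to[x_2,x_3]$ with $x_3'\equiv x_3$, $x_3'\neq x_3$, using the congruence relations in $\P_w(n,3)$ and MSZ.
\end{itemize}
Likewise, in the flippability proof, congruent intruders between $X_3$ and $X_1$ are excluded directly by lex-unnestedness before reducing to $\S_4$.

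Without these ingredients (e) is unproved. Part (g) depends on (e), because maximal chains of $\C_w(n,3)$ must be known to run from $\emptyset$ to $\Inv_3(w)$. Also, the resulting order lies in $\A_w(n,3)$ because every prefix is consistent (a lower ideal of $\P_w(n,3)$ satisfying MSZ on packets of $\Inv_4(w)$), not because ``each step is a packet flip.''
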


The preceding theorem shows that there are two additional ways to view
the directed graph $\cG(w)$, either as the Hasse diagram of $\B_w(n,2)$ or
$\C_w(n,3)$. \cref{conj:Elias.n=2} and several other properties of
$\cG(w)$ immediately follow from this observation.
\begin{corollary}\label{cor:cGgraph}
For any  $w\in \affineS_n$, the following hold. 
\begin{enumerate}[label=(\alph*)]
	\item $\cG(w)$ is a bipartite directed acyclic graph with a unique source vertex and a unique sink vertex. In particular, $\cG(w)$ is a tree if and only if $\cG(w)$ is a path.
	\item The diameter of $\cG(w)$ as an undirected graph is $|\Inv_3(w)|$, which counts the number of $321$ patterns in $w$ up to a congruence shift. In particular, $\cG(w)$ consists of a single element if and only if $w$ is $321$-avoiding.
\end{enumerate}
\end{corollary}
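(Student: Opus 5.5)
The plan is to derive everything from \cref{thm:affine}(d),(e),(f). By part (d), $\cG(w)$ is the Hasse diagram of $\B_w(n,2)$. By part (e), $\B_w(n,2)\cong\C_w(n,3)$ is ranked with a unique minimum and a unique maximum, whose reversal sets are $\emptyset$ and $\Inv_3(w)$.

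For part (a), acyclicity follows because the Hasse diagram of any finite poset, with edges oriented upward, is a directed acyclic graph. Finiteness holds since $w$ has finitely many reduced words. A vertex is a source exactly when it is a minimal element, and a sink exactly when it is a maximal element. So uniqueness of the minimum and maximum gives a unique source and a unique sink.

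For bipartiteness I would use the rank function. In $\C_w(n,3)$, single step inclusion means each cover adds exactly one element, so the rank of a consistent set $R$ is $|R|$. Every edge then joins a vertex of even rank to one of odd rank, giving the bipartition by parity of $|R|$.

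For the tree/path statement, suppose $\cG(w)$ is a tree. I claim every vertex has at most one upward and at most one downward edge. If some $x$ covered two elements $y_1,y_2$, then descending chains from $y_1$ and $y_2$ to the unique minimum would form an undirected cycle through $x$. Indeed, the chains $x>y_1>\cdots>\hat0$ and $x>y_2>\cdots>\hat0$ first meet at some vertex $z$ below both, and the two segments down to $z$ form a cycle. Dually, using the unique maximum, no vertex is covered by two elements. A connected poset with all in- and out-degrees at most $1$ is a chain, so its Hasse diagram is a path. The converse is trivial.

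For part (b), the main step is to identify undirected graph distance with the size of the symmetric difference of reversal sets. Each edge changes the reversal set by exactly one element, so
\[
d(u,v)\ge |R_u\triangle R_v|\le|\Inv_3(w)|,
\]
that is, distance is bounded below by the symmetric difference, which is itself at most $|\Inv_3(w)|$. For the upper bound on distance, note that any vertex is connected to $\hat0$ by a downward saturated chain of length $|R_u|$, and to $\hat1$ by an upward chain of length $|\Inv_3(w)|-|R_u|$.

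Here is the main obstacle. Bounding $d(u,v)$ by $|R_u|+|R_v|$ does not immediately give $|\Inv_3(w)|$. I would instead route through $\hat0$ or $\hat1$ depending on which is shorter:
\[
d(u,v)\le\min\bigl(|R_u|+|R_v|,\;2|\Inv_3(w)|-|R_u|-|R_v|\bigr)\le|\Inv_3(w)|.
\]
The lower bound comes from the pair $(\hat0,\hat1)$, which is at distance exactly $|\Inv_3(w)|$ by ranks and the symmetric difference bound. Hence the diameter equals $|\Inv_3(w)|$.

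Finally, $\Inv_3(w)$ consists of the length-$3$ decreasing subsequences, i.e. the $321$ patterns, up to a congruence shift, by definition. So $\cG(w)$ is a single vertex iff the diameter is $0$, iff $\Inv_3(w)=\emptyset$, iff $w$ is $321$-avoiding.
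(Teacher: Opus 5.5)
Your proposal is correct and follows essentially the same route as the paper. You read off acyclicity, the unique source and sink, bipartiteness by parity of $|R|$, and the tree/path equivalence from \cref{thm:affine}(d),(e), then bound the diameter by routing through $\emptyset$ or $\Inv_3(w)$ exactly as the paper does; your merging-chains argument for tree $\Rightarrow$ path is a bit more careful than the paper's, though the mixed-direction display $d(u,v)\ge|R_u\triangle R_v|\le|\Inv_3(w)|$ would read better as two separate statements.
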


\subsection*{Related Work}

As mentioned above, Hothem was the first to extend the higher Bruhat
orders to intervals $[\id,w]$ in the symmetric group
\cite{hothem2021extendinghigherbruhatorders}.  This paper shows that
Hothem's extension can also be characterized by the more general
definition of consistent sets given here for the first time. This definition depends 
on our newly constructed permanent poset $\P_w(n,k)$, and connects explicitly to acyclicity
of the reversal graphs $G_R$ mentioned above.  Furthermore, the extension of
higher Bruhat orders to affine permutations, the associated
definitions and statements on $k$-inversion sets, the permanent poset,
consistent sets, and the connection to the reversal graphs have not
been previously studied for higher values of $k$.  

Commutation classes of reduced words for any Coxeter group, including
the symmetric groups and affine symmetric groups, can be represented
using their heap posets due to work of Stembridge \cite{Stem4,Stem3}
building on the theory of commutation monoids and the heap
construction of Cartier-Foata \cite{Cartier.Foata.1969}, Viennot
\cite{Viennot.1986}, and Billey-Jockusch-Stanley \cite[Sec. 2]{BJS93}.
Each commutation class of reduced words, or equivalently reflection
orders, is the set of linear extensions of its heap poset, see
\cite[Prop. 2.2]{Stem4}.  In this paper, the poset induced by a
reversal graph $G_{R}$ extends the heap poset construction to all
$k$-admissible orders, where the case of $k=2$ is isomorphic to the
heap poset of the corresponding commutation class of reduced words.
The permanent poset $\P_{w}(n,k)$ that we define below is permanent in
the sense that it is the intersection of all heap posets for a given
triple $k,n,w$.  See \cref{thm:equiv_class.and.linear.exts} for the
case of classical permutations and
\cref{thm:equiv_class.and.linear.exts.affine} for the case of affine
permutations.

Elnitsky proved that the commutation classes of reduced words for
permutations in $\S_{n}$ are also in bijection with rhombic tilings of
a certain $2n$-sided polygon \cite[Thm 2.2]{elnitsky}.  Thus,
commutation equivalence of reduced words can be witnessed by the heap
poset, the associated Elnitsky tiling, or the reversal set of the
associated reflection order from
\Cref{thm:affine}~\ref{part:com.equiv.rev.set}.  The braid moves on
reduced words correspond with local moves on a hexagon in the tiling
associated to a reduced word, and with packet flips in our
terminology.  Hence, for $w \in \S_{n}$, the second higher Bruhat
order $\B_{w}(n,2)$ can also be viewed as a partial order on rhombic
tilings of the Elnitsky polygon for $w$.  A braid move of the form
$121\to 212$ increases the sum of the numbers in the reduced word for
a classical permutation, though not necessarily for an affine
permutation.  It also decreases the sum of the distances of the
vertices in the corresponding rhombic tiling to the vertex labeled
$0$.  Elnitsky proved that the graphs $\cG(w)$ are connected and
bipartite \cite[Prop. 3.1 and 3.2]{elnitsky}.  \cref{thm:affine} and
\cref{cor:cGgraph} extend this to every $w\in \affineS_n$, as every
ranked poset with  unique minimal and maximal elements has a connected
bipartite Hasse diagram.

Escobar-Pechenik-Tenner-Yong used the Elnitsky tilings to describe and
generalize the Bott-Samelson varieties to a larger class of varieties
that also are desingularizations of Schubert varieties in complete
flag manifolds \cite{EPTY.2018}.  The generalized variation uses the
zonotopal tilings due to Tenner \cite{tenner} as the underlying
geometric structure.  The zonotopes used in these tilings correspond
with $k$-inversions for various $1\leq k\leq n$ in our terminology.
We note that Tenner's poset on zonotopal tilings of the Elnitsky
polytope $E(w)$ for a permutation $w \in \S_{n}$ is not the same as any
of the posets in this paper since it uses a mixture of different size
$k$-inversions.

Our \cref{cor:cGgraph} extends other known results from $\S_n$ to
$\affineS_n$.  For $w\in \S_n$, the equivalence of $\cG(w)$ being a
tree or path was established by Tenner \cite[Theorem 6.12]{tenner},
and \cref{cor:cGgraph}(a) extends this to $w\in \affineS_n$.  We
extend her characterization of path graphs to the affine permutations
in \Cref{thm:path_iff_2common}.  The diameter in \cref{cor:cGgraph}(b)
also extends a result of Gutierres, Mamede, and Santos
\cite{GUTIERRES} from the symmetric group $\S_n$ to the affine
symmetric group $\affineS_n$. The case where $\cG(w)$ is a single
vertex recovers the 321-avoiding characterization of
Billey-Jockusch-Stanley \cite[Theorem 2.1]{BJS93} for $w\in \S_n$ and
Green \cite[Proposition 4.3]{green} for $w\in \affineS_n$.

Assaf \cite{Assaf.2019} defined a directed analog of the
Tits/Matsumoto graph on all reduced words for $w \in \S_n$ and proved
it is the Hasse diagram of a ranked poset with a unique maximal
element \cite[Cor. 2.9]{Assaf.2019}.  By definition, $\cG(w)$ as an
undirected graph is a quotient of the Tits/Matsumoto graph.  A
comparison of our constructions and the bijections in
\cref{thm:affine} shows that $\cG(w)$ is a quotient of Assaf's
graph when $w\in \S_n$.

\subsection*{Outline} We begin in \cref{sec:prelims} by giving
preliminary information on the affine symmetric group and the
classical higher Bruhat orders. In \cref{sec:non-longest}, we study
the $k$-inversion sets of permutations and define $k$-admissible
orders on these sets.  We use the admissible orders to construct our
version of higher Bruhat orders for arbitrary elements $w\in \S_n$,
and we connect these with heap posets and generalized consistent sets in
order to establish the properties in \cref{thm:nonlongest}.  The
constructions for the symmetric group are simpler to understand first
before going on to the affine higher Bruhat orders for all $w\in
\affineS_n$ introduced in \Cref{sec:affine}.  In \Cref{sec:affine}, we
state and prove many of the affine analogs of the results on the
symmetric group. In particular, we study properties of the
$k$-inversion sets for affine permutations and prove that the permanent
posets for any affine permutation are indeed posets in the first two
subsections of \Cref{sec:affine}. 
We spell out the details for the first and second higher Bruhat orders
in \Cref{sub:firstAHB} and \Cref{sub:secondAHB} in order to prove
\cref{thm:affine}.  In \Cref{sub:ultimateAHB}, we address the
admissible orders in $\A_{w}(n,n)$, along with ultimate higher Bruhat
orders $\B_w(n,k)$, $\C_w(n,k)$, and $\P_{w}(n,k)$ for all $k\geq n$.
In \Cref{sec:open}, we conclude with some additional open problems for
affine higher Bruhat orders.

\section{Preliminaries}\label{sec:prelims}

In this section, we review the terminology and theorems we need from the literature. For more background on partial orders, see \cite{StanEC1}.  For more background on affine permutations, see \cite[Sec. 8.3]{BB}.

\subsection{Partially Ordered Sets}

 A \defn{partially ordered set (poset)} $(P, \leq_P)$ is a set of
elements $P$ equipped with a binary relation $\leq_P$ satisfying
reflexivity, antisymmetry, and transitivity. When there is no
confusion, we will also refer to the poset as simply $P$ and use
$\leq$ instead of $\leq_P$. An element $y$ \defn{covers} $x$, denoted
$x \lessdot y$ if $x < y$ and there is no element $z \in P$ such that
$x < z < y$.  A subset $S$ of $P$ is \defn{convex} if $x < y < z$ and
$x,z \in S$ imply $y \in S$.  A set equipped with a binary relation
$\leq_P$ satisfying just reflexivity and transitivity is a
\defn{pre-order}.
 
 There are two directed graphs naturally associated with any poset.
The first is the graph of all strict relations $x<y$ on $P$.  The second is
the \defn{Hasse diagram} or \defn{transitive reduction}, which is the
graph on $P$ with directed edges given by the covering relations in
$P$. Conversely, every directed acyclic graph gives rise to a poset on
its vertex set by taking the \defn{transitive closure}, in which case $x<y$
provided there exists a sequence $(a_1, \ldots, a_\ell)$ such that
$a_1 = x$, $a_\ell = y$, and $a_i \to a_{i+1}$ is an edge in the graph
for all $1\leq i < \ell$. We will often define our posets by
specifying a generating set of relations and taking the transitive
closure, and these generating sets can be viewed as edges of a
directed graph.  However, it is necessary to check that the resulting
relation is, in fact, a partial order. Reflexivity and transitivity
are clear when taking the transitive closure, but one must check the
resulting graph is acyclic to ensure the corresponding binary relation
is antisymmetric.
 
A poset $(P,\leq)$ is a \defn{total order} if every pair of elements $x$
and $y$ is comparable, meaning either $x\leq y$ or $y \leq x$.  A
\defn{linear extension} of $P$ is any total ordering of the elements
in $P$ such that any two comparable elements in $P$ are related in the
same way in the linear extension. We will use $\mathscr{L}(P)$ to denote the set
of all linear extensions of $P$.  If $P$ is finite, we will often
denote a linear extension of $P$ as an ordered list $\rho = (\rho_1,
\rho_2, \ldots, \rho_\ell)$ of all elements in $P$, and we denote the
corresponding total order relation by $\leq_\rho$. For a subset $S
\subseteq P$, the \defn{restriction} $\rho|_S$ is the total order on
$S$ induced from $\rho$, and the restriction of $(P,\leq)$ to $S$ will be
denoted by $(S,\leq_P)$, or just $S$ if the relation is clear from
context.  The nonempty subset $S$ is a \defn{chain} in $P$ if
$(S,\leq_P)$ is a total order. A chain $(c_1 <_P c_2 <_P \cdots <_P c_k)$
has \defn{length} $k-1$, and it is \defn{saturated} if each $c_i <_P
c_{i+1}$ relation is a covering relation. A chain is \defn{maximal} if
it cannot be extended to a longer chain. Note that every maximal chain
is saturated but the converse is not necessarily true.  An
\defn{antichain} is a subset $S$ of $P$ such that no two distinct
elements of $S$ are related in the partial order.

A poset $P$ is \defn{ranked} if there exists a well-defined rank function $r: P
\to \N$ such that $r(x) = 0$ if $x$ is a minimal element in $P$, and
$r(y) = r(x)+1$ whenever $y$ covers $x$ in $P$. If $P$ is a finite poset
with a unique minimal element $\hat{0}$ and a unique maximal element
$\hat{1}$, then it is ranked if and only if every maximal chain in $P$ has
the same length.

If  $x \leq y$ in $P$, the (closed) \defn{interval} $[x,y]$ is the set $\{z \in P : x \le z \le y\}$.  A \defn{lower order ideal} $I$ of a poset $P$ is a subset that satisfies the \defn{downwards closed} condition
\[
\text{if $y \in I$ and $x \leq  y$, then $x \in I$.}
\]
Similarly, an \defn{upper order ideal} $I$ is a subset that satisfies the \defn{upwards closed} condition
\[
\text{if $x \in I$ and $x \leq  y$, then $y \in I$.}
\]

Given two posets $P,Q$, an \defn{order-preserving} map is a map $f: P
\to Q$ such that if $x \le_P y$, then $f(x) \le_Q f(y)$. An
\defn{isomorphism} of posets $P,Q$ is an order-preserving bijection
between $P$ and $Q$ whose inverse is also order-preserving.
Furthermore, the product of two posets $P \times Q$ is the poset on
the pairs $(p,q)$ for all $p\in P, q\in Q$ with the partial order
$(p_{1},q_{1})\leq (p_{2},q_{2})$ if and only if $p_{1}\leq p_{2}$ and
$q_{1}\leq q_{2}$ in $P$ and $Q$ respectively.

\subsection{Symmetric and Affine Symmetric groups}\label{sub:sn} Here
we summarize some known work of Barkley-Speyer, Bj\"orner-Brenti,
Eriksson-Eriksson, Lusztig, and Shi
\cite{Barkley.Speyer.2024,bjorner_brenti_1996,E-E,Lusztig.1983,Shi.1986}
on affine symmetric groups.  A bijection $w: \Z \to \Z$ is called
\defn{$n$-periodic} if for all $x \in \Z$, we have $w(x+n) =
w(x)+n$. Note that an $n$-periodic bijection is uniquely determined by
the list of values it takes on $[n]$.  The \defn{affine symmetric
group} $\affineS_n$ is the group of all $n$-periodic bijections $w: \Z
\to \Z$ that satisfy the additional property $\sum_{i \in [n]}w(i) =
\binom{n+1}{2}$. The group operation on $\affineS_n$ is function composition. The
\defn{symmetric group} $\S_n$ is the subgroup of all $w \in
\affineS_n$ such that $w([n])=[n]$.  Note that for $n=1$, there is
exactly one $n$-periodic bijection in $\affineS_{1}$, which is the
identity map on $ \Z \to \Z$.  Throughout the rest of this section,
assume $n\geq 2$.

An element in $\affineS_n$ is called an \defn{affine permutation}.  If
we need to specify that $w \in \S_n$, we will call $w$ a
\defn{classical permutation}, or just a permutation. Elements in
$\S_n$ and $\affineS_n$ can be represented in multiple ways, and we
will primarily use the window and one-line notations. The \defn{window
notation} for $w$ shows the image of $[n]$ in a tuple
$(w(1),w(2),\ldots,w(n))$.  Since $w \in \affineS_{n}$, every $i \in
[n]$ is congruent to some element in $\{w(1),\dotsc , w(n) \}$ modulo
$n$.  The \defn{one-line notation} extends the window notation by
showing the image of all of $\mathbb{Z}$ as an infinite tuple. For
example, the permutation $w\in \affineS_4$ defined by $w(1)=4$,
$w(2)=7$, $w(3)=-3$, and $w(4)=2$ has window notation $(4, 7, -3, 2)$,
and a portion of its one-line notation is $(\ldots,0,3,-7,-2,4, 7, -3,
2,8,11,1,6,\ldots)$. The $n$-periodic property of affine permutations
can be used to obtain the one-line notation from the window notation,
and the property $\sum_{i \in [n]}w(i) = \binom{n+1}{2}$ can be used
to identify the window notation from the one-line notation by finding
the unique consecutive $n$ elements with a sum of $\binom{n+1}{2}$.

The affine symmetric group $\affineS_n$ is generated by the
\defn{simple transpositions} $s_1, \ldots, s_{n-1}, s_n$ where $s_i$
is the affine permutation that maps $s_i(i) = i+1$, $s_i(i+1) = i$,
and fixes $j$ for all $j \in \Z$ such that $j \not\equiv i, i+1
\pmod{n}$. We will consider the indices of the simple transpositions
modulo $n$. For example, $s_0$ and $s_n$ both denote the same simple
transposition that swaps $n$ with $n+1$. For $n\geq 3$, the simple
transpositions in $\affineS_n$ satisfy the following set of minimal
relations:
\begin{itemize}
    \item $s_i^2 = 1$ for all $i \in \Z$,
    \item $s_is_{i+1}s_i = s_{i+1}s_is_{i+1}$ for all $i \in \Z$, and
    \item $s_is_j = s_js_i$ for all $i,j \in \Z$ with $j \not\equiv i,i+1, i-1 \pmod{n}$.
\end{itemize}
Relations of the second kind are termed \defn{braid relations}, and
relations of the third kind are termed \defn{commutation relations}.
For $n=2$, the generators only satisfy the first relation $s_i^2 = 1$
for all $i \in \Z$.  The symmetric group $\S_n$ is generated by $s_1,
\ldots, s_{n-1}$, omitting the simple transposition $s_n$.

A \defn{reduced decomposition} for an element $w \in \affineS_n$ is a
product $s_{i_1} \cdots s_{i_\ell}$ of minimal length such that
$s_{i_1} \cdots s_{i_\ell} = w$. For brevity, we will consider
\defn{reduced words}, which are the indices $i_1 \cdots i_\ell$ of a
reduced decomposition $s_{i_1} \cdots s_{i_\ell}$.  All reduced words
for an element $w \in \affineS_n$ have the same \defn{length}, which
is denoted $\ell(w)$.  Since $s_{i}=s_{i+n}$ for all $i$, we consider
the reduced words for $w \in \affineS_{n}$ to be elements in
$(\Z/n\Z)^{\ell(w)}$.  There is a unique permutation $w_0 \in \S_n$ of
maximal length, called the \defn{longest permutation} given by $w_0 =
(n, n-1, \ldots, 1)$.  There is no affine permutation of longest
length in $\affineS_{n}$ for $n>1$.

In general, an element $w\in \affineS_n$ can have many reduced words
as elements in $(\Z/n\Z)^{\ell(w)}$. Given any reduced word, one can
generate additional ones using relations among the generators of
$\affineS_n$. If $i_1 \cdots i_\ell$ is a reduced word for $w$ and
$i_{j+1} \not\equiv i_j+1, i_j-1 \pmod{n}$, then swapping $i_j$ and
$i_{j+1}$ is another reduced word for $w$ obtained via
\defn{commutation}. Two reduced words $a=a_1\cdots a_\ell$ and
$b=b_1\cdots b_\ell$ are \defn{commutation equivalent} if they differ
by a series of commutations. If $\mathbf{a}=a_1 \cdots a_{\ell}$ is a
reduced word for $w \in \affineS_{n}$, $n\geq 3$, and $i(i+1)i$
appears in consecutive entries in $\mathbf{a}$, then this triple
$i(i+1)i$ can be replaced with $(i+1)i(i+1)$ to obtain another reduced
word for $w$. Similarly, one can replace $(i+1)i(i+1)$ with $i(i+1)i$,
and both of these operations will be called a \defn{braid}.  There are
no allowed braids on reduced words for $w$ when $n=2$.   

For any $w\in \affineS_n$,  the \defn{graph on reduced words of $w$}, denoted $\cR(w)$, is the undirected graph whose vertex set consists of reduced words for $w$ and edges between reduced words that can be obtained from one another using a commutation or braid. Let $\sim_w$ denote commutation equivalence, and let $\boldsymbol{\cR(w)/\sim_w}$ be the quotient graph with vertices given by the equivalence classes of reduced words. A classical result of Matsumoto \cite{Matsumoto_1964} and Tits \cite{Tits_1968} states that $\cR(w)$ is connected, which also implies that $\cR(w)/\sim_w$ is connected. 

Recall the directed braid graph on commutation classes of reduced
words of $w$, denoted $\cG(w)$ in the introduction, is obtained from
$\cR(w)/\sim_w$ by directing the edges between two commutation classes
as follows.  If there exist representatives $a' \in [a]$ and $b'\in
[b]$ that differ by a braid from $i(i+1)i$ to $(i+1)i(i+1)$ in three
consecutive positions for some $i$, then orient the edge $[a]\to [b]$.
Note, if $w \in \S_n$, the edges point along the direction of
increasing the sum of the entries in the reduced word, but this does
not extend to all affine permutations since we must take the values
modulo $n$.

\begin{example}
    \label{example:affine-reduced-word}
    Consider $w=(1,7,2,0)\in \affineS_4$. A reduced decomposition for $w$ is $s_4s_1s_2s_1s_4s_3s_2$, which corresponds to the reduced word $4121432$. Using commutation and braid relations starting with this reduced word, one can generate  the graphs $\cR(w)$ and $\cG(w)$ shown in \cref{fig:reducedwordgraph}.
    \begin{figure}
    \centering
    \scalebox{0.875}{
    \begin{tikzpicture}
        \node (1) at (0,-3) {$4121432$};
        \node (2) at (2,-3) {$4212432$};
        \node (3) at (4,-3) {$4214232$};
        \node (4) at (6,-3) {$4214323$};
        \node (5) at (2,-1.5) {$2412432$};
        \node (6) at (4,-1.5) {$2414232$};
        \node (7) at (6,-1.5) {$2414323$};
        \node (8) at (4,0) {$2141232$};
        \node (9) at (6,0) {$2141323$};
        \node (10) at (8,0) {$2143123$};
        \path (1) edge node {} (2);
        \path (2) edge node {} (3);
        \path (3) edge node {} (4);
        \path (2) edge node {} (5);
        \path (3) edge node {} (6);
        \path (4) edge node {} (7);
        \path (5) edge node {} (6);
        \path (6) edge node {} (7);
        \path (6) edge node {} (8);
        \path (7) edge node {} (9);
        \path (8) edge node {} (9);
        \path (9) edge node {} (10);
    \end{tikzpicture} \quad 
    \begin{tikzpicture}[every text node part/.style={align=center}]
        \node (1) at (0,-2) {$4121432$};
        \node (2) at (3,-2) {$2414232$ \\ $2412432$ \\ $4212432$ \\ $4214232$  };
        \node (3) at (6,-2) {$2414323$ \\ $4214323$  };
        \node (4) at (3,0) {$2141232$};
        \node (5) at (6,0) {$2141323$ \\ $2143123$};
        \draw[->] (1) edge node {} (2);
        \draw[->] (2) edge node {} (3);
        \path[->] (2) edge node {} (4);
        \path[->] (3) edge node {} (5);
        \path[->] (4) edge node {} (5);
    \end{tikzpicture}
    }
    \caption{$\cR(w)$ (left) and $\cG(w)$ (right) for the permutation $w= (1,7,2,0) \in \affineS_4$.}
    \label{fig:reducedwordgraph}
\end{figure}
\end{example}

For $w\in \affineS_n$, the \defn{inversion set} of $w$ is
\begin{equation}\label{eq:inversion.set.def}
\Inv(w)=\left\{(x,y)\in [n] \times \mathbb{Z} : x<y \text{ and } w^{-1}(x)> w^{-1}(y) \right\}.
\end{equation}
Observe that if $w(i)<w(i+1)$, then $ws_i$ has exactly one more
inversion than $w$, namely $(w(i),w(i+1))$ where by $n$-periodicity we can
assume $w(i) \in [n]$.  Therefore, affine permutations are uniquely
determined by their inversion set and $\ell(w)=|\Inv(w)|$ by induction
on the length of $w$. By considering each $(i,j)\in \Inv(w)$ under its natural identification in the quotient group $\mathbb{Z}^2/(\mathbb{Z}\cdot (n,n))$, the following statement is a characterization
of inversion sets for affine permutations.  It follows from the
discussion of affine inversions in \cite[Prop
2.1]{Barkley.Speyer.2024} and the more general characterization of
inversion sets for Coxeter groups in \cite[Lemma 4.1(d)]{Dyer.2019},
though stated a bit differently.

\begin{theorem}\label{lem:inversion.sets}
Let $n$ be a positive integer, and let $R$ be a finite subset of
$\mathbb{Z}^2/(\mathbb{Z}\cdot (n,n))$ whose elements have
representatives $(x,y)$ with $x<y$.  Then, $R$ is the inversion set
for some affine permutation in $\affineS_{n}$ if and only if for all
triples of integers $x<y<z$, we have
\begin{enumerate}[label=(\alph*)]
%% MSZ condition requirements
\item \label{part:trapped.value}$(x,z) \in R$ implies  $(x,y) \in R$ or $(y,z) \in R$, 
\item \label{part:inv.packet.gaps.implies.all}$(x,y) \in R$ and $(y,z) \in R$ implies  $(x,z) \in R$, and
%% congruence poset requirements
\item \label{part:inv.congruence}$(x,y) \in R$ implies $x \not \equiv y \pmod{n}$ and $(x,y-en)
\in R$ for all $e \in \Z_{\geq 0}$ such that $ x < y-en<y$.
\end{enumerate}
\end{theorem}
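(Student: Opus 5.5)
Both directions will be proved directly in the one-line picture, where $(x,y)$ with $x<y$ is an inversion of $w$ exactly when the value $y$ appears to the left of the value $x$. Throughout, elements of $\mathbb{Z}^2/(\mathbb{Z}\cdot(n,n))$ are identified with pairs of values $(x,y)$, $x<y$, up to simultaneous shift by $n$. By $n$-periodicity, an affine permutation is the same thing as an $n$-periodic total order on $\mathbb{Z}$ together with the sum normalization.

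\emph{Necessity.} Let $R=\Inv(w)$. Conditions \ref{part:trapped.value} and \ref{part:inv.packet.gaps.implies.all} come from a case check on the relative positions of $x<y<z$ in the one-line notation. If $z$ precedes $x$, then $y$ lies either after $x$, giving $(x,y)\in R$, or before $x$, in which case it lies before $z$ or between $z$ and $x$, giving $(y,z)\in R$ or $(x,y)\in R$ respectively. Transitivity of the position order gives \ref{part:inv.packet.gaps.implies.all}.

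For \ref{part:inv.congruence}, note first that $x\equiv y \pmod n$ with $x<y$ forces $w^{-1}(x)<w^{-1}(y)$, because $w^{-1}(y)=w^{-1}(x)+(y-x)$. Now suppose $x<y-en<y$. Then $w^{-1}(y-en)=w^{-1}(y)-en<w^{-1}(y)<w^{-1}(x)$, so $(x,y-en)\in R$. Finiteness of $R$ follows from $|\Inv(w)|=\ell(w)$.

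\emph{Sufficiency.} I induct on $|R|$; the case $R=\emptyset$ gives $\mathrm{id}$. The key step is to find a ``descent-like'' element of $R$ that can be removed while keeping conditions (a)--(c). Following the observation before the theorem that $ws_i$ adds exactly the inversion $(w(i),w(i+1))$, I want some $(x,y)\in R$ such that $R'=R\setminus\{(x,y)\}$ also satisfies (a)--(c). The induction hypothesis then gives $u$ with $\Inv(u)=R'$, and $x,y$ should be adjacent in the one-line notation of $u$ with $x$ before $y$, so that $w=us_i$ has $\Inv(w)=R$.

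The natural candidate is an $(x,y)\in R$ with $y-x$ minimal. Condition (a) applied to $(x,y)$ and any $x<m<y$ forces $(x,m)$ or $(m,y)$ into $R$, and both have smaller difference, contradicting minimality. So no value strictly between $x$ and $y$ is involved, and in particular $y=x+1$ in a suitable sense. Check that $R'$ satisfies (a)--(c):
\begin{itemize}
\item For (a), the only new failure would be a triple with $(x,y)$ as a witness, but triples $a<b<c$ with $\{a,b\}$ or $\{b,c\}$ equal to $\{x,y\}$ and $y=x+1$ are controlled by (b) for $R$.
\item For (b), removal cannot create a violation unless $(x,y)$ is the conclusion, which is impossible since nothing lies strictly between $x$ and $y$.
\item For (c), minimality ensures no $(x,y+en)$ in $R$ requires $(x,y)$, except through smaller differences.
\end{itemize}

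Next I must show $x,y$ are adjacent in $u$ with $x$ first. Adjacency as values means $u(i)=x$ and $u(i+1)=y$ only after checking that positions agree. Here I would use (a)/(b) for $R'$ to show that any value lying between $x$ and $y$ in position would create an inversion contradicting the structure. The alternative is to choose the removable element more cleverly, as a ``simple root'' of the finite biconvex set, in analogy with Dyer's argument.

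The main obstacle is this choice of removable element. Minimal difference alone may not give positional adjacency, so I expect to refine the choice: take $(x,y)\in R$ such that no $(x',y')\in R$ satisfies $(x',y')\ne (x,y)$ with $x\le x'<y'\le y$. Condition (c) is then needed to handle the periodic copies correctly, and in particular to guarantee $x\not\equiv y$, so that the $s_i$ is well defined. I also expect (c) to be needed to verify that the resulting $n$-periodic bijection satisfies the sum normalization, which is automatic since $us_i\in\affineS_n$.
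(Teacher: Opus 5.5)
Your necessity argument is fine; in the case check the labels are swapped (if $y$ comes after $x$ you get $(y,z)\in R$, and if $y$ comes before $z$ you get $(x,y)\in R$), but the conclusion is unaffected.

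\textbf{The gap.} Sufficiency breaks at exactly the step you flag, and neither of your proposed removable elements works. As you observe, (a) forces an element of $R$ with $y-x$ minimal to be $(x,x+1)$. The same argument shows that an element minimal under interval nesting also has this form, so your refined choice selects the same elements. Deleting such an element need not preserve (a). For $n=3$ and $w=(2,3,1)$ we have $R=\mathrm{Inv}(w)=\{(1,2),(1,3)\}$. The only candidate is $(1,2)$, and $R\setminus\{(1,2)\}=\{(1,3)\}$ violates (a) at $1<2<3$. So your bullet asserting that (a) survives the deletion is false, and the induction hypothesis cannot be applied.

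The underlying problem is a mix-up of values and positions. The inversion you may simply delete is $(u(i),u(i+1))$ with $w=us_i$, a pair adjacent in \emph{position}; here that is $(1,3)$, which is not nesting-minimal. By contrast, $(x,x+1)$ is adjacent in \emph{value} and is a left descent. For it the correct recursion is $\mathrm{Inv}(s_xw)=s_x\bigl(\mathrm{Inv}(w)\setminus\{(x,x+1)\}\bigr)$, with $s_x$ acting on the entries. Locating a position-adjacent pair from the abstract data $R$ is essentially the whole content of the theorem. Deferring it, and the positional adjacency in $u$, to ``a cleverer choice'' leaves the sufficiency direction unproved.

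\textbf{Two repairs.} The first keeps your induction.
\begin{enumerate}
\item Take $(x,x+1)\in R$ as you found.
\item Put $R'=s_x\bigl(R\setminus\{(x,x+1)\}\bigr)$, where $s_x$ swaps $x+mn$ and $x+1+mn$ for all $m$.
\item Verify (a)--(c) for $R'$. Only triples containing some pair $\{x+mn,x+1+mn\}$ need care, and these are handled by (b) and (c) for $R$.
\item Set $w=s_xu$.
\end{enumerate}
The second avoids induction altogether. For distinct integers, put $a\prec b$ if either $a<b$ and $(a,b)\notin R$, or $a>b$ and $(b,a)\in R$.
\begin{enumerate}
\item Conditions (a) and (b) give transitivity in the six relative-order cases.
\item Condition (c) gives $a\prec a+n$. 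Hence $\{c: a\preceq c\prec a+n\}$ contains exactly one element of each residue class.
\item Finiteness of $R$ bounds $y-x$ over $R$, so $(\mathbb{Z},\prec)$ has order type $\mathbb{Z}$.
\item Listing $\mathbb{Z}$ in $\prec$-order gives an $n$-periodic bijection. Its window sum can be normalized by reindexing, since the window is a complete residue system mod $n$.
\item Its inversion set is $R$ by construction.
\end{enumerate}
For comparison, the paper gives no proof of its own here; it cites Barkley--Speyer and Dyer.
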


The inversion set of $w \in \affineS_{n}$ can be represented as a
directed acyclic multigraph on $[n]$.  This graph was introduced by
Bj\"orner-Brenti \cite[Sect 5]{bjorner_brenti_1996} and a
characterization was developed by Papi
\cite{papi1997affinepermutations}. See also \cite{bbgraph,shi.1999}
for other characterizations of these graphs.  Recall, $\floor{x}$ is
the largest integer less than or equal to $x \in \mathbb{R}$, even if
$x<0$.

\begin{definition}\label{def:BB.graph}
Let $I_{w}$ be the multigraph on $[n]$ with
\begin{equation}\label{eq:BB.multiplicity}
\left| \left\lfloor \frac{w(j)-w(i)}{n} \right\rfloor \right|
\end{equation}
edges pointing from $i$ to $j$ if  $w(i)<w(j)$  and pointing from
$j$ to $i$ if $w(j)<w(i)$ for each pair $1\leq i<j\leq n$. 
\end{definition}

Examples are shown in \cref{fig:inversion_graph_example}. Since we
have chosen to describe the inversion set on values instead of
positions, it is more natural to consider $I_{w^{-1}}$, which is the
multigraph on $[n]$ with a directed edge from $y \pmod{n} \to x
\pmod{n}$ for each $(x,y) \in \Inv(w)$.  One can recover $\Inv (w)$
from $I_{w^{-1}}$ by Part (c) of \Cref{lem:inversion.sets}.

\begin{figure}[h!]
\centering
\begin{tikzpicture}
        \node (1) at (0,0) {$1$};
        \node (2) at (2,0) {$2$};
        \node (3) at (4,0) {$3$};
        \node (4) at (6,0) {$4$};
        \draw[->] (1) to [out=0,in=180] (2);
        \draw[->] (3) to [out=165,in=15] (2);
        \draw[->] (3) to [out=195,in=345] (2);
        \draw[->] (4) to [out=180,in=0] (3);
        \draw[->] (4) to [out=150,in=30] (2);
        \draw[->] (4) to [out=210,in=330] (2);
        \draw[->] (4) to [out=135,in=45] (1);
\end{tikzpicture}
\qquad 
\begin{tikzpicture}
        \node (1) at (0,0) {$1$};
        \node (2) at (2,0) {$2$};
        \node (3) at (4,0) {$3$};
        \node (4) at (6,0) {$4$};
        \draw[->] (1) to [out=45,in=135] (4);
        \draw[->] (3) to [out=165,in=15] (2);
        \draw[->] (3) to [out=195,in=345] (2);
        \draw[->] (3) to [out=15,in=165] (4);
        \draw[->] (3) to [out=345,in=195] (4);
        \draw[->] (2) to [out=330,in=210] (4);
        \draw[->] (3) to [out=150,in=30] (1);
\end{tikzpicture}
\caption{$I_w$ and $I_{w^{-1}}$ for $w=(1,7,2,0) \in \affineS_{4}$, where $w^{-1}=(1,3,-2,8)$.}
\label{fig:inversion_graph_example}
\end{figure}

The reflection orders were originally studied by Dyer 
\cite{Dyer.1993}, and now have been applied extensively in the theory
of Coxeter groups and Kazhdan-Lusztig polynomials.  See for example
\cite{Brenti.Casselli.Marietti,cellini1998structure,Dyer.2019,wangwang2024ordertypesreflectionorders}. We
give a definition in the setting of $\affineS_n$.

\begin{definition}\label{def:reflection.order}
Let $w \in \affineS_{n}$.  For each reduced word $\mathbf{i}=i_1
\cdots i_\ell \in \cR(w)$, associate affine permutations
$w^{(j)}=s_{i_{1}}s_{i_{2}}\cdots s_{i_{j-1}}$ for all $1\leq j \leq
\ell$.  Define $\rho_{j}=(w^{(j)}(i_{j}), w^{(j)}(i_{j}+1))$ for all
$1\leq j \leq \ell$. Then, the \defn{reflection order} associated to
$\mathbf{i}$ is the total order on $\Inv (w)$ given by
$\rho(\mathbf{i})=(\rho_{1},\dotsc , \rho_{\ell})$.  The set of all
\defn{reflection orders} associated with $w$ is the set of all such
total orders on $\Inv(w)$ associated to reduced words of $w$.
\end{definition}

The reflection order associated to $i_1 \cdots i_\ell \in \cR(w)$ is
well-defined since $w$ is $n$-periodic, $i_1 \cdots i_\ell \in
(\Z/n\Z)^{\ell}$, and each $s_{i}=s_{i+n}$.  In the notation above, if
$a= w^{(j)}(i_{j})$ and $b=w^{(j)}(i_{j}+1)$, then $a<b$ and $(a,b)
\in \Inv (w)$.  Conversely, each inversion appears exactly once in the
associated list since $i_1 \cdots i_\ell$ is a reduced word for $w$.
Furthermore, any reflection order $\rho = (\rho_{1},\rho_{2},\dotsc ,
\rho_{\ell})$ for $w$ uniquely determines its associated reduced word
for $w$ as an element in $(\Z/n\Z)^{\ell}$. Specifically, $\rho_{1}$
determines $s_{i_{1}}$ up to a shift in index by $n$ and
$(s_{i_{1}}\rho_{2},\dotsc , s_{i_{1}}\rho_{\ell})$ must be a reflection order for
$s_{i_{1}}w$, so it determines a reduced word $(i_{2},\dots ,
i_{\ell})$ by induction on the length of the affine permutation.

\begin{example}
If $w=(3,2,1) \in \S_{3}$, then the reflection order
determined by its reduced word $121$ is $((1,2),(1,3), (2,3))$, while the reflection order determined by $212$ is
$((2,3),(1,3),(1,2))$ in the opposite order.   
\end{example}

\begin{example}
If $w=(1,7,2,0) \in \affineS_{4}$ as in
\Cref{example:affine-reduced-word}, the reflection order for the
reduced word $0121032$ is $((0, 1), (0, 2), (0, 3), (2, 3), (1, 3),
(0, 7), (2, 7))$.  Observe that this reflection order is a total order
on $\Inv(w)$, and $I_{w^{-1}}$ in \Cref{fig:inversion_graph_example} is equal
to the graph obtained from this list of inversions by inserting an
edge from $y  \pmod{n}$ to $x  \pmod{n}$ for each $(x,y) \in \Inv(w)$.
\end{example}

The reflection orders have the property that each ordered prefix must also be
a reflection order since each ordered prefix of a reduced word is still a
reduced word.  Therefore, one can use \Cref{lem:inversion.sets} to
completely characterize the reflection orders for affine permutations
as follows.

\begin{corollary}\label{cor:characterize.ref.order} Let $w \in
\affineS_{n}$.  A total order $\rho = (\rho_{1},\rho_{2},\dotsc ,
\rho_{\ell})$ of $\Inv(w)$ is a reflection order if and only if
\begin{enumerate}[label=(\alph*)]
\item for each triple of integers $x<y<z$ that are distinct mod $n$,
$\rho$ restricted to $\{(x,y),(x,z),(y,z) \}\cap \Inv(w)$ is an
initial sequence of the list $((x,y),(x,z),(y,z))$ or
its reverse $((y,z),(x,z),(x,y))$, and \vspace{.1in}

\item for each pair $(x,y),(x,y+n) \in \Inv (w)$, the pair $(x,y)$ appears before
$(x,y+n)$ in $\rho$.
\end{enumerate}
\end{corollary}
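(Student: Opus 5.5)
The plan is to use the prefix characterization noted just before the statement. A total order $\rho$ on $\Inv(w)$ is a reflection order for $w$ exactly when every prefix $\{\rho_1,\dots,\rho_j\}$ is the inversion set of some $u_j$ with $\ell(u_j)=j$, and with $u_{j}=u_{j-1}s_{i_j}$ for a simple transposition. Here we use that $(w(i),w(i+1))$ is the new inversion of $ws_i$, and that $u\le w$ in weak order exactly when $\Inv(u)\subseteq\Inv(w)$. So $\rho$ is a reflection order iff every prefix set satisfies conditions (a)–(c) of \Cref{lem:inversion.sets}. Condition (c) of that theorem restricts only which sets occur. Since $\Inv(w)$ already satisfies it, a prefix satisfies (c) iff $\rho$ lists $(x,y-en)$ before $(x,y)$ whenever both are in $\Inv(w)$. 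Chaining over consecutive $e$, this is exactly condition (b) of the corollary; the congruence part $x\not\equiv y$ is automatic inside $\Inv(w)$. The first step, therefore, is to reduce everything to showing that conditions (a)–(b) of \Cref{lem:inversion.sets} hold for all prefixes iff condition (a) of the corollary holds.

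Conditions (a) and (b) of \Cref{lem:inversion.sets} are local to a triple $x<y<z$. Moreover, a triple with two entries congruent mod $n$ contributes at most the pairs that are inversions. The inversion set of $w$ (and of each prefix) involves only pairs of distinct residues, so if, say, $x\equiv y$ then $(x,y)\notin\Inv(w)$. I would check in that case that (a) and (b) are either vacuous or reduce to (c)-type conditions via periodicity. For example, with $x\equiv z$, no pair $(x,z)$ can be present. With $x\equiv y$, (a) says $(x,z)\in R\Rightarrow (y,z)\in R$, and $(y,z)=(x,z-(y-x))$, which is the (c) condition. The case $y\equiv z$ is handled symmetrically via translation. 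So only triples with distinct residues need genuine attention.

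For such a triple, let $T=\{(x,y),(x,z),(y,z)\}\cap\Inv(w)$. The main step is a finite check. The sequence of prefix intersections with $T$ grows one element at a time and ends at $T$, which itself satisfies (a)–(b). The condition that every intermediate subset satisfies (a)–(b) forbids $\{(x,z)\}$ alone and forbids $\{(x,y),(y,z)\}$ without $(x,z)$. Enumerating the possible $T$ (which must itself be closed in this sense), the admissible orderings of $T$ are exactly the initial segments of $((x,y),(x,z),(y,z))$ or its reverse, matching condition (a) of the corollary. For instance, if $T$ is all three pairs, the first element cannot be $(x,z)$; and if it is $(x,y)$, the second cannot be $(y,z)$.

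Conversely, if (a) and (b) of the corollary hold, every prefix satisfies (a)–(c) of \Cref{lem:inversion.sets}, so it is $\Inv(u_j)$ for some $u_j$ with $\ell(u_j)=j$. The consecutive $u_j$ differ by one inversion, so $u_j=u_{j-1}s_{i}$ for some simple $s_i$. This uses the standard fact that inversion sets differing by one element come from a right multiplication by a simple transposition; I would justify it by noting that the added pair $(a,b)$ must have $a,b$ adjacent in the one-line notation of $u_{j-1}$, else a trapped value would violate (a) or (b). This yields a reduced word whose reflection order is $\rho$. The expected obstacle is the careful bookkeeping of the degenerate congruent triples and this adjacency argument; the distinct-residue case analysis is routine.
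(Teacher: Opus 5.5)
Your proposal is correct and follows the route the paper indicates just before the statement. Prefixes of a reflection order are the inversion sets of the elements along a reduced word, and \Cref{lem:inversion.sets} is applied prefix by prefix: its part (c) gives condition (b), and its triple-local parts (a)--(b) give condition (a) through your finite case check, filling in details the paper leaves implicit. One point to state explicitly is that for a triple with $x\equiv z$, part (b) of \Cref{lem:inversion.sets} is vacuous for prefixes only because $\Inv(w)$ itself cannot contain both $(x,y)$ and $(y,z)$.
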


There is a partial order on $\affineS_n$, called the \defn{weak
order}, defined by $v\leq w$ if and only if $\Inv(v)\subseteq
\Inv(w)$.  The covering relations in the weak order are all of the
form $w \lessdot ws_i$ if $\ell(w) = \ell(ws_i) - 1$ for some $i \in
[n]$, so one could also say the weak order is defined by
\defn{single step inclusion} on inversion sets.  Furthermore, every
maximal chain of an identity interval $[\mathrm{id}, w]$ corresponds
with a reduced word for $w$.  A portion of the Hasse diagram for the
weak order on $\affineS_3$ is shown in
\cref{fig:weakbruhatexample}. The weak order is also sometimes called
the \defn{weak Bruhat order}, which is where the higher Bruhat orders
got their name.   See \cite[Chapters 2 and 3]{BB} for details.

\begin{figure}
\centering 
\scalebox{0.875}{
    \begin{tikzpicture}
        \node (123) at (0,0) {$(1,2,3)$};
        \node (213) at (-2,1) {$(2,1,3)$};
        \node (132) at (0,1) {$(1,3,2)$};
        \node (024) at (2,1) {$(0,2,4)$};
        \path (123) edge node {} (213);
        \path (123) edge node {} (132);
        \path (123) edge node {} (024);
        \node (231) at  (-5,2) {$(2,3,1)$};
        \node (015) at  (-3,2) {$(0,1,5)$};
        \node (312) at  (-1,2) {$(3,1,2)$};
        \node (-134) at  (1,2) {$(-1,3,4)$};
        \node (204) at  (3,2) {$(2,0,4)$};
        \node (042) at  (5,2) {$(0,4,2)$};
        \path (213) edge node {} (231);
        \path (213) edge node {} (015);
        \path (132) edge node {} (312);
        \path (132) edge node {} (-134);
        \path (024) edge node {} (204);
        \path (024) edge node {} (042);
        \node (-235) at  (-8,3) {$(-2,3,5)$};
        \node (051) at  (-6,3) {$(0,5,1)$};
        \node (321) at  (-4,3) {$(3,2,1)$};
        \node (-116) at  (-2,3) {$(-1,1,6)$};
        \node (105) at  (0,3) {$(1,0,5)$};
        \node (3-14) at  (2,3) {$(3,-1,4)$};
        \node (-143) at  (4,3) {$(-1,4,3)$};
        \node (240) at  (6,3) {$(2,4,0)$};
        \node (402) at  (8,3) {$(4,0,2)$};
        \path (231) edge node {} (-235);
        \path (231) edge node {} (321);
        \path (015) edge node {} (051);
        \path (015) edge node {} (105);
        \path (312) edge node {} (321);
        \path (312) edge node {} (-116);
        \path (-134) edge node {} (3-14);
        \path (-134) edge node {} (-143);
        \path (204) edge node {} (105);
        \path (204) edge node {} (240);
        \path (042) edge node {} (-143);
        \path (042) edge node {} (402);
    \end{tikzpicture}}
    \caption{The lower order ideal of the weak order on $\affineS_3$ consisting of permutations with length at most $3$.}
    \label{fig:weakbruhatexample}
\end{figure}

Bj\"orner-Wachs characterized the set of all linear extensions of any poset with $n$ elements in \cite{Bjorner_Wachs_1991} using the Hasse diagram of the weak order on the symmetric group $\S_n$.  The following statement is a direct consequence of their theorem.

\begin{theorem}[{\cite[Thm. 6.6(i)]{Bjorner_Wachs_1991}}]
\label{lem:linear_extensions} Let $P$ be a finite poset. Any two
linear extensions of $P$ can be obtained from one another by a
sequence of adjacent transpositions between elements that are not
comparable in $P$.
\end{theorem}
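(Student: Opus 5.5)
The statement to prove is \cref{lem:linear_extensions}: any two linear extensions of a finite poset $P$ are connected by adjacent transpositions of incomparable elements. The plan is a direct induction, using distance from a fixed target extension rather than the weak-order machinery of Bj\"orner--Wachs.

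Fix two linear extensions $\rho=(\rho_1,\dots,\rho_m)$ and $\sigma=(\sigma_1,\dots,\sigma_m)$ of $P$. Define the distance $d(\rho,\sigma)$ to be the number of pairs $\{a,b\}$ whose relative order differs in $\rho$ and $\sigma$. We induct on $d(\rho,\sigma)$. If $d=0$, then $\rho=\sigma$ and there is nothing to prove.

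Suppose $d>0$.

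\textbf{Step 1: find a descent.} Some adjacent pair $\rho_i,\rho_{i+1}$ must appear in the opposite order in $\sigma$. Otherwise every adjacent pair of $\rho$ is ordered as in $\sigma$. Since a total order is determined by its adjacent comparisons, transitivity would then force $\rho=\sigma$, contradicting $d>0$.

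\textbf{Step 2: the pair is incomparable.} The elements $\rho_i$ and $\rho_{i+1}$ are incomparable in $P$. Indeed, if $\rho_i<_P\rho_{i+1}$, then every linear extension, including $\sigma$, would place $\rho_i$ first. The reverse relation $\rho_{i+1}<_P\rho_i$ is impossible because $\rho$ itself places $\rho_i$ first.

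\textbf{Step 3: swap and check.} Let $\rho'$ be obtained from $\rho$ by swapping $\rho_i$ and $\rho_{i+1}$. Then $\rho'$ is still a linear extension. The only relative order that changed is between two incomparable elements, and every other comparable pair keeps its order. Moreover $d(\rho',\sigma)=d(\rho,\sigma)-1$, since exactly one pair, now agreeing with $\sigma$, changed its relative order.

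By induction, $\rho'$ connects to $\sigma$ by allowed moves. Prepending the swap from Step 3 connects $\rho$ to $\sigma$. The only point needing care is Step 1, the existence of an adjacent disagreeing pair, and that follows from the adjacency argument given there.

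Alternatively, one can cite \cite[Thm. 6.6(i)]{Bjorner_Wachs_1991} directly. Their result identifies $\mathscr{L}(P)$, after labeling, with a connected set of permutations whose weak-order Hasse edges are exactly these adjacent swaps.
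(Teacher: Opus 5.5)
Your proof is correct and complete. It takes a genuinely different route from the paper, because the paper does not prove this statement at all. It quotes it as a direct consequence of \cite[Thm.~6.6(i)]{Bjorner_Wachs_1991}, where Bj\"orner and Wachs identify the linear extensions of $P$, after labeling, with a set of permutations and analyze it through the Hasse diagram of the weak order. Connectivity by adjacent swaps of incomparable elements is read off from that weak-order picture.

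Your argument is the elementary bubble-sort induction on $d(\rho,\sigma)$, the number of pairs on which the two extensions disagree:
\begin{itemize}
\item an adjacent disagreeing pair in $\rho$ must exist;
\item that pair must be incomparable in $P$, since both $\rho$ and $\sigma$ are linear extensions;
\item swapping it yields another linear extension with $d$ reduced by exactly one.
\end{itemize}

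This approach is self-contained and needs no labeling or weak-order machinery. It also proves slightly more than the statement: the connecting sequence has length exactly $d(\rho,\sigma)$. That length is the minimum possible, because each adjacent transposition changes the relative order of exactly one pair.

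The places where the paper invokes the result, such as \cref{thm:equiv_class.and.linear.exts} and \cref{thm:ultimateAHB}, use only this connectivity statement for finite posets. So your argument could replace the citation outright. What the citation buys instead is placing the fact inside the weak-order framework that the paper relies on elsewhere, for instance in identifying $1$-admissible orders with weak-order intervals.
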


\subsection{Higher Bruhat Orders}\label{subsection:higher-bruhat-order-intro}

In this subsection, the higher Bruhat
orders are reviewed according to the definitions of Manin-Schechtman
\cite{manin_schechtman_1989} and Ziegler \cite{ziegler_1993}. Given an
unordered set $X = \{x_1, x_2, \ldots, x_n\}$, let $(x_1, x_2, \ldots,
x_n)$ denote $X$ as an ordered list. When $(x_1, x_2, \ldots, x_n)$
are integers in increasing order, the notation $[x_1, x_2, \ldots,
x_n]$ will be used to emphasize that the elements are listed in
increasing order.  When unambiguous in examples, the brackets and
commas are omitted from an ordered set $X=[x_1,\dots,x_n]$ to just
$x_1 \dots x_n$.  In order to avoid ambiguity between $[n]$ as a set
of positive integers and $[n]$ as an ordered set of size one, the
latter will be written as the singleton set $\{n\}$. For any
nonnegative integer $k$, we henceforth revise the notation
$\binom{[n]}{k}$ to mean the collection of ordered subsets of $k$
distinct elements in $[n]$, ordered in increasing order. For example,
$\binom{[4]}{2} = \{[1,2], [1,3], [1,4], [2,3], [2,4], [3,4]\}$. By
convention, $[n] = \emptyset$ if $n = 0$, $\binom{[n]}{0} =
\{\emptyset\}$ for all $n \ge 0$, and if $k$ is a positive integer
such that $k > n$, then $\binom{[n]}{k} = \emptyset$.

 For an ordered set $X = [x_1, \ldots, x_k] \in \binom{[n]}{k}$ and a sequence of positive integers $1 \le i_1 < i_2 < \cdots < i_j \le k$, the notation $X_{i_1, \ldots, i_j}$ denotes the ordered set $X \setminus \{x_{i_1}, \ldots, x_{i_j}\}$. For example, if $X = [1, 5, 6, 8, 9]$, then $X_{3,5} = [1,5,8]$. The \defn{packet} of an ordered set $X$ is the unordered set of all size $(k-1)$ subsets of $X$, denoted $P(X) = \{X_1, X_2, \ldots, X_k\}$. For a total order $\rho$ on a set $\mathcal{S}$ and a subset $I \subseteq \mathcal{S}$, the \defn{restriction} $\rho|_I$ is the total order on $I$ inherited from $\rho$.
 
 The \defn{lexicographic} (lex) order on $\binom{[n]}{k}$ is the total
order on $\binom{[n]}{k}$ where, for any two distinct elements $[i_1,
\ldots, i_k]$, $[j_1, \ldots, j_k] \in \binom{[n]}{k}$, $[i_1, \ldots,
i_k] < [j_1, \ldots, j_k]$ if and only if there exists $r \in [k]$
such that $i_r < j_r$, and $i_1 = j_1$, $i_2 = j_2$, $\cdots$,
$i_{r-1} = j_{r-1}$. The \defn{antilexicographic} (antilex) order is
the total order on $\binom{[n]}{k}$ obtained by reversing the lex
order. For a packet $P(X)$, the lex order is $(X_k, X_{k-1}, \ldots,
X_1)$, and the antilex order is $(X_1, X_2, \ldots, X_k)$. Notice that
the indices of $X_k, X_{k-1}, \ldots, X_1$ are in decreasing order for
the lex order on $P(X)$ since omitting the largest element $x_k$
corresponds to the lexicographically first element $X_k = [x_1, x_2,
\ldots, x_{k-1}]$. A \defn{prefix} of $P(X)$ is an unordered subset of
the form $\{X_k, X_{k-1}, \ldots, X_i \}$, and a \defn{suffix} of
$P(X)$ is an unordered subset of the form $\{X_i, X_{i-1}, \ldots, X_1
\}$.  The empty set and $P(X)$ are both prefixes and suffixes of
$P(X)$. 

\begin{example}
    The lex order on $\binom{[4]}{2}$ is the total order
    \[
    [1,2] < [1,3] < [1,4] < [2,3] < [2,4] < [3,4],
    \]
    which is identified with the ordered list 
    \[
    ([1,2], [1,3], [1,4], [2,3], [2,4], [3,4]).
    \]
 Thus, the lex order on $\binom{[4]}{2}$ may be written compactly as $(12, 13, 14, 23, 24, 34)$. The antilex order on $\binom{[4]}{2}$ is $(34, 24, 23, 14, 13, 12)$. The restriction of the antilex order on $\binom{[4]}{2}$ to the packet $P(123)$ is $(23, 13, 12)$.    
\end{example}

\begin{definition}[\cite{manin_schechtman_1989}]\label{def:admissible.MS}
A total order $\rho$ on $\binom{[n]}{k}$ is \defn{admissible} if the
restriction $\rho|_{P(X)}$ is either the lex or antilex order on
$P(X)$ for every $X \in \binom{[n]}{k+1}$.  The set of admissible
orders on $\binom{[n]}{k}$ is denoted $\boldsymbol{\A(n,k)}$. Given an
admissible order $\rho \in \A(n,k)$, its \defn{reversal set} is
determined by the packets in antilex order, namely

\begin{equation}
\boldsymbol{\Rev_{n,k}(\rho)} = \left\{X \in \binom{[n]}{k+1} : \rho|_{P(X)}=(X_1,\dots,X_{k+1})\right\}.
\end{equation}

\end{definition}

The subscript in $\Rev_{n,k}$ is dropped when $k$ and $n$ are clear
from context. For integers $1 \le k \le n$, the lex (resp. antilex)
order on $\binom{[n]}{k}$ is always an admissible order in $\A(n,k)$
since the restriction to any packet is always the lex (resp. antilex)
order on the packet. The reversal set of the lex order on
$\binom{[n]}{k}$ is the empty set since there are no packets in
antilex order. The reversal set of the antilex order on
$\binom{[n]}{k}$ is $\binom{[n]}{k+1}$ since the restriction to every
packet is the antilex order.

\begin{remark}\label{rem:MS.inv.set}
Note, what we call the reversal set here is the same as what
Manin-Schechtman called the inversion set of an admissible order.  We
changed the name to differentiate this concept from the $k$-inversion
sets we will define for classical and affine permutations in
\Cref{def:k.inversion.Sn} and \Cref{def:k.inversion.affine}.  Both
concepts generalize the inversion set of the longest permutation
$w_{0}$ in $S_{n}$.
\end{remark}

\begin{example}
    Consider the total order
    \begin{equation}
    \label{equation:example-admissible-order}
    \rho = (23, 24, 25, 45, 13, 15, 35, 14, 34, 12).
    \end{equation}
    The restriction of $\rho$ to the packet $P(145) = \{14, 15, 45\}$ is $\rho|_{P(145)} = (45, 15, 14)$, which is the antilex order on $P(145)$. One can check that for every $X \in \binom{[5]}{3}$, the restriction of $\rho$ to $P(X)$ is either the lex or antilex order on $P(X)$. Thus, $\rho$ is in $\A(5,2)$. Its reversal set is
    \begin{equation}
        \label{equation:example-reversal-set}
        \Rev(\rho) = \{123, 124, 125, 145, 345\}.
    \end{equation}
\end{example}

\begin{example}
    As a nonexample, consider transposing $13$ and $15$ in the total order $\rho$ of \eqref{equation:example-admissible-order}. This yields the total order
    \begin{equation}
        \tau = (23, 24, 25, 45, 15, 13, 35, 14, 34, 12).
    \end{equation}
    The restriction of $\tau$ to the packet $P(135) = \{13, 15, 35\}$ is $\tau|_{P(135)} = (15, 13, 35)$, which is neither the lex order nor the antilex order on $P(135)$. Thus, $\tau$ is \emph{not} an admissible order.

    Note that $P(13) \cap P(15) \neq \emptyset$. However, $P(45) \cap P(13) = \emptyset$ and one can check that transposing $45$ and $13$ in $\rho$ does yield an admissible total order. 
\end{example}

Subsets $X,Y \in \binom{[n]}{k}$ are said to \defn{commute} if they do
not belong to a common packet $P(Z)$ for any $Z \in \binom{[n]}{k+1}$.
If $\rho = (\rho_1, \ldots, \rho_{\binom{n}{k}}) \in \A(n,k)$ and
$\rho_i, \rho_{i+1}$ commute, then the total order obtained by
transposing $\rho_i$ and $\rho_{i+1}$ is also an admissible order. Two
admissible orders $\rho, \rho' \in \A(n,k)$ are \defn{commutation
equivalent}, denoted $\boldsymbol{\rho \sim \rho'}$, if $\rho'$ can be
obtained from $\rho$ by a sequence of commutations. The
\defn{commutation class} of an admissible order $\rho \in \A(n,k)$,
denoted $\boldsymbol{[\rho]}$, is the set of admissible orders that
are commutation equivalent to $\rho$. Note that reversal sets are
invariant under commutations, so $\Rev([\rho])$ is well-defined for a
commutation class $[\rho]$. 

Let $\rho = (\rho_1, \ldots, \rho_{\binom{n}{k}}) \in \A(n,k)$ and $X
\in \binom{[n]}{k+1}$ be chosen such that $\rho|_{P(X)}$ is a
saturated chain $(\rho_i, \rho_{i+1}, \ldots, \rho_{i+k})$ of
$\rho$. Reversing the order only on the saturated chain yields a new total order 
\begin{equation}
    \sigma = (\rho_1, \ldots, \rho_{i-1}, \rho_{i+k}, \ldots, \rho_i, \rho_{i+k+1}, \ldots, \rho_{\binom{n}{k}}).
\end{equation} If $\rho|_{P(X)}$ is the lex (resp. antilex) order on $P(X)$, then $\sigma$ is said to be obtained from $\rho$ by a \defn{packet flip of $P(X)$ from lex-to-antilex} (resp. antilex-to-lex). One can observe that the new total order $\sigma$ is also an admissible order.
\begin{definition}[\cite{manin_schechtman_1989}]
    \label{definition:higher-bruhat-order-1}
    For integers $1 \le k \le n$, the \defn{higher Bruhat order} $\boldsymbol{\B(n,k)}$ is the poset on commutation classes $\A(n,k)/\hspace{-0.2em}\sim$ obtained by taking the transitive closure of the cover relations where $[\rho] \lessdot [\sigma]$ if and only if there exist $\rho' \in [\rho]$ and $\sigma' \in [\sigma]$ such that $\sigma'$ is obtained from $\rho'$ by a packet flip from lex-to-antilex.
\end{definition}

\begin{example}
\label{example:total-order-reversal-set}
 The total order $\rho = (23, 13, 24, 14, 12, 34)$ is an admissible order in $\A(4,2)$ with reversal set $\Rev(\rho) = \{123, 124\}$. The commutation class $[\rho]$ is
 \[
    \{(23, 24, 13, 14, 12, 34), (23, 13, 24, 14, 34, 12),
    (23, 24, 13, 14, 34, 12), (23, 13, 24, 14, 12, 34)\},
 \]
 obtained by commuting only $(13, 24)$, only $(12, 34)$, both, or
 neither. The admissible order \[\sigma = (23, 24, 34, 14, 13, 12)\]is
 obtained from $(23, 24, 13, 14, 34, 12) \in [\rho]$ by a packet flip
 of $P(134) = \{13, 14, 34\}$ from lex-to-antilex. Thus, $[\rho]
 \lessdot [\sigma]$ in $\B(4,2)$. The Hasse diagram of $\B(4,2)$ is
 depicted on the left in \cref{fig:b-4-2} with the corresponding
 reversal sets on the right, ordered by single step inclusion.
\end{example}

 \begin{theorem}[(Manin-Schechtman)]\cite[Theorem 3]{manin_schechtman_1989}\label{thm:MS}
    Let $1\leq k\leq n$.
    \begin{enumerate}[label=(\alph*)]
        \item $\B(n,k)$ is a ranked poset with rank function
        $\rev([\rho])=|\Rev(\rho)|.$
        \item $\B(n,k)$ has a unique minimal element with reversal set
	$\emptyset$ and a unique maximal element with reversal set
	$\binom{[n]}{k+1}$.
        \item \label{item:unique.rev.set} Every element in $\B(n,k)$ is uniquely determined by its reversal set. 
        \item \label{item:bij} There is a natural bijection between the maximal chains
	of $\B(n,k)$ and the elements of $\A(n,k+1)$ given by mapping
	a maximal chain $([\rho_{0}],[\rho_{1}],\dots, [\rho_{\binom{n}{k+1}}])$ of
	$\B(n,k)$ to the total order on 	$\binom{[n]}{k+1}$
	given by the unique elements in the list of sets 
\[
(\Rev (\rho_{1}) \setminus \Rev (\rho_{0}) , \Rev
(\rho_{2})\setminus \Rev (\rho_{1}), \dots , \Rev (\rho_{\binom{n
}{k+1}})\setminus \Rev (\rho_{\binom{n}{k+1}-1})).
\]
    \end{enumerate}
\end{theorem}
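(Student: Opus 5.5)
We plan to prove the four parts of \cref{thm:MS} through the reversal-set map $[\rho]\mapsto\Rev(\rho)$, and we prove part \ref{item:unique.rev.set} first, since everything else rests on it.

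\textbf{Part \ref{item:unique.rev.set}.} Fix $\rho\in\A(n,k)$ and set $R=\Rev(\rho)$. We build a directed graph $G_R$ on $\binom{[n]}{k}$. For each $X\in\binom{[n]}{k+1}$ we add the edges $X_{k+1}\to X_k\to\cdots\to X_1$ if $X\notin R$, and the reversed edges if $X\in R$.

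Any admissible order with reversal set $R$ must be a linear extension of $G_R$. Conversely, every linear extension of $G_R$ is admissible with reversal set $R$, because each packet is totally ordered along a path in $G_R$.

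Now suppose $\rho$ and $\rho'$ are admissible with $\Rev(\rho)=\Rev(\rho')$. Both are linear extensions of the transitive closure of $G_R$, which is acyclic because $\rho$ exists. By \cref{lem:linear_extensions}, they differ by adjacent transpositions of elements incomparable in that poset. Two elements lying in a common packet are comparable, so each such transposition is a commutation. Hence $\rho\sim\rho'$, and $[\rho]$ is determined by $R$.

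\textbf{Part (a).} A lex-to-antilex packet flip of $P(X)$ changes the reversal set by adding exactly $X$. The restriction to any other packet $P(Y)$ meets $P(X)$ in at most one element, so it is unchanged. Together with the previous paragraph, this shows that $\Rev$ is an injective, strictly rank-increasing map along covers. Therefore $\B(n,k)$ is ranked by $|\Rev|$, provided the minimum is well defined, which part (b) supplies.

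\textbf{Part (b).} The lex order has $\Rev=\emptyset$ and the antilex order has $\Rev=\binom{[n]}{k+1}$. The substantive claim is that every admissible order $\rho$ that is not the lex class admits, within its class, a packet $P(X)$ with $X\in\Rev(\rho)$ that is consecutive. Flipping it antilex-to-lex decreases $|\Rev|$, and by induction we descend to $\emptyset$. The ascent to $\binom{[n]}{k+1}$ is dual.

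To find $X$, we choose $X\in\Rev(\rho)$ appropriately extremal, for example minimal in a suitable order, and show that no element of $\binom{[n]}{k}$ is forced by $G_R$ to lie strictly between two consecutive elements of $P(X)$. Equivalently, the elements of $P(X)$ form an interval of the poset from $G_R$, after which commutations make them adjacent. For this we analyse the single-element extensions: any $Y\in\binom{[n]}{k}$ wedged between $X_i$ and $X_{i-1}$ lies in packets $P(Y\cup\{x\})$ with $X\cup\{\cdot\}$ a $(k+2)$-set. Admissibility of $\rho$ restricted to $\binom{X\cup\{y\}}{k}$, which is an order in the small case $\A(k+2,k)$, forces a reversal in a set contradicting the extremality of $X$. \textbf{This ``consecutive packet exists'' step is the main obstacle.} We would first try Ziegler's approach: characterise the possible $\Rev(\rho)$ as consistent sets, meaning that for every $(k+2)$-set $Z$, the set $\Rev\cap P(Z)$ is a prefix or suffix of $P(Z)$. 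We would then argue directly on consistent sets that a removable element exists.

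\textbf{Part \ref{item:bij}.} A maximal chain adds the $(k+1)$-sets one at a time, giving a total order on $\binom{[n]}{k+1}$. Restricted to $P(Z)$ for $Z\in\binom{[n]}{k+2}$, consistency of every intermediate reversal set forces the restriction to be lex or antilex, so the resulting order lies in $\A(n,k+1)$.

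Conversely, given $\tau\in\A(n,k+1)$, every prefix of $\tau$ is a consistent set. Using the surjectivity from the previous step, that consistent sets are realised as reversal sets with single-step descents, each prefix is $\Rev$ of some class. Consecutive prefixes differ by one set, which is a cover by the flip argument. The two constructions are mutually inverse by part \ref{item:unique.rev.set}.
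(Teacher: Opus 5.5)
The paper does not reprove this cited theorem; it recovers it as the $w=w_0$ case of \cref{thm:nonlongest}. Measured against that route, your part~(c), and your computation in part~(a) that a lex-to-antilex flip of $P(X)$ adds exactly $X$ to the reversal set, are correct. They coincide with \cref{thm:equiv_class.and.linear.exts} and \cref{lem:packet.flips.Sn} for $w=w_0$, where the permanent poset is an antichain and $G_R$ has only packet edges.

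\textbf{The gap is part~(b).} Parts (a) and (d) also rest on it. You never show that a class other than the bottom one contains a reversed packet that can be made consecutive, and the local argument you sketch does not close as stated.
\begin{enumerate}
\item An element forced between two members of $P(X)$ in the heap poset is forced by a directed path of packet edges. Each edge swaps one entry, so the forced element can contain several values outside $X$. For example, in the lex order on $\binom{[5]}{2}$ the packets of $134,124,245,235$ force $13<14<24<25<35$. So $24$ sits between two members of $P(135)$ without lying in $\binom{X\cup\{y\}}{2}$ for any single $y$. Restricting to a copy of $\A(k+2,k)$ cannot detect such obstructions.
\item You never fix the extremality notion, and the natural one fails. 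By \cref{rem:nonnested.does.not.always.imply.flippable}, in the lex order on $\binom{[5]}{3}$ the packet $P(1245)$ is unnested. Yet $P(1234)$ and $P(1345)$ force $124<134<135<145$, so $P(1245)$ is not flippable.
\end{enumerate}

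\textbf{Part~(d) and your fallback via consistent sets assume facts you do not establish.}
\begin{enumerate}
\item That reversal sets are consistent. This is a local check: for $h<i<j$, the packets $P(Z_h),P(Z_i),P(Z_j)$ order $Z_{h,i},Z_{h,j},Z_{i,j}$ pairwise, and the patterns excluded in \cref{lem:prefix_suffix} are exactly the cyclic ones.
\item The converse, which is the real content: every consistent $R\subseteq\binom{[n]}{k+1}$ is a reversal set, i.e.\ $G_R$ is acyclic. The paper obtains this from \cref{lem:GRacyclic}, by induction on $n$ using the partition $V_0,V_1,V_2$, deletion, and truncation.
\item The converse of your flip computation: if $R$ and $R\cup\{X\}$ are both reversal sets, their classes differ by one flip. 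This holds because $G_R$ and $G_{R\cup\{X\}}$ are both acyclic and differ only along $P(X)$, which forces $P(X)$ to be convex in the heap poset (\cref{thm:Bnk_iso_Cnk}).
\end{enumerate}

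Only with these in hand does the right extremality become available. It comes from the suffix set $S(R)=\{Y: Y_1\in R\}$. This set is consistent, so $G_{S(R)}$ is acyclic, and $R$ is a lower ideal of $(\binom{[n]}{k+1},\le_{S(R)})$. Removing a maximal element of $R$ there, or adding a minimal element of its complement, preserves consistency (\cref{lem:suffix_set_consistent}, \cref{lem:suffix_set.build.up}, \cref{thm:Cnk_min_max}). Transported through $\B(n,k)\cong\C(n,k+1)$, this gives (b), and hence (a) and the chain bijection in (d).
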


By \Cref{thm:MS} Part~\ref{item:bij}, observe that $\Rev(\rho) \cap
P(X)$ must be a prefix or suffix of lex order on $P(X)$ for all $\rho
\in \A(n,k)$ and $X\in \binom{[n] }{k+2}$ by the definition of
admissible.  This observation led Ziegler to make the following
definition. Motivated by the Manin-Schechtman \cref{thm:MS} and
Ziegler's definition below, we will refer to this property as the
\defn{MSZ Condition}.

 \begin{figure}[b]
     \centering
\scalebox{1.0}{
            \begin{tikzpicture}[every node/.style={font=\small}]
            % Left hexagonal structure
            \node (a1) at (0, 2.5) {$[(34, 24, 23, 14, 13, 12)]$};
            \node (a2) at (-2,1.25) {$[(23, 24, 34, 14, 13, 12)]$};
            \node (a3) at (2,1.25) {$[(34, 24, 14, 12, 13, 23)]$};
            \node (a4) at (-2,0) {$[(23, 13, 24, 14, 12, 34)]$};
            \node (a5) at (2,0) {$[(12, 34, 14, 13, 24, 23)]$};
            \node (a6) at (-2,-1.25) {$[(23, 13, 12, 14, 24, 34)]$};
            \node (a7) at (2,-1.25) {$[(12, 13, 14, 34, 24, 23)]$};
            \node (a8) at (0,-2.5) {$[(12, 13, 14, 23, 24, 34)]$};
            
            % Draw lines for the left hexagon
            \draw (a1) -- (a2);
            \draw (a1) -- (a3);
            \draw (a2) -- (a4);
            \draw (a3) -- (a5);
            \draw (a4) -- (a6);
            \draw (a5) -- (a7);
            \draw (a6) -- (a8);
            \draw (a7) -- (a8);
            \end{tikzpicture}
        }
        \scalebox{1.0}{
            \begin{tikzpicture}[every node/.style={font=\small}]
            % Left hexagonal structure
            \node (a1) at (0, 2.5) {$\{123, 124, 134, 234\}$};
            \node (a2) at (-2,1.25) {$\{123, 124, 134\}$};
            \node (a3) at (2,1.25) {$\{124, 134, 234\}$};
            \node (a4) at (-2,0) {$\{123, 124\}$};
            \node (a5) at (2,0) {$\{134, 234\}$};
            \node (a6) at (-2,-1.25) {$\{123\}$};
            \node (a7) at (2,-1.25) {$\{234\}$};
            \node (a8) at (0,-2.5) {$\emptyset$};
            
            % Draw lines for the left hexagon
            \draw (a1) -- (a2);
            \draw (a1) -- (a3);
            \draw (a2) -- (a4);
            \draw (a3) -- (a5);
            \draw (a4) -- (a6);
            \draw (a5) -- (a7);
            \draw (a6) -- (a8);
            \draw (a7) -- (a8);
            \end{tikzpicture}
        }

    \caption{The partial orders $\B(4,2)$ on the left and $\C(4,3)$ on the right.}
     \label{fig:b-4-2}
 \end{figure}

\begin{definition}[\cite{ziegler_1993}]\label{def:consistent.Z}
For positive integers $k,n$, a subset $I \subseteq \binom{[n]}{k}$ is
\defn{consistent} if the \defn{MSZ condition} holds:
\begin{quote}
     $P(X) \cap I$ is a prefix or a suffix of $P(X)$ for every $X \in \binom{[n]}{k+1}$.
\end{quote}
The poset of all consistent subsets of $\binom{[n]}{k}$ ordered by single step inclusion is denoted $\boldsymbol{\C(n,k)}$.
\end{definition}

\begin{example}
    One can check that the reversal set $\Rev(\rho)$ in \eqref{equation:example-reversal-set} is a consistent subset in $\C(5,3)$ by considering $\Rev(\rho) \cap P(X)$ for all $X \in \binom{[5]}{4}$. For example, the intersection $\Rev(\rho) \cap P(1234) = \{123, 124 \}$ is a prefix of $P(1234)$, and the intersection $\Rev(\rho) \cap P(1345) = \{145, 345 \}$ is a suffix of $P(1345)$ in lex order. The remaining 3 subsets are left to the reader.
\end{example}

Ziegler also gave an alternative method of checking the MSZ
condition. While not as efficient, this alternative method will be
useful for theoretical purposes.  The following lemma is closely
related to \Cref{lem:inversion.sets}.

\begin{lemma}\cite[Lemma 2.4]{ziegler_1993}\label{lem:prefix_suffix}
For any $R\subseteq \binom{[n]}{k}$ and $X\in \binom{[n]}{k+1}$, the following are equivalent.
\begin{enumerate}[label=(\alph*)]
	\item The intersection $R\cap P(X)$ is a prefix or suffix of $P(X)$. 
	\item For any  $1\leq h<i<j\leq k+1$, the intersection of $R$ with $\{X_h,X_i,X_j\}$ is neither $\{X_h,X_j\}$ nor $\{X_i\}$.
\end{enumerate}
\end{lemma}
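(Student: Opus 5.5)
We prove the equivalence directly by reducing to a statement about subsets of a totally ordered list. Write the lex order on $P(X)$ as $(X_{k+1}, X_k, \dots, X_1)$ and set $S=\{i\in[k+1] : X_i\in R\}$. Then $R\cap P(X)$ is a prefix of $P(X)$ exactly when $S$ is an interval of the form $\{i,\dots,k+1\}$ (possibly empty). It is a suffix exactly when $S=\{1,\dots,i\}$ (possibly empty). So condition (a) says that $S$ is empty, or is all of $[k+1]$, or is an interval containing $1$ or containing $k+1$. Condition (b) says that no triple $h<i<j$ has $h,j\in S$ and $i\notin S$, and no triple has $i\in S$ and $h,j\notin S$.

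For (a)$\Rightarrow$(b), let $S$ be an initial or final interval of $[k+1]$. Because $S$ is an interval, it cannot contain $h$ and $j$ while missing some $i$ between them. Its complement is also an interval, namely a final or initial segment. So the complement cannot contain $h$ and $j$ while missing $i$, which rules out $S\cap\{h,i,j\}=\{i\}$. This is all that is needed.

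For (b)$\Rightarrow$(a), the first forbidden pattern forces $S$ to be convex, i.e.\ an interval $\{a,\dots,b\}$. The second forbidden pattern applies to $[k+1]\setminus S$ and forces it to be convex as well. Now suppose $S$ is nonempty and proper. A nonempty proper interval whose complement is also an interval must contain $1$ or $k+1$. Otherwise the complement would contain both $1$ and $k+1$ but miss $a$, contradicting convexity. Hence $S$ is an initial or final segment, and $R\cap P(X)$ is a suffix or a prefix, which is (a).

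There is no real obstacle here. The only point needing care is the index reversal: in lex order the prefixes of $P(X)$ correspond to final index segments $\{i,\dots,k+1\}$. This reversal does not affect the argument, since both conditions are symmetric under reversing the indices.
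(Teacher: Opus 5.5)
Your proof is correct and complete. Passing to the index set $S=\{i : X_i\in R\}$, condition (b) says exactly that both $S$ and $[k+1]\setminus S$ are intervals. A nonempty proper interval whose complement is also an interval must contain $1$ or $k+1$, which is the prefix/suffix condition. You handle the index reversal correctly, and the degenerate case $k=1$, where (b) is vacuous, is covered.

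The paper gives no proof of this statement; it only cites Ziegler's Lemma 2.4. Your convexity argument therefore supplies a short self-contained proof of a fact the paper takes as given.
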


Total orders on $\binom{[n]}{1}$ are the admissible orders
$\A(n,1)$. Such total orders are easily identified with permutations
in $\S_n$. Under this identification, each permutation is in its own
commutation class, and packet flips correspond to transposition of
adjacent values in the permutation.  Hence, $\B(n,1)$ is isomorphic as
a poset to $\S_n$ under the weak order.  Furthermore, reversal sets of
admissible orders in $\A(n,1)$ correspond to inversion sets of
permutations in $\S_n$, which are the consistent subsets of
$\binom{[n]}{2}$ by \Cref{lem:inversion.sets}. Since the weak order is
given by single step inclusion on inversion sets, $\C(n,2)$ is also
isomorphic to the weak order on $\S_{n}$ and $\B(n,1)$.  The
isomorphism between $\B(n,1)$ and $\C(n,2)$ motivated the following
theorem due to Ziegler.

\begin{theorem}\cite[Theorem~4.1]{ziegler_1993}\label{thm:ziegler}
    For integers  $1\leq k\leq n$, the map $[\rho] \mapsto \Rev(\rho)$ is a poset isomorphism between $\B(n,k)$ and $\C(n,k+1)$.
\end{theorem}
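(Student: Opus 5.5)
Since the map $[\rho]\mapsto\Rev(\rho)$ is already well defined on commutation classes, the proof splits into four checks: it lands in $\C(n,k+1)$, it is injective, it is surjective, and it matches covers with covers. For the first, take $\rho\in\A(n,k)$ and $Y\in\binom{[n]}{k+2}$. We restrict $\rho$ to the $k$-subsets of $Y$, which forms an admissible order on the cube-like family $\binom{Y}{k}$, and apply \Cref{lem:prefix_suffix}. Choose $h<i<j$ and consider $X=Y_h,Y_i,Y_j$. The three packets $P(Y_h),P(Y_i),P(Y_j)$ pairwise share the $k$-sets $Y_{h,i}$, $Y_{h,j}$ and $Y_{i,j}$. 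Lex versus antilex on each packet decides the relative order of those shared sets. A direct check then shows that the patterns $\{Y_h,Y_j\}$ reversed with $Y_i$ not reversed, or $Y_i$ reversed alone, would force a cyclic order on $Y_{h,i}<Y_{h,j}<Y_{i,j}$. Hence the MSZ condition holds and $\Rev(\rho)$ is consistent.

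Next, a packet flip of $P(X)$ from lex to antilex changes the restriction of $\rho$ only on that one packet. So it adds exactly $X$ to the reversal set, and the map sends covers of $\B(n,k)$ to single step inclusions in $\C(n,k+1)$. For the converse and for injectivity, I would attach to each consistent $R\subseteq\binom{[n]}{k+1}$ a directed graph $G_R$ on $\binom{[n]}{k}$. It has an edge $X_{j}\to X_{j-1}$ (antilex direction) whenever $X\in R$, and an edge $X_{j+1}\to X_j$ (lex direction) otherwise, along consecutive elements of each packet. By construction the admissible orders with reversal set $R$ are exactly the linear extensions of $G_R$. Two steps remain here.
\begin{enumerate}
\item Prove $G_R$ is acyclic whenever $R$ is consistent. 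This gives surjectivity, since a linear extension then exists.
\item Invoke \Cref{lem:linear_extensions}. Any two linear extensions differ by adjacent transpositions of incomparable elements. Incomparable adjacent elements never lie in a common packet, because every packet is a chain in $G_R$. So these transpositions are commutations, and all admissible orders with reversal set $R$ form one class, which gives injectivity.
\end{enumerate}

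Finally, suppose $R\subset R\cup\{X\}$ are both consistent. I would show that some linear extension of $G_R$ has $P(X)$ as a consecutive saturated block in lex order; flipping that block realizes the cover. The natural route is to show $P(X)$ is convex in the transitive closure of $G_R$. If some $Z$ had $X_i<Z<X_j$, then the path through $Z$ would also be a path in $G_{R\cup\{X\}}$, where the packet order is reversed. That would give a cycle in $G_{R\cup\{X\}}$, contradicting step 1 applied to $R\cup\{X\}$. Once $P(X)$ is convex, it can be made contiguous, and the cover is realized.

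The main obstacle is step 1, the acyclicity of $G_R$ from the purely local MSZ condition. I would attempt induction on $n$, deleting the element $n$. The $k$-subsets not containing $n$ carry the graph $G_{R'}$ for $\binom{[n-1]}{k}$. The $k$-subsets containing $n$ correspond to $\binom{[n-1]}{k-1}$ and carry a graph of the same type for a consistent set one level down. The edges between the two parts come from packets $P(X)$ with $n\in X$. Consistency on those packets should make every cross edge consistent with a single "single-element extension" ordering, as in Ziegler's oriented matroid argument. A cycle would then have to lie entirely in one part, which the inductive hypothesis rules out. If this bookkeeping becomes unwieldy, my fallback is Ziegler's approach: realize $R$ as the sign data of a single-element lifting of the cyclic arrangement, and read an acyclic sweep order from it.
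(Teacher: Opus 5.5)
You have the right reduction, but the step everything depends on, acyclicity of $G_R$ for consistent $R$, is left open. The induction you sketch for it fails as stated.

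\textbf{What is sound.} The paper cites this theorem rather than proving it. Its \Cref{sec:non-longest}, however, proves the generalization \Cref{thm:Bnk_iso_Cnk}. Everything in your proposal outside step 1 follows that route with $w=w_0$, where there are no quasi-inversion edges: \Cref{lem:GR_to_admissible}, \Cref{thm:equiv_class.and.linear.exts}, and the convexity argument in \Cref{thm:Bnk_iso_Cnk}. Your three-packet cycle check for the MSZ condition is correct. It is also what is needed here, because the paper's \Cref{lem:reversal_is_consistent} gets MSZ by citing this very theorem. One slip: for $X\in R$ the antilex edges are $X_{j-1}\to X_j$; as written, both of your edge types point in the lex direction.

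\textbf{The gap.} Surjectivity and the cover argument both rest on step 1. With the two blocks ``contains $n$'' and ``avoids $n$'', cross edges run in both directions. A packet $P(Z)$ with $n\in Z$ contributes $Z_{k+1}\to Z_k$ if $Z\notin R$ and $Z_k\to Z_{k+1}$ if $Z\in R$. Already for $n=3$, $k=1$, $R=\{[2,3]\}$ the graph contains $\{1\}\to\{3\}\to\{2\}$. So no ordering of the two blocks orients all cross edges forward. A cycle could enter the $n$-block from one $n$-free vertex, leave it to another, and close up through $n$-free vertices, and nothing in your sketch excludes this. Falling back on Ziegler's single-element lifting just cites the original proof.

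\textbf{The missing idea.} Split the $n$-free vertices according to $R$, as in \eqref{eq:partition}:
$V_0=\{X: n\notin X,\ X\cup\{n\}\notin R\}$, $V_1=\{X: n\in X\}$, and $V_2=\{X: n\notin X,\ X\cup\{n\}\in R\}$.

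First, the cross edges at $V_1$. Since $Z_{k+1}\cup\{n\}=Z$, the computation above shows that edges between $V_1$ and its complement only go $V_0\to V_1$ or $V_1\to V_2$.

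Second, the edges among $n$-free vertices. Take a packet edge between $Z_i$ and $Z_{i+1}$ with $Z\in\binom{[n-1]}{k+1}$, and apply MSZ to $W=Z\cup\{n\}$. The packet $P(W)$ in lex order is $(Z,\,Z_{k+1}\cup\{n\},\dots,Z_1\cup\{n\})$.
If $Z\notin R$, the edge is $Z_{i+1}\to Z_i$. Here $R\cap P(W)$ is a suffix, so $Z_{i+1}\cup\{n\}\in R$ forces $Z_i\cup\{n\}\in R$.
If $Z\in R$, the edge is $Z_i\to Z_{i+1}$. Here $R\cap P(W)$ is a prefix, so $Z_i\cup\{n\}\in R$ forces $Z_{i+1}\cup\{n\}\in R$.
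Either way no edge runs from $V_2$ to $V_0$, so every cycle lies inside a single $V_a$.

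Third, the induction. The subgraph induced on $V_0\cup V_2$ is $G_{R\setminus n}$. The subgraph induced on $V_1$ is $G_{R/n}$ on $\binom{[n-1]}{k-1}$. Both $R\setminus n$ and $R/n$ are consistent; for $R/n$, deleting the first lex element $Z$ of $P(W)$ preserves prefixes and suffixes, as in \Cref{lem:contract_n}. Induction on $n$ then closes step 1, with edgeless graphs when $n\le k$ or $k=0$. For $w_0$ this is a short direct MSZ check; the computer verification in \Cref{lem:partition} is needed only for general $w\in\S_n$.
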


\begin{example}
    The Hasse diagram of $\C(4,3)$ is drawn on the right in \cref{fig:b-4-2}. Per \cref{thm:ziegler}, $\B(4,2)$ and $\C(4,3)$ are isomorphic as posets, with the isomorphism sending $[\rho] \mapsto \Rev([\rho])$, which can be seen from the figure.
\end{example}

\begin{remark}
Ziegler \cite[Theorem 4.5]{ziegler_1993} showed that the partial order
on consistent sets in $\C(n,k)$ is equivalent to inclusion order when
$k\leq 2$ or $n-k\leq 3$. Subsequent work by Felsner and Weil
\cite[Theorem 1]{felsnerweil} showed that the partial order on
$\C(n,3)$ is the same as the poset ordered by inclusion. However,
Ziegler also showed the partial order on $\C(8,4)$ is not the same as
inclusion order.  Thus, inclusion and single step inclusion
do not generate the same partial order on consistent sets in all
cases.  
\end{remark}

\section{Higher Bruhat Orders for Permutations in Symmetric Groups}\label{sec:non-longest}

In this section, we generalize the admissible orders and higher
Bruhat orders given in \cref{subsection:higher-bruhat-order-intro} by defining
$\A_w(n,k)$, $\B_w(n,k)$, and $\C_w(n,k)$ for arbitrary $w\in \S_n$.
The classical $\A(n,k)$ and $\B(n,k)$ are constructed using total
orders on ${[n]\choose k}$ in
\Cref{subsection:higher-bruhat-order-intro}.  The set ${[n]\choose k}$
can also be described as the set of ordered $k$-tuples $[x_1,\cdots
,x_k]$ satisfying $w_0^{-1}(x_1)>\ldots>w_0^{-1}(x_k)$, where $w_0\in
\S_n$ is the longest permutation. This second description motivates us
to generalize the notion of an inversion for $w \in \S_{n}$ to larger
subsets of $[n]$ which appear in reverse order in the window 
notation for $w$, which we call $k$-inversions for $w$.  We will
define a poset on the $k$-inversions for $w$ called the permanent poset $\P_{w}(n,k)$. We then define the admissible orders for $w$ to be linear extensions of $\P_{w}(n,k)$ such that
every full-packet must again be in lex or antilex order.  The notions
of higher Bruhat orders and consistent sets also depend on $\P_{w}(n,k)$.  Once the definitions
and basic properties are presented, we prove \cref{thm:nonlongest}.
Throughout this section assume $k,n\in \mathbb{Z}$ are positive integers and $w \in \S_{n}$,
unless otherwise specified.  

\subsection{Higher Order Inversions and Quasi-inversions}

Recall from \cref{subsection:higher-bruhat-order-intro} that $\binom{[n]}{k}$
denotes the collection of ordered subsets $[x_1,\dots , x_k]$ of $k$
distinct elements in $[n]$ listed in increasing order.  In
this notation, $[i,j] \in \binom{[n]}{2}$ is an inversion for $w$
whenever $w^{-1}(i)>w^{-1}(j)$.  More generally, we consider the size
$k$-subsets appearing in reverse order for $w$.  

\begin{definition}\label{def:k.inversion.Sn}
We say $X=[x_1,\ldots,x_k] \in
\binom{[n]}{k}$ is a $\boldsymbol{k}$-\defn{inversion} for
$w\in \S_n$ provided $w^{-1}(x_1)>\cdots >w^{-1}(x_k)$.  The
\defn{$k$-inversion set} of $w$ is defined to 
be
\[\boldsymbol{\Inv_k(w)}=\left\{[x_1,\ldots,x_k] \in \binom{[n]}{k}:
w^{-1}(x_1) > \cdots > w^{-1}(x_k) \right\}.
\]
\end{definition}

Note that for $k>n$, we have that $\Inv_{k}(w)=\emptyset$.  For $k=0$,
$\Inv_{0}(w)=\{\emptyset \}$.  For $k=1$, every $\{i\} \in \binom{[n]
}{1}$ is a 1-inversion of $w \in \S_{n}$.  For $k>1$, the next lemma
follows directly from the definition.  We will use it later on with no
further comment.   

\begin{lemma}\label{lem:adjacent.pairs}
If $X=[x_{1},\dotsc, x_{k}] \in \binom{[n]}{k}$ and $k>1$, then the following are equivalent.
\begin{enumerate}[label=(\alph*)]
\item $X \in \Inv_k(w)$.
\item For every subsequence $1\leq i_{1}<i_{2}<\cdots <i_{j}\leq k$,
the subset $[x_{i_{1}},x_{i_{2}},\dotsc , x_{i_{j}}] \in
\Inv_{j}(w)$.  
\item  For all $1\leq i<k$, the adjacent pair $[x_{i},x_{i+1}] \in \Inv_{2}(w)$. 
\end{enumerate}
\end{lemma}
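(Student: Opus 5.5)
The plan is to prove the cycle of implications (a)$\Rightarrow$(b)$\Rightarrow$(c)$\Rightarrow$(a), working directly from \Cref{def:k.inversion.Sn}. Throughout, the set $X=[x_1,\dots,x_k]$ is written in increasing order, so any subsequence $[x_{i_1},\dots,x_{i_j}]$ with $i_1<\cdots<i_j$ is again an increasingly ordered element of $\binom{[n]}{j}$. This means membership in $\Inv_j(w)$ depends only on the relative order of the positions $w^{-1}(x_{i_1}),\dots,w^{-1}(x_{i_j})$.

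For (a)$\Rightarrow$(b), assume $w^{-1}(x_1)>\cdots>w^{-1}(x_k)$. Any subsequence of a strictly decreasing sequence is strictly decreasing, so $w^{-1}(x_{i_1})>\cdots>w^{-1}(x_{i_j})$, which says exactly that $[x_{i_1},\dots,x_{i_j}]\in\Inv_j(w)$. When $j=1$ the condition is vacuous, since every singleton is a $1$-inversion. When $j=0$ we get $\emptyset\in\Inv_0(w)=\{\emptyset\}$, so there is nothing to check.

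For (b)$\Rightarrow$(c), specialize (b) to the subsequences $(i,i+1)$ with $1\leq i<k$; these exist because $k>1$.

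For (c)$\Rightarrow$(a), each adjacent pair satisfies $[x_i,x_{i+1}]\in\Inv_2(w)$, which means $w^{-1}(x_i)>w^{-1}(x_{i+1})$. Chaining these $k-1$ strict inequalities by transitivity gives $w^{-1}(x_1)>\cdots>w^{-1}(x_k)$, which is (a).

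There is no real obstacle here. The only point needing care is bookkeeping: the elements of $\binom{[n]}{j}$ are taken in increasing order, and the ordering condition in the definition is imposed on $w^{-1}$ of the entries rather than on $w$.
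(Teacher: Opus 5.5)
Your proof is correct and matches the paper's approach. The paper omits a proof, noting only that the lemma follows directly from the definition of $\Inv_k(w)$, and your cycle (a)$\Rightarrow$(b)$\Rightarrow$(c)$\Rightarrow$(a) is exactly that direct verification.
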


If $X \in \Inv_{k+1}(w)$, then $P(X) \subseteq \Inv_{k}(w)$, so we
call $P(X)$ a \defn{full-packet} for $w$.  For $k>1$ and $X\in
{[n]\choose k+1}$ the intersection $P(X)\cap \Inv_k(w)$ does not need
to be the entire packet. Instead, $P(X)\cap \Inv_k(w)$ can be
characterized using one of the following four cases.

\begin{lemma}\label{lem:intersection_size}
     For any $X \in \binom{[n]}{k+1}$, the intersection $P(X)\cap
\Inv_k(w)$ is one of the
following: 
\begin{enumerate}[label=(\alph*)] 
    \item $\emptyset$, 
    \item $\{X_i\}$ for some $i\in [k+1]$, 
    \item $\{X_i,X_{i+1}\}$ for some
    $1\leq i\leq k$, or 
    \item $P(X)$. 
\end{enumerate}
Thus, the possible intersection sizes are $\{0,1,2,k+1 \}$. 
\end{lemma}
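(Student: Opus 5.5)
The plan is to reduce membership of each $X_i$ in $\Inv_k(w)$ to conditions on adjacent pairs, using \Cref{lem:adjacent.pairs}(c). Write $X=[x_1,\dots,x_{k+1}]$ and define the set of ``bad'' adjacent positions
\[
B=\{\,j\in[k] : [x_j,x_{j+1}]\notin \Inv_2(w)\,\}.
\]
The case $k=1$ is trivial: $P(X)$ consists of two singletons, both always in $\Inv_1(w)$, so the intersection is all of $P(X)$. So assume $k\geq 2$.

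\emph{Step 1: membership criterion.} $X_i$ is obtained by deleting $x_i$. Its adjacent pairs are the pairs $[x_j,x_{j+1}]$ with $j\neq i-1,i$, together with the new pair $[x_{i-1},x_{i+1}]$ when $1<i<k+1$. By \Cref{lem:adjacent.pairs}, $X_i\in\Inv_k(w)$ if and only if two things hold. First, $B\subseteq\{i-1,i\}$. Second, if $1<i<k+1$, then $w^{-1}(x_{i-1})>w^{-1}(x_{i+1})$.

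\emph{Step 2: case split on $|B|$.}
\begin{itemize}
\item If $B=\emptyset$, then $X\in\Inv_{k+1}(w)$, so every $X_i$ lies in $\Inv_k(w)$ by \Cref{lem:adjacent.pairs}(b). This is case (d).
\item If $|B|\geq 3$, no pair $\{i-1,i\}$ contains $B$, so the intersection is empty.
\item If $|B|=2$, then $B\subseteq\{i-1,i\}$ forces $B=\{i-1,i\}$ for the single index $i$ (if any) with this property. So at most one $X_i$ survives, giving case (a) or (b).
\item If $B=\{j\}$, the only candidates are $X_j$ and $X_{j+1}$. This gives case (a), (b), or (c) with the two candidates consecutive.
\end{itemize}

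\emph{Step 3: the size count.} When $k\geq 2$ we have $k+1\geq 3$, and the lemma's case list (a)--(d) gives sizes $0,1,2,k+1$. When $k=1$, case (d) has size $2=k+1$, so the stated set of sizes still holds.

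The main obstacle is Step 1. One must handle the endpoints $i=1$ and $i=k+1$, where no new adjacent pair is created, and must not require the new pair $[x_{i-1},x_{i+1}]$ to be an inversion when it does not exist. Once this bookkeeping is done, the case analysis in Step 2 is immediate. No transitivity argument on the new pair is needed, because only an upper bound on the intersection is claimed.
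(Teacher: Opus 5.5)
Your proof is correct, but it is organized differently from the paper's. The paper argues by contrapositive in one step. If the intersection is not of type (a)--(c), then it contains some $X_i,X_j$ with $j>i+1$. No adjacent pair $[x_l,x_{l+1}]$ of $X$ contains both $x_i$ and $x_j$, so each such pair lies inside $X_i$ or $X_j$ and is therefore a $2$-inversion. Hence $X\in\Inv_{k+1}(w)$ by \Cref{lem:adjacent.pairs}.

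You instead introduce the set $B$ of non-inverted adjacent pairs, extract the necessary condition $B\subseteq\{i-1,i\}$ for $X_i$ to survive, and split on $|B|$. Both arguments rest on the same adjacent-pair criterion. The paper's route is shorter: it needs no bookkeeping about the new pair $[x_{i-1},x_{i+1}]$ or the endpoints $i=1,k+1$, and no separate $k=1$ case, since for $k=1$ no indices with $j>i+1$ exist. Yours gives more precise information, because it locates the possible survivors. When $B=\{j\}$ only $X_j,X_{j+1}$ can survive, and when $B=\{j,j+1\}$ only $X_{j+1}$ can. This anticipates the description of quasi-inversions in \Cref{lem:types.of.packets}(c), where a two-element intersection $\{X_j,X_{j+1}\}$ forces $[x_j,x_{j+1}]$ to be the unique non-inverted adjacent pair.
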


\begin{proof}
If $P(X)\cap \Inv_k(w)$ is among the cases in (a),(b), or (c), then
the result holds.  So, assume there exist two $X_i, X_j \in \Inv_k(w)$
with $j>i+1$.  Then, every adjacent pair in $X$ forms an inversion for
$w$, so $X \in \Inv_{k+1}(w)$ by \Cref{lem:adjacent.pairs}.
\end{proof}

We need to consider carefully which $(k+1)$-subsets are of type (c) in
\Cref{lem:intersection_size}.  This leads us to consider the following
definition and some straightforward consequences of
\Cref{lem:adjacent.pairs} and \Cref{lem:intersection_size}.

\begin{definition}\label{def:quasi.inversion}
  We say $X=[x_1,\ldots, x_k] \in \binom{[n]}{k}$ is a
\defn{$k$-quasi-inversion} for $w \in \S_n$ if all but exactly one of
the pairs $\{[x_i, x_j] : 1 \le i < j \le k\}$ is a 2-inversion of
$w$.  We use $\boldsymbol{\Qinv_{k}(w)}$ to denote the set of $k$-quasi-inversions for $w$.
\end{definition}

We note that sometimes the $2$-quasi-inversions are called
co-inversions or non-inversions of $w$.  The following corollary
describes $k$-quasi-inversions for all positive integers $k$.  

\begin{corollary}\label{lem:types.of.packets}
The following hold.
\begin{enumerate}[label=(\alph*)]
    \item There are no 1-quasi-inversions for
$w$. 
    \item The set of 2-quasi-inversions for $w$ is the
complement of $\Inv_{2}(w)$ in $\binom{[n]}{2}$.  

\item For $k>2$, the $k$-quasi-inversions are exactly the $k$-subsets
$X=[x_{1},\dots ,x_{k}] \in \binom{[n]}{k}$ such that $P(X)\cap
\Inv_{k-1}(w) = \{X_i,X_{i+1}\}$ for some $i \in [k-1]$.  In this 
case, $[x_{i},x_{i+1}] \not \in \Inv_{2}(w)$ and 
\[
w^{-1}(x_1)>\cdots > w^{-1}(x_{i-1})> w^{-1}(x_{i+1})>w^{-1}(x_i) >
w^{-1}(x_{i+2})> \cdots  > w^{-1}(x_{k}).
\]
\end{enumerate}

\end{corollary}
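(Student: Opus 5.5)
Parts (a) and (b) come straight from the definitions. For (a), a $1$-quasi-inversion would be a singleton $[x_1]$ for which all but exactly one of the pairs $[x_i,x_j]$ with $1\le i<j\le 1$ is a $2$-inversion. There are no such pairs, so it is impossible for exactly one of them to fail. For (b), take $k=2$ and $X=[x_1,x_2]$. There is only one pair, namely $X$ itself, so "all but exactly one pair is a $2$-inversion" says exactly that $X\notin\Inv_2(w)$. The $2$-quasi-inversions are therefore $\binom{[n]}{2}\setminus\Inv_2(w)$.

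For (c), fix $k>2$ and $X=[x_1,\dots,x_k]$. In what follows, "pair" means a $2$-subset $[x_a,x_b]$ with $a<b$, and $X_a$ denotes $X$ with $x_a$ removed.

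\emph{Forward direction.} Suppose $X$ is a $k$-quasi-inversion, and let $[x_a,x_b]$ with $a<b$ be the unique pair that is not a $2$-inversion.
\begin{itemize}
\item The subset $X_c$ contains the bad pair unless $c\in\{a,b\}$. If it contains the bad pair, \Cref{lem:adjacent.pairs}(b) gives $X_c\notin\Inv_{k-1}(w)$.
\item $X_a$ and $X_b$ each omit the bad pair, so all their pairs are $2$-inversions. By \Cref{lem:adjacent.pairs}(c), both lie in $\Inv_{k-1}(w)$.
\item Hence $P(X)\cap\Inv_{k-1}(w)=\{X_a,X_b\}$. This is not all of $P(X)$, because $P(X)$ has $k\ge 3$ elements.
\item By \Cref{lem:intersection_size}, a two-element intersection must have the form $\{X_i,X_{i+1}\}$. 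So $b=a+1$, and we set $i=a$.
\end{itemize}

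\emph{Reverse direction.} Suppose $P(X)\cap\Inv_{k-1}(w)=\{X_i,X_{i+1}\}$.
\begin{itemize}
\item Every pair $[x_a,x_b]$ other than $[x_i,x_{i+1}]$ avoids at least one of $x_i,x_{i+1}$. It therefore lies inside $X_i$ or $X_{i+1}$, and so it is a $2$-inversion by \Cref{lem:adjacent.pairs}(b).
\item If $[x_i,x_{i+1}]$ were also a $2$-inversion, every adjacent pair of $X$ would be an inversion. Then $X\in\Inv_k(w)$ by \Cref{lem:adjacent.pairs}(c), and every element of $P(X)$ would lie in $\Inv_{k-1}(w)$. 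This contradicts $|P(X)\cap\Inv_{k-1}(w)|=2<k$.
\item So exactly one pair fails, and $X$ is a $k$-quasi-inversion. The same argument shows $[x_i,x_{i+1}]\notin\Inv_2(w)$ in this case.
\end{itemize}

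\emph{The displayed chain of inequalities.} All pairs except $[x_i,x_{i+1}]$ are inversions, so $w^{-1}(x_a)>w^{-1}(x_b)$ whenever $a<b$ and $\{a,b\}\ne\{i,i+1\}$. The single failing pair gives $w^{-1}(x_i)<w^{-1}(x_{i+1})$. Chaining these comparisons produces exactly the displayed order with $x_i$ and $x_{i+1}$ swapped.

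The only step that needs care is the adjacency $b=a+1$. It relies on \Cref{lem:intersection_size}, and that lemma applies only because $k\ge 3$ makes a two-element intersection differ from the full packet.
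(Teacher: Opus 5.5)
Your proof is correct and takes the route the paper intends. The paper gives no separate argument and presents the corollary as a straightforward consequence of \Cref{lem:adjacent.pairs} and \Cref{lem:intersection_size}, which are exactly your two ingredients (the second forcing the unique non-inverted pair to be adjacent); you additionally spell out both directions of part (c) and the displayed chain of inequalities.
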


\begin{example}\label{ex:k.inv}
 Consider $w=(6,4,5,2,3,1)\in \S_6$. The $4$-inversions of $w$ are
given by \[\Inv_4(w)=\{[1,2,4,6], [1,2,5,6],[1,3,4,6],[1,3,5,6]\}.\]
Observe that
$P([1,2,3,4]) \cap \Inv_{3}(w)=\{[1,2,4],[1,3,4]\}$, so $[1,2,3,4]$ is
a $4$-quasi-inversion of $w$. The entire set of $4$-quasi-inversions is \[\{[1,2,3,4],[1,2,3,5],[1,2,3,6],[1,2,4,5],[1,3,4,5],[1,4,5,6],[2,3,4,6],[2,3,5,6],
[2,4,5,6], [3,4,5,6]\}.\] 
The subset $[2,3,4,5]$ is neither an
inversion nor a quasi-inversion for $w$. 
\end{example}

\begin{remark}\label{ex:nonprefixsuffixintersection}
While the size 1 or 2 intersections $\Inv_{2}(w)\cap P(X)$ must be a
prefix or suffix for all $X \in \binom{[n] }{3}$ by
\Cref{lem:inversion.sets}, the same requirement does not need to hold
for the size 1 or 2 intersections $\Inv_{k}(w)\cap P(X)$ for $k>2$.
For example, if $w=(5,2,3,1,4)$, then
$\Inv_{3}(w)=\{[1,2,5],[1,3,5]\}$.  For $X=[1,2,4,5]$,\
$P(X)\cap \Inv_{3}(w)=\{X_{3} \}$.  For $Y=[1,2,3,5]$,\ $P(Y)\cap
\Inv_{3}(w)=\{Y_{2},Y_{3} \}$.   
\end{remark}

\subsection{Permanent Posets for Permutations}\label{sub:permanent.Sn}

Recall from \Cref{sub:sn} that the weak order on $\S_{n}$ is
determined by (single step) inclusion of inversion sets. Furthermore,
the 2-quasi-inversions of $w$ are exactly the pairs in $\binom{[n]
}{2} \setminus \Inv_{2}(w)$. Therefore, if $[i,j]$ is a
2-quasi-inversion for $w$, every permutation $v$ in the interval
$[\mathrm{id},w]$ in weak order has the value $i$ appearing before $j$
in its window notation. Furthermore, every permutation satisfying
these conditions is in $[\mathrm{id},w]$, since their inversion sets
must be contained in $\Inv_{2}(w)$.  Thus, we can identify the
permutations in $[\mathrm{id},w]$ with the total orders of $[n]\cong
\binom{[n] }{1}$ satisfying $i<j$ whenever $[i,j]$ is a
2-quasi-inversion for $w$.  Similarly, reflection orders of $w \in
\S_{n}$ are certain total orders on $\Inv_{2}(w)$ as defined in
\Cref{def:reflection.order}.  By the prefix/suffix condition in
\Cref{cor:characterize.ref.order} Part (a), we know that for each
3-quasi-inversion $X$ for $w$, $P(X)\cap\Inv_2(w)=\{X_{2},X_{3}\}$
implies $X_{3}<X_{2}$ in every reflection order for $w$, and
$P(X)\cap\Inv_2(w)=\{X_{1},X_{2} \}$ implies $X_{1}<X_{2}$ in every
reflection order for $w$.  Note, $X_{2}$ is greater in either case.
The relations imposed by the 2-quasi-inversions and the
3-quasi-inversions are generalized by the following family of posets
and the corresponding parity condition.

\begin{definition}\label{def:permanent.poset.}
Let $\boldsymbol{\P_w(n,k)}$ denote the set $\Inv_{k}(w)$ with order
relations given by the transitive closure of the \defn{quasi-inversion
relations}: if $X \in \binom{[n]}{k+1}$ is a quasi-inversion for $w$
such that $P(X)\cap \Inv_{k}(w) = \{X_i,X_{i+1}\}$, then $X_{i}$ and
$X_{i+1}$ are ordered in $\P_w(n,k)$ depending on the following
\defn{parity condition}:
\begin{itemize}
\item antilex order $X_{i}<_{\P_w(n,k)} X_{i+1}$ if $(k-i)$ is odd, and
\item lex order $X_{i+1}<_{\P_w(n,k)} X_{i}$ if $(k-i)$ is even.
\end{itemize}
Let $\P_{w}(n,0)$ be $\Inv_{0}(w)=\{\emptyset\}$ with no relations.
We call $\boldsymbol{\P_w(n,k)}$ the \defn{permanent poset} for $n$,
$k$, and $w$.
\end{definition}

\begin{remark}\label{rem:easy.pnks}
The poset $\P_w(n,1)$ is generated by the relations $\{i \}<\{j \} $
for all 2-quasi-inversions $[i,j] \in \binom{[n] }{2}\setminus
\Inv_{2}(w)$.  Hence, $\P_w(n,1)$ is isomorphic to the poset on $[n]$
generated by the natural order relations $i<j$ whenever $[i,j]$ is not
an inversion of $w$.  Furthermore, when $k>n$, we know that
$\Inv_k(w)=\emptyset$, so $\P_w(n,k)$ is the empty poset.
\end{remark}

\begin{example}
For $w=(3,4,2,1) \in \S_{4}$ and $k=1$, $\P_{w}(4,1)$ is the poset on
vertices $\{\{1 \},\{2 \},\{3 \},\{4 \} \}$ with the single
quasi-inversion relation $\{3 \}<_{\P_{w}(4,1)}\{4 \}$.  The linear
extensions of $\P_{w}(4,1)$, denoted $\mathscr{L}(\P_{w}(4,1))$,
include $(\{3 \},\{1 \},\{2 \},\{4 \})$, corresponding with the
permutation $(3,1,2,4)$.  In fact, $\mathscr{L}(\P_{w}(4,1))$ is in
natural bijection with the permutations in $\S_{4}$ such that $3$
appears before $4$ in window notation.  Such permutations are exactly
the ones in the weak order interval $[\mathrm{id},w]$ defined by
containment of inversion sets.
\end{example}

We will give some nontrivial examples next.  We will then establish
properties in the case where $1< k\leq n$.

\begin{example}\label{ex:permanent poset}
    Consider $w=(6,4,5,2,3,1)\in \S_6$ from \cref{ex:k.inv}.  The permanent poset on
$\Inv_3(w)$ is determined by the quasi-inversion relations by
\cref{def:permanent.poset.}. The Hasse diagram of $\P_w(6,3)$ is shown
in \cref{fig:permanent.example}. Observe $|\Inv_3(w)|=12$ and
$\P_w(6,3)$ has 4 connected components.  Each of the 10
quasi-inversions of size 4 given in \Cref{ex:k.inv} determines a
covering relation in $\P_w(6,3)$. However, the quasi-inversion
relations are not always covering relations.
    
    \begin{figure}[h!]
\centering 
    \begin{tikzpicture}
        \node (134) at (0,0) {$[1,3,4]$};
        \node (135) at (-1,1) {$[1,3,5]$};
        \node (124) at (1,1) {$[1,2,4]$};
        \node (125) at (0,2) {$[1,2,5]$};
        \node (136) at (3,0) {$[1,3,6]$};
        \node (126) at (3,1) {$[1,2,6]$};
        \node (156) at (5,0) {$[1,5,6]$};
        \node (146) at (5,1) {$[1,4,6]$};
        \node (256) at (8,0) {$[2,5,6]$};
        \node (246) at (7,1) {$[2,4,6]$};
        \node (356) at (9,1) {$[3,5,6]$};
        \node (346) at (8,2) {$[3,4,6]$};
        \path (134) edge node {} (135);
        \path (134) edge node {} (124);
        \path (135) edge node {} (125);
        \path (124) edge node {} (125);
        \path (136) edge node {} (126);
        \path (156) edge node {} (146);
        \path (256) edge node {} (246);
        \path (246) edge node {} (346);
        \path (256) edge node {} (356);
        \path (356) edge node {} (346);
    \end{tikzpicture}
    \caption{The Hasse diagram of $\P_w(6,3)$ for $w=(6,4,5,2,3,1)\in \S_6$.}
    \label{fig:permanent.example}
\end{figure}
\end{example}

While reflexivity and transitivity of $\leq_{\P_w(n,k)}$ follow by
construction, antisymmetry does not. We will show that antisymmetry
does hold, so that $\P_w(n,k)$ is a poset. Additionally, we will
establish that any full-packet $P(X)\subseteq \Inv_k(w)$ is an
antichain in $\P_w(n,k)$. Our general approach will be to associate a
matrix to each $Y\in \Inv_k(w)$ and show that quasi-inversion
relations imply certain monotonicity properties.

Let $\mathbb{Z}^{2\times k}$ be the $2\times k$ matrices with entries in $\mathbb{Z}$, which we consider as a group under addition. Define $M:\Inv_k(w) \to \mathbb{Z}^{2\times k}$ on each element $Y=[y_1,\ldots,y_k]\in \Inv_k(w)$ by
\begin{equation}\label{eq:inversion_matrix}
    M(Y) = \begin{bmatrix}
    w^{-1}(y_1) & w^{-1}(y_2) & \ldots & w^{-1}(y_{k-1}) & w^{-1}(y_k) \\
    y_1 & y_2 & \ldots &  y_{k-1} &
y_k \end{bmatrix}.
\end{equation}
Note, the top row of $M(Y)$ denotes the positions where the values
$y_{1}<\dots <y_{k}$ occur in the window notation for $w$.  Since
$Y \in \Inv_k(w)$, the top row must be decreasing from left to right.
We will use $A_{i,j}$ to denote the entry in row $i\in [2]$ and column
$j\in [k]$ of $A\in \mathbb{Z}^{2\times k}$. For each $2\leq j\leq k$,
define
$F_j:\mathbb{Z}^{2\times k} \to \mathbb{Z}$ and
$G_j:\mathbb{Z}^{2\times k} \to \mathbb{Z}$ by
\begin{equation}\label{eq:Fi_Gi}
    F_j(A)=(-1)^{k-j}(A_{1,j}-A_{2,j-1})\quad \text{ and }\quad G_j(A)=(-1)^{k-j}(A_{2,j}-A_{1,j-1}).
\end{equation}
These functions calculate the difference between diagonal entries
within two consecutive columns. It is straightforward to show that
these functions are additive group homomorphisms.  We emphasize that $k$ is
used in the definition of $F_{j}$ and $G_{j}$ so it must be understood
from context or determined by the input.  Furthermore, the definition
of $M$ depends on the fixed permutation $w$.   

\begin{example}\label{example:FG_functions}
We revisit $w=(6,4,5,2,3,1)\in \S_6$ from \cref{ex:permanent poset} and $[2,5,6]\in \Inv_3(w)$. The corresponding matrix is
\[M([2,5,6])=\begin{bmatrix}
4 & 3 & 1 \\
2 & 5 & 6
\end{bmatrix}.\] For the functions in \eqref{eq:Fi_Gi} with $k=3$, we
find
\begin{equation*}
    \begin{split}
        F_2\circ M([2,5,6]) & = (-1)^{3-2}(3-2) = -1,\\
        F_3\circ M([2,5,6]) & = (-1)^{3-3}(1-5) = -4,\\
        G_2\circ M([2,5,6]) & = (-1)^{3-2}(5-4) = -1,\\
        G_3\circ M([2,5,6]) & = (-1)^{3-3}(6-3) = 3.
    \end{split}
\end{equation*}
For $k=2$, we observe that $F_{2}$ is decreasing and $G_{2}$ is
increasing on the packet of $[2,5,6]$ in lex order:
\[
F_{2}\circ M([2,5])=1>F_{2}\circ M([2,6])=-1>F_{2}\circ M([5,6])=-4,
\]
\[
G_{2} \circ M ([2,5])=1<G_{2} \circ M([2,6])=2<G_{2}\circ M([5,6])=3.
\]
\end{example}

\begin{lemma}\label{lem:diagonal_differences}
Suppose $2 \le k \le n$. If $X <_{\P_w(n,k)} Y$, then for any $j\in
[k]\setminus \{1\}$, we have that $F_j\circ M(X)\leq F_j\circ M(Y)$
and $G_j\circ M(X)\leq G_j\circ M(Y)$. Furthermore, there exists $j\in
[k]\setminus \{1\}$ such that the inequality is strict for both $F_j$
and $G_j$.
\end{lemma}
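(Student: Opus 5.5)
The plan is to reduce to a single generating relation and then compute directly. First, since $F_j$ and $G_j$ are additive group homomorphisms and the relation $<_{\P_w(n,k)}$ is by definition the transitive closure of the quasi-inversion relations, it suffices to prove the statement when $X<Y$ is a single quasi-inversion relation. Indeed, suppose $X=Z_0<Z_1<\cdots<Z_m=Y$ is a chain of generating relations, and each step satisfies the weak inequalities for all $j$. Then the weak inequalities chain together by transitivity. Moreover, if the first step $Z_0<Z_1$ is strict in both $F_j$ and $G_j$ for some index $j$, then for that same $j$ the remaining weak inequalities preserve strictness. So the "furthermore" clause also passes to the transitive closure.

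For a generating relation, let $X'=[x_1,\dots,x_{k+1}]\in\binom{[n]}{k+1}$ be a quasi-inversion with $P(X')\cap\Inv_k(w)=\{X'_i,X'_{i+1}\}$ for some $i\in[k]$. By \Cref{lem:types.of.packets}(c) we have $w^{-1}(x_{i+1})>w^{-1}(x_i)$. The key observation is that $X'_i$ and $X'_{i+1}$ agree in every position except position $i$. There $X'_i$ has entry $x_{i+1}$ and $X'_{i+1}$ has entry $x_i$. Hence the difference $D=M(X'_i)-M(X'_{i+1})$ is zero outside column $i$, and column $i$ equals $(a,b)^T$ with $a=w^{-1}(x_{i+1})-w^{-1}(x_i)>0$ and $b=x_{i+1}-x_i>0$.

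Applying the definitions in \eqref{eq:Fi_Gi}, the only $j$ with $F_j(D)$ or $G_j(D)$ possibly nonzero are $j=i$ and $j=i+1$. Explicitly,
\[
F_i(D)=(-1)^{k-i}a,\quad F_{i+1}(D)=(-1)^{k-i-1}(-b)=(-1)^{k-i}b,\quad G_i(D)=(-1)^{k-i}b,\quad G_{i+1}(D)=(-1)^{k-i}a.
\]
All of these share the sign $(-1)^{k-i}$. If $k-i$ is odd, the parity condition gives $X'_i<X'_{i+1}$, and $D$ has nonpositive images, so $F_j\circ M(X'_i)\le F_j\circ M(X'_{i+1})$ and likewise for $G_j$. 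If $k-i$ is even, the relation is $X'_{i+1}<X'_i$ and all images of $D$ are nonnegative, which again gives the correct direction. In both cases the inequality is strict for both $F_j$ and $G_j$ at $j=i$ (when $i\ge 2$) and at $j=i+1$ (when $i\le k-1$). Since $k\ge2$, at least one of these indices lies in $[k]\setminus\{1\}$.

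The main point needing care is the bookkeeping: identifying that the two packet elements differ in exactly one column, and checking that the sign $(-1)^{k-j}$ in the definition of $F_j,G_j$ exactly matches the parity convention of \Cref{def:permanent.poset.}. Everything else is routine.
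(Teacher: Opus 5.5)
Your proof is correct and follows essentially the same route as the paper: you reduce to a single quasi-inversion relation, observe that $M(X'_i)-M(X'_{i+1})$ is supported in column $i$ with two positive entries, and match the common sign $(-1)^{k-i}$ of $F_i,F_{i+1},G_i,G_{i+1}$ against the parity convention. You also write out the $G_j$ computations, which the paper leaves to the reader, and you explain why strictness survives passage to the transitive closure, which the paper leaves implicit.
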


\begin{proof}
It suffices to establish the claim for the
quasi-inversion relations that generate $\P_w(n,k)$. Let $X\in \binom{[n]}{k+1}$ be a quasi-inversion for $w$ where
$P(X)\cap \Inv_{k}(w)=\{X_i,X_{i+1}\}$.  Then,
\begin{equation}\label{eq:quasi_matrix_difference}
M(X_i)-M(X_{i+1})=\begin{bmatrix}
    0 & \ldots & 0 & w^{-1}(x_{i+1})-w^{-1}(x_{i}) & 0 & \ldots &0 \\
    0 & \ldots & 0 & x_{i+1}-x_{i}                  & 0 & \ldots &0
    \end{bmatrix},
    \end{equation}
    where the $i^{th}$ column is necessarily nonzero. As $X$ is a quasi-inversion, \cref{lem:types.of.packets} implies that $x_i<x_{i+1}$ and $w^{-1}(x_i)<w^{-1}(x_{i+1})$, so both of the nonzero terms are positive. 
    
    If $i\geq 2$ and $k-i$ is even, we have the quasi-inversion
relation $X_{i+1}<_{\P_w(n,k)} X_{i}$.  Since each $F_{j}$ is an
additive group homomorphism, $F_j\circ M(X_i)-F_j\circ M (X_{i+1})
=F_{j}(M(X_i)-M(X_{i+1}))=0$ for all $j \in [k] \setminus \{1,i, i+1\}$,
\begin{equation}\label{eq:even_quasiinversion} F_i\circ
M(X_i)-F_i\circ M(X_{i+1})= (-1)^{k-i}\cdot
(w^{-1}(x_{i+1})-w^{-1}(x_{i}) - 0)>0,
\end{equation}
and when $i< k$,
\begin{equation}
F_{i+1}\circ M(X_i)-F_{i+1}\circ M(X_{i+1})=(-1)^{k-(i+1)}\cdot
(0-(x_{i+1}-x_{i}))>0.
\end{equation}
 In the case where $i=1$ and
$k-1$ is even, then we omit \eqref{eq:even_quasiinversion}, but the
remaining statements hold. Similarly, when $k-i$ is odd, we have that
$X_{i}<_{\P_w(n,k)}
X_{i+1}$,
\begin{equation}\label{eq:odd_quasiinversion}
F_i\circ M(X_i)-F_i\circ M(X_{i+1})
    =(-1)^{k-i}(w^{-1}(x_{i+1})-w^{-1}(x_{i}) - 0)<0,
\end{equation}
\begin{equation}\label{eq:odd_quasiinversion2}
F_{i+1}\circ M(X_i)-F_{i+1}\circ
M(X_{i+1})=(-1)^{k-(i+1)}(0-(x_{i+1}-x_{i}))<0,
\end{equation}
and $F_j\circ M(X_i)-F_j\circ M(X_{i+1})=0$ for all remaining $j \in
[k] \setminus \{1,i, i+1\}$, where we omit \eqref{eq:odd_quasiinversion}
when $i=1$ and \eqref{eq:odd_quasiinversion2} when $i=k$.  Combined,
we see that each $F_j\circ M$ is monotone increasing with respect to
the quasi-inversion relations in $\P_w(n,k)$, and the values are
defined and strictly increasing for at least one of $j=i$ or $j = i+1$.  Hence,
the statements in the lemma for the $F_{j}$'s both hold.

A similar argument with casework and \eqref{eq:quasi_matrix_difference} establishes the result for the $G_j\circ M$
functions. The details in this case are left to the reader.
\end{proof}

We will next consider properties of the functions $F_{j}\circ M$ and
$G_{j}\circ M$ on full-packets in $\Inv_k(w)$. \cref{example:FG_functions} suggests that these behave differently on full packets. In general, the parity
of $k-j$ determines which is increasing or decreasing on every full-packet in lex order.

\begin{lemma}\label{lem:diagonal_packets}
   Suppose $X\in \Inv_{k+1}(w)$ and $2 \leq j\leq k$. If $k-j$ is
even, then
   \begin{equation}\label{eq:FG_on_packets} \begin{split}
            F_{j}\circ M(X_1)=\ldots=F_{j}\circ M(X_{j-1})<F_{j}\circ M(X_j)<F_{j}\circ M(X_{j+1})=\ldots = F_{j}\circ M(X_{k+1}), \text{ and}\\
    G_{j}\circ M(X_1)=\ldots=G_{j}\circ M(X_{j-1})>G_{j}\circ
    M(X_j)>G_{j}\circ M(X_{j+1})=\ldots = G_{j}\circ M(X_{k+1}). \hspace{.15in}
        \end{split}
    \end{equation}
    If $k-j$ is odd, then the above inequalities are reversed.
\end{lemma}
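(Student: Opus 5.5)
The plan is a direct computation: write out the two relevant columns of $M(X_m)$ for each $m\in[k+1]$, then compare. Write $X=[x_1,\dots,x_{k+1}]\in\Inv_{k+1}(w)$ and set $p_t=w^{-1}(x_t)$. Since $X$ is a $(k+1)$-inversion, we have $x_1<\cdots<x_{k+1}$ and $p_1>\cdots>p_{k+1}$. These two chains of inequalities are the only facts about $w$ we will use.

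First I would identify the columns. The set $X_m$ is obtained by deleting $x_m$, so column $c$ of $M(X_m)$ is $(p_c,x_c)^T$ when $c<m$ and $(p_{c+1},x_{c+1})^T$ when $c\ge m$. Since $F_j$ and $G_j$ only involve columns $j-1$ and $j$, the value depends only on where $m$ sits relative to $j$. Let $s=(-1)^{k-j}$. There are three cases.
\begin{itemize}
\item If $m\ge j+1$, the two columns are $x_{j-1},x_j$. Then $F_j=s(p_j-x_{j-1})$ and $G_j=s(x_j-p_{j-1})$.
\item If $m=j$, the two columns are $x_{j-1},x_{j+1}$. Then $F_j=s(p_{j+1}-x_{j-1})$ and $G_j=s(x_{j+1}-p_{j-1})$.
\item If $m\le j-1$, the two columns are $x_j,x_{j+1}$. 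Then $F_j=s(p_{j+1}-x_j)$ and $G_j=s(x_{j+1}-p_j)$.
\end{itemize}
This gives the equalities in \eqref{eq:FG_on_packets} at once: $F_j\circ M$ and $G_j\circ M$ are constant on $X_1,\dots,X_{j-1}$ and constant on $X_{j+1},\dots,X_{k+1}$.

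Next come the strict inequalities. For $F_j$ with $s=1$:
\begin{itemize}
\item $p_{j+1}-x_j<p_{j+1}-x_{j-1}$, because $x_{j-1}<x_j$;
\item $p_{j+1}-x_{j-1}<p_j-x_{j-1}$, because $p_{j+1}<p_j$.
\end{itemize}
For $G_j$ with $s=1$:
\begin{itemize}
\item $x_{j+1}-p_j>x_{j+1}-p_{j-1}$, because $p_j<p_{j-1}$;
\item $x_{j+1}-p_{j-1}>x_j-p_{j-1}$, because $x_{j+1}>x_j$.
\end{itemize}
These are exactly the displayed chains when $k-j$ is even. When $k-j$ is odd, $s=-1$ multiplies every value by $-1$, which reverses all inequalities and keeps the equalities.

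There is no real obstacle here. The only care needed is the indexing of columns of $X_m$ at the boundary: $m=j$ versus $m=j\pm1$, and the extreme values $j=2$ and $j=k$, where one side of a chain has a single term. I would check these boundary cases explicitly, noting that $j-1\ge 1$ and $j+1\le k+1$ always hold, so every entry referenced exists. Finally, as a sanity check, I would compare the result against \Cref{example:FG_functions}.
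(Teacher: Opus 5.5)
Your proof is correct and is essentially the paper's argument. The paper compares consecutive packet elements through the differences $M(X_i)-M(X_{i+1})$, which are nonzero in only one column, and uses additivity of $F_j$ and $G_j$; you instead write down the three possible values of $F_j\circ M$ and $G_j\circ M$ directly, but both reduce to the same inequalities $x_{j-1}<x_j<x_{j+1}$ and $w^{-1}(x_{j-1})>w^{-1}(x_j)>w^{-1}(x_{j+1})$, and you also carry out the $G_j$ case that the paper leaves to the reader.
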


\begin{proof}
    This follows from a similar argument as in \cref{lem:diagonal_differences}. For $F_{j}\circ M$, we first note that when $i \in [k] \setminus \{j-1,j\}$, the difference $M(X_i)-M(X_{i+1})$ has 0's in columns $j-1$ and $j$, so $F_{j}\circ M(X_i)-F_{j}\circ M(X_{i+1})=0$. 
    
    For the case $i=j-1$, we first express
    \begin{equation}\label{eq:full_packet_difference1}
M(X_{j-1})-M(X_{j})=\begin{bmatrix}
    0 & \ldots & 0 & w^{-1}(x_{j})-w^{-1}(x_{j-1}) & 0 & \ldots &0 \\
    0 & \ldots & 0 & x_{j}-x_{j-1}                  & 0 & \ldots &0
    \end{bmatrix},
    \end{equation}
    where column $j-1$ is nonzero, $w^{-1}(x_{j})-w^{-1}(x_{j-1})<0$,
    and $x_{j}-x_{j-1}>0$ since $X \in \Inv_{k+1}(w)$. From this, we find
    \[F_{j}\circ M(X_{j-1})-F_{j}\circ M(X_{j})=
    F_{j}(M(X_{j-1})- M(X_{j}))= (-1)^{k-j}(0-(x_j-x_{j-1})).\]
    When $k-j$ is even, this is negative, so $F_{j}\circ M(X_{j-1})<F_{j}\circ M(X_j)$. When $k-j$ is odd, we find the opposite inequalities. 
    
    For the case $i=j$, we similarly express 
    \begin{equation}\label{eq:full_packet_difference2}
M(X_j)-M(X_{j+1})=\begin{bmatrix}
    0 & \ldots & 0 & w^{-1}(x_{j+1})-w^{-1}(x_{j}) & 0 & \ldots &0 \\
    0 & \ldots & 0 & x_{j+1}-x_{j}                  & 0 & \ldots& 0
    \end{bmatrix},
    \end{equation}
    where column $j$ is nonzero, $w^{-1}(x_{j+1})-w^{-1}(x_{j})<0$, and $x_{j+1}-x_{j}>0$. We similarly calculate 
    \[F_{j}\circ M(X_{j})-F_{j}\circ M(X_{j+1})=(-1)^{k-j}(w^{-1}(x_{j+1})-w^{-1}(x_{j})-0).\]
    When $k-j$ is even, this is negative, so $F_{j}\circ M(X_j)<F_{j}\circ M(X_{j+1})$. When $k-j$ is odd, this is instead positive. 
    
    Combined, we conclude that when $k-j$ is even, \eqref{eq:FG_on_packets} holds for $F_{j}\circ M$, and when $k-j$ is odd, the reverse relations hold. A similar argument establishes the corresponding results for $G_{j}\circ M$.
\end{proof}

\begin{theorem}\label{lem:poset.packet.antichain}
For any integers $1\leq k\leq n$ and $w\in \S_n$, we have that $\P_{w}(n,k)$ is a poset.  Furthermore, if $X\in
\Inv_{k+1}(w)$, then the elements in $P(X)$ form an antichain in
$\P_w(n,k)$.
\end{theorem}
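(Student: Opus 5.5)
The plan is to treat $k=1$ separately and then, for $2\le k\le n$, use the monotone functions $F_j\circ M$ and $G_j\circ M$ from \cref{lem:diagonal_differences} and \cref{lem:diagonal_packets}.

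\emph{Case $k=1$.} By \Cref{rem:easy.pnks}, $\P_w(n,1)$ is generated by relations $\{i\}<\{j\}$ with $i<j$ as integers. Its transitive closure is therefore contained in the natural order on $[n]$, which gives antisymmetry. For the antichain claim, note that every $X=[x_1,x_2]\in\Inv_2(w)$ has packet $P(X)=\{\{x_1\},\{x_2\}\}$. The generating relations come only from $2$-quasi-inversions, that is, non-inversions. Suppose there were a chain $\{x_1\}=\{a_0\}<\{a_1\}<\cdots<\{a_m\}=\{x_2\}$ with each $[a_{t},a_{t+1}]$ a non-inversion. Then $w^{-1}(a_0)<w^{-1}(a_1)<\cdots<w^{-1}(a_m)$, so $w^{-1}(x_1)<w^{-1}(x_2)$. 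This contradicts $[x_1,x_2]\in\Inv_2(w)$. The reverse direction $\{x_2\}<\{x_1\}$ is impossible because $x_2>x_1$.

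\emph{Antisymmetry for $k\ge 2$.} Suppose $X<_{\P_w(n,k)}Y$ and $Y<_{\P_w(n,k)}X$ both hold for some $X$, possibly with $X=Y$ in the sense of a nontrivial cycle. By \cref{lem:diagonal_differences}, $F_j\circ M(X)\le F_j\circ M(Y)\le F_j\circ M(X)$ for all $j\ge 2$, so every one of these is an equality. The same lemma also gives some $j$ at which $F_j\circ M(X)<F_j\circ M(Y)$, a contradiction. Hence the generating relations have no directed cycle, and the transitive closure is a partial order.

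\emph{Antichain for $k\ge 2$.} Let $X\in\Inv_{k+1}(w)$, and suppose $X_a<_{\P_w(n,k)}X_b$ for some $a\ne b$. By \cref{lem:diagonal_differences}, every $F_j\circ M$ and every $G_j\circ M$ is weakly increasing from $X_a$ to $X_b$, with strict increase for both $F_j$ and $G_j$ at some $j$. Now compare with \cref{lem:diagonal_packets}. For each fixed $j$, the functions $F_j\circ M$ and $G_j\circ M$ are monotone in opposite directions along the packet listed in lex order $X_1,\dots,X_{k+1}$. The key point is that at a single index $j$, the change from $X_a$ to $X_b$ in $F_j\circ M$ and the change in $G_j\circ M$ have opposite weak signs, and both are zero or both are nonzero. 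Both are nonzero exactly when the index interval between $a$ and $b$ contains $j-1$ or $j$, that is, when $\min(a,b)\le j-1$ and $\max(a,b)\ge j$, or the analogous condition.

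Because the two changes have opposite signs, they cannot both be strictly positive at the same $j$. This contradicts the strict-increase statement, so no two distinct elements of $P(X)$ are comparable.

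\emph{Main obstacle.} The hard step is the sign bookkeeping in the antichain argument. I need that, for each $j$, the $F_j$ and $G_j$ differences between $X_a$ and $X_b$ are simultaneously zero or simultaneously nonzero with opposite signs. I would prove this carefully from \eqref{eq:FG_on_packets}. Both $F_j\circ M$ and $G_j\circ M$ are constant on $\{X_1,\dots,X_{j-1}\}$ and on $\{X_{j+1},\dots,X_{k+1}\}$, and they jump at the same two steps, in opposite directions. So the set of pairs $(a,b)$ with a nonzero difference is the same for $F_j$ and $G_j$, and the signs are opposite in every case.
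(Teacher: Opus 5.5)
Your proof is correct and follows the paper's route. The $k=1$ case is the paper's observation that both $\mathrm{id}$ and $w$ are linear extensions of $\P_w(n,1)$, written out explicitly, and for $k\ge 2$ you use \cref{lem:diagonal_differences} and \cref{lem:diagonal_packets} just as the paper does. The one minor difference is in the antichain step: the paper names the index ($j$, or $k$ when $j=k+1$) at which $F\circ M$ and $G\circ M$ move strictly in opposite directions, while you use the clause ``strict for both $F_j$ and $G_j$ at the same $j$'' and rule it out at every $j$. Both versions work.
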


\begin{proof}
By \Cref{rem:easy.pnks}, $\P_{w}(n,1)$ is a poset on $[n]$ and both the identity
permutation and $w$ are linear extensions of $\P_w(n,1)$.  Therefore,
if $[i,j] \in \Inv_2(w)$, then $\{i\}$ and $\{j\}$ must be
incomparable in $\P_w(n,1)$ and $P([i,j])=\{\{i \},\{j \} \}$ is an antichain. Thus, it suffices to consider the case $2
\le k \le n$.

As mentioned previously, reflexivity and transitivity hold on
$\P_w(n,k)$ by construction. For antisymmetry, define the function
$F=\sum_{j=2}^k F_j\circ M$. By \cref{lem:diagonal_differences},
$Y<_{\P_w(n,k)}Z$ implies $F(Y)<F(Z)$, so $Z\not<_{\P_w(n,k)}Y$.  We
conclude that antisymmetry holds, so $\P_w(n,k)$ is a poset.  Observe
the function $G=\sum_{j=2}^k G_j\circ M$ could have been used to prove
antisymmetry as well since it also has the property
$Y<_{\P_w(n,k)}Z$ implies $G(Y)<G(Z)$.

    For the antichain claim, we let $1\leq i<j\leq k+1$, and consider
$X_i,X_j\in P(X)$.  When $j\leq k$, \cref{lem:diagonal_packets} implies that depending on the parity of $k-j$, either
\[
F_{j}\circ M(X_{i})> F_{j}\circ M(X_{j})\text{ and } G_{j}\circ M(X_{i})<G_{j}\circ M(X_{j}) 
\]
or
\[
  G_{j}\circ M(X_{i})>G_{j}\circ M(X_{j}) \text{ and }F_{j}\circ M(X_{i})<F_{j}\circ M(X_{j}).
\]
In either case, these conditions imply
neither $X_i<_{\P_w(n,k)} X_j$ nor $X_i>_{\P_w(n,k)} X_j$ is 
possible, since $F_{j}\circ M$ and $G_{j}\circ M$ are weakly order
preserving on $\P_w(n,k)$ by \cref{lem:diagonal_differences}. When $j=k+1$, 
\[
G_{k}\circ M(X_{i})>G_{k}\circ M(X_{j})\text{ and }  F_{k}\circ M(X_{i})<F_{k}\circ M(X_{j}),
\]
and we similarly conclude neither $X_i<_{\P_w(n,k)} X_j$ nor $X_i>_{\P_w(n,k)} X_j$ are
possible. As $i,j\in [k+1]$ are arbitrary distinct elements, we conclude that 
$P(X)$ is an antichain in $\P_w(n,k)$. 
\end{proof}

\begin{lemma}\label{lem:permanent_on_semipackets}
Let $Z\in \Qinv_{k+2}(w)$ with $P(Z)\cap
\Inv_{k+1}(w)=\{Z_i,Z_{i+1}\}$, and let $h\in [k+2]\setminus
\{i,i+1\}$.  If $k-i$ is even, then we have the quasi-inversion
relations
\[
\begin{array}{lllll}
  (Z_{i})_h   & <_{\P_w(n,k)} &  (Z_{i+1})_{h} & \text{if} & h<i, \text{ and}\\ 

  (Z_{i+1})_{h-1} & <_{\P_w(n,k)} & (Z_i)_{h-1} & \text{if} & h>i+1.
\end{array}
\]
If $k-i$ is odd, then the relations above are reversed. 
\end{lemma}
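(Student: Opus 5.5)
The plan is to exhibit, for each $h$, a single $(k+1)$-quasi-inversion whose defining quasi-inversion relation in \Cref{def:permanent.poset.} is exactly the claimed relation. Write $Z=[z_1,\dots,z_{k+2}]$. By \Cref{lem:types.of.packets}, $[z_i,z_{i+1}]$ is the unique pair in $Z$ that is not a $2$-inversion of $w$, and every other pair is a $2$-inversion. Fix $h\in[k+2]\setminus\{i,i+1\}$ and set $Y=Z_h\in\binom{[n]}{k+1}$. Every pair of $Y$ is a pair of $Z$, and $Y$ still contains both $z_i$ and $z_{i+1}$. So all but exactly one pair of $Y$ are $2$-inversions, and $Y$ is a $(k+1)$-quasi-inversion.

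Next we apply \Cref{lem:types.of.packets}(c) to $Y$; this needs $k+1>2$, and the case $k=1$ is handled at the end. It gives $P(Y)\cap\Inv_k(w)=\{Y_m,Y_{m+1}\}$, where $m$ is the position of $z_i$ in $Y$. There are two cases, and in each we read off the indices.
\begin{itemize}
\item If $h<i$, then $m=i-1$. Here $Y_{i-1}$ is $Z$ with $z_h,z_i$ deleted, which is $(Z_i)_h$, because deleting $z_i$ does not move $z_h$. Likewise $Y_i=(Z_{i+1})_h$. The parity index in \Cref{def:permanent.poset.} is $k-(i-1)=k-i+1$.
\begin{itemize}
\item If $k-i$ is even, this index is odd, so the relation is antilex: $Y_{i-1}<_{\P_w(n,k)}Y_i$, i.e.\ $(Z_i)_h<(Z_{i+1})_h$.
\item If $k-i$ is odd, the relation is lex and reversed.
\end{itemize}
\item If $h>i+1$, then $m=i$. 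In $Z_i$ and in $Z_{i+1}$ the element $z_h$ sits in position $h-1$, so $Y_i=(Z_i)_{h-1}$ and $Y_{i+1}=(Z_{i+1})_{h-1}$. The parity index is $k-i$.
\begin{itemize}
\item If $k-i$ is even, the relation is lex: $Y_{i+1}<Y_i$, i.e.\ $(Z_{i+1})_{h-1}<(Z_i)_{h-1}$.
\item If $k-i$ is odd, it is reversed.
\end{itemize}
\end{itemize}
These are exactly the stated relations, and each is by construction a generating quasi-inversion relation.

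The only real care needed is the index bookkeeping: locating $z_i$ in $Y$, identifying $Y_m,Y_{m+1}$ with $(Z_i)_\ast,(Z_{i+1})_\ast$ with the shifted subscript $h-1$ when $h>i+1$, and tracking the one-step parity shift when $h<i$.

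For the edge case $k=1$, $Y$ is a $2$-quasi-inversion $[z_i,z_{i+1}]$. We have $P(Y)=\{Y_1,Y_2\}\subseteq\Inv_1(w)$, and with $m=1$ the parity index $k-m=0$ is even. The definition therefore gives $Y_2<Y_1$, i.e.\ $\{z_i\}<\{z_{i+1}\}$, consistent with \Cref{rem:easy.pnks}. Since $k-i$ then has the right parity, the same identification of $Y_1,Y_2$ goes through verbatim.
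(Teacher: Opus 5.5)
Your proof is correct and is essentially the paper's argument. Both observe that $Z_h$ is a $(k+1)$-quasi-inversion whose packet meets $\Inv_k(w)$ in $(Z_h)_{i-1}=(Z_i)_h$ and $(Z_h)_i=(Z_{i+1})_h$ when $h<i$ (respectively $(Z_h)_i=(Z_i)_{h-1}$ and $(Z_h)_{i+1}=(Z_{i+1})_{h-1}$ when $h>i+1$), then read off the relation from the parity condition, with the shift to $k-(i-1)$ in the first case; your separate check of $k=1$ is a small detail the paper leaves implicit.
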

\begin{proof}
Since $Z$ is a quasi-inversion of $w$ with $P(Z)\cap
\Inv_{k+1}(w)=\{Z_i,Z_{i+1}\}$ and $h\in [k+2]\setminus \{i,i+1\}$, we
know $Z_h$ is a quasi-inversion of $w$ by \cref{lem:types.of.packets}.
If $h<i$, then $(Z_i)_h=(Z_h)_{i-1}$, \ $(Z_{i+1})_h=(Z_h)_{i}$,  and
both are $k$-inversions for $w$ since $Z_{i}$ and $Z_{i+1}$ are
$(k+1)$-inversions for $w$.  If in addition $k-i$ is even, then
$k-(i-1)$ is odd and $(Z_h)_{i-1}<_{\P_w(n,k)} (Z_h)_{i}$ by the
quasi-inversion relations from \Cref{def:permanent.poset.}, so
\[
(Z_i)_h= (Z_h)_{i-1} <_{\P_w(n,k)} (Z_h)_{i}=(Z_{i+1})_h.
\]
Similarly, if $h>i+1$ and $k-i$ is even, then
$(Z_{i+1})_{h-1}=(Z_{h})_{i+1},\ (Z_i)_{h-1}=(Z_{h})_i \in \Inv_{k}(w)$,
and
\[
(Z_{i+1})_{h-1}=(Z_{h})_{i+1}<_{\P_w(n,k)} (Z_{h})_{i}=(Z_i)_{h-1}.
\]
If $k-i$ is odd, the quasi-inversion relations are reversed from
those above because of the parity constraint in
\Cref{def:permanent.poset.}.  Hence, $(Z_{i})_h>_{\P_w(n,k)}
(Z_{i+1})_{h}$ for $h<i$ and $(Z_{i+1})_{h-1}>_{\P_w(n,k)} (Z_i)_{h-1}$ for $h>i+1$.
\end{proof}

\subsection{Admissible Orders for Permutations}\label{sub:admissible.orders.Sn}

We define admissible orders on all $w \in \S_n$ by extending the
full-packet condition from \Cref{def:admissible.MS}, and highlight some of
their properties.  The primary modification from the classical
admissible orders in $\A(n,k)$ is a separate condition for
quasi-inversions, which is encoded in the permanent posets.

\begin{definition}\label{def:admissible.BCL}
A total
order $\rho$ on $\Inv_k(w)$ is a $\boldsymbol{k}$-\defn{admissible
order} for $w$ if $\rho$ is a linear extension of $\P_w(n,k)$  and
satisfies the \defn{full-packet condition}:
\begin{quote}
 $\rho|_{P(X)}$ is the full-packet in
	lex order $(X_{k+1},\ldots,X_{2},X_1)$ or in antilex order
	$(X_1,X_2,\ldots,X_{k+1})$ for every $X\in \Inv_{k+1}(w)$.
\end{quote}
    Let $\boldsymbol{\A_w(n,k)}$ be the set of $k$-admissible orders
    on $\Inv_k(w)$.     The \defn{reversal set} of $\rho \in
    \A_w(n,k)$ is
    \[
\boldsymbol{\Rev_{n,k,w}(\rho)}=
\{X \in \Inv_{k+1}(w):  \rho|_{P(X)}=(X_1,X_2,\ldots,X_{k+1})\},
    \]
and the \defn{reversal statistic} is $\boldsymbol{\rev_{n,k,w}(\rho)}$ $
=|\Rev_{n,k,w}(\rho)|$.  Again, we write $\boldsymbol{\Rev(\rho)}$ and
$\boldsymbol{\rev(\rho)}$ when $n,k,w$ are clear from the context.
\end{definition}

When $k>n$, we know that $\Inv_k(w)=\emptyset$, so $\A_w(n,k)$
consists of a single element, the empty list $()$,  with reversal set $\emptyset$. We
will focus on the cases where $1\leq k\leq n$.

\begin{example}\label{ex:admissible order}
    Consider $w=(6,4,5,2,3,1)\in \S_6$ from \cref{ex:permanent poset}.
There are $831600=2\cdot2\cdot\binom{12}{2,2,4,4}$ linear extensions
of $\P_w(6,3)$, but only 1228 of them also satisfy the requirement
that every full-packet must be in lex or antilex order.  Suppressing
the square brackets and internal commas, six of the linear extensions
of $\P_w(6,3)$ are given in \Cref{table:admissible} along with their
reversal sets. Checking the restriction of
each $\rho^i$ to the packet of each inversion in $\Inv_4(w)=\{1246,
1256, 1346, 1356\}$ shows
that all are admissible orders. 

\begin{table}[h]\centering 
    \begin{tabular}{|c|c|}  \hline 
    Admissible order & Reversal Set \\ \hline 
        $\rho^1 = (134, 124, 135, 136, 125, 126, 156, 256, 146, 246, 356, 346)$ & $\emptyset$\\ \hline 
        $\rho^2 = (134, 256, 135, 136, 156, 356, 124, 126, 146, 246, 346, 125)$ & $\{1256\}$\\ \hline 
        $\rho^3 = (134, 256, 135, 136, 156, 356, 246, 146, 126, 124, 346, 125)$ & $\{1246, 1256\}$\\\hline 
        $\rho^4 = (134, 256, 356, 156, 136, 135, 124, 126, 146, 246, 346, 125)$ & $\{1256, 1356\}$\\ \hline 
        $\rho^5 = (134, 256, 356, 156, 136, 135, 246, 146, 126, 124, 346, 125)$ & $\{1246, 1256, 1356\}$\\\hline 
        $\rho^6 = (256, 246, 356, 156, 346, 146, 136, 134, 126, 124, 135, 125)$ & $\Inv_4(w)$ \\\hline 
    \end{tabular}
    \caption{Six $3$-admissible orders for the permutation $w=(6,4,5,2,3,1)\in \S_6$ from \cref{fig:permanent.example}.  Square brackets and internal commas have been suppressed for clarity.}
\label{table:admissible}
\end{table}
\end{example}

\begin{lemma}\label{lem:An1.weak.order}
The admissible orders in $\A_{w}(n,1)$ are
exactly the permutations in the interval $[\mathrm{id},w]$ in the weak
order on $\S_{n}$ if one equates the singleton sets $\{i \}$ with the
value $i$.   
\end{lemma}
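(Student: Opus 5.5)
We unwind the definitions for $k=1$ and then use the weak-order characterization already discussed in \Cref{sub:permanent.Sn}. Here $\Inv_1(w)=\binom{[n]}{1}$ consists of all singletons, so a total order $\rho=(\{v_1\},\dots,\{v_n\})$ on $\Inv_1(w)$ is identified with the permutation $v=(v_1,\dots,v_n)\in\S_n$ in window notation. We must show that $\rho$ is $1$-admissible for $w$ if and only if $\Inv_2(v)\subseteq\Inv_2(w)$, which is exactly the condition $v\le w$ in weak order.

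First, the full-packet condition is vacuous when $k=1$. For $X=[i,j]\in\Inv_2(w)$, the packet is $P(X)=\{X_1,X_2\}=\{\{j\},\{i\}\}$. The lex order is $(X_2,X_1)=(\{i\},\{j\})$ and the antilex order is $(X_1,X_2)=(\{j\},\{i\})$. Any total order restricts to one of these two. Hence $\A_w(n,1)$ is exactly the set of linear extensions $\mathscr{L}(\P_w(n,1))$.

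Second, by \Cref{rem:easy.pnks}, $\P_w(n,1)$ is generated by the relations $\{i\}<\{j\}$ for each $2$-quasi-inversion $[i,j]$, that is, for each $[i,j]\in\binom{[n]}{2}\setminus\Inv_2(w)$. It is worth checking the parity convention once. A $2$-quasi-inversion $X=[i,j]$ with $k=1$ has $P(X)\cap\Inv_1(w)=\{X_1,X_2\}$, so the index is $1$ and $k-1=0$ is even. The rule then gives the lex relation $X_2<X_1$, namely $\{i\}<\{j\}$, as claimed. So $\rho$ is a linear extension exactly when, for every pair $i<j$ with $[i,j]\notin\Inv_2(w)$, the value $i$ precedes $j$ in $v$. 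Equivalently, no non-inversion of $w$ is an inversion of $v$.

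The two directions are now immediate. If $\Inv_2(v)\subseteq\Inv_2(w)$, then every non-inversion of $w$ is a non-inversion of $v$, so $\rho$ is a linear extension. Conversely, if $\rho$ is a linear extension, any inversion $[i,j]$ of $v$ must lie in $\Inv_2(w)$, which gives containment.

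The only step requiring any care is the identification of weak order with containment of inversion sets, written in the value form $\Inv_2$. This is the definition given in \Cref{sub:sn}, since $\Inv(w)$ there is defined on values and agrees with $\Inv_2(w)$ for $w\in\S_n$. No real obstacle is expected beyond this bookkeeping.
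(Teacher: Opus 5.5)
Your proof is correct and follows the same approach as the paper: the full-packet condition is automatic for two-element packets, so $\A_w(n,1)=\mathscr{L}(\P_w(n,1))$, and these linear extensions correspond to the permutations $v$ with $\Inv_2(v)\subseteq\Inv_2(w)$. The paper cites \Cref{rem:easy.pnks} and the discussion in \Cref{sub:permanent.Sn} for that last correspondence, whereas you derive it directly and also check the parity convention.
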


\begin{proof}
By \Cref{rem:easy.pnks}, the linear extensions of $\P_{w}(n,1)$ are in
natural bijection with the permutations in $[\mathrm{id},w]$.  If
$\rho$ is a linear extension of $\P_{w}(n,1)$, then $\rho|_{P(X)}$ is
either in lex or antilex order for every $X=[x_{1},x_{2}] \in
\Inv_{2}(w)$ automatically since both $\{x_{1} \}$ and $\{x_{2} \}$
must appear in $\rho$.  Therefore, linear extensions of $\P_{w}(n,1)$
are exactly the admissible orders in $\A_{w}(n,1)$.
\end{proof}

\begin{lemma}\label{lem:An2.weak.order}
The admissible orders in $\A_{w}(n,2)$ are the
reflection orders for $w$.
\end{lemma}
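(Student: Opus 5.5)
The plan is to compare the definition of $\A_w(n,2)$ directly with the characterization of reflection orders in \Cref{cor:characterize.ref.order}, specialized to $w\in\S_n$. For classical $w$ there are no pairs $(x,y),(x,y+n)$ in $\Inv(w)$, so condition (b) of that corollary is vacuous. Moreover $\Inv(w)$ in the sense of \eqref{eq:inversion.set.def} coincides with $\Inv_2(w)$, since $w$ preserves $[n]$. A total order $\rho$ on $\Inv_2(w)$ is therefore a reflection order if and only if, for every triple $X=[x,y,z]\in\binom{[n]}{3}$, the restriction $\rho|_{P(X)\cap\Inv_2(w)}$ is an initial segment of $(X_3,X_2,X_1)=([x,y],[x,z],[y,z])$ or of its reverse $(X_1,X_2,X_3)$.

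Next I would split this triple condition according to the type of $P(X)\cap\Inv_2(w)$ in \Cref{lem:intersection_size} (with $k=2$). If the intersection is empty or a single element, the condition holds automatically. In the admissible-order setting, neither type contributes a full-packet or a quasi-inversion relation, so nothing is lost on either side. If the intersection is all of $P(X)$, i.e.\ $X\in\Inv_3(w)$, then an initial segment of length three is the whole list, so the condition says exactly that $\rho|_{P(X)}$ is lex or antilex. This is precisely the full-packet condition of \Cref{def:admissible.BCL}.

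The remaining case, where the intersection has two elements, is the main step. Here $X$ is a $3$-quasi-inversion by \Cref{lem:types.of.packets}, and the intersection is $\{X_i,X_{i+1}\}$ with $i\in\{1,2\}$. Among the two-element sets, only $\{X_3,X_2\}$ and $\{X_1,X_2\}$ can be initial segments, and this matches: by \Cref{lem:inversion.sets}(a),(b) the set $\{X_1,X_3\}$ cannot occur.
\begin{itemize}
\item If the intersection is $\{X_2,X_3\}$ (so $i=2$), the reflection condition forces $X_3$ before $X_2$. This is the initial segment $(X_3,X_2)$; the order $(X_2,X_3)$ is not an initial segment of $(X_1,X_2,X_3)$.
\item If the intersection is $\{X_1,X_2\}$ (so $i=1$), the reflection condition forces $X_1$ before $X_2$.
\end{itemize}
I would then check against the parity rule of \Cref{def:permanent.poset.} with $k=2$. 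For $i=2$, $k-i=0$ is even, giving the relation $X_3<_{\P_w(n,2)}X_2$. For $i=1$, $k-i=1$ is odd, giving $X_1<_{\P_w(n,2)}X_2$. So the triple conditions of this type are exactly the generating quasi-inversion relations of $\P_w(n,2)$.

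Finally, I would combine the cases. A total order satisfies all the generating relations if and only if it is a linear extension of their transitive closure $\P_w(n,2)$, which is a poset by \Cref{lem:poset.packet.antichain}. Hence $\rho$ is a reflection order if and only if $\rho$ is a linear extension of $\P_w(n,2)$ satisfying the full-packet condition, that is, $\rho\in\A_w(n,2)$. The only delicate point is the bookkeeping of indices and parity in the quasi-inversion case.
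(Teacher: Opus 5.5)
Your argument is correct and is essentially the paper's proof: you restate the $k=2$ admissibility conditions triple by triple (full packets in lex or antilex order, plus the quasi-inversion orderings $[x,y]<_\rho[x,z]$ and $[y,z]<_\rho[x,z]$ from the parity rule), match them against \Cref{cor:characterize.ref.order}(a), and note that part (b) is vacuous for $w\in\S_n$. One small slip: a singleton intersection does not satisfy the condition automatically, since $(X_2)$ is an initial segment of neither list; what saves you is that $P(X)\cap\Inv_2(w)=\{X_2\}$ cannot occur, by \Cref{lem:inversion.sets}(a).
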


\begin{proof}
When $k=2$, \Cref{def:admissible.BCL} can be restated as follows. Each $\rho\in \A_w(n,2)$ is a total order on $\Inv_2(w)$ with the following conditions for any $1\leq x<y<z \leq n$.
    \begin{itemize}
        \item If $[x,y,z]\in \Inv_3(w)$, then the restriction of $\rho$ to $P([x,y,z])$ is the lex or antilex order.
        \item If $P([x,y,z])\cap \Inv_2(w)=\{[x,y],[x,z]\}$, then $[x,y]<_{\rho} [x,z]$. 
        \item If $P([x,y,z])\cap \Inv_2(w)=\{[x,z],[y,z]\}$, then
	$[y,z]<_{\rho} [x,z]$.
    \end{itemize}
    The proof now follows by comparing this with
\Cref{cor:characterize.ref.order}(a).
\end{proof}

Recall from \Cref{subsection:higher-bruhat-order-intro} that any two
elements $X,Y\in {[n]\choose k}$ commute if they do not belong to a
common packet $P(Z)$ for any $Z \in \binom{[n]}{k+1}$.  Note that
$P(X) \cap P(Y)=\emptyset$ if and only if $X$ and $Y$ do not belong to
a common packet.  This is straightforward to check for $k>1$.  When
$k=1$ and $x,y \in [n]$ are distinct, they must belong to a common
packet and $P(\{x \})\cap P(\{y \})=\{\emptyset \} \neq \emptyset$.
This concept generalizes to $k$-inversions of $w \in \S_{n}$ as
follows.

\begin{definition}\label{def:commutes.Sn}
 For $X,Y \in \Inv_{k}(w)$, we say $X,Y$ \defn{commute with respect
to} $\boldsymbol{w}$ provided they do not belong to a common packet
and $X,Y$ are incomparable in $\P_{w}(n,k)$. Two total orders on
$\Inv_{k}(w)$ are related by a \defn{commutation with respect to}
$\boldsymbol{w}$ provided that, as lists, they differ by a single
adjacent transposition between two elements that commute with respect
to $w$.  Let $\sim_{w}$ denote the equivalence relation on total
orders of $\Inv_{k}(w)$ generated by commutations with respect to $w$.
\end{definition}

Since the permutation $w$ is fixed throughout this section, all
commutations discussed below are with respect to the fixed permutation
$w$.  The commutation class of a total order $\rho$ on $\Inv_{k}(w)$
will be denoted $\boldsymbol{[\rho]}$ as in the classical case.  The
next lemma shows $\A_w(n,k)$ is closed under commutations.

\begin{lemma}\label{lem:commutations}
 If $\rho \in \A_w(n,k)$ and $\sigma \sim_{w} \rho$, then $\sigma \in
\A_w(n,k)$ and $\Rev(\rho) = \Rev(\sigma)$.
\end{lemma}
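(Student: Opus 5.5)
By induction on the number of commutations, it suffices to treat a single commutation. So assume $\rho \in \A_w(n,k)$ and that $\sigma$ is obtained from $\rho$ by swapping two adjacent entries $\rho_i=X$ and $\rho_{i+1}=Y$. Here $X$ and $Y$ commute with respect to $w$: they lie in no common packet, and they are incomparable in $\P_w(n,k)$. We must check three things: $\sigma$ is a linear extension of $\P_w(n,k)$, $\sigma$ satisfies the full-packet condition, and $\Rev(\sigma)=\Rev(\rho)$.

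For the linear extension property, we use that $\sigma$ and $\rho$ agree on the relative order of every pair except $\{X,Y\}$. Every relation $A<_{\P_w(n,k)}B$ with $\{A,B\}\neq\{X,Y\}$ is therefore respected by $\sigma$ because it is respected by $\rho$. Since $X$ and $Y$ are incomparable in $\P_w(n,k)$, no relation constrains their relative order. Hence $\sigma\in\mathscr{L}(\P_w(n,k))$.

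For the full-packet condition and the reversal set, fix $Z\in\Inv_{k+1}(w)$ and compare $\sigma|_{P(Z)}$ with $\rho|_{P(Z)}$. These restrictions differ only if both $X$ and $Y$ lie in $P(Z)$. That would make $X$ and $Y$ belong to a common packet, contradicting the hypothesis. So $\sigma|_{P(Z)}=\rho|_{P(Z)}$ for every $Z\in\Inv_{k+1}(w)$. This single observation gives both conclusions:
\begin{itemize}
\item each such restriction is still the lex or antilex order, so $\sigma\in\A_w(n,k)$;
\item the set of $Z$ whose restriction is antilex is unchanged, so $\Rev(\sigma)=\Rev(\rho)$.
\end{itemize}

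There is no real obstacle. The one point needing care is that the ``common packet'' clause in \Cref{def:commutes.Sn} refers to packets $P(Z)$ with $Z\in\binom{[n]}{k+1}$ arbitrary. This is stronger than what we need, since only full-packets with $Z\in\Inv_{k+1}(w)$ matter for the full-packet condition, so the argument goes through. Applying this single-step argument along the chain of commutations from $\rho$ to $\sigma$ completes the proof.
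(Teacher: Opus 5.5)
Your proposal is correct and follows essentially the same argument as the paper. Both reduce to a single adjacent swap of commuting $X,Y$, use incomparability in $\P_w(n,k)$ to keep $\sigma$ a linear extension, and use the absence of a common packet to get $\sigma|_{P(Z)}=\rho|_{P(Z)}$ for every $Z\in\Inv_{k+1}(w)$. That last equality yields both admissibility and $\Rev(\sigma)=\Rev(\rho)$.
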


\begin{proof}
Since all total orders in the same equivalence class with $\rho$ are
connected by commutations, it suffices to assume $\sigma$ is obtained
from $\rho$ by interchanging an adjacent pair $X,Y \in \Inv_{k}(w)$
that commute with respect to $w$. Since $X$ and $Y$ commute, we know
they are incomparable elements in $\P_{w}(n,k)$.  Hence, $\sigma$ is
also a linear extension of $\P_w(n,k)$.  Furthermore, $X$ and $Y$
cannot belong to the same packet $P(Z)$ for any $Z\in \Inv_{k+1}(w)$, so
$\rho|_{P(Z)}=\sigma|_{P(Z)}$.  Therefore, $\rho \in \A_w(n,k)$
implies $\sigma \in \A_w(n,k)$ and $\Rev(\rho) = \Rev(\sigma)$ by
\cref{def:admissible.BCL}.
\end{proof}

The concept of a packet flip on an admissible order from
\Cref{subsection:higher-bruhat-order-intro} extends easily to all
admissible orders of $\Inv_{k}(w)$ and their equivalence classes.  We
will need to add more vocabulary pertaining to packet flips for the
proof of \Cref{thm:nonlongest}.

\begin{definition}\label{def:flippable}
     For any $\rho\in \A_w(n,k)$ and $Z\in \Inv_{k+1}(w)$, the packet
$P(Z)$ is \defn{flippable} in $\rho$ if $P(Z)$ forms a saturated chain in
$\rho$.  If $\rho|_{P(Z)}$ is in lex (resp. antilex) order, then a
\defn{lex-to-antilex packet flip} (resp. \defn{antilex-to-lex packet
flip}) at $P(Z)$ is the operation that reverses the order of $P(Z)$ in
$\rho$.  Similarly, the packet $P(Z)$ is \defn{flippable} for the
equivalence class $[\rho]\in \A_w(n,k)/\hspace{-0.2em}\sim_{w}$ if there exists some
representative $\sigma\in [\rho]$ such that $P(Z)$ forms a saturated
chain in $\sigma$.
\end{definition}

\begin{example}\label{ex:flips}
    In the admissible order $\rho^2$ from \cref{ex:admissible order},
observe that $P(1246)$ and $P(1356)$ each form a saturated chain in
lex order, so both are lex-to-antilex flippable. Flipping these
respectively results in $\rho^3$ or $\rho^4$. Additionally, $\rho^2$
is commutation equivalent to
\[
\sigma = (134, 124, 135, 136, 256,156, 126, 125, 146, 246, 356, 346),
\]
where $P(1256)$ forms a saturated chain in antilex order in
$\sigma$. Hence, $P(1256)$ is antilex-to-lex flippable for $\sigma$,
and flipping this packet in $\sigma$ results in $\rho^1$.  Therefore,
we say the equivalence class $[\sigma]=[\rho^{2}]$ can be obtained
from $[\rho^{1}]$ by a lex-to-antilex packet flip.
\end{example}

\begin{lemma}\label{lem:packet.flips.Sn}
 If $\sigma$ is obtained from $\rho\in \A_w(n,k)$ by a lex-to-antilex
(resp. antilex-to-lex) packet flip at $P(Z)$ for $Z \in
\Inv_{k+1}(w)$, then $\sigma \in \A_w(n,k)$ and
$\Rev(\sigma)=\Rev(\rho)\cup \{Z\}$
(resp. $\Rev(\sigma)=\Rev(\rho)\setminus \{Z \}$).
\end{lemma}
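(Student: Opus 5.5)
To show $\sigma \in \A_w(n,k)$, I would check the two conditions of \Cref{def:admissible.BCL} separately: that $\sigma$ is a linear extension of $\P_w(n,k)$, and that $\sigma$ satisfies the full-packet condition. Afterwards I would compute the reversal set. Throughout, $P(Z)=(\rho_i,\dots,\rho_{i+k})$ is a saturated chain, that is, a consecutive block of $\rho$, and $\sigma$ reverses this block while agreeing with $\rho$ elsewhere.

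For the linear-extension property, the only pairs whose relative order differs between $\rho$ and $\sigma$ are pairs of distinct elements of $P(Z)$. Since $Z \in \Inv_{k+1}(w)$, \Cref{lem:poset.packet.antichain} says $P(Z)$ is an antichain in $\P_w(n,k)$. Hence no relation of $\P_w(n,k)$ is violated by the reversal, and $\sigma$ is again a linear extension. For $k=1$ this is still covered, since the theorem includes that case.

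For the full-packet condition, consider any $Y \in \Inv_{k+1}(w)$. If $Y=Z$, then $\sigma|_{P(Z)}$ is the reverse of $\rho|_{P(Z)}$, so lex becomes antilex and vice versa. If $Y\neq Z$, I claim $|P(Y)\cap P(Z)|\le 1$. Two distinct $(k+1)$-sets share at most one $k$-subset, namely their intersection when that intersection has size $k$. For $k=1$, both packets consist of singletons, and two distinct $2$-sets share at most one singleton. Consequently $\sigma|_{P(Y)}$ is obtained from $\rho|_{P(Y)}$ without changing the relative order of any two elements of $P(Y)$, because at most one element of $P(Y)$ lies in the reversed block. Therefore $\sigma|_{P(Y)}=\rho|_{P(Y)}$, which is lex or antilex. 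This is the step requiring the most care, but it is elementary. It follows that $\sigma\in\A_w(n,k)$.

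Finally, the same argument shows that membership in the reversal set is unchanged for every $Y\neq Z$. Meanwhile $Z$ changes from lex to antilex, so it enters the reversal set, or from antilex to lex, so it leaves. This gives $\Rev(\sigma)=\Rev(\rho)\cup\{Z\}$ in the lex-to-antilex case and $\Rev(\sigma)=\Rev(\rho)\setminus\{Z\}$ in the antilex-to-lex case, where $Z\notin\Rev(\rho)$ and $Z\in\Rev(\rho)$ respectively.
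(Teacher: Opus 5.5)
Your proposal is correct and follows essentially the same route as the paper. The antichain property from \Cref{lem:poset.packet.antichain} keeps $\sigma$ a linear extension, and the restrictions to all other full-packets are unchanged; your observation that distinct $Y,Z\in\Inv_{k+1}(w)$ satisfy $|P(Y)\cap P(Z)|\le 1$ supplies the justification for this last point, which the paper asserts without proof.
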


\begin{proof}
Since $\rho \in \A_w(n,k)$, we know $\rho$ is a linear extension of
$\P_{w}(n,k)$ and its restriction to any full-packet is either in lex or
antilex order.  Assuming $\sigma$ is obtained from $\rho$ by a
lex-to-antilex packet flip at $P(Z)$, we know $P(Z)$ forms a saturated chain
in $\rho$ and in $\sigma$.  By \Cref{lem:poset.packet.antichain}, we
know $P(Z)$ forms an antichain in $\P_{w}(n,k)$, so permuting the
elements of $P(Z)$ in $\rho$ results in another linear extension of
$\P_{w}(n,k)$. The restriction of $\sigma$ to any full-packet that is
not $P(Z)$ will match the restriction of $\rho$, while their
restrictions to $P(Z)$ will be reverses of one another. Hence,
$\sigma$ is a linear extension of $\P_w(n,k)$ that restricts to the
lex or antilex order on any full-packet, and
$\Rev(\sigma)=\Rev(\rho)\cup \{Z\}$.

The case where $\sigma$ is obtained from $\rho$ by an antilex-to-lex flip at
$P(Z)$ is proved similarly.  Such a flip removes only $Z$ from the
reversal set.
\end{proof}

\begin{lemma}\label{lem:reversal_is_consistent}
    For any $\rho\in \A_w(n,k)$, the reversal set $\Rev(\rho) \subseteq
\Inv_{k+1}(w)$ satisfies

\begin{itemize}
\item $\Rev(\rho)$ is a lower-order ideal of $\P_w(n,k+1)$, and
\item  the MSZ Condition holds:
$P(Z) \cap \Rev(\rho)$ is a prefix or a suffix of $P(Z)$ for every $Z
\in \Inv_{k+2}(w)$.
\end{itemize}
\end{lemma}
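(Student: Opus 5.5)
The proof treats the two claims separately. In both, the idea is to pick out a few elements of $\Inv_k(w)$ that lie in two or three $(k+1)$-packets at once. The order $\rho$ places them in some order, and that order is forced to agree with the lex/antilex orientation of each packet containing them.

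\emph{Lower order ideal.} Since $\P_w(n,k+1)$ is the transitive closure of the quasi-inversion relations, it suffices to check a single generating relation. Let $Z\in\Qinv_{k+2}(w)$ with $P(Z)\cap\Inv_{k+1}(w)=\{Z_i,Z_{i+1}\}$, where $1\le i\le k+1$. Suppose first that $k-i$ is even, so $(k+1)-i$ is odd and the relation is $Z_i<_{\P_w(n,k+1)}Z_{i+1}$. We must show that $Z_{i+1}\in\Rev(\rho)$ implies $Z_i\in\Rev(\rho)$.

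The packets $P(Z_i)$ and $P(Z_{i+1})$ share the element $E=(Z_i)_i=(Z_{i+1})_i$, which is $Z$ with $z_i,z_{i+1}$ deleted. The remaining elements pair up by index $m\neq i$. By \Cref{lem:permanent_on_semipackets}, rewritten in the index $m$, we have $(Z_i)_m<_{\P_w(n,k)}(Z_{i+1})_m$ for $m<i$ and $(Z_{i+1})_m<_{\P_w(n,k)}(Z_i)_m$ for $m>i$. Since $\rho$ is a linear extension of $\P_w(n,k)$, these relations hold in $\rho$.

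Now assume for contradiction that $Z_{i+1}\in\Rev(\rho)$ but $Z_i\notin\Rev(\rho)$. Then $\rho|_{P(Z_{i+1})}$ is antilex and $\rho|_{P(Z_i)}$ is lex.
\begin{itemize}
\item If $i\le k$, pick $m>i$. Lex on $P(Z_i)$ gives $(Z_i)_m<_\rho E$, antilex on $P(Z_{i+1})$ gives $E<_\rho(Z_{i+1})_m$, and the permanent relation gives $(Z_{i+1})_m<_\rho(Z_i)_m$. These three relations form a cycle.
\item If $i=k+1$, pick $m<i$. Then $E<_\rho(Z_i)_m<_\rho(Z_{i+1})_m<_\rho E$, again a cycle.
\end{itemize}
Either way we have a contradiction. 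When $k-i$ is odd, all relations reverse and the same argument with the roles of $Z_i,Z_{i+1}$ swapped applies. A suitable $m$ always exists because $k\ge1$.

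\emph{MSZ condition.} Let $Z\in\Inv_{k+2}(w)$. By \Cref{lem:prefix_suffix}, it suffices to show that for $1\le h<i<j\le k+2$ the set $\Rev(\rho)\cap\{Z_h,Z_i,Z_j\}$ is neither $\{Z_h,Z_j\}$ nor $\{Z_i\}$. All three of $Z_h,Z_i,Z_j$ lie in $\Inv_{k+1}(w)$, so their packets are full-packets. Consider the three elements $A=Z_{h,i}$, $B=Z_{h,j}$ and $C=Z_{i,j}$ of $\Inv_k(w)$. Their positions in the packets are:
\begin{itemize}
\item in $P(Z_h)$, $A=(Z_h)_{i-1}$ and $B=(Z_h)_{j-1}$;
\item in $P(Z_i)$, $A=(Z_i)_h$ and $C=(Z_i)_{j-1}$;
\item in $P(Z_j)$, $B=(Z_j)_h$ and $C=(Z_j)_i$.
\end{itemize}
Since each packet is in lex or antilex order, we get $Z_h\in\Rev(\rho)\iff A<_\rho B$, $Z_i\in\Rev(\rho)\iff A<_\rho C$, and $Z_j\in\Rev(\rho)\iff B<_\rho C$.

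The reversal set $\{Z_h,Z_j\}$ would force $A<_\rho B<_\rho C<_\rho A$. The reversal set $\{Z_i\}$ would force $A<_\rho C<_\rho B<_\rho A$. Both are impossible in a total order, which proves the MSZ condition.

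The main obstacle is the index bookkeeping in the first part. One has to match the indexing of \Cref{lem:permanent_on_semipackets} to the index $m$ and confirm that $E$ occupies position $i$ in both packets. With the wrong indexing, the chosen $m$ and the direction of the permanent relation would not close up into a cycle.
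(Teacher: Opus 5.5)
Your proof is correct, and you argue the two halves somewhat differently from the paper.

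\emph{Lower order ideal.} Here you follow essentially the paper's route: both arguments combine \Cref{lem:permanent_on_semipackets} with the lex/antilex dichotomy on $P(Z_i)$ and $P(Z_{i+1})$. The paper uses the two extreme indices and phrases the argument as the nesting $(Z_i)_1\le_\rho (Z_{i+1})_1<_\rho (Z_{i+1})_{k+1}\le_\rho (Z_i)_{k+1}$. You instead use the shared element $E=Z\setminus\{z_i,z_{i+1}\}$ and one paired index $m$ to produce a $3$-cycle in $\rho$. One small remark: $k-i$ even already forces $i\le k$, so your sub-case $i=k+1$ only arises after the swap.

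\emph{MSZ condition.} Here your route genuinely differs. The paper restricts $\rho$ to the second packet $P^2(Z)$, regards it as a classical admissible order in $\A(k+2,k)$, and invokes Ziegler's isomorphism theorem (\cref{thm:ziegler}). You instead check the triple criterion of \Cref{lem:prefix_suffix} directly. You use the three $k$-subsets $Z_{h,i},Z_{h,j},Z_{i,j}$, each of which lies in two of the full packets $P(Z_h),P(Z_i),P(Z_j)$.

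What your version buys:
\begin{itemize}
\item It is self-contained and uses only three full packets at a time.
\item It effectively reproves the easy direction of Manin--Schechtman/Ziegler, rather than citing a theorem far stronger than what is needed.
\item Both halves of the lemma now run on the same mechanism: shared elements of overlapping packets force a cycle in $\rho$.
\end{itemize}
The paper's version is shorter on the page and stresses that the MSZ condition is simply inherited from the classical theory.
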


\begin{proof}
First, let $Z \in \binom{[n] }{k+2}$ be a quasi-inversion for $w$ such
that $P(Z)\cap \Inv_{k+1}(w)=\{Z_{i},Z_{i+1}\}$.  Assume
$Z_{i}<_{\P_w(n,k+1)} Z_{i+1}$ and so $k+1-i$ is odd.  If $Z_{i+1} \in
\Rev(\rho)$, then $(Z_{i+1})_1$ and $(Z_{i+1})_{k+1}$ are distinct
since $k\geq 1$ and $(Z_{i+1})_1<_{\rho} (Z_{i+1})_{k+1}$.  Since
$\rho\in \A_w(n,k)$, $\rho$ must be a linear extension of $\P_w(n,k)$.
If $i=1$, then $(Z_{i})_{1}=(Z_{i+1})_{1}$.  Otherwise,
$(Z_{i})_{1}=(Z_{1})_{i-1}<_{\P_w(n,k)}(Z_{1})_{i}=(Z_{i+1})_{1}$
because the parity condition is the same when deleting the smallest
entry first.  Similarly, if $i=k+1$, then
$(Z_{i})_{k+1}=(Z_{i+1})_{k+1}$. Otherwise,
$(Z_{i+1})_{k+1}=(Z_{k+2})_{i+1}<_{\P_w(n,k)}(Z_{k+2})_{i}=(Z_{i})_{k+1}$
because the parity condition flips when deleting the largest entry.
Combining these relations with \cref{lem:permanent_on_semipackets} and
the fact that $\rho\in \A_w(n,k)$ is a linear extension of
$\P_w(n,k)$, we have
\begin{equation}\label{eq:nesting.1}
(Z_{i})_1\leq _{\rho} (Z_{i+1})_1<_{\rho} (Z_{i+1})_{k+1} \leq_{\rho}
(Z_{i})_{k+1}.
\end{equation}
Therefore, $Z_{i}<_{\P_w(n,k+1)} Z_{i+1}$ and $Z_{i+1}\in \Rev(\rho)$
implies $Z_{i}\in \Rev(\rho)$. Similarly, $Z_{i+1}<_{\P_w(n,k+1)}
Z_{i}$ and $Z_{i}\in \Rev(\rho)$ implies $k+1-i$ is even and
\begin{equation}\label{eq:nesting.2}
(Z_{i+1})_1\leq _{\rho} (Z_{i})_1<_{\rho} (Z_{i})_{k+1} \leq_{\rho}
(Z_{i+1})_{k+1}, 
\end{equation}
so $Z_{i+1}\in \Rev(\rho)$.  Hence, $\Rev(\rho)$ is a lower order
ideal of $\P_w(n,k+1)$.

Second, let $Z\in \Inv_{k+2}(w)$, and let $P^2(Z)=\{Z_{i,j}:1\leq
i<j\leq k+2\}$ be the second packet of $Z$.
Observe that
\[\Rev(\rho)\cap P(Z) = \Rev(\rho|_{P^2(Z)})\cap P(Z).
\]
Furthermore, only the relative order of the values in $Z$ are relevant
to showing whether or not $\Rev(\rho|_{P^2(Z)})\cap P(Z)$ is a prefix
or suffix of $P(Z)$ since $Z\in \Inv_{k+2}(w)$. Hence, without loss of
generality the restriction $\rho|_{P^2(Z)}$ can be viewed as an
admissible order on $\binom{[k+2]}{k}$ in the classical sense by
comparing \cref{def:admissible.MS} and \Cref{def:admissible.BCL}.
Therefore, $\Rev(\rho|_{P^2(Z)})\cap P(Z)$ must be a prefix or suffix
of $P(Z)$ by \cref{thm:ziegler}, so $\rho$ satisfies the MSZ
Condition.
\end{proof}

\begin{remark}\label{rem:nested}
The parity condition in \Cref{def:permanent.poset.} is critical to
the proof above.  Observe from \eqref{eq:nesting.1} and
\eqref{eq:nesting.2} that if $X<_{\P_w(n,k+1)} Y$, $\rho\in
\A_w(n,k)$, and $Y \in \Rev(\rho)$, then $P(Y)$ is in antilex order
and nested between $X_{1}$ and $X_{k+1}$ in $\rho$.  In particular,
$P(X)$ is not flippable in $\rho$ whenever $Y \in \Rev(\rho)$.  
\end{remark}

\subsection{Higher Bruhat Orders for Permutations}\label{sub:Higher.for.Sn}

By \Cref{lem:commutations} and \Cref{lem:packet.flips.Sn}, the set of
admissible orders $\A_w(n,k)$ is closed under commutations and
packet flips.  Hence, the classical definition of the $k^{th}$ higher
Bruhat orders from \Cref{subsection:higher-bruhat-order-intro} extends
to the case of arbitrary permutations.

\begin{definition}\label{def:higher.B.Sn} The
$\boldsymbol{k^{th}}$\defn{ higher Bruhat order for $\boldsymbol{w}$,}
denoted $\boldsymbol{\B_w(n,k)}$, is the poset
$(\A_w(n,k)/\hspace{-0.2em}\sim_{w}, \leq)$ where $[\rho]\leq
[\sigma]$ if $\sigma$ can be obtained from $\rho$ by some sequence of
commutations and lex-to-antilex packet flips.
\end{definition}

By \Cref{lem:packet.flips.Sn}, the covering relations in $\B_w(n,k)$
are exactly the lex-to-antilex packet flips on commutation classes in
$\A_w(n,k)/\hspace{-0.2em}\sim_{w}$.  Since each lex-to-antilex packet
flip increases the size of the corresponding reversal set by exactly
one, $\rev([\rho])= \rev([\sigma])-1$ if $[\rho] \lessdot [\sigma]$.
Furthermore, this proves the relations in $\B_w(n,k)$ are
antisymmetric, so $\B_w(n,k)$ is indeed a poset. 

When $k>n$, we know that $\A_w(n,k)$ contains a single element, so $\B_w(n,k)$ is a poset on a single element. We will focus on the cases where $1\leq k\leq n$.

\begin{example}\label{ex:B.645231.6.3}
Consider the admissible orders for $w=(6,4,5,2,3,1)\in \S_6$ from
\cref{ex:admissible order}. By computer, one can verify that
$|\A_w(6,3)/\hspace{-0.2em}\sim_{w}\hspace{-0.2em}|=6$ and that the
six admissible orders give a representative from each of the six
commutation classes of admissible orders in $\A_w(6,3)$. By applying
all possible commutations and packet flips, we find that $\B_w(6,3)$
is the poset on the left in \cref{fig:BnkCnk}. Additionally,
$\P_w(6,4)$ is shown in the middle of \cref{fig:BnkCnk}.  The poset on
reversal sets for each element in $\B_w(6,3)$ given by either single
step inclusion or inclusion is shown on the right of
\cref{fig:BnkCnk}.  Observe that the reversal sets in this case are
always lower order ideals of $\P_w(6,4)$, and they satisfy the MSZ
condition by \Cref{lem:reversal_is_consistent}.
\end{example}

\begin{figure}[h!]
    \centering 
    \begin{tikzpicture}
        %%% Bnk
        \node (rho1) at (0,0) {$[\rho^1]$};
        \node (rho2) at (0,1) {$[\rho^2]$};
        \node (rho4) at (1,2) {$[\rho^4]$};
        \node (rho3) at (-1,2) {$[\rho^3]$};
        \node (rho5) at (0,3) {$[\rho^5]$};
        \node (rho6) at (0,4) {$[\rho^6]$};
        \path (rho1) edge node {} (rho2);
        \path (rho2) edge node {} (rho3);
        \path (rho2) edge node {} (rho4);
        \path (rho3) edge node {} (rho5);
        \path (rho4) edge node {} (rho5);
        \path (rho5) edge node {} (rho6);
        %%% Pnk
        \node (1256) at (4,1) {$[1,2,5,6]$};
        \node (1246) at (3,2) {$[1,2,4,6]$};
        \node (1356) at (5,2) {$[1,3,5,6]$};
        \node (1346) at (4,3) {$[1,3,4,6]$};
        \path (1256) edge node {} (1246);
        \path (1256) edge node {} (1356);
        \path (1246) edge node {} (1346);
        \path (1356) edge node {} (1346);
        %%% Cnk
        \node (r1) at (10,0) {$\emptyset$};
        \node (r2) at (10,1) {$\{[1,2,5,6]\}$};
        \node (r4) at (8,2) {$\{[1,2,4,6],[1,2,5,6]\}$};
        \node (r3) at (12,2) {$\{[1,2,5,6],[1,3,5,6]\}$};
        \node (r5) at (10,3) {$\{[1,2,4,6],[1,2,5,6],[1,3,5,6]\}$};
        \node (r6) at (10,4) {$\Inv_4(w)$};
        \path (r1) edge node {} (r2);
        \path (r2) edge node {} (r3);
        \path (r2) edge node {} (r4);
        \path (r3) edge node {} (r5);
        \path (r4) edge node {} (r5);
        \path (r5) edge node {} (r6);
    \end{tikzpicture}
    \caption{The Hasse diagrams for $\B_w(6,3)$ (left), $\P_w(6,4)$ (middle), and $\C_w(6,4)$ (right) from \cref{ex:BnkCnk}. }
    \label{fig:BnkCnk}
\end{figure}

\begin{lemma}\label{lem:Bn1.weak.order}
The poset $\B_{w}(n,1)$ is isomorphic to weak order
on the interval $[\mathrm{id},w]$.  
\end{lemma}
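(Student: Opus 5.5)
The natural map is the one from \Cref{lem:An1.weak.order}: send an admissible order $\rho=(\{a_1\},\dots,\{a_n\})\in\A_w(n,1)$ to the permutation $u_\rho$ with window notation $(a_1,\dots,a_n)$. That lemma already identifies $\A_w(n,1)$ with the interval $[\mathrm{id},w]$. Two things remain: the commutation classes are singletons, and lex-to-antilex packet flips are exactly the weak order covers inside $[\mathrm{id},w]$.

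\emph{Step 1: classes are singletons.} For $k=1$, any two distinct singletons $\{x\},\{y\}$ both lie in the packet $P([x,y])$ with $[x,y]\in\binom{[n]}{2}$. So they belong to a common packet and never commute in the sense of \Cref{def:commutes.Sn}. Hence $\sim_w$ is trivial on $\A_w(n,1)$, and $\B_w(n,1)$ is in bijection with $[\mathrm{id},w]$ via $[\rho]=\{\rho\}\mapsto u_\rho$.

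\emph{Step 2: covers match.} Full-packets for $k=1$ are the sets $P([x,y])=\{\{x\},\{y\}\}$ with $[x,y]\in\Inv_2(w)$. Such a packet is flippable in $\rho$ exactly when $x$ and $y$ are adjacent in $\rho$. The flip is lex-to-antilex exactly when $\{x\}$ precedes $\{y\}$, with $x<y$.

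Write $\rho$ as $u=u_\rho$, with $u(i)=x$ and $u(i+1)=y$. The flip produces $us_i$, and $\ell(us_i)=\ell(u)+1$ because $u(i)<u(i+1)$. Its inversion set is $\Inv(u)\cup\{[x,y]\}$, matching $\Rev(\sigma)=\Rev(\rho)\cup\{[x,y]\}$ from \Cref{lem:packet.flips.Sn}. Note that $\Rev(\rho)=\Inv_2(u_\rho)$, since $\Rev(\rho)$ consists of the pairs $[x,y]$ with $\{y\}$ before $\{x\}$.

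Conversely, suppose $u\lessdot us_i$ with both in $[\mathrm{id},w]$. The new inversion $[u(i),u(i+1)]$ lies in $\Inv_2(us_i)\subseteq\Inv_2(w)$. So $P([u(i),u(i+1)])$ is a full-packet, adjacent in $\rho$ and in lex order, and flipping it gives $us_i$. Therefore the generating relations of $\B_w(n,1)$ are exactly the weak order covers $u\lessdot us_i$ inside the interval.

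\emph{Step 3: the orders agree.} The order on $\B_w(n,1)$ is the transitive closure of these flips. Weak order on $[\mathrm{id},w]$ is the transitive closure of its own covers within the interval. This holds because the interval is convex in weak order, so any chain from $u$ to $v\le w$ stays in $[\mathrm{id},w]$, and saturated chains can be chosen. Hence the bijection is an isomorphism.

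The only point needing care is the converse in Step 2: a weak-order cover inside the interval must come from a full-packet of $w$. This is handled by noting $\Inv_2(us_i)\subseteq\Inv_2(w)$. Everything else is bookkeeping from the two cited lemmas.
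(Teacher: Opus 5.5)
Your proposal is correct and follows the same route as the paper. It identifies $\A_w(n,1)$ with $[\mathrm{id},w]$ via \Cref{lem:An1.weak.order}, notes that no two singletons commute because any two share a packet, and matches lex-to-antilex packet flips with the covers $u\lessdot us_i$. Your explicit converse (using $\Inv_2(us_i)\subseteq\Inv_2(w)$) and your remark that saturated chains stay inside the interval spell out what the paper's proof leaves to ``the definition of weak order.''
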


\begin{proof}
By \Cref{lem:An1.weak.order} and an abuse of notation equating
singleton sets with values, $\A_{w}(n,1)=[\mathrm{id},w]$.  Since every
pair of numbers $i<j$ corresponds with the packet of some $X \in
\binom{[n] }{2}$, no two singletons commute, so
$\A_{w}(n,1)=\B_{w}(n,1)$ as sets.  Thus, since lex-to-antilex packet flips on
$\B_{w}(n,1)$ correspond with adjacent transpositions increasing
length, $\B_{w}(n,1)$ is isomorphic to weak order on the interval
$[\mathrm{id},w]$ by the definition of weak order.  
\end{proof}

\begin{lemma}\label{lem:Bn2.weak.order}
The Hasse diagram of the poset $\B_w(n,2)$ is isomorphic to
$\cG(w)$, the directed graph on commutation classes of reduced words
for $w$.
\end{lemma}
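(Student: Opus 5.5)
The plan is to use \Cref{lem:An2.weak.order} and the bijection between reduced words of $w$ and reflection orders (discussed after \Cref{def:reflection.order}). Write $\phi(\mathbf{i})=\rho(\mathbf{i})$ for this bijection $\cR(w)\to \A_w(n,2)$. It then suffices to check two things. First, $\phi$ carries commutation equivalence of reduced words onto $\sim_w$ on $\A_w(n,2)$, so it descends to a bijection of vertex sets $\cR(w)/\sim_w \to \A_w(n,2)/\sim_w$. Second, directed braid moves $i(i+1)i\to(i+1)i(i+1)$ correspond exactly to lex-to-antilex packet flips. By \Cref{lem:packet.flips.Sn}, the covers of $\B_w(n,2)$ are exactly lex-to-antilex flips on classes, so these two checks give the isomorphism of directed graphs.

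For the first step, take a reduced word $\mathbf{i}$ with adjacent letters $i_j,i_{j+1}$ and let $u=w^{(j)}$. The consecutive reflections are $\rho_j=(u(a),u(a+1))$ with $a=i_j$, and $\rho_{j+1}=(v(b),v(b+1))$ with $b=i_{j+1}$ and $v=us_a$.
\begin{itemize}
\item If $a,b$ commute (i.e.\ $b\not\equiv a\pm1$), then $v(b)=u(b)$ and $v(b+1)=u(b+1)$. Swapping the letters simply swaps $\rho_j$ and $\rho_{j+1}$, and all other $\rho$'s are unchanged. The four values $u(a),u(a+1),u(b),u(b+1)$ are distinct mod $n$, so the two pairs lie in no common packet $P([x,y,z])$.
\item If $b\equiv a\pm1$, the two pairs share a value, hence lie in a common packet. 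So two consecutive reflections from non-commuting letters never commute with respect to $w$.
\end{itemize}
Conversely, suppose two adjacent entries of a reflection order lie in no common packet and are incomparable in $\P_w(n,2)$. Then by the previous bullet their letters must commute, so swapping them is a commutation of reduced words. This shows $\phi$ intertwines the two commutation relations. The incomparability condition is automatic here, because every reflection order is a linear extension of $\P_w(n,2)$ and the swapped pair shares no packet, hence no quasi-inversion relation.

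For the second step, consider a braid $i(i+1)i$ in positions $j,j+1,j+2$ with $u=w^{(j)}$, $x=u(i)$, $y=u(i+1)$, $z=u(i+2)$. The three reflections are $(x,y),(x,z),(y,z)$, which is the lex order on $P([x,y,z])$; here $x<y<z$ by reducedness. Replacing the triple with $(i+1)i(i+1)$ yields $(y,z),(x,z),(x,y)$, the antilex order, and leaves all other reflections unchanged. So a directed braid is exactly a lex-to-antilex flip of a saturated full packet.

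Conversely, let $P(Z)$ be saturated in lex order in some reflection order. The three consecutive reflections share values pairwise, so their letters are pairwise adjacent, which forces the form $i(i+1)i$ or $(i+1)i(i+1)$. Since the reflections appear in lex order, the first letter transposes $x,y$ at adjacent positions, and this forces the form $i(i+1)i$. For the affine case with $n\ge3$ one must argue modulo $n$. For $n=2$ there are no $3$-inversions and no braids, so both graphs have no edges.

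The main obstacle is this identification in the affine setting: checking that "share a value" matches "adjacent indices mod $n$", and that saturation of a packet in a reflection order pins down a consecutive braid pattern, with the orientation correct modulo $n$. I would handle it by computing the $\rho_j$ directly from $u$ as above, using $n$-periodicity.
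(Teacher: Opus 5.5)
Your plan follows the paper's proof. You identify $\A_w(n,2)$ with reflection orders, show that commutations of reduced words match commutations with respect to $w$ (so the bijection descends to classes), and show that directed braids $p(p+1)p\to(p+1)p(p+1)$ match lex-to-antilex flips, which are the covers of $\B_w(n,2)$. The forward directions are fine, but one step in the converse of the braid argument fails as written.

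\textbf{The braid converse.} The inference ``the three consecutive reflections share values pairwise, so their letters are pairwise adjacent, which forces $i(i+1)i$ or $(i+1)i(i+1)$'' is not valid.
\begin{itemize}
\item Your commutation analysis only constrains \emph{consecutive} letters, so it says nothing about the letters at positions $r$ and $r+2$.
\item Pairwise sharing of values does not single out braids. The word $p(p+1)(p+2)$ yields the reflections $[u(p),u(p+1)]$, $[u(p),u(p+2)]$, $[u(p),u(p+3)]$, which pairwise share $u(p)$. In $\affineS_3$, three distinct letters are even pairwise adjacent mod $3$.
\end{itemize}
What pins the word down is the specific lex pattern $([z_1,z_2],[z_1,z_3],[z_2,z_3])$, tracked through positions, as the paper does.
\begin{enumerate}
\item We have $j_r=p$ with $u(p)=z_1$ and $u(p+1)=z_2$.
\item In $us_p$, the value $z_1$ sits in position $p+1$ and $z_2$ in position $p$. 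The next letter cannot commute with $p$, since the reflection shares $z_1$. It cannot be $p-1$, since that reflection would contain $z_2$. So the next reflection $[z_1,z_3]$ forces $j_{r+1}=p+1$ and $u(p+2)=z_3$.
\item In $us_ps_{p+1}$, the values $z_2,z_3$ occupy positions $p,p+1$, which forces $j_{r+2}=p$.
\end{enumerate}
Your following sentence gestures at this, so replace the adjacency shortcut with this computation.

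\textbf{Incomparability in the commutation step.} This is a smaller point. The absence of a common packet only rules out a direct quasi-inversion relation. It does not rule out comparability in the transitive closure $\P_w(n,2)$. The correct reason for incomparability is that both $\rho(\mathbf{i})$ and the order with the two entries swapped are reflection orders of reduced words. Hence both are linear extensions of $\P_w(n,2)$, and the two entries appear in opposite orders in them.
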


\begin{proof}
If $n \leq 2$, then there are no braid moves and each $\B_w(n,2)$ has
a single element, so the statement is immediate.  Hence, assume
$n \geq 3$.  By \Cref{lem:An2.weak.order}, the admissible orders in
$\A_w(n,2)$ are exactly the reflection orders for $w$.  We write
$\rho(\mathbf{j})$ for the reflection order associated to a reduced word
$\mathbf{j}=j_1\cdots j_\ell \in \cR(w)$. We also write $[\mathbf{j}]$ 
for the commutation class of $\mathbf{j}$ as an element in $\cR(w)/\sim_w$, 
which is the vertex set of $\cG(w)$. 

To compare commutation relations on reduced words and admissible
orders, observe that the case $n=3$ has no nontrivial commutations, so
the commutation comparison below is only needed for $n\geq 4$.  Let
$\rho(\mathbf{j})=(\rho_1,\dots,\rho_\ell)$, fix a position $r$, and
set
\[
    u=s_{j_1}\cdots s_{j_{r-1}}.
\]
Write
\[
    X=\rho_r=[a,b]=[u(j_r),u(j_r+1)]
\]
and
\[
    Y=\rho_{r+1}=[c,d]
    =[u s_{j_r}(j_{r+1}),u s_{j_r}(j_{r+1}+1)].
\]

Suppose first that $j_{r+1}\not\equiv j_{r}, j_{r}+1, j_{r}-1
\pmod{n}$, so $s_{j_{r}}$ and $s_{j_{r+1}}$ commute as simple
transpositions.  Then, $s_{j_r}$ fixes $j_{r+1}$ and $j_{r+1}+1$, so
\[
    Y=[u(j_{r+1}),u(j_{r+1}+1)].
\]
The four positions $j_r,j_r+1,j_{r+1},j_{r+1}+1$ are distinct, and
therefore $|\{a,b,c,d\}|=4$.  Thus $X$ and $Y$ do not belong to a
common packet.

If $\mathbf{j}'$ is obtained from $\mathbf{j}$ by a commutation
swapping $j_r$ and $j_{r+1}$, then $\mathbf{j}'\in \cR(w)$, and
$\rho(\mathbf{j}')$ is obtained from $\rho(\mathbf{j})$ by swapping
the adjacent entries $X$ and $Y$.  Since both $\rho(\mathbf{j})$ and
$\rho(\mathbf{j}')$ are admissible orders, both are linear extensions
of $\P_w(n,2)$, so $X$ and $Y$ must be incomparable in $\P_w(n,2)$.
Hence $X$ and $Y$ commute with respect to $w$ in the sense of
\Cref{def:commutes.Sn}.

Conversely, suppose adjacent entries $X=\rho_r$ and $Y=\rho_{r+1}$
commute with respect to $w$.  Since $\mathbf{j}$ is reduced, we cannot
have $j_r\equiv j_{r+1}$.  If $j_{r+1}\equiv j_{r}\pm 1 \pmod{n}$,
then $X$ and $Y$ share a value.  Indeed, if $j_{r+1}=j_r+1$, then $Y$
contains $u(j_r)$, while if $j_{r+1}=j_r-1$, then $Y$ contains
$u(j_r+1)$.  Thus $X$ and $Y$ would lie in the common packet of their
union, contradicting that they commute with respect to $w$.  Therefore
$j_{r+1}\not\equiv j_{r}, j_{r}+1, j_{r}-1 \pmod{n}$, so the adjacent
values $j_r$ and $j_{r+1}$ commute as simple transpositions.  This
proves that commutations of reduced words correspond exactly to
commutations of the associated admissible orders.  Consequently, the
map
\begin{equation}\label{eq:Gw_Bn2}
 \Phi:\cR(w)/\sim_w \longrightarrow \A_w(n,2)/\hspace{-0.2em}\sim_w,
\end{equation}
sending $[\mathbf{j}]\longmapsto [\rho(\mathbf{j})]$ is a well-defined
bijection.

It remains to compare the directed edges relating braids to packet
reversals.  Suppose $[\rho]\lessdot[\sigma]$ in $\B_w(n,2)$.  By the
definition of the covering relations in $\B_w(n,2)$, after replacing
$\rho$ and $\sigma$ by representatives of their commutation classes,
we may assume that $\sigma$ is obtained from $\rho$ by a
lex-to-antilex packet flip at $P(Z)$ for some $Z=[z_1,z_2,z_3]\in
\Inv_3(w)$.  Thus, for some position $r$,
\[
    (\rho_r,\rho_{r+1},\rho_{r+2})
    =(Z_3,Z_2,Z_1)
    =([z_1,z_2],[z_1,z_3],[z_2,z_3])
\]
and
\[
    (\sigma_r,\sigma_{r+1},\sigma_{r+2})
    =(Z_1,Z_2,Z_3).
\]
Let $\mathbf{j}=j_1\cdots j_\ell$ be the reduced word with
$\rho=\rho(\mathbf{j})$, let $u=s_{j_1}\cdots s_{j_{r-1}}$, and set
$p=j_r$.  Since $\rho_r=[z_1,z_2]$, we have $u(p)=z_1$ and
$u(p+1)=z_2$.  The next inversion in the admissible order is
$\rho_{r+1}=[z_1,z_3]$, which shares the smaller value $z_1$ with
$\rho_r$, so it must be that $j_{r+1}=p+1$ and $u(p+2)=z_3$.  In
$us_{p}s_{p+1}$, the values in positions $p,p+1,p+2$ are
$z_2,z_3,z_1$, so the entry $\rho_{r+2}=[z_2,z_3]$ forces $j_{r+2}=p$.
Hence $\mathbf{j}$ has the form
\[
    \mathbf{j}
    =
    j_1\cdots j_{r-1}\, p(p+1)p\, j_{r+3}\cdots j_\ell .
\]
Replacing the subword $p(p+1)p$ by $(p+1)p(p+1)$ gives another reduced
word for $w$ via a braid relation 
\[
    \mathbf{j}'
    =
    j_1\cdots j_{r-1}\, (p+1)p(p+1)\, j_{r+3}\cdots j_\ell .
\]
A direct calculation of the three reflections in the braid shows that
$\rho(\mathbf{j}')=\sigma$.  Therefore the packet flip defining the cover relation
$[\rho]\lessdot[\sigma]$ corresponds under $\Phi^{-1}$ to the directed
braid
\[
    p(p+1)p \longrightarrow (p+1)p(p+1)
\]
defining the edge $\Phi^{-1}([\rho])=[\mathbf{j}]\to [\mathbf{j}']=\Phi^{-1}([\sigma])$ in $\cG(w)$.

Conversely, suppose $[\mathbf{j}]\to [\mathbf{j}']$ as a 
directed edge in $\cG(w)$.  Without loss of generality, we can assume
\[
    \mathbf{j}
    =
    j_1\cdots j_{r-1}\, p(p+1)p\, j_{r+3}\cdots j_\ell
\]
and
\[
    \mathbf{j}'
    =
    j_1\cdots j_{r-1}\, (p+1)p(p+1)\, j_{r+3}\cdots j_\ell
\]
differ by a directed braid.  Let
$u=s_{j_1}\cdots s_{j_{r-1}}$.  Since $\mathbf{j}$ is reduced, the
three transpositions in the braid all increase the length of the permutation,
 which implies
\[
    u(p)<u(p+1)<u(p+2).
\]
Set $Z=[u(p),u(p+1),u(p+2)]$.  The contribution of the subword
$p(p+1)p$ to $\rho(\mathbf{j})$ is
\[
    (Z_3,Z_2,Z_1),
\]
while the contribution of $(p+1)p(p+1)$ to $\rho(\mathbf{j}')$ is
\[
    (Z_1,Z_2,Z_3).
\]
The prefix and suffix contributions agree outside these three positions.
Thus $\rho(\mathbf{j}')$ is obtained from $\rho(\mathbf{j})$ by a
lex-to-antilex packet flip at $P(Z)$, so 
$\Phi([\mathbf{j}])=[\rho(\mathbf{j})]\lessdot[\rho(\mathbf{j}')]=\Phi([\mathbf{j}'])$
is a cover relation in $\B_w(n,2)$.

The bijection $\Phi$ therefore sends directed edges of $\cG(w)$ exactly
to cover relations in the Hasse diagram of $\B_w(n,2)$, preserving
orientation.  Hence the Hasse diagram of $\B_w(n,2)$ is isomorphic to
$\cG(w)$ as a directed graph.
\end{proof}

Our next goal is to show that each equivalence class $[\rho ] \in
\B_{w}(n,k)$ is exactly the set of linear extensions of a poset.  We
start by introducing directed graphs that generalize the inversion
graphs of permutations and encode a sufficient set of inequalities
satisfied by admissible sequences with a given reversal set.

\begin{definition}\label{def:classical.GR}
For any $R \subseteq \Inv_{k+1}(w)$, define the \defn{reversal graph}, denoted
$G_R$, to be the graph with vertex set $\Inv_{k}(w)$ and directed
edges
\begin{itemize}
\item $X \to Y$ if $X<_{{\P_{w}(n,k)}}Y$ is a quasi-inversion
relation,

\item $X_i\color{red}\to\color{black} X_{i+1}$ for all $1\leq i\leq  k$
if $P(X)$ is a full-packet for $w$ and $X\in R$, and

\item $X_{i+1}\color{blue}\to\color{black} X_i$ for all $1\leq i\leq  k$ if $P(X)$ is a full-packet for $w$ and $X\notin R$. 
\end{itemize}
We refer to the first type of edges as \defn{quasi-inversion
edges}. We refer to the latter two types of edges as \defn{full-packet
edges}, more specifically as \defn{full-antilex} (red) or
\defn{full-lex edges} (blue).  In all cases, any two vertices
connected by an edge differ in exactly one position.  Note, the color
on the edges is only there as a visual reminder of the reason each
edge was added.
\end{definition}

Any directed graph induces a preorder on its vertices by considering
edges $X\to Y$ to imply $X<Y$ and taking the transitive closure.  If
the graph is acyclic then the associated preorder is
antisymmetric. Hence, every acyclic directed graph induces a
corresponding partial order.  A reader familiar with heaps of reduced
words from \cite[Sec. 2]{BJS93} and \cite[Prop. 2.2]{Stem4} may
recognize that when $R\subseteq \binom{[n] }{3}$ and $G_{R}$ is
acyclic, then the associated preorder $(\Inv_2(w),\le_R)$ is
isomorphic to the heap poset of a reduced word for $w$, as mentioned
in the introduction.  We extend the notion of heap posets to all
$k,n,w$.

\begin{definition}\label{def:heap.poset.preorder}
For $R \subseteq \Inv_{k+1}(w)$, the \defn{reversal preorder for
$\mathbf{R}$}, denoted $\boldsymbol{(\Inv_{k}(w),\leq_R)}$, is the
transitive closure of the relations $X<_{R}Y$ for all
(quasi-inversion, full lex, or full antilex) edges $X\to Y$ in
$G_{R}$.  If $G_{R}$ is acyclic, then we refer to $(\Inv_k(w),\le_R)$
as the \defn{heap poset} associated to $R$.
\end{definition}

\begin{remark}\label{rem:k=2graphs}
Let $R\subseteq \Inv_{2}(w)$ be the inversion set of some permutation
$u\in [\mathrm{id},w]$. The corresponding reversal graph $G_{R}$ is
the acyclic oriented complete graph on $\Inv_{1}(w)=\binom{[n]
}{1}=\{\{i \}: i \in [n] \}$ with an edge $\{j \} \to \{i \}$ for all
$[i,j] \in R$ and $\{i \} \to \{j \}$ for all $[i,j] \not \in R$,
which induces a partial order with the unique linear extension $u$.
Observe the graph $I_{u^{-1}}$ defined in \Cref{def:BB.graph} is the
same as $G_{R}$ minus the edges corresponding with $[i,j] \not \in R$,
or equivalently minus edges indexed by $i<j$ with $\{i \} \to \{j \}$.
\end{remark}

\begin{example}\label{ex:GR example}
Consider $w=(6,4,5,2,3,1)\in \S_6$ from \cref{ex:B.645231.6.3}. The
$4$-inversions of $w$
are \[\Inv_4(w)=\{[1,2,4,6],[1,2,5,6],[1,3,4,6],[1,3,5,6]\}.\]
For $R=\{[1,2,5,6],[1,3,5,6]\}\subseteq \Inv_4(w)$, the corresponding 
graph $G_R$ is shown in \cref{fig:Gr
example}.  As one can see from the figure, the structure of $G_R$ as a
directed graph can be complex.  

\begin{figure}[h!]  \centering \begin{tikzpicture} \node
(134) at (0,0) {$[1,3,4]$}; \node (135) at (-1,1) {$[1,3,5]$}; \node
(124) at (1,1) {$[1,2,4]$}; \node (125) at (0,2) {$[1,2,5]$}; \node
(136) at (3,0) {$[1,3,6]$}; \node (126) at (3,1) {$[1,2,6]$}; \node
(156) at (5,0) {$[1,5,6]$}; \node (146) at (5,1) {$[1,4,6]$}; \node
(256) at (8,0) {$[2,5,6]$}; \node (246) at (7,1) {$[2,4,6]$}; \node
(356) at (9,1) {$[3,5,6]$}; \node (346) at (8,2) {$[3,4,6]$};
\draw[->] (134) edge node {} (135); \draw[->] (134) edge node {}
(124); \draw[->] (135) edge node {} (125); \draw[->] (124) edge node
{} (125); \draw[->] (136) edge node {} (126); \draw[->] (156) edge
node {} (146); \draw[->] (256) edge node {} (246); \draw[->] (246)
edge node {} (346); \draw[->] (256) edge node {} (356); \draw[->]
(356) edge node {} (346); \draw[<-,color=red] (125) edge node {}
(126); \draw[<-,color=red] (126) edge node {} (156);
\draw[<-,color=red] (156) edge node {} (256); \draw[<-,color=red]
(135) edge node {} (136); \draw[<-,color=red] (136) edge node {}
(156); \draw[<-,color=red] (156) edge node {} (356);
\draw[->,color=blue] (124) edge node {} (126); \draw[->,color=blue]
(126) edge node {} (146); \draw[->,color=blue] (146) edge node {}
(246); \draw[->,color=blue] (134) edge node {} (136);
\draw[->,color=blue] (136) edge node {} (146); \draw[->,color=blue]
(146) edge node {} (346); \end{tikzpicture} \caption{An example of a
graph $G_R$, with quasi-inversion edges shown in black, full lex edges
in blue, and full antilex edges in red.}  \label{fig:Gr example}
\end{figure}
\end{example}

We say a total order $\rho$ on the vertices of a directed graph $G$
\defn{respects} the directed edge $X\to Y$ in $G$ if $X$ comes before
$Y$ in $\rho$.  If there exists a total order $\rho$ respecting every
directed edge in $G$, then $G$ must be acyclic. In this case, its
transitive closure is a partial order, and the total orders respecting
every edge of $G$ are exactly the linear extensions of this partial
order.  Recall that $\mathscr{L}(P, \leq)$ denotes the set of linear
extensions of the poset $(P,\leq )$.  Using these concepts, we can
relate the acyclic reversal graphs to admissible orders as follows.

\begin{lemma}\label{lem:GR_to_admissible}
Let $R\subseteq \Inv_{k+1}(w)$.  If $\rho\in \A_w(n,k)$ and
$\Rev(\rho)=R$, then $G_R$ is acyclic. Conversely, if $G_R$ is
acyclic, then
\[
    \{\rho \in \A_w(n,k):\Rev(\rho)=R\} =
\mathscr{L}(\Inv_k(w),\le_R).
\]
\end{lemma}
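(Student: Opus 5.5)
The plan is to show that an order $\rho$ on $\Inv_k(w)$ is admissible with reversal set $R$ exactly when $\rho$ respects every edge of $G_R$; both parts of the lemma then follow. The forward direction comes first, and it is a direct translation of the definitions. Suppose $\rho\in\A_w(n,k)$ with $\Rev(\rho)=R$. Every quasi-inversion edge $X\to Y$ of $G_R$ is a generating relation $X<_{\P_w(n,k)}Y$, so $\rho$ respects it, because $\rho$ is a linear extension of $\P_w(n,k)$ by \Cref{def:admissible.BCL}. Now take a full-packet $P(X)$ with $X\in\Inv_{k+1}(w)$. By the full-packet condition, $\rho|_{P(X)}$ is either antilex $(X_1,\dots,X_{k+1})$ or lex $(X_{k+1},\dots,X_1)$, and by the definition of $\Rev$ the first case occurs exactly when $X\in R$. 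In the first case $\rho$ respects the red edges $X_i\to X_{i+1}$, and in the second it respects the blue edges $X_{i+1}\to X_i$. So $\rho$ respects every edge of $G_R$. A total order respecting every edge rules out any directed cycle, so $G_R$ is acyclic; this also gives the inclusion $\subseteq$ in the displayed equality.

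For the converse, assume $G_R$ is acyclic and let $\rho\in\mathscr{L}(\Inv_k(w),\le_R)$. We must show that $\rho$ is admissible and that $\Rev(\rho)=R$.
\begin{itemize}
\item \emph{Linear extension of $\P_w(n,k)$.} The relation $\le_{\P_w(n,k)}$ is the transitive closure of the quasi-inversion relations. Each of these is an edge of $G_R$, so $\le_{\P_w(n,k)}$ is contained in $\le_R$, and $\rho$ is a linear extension of $\P_w(n,k)$.
\item \emph{Full-packet condition.} Fix $X\in\Inv_{k+1}(w)$. If $X\in R$, the red edges form the directed path $X_1\to X_2\to\cdots\to X_{k+1}$ in $G_R$, so $\rho|_{P(X)}=(X_1,\dots,X_{k+1})$ is antilex. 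If $X\notin R$, the blue path $X_{k+1}\to\cdots\to X_1$ forces lex order. So the full-packet condition holds and $\rho\in\A_w(n,k)$.
\item \emph{Reversal set.} The same dichotomy shows $X\in\Rev(\rho)$ if and only if $X\in R$, since lex and antilex order on a packet of size $k+1\ge 2$ are distinct.
\end{itemize}
This gives the inclusion $\supseteq$ and completes the equality.

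The only point needing any care is the edge case of small $k$. When $k=1$, the packet $P(X)$ has size $2$ and is still nontrivial, so lex and antilex still differ and the argument goes through unchanged; when $k=0$ or $k>n$ the sets are trivial. Otherwise the argument is routine: I do not expect a genuine obstacle, since acyclicity in the forward direction is witnessed by $\rho$ itself, and the converse just unwinds the definitions of $G_R$ and $\le_R$.
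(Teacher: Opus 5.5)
Your proof is correct and follows the paper's argument essentially step for step. In the forward direction, $\rho$ itself respects every quasi-inversion edge and every red or blue full-packet edge, which gives both acyclicity and the inclusion $\subseteq$. In the converse, the containment of $\P_w(n,k)$ in $\le_R$ together with the directed paths on each full packet forces admissibility and $\Rev(\rho)=R$.

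One small quibble: your aside that $k=0$ is trivial is not quite right. For $k=0$ every packet is a singleton, so lex and antilex coincide and the displayed equality can fail. This does not matter, since the section assumes $k\ge 1$.
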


\begin{proof}
If $\rho\in \A_w(n,k)$ with $\Rev(\rho)=R$, then $\rho$ is a linear
extension of $\P_w(n,k)$ by definition.  Additionally, $\rho$
restricts to the antilex or lex order on every packet $P(X)\subseteq
\Inv_k(w)$ depending on $X \in \Rev(\rho)$ or $X \not \in
\Rev(\rho)$, respectively. These two possibilities align with the relations $X_1<_R
X_2<_R \ldots <_R X_{k+1}$ or $X_{k+1}<_R X_k <_R \ldots
<_R X_1$ in $(\Inv_k(w),\leq_R)$ depending on whether or not $X
\in R$. Hence, $\rho$ respects every directed edge of $G_R$, so
$G_{R}$ is acyclic and $\rho \in \mathscr{L}(\Inv_k(w),\le_R).$

Conversely, assume $G_{R}$ is acyclic, so $(\Inv_k(w),\leq_R)$ is a
partial order.  By construction, $(\Inv_k(w),\leq_R)$ contains the
permanent poset $\P_w(n,k)$ as a subposet on the same vertex set.
Hence, if $\rho \in \mathscr{L}(\Inv_k(w),\leq_R)$, then $\rho$ is
also a linear extension of $\P_w(n,k)$.  Furthermore, every
full-packet $P(X)\subseteq \Inv_{k}(w)$ must appear in lex or antilex
order since its elements were specifically ordered that way in
$G_{R}$.  In particular, $P(X)$ must be in antilex order if and only
if $X \in R$.  We conclude that $\rho\in \A_w(n,k)$ and $\Rev(\rho)=R$
by \cref{def:admissible.BCL} to complete the proof.
\end{proof}

In the next theorem, we show the elements of $\B_w(n,k)$ are uniquely
determined by their reversal set, as in
\Cref{thm:MS}~\ref{item:unique.rev.set}.  Each commutation class is
the set of linear extensions of the corresponding poset.

\begin{theorem}\label{thm:equiv_class.and.linear.exts}
If $\rho \in \A_{w}(n,k)$ and $R=\Rev(\rho)$, then $G_{R}$ is acyclic
and its commutation equivalence class $[\rho]\in \B_w(n,k)$ is given
by
\begin{equation}\label{eq:rev_inverse.classical}
    [\rho]=\{\sigma \in \A_{w}(n,k): \Rev (\sigma )=R\}
          =\mathscr{L}(\Inv_k(w),\leq_R).
\end{equation}
\end{theorem}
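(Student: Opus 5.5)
The plan is to combine \Cref{lem:GR_to_admissible} with the linear-extension connectivity result \Cref{lem:linear_extensions} of Bj\"orner--Wachs. First, since $\rho\in\A_w(n,k)$ and $\Rev(\rho)=R$, \Cref{lem:GR_to_admissible} immediately gives that $G_R$ is acyclic. It also gives the second equality in \eqref{eq:rev_inverse.classical}, namely $\{\sigma\in\A_w(n,k):\Rev(\sigma)=R\}=\mathscr{L}(\Inv_k(w),\leq_R)$. So it remains to show that $[\rho]$ equals this set.

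For the inclusion $[\rho]\subseteq\{\sigma:\Rev(\sigma)=R\}$, invoke \Cref{lem:commutations}. Every $\sigma\sim_w\rho$ lies in $\A_w(n,k)$ and has $\Rev(\sigma)=\Rev(\rho)=R$.

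For the reverse inclusion, let $\sigma\in\mathscr{L}(\Inv_k(w),\leq_R)$. Both $\rho$ and $\sigma$ are linear extensions of the finite poset $(\Inv_k(w),\leq_R)$. By \Cref{lem:linear_extensions}, $\sigma$ is obtained from $\rho$ by a sequence of adjacent transpositions. Each transposition swaps two elements $X,Y$ that are incomparable in $\leq_R$, and every intermediate order is again a linear extension of $\leq_R$.

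The key step is to check that each such swap is a commutation with respect to $w$ in the sense of \Cref{def:commutes.Sn}. That is, $X$ and $Y$ are incomparable in $\P_w(n,k)$ and do not lie in a common packet.
\begin{itemize}
\item Incomparability in $\P_w(n,k)$ holds because $\leq_R$ contains all relations of $\P_w(n,k)$; its generating quasi-inversion edges are edges of $G_R$.
\item For the packet condition, suppose $X,Y\in P(Z)$ for some $Z\in\binom{[n]}{k+1}$. Since $X\neq Y$ are both $k$-inversions, \Cref{lem:intersection_size} says $P(Z)\cap\Inv_k(w)$ is either $\{Z_i,Z_{i+1}\}$ or all of $P(Z)$.
\item In the first case, $Z$ is a quasi-inversion, so $X=Z_i$ and $Y=Z_{i+1}$ are joined by a quasi-inversion edge of $G_R$ and are comparable.
\item In the second case, $Z\in\Inv_{k+1}(w)$, so the full-packet edges of $G_R$ make $P(Z)$ a chain under $\leq_R$, in antilex or lex order according to whether $Z\in R$. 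Again $X,Y$ are comparable.
\end{itemize}
Both cases contradict incomparability, so each swap is a commutation and $\sigma\sim_w\rho$.

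There is one edge case to handle: for $k=1$, two singletons always share the packet $P(\{x,y\})$ with $[x,y]$ either a 2-inversion or a 2-quasi-inversion. Then $\leq_R$ is a total order and the class is a single element, so the argument above still applies, with $\binom{[n]}{2}$ covering all pairs.

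I expect the main obstacle to be the packet step, particularly making sure that "common packet" is read over all of $\binom{[n]}{k+1}$ and not just full packets. The case analysis via \Cref{lem:intersection_size} resolves this, since any two distinct $k$-inversions in a common packet are forced to be linked by an edge of $G_R$.
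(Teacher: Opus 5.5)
Your proposal is correct and follows the paper's proof essentially step for step. You use \Cref{lem:GR_to_admissible} for acyclicity and the second equality, \Cref{lem:commutations} for $[\rho]\subseteq\{\sigma:\Rev(\sigma)=R\}$, and \Cref{lem:linear_extensions} together with the case analysis from \Cref{lem:intersection_size} to show that every swap of $\leq_R$-incomparable neighbors is a commutation with respect to $w$. Your separate treatment of $k=1$, where a two-element packet intersection is the whole packet and $\leq_R$ is a total order, spells out a point the paper leaves implicit.
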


\begin{proof}
By \Cref{lem:commutations}, we know that if $\sigma \in [\rho]$, 
then $\sigma \in \A_{w}(n,k)$ and $\Rev(\rho)=\Rev(\sigma)$.  Hence,
if $\rho \in \A_{w}(n,k)$ and $R=\Rev(\rho)$, then $G_{R}$ is acyclic
and $[\rho] \subseteq\{\sigma \in \A_{w}(n,k): \Rev (\sigma )=R \}
= \mathscr{L}(\Inv_k(w),\leq_R)$ by \Cref{lem:GR_to_admissible}.

It remains to show that every element of
$\mathscr{L}(\Inv_k(w),\le_R)$ lies in $[\rho]$. By
\cref{lem:linear_extensions}, any two linear extensions of
$(\Inv_k(w),\le_R)$ can be obtained from one another by a sequence of
adjacent transpositions of incomparable elements. If two elements
$X,Y$ are incomparable in $(\Inv_k(w),\le_R)$, then they are
incomparable in $\P_w(n,k)$.  By \cref{lem:intersection_size}, if
$X,Y$ lie in a common packet, then either the packet is full, in which
case the full-packet edges of $G_R$ totally order them, or they form
the two-element packet intersection coming from a quasi-inversion, in
which case they are ordered by a quasi-inversion edge. Thus, $X,Y$
commute with respect to $w$. Hence every linear extension lies in the
commutation equivalence class $[\rho]$.
\end{proof}

\begin{corollary}\label{cor:reversals_unique}
Let $\rho,\sigma \in \A_{w}(n,k)$. Then, $\Rev(\rho)=\Rev(\sigma)$ if
and only if $[\rho]=[\sigma]$.
\end{corollary}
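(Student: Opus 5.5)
This follows directly from \Cref{thm:equiv_class.and.linear.exts}; each direction is a one-line application of it or of an earlier lemma.

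For the forward direction, suppose $\Rev(\rho)=\Rev(\sigma)=R$. Apply \Cref{thm:equiv_class.and.linear.exts} to $\rho$. It gives $[\rho]=\{\tau\in\A_w(n,k):\Rev(\tau)=R\}$. Since $\sigma\in\A_w(n,k)$ and $\Rev(\sigma)=R$, we get $\sigma\in[\rho]$, and therefore $[\sigma]=[\rho]$ because $\sim_w$ is an equivalence relation.

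For the reverse direction, suppose $[\rho]=[\sigma]$, so that $\sigma\sim_w\rho$. Then \Cref{lem:commutations} gives $\Rev(\rho)=\Rev(\sigma)$ directly.

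There is no real obstacle here. All the substantive work sits in \Cref{thm:equiv_class.and.linear.exts}, and in particular in the step there showing that incomparable elements of $(\Inv_k(w),\le_R)$ commute with respect to $w$. The only point needing care is that the theorem's description of $[\rho]$ uses nothing about $\rho$ beyond the set $R=\Rev(\rho)$. Hence the class $\{\tau\in\A_w(n,k):\Rev(\tau)=R\}$ is the same set whether it is computed from $\rho$ or from $\sigma$.
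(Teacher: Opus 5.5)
Your proof is correct and matches the paper's approach: the paper states this corollary without a separate proof, as an immediate consequence of \Cref{thm:equiv_class.and.linear.exts}, whose description of $[\rho]$ as $\{\tau\in\A_w(n,k):\Rev(\tau)=R\}$ gives the forward direction. Invariance of reversal sets under commutations (\Cref{lem:commutations}) gives the reverse direction, exactly as you argue.
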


\begin{corollary}\label{cor:B.equiv.acyclic}
The reversal map induces a bijection
\[
Rev: \B_{w}(n,k) \to \{R \subseteq \Inv_{k+1}(w): G_{R} \text{ is
acyclic} \}, 
\]
where the inverse sends $R$ to the commutation class given by linear
extensions 
\[
    \mathscr{L}(\Inv_k(w),\le_R).
\]
\end{corollary}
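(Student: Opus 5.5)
The statement to prove is \Cref{cor:B.equiv.acyclic}: the reversal map on commutation classes is a bijection onto the subsets $R\subseteq\Inv_{k+1}(w)$ with $G_R$ acyclic. I will assemble it directly from \Cref{lem:GR_to_admissible}, \Cref{thm:equiv_class.and.linear.exts}, and \Cref{cor:reversals_unique}. The argument has three steps: the map is well defined, it is injective, and it is surjective onto the acyclic sets.

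First, the map is well defined. By \Cref{lem:commutations}, every $\sigma\in[\rho]$ has $\Rev(\sigma)=\Rev(\rho)$, so $\Rev([\rho])$ makes sense. By the first part of \Cref{lem:GR_to_admissible}, $G_{\Rev(\rho)}$ is acyclic, so the image lies in the claimed target set.

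Second, injectivity is exactly the forward direction of \Cref{cor:reversals_unique}. If $\Rev(\rho)=\Rev(\sigma)$, then $[\rho]=[\sigma]$; this follows from \eqref{eq:rev_inverse.classical}, since both classes equal $\{\tau\in\A_w(n,k):\Rev(\tau)=R\}$.

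Third, surjectivity and the description of the inverse. Let $G_R$ be acyclic. Then $(\Inv_k(w),\le_R)$ is a finite poset, so it has at least one linear extension $\rho$. For instance, I can topologically sort $G_R$. By the converse part of \Cref{lem:GR_to_admissible}, $\rho\in\A_w(n,k)$ and $\Rev(\rho)=R$, so $R$ is in the image. Applying \Cref{thm:equiv_class.and.linear.exts} to this $\rho$ gives $[\rho]=\mathscr{L}(\Inv_k(w),\le_R)$. Hence the inverse sends $R$ to this set of linear extensions.

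There is essentially no obstacle here. The only point to check is nonemptiness of $\mathscr{L}$, which holds because every finite poset has a linear extension. The edge case $k>n$ is also fine: there $\Inv_{k+1}(w)=\emptyset$, the only subset is $R=\emptyset$, $G_\emptyset$ is the empty graph and hence acyclic, and $\B_w(n,k)$ has the single element $[()]$.
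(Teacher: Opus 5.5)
Your proposal is correct and follows the paper's approach: the paper gives no separate proof, treating this corollary as an immediate consequence of \Cref{lem:GR_to_admissible}, \Cref{thm:equiv_class.and.linear.exts}, and \Cref{cor:reversals_unique}, and you assemble exactly those pieces. You also make explicit the point the paper leaves implicit for surjectivity: every finite poset, including the empty one, has a linear extension, so every acyclic $G_R$ is realized by some admissible order.
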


The goal of the next subsection is to replace the global acyclicity
condition on $G_R$ by local conditions on $R$. These local conditions
are the generalized consistent set conditions defining \(\C_w(n,k+1)\).

\subsection{Consistent Sets for Permutations}\label{sub:consistent.Sn}

The following notion generalizes the consistent sets defined in
\Cref{def:consistent.Z}. Similar to the definition of admissible
orders, the permanent poset will play an essential role in this
generalization.  Here the goal is to define consistent sets in such a
way that the reversal sets of admissible orders for $w$ are
automatically consistent.  Recall we already know two necessary
conditions by \Cref{lem:reversal_is_consistent}.  It will turn out
that these conditions are also sufficient.  Hence, the main theorem of
this subsection shows that every consistent set for $w \in \S_{n}$ is
also the reversal set for a unique equivalence class of admissible
orders for $w$.  Using this characterization, we will complete the
proof of \Cref{thm:nonlongest}.

Note that we momentarily shift indices and define consistency for subsets
of $\Inv_k(w)$.  With this shifted indexing, if $R \subseteq
\Inv_k(w)$, then $G_R$ is a directed graph on $\Inv_{k-1}(w)$
by definition.

\begin{definition}\label{def:consistent.Sn}
    A subset $R\subseteq \Inv_{k}(w)$ is \defn{consistent with respect to $\boldsymbol{w}$} if 
    \begin{itemize}
        \item $R$ is a lower-order ideal of $\P_w(n,k)$, and
	\item  the MSZ Condition holds: $P(X) \cap R$ is a prefix or a
	suffix of $P(X)$ for every $X \in \Inv_{k+1}(w)$.
    \end{itemize}
    When the permutation $w$ is clear from context, we will simply refer to $R$ as
\defn{consistent}.  Define $\boldsymbol{\C_w(n,k)}$ to be the poset on consistent subsets
of $\Inv_k(w)$ with partial order $\leq_1$ generated by single step
inclusion.
\end{definition}

If $k=1$, recall the permanent poset $\P_{w}(n,1)$ is the poset on
$\binom{[n] }{1}=\Inv_{1}(w)$ generated by the 2-quasi-inversion
relations, and its linear extensions are in bijection with
$[\mathrm{id},w]$.  Therefore, the lower order ideals of $\P_{w}(n,1)$
are in bijection with the prefixes of permutations $u \in
[\mathrm{id},w]$. Every lower order ideal of $\P_{w}(n,1)$ satisfies
the MSZ condition for every $X \in \Inv_{2}(w)$, so
\begin{equation}\label{eq:1consistent}
\C_{w}(n,1) = \{\{\{u(1) \},\{u(2) \},\dots
, \{u(j)  \}\}:  0\leq j\leq n,\  u=(u(1),\dots , u(n)) \in
[\id,w] \}.
\end{equation}

\begin{lemma}\label{lem:consistent.k=2}
The weak Bruhat order on $[\mathrm{id},w]$ is isomorphic to
$\C_{w}(n,2)$, and the isomorphism is given by mapping a permutation
to its inversion set.
\end{lemma}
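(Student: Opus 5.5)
The plan is to show that the consistent subsets of $\Inv_2(w)$ are exactly the inversion sets $\Inv_2(u)$ for $u\in[\mathrm{id},w]$. Once this is done, the order is immediate: single step inclusion on these sets is the covering relation of weak order restricted to $[\mathrm{id},w]$. Indeed, $u\lessdot us_i$ adds exactly one inversion, and every single step inclusion $\Inv_2(u)\subset\Inv_2(v)$ with $|\Inv_2(v)\setminus\Inv_2(u)|=1$ comes from such a cover. The map $u\mapsto \Inv_2(u)$ is injective because permutations are determined by their inversion sets.

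\emph{Inversion sets are consistent.} Let $u\in[\mathrm{id},w]$. By \Cref{lem:An1.weak.order}, $u$ is an admissible order in $\A_w(n,1)$. Its reversal set consists of those $[i,j]\in\Inv_2(w)$ with $\{i\}$ before $\{j\}$ reversed, that is, with $j$ appearing before $i$ in $u$. This is exactly $\Inv_2(u)$. Hence $\Inv_2(u)$ is consistent by \Cref{lem:reversal_is_consistent}.

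\emph{Consistent sets are inversion sets.} Let $R\subseteq\Inv_2(w)$ be consistent. I would verify the conditions of \Cref{lem:inversion.sets} for $R$ as a set of pairs in $[n]$; condition (c) is vacuous in the classical case. Take $x<y<z$ and split into cases according to $P([x,y,z])\cap\Inv_2(w)$, using \Cref{lem:intersection_size}.
\begin{itemize}
\item If $[x,y,z]\in\Inv_3(w)$, the MSZ condition says $R\cap P([x,y,z])$ is a prefix or suffix of the lex order $([x,y],[x,z],[y,z])$. This is exactly conditions (a) and (b).
\item If the intersection has at most one element, then $R$ contains at most one of the three pairs, so (a) and (b) hold trivially. (Condition (a) concerns $[x,z]$ alone only when the intersection is $\{[x,z]\}$, which cannot occur since $\Inv_2(w)$ itself satisfies (a).)
\item If the intersection is two-element, then $[x,y,z]$ is a 3-quasi-inversion. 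Then $\Inv_2(w)\cap P=\{[x,y],[x,z]\}$ or $\{[x,z],[y,z]\}$, and by the parity rule $X_2=[x,z]$ is the larger element in $\P_w(n,2)$. Since $R$ is a lower order ideal, $[x,z]\in R$ forces the other pair into $R$, giving (a). Condition (b) is vacuous here because $[x,y]$ and $[y,z]$ are not both in $\Inv_2(w)$.
\end{itemize}
Thus $R=\Inv_2(u)$ for some $u\in\S_n$, and $u\le w$ since $R\subseteq\Inv_2(w)$.

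The main care point is the quasi-inversion case: one must check that the parity rule in \Cref{def:permanent.poset.} for $k=2$ really makes $[x,z]$ the top element in both subcases. For $i=1$, $k-i=1$ is odd, so $X_1<X_2$. For $i=2$, $k-i=0$ is even, so $X_3<X_2$. Both agree with the discussion before \Cref{def:permanent.poset.}.
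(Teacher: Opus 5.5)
Your proof is correct. The converse direction (consistent sets are inversion sets) and the order comparison follow the paper's proof, which also just checks the conditions of \Cref{lem:inversion.sets}. You additionally supply the case split by packet-intersection type via \Cref{lem:intersection_size}, which the paper leaves to the reader.

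The forward direction is where you take a different route. The paper argues directly. \Cref{lem:inversion.sets}(a),(b) give the MSZ condition for $\Inv_2(u)$. It then observes that on a two-element intersection $|\Inv_2(w)\cap P(X)|=2$, the quasi-inversion relation in $\P_w(n,2)$ encodes exactly the same prefix/suffix requirement, so $\Inv_2(u)$ is a lower order ideal.

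You instead identify $u$ with an element of $\A_w(n,1)$ via \Cref{lem:An1.weak.order}, note that $\Rev(u)=\Inv_2(u)$ because $\Inv_2(u)\subseteq\Inv_2(w)$, and invoke \Cref{lem:reversal_is_consistent} at $k=1$. This is legitimate, since that proof explicitly allows $k\ge 1$.

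Your route shows that this half of the lemma is just the $k=1$ instance of ``reversal sets are consistent,'' which fits it into the map $\B_w(n,1)\to\C_w(n,2)$. The paper's route is more elementary and self-contained. The MSZ half of \Cref{lem:reversal_is_consistent} passes through Ziegler's \Cref{thm:ziegler}, whose $k=1$ case amounts to \Cref{lem:inversion.sets}(a),(b) on three values.
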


\begin{proof}
By \Cref{lem:inversion.sets}, every inversion set $\Inv_{2}(u)$ for a
permutation $ u \in [\mathrm{id},w]$ is a subset of $\Inv_{2}(w)$ such
that $\Inv_{2}(u)\cap P(X)$ is a prefix or a suffix of $P(X)$.  Note
that if $|\Inv_{2}(w)\cap P(X)|=2$, then the corresponding
quasi-inversion relation in $\P_{w}(n,2)$ encodes exactly the same
requirement on the lower order ideals of $\P_{w}(n,2)$ by the parity
condition. Thus, for all $u\in [\mathrm{id},w]$, both conditions for
consistent sets for $w$ in \Cref{def:consistent.Sn} hold and
$\Inv_{2}(u) \in \C_{w}(n,2)$.  Conversely, a consistent set $R \in
\C_{w}(n,2)$ satisfies the  conditions of \Cref{lem:inversion.sets} so
$R$ is the inversion set of some permutation $u$.  Furthermore, since
$R \subseteq \Inv_{2}(w)$, we know $u\in [\mathrm{id},w]$ in weak
order.  Therefore, $\Inv_{2}: [\mathrm{id},w] \to \C_{w}(n,2)$ is a
bijection.  Since the covering relations for both posets are given by
single step inclusion, the bijection is a poset isomorphism.
\end{proof}

\begin{lemma}\label{lem:reversal_is_consistent.Sn}
For any $\rho\in \A_w(n,k)$, the reversal set $\Rev(\rho) \in \C_w(n,k+1)$. Furthermore, $\Rev(\rho)$ is constant on the equivalence class $[\rho]$, so $\Rev:\B_w(n,k) \to \C_w(n,k+1)$ is a well-defined order-preserving map.
\end{lemma}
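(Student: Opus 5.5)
The statement has three parts: $\Rev(\rho)\in\C_w(n,k+1)$, the reversal set is constant on commutation classes, and the induced map is order-preserving. The first two are essentially already in hand, so the plan is to assemble earlier results and then check the order relation on covers.

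First, for $\rho\in\A_w(n,k)$, \Cref{lem:reversal_is_consistent} gives both defining properties of consistency in \Cref{def:consistent.Sn}. It shows $\Rev(\rho)\subseteq\Inv_{k+1}(w)$ is a lower order ideal of $\P_w(n,k+1)$. It also shows the MSZ condition holds for every $Z\in\Inv_{k+2}(w)$. Hence $\Rev(\rho)\in\C_w(n,k+1)$.

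Second, for well-definedness, \Cref{lem:commutations} says that $\sigma\sim_w\rho$ implies $\Rev(\sigma)=\Rev(\rho)$. So $\Rev([\rho])$ is a well-defined map $\B_w(n,k)\to\C_w(n,k+1)$. By \Cref{cor:reversals_unique} this map is in fact injective, though injectivity is not needed here.

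Third, for order preservation, it suffices to check the generating relations of $\B_w(n,k)$ from \Cref{def:higher.B.Sn}. These are commutations, which preserve $\Rev$, and lex-to-antilex packet flips. Suppose $\sigma'$ is obtained from $\rho'\in[\rho]$ by a lex-to-antilex flip at $P(Z)$ for some $Z\in\Inv_{k+1}(w)$. Then \Cref{lem:packet.flips.Sn} gives $\sigma'\in\A_w(n,k)$ and $\Rev(\sigma')=\Rev(\rho)\cup\{Z\}$ with $Z\notin\Rev(\rho)$. Both sets are consistent by the first step, so this is a single step inclusion, i.e.\ a cover relation in $\C_w(n,k+1)$.

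A general relation $[\rho]\le[\sigma]$ is a finite sequence of such moves. Each intermediate reversal set is consistent, so chaining the single step inclusions gives $\Rev(\rho)\le_1\Rev(\sigma)$ by transitivity of the order generated by single step inclusion.

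The only point needing care is that $\le_1$ is generated by single step inclusions between consistent sets. This is why each intermediate admissible order must be shown to have a consistent reversal set, which the first step supplies. No genuine obstacle is expected; the substantive difficulty, namely surjectivity and the converse order relation, belongs to the later theorem rather than to this lemma.
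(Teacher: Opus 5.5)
Your proposal is correct and matches the paper's proof. The paper likewise combines the consistency lemma for reversal sets, the invariance of $\Rev$ under commutations, and the fact that a lex-to-antilex packet flip adds exactly one element to the reversal set. Your remark that each intermediate reversal set is consistent, so the chained single step inclusions are genuine relations in $\C_w(n,k+1)$, spells out what the paper leaves implicit.
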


\begin{proof}
The first claim is proved by comparing
\Cref{lem:reversal_is_consistent} and \Cref{def:consistent.Sn}.  The
set $\Rev(\rho)$ is constant on the equivalence class $[\rho]$ by
\Cref{lem:commutations}, so the map $\Rev_{n,k,w}: \B_w(n,k) \to
\C_w(n,k+1)$ is well-defined.  The map is order-preserving by
\Cref{lem:packet.flips.Sn} and \Cref{def:higher.B.Sn}.
\end{proof}

\begin{example}\label{ex:empty.full}
Both $\emptyset$ and $\Inv_{k}(w)$ are consistent sets in $\C_w(n,k)$.
Indeed, both are naturally lower order ideals of $\P_{w}(n,k)$.
Furthermore, for each $X \in \Inv_{k+1}(w)$, $P(X)\cap
\Inv_{k}(w)=P(X)$ and both $\emptyset$ and $P(X)$ are prefixes of $P(X)$,
so the $MSZ$ condition holds.  In particular, $\C_w(n,k)=\{\emptyset
\}$ for all $k>n$.  However, it is not clear directly from the
definition that $\emptyset$ and $\Inv_{k}(w)$ are comparable in
$\C_w(n,k)$ using single step inclusion order for $1<k<n$, but it will
follow from \Cref{thm:nonlongest}.
\end{example}

\begin{example}
For $w=(3,4,2,1) \in \S_{4}$, recall $\P_{w}(4,1)$ is the poset on vertices
$\{\{1 \},\{2 \},\{3 \},\{4 \} \}$ with the single quasi-inversion
relation $\{3 \}<_{\P_{w}(4,1)}\{4 \}$.  The lower-order ideals of
$\P_{w}(4,1)$ are exactly the subsets $I \subseteq \{\{1 \},\{2 \},\{3
\},\{4 \} \}$ such that $\{4 \} \in I$ implies $\{3 \} \in I$.  The
MSZ Condition is trivially satisfied in all cases when $k=1$, so
$\C_{w}(4,1)$ is the set of all lower order ideals of $\P_{w}(4,1)$
ordered by single step inclusion. Such lower order ideals correspond
with subsets of $[4]$ that occur as prefixes of permutations $u \in
[\mathrm{id},w]$ in window notation.
\end{example}

\begin{example}
Consider $w_0\in \S_n$ and any $1\leq k\leq n$.  We know $w_{0}$ has
no $k$-quasi-inversions, so the permanent poset $\P_{w_{0}}(n,k)$ is
an antichain on the set $\Inv_{k}(w_{0})=\binom{[n] }{k}$.  Therefore,
every subset of $\Inv_{k}(w_{0})$ is a lower-order ideal of
$\P_{w_{0}}(n,k)$.  Consequently, the definitions for $\A_{w_0}(n,k)$,
$\B_{w_0}(n,k)$, and $\C_{w_0}(n,k)$ reduce to the ones in
\cref{subsection:higher-bruhat-order-intro} requiring only the
full-packet condition or the MSZ condition accordingly.
\end{example}

\begin{example}\label{ex:BnkCnk}
Continuing with $w=(6,4,5,2,3,1)\in \S_6$ from \cref{ex:B.645231.6.3},
the poset $\C_w(6,4)$ is exactly the poset on the right in
\cref{fig:BnkCnk}, which is also the poset on reversal sets for each
element in $\B_w(6,3)$ ordered by single step inclusion.  To verify
the Hasse diagram, observe that all of the lower-order ideals of
$\P_w(6,4)$ are consistent since $\Inv_5(w)=\emptyset$ and the MSZ
condition is trivially satisfied since there are no prefix or suffix
requirements in this case.  Observe that $\B_w(6,3)$ and $\C_w(6,4)$
are isomorphic, and an explicit isomorphism is given by
$\Rev_{6,3,w}$.
\end{example}

\begin{example}
    For a more complex example of a consistent set, consider $w=(6,4,5,2,3,1)$ with $k=3$. From \cref{fig:permanent.example}, one can observe that 
    \[R=\{[1,3,4],[1,3,5],[1,3,6],[1,5,6],[2,5,6],[3,5,6]\}\]
    is a lower-order ideal of $\P_w(6,3)$. Its intersections with the packets of the four 4-inversions of $w$ are
    \begin{equation}
        \begin{split}
            R\cap P([1,2,4,6]) & = \emptyset, \\
            R\cap P([1,2,5,6]) & = \{[1,5,6],[2,5,6]\}, \\
            R\cap P([1,3,4,6]) & = \{[1,3,4],[1,3,6]\}, \\
            R\cap P([1,3,5,6]) & = \{[1,3,5],[1,3,6],[1,5,6],[3,5,6]\}.
        \end{split}
    \end{equation}
    Each of the above is a prefix or suffix of the respective packet, so $R\in \C_w(n,3)$. 
\end{example}

In the next lemma, we will show that $R \in \C_w(n,k)$ is necessary
for $G_{R}$ to be acyclic for $k>1$.  In the case $k=1$, we observe
that $G_{R}$ is the single vertex graph with no edges for all $R
\subseteq \binom{[n] }{1}$ since $\Inv_{0}(w)=\{\emptyset \}$.  Hence,
we exclude the $k=1$ case below.

\begin{lemma}\label{lem:not_consistent_cycle.Sn} Assume $k>1$. If
$R\subseteq \Inv_k(w)$ is not consistent, then $G_R$ contains a cycle.
\end{lemma}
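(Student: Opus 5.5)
The plan is to split according to which of the two consistency conditions in \Cref{def:consistent.Sn} fails, and in each case to exhibit an explicit directed cycle in $G_R$ on the vertex set $\Inv_{k-1}(w)$. Throughout, $G_R$ has quasi-inversion edges from $\P_w(n,k-1)$ together with full-packet edges for each $X\in\Inv_k(w)$. These are antilex chains $X_1\to\cdots\to X_k$ when $X\in R$, and lex chains $X_k\to\cdots\to X_1$ when $X\notin R$.

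\emph{Case 1: $R$ is not a lower order ideal of $\P_w(n,k)$.} Since $\P_w(n,k)$ is generated by quasi-inversion relations, there is a generating relation that is violated. So there is a quasi-inversion $Z\in\Qinv_{k+1}(w)$ with $P(Z)\cap\Inv_k(w)=\{Z_i,Z_{i+1}\}$, a smaller element $A$ among $Z_i,Z_{i+1}$, and a larger element $B$, such that $B\in R$ but $A\notin R$.

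Take the parity where $Z_i<Z_{i+1}$, so $k-i$ is odd. Then $Z_{i+1}\in R$ and $Z_i\notin R$. Replay the nesting argument of \Cref{lem:reversal_is_consistent} with $k$ replaced by $k-1$, using the paths of $G_R$ in place of $\rho$. The full-antilex chain of $Z_{i+1}$ gives a path $(Z_{i+1})_1\to\cdots\to(Z_{i+1})_k$. The full-lex chain of $Z_i$ gives a path $(Z_i)_k\to\cdots\to(Z_i)_1$.

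The quasi-inversion edges supplied by \Cref{lem:permanent_on_semipackets} link these two paths. With $h=1$ they give $(Z_i)_1\to(Z_{i+1})_1$, or equality when $i=1$. With the top index they give $(Z_{i+1})_k\to(Z_i)_k$, or equality when $i=k$. Concatenating yields the closed walk
\[
(Z_i)_1\to(Z_{i+1})_1\to\cdots\to(Z_{i+1})_k\to(Z_i)_k\to\cdots\to(Z_i)_1.
\]
This walk is nontrivial because $k>1$ forces $(Z_{i+1})_1\neq(Z_{i+1})_k$. Hence $G_R$ contains a cycle. The opposite parity is symmetric, mirroring \eqref{eq:nesting.2}.

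\emph{Case 2: the MSZ condition fails for some $X\in\Inv_{k+1}(w)$.} Apply \Cref{lem:prefix_suffix} to obtain indices $h<i<j$ with $R\cap\{X_h,X_i,X_j\}$ equal to either $\{X_h,X_j\}$ or $\{X_i\}$. Set $Y=X_{h,i,j}$-style indices aside and work in the second packet of $X$. All of $P^2(X)=\{X_{a,b}\}$ lie in $\Inv_{k-1}(w)$, and every $X_a$ is a full $k$-inversion, so all edges among these vertices are full-packet edges.

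Relabel by the order-preserving bijection $X\cong[k+1]$. The subgraph of $G_R$ on $P^2(X)$ is then exactly the classical graph for $R\cap P(X)$ inside $\binom{[k+1]}{k-1}$. For the classical case it is known, via \Cref{thm:ziegler} and \Cref{lem:GR_to_admissible} applied to $w_0$, that inconsistency forces a cycle. To avoid circularity, I would instead exhibit the cycle directly. Restrict to the three vertices $X_{h,i},X_{h,j},X_{i,j}$ and use the packets $P(X_h),P(X_i),P(X_j)$, each of which contains two of them.

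Take the case $R\cap\{X_h,X_i,X_j\}=\{X_h,X_j\}$. In $X_h$ the element $X_{h,i}$ omits the smaller remaining index $i$, and $X_{h,j}$ omits $j$. Antilex order therefore gives $X_{h,i}\to X_{h,j}$. In $X_j$, antilex order gives $X_{h,j}\to X_{i,j}$. In $X_i\notin R$, lex order gives $X_{i,j}\to X_{h,i}$. These three edges form a directed cycle, since chains within a packet give directed paths by transitivity.

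The case $\{X_i\}$ is the reverse of this and gives the reversed cycle.

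The main obstacle I expect is the index bookkeeping in both cases. In Case 1 one must get the parities and the boundary cases $i=1$ and $i=k$ right in \Cref{lem:permanent_on_semipackets} after the shift $k\mapsto k-1$. In Case 2 one must translate ``omit the $a$-th element of $X_b$'' into the indices of $X$ correctly. Both computations are routine checks modeled on earlier proofs.
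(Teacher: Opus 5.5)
Your argument is correct, but it takes a different route from the paper's. The paper proves the contrapositive in two lines. If $G_R$ is acyclic, then any linear extension of $(\Inv_{k-1}(w),\leq_R)$ is an admissible order in $\A_w(n,k-1)$ with reversal set $R$ by \Cref{lem:GR_to_admissible}, and reversal sets are consistent by \Cref{lem:reversal_is_consistent.Sn}. You instead build the cycle by hand.

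\emph{Case 1.} This is the paper's nesting argument from \Cref{lem:reversal_is_consistent}, with the order $<_\rho$ replaced by directed paths in $G_R$. The shifted parities from \Cref{lem:permanent_on_semipackets} work out, and so do the boundary equalities at $i=1$ and $i=k$. The hypothesis $k>1$ makes the closed walk nontrivial.

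\emph{Case 2.} This is where the routes genuinely diverge. For the MSZ part, the paper restricts to the second packet and invokes Ziegler's isomorphism \Cref{thm:ziegler}. You use only the local criterion \Cref{lem:prefix_suffix} and exhibit an explicit cycle through $X_{h,i}$, $X_{h,j}$, $X_{i,j}$ inside $P(X_h)$, $P(X_j)$ and $P(X_i)$. The identifications $X_{h,i}=(X_h)_{i-1}$, $X_{h,j}=(X_h)_{j-1}=(X_j)_h$, $X_{i,j}=(X_j)_i=(X_i)_{j-1}$ and $X_{h,i}=(X_i)_h$ give exactly the orientations you claim in both the $\{X_h,X_j\}$ and $\{X_i\}$ cases.

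\emph{Comparison.} Your route is self-contained and local: it avoids \Cref{thm:ziegler} and the admissible-order machinery, and it shows the offending cycle lives in the packets of at most three $k$-inversions. The paper's route is shorter and makes the conceptual point that the lemma is just the contrapositive of ``reversal sets are consistent.'' That is also why it transfers verbatim to the affine version \Cref{lem:not_consistent_cycle}; your Case 1 would need a separate congruence-edge variant there.

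\emph{Circularity.} Your worry is unfounded: neither \Cref{lem:GR_to_admissible} nor \Cref{lem:reversal_is_consistent.Sn} depends on this lemma, so the paper's short argument is legitimate. The stray phrase ``Set $Y=X_{h,i,j}$-style indices aside'' plays no role and can be deleted.
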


\begin{proof}
By \Cref{lem:GR_to_admissible}, we know that if $G_R$ is acyclic, then
there exists an admissible order $\rho \in \A_w(n,k-1)$ such that
$\Rev(\rho)=R$.  By \Cref{lem:reversal_is_consistent.Sn}, the reversal
sets of admissible orders are consistent.  Thus, $G_R$ is acyclic
implies $R$ is consistent, and the proof follows using the
contrapositive statement.
\end{proof}

By \Cref{cor:B.equiv.acyclic} and
\Cref{lem:reversal_is_consistent.Sn}, we know $\Rev_{n,k,w}: \B_w(n,k)
\to \C_w(n,k+1)$ is a well-defined injective map.  In order to prove
this is a bijection as required for \Cref{thm:nonlongest}, it remains
to show that every consistent set in $\C_w(n,k+1)$ is the reversal set
for some admissible order in $\A_w(n,k)$.  The next theorem shows that
consistent sets characterize the acyclic reversal graphs for $w$.

\begin{theorem}\label{lem:GRacyclic}
Let $k,n\in\mathbb{Z}$ with $k>1$ and $n>0$, let $w\in\S_n$, and
let $R\subseteq\Inv_k(w)$. The graph $G_R$ is acyclic if and only if
$R\in\C_w(n,k)$.
\end{theorem}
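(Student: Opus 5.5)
The direction ``$G_R$ acyclic $\Rightarrow R\in\C_w(n,k)$'' is exactly \Cref{lem:not_consistent_cycle.Sn}, so the work is the converse. Given $R\in\C_w(n,k)$, I would show $G_R$ is acyclic by exhibiting a total order on $\Inv_{k-1}(w)$ that respects every edge of $G_R$. By \Cref{lem:GR_to_admissible}, this is the same as constructing an admissible order in $\A_w(n,k-1)$ with reversal set $R$. I would follow the deletion/contraction strategy behind Ziegler's proof of \Cref{thm:ziegler}, using induction on $n$. The case $n\le k$ is trivial: $\Inv_k(w)$ then has at most one element and $G_R$ has only the edges of a single packet.

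For the inductive step, let $p=w^{-1}(n)$ and let $w'\in\S_{n-1}$ be $w$ with the value $n$ deleted from its window. Every $(k-1)$-inversion of $w$ either omits $n$, and is then a $(k-1)$-inversion of $w'$, or ends in $n$. Those ending in $n$ correspond, by dropping $n$, to $(k-2)$-inversions of the permutation $w''$ obtained from $w$ by keeping only the values in positions after $p$. I split $R$ accordingly:
\[
R'=\{X\in R: n\notin X\}\subseteq \Inv_k(w'),\qquad R/n=\{X\setminus\{n\}: n\in X\in R\}\subseteq \Inv_{k-1}(w'').
\]
The first step is to check that $R'$ and $R/n$ are consistent for $w'$ and $w''$ respectively. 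The MSZ condition restricts directly to these smaller configurations. For the lower-ideal condition, I would verify that every quasi-inversion relation of $\P_{w'}(n-1,k)$ is a quasi-inversion relation of $\P_w(n,k)$, which should be immediate. For $R/n$, I would verify the same for quasi-inversion relations after appending $n$, checking carefully that the parity $(k-i)$ in \Cref{def:permanent.poset.} transforms correctly; \Cref{lem:permanent_on_semipackets} is the model computation here. By induction, this yields admissible orders $\rho'$ for $w'$ with reversal set $R'$ and $\rho''$ for $w''$ with reversal set $R/n$. The base case $k=2$ is already covered by \Cref{lem:consistent.k=2} together with \Cref{lem:An2.weak.order}, since consistent sets in $\C_w(n,2)$ are inversion sets and reflection orders realize them.

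The second step is to glue $\rho'$ and $\rho''$ into an order on all of $\Inv_{k-1}(w)$. As in the classical single-element-extension picture, each $Y\cup\{n\}$ with $Y$ in $\rho''$ should be inserted into $\rho'$ immediately after the ``cut'' determined by the set $\{Z\setminus\{n\}\}$, where $Z$ ranges over the packets containing $Y\cup\{n\}$ whose membership in $R$ forces $Y\cup\{n\}$ to come after a given element of $\rho'$. Concretely, I would define the position of $Y\cup\{n\}$ as the latest element of $\rho'$ that must precede it, as dictated by the full-packet edges and quasi-inversion edges of $G_R$ joining $Y\cup\{n\}$ to sets not containing $n$. I would then use the MSZ condition on the packets of $(k+1)$-inversions containing $n$ to show that these cut positions are monotone along $\rho''$. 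Monotonicity is what makes the glued list respect all edges, and \Cref{lem:GR_to_admissible} then finishes the proof.

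The main obstacle is this gluing step. The difficulty is showing that the quasi-inversion edges between elements containing $n$ and elements not containing $n$ never force an inserted element both before and after some element of $\rho'$. This is where the lower-ideal condition on $R$ in $\P_w(n,k)$ must be used, in the nested form described in \Cref{rem:nested}. If the direct cut argument becomes unwieldy, my fallback is a minimal-counterexample argument: take a shortest cycle in $G_R$, use the potential $F=\sum_j F_j\circ M$ from \Cref{lem:diagonal_differences} to show the cycle must contain full-packet edges from at least two packets sharing a $(k+1)$-set, and then localize the cycle to a single $(k+1)$- or $(k+2)$-subset, where the classical \Cref{lem:prefix_suffix} gives a contradiction.
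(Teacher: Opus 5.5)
Your skeleton is the paper's: induction on $n$, with the sets avoiding $n$ handled by deleting $n$ and the sets containing $n$ handled by passing to $w''$. However, the contraction step is stated wrongly. Removing the final entry $n$ sends the pair $X_i,X_{i+1}$ of a $(k+1)$-set to the pair $(X\setminus\{n\})_i,(X\setminus\{n\})_{i+1}$ of a $k$-set. The parity $k-i$ therefore flips, and every quasi-inversion relation is reversed (\Cref{lem:contract_n}(a)). So $R/n$ is an \emph{upper} order ideal, not a consistent set. Concretely, take $w=(6,4,5,2,3,1)$ and $R=\{[1,2,5,6],[1,3,5,6]\}\in\C_w(6,4)$. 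Then $w''=(4,5,2,3,1)$ and $R/6=\{[1,2,5],[1,3,5]\}$, which omits $[1,3,4]<_{\P_{w''}(5,3)}[1,3,5]$. What is consistent is the complement $\Inv_{k-1}(w'')\setminus(R/n)$. The subgraph of $G_R$ on the $n$-containing sets is the \emph{reverse} of that complement's reversal graph (\Cref{lem:V1}). So $\rho''$ has to be built for the complement and then reversed; this part is repairable.

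The genuine gap is the gluing step, which you leave open. The missing idea is a three-block partition. Let $V_1$ be the sets containing $n$, and split the sets avoiding $n$ into $V_0=\{X:X\cup\{n\}\notin R\}$ and $V_2=\{X:X\cup\{n\}\in R\}$. The paper shows (\Cref{lem:partition}) that every edge of $G_R$ goes from $V_a$ to $V_b$ with $a\le b$. Listing $V_0$, then $V_1$, then $V_2$, each in a topological order of its inductively acyclic induced subgraph, then respects every edge (\Cref{cor:partition}).

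In your language, all $n$-containing sets go into a \emph{single} slot. That only works for a $\rho'$ in which all of $V_0$ precedes all of $V_2$. You take $\rho'$ arbitrary from the induction hypothesis, and it is determined only up to commutation. So it can put some $A\in V_2$ before an incomparable $B\in V_0$. If a chain of $n$-containing vertices is forced after $B$ and before $A$, no insertion position exists, whatever monotonicity of cuts along $\rho''$ you prove.

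Establishing the partition property is where the work lies. Edges with exactly one endpoint containing $n$ are a short check. Excluding edges from $V_2$ to $V_0$ among sets avoiding $n$ needs the MSZ condition on $P(Z\cup\{n\})$ together with the lower-ideal condition across quasi-inversions. The paper reduces this by deletion and double contraction (\Cref{lem:reduction_remove2}) to $n=k+2\le 6$ and checks those cases by computer.

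Your fallback does not fill this gap. The potential $\sum_j F_j\circ M$ is not monotone along full-packet edges, and nothing confines a shortest cycle to one $(k+1)$- or $(k+2)$-set. Localizing cycles is exactly the hard part (compare \Cref{sec:affineGR}).

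Separately, your base case cites the wrong result. For $k=2$ you need an order on $\Inv_1(w)$ with reversal set $R$, namely the window of the $u\le w$ with $\Inv_2(u)=R$ (\Cref{lem:An1.weak.order}), not a reflection order.
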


We have shown $R$ must necessarily be consistent in order for $G_{R}$
to be acyclic in \Cref{lem:not_consistent_cycle.Sn}.  The proof that
$R$ being consistent is sufficient will be postponed until
\Cref{sub:G_R}, as it requires induction with several additional
tools.  For now, assume $G_{R}$ is a directed acyclic graph on
$\Inv_{k}(w)$ for all $R\in \C_w(n,k+1)$, so $(\Inv_{k}(w),\leq_{R})$
is a partial order.    The following statement is a generalization of
Ziegler's \cref{thm:ziegler}.

\begin{theorem}\label{thm:Bnk_iso_Cnk}
    Let $k,n\in \mathbb{Z}$ be positive integers and $w\in \S_n$.  The
map $\Rev_{n,k,w}: \B_w(n,k) \to \C_w(n,k+1)$ is a poset isomorphism
with inverse given by
\begin{equation}\label{eq:rev_inverse.iso.inv.class}
    \Rev_{n,k,w}^{-1}(R)=\mathscr{L}(\Inv_k(w),\leq_R).
\end{equation}
In particular, there exists a commutation equivalence class of
$k$-admissible orders $[\rho]\in \B_w(n,k)$  with $\Rev([\rho])=R$
for every $R \in \C_w(n,k+1)$.
\end{theorem}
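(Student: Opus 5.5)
The plan is to combine the bijection from \Cref{cor:B.equiv.acyclic} with \Cref{lem:GRacyclic}, and then show that the inverse map is order-preserving on cover relations.

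\emph{Bijectivity and the inverse formula.} By \Cref{lem:reversal_is_consistent.Sn}, $\Rev_{n,k,w}$ is a well-defined order-preserving map $\B_w(n,k)\to\C_w(n,k+1)$. By \Cref{cor:B.equiv.acyclic}, it is injective, and its image is exactly $\{R\subseteq\Inv_{k+1}(w): G_R\text{ acyclic}\}$. By \Cref{lem:GRacyclic}, applied with $k+1>1$ and taken as given here, this image is precisely $\C_w(n,k+1)$. So $\Rev_{n,k,w}$ is a bijection, and \Cref{cor:B.equiv.acyclic} gives the inverse formula \eqref{eq:rev_inverse.iso.inv.class}. In particular every $R\in\C_w(n,k+1)$ is the reversal set of some class.

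\emph{The inverse preserves covers.} The order on $\C_w(n,k+1)$ is generated by single step inclusions $R\subset R'=R\cup\{Z\}$ with both sets consistent. So it suffices to show that $[\rho_R]\le[\rho_{R'}]$ in $\B_w(n,k)$, where $[\rho_R]=\mathscr{L}(\Inv_k(w),\le_R)$. I would find a linear extension $\rho$ of $\le_R$ in which $P(Z)$ is a saturated chain. Since $Z\notin R$, this chain is in lex order, and a lex-to-antilex flip at $P(Z)$ then gives, by \Cref{lem:packet.flips.Sn}, an admissible order with reversal set $R\cup\{Z\}$. That order lies in $\Rev^{-1}(R')$ by \Cref{cor:reversals_unique}.

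\emph{Key step: no element lies strictly between packet elements.} This is the main obstacle. I claim there is no $Y\notin P(Z)$ with $Z_a<_R Y<_R Z_b$ for any $a,b$. Suppose such a $Y$ exists. Then there is a directed path in $G_R$ from $Z_a$ through $Y$ to $Z_b$. Take a minimal subpath that starts at some $Z_c$, ends at some $Z_d$, and has nonempty interior disjoint from $P(Z)$.
\begin{itemize}
\item Since the interior is nonempty and avoids $P(Z)$, none of its edges join two elements of $P(Z)$. In particular it uses none of the full-packet edges of $P(Z)$.
\item The graphs $G_R$ and $G_{R'}$ differ only in those edges. So this subpath is a path in both graphs.
\item We have $c\neq d$, since otherwise the subpath is already a cycle in $G_R$.
\item If $c<d$, close it up with the full-lex edges $Z_d\to\cdots\to Z_c$ of $G_R$. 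This gives a cycle in $G_R$.
\item If $c>d$, close it up with the full-antilex edges $Z_d\to\cdots\to Z_c$ of $G_{R'}$. This gives a cycle in $G_{R'}$.
\end{itemize}
Either case contradicts acyclicity, which holds by \Cref{lem:GRacyclic} because both $R$ and $R'$ are consistent. The point needing care is that two distinct elements of $P(Z)$ can be joined directly only by edges coming from $P(Z)$ itself. This holds because they lie in no other common packet and are not related by a quasi-inversion edge (compare \Cref{lem:intersection_size}).

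\emph{Building the linear extension.} Let $D=\{Y\notin P(Z): Y<_R Z_i\text{ for some }i\}$. By the claim, $D$ is a down-set, $D\cup P(Z)$ is a down-set, and the remaining elements form an up-set. Take any linear extension of $D$, then $P(Z)$ in lex order (forced by the full-lex edges), then any linear extension of the rest. This $\rho$ has $P(Z)$ as a saturated chain, which completes the cover argument.

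Since covers of $\C_w(n,k+1)$ map to relations in $\B_w(n,k)$ and $\Rev$ is order-preserving, $\Rev_{n,k,w}$ is a poset isomorphism.
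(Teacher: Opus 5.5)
Your proposal is correct and follows essentially the same route as the paper. Bijectivity comes from \Cref{cor:B.equiv.acyclic} together with \Cref{lem:GRacyclic}, and the inverse is shown to preserve covers by proving that $P(Z)$ is convex in $(\Inv_k(w),\leq_R)$: a path leaving $P(Z)$ would close up into a cycle in $G_R$ or $G_{R'}$. Your explicit minimal exit segment and your down-set construction of the linear extension simply spell out steps the paper treats briefly.
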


\begin{proof}
\Cref{cor:B.equiv.acyclic} and \Cref{lem:GRacyclic} imply that $\Rev =
\Rev_{n,k,w}$ is a well-defined bijection with inverse given by
\eqref{eq:rev_inverse.classical}.  Furthermore, $\Rev$ is order-preserving since
the covering relations in $\B_{w}(n,k)$ are given by lex-to-antilex
packet flips, and these flips result in single step inclusion of reversal sets.

It remains to show that $\Rev^{-1}$ is also order-preserving.  Assume
$R, R' \in \C_w(n,k+1)$ and $R\lessdot_{1} R'$. By definition of
single step inclusion, $R'=R\cup \{X \}$ for some $X\in
\Inv_{k+1}(w)$.

By construction, $G_R$ and $G_{R'}$ differ only by reversing the
full-packet edges along $P(X)$. If a directed path in $G_{R}$ between
two elements of $P(X)$ left $P(X)$, then some segment with endpoints
in $P(X)$ and internal vertices outside $P(X)$ would survive the
reversal of edges along $P(X)$.  This would result in a directed cycle
in $G_{R'}$ with the reversed chain on $P(X)$.  The same argument
applies if $G_{R}$ and $G_{R'}$ are interchanged.  However, since both
$R,R' \in \C_w(n,k+1)$, both $G_{R}$ and $G_{R'}$ are acyclic by
\Cref{lem:GRacyclic}. Therefore, no such directed path including
vertices of $P(X)$ can leave $P(X)$, so $P(X)$ is a convex saturated
chain in both heap posets $(\Inv_k(w),\leq_R)$ and
$(\Inv_k(w),\leq_{R'})$.  Consequently, there are linear extensions
$\rho\in \mathscr{L}(\Inv_k(w),\leq_R)$ and $\sigma\in
\mathscr{L}(\Inv_k(w),\leq_{R'})$ that differ by a packet flip at
$P(X)$. We conclude that $\Rev^{-1}(R)=[\rho] \lessdot_{\B_w(n,k)}
[\sigma ] = \Rev^{-1}(R')$.
\end{proof}

\begin{corollary}\label{cor:flippable}
 For any $[\rho]\in \B_w(n,k)$ and $X\in \Inv_{k+1}(w)$, the following hold:
    \begin{enumerate}[label=(\alph*)]
        \item $P(X)$ is lex-to-antilex flippable in $[\rho]$ if and only if $X\notin \Rev(\rho)$ and $\Rev(\rho)\cup \{X \}\in \C_w(n,k+1)$, and
        \item $P(X)$ is antilex-to-lex flippable in $[\rho]$ if and only if $X\in \Rev(\rho)$ and $\Rev(\rho)\setminus \{X \}\in \C_w(n,k+1)$.
    \end{enumerate}
\end{corollary}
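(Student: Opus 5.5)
The plan is to prove (a) in full; (b) then follows by symmetry, reversing all orders and exchanging the roles of lex and antilex. Both directions are deduced from \Cref{thm:Bnk_iso_Cnk}, \Cref{thm:equiv_class.and.linear.exts}, and \Cref{lem:GRacyclic}. Throughout, write $R=\Rev(\rho)$, so that $[\rho]=\mathscr{L}(\Inv_k(w),\leq_R)$.

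For the forward direction of (a), suppose $P(X)$ is lex-to-antilex flippable in $[\rho]$. Then some $\sigma\in[\rho]$ has $P(X)$ as a saturated chain in lex order. In particular $\rho$ restricts to lex order on $P(X)$, so $X\notin R$. Flipping $P(X)$ in $\sigma$ gives $\sigma'\in\A_w(n,k)$ with $\Rev(\sigma')=R\cup\{X\}$, by \Cref{lem:packet.flips.Sn}. By \Cref{lem:reversal_is_consistent.Sn}, $\Rev(\sigma')$ lies in $\C_w(n,k+1)$, which is the required conclusion.

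For the converse of (a), assume $X\notin R$ and $R':=R\cup\{X\}\in\C_w(n,k+1)$. Then $R\lessdot_1 R'$ by single step inclusion. I would reuse the argument in the proof of \Cref{thm:Bnk_iso_Cnk}. By \Cref{lem:GRacyclic}, both $G_R$ and $G_{R'}$ are acyclic, and these two graphs differ only in the orientation of the full-packet edges on $P(X)$. Suppose some directed path in $G_R$ between two elements of $P(X)$ passed through a vertex outside $P(X)$. Its outside segment, combined with the reversed chain in $G_{R'}$, would form a cycle in $G_{R'}$. Hence $P(X)$ is a convex chain in $(\Inv_k(w),\leq_R)$, and its elements are consecutive in the lex order $X_{k+1}<_R\cdots<_R X_1$.

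A convex chain whose consecutive elements are covers can be made a saturated (contiguous) block in some linear extension. To do this, first list every element that lies below some element of $P(X)$ but is not in $P(X)$. Next list $P(X)$. Finally list the rest. Convexity guarantees that nothing forced above $X_{k+1}$ is forced below $X_1$ from outside the chain, so this ordering is a valid linear extension. The resulting $\sigma\in\mathscr{L}(\Inv_k(w),\leq_R)=[\rho]$ has $P(X)$ saturated in lex order, so $P(X)$ is lex-to-antilex flippable.

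The main point needing care is this convexity and linear-extension step. I expect it to be routine given acyclicity of both graphs: define the lower set $D=\{Y\notin P(X): Y<_R X_j \text{ for some } j\}$. By convexity, $D$ contains no element above any $X_i$, so placing $D$, then $P(X)$, then the remaining elements respects all relations.
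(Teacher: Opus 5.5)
Your proof is correct and follows the paper's route: the paper gives no separate proof and derives the corollary from \Cref{thm:Bnk_iso_Cnk} and its proof, where acyclicity of both $G_R$ and $G_{R\cup\{X\}}$ forces $P(X)$ to be a convex chain in $(\Inv_k(w),\leq_R)$, hence a contiguous block in some element of $[\rho]=\mathscr{L}(\Inv_k(w),\leq_R)$; your explicit down-set construction of that linear extension fills in a step the paper leaves implicit. One caveat: reversing an admissible order does not in general give a linear extension of $\P_w(n,k)$, so instead of appealing to ``symmetry'' you should prove (b) by rerunning the same argument with $R\setminus\{X\}$ and $R$ in the roles of $R$ and $R\cup\{X\}$ (and note that an outside segment running against the lex direction already closes a cycle in $G_R$ itself, rather than in $G_{R\cup\{X\}}$).
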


We next establish poset properties of $\B_w(n,k)\cong \C_w(n,k+1)$,
namely that they are ranked with unique minimal and maximal elements.
We will focus on $\C_w(n,k)$ where the minimal element must be
$\emptyset$ and maximal element must be $\Inv_{k}(w)$ by
\Cref{ex:empty.full}. From \cref{cor:flippable}, this can be reduced
to showing that an element can be removed or added whenever $R\in
\C_w(n,k)$ is not one of these two sets. Our approach has some
connections to the \emph{boundary} of a consistent set defined by
Ziegler in \cite[p. 262]{ziegler_1993}, though it has several key
differences to account for our more general setting.  There is also a
similarity to the sets $J_{s}$ and $J_{F}$ defined by Hothem in
\cite[Def. 2.5.4]{hothem2021extendinghigherbruhatorders}. However,
what Hothem calls a realizable $k$-set is a subset $J
\subseteq\binom{[n] }{k}$ such that $P(X) \cap J$ is a prefix or
suffix for all $X \in \binom{[n] }{k+1}$ in our notation.  This notion
is not comparable to the consistent sets in $\C_{w}(n,k)$.

\begin{definition}\label{def:suffix_set}
For $R\in \C_w(n,k)$, define the \defn{suffix set} for $R$ to
be
\[S(R) = \{Y \in \Inv_{k+1}(w): Y_1\in R\}.\]
\end{definition}

\begin{lemma}\label{lem:suffix_set_consistent}
  The following statements hold for any consistent set $R\in \C_w(n,k)$.
\begin{enumerate}[label=(\alph*)]
\item The suffix set $S(R) \in \C_w(n,k+1)$.
\item We have that $R$ is a lower order ideal in the corresponding
heap poset
$(\Inv_{k}(w), \leq_{S(R)})$.
\item Every lower order ideal of $(\Inv_{k}(w), \leq_{S(R)})$ is in
$\C_w(n,k)$.
\item Every lower order ideal of $(\Inv_{k-1}(w), \leq_{R})$ is in
$\C_w(n,k-1)$ provided that $k>1$.  
\end{enumerate}
\end{lemma}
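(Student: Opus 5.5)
The plan is to prove (a) and (b) by direct index bookkeeping with the definitions, and to get (c) and (d) from one observation: in a heap poset every full-packet is a chain. For acyclicity of the reversal graphs involved, I will use the standing assumption of this subsection that $G_{R'}$ is acyclic for every consistent $R'$. This is legitimate because (a) makes $S(R)$ consistent, so $(\Inv_k(w),\leq_{S(R)})$ really is a heap poset, and likewise $(\Inv_{k-1}(w),\leq_R)$ for $k>1$. The new content of the lemma is therefore local and does not depend on the postponed direction of \Cref{lem:GRacyclic}.

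For (a), I first check that $S(R)$ is a lower order ideal of $\P_w(n,k+1)$. Let $Z$ be a $(k+2)$-quasi-inversion with $P(Z)\cap\Inv_{k+1}(w)=\{Z_i,Z_{i+1}\}$ and $Z_i<_{\P_w(n,k+1)}Z_{i+1}$, so $k+1-i$ is odd. If $i=1$, then $(Z_1)_1=(Z_2)_1$. Otherwise $(Z_i)_1=(Z_1)_{i-1}$ and $(Z_{i+1})_1=(Z_1)_i$, and $Z_1$ is a quasi-inversion whose parity $k-(i-1)$ is again odd, so $(Z_i)_1<_{\P_w(n,k)}(Z_{i+1})_1$. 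This is exactly the computation in the proof of \Cref{lem:reversal_is_consistent}. Since $R$ is a lower ideal of $\P_w(n,k)$, $Z_{i+1}\in S(R)$ forces $Z_i\in S(R)$; the reverse parity case is symmetric.

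For the MSZ condition in (a), take $Z\in\Inv_{k+2}(w)$. The key identities are $(Z_1)_1=Z_{1,2}=(Z_2)_1$ and $(Z_j)_1=(Z_1)_{j-1}$ for $j\ge2$. Consequently $Z_1\in S(R)$ if and only if $Z_2\in S(R)$, and $Z_j\in S(R)$ if and only if $(Z_1)_{j-1}\in R$. By the MSZ condition for $R$ at $Z_1\in\Inv_{k+1}(w)$, the set $R\cap P(Z_1)$ is a prefix or a suffix of $P(Z_1)$. A suffix $\{(Z_1)_i,\dots,(Z_1)_1\}$ translates to the suffix $\{Z_{i+1},\dots,Z_2,Z_1\}$ of $P(Z)$. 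A prefix $\{(Z_1)_{k+1},\dots,(Z_1)_i\}$ translates to $\{Z_{k+2},\dots,Z_{i+1}\}$, with $Z_1$ added exactly when $i=1$, i.e. when $S(R)\cap P(Z)$ is all of $P(Z)$. In every case the result is a prefix or a suffix.

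For (b), I check that no edge of $G_{S(R)}$ enters $R$ from outside $R$; then $R$ is downward closed under the transitive closure. Quasi-inversion edges are handled because $R$ is a lower ideal of $\P_w(n,k)$. For a full-packet $P(Y)$ with $Y\in S(R)$, we have $Y_1\in R$, so $R\cap P(Y)$ is a suffix $\{Y_i,\dots,Y_1\}$, and the red edges $Y_j\to Y_{j+1}$ never point from outside $R$ into $R$. If $Y\notin S(R)$, then $Y_1\notin R$, so $R\cap P(Y)$ is a prefix $\{Y_{k+1},\dots,Y_i\}$, and the blue edges $Y_{j+1}\to Y_j$ are again fine.

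For (c), let $I$ be a lower ideal of $(\Inv_k(w),\leq_{S(R)})$. Since $\P_w(n,k)$ is a subrelation of $\leq_{S(R)}$, $I$ is a lower ideal of $\P_w(n,k)$. For $X\in\Inv_{k+1}(w)$, the packet $P(X)$ is totally ordered in lex or antilex order by the full-packet edges. Hence $I\cap P(X)$ is an initial segment of that chain, which is a prefix or a suffix, so the MSZ condition holds. Part (d) is the identical argument one level down, applied to $G_R$ on $\Inv_{k-1}(w)$. The only delicate point I anticipate is the index translation in (a), especially the tie between $Z_1$ and $Z_2$ and the boundary cases $i=1$ and $i=k+1$. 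I will verify these carefully against small examples such as $w_0\in\S_4$.
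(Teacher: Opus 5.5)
Your proposal is correct and follows essentially the same route as the paper. Part (a) rests on the identities $(Z_j)_1=(Z_1)_{j-1}$ and $(Z_1)_1=(Z_2)_1$. Part (b) uses the same edge-by-edge prefix/suffix check, and parts (c) and (d) rest on full-packets being chains in the heap poset. The only cosmetic differences are these: you translate prefixes and suffixes of $P(Z_1)$ directly to get the MSZ condition for $S(R)$, whereas the paper checks Ziegler's three-element criterion (\Cref{lem:prefix_suffix}); and the paper derives (c) from (a) and (d) rather than proving it directly.
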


\begin{proof}
To prove $S(R) \in \C_w(n,k+1)$, we need to show that $S(R)$ is a
lower-order ideal of $\P_w(n,k+1)$ that satisfies the MSZ condition.
Let $X\in \binom{[n]}{k+2}$ be a quasi-inversion of $w$ with $P(X)\cap
\Inv_{k+1}(w)=\{X_i,X_{i+1}\}$.  If $k-i$ is odd and $i>1$, then
\[
 (X_{i+1})_{1}=(X_{1})_{i}<_{\P_w(n,k)} (X_{1})_{i-1}=(X_i)_1
\]
by \Cref{def:permanent.poset.}.  Since $R \in \C_w(n,k)$ implies $R$
is a lower-order ideal of $\P_w(n,k)$, we know $(X_i)_1\in R$ implies
$(X_{i+1})_1\in R$. Thus, $X_{i} \in S(R)$ implies $X_{i+1}\in
S(R)$. Similarly, when $k-i$ is even and $i>1$, then $X_{i+1}\in S(R)$
implies $X_{i}\in S(R)$.  If $i=1$, note $(X_{i+1})_{1}=(X_{i})_{1}$
so the corresponding statements for $k-i$ odd or even trivially hold.
Since the quasi-inversion relations generate $\P_w(n,k+1)$ by
definition, we conclude that $S(R)$ is a lower-order ideal of
$\P_w(n,k+1)$.

Next, suppose $X\in \Inv_{k+2}(w)$, so $X_{1} \in \Inv_{k+1}(w)$.
Consider any $1\leq h<i<j\leq k+2$.  If $X_h,X_j\in S(R)$, we claim
$X_i\in S(R)$ as well.  The claim follows because
$(X_h)_1,(X_i)_1,(X_j)_1 \in P(X_{1})$ and $R \in \C_w(n,k)$, so
$(X_h)_1,(X_j)_1\in R$ implies $(X_i)_1\in R$.  Similarly, if
$X_h,X_j\notin S(R)$, we can show $X_i\notin S(R)$ since $R \in
\C_w(n,k)$. We conclude that $S(R) \cap P(X)$ is a prefix or
suffix of $P(X)$ by \cref{lem:prefix_suffix}, so $S(R)$ satisfies the MSZ condition. Combined, we conclude that Part (a) holds. 

To see that $R$ is a lower order ideal of $(\Inv_{k}(w),
\leq_{S(R)})$, recall $R$ is a lower order ideal of $\P_{w}(n,k)$ by
\Cref{def:consistent.Sn}.  Moreover, by definition of $S(R)$, we know
$R \cap P(X)$ is a prefix for all $X \in \Inv_{k+1}(w) \setminus S(R)$
and $R \cap P(X)$ is a suffix for all $X \in S(R)$.  The covering
relations in $(\Inv_{k}(w), \leq_{S(R)})$ all come from covering
relations in $\P_{w}(n,k)$ or the required lex/antilex order on
full-packets.  Hence, if $X\leq_{S(R)} Y$ and $Y \in R$, then $X \in
R$ so $R$ is a lower order ideal of $(\Inv_{k}(w), \leq_{S(R)})$,
proving Part (b).

Note, Part (c) follows from Parts (a) and (d).  To prove Part (d), let
$Q$ be a lower order ideal of $(\Inv_{k-1}(w), \leq_{R})$.  Then $Q$
is also a lower order ideal of $\P_w(n,k-1)$ by definition of
$\leq_{R}$.  For any $X \in \Inv_{k}(w)$, note that $P(X) \cap Q$ is a
prefix or suffix of $P(X)$ since $P(X)$ must be a chain in
$(\Inv_{k-1}(w), \leq_{R})$ in lex or antilex order by definition.
\end{proof}

The analog of flippable packets in $\B_{w}(n,k)$ from
\Cref{cor:flippable} in $\C_w(n,k)$ is described in the following
lemma.  Note, the lower order ideals of any poset $(P,\leq )$
partially ordered by inclusion form another poset $J(P)$ called the
lattice of order ideals of $P$.  This lattice is ranked by size
\cite[Prop. 3.4.5]{StanEC1}.  Thus, $J(P)$ could also be defined by
single step inclusion since the covering relations are determined by
adding to a lower order ideal $I$ a single minimal element in $P
\setminus I$.  Similarly, consistent sets are lower order ideals of
the permanent poset ordered by single step inclusion, but not all
lower order ideals of $\P_{w}(n,k)$ are consistent.  Instead,
consistent sets are lower order ideals of certain heap posets.

\begin{lemma}\label{lem:suffix_set.build.up}
For $R\in\C_w(n,k)$ and $X\in\Inv_k(w)$, the following properties hold.
\begin{enumerate}[label=(\alph*)]
\item If $X$ is minimal with respect to the heap poset
$(\Inv_k(w),\leq_{S(R)})$ restricted to $\Inv_k(w)\setminus R$,
then $R\cup\{X\}\in\C_w(n,k)$.
\item If $X$ is maximal with respect to $\leq_{S(R)}$ restricted
to $R$, then $R\setminus\{X\}\in\C_w(n,k)$.
\end{enumerate}
\end{lemma}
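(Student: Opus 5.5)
The plan is to derive both parts directly from \Cref{lem:suffix_set_consistent}, parts (a) and (c), with no new combinatorics. Fix $R\in\C_w(n,k)$. By \Cref{lem:suffix_set_consistent}(a), $S(R)\in\C_w(n,k+1)$. By the standing assumption preceding \Cref{thm:Bnk_iso_Cnk}, which is the sufficiency half of \Cref{lem:GRacyclic}, the graph $G_{S(R)}$ on $\Inv_k(w)$ is acyclic. Hence $\leq_{S(R)}$ is a genuine partial order on $\Inv_k(w)$. Then \Cref{lem:suffix_set_consistent}(c) tells us that every lower order ideal of $(\Inv_k(w),\leq_{S(R)})$ lies in $\C_w(n,k)$. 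It therefore suffices to show that $R\cup\{X\}$ in case (a), and $R\setminus\{X\}$ in case (b), are lower order ideals of this heap poset.

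For this I will use \Cref{lem:suffix_set_consistent}(b), which says $R$ itself is a lower order ideal of $(\Inv_k(w),\leq_{S(R)})$. The rest is a standard fact about order ideals in any finite poset $P$. If $I$ is a lower ideal and $X$ is minimal in $P\setminus I$, then $I\cup\{X\}$ is a lower ideal: any $Y<X$ cannot lie in $P\setminus I$ by minimality, so $Y\in I$. Dually, if $X$ is maximal in $I$, then $I\setminus\{X\}$ is a lower ideal. To see this, take $Z\in I\setminus\{X\}$ and $Y\le Z$. Then $Y\in I$, and $Y\neq X$, since otherwise $X<Z$ with $Z\in I$ would contradict the maximality of $X$. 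Applying these two facts with $P=(\Inv_k(w),\leq_{S(R)})$ and $I=R$ gives (a) and (b).

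The main obstacle is logical rather than computational. The argument depends on $G_{S(R)}$ being acyclic, and this is the postponed direction of \Cref{lem:GRacyclic}. So either this lemma must be proved after \Cref{sub:G_R}, or it is used there under the inductive hypothesis on $k$. Without acyclicity, $\leq_{S(R)}$ is only a preorder, and the words ``minimal'' and ``maximal'' in the statement lose their meaning. If this ordering is a problem, I would try to show that only the restrictions of $\leq_{S(R)}$ to $R$ and to its complement are needed. Then I would check directly that the edges between $R$ and $\Inv_k(w)\setminus R$ all point from $R$ outward, which follows from the prefix/suffix choice defining $S(R)$ together with $R$ being an ideal of $\P_w(n,k)$. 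The ideal argument above only uses relations of the form $Y<X$ with both elements on one side, together with this edge orientation. A small degenerate point also needs checking: when $k=1$, \Cref{lem:suffix_set_consistent}(c) is obtained via part (d) with $k>1$, so the $k=1$ case should instead be handled directly from \eqref{eq:1consistent}, where $G_{S(R)}$ is a tournament and the claims are immediate.
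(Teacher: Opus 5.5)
Your proposal is correct and is essentially the paper's own proof: $R$ is a lower order ideal of $(\Inv_k(w),\leq_{S(R)})$ by \Cref{lem:suffix_set_consistent}(b), adding a minimal element of the complement or deleting a maximal element of $R$ preserves being a lower order ideal, and \Cref{lem:suffix_set_consistent}(c) then gives consistency, with acyclicity of $G_{S(R)}$ supplied by the standing assumption. That assumption is the postponed direction of \Cref{lem:GRacyclic}, whose proof in \Cref{sub:G_R} does not use this lemma, and your separate caveat about $k=1$ is unnecessary, since part (c) follows from part (d) applied at level $k+1\geq 2$ to $S(R)\in\C_w(n,k+1)$.
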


\begin{proof}
By \cref{lem:suffix_set_consistent}(b), $R$ is a lower order ideal
of $(\Inv_k(w),\leq_{S(R)})$. Adding a minimal element of its
complement or removing a maximal element of $R$ gives another lower
order ideal of this heap poset. By
\cref{lem:suffix_set_consistent}(c), each resulting set is in
$\C_w(n,k)$.
\end{proof}

\begin{theorem}\label{thm:Cnk_min_max}
 Let $k,n\in \mathbb{Z}$ be positive integers and $w\in \S_n$. The poset $\C_w(n,k)$
is a ranked poset with unique minimal element $\emptyset$ and a unique
maximal element $\Inv_{k}(w)$.  The rank of $R \in \C_w(n,k)$ is
$|R|$.
\end{theorem}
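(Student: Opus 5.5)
The plan is to show that every $R\in\C_w(n,k)$ other than $\emptyset$ has a consistent set $R\setminus\{X\}$ covered by it, and every $R$ other than $\Inv_k(w)$ has a consistent set $R\cup\{X\}$ covering it. Since single step inclusion changes $|R|$ by exactly one, this gives the result. By \Cref{ex:empty.full}, both $\emptyset$ and $\Inv_k(w)$ lie in $\C_w(n,k)$.

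The existence of these neighbors comes directly from \Cref{lem:suffix_set.build.up}. By \Cref{lem:suffix_set_consistent}(a), $S(R)\in\C_w(n,k+1)$. Since \Cref{lem:GRacyclic} is assumed (it is postponed only for proof), $G_{S(R)}$ is acyclic, so $(\Inv_k(w),\leq_{S(R)})$ is a genuine finite poset. The case $k=1$ has $\Inv_2(w)$ in place of $S(R)$'s role, but $k+1=2>1$, so the theorem still applies. If $R\neq\emptyset$, the finite nonempty set $R$ has a maximal element $X$ under $\leq_{S(R)}$, and \Cref{lem:suffix_set.build.up}(b) gives $R\setminus\{X\}\in\C_w(n,k)$. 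Likewise, if $R\neq\Inv_k(w)$, the complement has a minimal element, and part (a) gives $R\cup\{X\}\in\C_w(n,k)$.

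The structure then follows from three observations.

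\emph{Extreme elements.} Iterating the removal step gives a saturated chain from $\emptyset$ up to $R$ of length $|R|$, so $\emptyset\leq_1 R$ for every $R$. Iterating the addition step gives $R\leq_1\Inv_k(w)$. Hence $\emptyset$ is the unique minimal element and $\Inv_k(w)$ the unique maximal element.

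\emph{Covering relations.} Since $\leq_1$ is generated by single step inclusions, every relation $R\leq_1 R'$ is realized by a sequence of consistent sets, each obtained from the previous one by adding one element. In particular $R\subseteq R'$. A cover $R\lessdot R'$ must be a single generating step, since a longer chain of generating steps would pass through an intermediate element. So every cover has $|R'|=|R|+1$.

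\emph{Ranking.} With the rank function $r(R)=|R|$, we have $r(\emptyset)=0$ and $r$ increases by one along covers. So $\C_w(n,k)$ is ranked with rank $|R|$.

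The only delicate point is that $\leq_{S(R)}$ must be a partial order for maximal and minimal elements to exist. This relies on the postponed sufficiency direction of \Cref{lem:GRacyclic}, which we are allowed to assume. Everything else is immediate bookkeeping.
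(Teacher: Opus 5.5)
Your proposal is correct and matches the paper's proof: remove a maximal element of $R$ (resp.\ add a minimal element of its complement) in the heap poset $(\Inv_k(w),\leq_{S(R)})$ via \Cref{lem:suffix_set.build.up}, and induct on $|R|$. The acyclicity of $G_{S(R)}$ needed for that heap poset comes from \Cref{lem:GRacyclic}, as you note. The only difference is that the paper handles $k=1$ directly via lower order ideals of $\P_w(n,1)$, and $k>n$ via $\C_w(n,k)=\{\emptyset\}$, whereas you run $k=1$ through the suffix-set lemmas; those lemmas do hold when $k=1$, so your uniform treatment also goes through.
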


\begin{proof}
By \Cref{ex:empty.full}, we know $\emptyset$ is a minimal element and
$\Inv_{k}(w)$ is a maximal element in $\C_w(n,k)$ for all $k$ and $n$.
If $k>n$, then $\Inv_{k}(w)=\emptyset$ and $\C_w(n,k) = \{\emptyset \}$, so the
statement is true. The case $k=1$ follows from the fact that the lower
order ideals of $\P_w(n,1)$ form a ranked poset under single step
inclusion with unique minimal element $\emptyset$ and a unique maximal
element $\Inv_{1}(w)=[n]$.  Furthermore, if $k=1$ the MSZ condition is
trivially satisfied since every subset of a two element packet is a
prefix or suffix.  Therefore, assume $2\leq k\leq n$.

Suppose $R\in \C_w(n,k)$ is nonempty.  By \Cref{lem:suffix_set_consistent}, the suffix set $S(R) \in
\C_w(n,k+1)$, and the corresponding poset
$(\Inv_k(w),\leq_{S(R)})$ contains $R$ as a lower order ideal.  Since $R$ is nonempty, there is
some $Z\in R$ that is maximal with respect to $\leq_{S(R)}$ restricted
to $R$. \Cref{lem:suffix_set.build.up} implies $R\setminus \{Z \} \in
\C_w(n,k)$. Since the poset structure on $\C_w(n,k)$ is defined to be
single step inclusion, $R\setminus \{Z \} \lessdot_{1} R$.
Therefore, $R\neq \emptyset$ implies $R$ is not minimal.  Furthermore,
by induction we can assume there exists a saturated chain in
$\C_w(n,k)$ from $\emptyset$ to $R\setminus \{Z\}$ of length $|R\setminus
\{Z \}|$, which can be extended to a chain from $\emptyset$ to $R$
through $R\setminus \{Z\}$ of length $|R|$.  The  length of any
saturated chain from $\emptyset$ to $R$ in a poset defined by
single step inclusion is $|R|$.  Hence, $\emptyset$ is the unique
minimal element in $\C_w(n,k)$ and the rank of $R$ is its size for all
$R\in \C_w(n,k)$.

Similarly, if $R\neq \Inv_k(w)$, then there is some $Z\in
\Inv_k(w)\setminus R$ that is minimal with respect to $\leq_{S(R)}$
restricted to $\Inv_k(w)\setminus R$.  By
\Cref{lem:suffix_set.build.up}, $R \cup \{Z \} \in \C_w(n,k)$.    Thus,
$R \lessdot_{1} R\cup \{Z\}$ so $R$ is not maximal in
$\C_w(n,k)$.  By induction on $|\Inv_{k}(w)|-|R|$, there exists a
saturated chain from $R$ to $\Inv_{k}(w)$ as well.  Hence,
$\Inv_{k}(w)$ is the unique maximal element in $\C_w(n,k)$.
\end{proof}

\begin{corollary}\label{cor:Bwnk.ranked}
The higher Bruhat order $\B_w(n,k)$ is a ranked poset with unique
minimal and maximal elements.
\end{corollary}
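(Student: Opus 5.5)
The plan is to transport the statement of \Cref{thm:Cnk_min_max} across the isomorphism of \Cref{thm:Bnk_iso_Cnk}. That theorem (which uses \Cref{lem:GRacyclic}, whose sufficiency direction is deferred to \Cref{sub:G_R} and which we take as given) says that $\Rev_{n,k,w}\colon \B_w(n,k)\to \C_w(n,k+1)$ is a poset isomorphism. Ranked posets with unique minimal and maximal elements are preserved under isomorphism. Hence it suffices to know these properties for $\C_w(n,k+1)$, and \Cref{thm:Cnk_min_max} supplies them directly, applied with $k+1$ in place of $k$.

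Concretely, the steps are as follows. First, by \Cref{thm:Cnk_min_max}, $\C_w(n,k+1)$ is ranked with rank function $R\mapsto |R|$, has unique minimum $\emptyset$, and has unique maximum $\Inv_{k+1}(w)$. Second, pull these back along $\Rev$. The class $\Rev^{-1}(\emptyset)=\mathscr{L}(\Inv_k(w),\leq_\emptyset)$ is the unique minimal element of $\B_w(n,k)$, and $\Rev^{-1}(\Inv_{k+1}(w))$ is the unique maximal one. The function $[\rho]\mapsto \rev([\rho])=|\Rev(\rho)|$ is a rank function, because cover relations correspond under the isomorphism and each lex-to-antilex packet flip increases $\rev$ by exactly one (\Cref{lem:packet.flips.Sn}).

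The degenerate case $k>n$ needs no special treatment. There $\A_w(n,k)$ is a single element, so $\B_w(n,k)$ is trivially ranked with a unique minimum and maximum. The only real content lies in the deferred sufficiency part of \Cref{lem:GRacyclic}, which is what makes $\Rev$ surjective. Given that, the corollary is immediate, and I expect no obstacle beyond confirming that the isomorphism identifies covers, which \Cref{thm:Bnk_iso_Cnk} states explicitly.
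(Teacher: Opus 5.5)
Your proposal is correct and is essentially the paper's proof, which just combines the isomorphism $\Rev_{n,k,w}\colon \B_w(n,k)\to \C_w(n,k+1)$ of \Cref{thm:Bnk_iso_Cnk} with \Cref{thm:Cnk_min_max} applied to $\C_w(n,k+1)$. One small refinement: the deferred acyclicity result \Cref{lem:GRacyclic} is used not only for surjectivity of $\Rev$ but also inside \Cref{thm:Cnk_min_max} itself, through the heap posets $(\Inv_{k+1}(w),\leq_{S(R)})$ from \Cref{lem:suffix_set_consistent}; this is harmless since you take that result as given.
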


\begin{proof}
This follows from \cref{thm:Bnk_iso_Cnk} and \cref{thm:Cnk_min_max}.
\end{proof}

\begin{corollary}\label{cor:connected}
 Any two admissible orders $\rho,\sigma\in \A_w(n,k)$ can be obtained
from one another using a sequence of commutations and packet flips.
\end{corollary}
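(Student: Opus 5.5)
Since $\B_w(n,k)$ is ranked with a unique minimal element and a unique maximal element by \cref{cor:Bwnk.ranked}, the Hasse diagram of $\B_w(n,k)$ is connected as an undirected graph. Indeed, every element $[\rho]$ lies on a saturated chain from the minimum up to $[\rho]$, so any two classes are joined through the minimum. The plan is to translate each edge of this Hasse diagram into an explicit sequence of operations on representatives, and then concatenate these sequences along a path.

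Let $\rho,\sigma\in\A_w(n,k)$, and pick a path in the Hasse diagram of $\B_w(n,k)$ from $[\rho]$ down to the minimal class and back up to $[\sigma]$. Each edge of this path is a covering relation $[\alpha]\lessdot[\beta]$. By \cref{def:higher.B.Sn}, together with the remark after it that covering relations are exactly lex-to-antilex packet flips on commutation classes, there are representatives $\alpha'\in[\alpha]$ and $\beta'\in[\beta]$ such that $\beta'$ is obtained from $\alpha'$ by a lex-to-antilex packet flip. Traversed downward, the same edge gives $\alpha'$ from $\beta'$ by an antilex-to-lex packet flip.

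Within a single class, any two representatives are related by a sequence of commutations, by the definition of $\sim_w$. Equivalently, by \cref{thm:equiv_class.and.linear.exts} and \cref{lem:linear_extensions}, any two linear extensions of $(\Inv_k(w),\le_R)$ differ by adjacent swaps of incomparable elements, and the proof of that theorem shows these swaps are commutations with respect to $w$. So at each vertex of the path we first move by commutations from the representative we arrived with to the representative needed for the next flip, and then perform that flip. Concatenating these steps along the whole path gives a sequence of commutations and packet flips transforming $\rho$ into $\sigma$.

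There is no real obstacle. The only point needing care is that covering relations are realized by flips between specific representatives rather than arbitrary ones, and the commutation moves within each class handle exactly this.
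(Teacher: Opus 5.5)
Your proposal is correct and follows essentially the same route as the paper: connectivity of the Hasse diagram of $\B_w(n,k)$ via \cref{cor:Bwnk.ranked}, a packet flip between suitable representatives along each cover, and commutations within each class to switch representatives. Your remark that covers are realized only between specific representatives is exactly the detail the paper handles by combining the commutations within each $[\rho_i]$ with the flips.
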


\begin{proof}
By \Cref{cor:Bwnk.ranked}, the Hasse diagram of $\B_w(n,k)$ is
connected, and some sequence of packet flips on representative
elements allows us to obtain $[\sigma]$ from $[\rho]$. We denote the
resulting sequence of elements in $\B_w(n,k)$
as \begin{equation}\label{eq:flip_sequence}
[\rho]=[\rho_0],[\rho_1],[\rho_2],\ldots,[\rho_{\ell-1}],[\rho_{\ell}]=[\sigma].
\end{equation}
The admissible orders in each commutation equivalence class $[\rho_i]$
are all connected by commutations by definition. Therefore, the result
now follows by combining these commutations within each $[\rho_i]$
with the packet flips used in \eqref{eq:flip_sequence}.
\end{proof}

We next consider maximal chains in $\C_w(n,k)$ and their relationship
to admissible orders in $\A_w(n,k)$.  The following result shows that
every prefix of every admissible order $\rho\in \A_w(n,k)$ is a
consistent set in $\C_w(n,k)$, generalizing \eqref{eq:1consistent}.  

\begin{lemma}\label{lem:admissible_to_consistent_set}
Suppose $\rho=(\rho_1,\rho_2,\ldots,\rho_{|\Inv_k(w)|})\in \A_w(n,k)$. 
Then for any $0\leq i\leq |\Inv_k(w)|$, \[\{\rho_1,\rho_2,\ldots,\rho_i\}\in \C_w(n,k).\]
\end{lemma}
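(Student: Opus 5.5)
We need to show that the prefix $I=\{\rho_1,\dots,\rho_i\}$ is consistent, i.e.\ that it is a lower order ideal of $\P_w(n,k)$ and satisfies the MSZ condition. The plan is to check each of the two conditions in \Cref{def:consistent.Sn} directly from the two defining properties of $\rho\in\A_w(n,k)$ in \Cref{def:admissible.BCL}.

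The lower-ideal condition comes straight from $\rho$ being a linear extension of $\P_w(n,k)$. If $Y\in I$ and $X\leq_{\P_w(n,k)} Y$, then $X\leq_\rho Y$, so $X$ appears no later than $Y$ in $\rho$. Since $I$ is an initial segment of $\rho$, this gives $X\in I$.

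For the MSZ condition, fix $X\in\Inv_{k+1}(w)$. By the full-packet condition, $\rho|_{P(X)}$ is either the lex order $(X_{k+1},\dots,X_1)$ or the antilex order $(X_1,\dots,X_{k+1})$. Because $I$ is an initial segment of $\rho$, the set $I\cap P(X)$ is an initial segment of $\rho|_{P(X)}$. In the lex case this initial segment has the form $\{X_{k+1},\dots,X_j\}$, which is a prefix of $P(X)$. In the antilex case it has the form $\{X_1,\dots,X_j\}$, which is a suffix. The empty set and all of $P(X)$ are covered in both cases. So $I\cap P(X)$ is always a prefix or a suffix, and therefore $I\in\C_w(n,k)$.

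The cases $k>n$, where $\Inv_k(w)=\emptyset$, and $k=1$ are handled by the same argument: when $k=1$ each $P(X)$ has only two elements, so every subset of it is a prefix or a suffix. I expect no real obstacle in this proof. The one point needing care is matching the conventions, namely that the lex order on the packet lists the prefix elements $X_{k+1},X_k,\dots$ first, so that an initial segment of a lex-ordered packet is exactly a prefix in the paper's sense.
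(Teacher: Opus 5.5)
Your proposal is correct and follows essentially the same argument as the paper: the lower-ideal property comes from $\rho$ being a linear extension of $\P_w(n,k)$, and the MSZ condition comes from the full-packet condition, since an initial segment of a lex-ordered packet is a prefix and an initial segment of an antilex-ordered packet is a suffix. Your check of the prefix/suffix conventions is the right detail to verify, and your separate remark about $k=1$ is harmless but unnecessary, since the general argument already covers that case.
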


\begin{proof}
Note that the case of $i=0$ results in $\emptyset$, and this is a
consistent set. For $i\geq 1$, let
$R=\{\rho_1,\rho_2,\ldots,\rho_i\}$. We must show that $R$ is a
lower-order ideal of $\P_w(n,k)$ that satisfies the MSZ condition.

Let $Y\in \Inv_k(w)$, and suppose $Y\in R$. As $\rho$ is a linear extension of $\P_w(n,k)$,
 every $X<_{\P_w(n,k)} Y$ must appear to the left of $Y$ in $\rho$. As $R$ is constructed using the
first $i$ elements in $\rho$, we conclude $X\in R$, so $R$ is a lower-order ideal of 
$\P_w(n,k)$. 

Recall $P(X)\subseteq \Inv_k(w)$ whenever $X\in
\Inv_{k+1}(w)$. Since $\rho \in \A_w(n,k)$, the restriction
$\rho|_{P(X)}$ is either the lex or antilex order on $P(X)$ for all
$X\in \Inv_{k+1}(w)$. As $R$ is constructed using the first $i$
elements in $\rho$, we conclude $R\cap P(X)$ is a prefix or suffix of
$P(X)$, respectively, for all $X\in \Inv_{k+1}(w)$.  Hence, $R$
satisfies the MSZ condition, so $R\in \C_w(n,k)$.
\end{proof}

By \Cref{thm:Cnk_min_max}, we know the poset $\C_w(n,k)$ has a unique
minimal element $\emptyset$ and a unique maximal element
$\Inv_{k}(w)$, so each maximal chain in $\C_w(n,k)$ must begin at $\emptyset$ and end at 
$\Inv_k(w)$. Additionally, the partial order on $\C_w(n,k)$ is given by single step
inclusion, so \cref{lem:admissible_to_consistent_set} gives a natural method for
 obtaining such a maximal chain when given an 
admissible order $\rho\in \A_w(n,k)$. Conversely, each maximal chain determines a total order
on $\Inv_{k}(w)$ given by recording the order in which $k$-inversions
are added along the chain. We now formalize this correspondence
between maximal chains in $\C_w(n,k)$ and admissible orders in $\A_w(n,k)$.

\begin{definition}\label{def:consistentchaintoadmissibleseq}
Let $\rho=(\rho_1,\rho_2,\ldots,\rho_{|\Inv_k(w)|})\in \A_w(n,k)$. For
all $0\leq i\leq |\Inv_k(w)|$, define the \defn{prefix maps} by
$R_i(\rho)=\{\rho_1,\ldots,\rho_i\}$.  Define the \defn{successive
prefix map}
\[\mathcal{R}: \A_w(n,k) \to \{\text{maximal chains of } \C_w(n,k)
\} \] by
\[\mathcal{R}(\rho)=(\emptyset=R_0(\rho)\lessdot_1 R_1(\rho)\lessdot_1 R_2(\rho) \lessdot_1 \dots \lessdot_1 R_{|\Inv_{k}(w)|}(\rho)).\]
\end{definition}

\begin{theorem}\label{thm:chains_to_Ank}
    Let $k,n\in \mathbb{Z}$ be positive integers and $w\in \S_n$. The
    successive prefix map
\[\mathcal{R}:
\A_w(n,k) \to \{\text{maximal chains of } \C_w(n,k) \} \]
is a bijection.
\end{theorem}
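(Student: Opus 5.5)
By \cref{lem:admissible_to_consistent_set}, each prefix $R_i(\rho)$ lies in $\C_w(n,k)$. Consecutive prefixes differ by exactly one element, so $R_{i-1}(\rho)\lessdot_1 R_i(\rho)$ in the single step order. The chain starts at $\emptyset$ and ends at $\Inv_k(w)$, which by \cref{thm:Cnk_min_max} are the unique minimal and maximal elements. Its length is $|\Inv_k(w)|$, which equals the rank of $\Inv_k(w)$. Hence $\mathcal{R}(\rho)$ is a maximal chain, and $\mathcal{R}$ is well defined. Injectivity is immediate: $\rho_i$ is the unique element of $R_i(\rho)\setminus R_{i-1}(\rho)$, so $\rho$ can be read off from $\mathcal{R}(\rho)$.

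For surjectivity, take any maximal chain $\emptyset=C_0\lessdot_1 C_1\lessdot_1\cdots$ in $\C_w(n,k)$. Since $\C_w(n,k)$ is ranked by size with unique $\hat 0$ and $\hat 1$, the chain has the form $C_0\subset C_1\subset\cdots\subset C_N=\Inv_k(w)$ with $N=|\Inv_k(w)|$ and $C_i=C_{i-1}\cup\{\rho_i\}$. This defines a total order $\rho=(\rho_1,\dots,\rho_N)$ on $\Inv_k(w)$ whose prefixes are exactly the $C_i$. It remains to show $\rho\in\A_w(n,k)$, which has two parts.

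\emph{Linear extension of $\P_w(n,k)$.} Suppose $X<_{\P_w(n,k)}Y$ and $Y=\rho_i$. Then $C_i$ is a lower order ideal of $\P_w(n,k)$ containing $Y$, so $X\in C_i$. Since $X\neq Y$, we get $X\in C_{i-1}$, so $X$ precedes $Y$ in $\rho$.

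\emph{Full-packet condition.} This is the main step. Fix $X\in\Inv_{k+1}(w)$. Each $C_i\cap P(X)$ is a prefix or suffix of $P(X)$ by the MSZ condition, and these sets grow by at most one element at a time. I claim the elements of $P(X)$ are added either in lex order $X_{k+1},X_k,\dots,X_1$ or in antilex order $X_1,\dots,X_{k+1}$. Let $Z$ be the first element of $P(X)$ added. If $Z\notin\{X_1,X_{k+1}\}$, then $\{Z\}$ is a proper nonempty subset that is neither a prefix nor a suffix, contradicting MSZ; this uses $k+1\ge 3$. So assume $Z=X_{k+1}$, with the case $Z=X_1$ symmetric. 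Then $C_i\cap P(X)=\{X_{k+1}\}$ is a nonempty proper prefix but not a suffix. Every subsequent intersection contains $X_{k+1}$, and while it remains proper it cannot be a suffix, because a proper suffix omits $X_{k+1}$. Thus every intersection until the packet is full is a prefix, and a chain of prefixes growing one element at a time adds the elements in lex order. Hence $\rho|_{P(X)}$ is lex.

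The degenerate case $k=1$ must be handled separately: there, packets have size $2$ and any order of two elements is lex or antilex, so the condition is vacuous. Combining the two parts gives $\rho\in\A_w(n,k)$ with $\mathcal{R}(\rho)$ equal to the given chain. This proves $\mathcal{R}$ is a bijection.
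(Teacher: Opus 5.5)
Your proof is correct and follows the paper's route. Well-definedness comes from \cref{lem:admissible_to_consistent_set} together with \cref{thm:Cnk_min_max}, injectivity from reading off each $\rho_i$, and surjectivity from checking that the insertion order along a maximal chain is a linear extension of the permanent poset (lower order ideals) and satisfies the full-packet condition (MSZ condition). Your explicit argument for the full-packet step fills in a detail the paper only asserts: the first element of $P(X)$ added must be $X_1$ or $X_{k+1}$, and this forces every later proper intersection to be a suffix or a prefix, respectively.
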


\begin{proof}
\cref{lem:admissible_to_consistent_set} shows that $\mathcal{R}$ is well-defined, and it is clear 
from its construction that $\mathcal{R}$ is injective. It remains to show surjectivity. 
For a maximal chain
\begin{equation}\label{eq:chain}
C= (\emptyset = R_{0}\lessdot_{1} R_1 \lessdot_{1} R_2 \lessdot_{1} \dots
\lessdot_{1} R_{|\Inv_{k}(w)|})
\end{equation}
in $\C_w(n,k)$, its pre-image under $\mathcal{R}$ must be
$\rho=(\rho_{1},\rho_{2},\dots, \rho_{|\Inv_{k}(w)|})$, where $R_{i}\setminus R_{i-1}=\{\rho_{i}\}$. 
We must show that this is an admissible order.  By \Cref{thm:Cnk_min_max},
we know $\rho$ is a total order on $\Inv_{k}(w)$.  Since each
$R_{i} \in \C_{w}(n,k)$, we know $R_{i}$ is a lower order ideal of
$\P_w(n,k)$.  Consequently, if $\rho_{i} <_{\P_{w}(n,k)} \rho_{j}$, then 
$\rho_{i} \in R_{j}$, so $i<j$.  Hence,
$\rho$ is a linear extension of $\P_w(n,k)$. Furthermore, for $X
\in \Inv_{k+1}(w)$, the restriction of $\rho$ to $P(X)$ records the
order in which elements of $P(X)$ are added into \eqref{eq:chain}.
Since each $R_{i} \in \C_{w}(n,k)$, we conclude $R_{i}\cap P(X)$ is a
prefix or suffix of $P(X)$ for all $i$, so $\rho|_{P(X)}$ must be the lex or
antilex order on $P(X)$. We conclude $\rho\in \A_{w}(n,k)$ and $\mathcal{R}(\rho)=C$,
 so $\mathcal{R}$ is surjective.  
\end{proof}

\begin{proof}[Proof of \cref{thm:nonlongest}] Recall that we will prove
\Cref{lem:GRacyclic} in \cref{sub:G_R}, which shows that $G_{R}$ is acyclic for all $R
\in \C_w(n,k)$.  Modulo that proof, we can complete the proof of
\Cref{thm:nonlongest}. 
Part (a) is given in \cref{thm:Bnk_iso_Cnk}. Part (b) is given in
\cref{thm:Cnk_min_max}. Part (c) is given in \cref{thm:chains_to_Ank}.
\end{proof}

\begin{remark}\label{cor:recursive}
Recall that $\B_{w}(n,k) \cong \C_{w}(n,k+1)$ by
\Cref{thm:Bnk_iso_Cnk}.  Furthermore, $\A_{w}(n,k+1)$ is in bijection
with maximal chains of $\C_{w}(n,k+1)$ by \Cref{thm:chains_to_Ank}.
Therefore, there is a natural interpretation of commutation
equivalence for maximal chains of $\B_{w}(n,k)$ and $\C_{w}(n,k+1)$
via the bijection with $\A_{w}(n,k+1)$.  Since packet flips are
defined on $\A_{w}(n,k+1)$, they also have a natural interpretation on
maximal chains of $\B_{w}(n,k)$ and $\C_w(n,k+1)$.  Therefore, the
equivalence classes of maximal chains in $\B_w(n,k)$ can be partially
ordered via lex-to-antilex packet flips, and this poset is isomorphic
to $\B_w(n,k+1)$.  This parallels the recursive nature of the higher
Bruhat orders as described by Manin-Schechtman in
\Cref{thm:MS}\ref{item:bij}.
\end{remark}

\subsection{Acyclicity of $G_{R}$}\label{sub:G_R}

In this section, we will prove \cref{lem:GRacyclic}.  For any $R \in
\C_{w}(n,k+1),$ recall the reversal graph $G_{R}$ has vertices in
$\Inv_{k}(w)$ and edges given by quasi-inversion relations and
lex/antilex packet orders.  The proof methods will be based on several
operations that will allow us to apply induction on certain subgraphs
of $G_R$ and then translate acyclicity on these subgraphs to
acyclicity of $G_R$.  For any $G_R$, we first partition its vertices
into the subsets
\begin{equation}
    \begin{split}\label{eq:partition}
        V_0(G_R) &= \{X \in \Inv_k(w): n\notin X \text{ and } X \cup \{n \} \not\in R\},\\
        V_1(G_R) &= \{X \in \Inv_k(w): n\in X\},\\
        V_2(G_R) &= \{X \in \Inv_k(w): n\notin X \text{ and } X \cup \{n \} \in R\}.
    \end{split}
\end{equation}
Note, that $V_{1}(G_{R})$ depends only on $w,k,n$ and does not depend on
$R$.

\begin{example}\label{ex:partition}
We illustrate this definition by referring back to $w=(6,4,5,2,3,1)\in \S_6$ and $R=\{[1,2,5,6],[1,3,5,6]\}\in \C_w(6,4)$ from \cref{ex:GR example} and \cref{fig:Gr example}. In this case, we have that $V_0(G_R)=\{[1,2,4],[1,3,4]\}$, $V_2(G_R)=\{[1,2,5],[1,3,5]\}$, and $V_1(G_R)$ consists of all remaining vertices. This partition is illustrated in \cref{fig:Gr example2}.
\begin{figure}[h!]
\begin{tikzpicture}
      \draw[fill=gray!20, draw=black, dashed]
    (-1.25,-0.5) -- (0.25,-0.5) -- (2.25,1.5) -- (0.75,1.5) -- cycle;
      \draw[fill=gray!20, draw=black, dashed]
    (-2.25,0.5) -- (-0.75,0.5) -- (1.25,2.5) -- (-0.25,2.5) -- cycle;
      \draw[fill=gray!20, draw=black, dashed]
    (2.35,-0.5) -- (2.35, 2.5) -- (9.75, 2.5) -- (9.75, -0.5) -- cycle;
   
        \node (134) at (0,0) {$[1,3,4]$};
        \node (135) at (-1,1) {$[1,3,5]$};
        \node (124) at (1,1) {$[1,2,4]$};
        \node (125) at (0,2) {$[1,2,5]$};
        \node (136) at (3,0) {$[1,3,6]$};
        \node (126) at (3,1) {$[1,2,6]$};
        \node (156) at (5,0) {$[1,5,6]$};
        \node (146) at (5,1) {$[1,4,6]$};
        \node (256) at (8,0) {$[2,5,6]$};
        \node (246) at (7,1) {$[2,4,6]$};
        \node (356) at (9,1) {$[3,5,6]$};
        \node (346) at (8,2) {$[3,4,6]$};
        \node at (-0.5,-0.75) {$V_0(G_R)$};
        \node at (6, -0.75) {$V_1(G_R)$};
        \node at (0.5,2.75) {$V_2(G_R)$};
        \draw[->] (134) edge node {} (135);
        \draw[->] (134) edge node {} (124);
        \draw[->] (135) edge node {} (125);
        \draw[->] (124) edge node {} (125);
        \draw[->] (136) edge node {} (126);
        \draw[->] (156) edge node {} (146);
        \draw[->] (256) edge node {} (246);
        \draw[->] (246) edge node {} (346);
        \draw[->] (256) edge node {} (356);
        \draw[->] (356) edge node {} (346);
        \draw[<-,color=red] (125) edge node {} (126);
        \draw[<-,color=red] (126) edge node {} (156);
        \draw[<-,color=red] (156) edge node {} (256);
        \draw[<-,color=red] (135) edge node {} (136);
        \draw[<-,color=red] (136) edge node {} (156);
        \draw[<-,color=red] (156) edge node {} (356);
        \draw[->,color=blue] (124) edge node {} (126);
        \draw[->,color=blue] (126) edge node {} (146);
        \draw[->,color=blue] (146) edge node {} (246);
        \draw[->,color=blue] (134) edge node {} (136);
        \draw[->,color=blue] (136) edge node {} (146);
        \draw[->,color=blue] (146) edge node {} (346);
    \end{tikzpicture}
    \caption{Partitioning the vertices in $G_R$ according to \eqref{eq:partition}.}
    \label{fig:Gr example2}
\end{figure}
\end{example}

For any directed graph $G=(V,E)$ and subset of vertices $V'\subseteq
V$, the \emph{subgraph induced by $V'$} is the graph on vertex set
$V'$ with all directed edges in $E$ that are between vertices in
$V'$. We will study the structure of the subgraphs of $G_R$ induced by
$V_0(G_R),V_1(G_R)$, or $V_2(G_R)$. We begin with several definitions
analogous to deletion and contraction in graphs and matroids.

\begin{definition}\label{def:deletion_contraction}
 For any $j\in [n]$, define the \defn{flatten function} $\fl_j:[n]\setminus
\{j\}\to [n-1]$ to be the unique order-preserving bijection
\[\fl_j(i)=\begin{cases}
    i & \text{if } i<j \\
    i-1 & \text{if } i>j.
\end{cases}\]
\begin{enumerate}[label=(\alph*)]
    \item For any $w\in \S_n$, define $\Fl_j(w)$ to be the permutation
    in $\S_{n-1}$ with window
    notation \[\Fl_j(w)=(\fl_j(w(1)),\ldots,\widehat{j},\ldots,\fl_j(w(n)))\]
obtained from the window notation for $w$ by removing $j$ wherever
it occurs and shifting the values above $j$ down by 1.

    \item For any $X=[x_1,\ldots,x_\ell]\subseteq [n]$, define $\Fl_j(X)$ by
    \[\Fl_j(X)=\begin{cases}
        [\fl_j(x_1),\ldots,\fl_j(x_\ell)] & \text{ if $j\notin X$}\\
        [\fl_j(x_1),\ldots,\widehat{j}, \ldots,\fl_j(x_\ell)] & \text{ if $j\in X$}. 
    \end{cases}\]
    We emphasize that the first case results in an element in $\binom{[n-1]}{\ell}$, while the second case results in an element in $\binom{[n-1]}{\ell-1}$. 
    \item For any $R\subseteq \binom{[n]}{\ell}$, the \defn{deletion} of $R$ with respect to $j$ is 
    \[R\setminus j =\{\Fl_j(X): X\in R, j\notin X\},\]
    and the \defn{contraction} of $R$ with respect to $j$ is
    \[R/j=\{\Fl_j(X):X\in R,j\in X\}.\]
    We emphasize that the deletion of $R\subseteq \binom{[n]}{\ell}$ is a
subset of $\binom{[n-1]}{\ell}$, while the contraction of $R$ is a subset
of $\binom{[n-1]}{\ell-1}$. 
\end{enumerate}
\end{definition}

\begin{example}\label{ex:deletion_contraction}
Consider $w=(6,4,5,2,3,1)\in \S_6$ from \cref{ex:GR example}. Then $\Fl_3(w)=(5,3,4,2,1)$. For $R=\{[1,2,5,6],[1,3,5,6]\}\in \C_w(6,4)$, its deletion with respect to $3$ is $R\backslash 3=\{[1,2,4,5]\}$, and its contraction with respect to $3$ is $R/3=\{[1,4,5]\}$. 
\end{example}

\begin{lemma}\label{lem:consequences.of.forget.maps} For any $j \in
[n]$, let $w'=\Fl_{j}(w)$.  Then, $\Inv_{k}(w) \setminus j =
\Inv_{k}(w')$ and $\Inv_{k}(w) / j \subseteq \Inv_{k-1}(w')$.
Furthermore, for any $R\subseteq \Inv_k(w)$, the restriction of
$\Fl_{j}$ to $R$ is a bijection with the disjoint union $R\setminus j
\uplus R/j.$
\end{lemma}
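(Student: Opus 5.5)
The plan is to prove the three claims in order, working directly from \Cref{def:k.inversion.Sn} and \Cref{def:deletion_contraction}. The basic observation is that the window notation of $w'=\Fl_j(w)$ is obtained from that of $w$ by deleting the value $j$ and applying $\fl_j$ to the remaining values. Deleting a position and renumbering does not change the relative order of the remaining positions. So for distinct $a,b\in[n]\setminus\{j\}$ we have
\[
w^{-1}(a)>w^{-1}(b) \iff (w')^{-1}(\fl_j(a))>(w')^{-1}(\fl_j(b)).
\]
Since $\fl_j$ is also order preserving on values, it follows that $[a,b]\in\Inv_2(w)$ if and only if $[\fl_j(a),\fl_j(b)]\in\Inv_2(w')$. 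I would record this equivalence first.

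For $\Inv_k(w)\setminus j=\Inv_k(w')$, I would use \Cref{lem:adjacent.pairs}(c) together with the observation above. Take $X=[x_1,\dots,x_k]$ with $j\notin X$. Then $X\in\Inv_k(w)$ if and only if every adjacent pair $[x_i,x_{i+1}]$ lies in $\Inv_2(w)$. By the observation, this holds if and only if every adjacent pair of $\Fl_j(X)$ lies in $\Inv_2(w')$, i.e., if and only if $\Fl_j(X)\in\Inv_k(w')$. This gives the inclusion $\subseteq$. For $\supseteq$, note that $\fl_j$ is a bijection from $[n]\setminus\{j\}$ onto $[n-1]$, so every $Y\in\binom{[n-1]}{k}$ equals $\Fl_j(X)$ for a unique $X$ avoiding $j$. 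The degenerate case $k=1$ is handled by noting that both sides equal all singletons. For $k=0$ both sides are $\{\emptyset\}$.

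For the contraction, take $X\in\Inv_k(w)$ with $j\in X$. By \Cref{lem:adjacent.pairs}(b), the subsequence $X\setminus\{j\}$ lies in $\Inv_{k-1}(w)$, and it avoids $j$. Applying the deletion statement just proved, with $k-1$ in place of $k$, gives $\Fl_j(X)=\Fl_j(X\setminus\{j\})\in\Inv_{k-1}(w')$. Only inclusion is claimed here, so nothing more is needed.

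For the bijection claim, the map $\Fl_j$ on $R$ splits according to whether $j\in X$. By definition, its image is $R\setminus j$ on the first part and $R/j$ on the second. These two images are disjoint because they have different cardinalities, $k$ versus $k-1$. Injectivity holds on each part because $X$ is recovered from $\Fl_j(X)$: in the first case by applying $\fl_j^{-1}$, and in the second by applying $\fl_j^{-1}$ and then inserting $j$. Surjectivity onto $R\setminus j\uplus R/j$ holds by definition.

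I expect no real obstacle. The only care needed is in the first step: checking that deleting an entry from the window notation preserves the relative order of positions, and handling the edge cases $k\le1$ and $k>n$, where the sets are trivial.
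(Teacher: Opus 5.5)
Your proof is correct and follows the same route as the paper. The ingredients are the same: order preservation of $\fl_j$ on values and of the position renumbering on the window, the bijection of $\Fl_j$ on $j$-avoiding versus $j$-containing subsets, and the partition of $R$ according to whether $j\in X$, with disjointness coming from the differing cardinalities. You simply spell out the paper's ``straightforward to check from the definitions'' step, routing through the adjacent-pairs lemma and deducing the contraction inclusion from the deletion equality at $k-1$. For $k=1$ that lemma does not apply to the contraction step, but there the claim is just $\{\emptyset\}\subseteq\{\emptyset\}$.
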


\begin{proof}
Observe the restrictions
\[
\Fl_{j}:\left\{X \in \binom{[n] }{\ell}: j \in X \right\} \to \binom{[n-1]
}{\ell-1}
\]
and 
\[
\Fl_{j}:\left\{X \in \binom{[n] }{\ell}: j \not \in X \right\} \to \binom{[n-1]
}{\ell}
\]
are bijections for all $\ell \geq 0$.  Therefore, it is
straightforward to check from the definitions that $\Inv_{k}(w)
\setminus j = \Inv_k(w')$ and $\Inv_{k}(w) / j \subseteq
\Inv_{k-1}(w')$ since $\fl_j$ is order-preserving on $[n]\setminus j$.
Furthermore, the restriction to $R$ can be partitioned into elements
containing $j$ or not containing $j$, this partition is preserved by
$\Fl_{j}$, and $\Fl_{j}$ restricts to a bijection on both parts.
\end{proof}

\begin{lemma}\label{lemma:F_j_properties}
    Let $j \in [n]$, and define $w' = \Fl_j(w)$. 
    \begin{enumerate}[label=(\alph*)]
    \item For any $X,Y\in \Inv_k(w)$ such that $j \not\in X \cup Y$, the quasi-inversion relation 
        $X <_{\P_w(n,k)} Y$ holds if and only if the quasi-inversion relation $\Fl_j(X) <_{\P_{w'}(n-1,k)} \Fl_j(Y)$ holds.
     \item For any $Z \in \binom{[n]}{k+1}$ such that $j \not\in Z$
	 and any $R \subseteq \binom{[n]}{k}$, the intersection $R \cap
	 P(Z)$ is a prefix (resp. suffix) of $P(Z)$ if and only if $(R
	 \setminus j) \cap P(\Fl_j(Z))$ is a prefix (resp. suffix) of
	 $P(\Fl_j(Z))$.
    \end{enumerate}
\end{lemma}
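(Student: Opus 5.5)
Both parts reduce to one observation: deleting $j$ via $\Fl_j$ preserves relative order of values and of positions. Since $\fl_j$ is an order-preserving bijection $[n]\setminus\{j\}\to[n-1]$, for any $a,b\neq j$ we have $a<b$ iff $\fl_j(a)<\fl_j(b)$. Moreover, $w'^{-1}(\fl_j(a))<w'^{-1}(\fl_j(b))$ iff $w^{-1}(a)<w^{-1}(b)$, because removing $j$ from the window notation of $w$ only shifts positions monotonically. Hence $[a,b]\in\Inv_2(w)$ iff $[\fl_j(a),\fl_j(b)]\in\Inv_2(w')$. Since membership in $\Inv_k$, in $\Qinv_{k+1}$, and the shape of $P(Z)\cap\Inv_k$ are all determined by which pairs are $2$-inversions (\Cref{lem:adjacent.pairs}, \Cref{def:quasi.inversion}), everything transports along $\Fl_j$ for sets avoiding $j$.

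For part (a), we first unpack what it means for $X<_{\P_w(n,k)}Y$ to be a quasi-inversion relation. There is some $Z\in\Qinv_{k+1}(w)$ with $P(Z)\cap\Inv_k(w)=\{Z_i,Z_{i+1}\}$ and $\{X,Y\}=\{Z_i,Z_{i+1}\}$, oriented by the parity of $k-i$. Because $X$ and $Y$ differ in exactly one position, $Z=X\cup Y$, so $j\notin X\cup Y$ forces $j\notin Z$. Next, we check that $\Fl_j(Z)$ is a $(k+1)$-quasi-inversion of $w'$, using the pair-by-pair correspondence of $2$-inversions. We also check that $\Fl_j$ commutes with taking packets, in the sense $\Fl_j(Z_i)=\Fl_j(Z)_i$, since $\fl_j$ preserves the increasing order and therefore the index positions. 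Combined with $\Inv_k(w)\setminus j=\Inv_k(w')$ from \Cref{lem:consequences.of.forget.maps}, this gives $P(\Fl_j(Z))\cap\Inv_k(w')=\{\Fl_j(Z)_i,\Fl_j(Z)_{i+1}\}$ with the same index $i$. The parity condition in \Cref{def:permanent.poset.} depends only on $k$ and $i$, so the orientation is preserved. The converse direction is the same argument run backwards, using that $\Fl_j$ is a bijection from sets avoiding $j$ onto subsets of $[n-1]$.

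For part (b), we note that since $j\notin Z$, every element of $P(Z)$ avoids $j$. The map $\Fl_j$ restricts to a bijection $P(Z)\to P(\Fl_j(Z))$ sending $Z_i\mapsto\Fl_j(Z)_i$, so it preserves lex order on the packet. An element $Z_i$ lies in $R$ iff $\Fl_j(Z_i)$ lies in $R\setminus j$, because $\Fl_j$ is injective on sets avoiding $j$. It follows that $(R\setminus j)\cap P(\Fl_j(Z))=\Fl_j(R\cap P(Z))$ as sets of indices, so prefixes correspond to prefixes and suffixes to suffixes.

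The only genuinely delicate step is the position-order claim for $w'$ in the opening paragraph. We handle it by writing $p=w^{-1}(j)$ and noting that $w'^{-1}(\fl_j(a))$ equals $w^{-1}(a)$ or $w^{-1}(a)-1$ according as $w^{-1}(a)<p$ or $w^{-1}(a)>p$. This map is strictly increasing on $[n]\setminus\{p\}$, which gives the claim. Everything else is bookkeeping with indices.
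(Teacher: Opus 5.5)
Your proposal is correct and follows essentially the same route as the paper. You set $Z=X\cup Y$ and transport quasi-inversion status and packet indices along $\Fl_j$ via $\Fl_j(Z_i)=\Fl_j(Z)_i$, noting that the parity condition depends only on $k$ and $i$; part (b) then follows from the same index correspondence on packets. The one difference is that you explicitly verify the position-monotonicity for $w'$, which the paper leaves implicit in \cref{lem:consequences.of.forget.maps}.
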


\begin{proof}
 If $X<_{\P_w(n,k)} Y$ is a quasi-inversion relation, then $Z = X\cup
Y$ is a $(k+1)$-quasi-inversion of $w$ that also does not contain $j$.
By \Cref{lem:consequences.of.forget.maps}, $\Fl_j$ induces a bijection
between the elements of $\Inv_{k}(w)$ that do not contain $j$ and the
elements in $\Inv_{k}(w')$ where $w'=\Fl_j(w)$.  Therefore, since
$\fl_{j}$ is order-preserving on $[n]\setminus \{j \}$, $\Fl_{j}(Z)$
is a $(k+1)$-quasi-inversion of $w'$ and $\Fl_{j}(X) \cup
\Fl_{j}(Y)=\Fl_{j}(Z)$.  If $X=Z_{i}$ and $Y=Z_{i+1}$, then
$\Fl_j(X)=\Fl_{j}(Z)_{i},$\  $\Fl_j(Y)=\Fl_{j}(Z)_{i+1},$ and
$\Fl_j(X)<_{\P_{w'}(n-1,k)} \Fl_j(Y)$ is a quasi-inversion relation in
$\P_{w'}(n-1,k)$ by the parity condition. Similarly, if
$X=Z_{i+1}$ and $Y=Z_{i}$, the same conclusion holds.  Conversely, if
$\Fl_j(X)<_{\P_{w'}(n-1,k)} \Fl_j(Y)$ is a quasi-inversion relation
and $j$ is not in either of $X$ or $Y$, then this argument can be
reversed to show $X<_{\P_w(n,k)} Y$ is a quasi-inversion
relation. This proves Part (a).

    For Part (b), observe that for any $i \in [k+1]$, $\Fl_j(Z)_i =
    \Fl_j(Z_i)$ since $j \not  \in Z$. Thus, $P(\Fl_j(Z))$ in lexicographic order is the same as $\Fl_j$ applied to $P(Z)$ in lexicographic order. It follows that $R \cap P(Z)$ is a prefix or suffix of $P(Z)$ if and only if $(R \setminus j) \cap P(\Fl_j(Z))$ is a prefix or suffix of $P(\Fl_j(Z))$.
\end{proof}

\begin{corollary}
    \label{cor:deletion_consistent}
    Suppose $R \in \C_w(n,k+1)$. For any $j \in [n]$, let $w' = \Fl_j(w)$. Then $R \setminus j \in \C_{w'}(n-1,k+1)$, and the
subgraph of $G_R$ induced by $\{X \in \Inv_{k}(w): j \not \in X \}$ is isomorphic as
a directed graph to the reversal graph $G_{R\setminus j}$ on $\Inv_{k}(w')$.  In particular, the
 subgraph of $G_R$ induced by $V_0(G_R)\cup V_2(G_R)$ is isomorphic as
 a directed graph to $G_{R\setminus n}$.
\end{corollary}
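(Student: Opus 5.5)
The proof has three parts: consistency of the deletion $R\setminus j$, the isomorphism of induced subgraphs, and the special case $j=n$. Throughout I will use the bijection from \Cref{lem:consequences.of.forget.maps}. It identifies $\{X\in\Inv_k(w): j\notin X\}$ with $\Inv_k(w')$, and $\{X\in\Inv_{k+1}(w): j\notin X\}$ with $\Inv_{k+1}(w')$, and it commutes with taking packets: $\Fl_j(Z)_i=\Fl_j(Z_i)$ whenever $j\notin Z$.

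\emph{Consistency of $R\setminus j$.} First, $R\setminus j$ is a lower order ideal of $\P_{w'}(n-1,k+1)$. It suffices to check this on the generating quasi-inversion relations $\Fl_j(X)<\Fl_j(Y)$ in $\P_{w'}(n-1,k+1)$. Such a relation comes from a quasi-inversion $Z'=\Fl_j(X)\cup\Fl_j(Y)$ of $w'$, and $Z'=\Fl_j(Z)$ for a unique $Z\in\binom{[n]}{k+2}$ with $j\notin Z$. Since $\fl_j$ is order preserving and $w'$ is $w$ with $j$ removed, $Z$ is a quasi-inversion of $w$ with the same index $i$. By \Cref{lemma:F_j_properties}(a), applied with $k+1$ in place of $k$, we get $X<_{\P_w(n,k+1)}Y$. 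So $\Fl_j(Y)\in R\setminus j$ gives $Y\in R$, hence $X\in R$, hence $\Fl_j(X)\in R\setminus j$. Second, the MSZ condition for $R\setminus j$: every $Z'\in\Inv_{k+2}(w')$ equals $\Fl_j(Z)$ for some $Z\in\Inv_{k+2}(w)$ with $j\notin Z$, so \Cref{lemma:F_j_properties}(b) transfers the prefix/suffix property from $R\cap P(Z)$ to $(R\setminus j)\cap P(Z')$.

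\emph{Isomorphism of induced subgraphs.} Let $H$ be the subgraph of $G_R$ induced by $V=\{X\in\Inv_k(w): j\notin X\}$. I will show that $\Fl_j\colon V\to\Inv_k(w')$ maps the edges of $H$ bijectively onto the edges of $G_{R\setminus j}$, preserving direction. Every edge of $G_R$ joins two vertices that differ in one position, so both endpoints lie in the packet of their union $Z=X\cup Y$. If both endpoints avoid $j$, then $j\notin Z$. There are two kinds of edges.
\begin{itemize}
\item Quasi-inversion edges correspond by \Cref{lemma:F_j_properties}(a).
\item Full-packet edges: $Z\in\Inv_{k+1}(w)$ with $j\notin Z$ if and only if $\Fl_j(Z)\in\Inv_{k+1}(w')$. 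Moreover $Z\in R$ if and only if $\Fl_j(Z)\in R\setminus j$. Since $\Fl_j(Z_i)=\Fl_j(Z)_i$, the red and blue chains $Z_i\to Z_{i+1}$ and $Z_{i+1}\to Z_i$ map to the corresponding chains in $G_{R\setminus j}$.
\end{itemize}
Conversely, every edge of $G_{R\setminus j}$ lifts to an edge of $H$ by the same correspondences. Hence $H\cong G_{R\setminus j}$.

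\emph{The case $j=n$.} Since $V_0(G_R)\cup V_2(G_R)$ is exactly $\{X: n\notin X\}$, this case is immediate.

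The main obstacle is the lower order ideal step. It needs the bookkeeping that a quasi-inversion $Z'$ of $w'$ lifts to a quasi-inversion $Z$ of $w$ with the same index $i$, so that the parity condition matches. This follows because relative positions in the window notation are unchanged by deleting $j$.
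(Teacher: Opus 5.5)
Your proposal is correct and follows essentially the same route as the paper: consistency of $R\setminus j$ comes from parts (a) and (b) of the lemma on $\Fl_j$. You check the lower-ideal property only on pulled-back generating quasi-inversion relations, which is exactly what is needed; the paper instead phrases this step as an isomorphism between the restricted subposet and $\P_{w'}(n-1,k+1)$. The graph isomorphism then matches quasi-inversion edges via part (a), and full-packet edges via $Z\in R \iff \Fl_j(Z)\in R\setminus j$ together with $\Fl_j(Z_i)=\Fl_j(Z)_i$, with the case $j=n$ immediate from $V_0(G_R)\cup V_2(G_R)=\{X\in\Inv_k(w): n\notin X\}$.
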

\begin{proof}

   Since $R$ is consistent, it is a lower order ideal of $\P_w(n,k+1)$
and satisfies the MSZ condition. By
\cref{lem:consequences.of.forget.maps}, there is a bijection between
$\{X \in \Inv_{k+1}(w) : j \not\in X\}$ and $\Inv_{k+1}(w) \setminus j
= \Inv_{k+1}(w')$. By \cref{lemma:F_j_properties}(a), the
subposet of $\P_w(n,k+1)$ on $\{X \in \Inv_{k+1}(w): j \not\in X\}$ given by
the transitive closure of the quasi-inversion relations restricted to
this set is isomorphic as a poset to $\P_{w'}(n-1,k+1)$. Thus, $R
\setminus j$ is a lower order ideal of $\P_{w'}(n-1,k+1)$. Furthermore,
since $R$ satisfies the MSZ condition, we know $R \setminus j$
satisfies the MSZ condition by \cref{lemma:F_j_properties}(b).  Hence,
$R \setminus j \in \C_{w'}(n-1,k+1)$.

Now let $H$ be the induced subgraph of $G_{R}$ on the vertices $V=\{X
\in \Inv_{k}(w): j \not \in X \}$. By
\Cref{lem:consequences.of.forget.maps}, $\Fl_j$ is a bijection between
$V$ and $\Inv_{k}(w')$, which are the vertices of $G_{R\setminus j}$.
By \Cref{lemma:F_j_properties}(a), $G_R$ contains the quasi-inversion
edge $X\to Y$ if and only if $G_{R\setminus j}$ contains the
quasi-inversion edge $\Fl_j(X)\to \Fl_j(Y)$. Additionally, $X\to Y$ is
a full lex edge (resp. antilex edge) precisely when they are adjacent
in a common packet $P(Z)$ where $Z\notin R$ (resp. $Z\in R$). By
construction, this occurs precisely when $\Fl_j(Z)\notin R\setminus j$
(resp. $\Fl_j(Z)\in R\setminus j$), and this corresponds to when
$\Fl_j(X)\to \Fl_j(Y)$ is a full lex (resp. antilex) edge in
$G_{R\setminus j}$. Hence, the bijection $\Fl_j$ is an isomorphism of
directed graphs between $H$ and $G_{R\setminus j}$.
\end{proof}

While the deletion of a consistent set is well-behaved, this is not always true for contraction. We give an example demonstrating some issues that arise involving the permanent posets.

\begin{example}\label{ex:contraction1}
    Consider $w=(4,5,3,1,2)\in \S_5$, where
$\Inv_3(w)=\{[1,3,4],[1,3,5],[2,3,4],[2,3,5]\}$ and
$\Inv_4(w)=\emptyset$, so the diamond poset

\begin{center}
\begin{tikzpicture}
        \node (134) at (0,0) {$[1,3,4]$};
        \node (135) at (-1,1) {$[1,3,5]$};
        \node (124) at (1,1) {$[2,3,4]$};
        \node (125) at (0,2) {$[2,3,5]$};
        \draw[->] (134) edge node {} (135);
        \draw[->] (134) edge node {} (124);
        \draw[->] (135) edge node {} (125);
        \draw[->] (124) edge node {} (125);
    \end{tikzpicture}
\end{center}
is $\P_{w}(5,3)$.  
As $[1,3,4]$ is minimal in $\P_w(5,3)$ and $\Inv_4(w)=\emptyset$, we
see that $R=\{[1,3,4]\}\in \C_w(5,3)$. Letting
$w'=\Fl_3(w)=(3,4,1,2)$, the corresponding permanent poset
$\P_{w'}(4,2)$ contains the quasi-inversion
relation
\[\Fl_3([1,3,4])=[1,3]>_{\P_{w'}(4,2)}[2,3]=\Fl_3([2,3,4]),
\]
so $R'=R/3=\{[1,3]\}$ is not a lower-order
ideal of $\P_{w'}(4,2)$. This example demonstrates that some relations
$X<_{\P_w(n,k)} Y$ may be reversed compared to $\Fl_j(X)>_{\P_{w'}(n-1,k-1)}
\Fl_j(Y)$. However, not all relations are reversed, as demonstrated by
$[1,3,4] <_{\P_w(5,3)} [1,3,5]$ and
\[
\Fl_3([1,3,4])=[1,3]<_{\P_{w'}(4,2)}[1,4]=\Fl_3([1,3,5]).
\]
In general,
for any quasi-inversion relation $X<_{\P_w(n,k)} Y$, the corresponding
relation between $\Fl_j(X)$ and $\Fl_j(Y)$ in $\P_{w'}(n-1,k-1)$ is
determined by whether $j$ appears left or right of the unique position
where $X$ and $Y$ differ.
\end{example}

The previous example demonstrates that the contraction of a consistent
set does not need to be consistent. However, several special cases of
contraction do work nicely such as contracting on $1$ or $n$. Below we
focus on contraction by $n$ because if $w(n)=n$, then
$\Inv_{k}(w)=\Inv_{k}(\Fl_{n}(w))$ for all $2\leq k\leq n$.  So, the
acyclicity of any $G_{R}$ follows directly from induction on $n$.  We
will also use a special case of contraction on two consecutive
elements for the induction argument in the proof of
\Cref{lem:GRacyclic}.  Because of the parity condition, the direction
of the quasi-inversion relations is flipped when contracting with
respect to $n$.

\begin{lemma}\label{lem:contract_n}
    Define $w'=\Fl_n(w)$. 
 \begin{enumerate}[label=(\alph*)] 
\item For
any $X,Y\in \Inv_k(w)$ such that $n\in X$ and $n\in Y$, we have
$X<_{\P_w(n,k)} Y$ is a quasi-inversion relation if and only if
$\Fl_n(X) >_{\P_{w'}(n-1,k-1)} \Fl_n(Y)$ is a quasi-inversion
relation.   
\item Let $R\subseteq\binom{[n]}{k}$, and let
$Z\in\binom{[n]}{k+1}$ with $n\in Z$. If $R\cap P(Z)$ is a prefix
(resp. suffix) of $P(Z)$, then $(R/n)\cap P(\Fl_n(Z))$ is a prefix
(resp. suffix) of $P(\Fl_n(Z))$.
    \end{enumerate}
\end{lemma}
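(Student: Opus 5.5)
The plan is to prove both parts by reading everything off the set $Z=X\cup Y$ (resp. $Z$), using that $n$ is the largest element of $[n]$. The key point is that $n$ always sits in the last position of such a set, so deleting it removes the last column and preserves all other indices. We use only \Cref{lem:types.of.packets}, \Cref{lem:consequences.of.forget.maps}, and the parity condition in \Cref{def:permanent.poset.}.

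\emph{Part (a).} A quasi-inversion relation between $X$ and $Y$ means that $Z=X\cup Y\in\binom{[n]}{k+1}$ is a $(k+1)$-quasi-inversion for $w$ with $P(Z)\cap\Inv_k(w)=\{Z_i,Z_{i+1}\}=\{X,Y\}$.
\begin{itemize}
\item Since $n\in X\cap Y$, we have $z_{k+1}=n$ and neither $Z_i$ nor $Z_{i+1}$ omits $n$, so $i+1\le k$.
\item Set $Z'=\Fl_n(Z)=[z_1,\dots,z_k]$. Then $\Fl_n(Z_h)=Z'_h$ for all $h\le k$.
\item $w'$ is obtained from $w$ by deleting $n$ from the window, so relative positions of the other values are unchanged. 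Hence a pair $[z_a,z_b]$ with $b\le k$ is a $2$-inversion of $w'$ exactly when it is one of $w$. Thus $Z'$ has exactly one non-inverted pair, namely $[z_i,z_{i+1}]$, so $Z'\in\Qinv_k(w')$.
\item By \Cref{lem:types.of.packets}(c) (for $k-1\ge2$), $P(Z')\cap\Inv_{k-1}(w')=\{Z'_i,Z'_{i+1}\}$.
\item The parity exponent for $Z$ in $\P_w(n,k)$ is $k-i$, while for $Z'$ in $\P_{w'}(n-1,k-1)$ it is $(k-1)-i$. These have opposite parity, so the direction of the relation is reversed.
\end{itemize}
The degenerate case $k=2$ (so $i=1$, and the target is $\P_{w'}(n-1,1)$ as in \Cref{rem:easy.pnks}) can be checked directly. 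Here $Z=[z_1,z_2,n]$ with $[z_1,z_2]\notin\Inv_2(w)$, and $k-i=1$ is odd, so $Z_1=[z_2,n]<Z_2=[z_1,n]$. In $\P_{w'}(n-1,1)$ we have the $2$-quasi-inversion relation $\{z_1\}<\{z_2\}$, which is again reversed.

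For the converse, start from a quasi-inversion relation between $\Fl_n(X)$ and $\Fl_n(Y)$ coming from $Z'\in\Qinv_k(w')$, and set $Z=Z'\cup\{n\}$. Every pair $[z_a,n]$ is a $2$-inversion of $w$, because each $z_a$ lies in $X$ or $Y$, both of which are $k$-inversions containing $n$. Combined with the previous bullet on pairs not involving $n$, $Z$ has exactly the one non-inverted pair, so $Z\in\Qinv_{k+1}(w)$, and the same parity computation applies in reverse. The only point needing care is this bookkeeping that $n$ occupies the last position, so no other indices shift.

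\emph{Part (b).} Since $n=z_{k+1}$, the element $Z_{k+1}$ is the only member of $P(Z)$ not containing $n$, and $\Fl_n(Z_h)=\Fl_n(Z)_h$ for $h\le k$. Hence
\[
(R/n)\cap P(\Fl_n(Z))=\{\Fl_n(Z)_h : h\le k,\ Z_h\in R\}.
\]
In lex order a prefix of $P(Z)$ has the form $\{Z_{k+1},Z_k,\dots,Z_i\}$. Discarding $Z_{k+1}$ leaves $\{Z_k,\dots,Z_i\}$, whose image is a prefix of $P(\Fl_n(Z))$ (empty if $i=k+1$). A suffix $\{Z_i,\dots,Z_1\}$ maps to the suffix $\{\Fl_n(Z)_{\min(i,k)},\dots,\Fl_n(Z)_1\}$ of $P(\Fl_n(Z))$. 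This is routine, and I expect no real obstacle; part (a)'s index and parity bookkeeping is the only delicate step.
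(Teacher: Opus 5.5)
Your proposal is correct and follows essentially the same route as the paper: because $n$ is always the last entry of $Z$, removing it leaves the indices $i,i+1$ of the two surviving packet elements unchanged, so the parity exponent changes from $k-i$ to $(k-1)-i$ and the relation reverses, while in (b) the only effect of contraction is to discard $Z_{k+1}$. You supply more bookkeeping than the paper does, namely checking that the flattened set is a quasi-inversion of $w'$, treating $k=2$ separately, and justifying the converse by noting that every pair $[z_a,n]$ is inverted, but the argument is the same.
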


\begin{proof}
    This follows from similar arguments to the ones in
\cref{lemma:F_j_properties}, with some adaptations. For (a), a
quasi-inversion relation $X<_{\P_w(n,k)} Y$ for
$\{X,Y\}=\{Z_i,Z_{i+1}\}$ comes from the parity of $k-i$ for some
quasi-inversion $Z$ of $w$. By definition, $\Fl_n(X),\Fl_n(Y)\in
\binom{[n-1]}{k-1}$ and $\Fl_n(Z)\in \binom{[n-1]}{k}$ are obtained by
removing the last element $n$ from $X,Y\in \binom{[n]}{k}$ and $Z\in
\binom{[n]}{k+1}$, so the relation between
$\{\Fl_n(X),\Fl_n(Y)\}=\{\Fl_n(Z)_i,\Fl_n(Z)_{i+1}\}$ is based on the
parity of $(k-1)-i$. As this is opposite of the parity of $k-i$, we
conclude $\Fl_n(X) >_{\P_{w'}(n-1,k-1)} \Fl_n(Y)$. Reversing this
argument shows that (a) holds.

For (b), the statement follows by definition of prefix/suffix and the
contraction operation on $R$. The proof is similar to
 \cref{lemma:F_j_properties}(b), but deletion is replaced with
contraction. Note that the element $Z_{k+1}\in P(Z)$ does not contain
$n$, so it is removed from $P(Z)$ when contraction is performed. Consequently,
the reverse implication of (b) does not hold in general.
\end{proof}

\begin{lemma}\label{lem:reduction_remove2}
Let $R\in \C_w(n,k+1)$ where $n=k+2$. Suppose $X\to Y$ is an edge in $G_R$ such that
\begin{enumerate}[label=(\roman*)]
\item $X$ and $Y$ do not contain $n$ and 
\item $X$ and $Y$ both
contain some consecutive elements $j,j+1\in [n-2]$.
\end{enumerate}
If $R'=(R/(j+1))/j\subseteq \Inv_{k-1}(w')$ is the double contraction,
$w'=\Fl_{j}\circ \Fl_{j+1}(w)\in \S_{n-2}$, $X'=\Fl_{j}\circ
\Fl_{j+1}(X)$, and $Y'=\Fl_{j}\circ \Fl_{j+1}(Y)$, then the following
hold.
\begin{enumerate}[label=(\alph*)]
    \item We have that $R'\in \C_{w'}(n-2,k-1)$.
    \item The edge $X\to Y$ is a quasi-inversion, full lex, or full antilex edge in $G_R$ if and only if $X'\to Y'$ is a quasi-inversion, full lex, or full antilex edge in $G_{R'}$, respectively. 
    \item For each $i\in \{0,2\}$, we have that $X\in V_i(G_R)$  if
    and only if $X'\in V_i(G_{R'})$.  The same holds for $Y$ and $Y'$.  
\end{enumerate}
\end{lemma}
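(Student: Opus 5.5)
The plan is to exploit the small size $n=k+2$ together with the consecutive values $j,j+1$. First I would pin down the structure forced by the hypotheses. Since $X,Y\in\Inv_k(w)$ differ in exactly one position, $|X\cup Y|=k+1$. Since $n\notin X\cup Y$, this forces $X\cup Y=[n-1]$. Because $X$ is a $k$-inversion containing $j,j+1$, we get $w^{-1}(j+1)<w^{-1}(j)$. Moreover, for every $x\in X$ with $x\neq j,j+1$, either $x<j$, in which case $x$ appears after both $j+1$ and $j$, or $x>j+1$, in which case $x$ appears before both. The same holds for $Y$, hence for every $x\in[n-1]\setminus\{j,j+1\}$. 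Thus in $w$ the values $j+1,j$ behave as a single ``block'' relative to all values other than $n$, and only the value $n$ can sit between them in the window notation. I would record this block property first, since every later step reduces to it, with a separate small case analysis for sets containing $n$.

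For part (b), I would proceed as follows. Every edge $X\to Y$ of $G_R$ comes from some $Z$ with $\{X,Y\}=\{Z_i,Z_{i+1}\}$, where $Z$ is either a $(k+1)$-quasi-inversion or a full-packet, and $Z=X\cup Y\supseteq\{j,j+1\}$. Because $j,j+1$ are consecutive values, the differing index $i$ is never between them. Removing $j,j+1$ therefore either lowers both $i$ and $k$ by $2$ (if $j,j+1$ precede position $i$) or lowers only $k$ by $2$. In both cases the parity of $k-i$ is preserved, unlike the single contraction in \cref{lem:contract_n}(a), where it flips. Also, $\Fl_j\circ\Fl_{j+1}$ preserves the relative order of all remaining values and positions. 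So $Z'=\Fl_j\circ\Fl_{j+1}(Z)$ is a quasi-inversion (respectively a full-packet) of $w'$ exactly when $Z$ is, with $\{X',Y'\}=\{Z'_{i'},Z'_{i'+1}\}$ and the same parity rule. Finally, $Z\in R$ if and only if $Z'\in R'$ by the definition of contraction, so the red/blue colour is preserved. Part (c) is immediate: $X'$ contains the flattened maximum $n-2$ iff $X\ni n$, and $X'\cup\{n-2\}\in R'$ iff $X\cup\{n\}\in R$.

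For part (a), I need $R'$ to be a lower order ideal of $\P_{w'}(n-2,k-1)$ satisfying the MSZ condition. For MSZ, I would argue as in \cref{lem:contract_n}(b), applied twice. Take $Z\supseteq\{j,j+1\}$. Contracting by $j+1$ and then by $j$ deletes from $P(Z)$ exactly the elements missing $j$ or $j+1$, and these occupy two consecutive positions in lex order, so prefixes map to prefixes and suffixes to suffixes. For the lower-ideal condition, I would lift each generating quasi-inversion relation of $\P_{w'}(n-2,k-1)$, coming from a $k$-quasi-inversion $Z'\subseteq[n-2]$ of $w'$, to $Z=\Fl_{j+1}^{-1}\Fl_j^{-1}(Z')\cup\{j,j+1\}$. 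The block property should make $Z$ a $(k+2)$-quasi-inversion of $w$ with the same parity, so the relation pulls back to one in $\P_w(n,k+1)$, where $R$ is an ideal. It also shows that every full-packet of $w'$ lifts to a full-packet of $w$.

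The main obstacle is the value $n$ (flattened to $n-2$). The block property is only guaranteed for values in $[n-1]$, and $n$ may lie between $j+1$ and $j$ in $w$. In that case the lift of a $Z'$ containing $n-2$ need not be a quasi-inversion or a full-packet of $w$. My first attempt would be to show directly that if $n$ separates $j+1$ and $j$, then no element of $\Inv_{k+1}(w)$ containing $n$ also contains both $j,j+1$, since such a set would require $n$ before both. So the relevant part of $R'$ involving $n-2$ is empty or forced, and the lower-ideal and MSZ checks there become vacuous. Otherwise, I would fall back on the MSZ condition of $R$ on the unique possible $(k+2)$-inversion $[n]$ to supply the missing relations.
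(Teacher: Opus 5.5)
Your route differs from the paper's. The paper sets $Z=[n]$, notes $X\cup Y=Z_n$, and then enumerates the possible window notations of $w$:
\begin{itemize}
\item $Z_n$ a quasi-inversion, with $w^{-1}(n)=1$ or $w^{-1}(n)>1$;
\item $Z_n$ an inversion, with $w^{-1}(n)=1$, $=2$, or $>2$.
\end{itemize}
In each subcase it writes out $\Inv_{k+1}(w)$ and $\Inv_{k-1}(w')$ explicitly (at most two elements, or a single full packet) and checks the ideal and MSZ conditions by hand.

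You instead isolate the structural reason. Because $X\cup Y=[n-1]$, the values $j+1,j$ form a block relative to every value except $n$. So double contraction is a bijection from sets containing $\{j,j+1\}$ to subsets of $[n-2]$. It preserves inversion/quasi-inversion status and the parity of $k-i$, provided $n$ is absent from the set or precedes $j+1$. This settles (b), (c), and the case $w^{-1}(n)<w^{-1}(j+1)$ of (a) uniformly. It also shows clearly why $n=k+2$ and consecutiveness are needed. The paper's enumeration is more pedestrian but leaves nothing implicit.

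The step that is not yet a proof is the complementary case of (a), which you flag but do not finish.
\begin{itemize}
\item The bad case is ``$n$ does not precede $j+1$''. This includes $n$ after $j$, not only $n$ between $j+1$ and $j$.
\item There the lower-ideal check is \emph{not} vacuous. $R'$ equals $\{[n-3]\}$ exactly when $[n-1]\in R$, and $[n-3]$ can be an endpoint of a relation in $\P_{w'}(n-2,k-1)$.
\end{itemize}
For example, take $w=(5,4,6,3,2,1)$, $j=3$, $R=\{[1,2,3,4,5]\}$, with the antilex edge $[2,3,4,5]\to[1,3,4,5]$. Then $w'=(3,4,2,1)$ and $R'=\{[1,2,3]\}$. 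But $[1,2,3,4]$ is a quasi-inversion of $w'$ relating $[1,2,3]$ and $[1,2,4]$. Had that relation pointed the other way, the lemma would fail.

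The missing observation is that your block property puts $n-1$ before $j+1$. So in the bad case $n-1$ precedes $n$, and $[n-3,n-2]$ is a non-inversion of $w'$. Two consequences follow:
\begin{itemize}
\item $[n-2]\notin\Inv_{n-2}(w')$, so MSZ is vacuous.
\item The only possible generating relation of $\P_{w'}(n-2,k-1)$ is the quasi-inversion relation between $[n-2]\setminus\{n-3\}$ and $[n-3]$, with $i=n-3=k-1$.
\end{itemize}
The parity $(k-1)-i=0$ is even, so $[n-3]<_{\P_{w'}(n-2,k-1)}[n-2]\setminus\{n-3\}$. Hence $R'\subseteq\{[n-3]\}$ is a lower order ideal. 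This is exactly the check the paper makes in its case where $Z_n$ is an inversion and $w^{-1}(n)>2$.

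Your proposed fallback, using the MSZ condition of $R$ on $[n]$, cannot replace this check. In the bad case $[n]\notin\Inv_n(w)$, so there is no such condition to use.
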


\begin{proof}
    Throughout, we let $Z=[1,2,\ldots,n-1,n]$, and let
$Z'=\Fl_{j}\circ \Fl_{j+1}(Z)$ be the double flatten set. As $X\to Y$
is an edge in $G_R$, neither element contains $n$, and $n=k+2$ by
hypothesis, it must be that $X\cup Y=Z_n=Z \setminus \{n \}$ is either
an inversion or a quasi-inversion for $w$. Additionally, $X$ and $Y$
differ at exactly one position $h$ and both contain $j$ and $j+1$, so
$\{X,Y\}=\{Z_{h,n},Z_{h+1,n}\}$, where $\{j,j+1\}\cap
\{h,h+1\}=\emptyset$.

To prove part (c), assume (a) holds.  If $X=Z_{h,n}$, then $X\in
V_0(G_R)$ if and only if $Z_{h} \not \in R$ if and only if
$\Fl_{j}\circ \Fl_{j+1}(Z_{h})\not \in R'$ if and only if $X'\in V_0(G_{R'})$.
Similarly, $X\in V_2(G_R)$ if and only if $X'\in V_2(G_{R'})$.  A
similar argument applies if $X=Z_{h+1,n}$, and we can then apply the
same arguments with $Y$ and $Y'$ in place of $X$ and $X'$ to establish
the corresponding claims for $Y$ and $Y'$.

To prove (a) and (b), we will use casework based on whether $X\cup Y$
is a quasi-inversion or a $(k+1)$-inversion.  In each case, we
consider subcases based on where $n$ appears in the window notation of
$w$ and the resulting permutation $w'$ after $j$ and $j+1$ are
contracted. By hypothesis $j,j+1\in [n-2]$, so the elements $n-1$ and
$n$ are not contracted to obtain $w'$.

\bigskip

\noindent \textbf{Case 1: $X\cup Y$ is a quasi-inversion.}  Since
$X\cup Y$ is a quasi-inversion, the window notation of $w$ contains
\begin{equation}\label{eq:double_quasi}
(n-1,n-2,\dots,h+2,h,h+1,h-1,\ldots,2,1). \end{equation} 
\begin{itemize}
\item If $w^{-1}(n)=1$, then $\Inv_{k+1}(w)=\{Z_h,Z_{h+1}\}$ and the
unique relation in $\P_w(n,k+1)$ is determined by the parity of
$k+1-h$.  Depending on the relative order of $h$ and $j$, we have that
$\Inv_{k-1}(w')$ is either $\{Z_h',Z_{h+1}'\}$ or
$\{Z_{h-2}',Z_{h-1}'\}$. The parities of $(k+1)-h$, $(k-1)-h$, and
$(k-1)-(h-2)$ are the same. Therefore, $\Fl_{j} \circ
\Fl_{j+1}:\P_w(n,k+1) \to \P_{w'}(n-2,k-1)$ is an order-preserving
bijection on two-element posets, so it maps lower-order ideals of
$\P_w(n,k+1)$ to lower-order ideals of $\P_{w'}(n-2,k-1)$.
\item If $w^{-1}(n)>1$, then $\Inv_{k+1}(w)$ and $\Inv_{k-1}(w')$ have
at most one element since $n=k+2$ and $w$ contains the pattern in
\eqref{eq:double_quasi}.  Since $R' \subseteq \Inv_{k-1}(w')$, it must
be the case that $R'$ is a lower-order ideal of $\P_{w'}(n-2,k-1)$. 

\end{itemize}
 In both cases, $R'$ trivially satisfies the MSZ condition, so we
conclude (a) holds.

Depending on the relative order of $h$ and $j$, we also see that
$\{X',Y'\}$ is either $\{Z_{h,n-2}',Z_{h+1,n-2}'\}$ or
$\{Z_{h-2,n-2}',Z_{h-1,n-2}'\}$.  By \eqref{eq:double_quasi} and the
order-preserving flattening map, $X' \cup Y'$ is a quasi-inversion of
$w'$.  Furthermore, the parity of $k-h$ determines the quasi-inversion
edge $X\to Y$ in $G_R$, and the same parity of $(k-2)-h$ or
$(k-2)-(h-2)$ will determine the quasi-inversion edge $X'\to
Y'$ in $G_{R'}$. Hence, (b) follows.

\bigskip

\noindent
\textbf{Case 2: $X\cup Y$ is a $(k+1)$-inversion}. 
Since $X\cup Y \in \Inv_{k+1}(w)$, the window notation of  $w$
contains 
\begin{equation}\label{eq:double_full}
(n-1,n-2,\dots,h+2,h+1,h,h-1,\ldots,2,1). 
\end{equation}

\begin{itemize}
\item If $w^{-1}(n)=1$, then $\Inv_{k+1}(w)=P(Z)$ and
$\Inv_{k-1}(w')=P(Z')$. As $R$ is a prefix or suffix of $P(Z)$, we
conclude that $R'$ is a prefix or suffix of $P(Z')$ so the MSZ
condition holds.  By \Cref{lem:poset.packet.antichain}, $P(Z')$ is an
antichain of $\P_{w'}(n-2,k-1)$, so any subset is a lower order ideal
including $R'$.  

\item If $w^{-1}(n)=2$, then $\Inv_{k+1}(w)=\{Z_n,Z_{n-1}\}$. Since
$j,j+1 \in [n-2]$, we have that $\Inv_{k-1}(w')=\{Z_{n-2}',Z_{n-3}'\}$
and $Z'$ is a quasi-inversion of $w'$ by \eqref{eq:double_full}.  As
in Case 1 with two element inversion sets, $R$ being a lower-order ideal
of $\P_w(n,k+1)$ implies that $R'$ is a lower-order ideal of
$\P_{w'}(n-2,k-1)$ by the parity condition.  Since
$\Inv_{k}(w')=\emptyset$, the MSZ condition trivially holds.

\item If $w^{-1}(n)>2$, then $\Inv_{k+1}(w)=\{Z_n\}$, so $R'$ is
either $\emptyset$ or $\{Z'_{n-2}\}$. The largest value $n-2$ of $w'$
cannot occur first, since the original values $n-1,n$ are not
contracted. If it occurs second, then
$\Inv_{k-1}(w')=\{Z'_{n-2},Z'_{n-3}\}$, implying the quasi-inversion
relation $Z'_{n-2}<_{\P_{w'}(n-2,k-1)}Z'_{n-3}$ holds. Otherwise,
$\Inv_{k-1}(w')=\{Z'_{n-2}\}$. In either case, both possibilities for
$R'$ are lower order ideals. Since $\Inv_k(w')=\emptyset$, the MSZ
condition is also satisfied.

\end{itemize}
In all three cases, we conclude (a) holds.

Observe that $Z_n\not \in R$ or $Z_n\in R$ determines if $X\to Y$ is a
full lex or antilex edge. This also determines $Z_{n-2}'\not \in R'$ or 
$Z_{n-2}'\in R'$, which in turn determines if $X'\to Y'$ is a full lex
or full antilex edge. Hence, (b) holds as well.
\end{proof}

\begin{remark}
The various hypotheses in \cref{lem:reduction_remove2} cannot be removed in general. We give some examples below.
\begin{itemize}
    \item Consider $w=(5,3,2,1,4)$, $R=\Inv_4(w)=\{[1,2,3,5]\}$, and
$j=1$, so $w'=(3,1,2)=\Fl_{1}\circ \Fl_{2}(w)$.  Note that $[1,2,5]
\textcolor{red}{\to} [1,2,3]$ is an antilex edge in $G_{R}$, and both elements
contain $1$ and $2$. However, one of these contains $n=5$, violating
condition (i) in \cref{lem:reduction_remove2}. Observe that
$\P_{w'}(3,2)$ contains the relation $[2,3]<_{\P_{w'}(3,2)} [1,3]$, so
$R'=(R/2)/1=\{[1,3]\}$ is not a lower-order ideal of $\P_{w'}(3,2)$
and $R'\notin \C_{w'}(3,2)$.

\item Consider $w=(5,3,4,2,1)$, $R=\Inv_4(w)=\{[1,2,3,5],[1,2,4,5]\}$,
and $j=2$, so $w'=(3,2,1)=\Fl_2\circ \Fl_3(w)$.  Note that $[1,2,3] \to [1,2,4]$ is a quasi-inversion edge in
$G_{R}$, but $[1,2,4]$ does not contain both $j=2$ and $j+1=3$, violating condition (ii) in \cref{lem:reduction_remove2}.  Observe that $R'=(R/3)/2=\{[1,3]\}$ is not a prefix or suffix of $P([1,2,3])=\Inv_2(w')$, so $R'\notin \C_{w'}(3,2)$.  Note that if we instead chose $j=1$, then the conditions in \cref{lem:reduction_remove2} are satisfied, and $R'=(R/2)/1$ is consistent for $w'=\Fl_1\circ \Fl_2(w)$.

\item Consider $w=(7,6,2,5,4,3,1)$, $R=P([1,3,4,5,6,7])=\Inv_5(w)$,
and $j=4$, so $w'=(5,4,2,3,1)=\Fl_4\circ \Fl_5(w)$. The edge
$[3,4,5,6]\textcolor{red}{\to} [1,4,5,6]$ in $G_R$ satisfies the
conditions (i) and (ii) in \cref{lem:reduction_remove2}, but this is an example where $n=7$ and $k=4$, so $n>k+2$. We find that $R'=(R/5)/4=P([1,3,4,5])$. However, notice that $[2,4,5]<_{\P_{w'}(5,3)} [3,4,5]$, so $R'$ is not a lower-order ideal of $\P_{w'}(5,3)$. Additionally, observe that $[1,2,4,5]\in \Inv_4(w')$, but $R'\cap P([1,2,4,5])=\{[1,4,5]\}$ is neither a prefix nor suffix of $P([1,2,4,5])$. This example demonstrates both the lower-order ideal and MSZ condition failing to hold.
\end{itemize}
 \end{remark}

We next consider edges between $V_0(G_R),V_1(G_R)$, and $V_2(G_R)$. In the graph $G_R$ from \cref{ex:partition} and \cref{fig:Gr example}, observe that all edges had the form $X\to Y$ with $X\in V_i(G_R)$, $Y\in V_j(G_R)$, and $i\leq j$. We will show that this holds in general using a combination of deletion and contraction, which allow us to reduce to a verification for finitely many permutations and consistent sets.

\begin{lemma}\label{lem:partition}
If $R\in \C_w(n,k+1)$ and $X\to Y$ is an edge in $G_R$, then $X\in
V_a(G_R)$ and $Y\in V_b(G_R)$ for some  $0\leq a\leq b\leq 2$.
\end{lemma}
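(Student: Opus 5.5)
The plan is to show that no edge of $G_R$ goes \emph{backwards} in the partition \eqref{eq:partition}. The forbidden cases are edges from $V_1(G_R)$ to $V_0(G_R)$, from $V_2(G_R)$ to $V_0(G_R)$, and from $V_2(G_R)$ to $V_1(G_R)$. Every edge $X\to Y$ of $G_R$ joins two $k$-subsets differing in exactly one position, so $Z=X\cup Y\in\binom{[n]}{k+1}$ is either a $(k+1)$-inversion or a quasi-inversion of $w$, and $\{X,Y\}=\{Z_h,Z_{h+1}\}$ for some $h$. I would split according to how many of $X,Y$ contain $n$.

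\textbf{Both contain $n$.} Then both lie in $V_1(G_R)$ and there is nothing to prove.

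\textbf{Exactly one contains $n$.} Then $n\in Z$, the differing position is the last one, and $\{X,Y\}=\{Z_k,Z_{k+1}\}$, where $Z_{k+1}=Z\setminus\{n\}$ lies in $V_0(G_R)\cup V_2(G_R)$ and $Z_k$ lies in $V_1(G_R)$. I would argue directly from \Cref{def:classical.GR} and the parity condition.
\begin{itemize}
\item If $Z$ is a quasi-inversion, the parity of $k-k=0$ is even, so the edge is $Z_{k+1}\to Z_k$. Moreover $Z\notin\Inv_{k+1}(w)$, so $Z\notin R$ and $Z_{k+1}\in V_0(G_R)$. This is allowed.
\item If $Z\in\Inv_{k+1}(w)$, the edge is the full-antilex edge $Z_k\to Z_{k+1}$ exactly when $Z\in R$, i.e. when $Z_{k+1}\in V_2(G_R)$. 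Otherwise it is the full-lex edge $Z_{k+1}\to Z_k$ with $Z_{k+1}\in V_0(G_R)$.
\end{itemize}
In every subcase $a\le b$.

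\textbf{Neither contains $n$.} Then $X,Y\in V_0(G_R)\cup V_2(G_R)$, and we must exclude $X\in V_2(G_R)$, $Y\in V_0(G_R)$, that is, $X\cup\{n\}\in R$ and $Y\cup\{n\}\notin R$. Set $W=Z\cup\{n\}$, so that $X\cup\{n\}$ and $Y\cup\{n\}$ are the adjacent elements $W_h,W_{h+1}$ of $P(W)$, and $W_{k+2}=Z$. Everything relevant is determined by the restriction of $w$ to $W$. So I would first apply \Cref{cor:deletion_consistent} repeatedly to delete every $j\notin W$. This preserves consistency of $R$ (now $R\setminus j$), the edge type, and membership in $V_0$ and $V_2$, since $n\notin\{j\}$ keeps the largest value. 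This reduces to the case $n=k+2$ and $W=[n]$.

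When $X$ and $Y$ share a consecutive pair $j,j+1\in[n-2]$, I would then apply \Cref{lem:reduction_remove2} repeatedly. It preserves consistency, the edge type, and the $V_0/V_2$ labels while lowering $k$ by two. Once $k$ is small, no such consecutive pair is forced to exist, and we are left with finitely many configurations of $w\in\S_{k+2}$, $R$, and the edge. These can be checked by hand using the MSZ condition on $P(W)$ and the lower-ideal condition in $\P_w(n,k+1)$.

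For the hand check, the mechanism I expect is the following. If $W\in\Inv_{k+2}(w)$, then by \Cref{lem:prefix_suffix} applied to the triple $\{W_h,W_{h+1},W_{k+2}\}$, membership of $Z=W_{k+2}$ in $R$ forces the orientation of the $Z$-packet edge to agree with the forbidden pattern only if $R\cap P(W)$ is neither a prefix nor a suffix. If $W$ is a quasi-inversion with $P(W)\cap\Inv_{k+1}(w)=\{W_h,W_{h+1}\}$, then the lower-ideal condition and the parity rule give the required comparison, via \Cref{lem:permanent_on_semipackets}.

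The main obstacle is bookkeeping the parities in this last case. One must check, for each relative position of $h$, that the orientation of the quasi-inversion edge $X\to Y$ (governed by $k-h$) and the relation between $W_h$ and $W_{h+1}$ in $\P_w(n,k+1)$ (governed by $k+1-h$) are compatible with ``$X\cup\{n\}\in R$ implies $Y\cup\{n\}\in R$.'' I would attack this first, since it is exactly where the parity convention of \Cref{def:permanent.poset.} must be used. I would also verify the reduction to the finite base cases carefully, in particular that the hypotheses (i) and (ii) of \Cref{lem:reduction_remove2} can always be arranged after deletion.
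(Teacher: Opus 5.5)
Your proposal follows the paper's proof essentially step for step. It uses the same split on how many of $X,Y$ contain $n$, the same edge-type analysis when exactly one does, and the same reduction of the remaining case, first via \Cref{cor:deletion_consistent} to $n=k+2$ and then via \Cref{lem:reduction_remove2} to $k\le 4$. The paper closes at that point with a finite (computer or by-hand) check.

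One caution on your sketched hand check: its case list is not exhaustive. It omits the full-lex edge $Z_{k+1}\to Z_k$ with $Z\in\Inv_{k+1}(w)\setminus R$ when $W$ is a quasi-inversion with $P(W)\cap\Inv_{k+1}(w)=\{W_{k+1},W_{k+2}\}$, that is, when $[z_{k+1},n]\notin\Inv_2(w)$. There you need the generating relation $W_{k+2}<_{\P_w(n,k+1)}W_{k+1}$, not \Cref{lem:permanent_on_semipackets}. You also need to add the trivial cases where the tail's $X\cup\{n\}\notin\Inv_{k+1}(w)$, which force $a=0$. With those cases included, your MSZ/lower-ideal mechanism never uses $n=k+2$ and works verbatim for every $n$. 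So the reductions and the finite check are not actually needed.
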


\begin{proof}
    First, suppose that $n$ appears in $X$ or $Y$. If $n$ appears in both, then $X,Y\in V_1(G_R)$, and the result follows. If exactly one contains $n$, then there are three possibilities.
    \begin{itemize}
        \item If $X\to Y$ is a full lex edge, then this can only occur if $n\in Y$ and $X\cup \{n \}=X\cup Y\notin R$. Hence, $X\in V_0(G_R)$ and $Y\in V_1(G_R)$. 
        \item If $X\to Y$ is a full antilex edge, then this can only
	occur if $n\in X$ and $Y\cup \{n \} = X\cup Y \in R$. Hence, $X\in
	V_1(G_R)$ and $Y \in V_2(G_R)$. 
        \item If $X\to Y$ is a quasi-inversion edge, then this can
	only occur if $n\in Y$ and $X\cup \{n \} = X\cup Y\notin \Inv_{k+1}(w)$. This in turn implies that $X\cup \{n \}\notin R$, so $X\in V_0(G_R)$ and $Y\in V_1(G_R)$. 
    \end{itemize}
    In all three cases above, the result follows. 

    We now consider the case $X$ and $Y$ do not contain $n$, so these
    elements must be in $V_0(G_R) \cup V_2(G_R)$. In this
case, we have that $\{X,Y \}=\{Z_{i},Z_{i+1} \} \subseteq P(Z)$ for
some $1\leq i\leq k$ and $Z \in \binom{[n-1]}{k+1}$, where $Z$ is
either an inversion or a quasi-inversion for $w$ since $X\to Y$ is an
edge in $G_{R}$. As $Z$ does not
contain $n$ and has size $k+1$, this case only occurs when $n\geq
k+2$. 

If $n>k+2$, then there exists some $j\in [n-1]$ that is not an element
of $Z$. Defining $w'=\Fl_j(w)$ and $R'=R\setminus j$,
\cref{cor:deletion_consistent} implies that $R'\in \C_{w'}(n-1,k+1)$
and $\Fl_j(X)\to \Fl_j(Y)$ is an edge in $G_{R'}$.  Furthermore,
$X\in V_0(G_R)$ if and only if  $X\cup \{n\}\notin R$ if and only if  $\Fl_j(X\cup
\{n\})=\Fl_j(X)\cup \{n-1\}\notin R'$ if and only if $\Fl_j(X)\in V_0(G_{R'})$.
Similarly,  $X\in V_2(G_R)$ if and only if $\Fl_j(X)\in V_2(G_{R'})$.  The
same argument applies to $Y$ and $\Fl_j(Y)$. Hence, it suffices to
prove that if $\Fl_{j}(X) \in V_{a}(G_{R'})$ and $\Fl_{j}(Y) \in
V_{b}(G_{R'})$ then $a\leq b$.  Since $R \in \C_w(n,k+1)$ and $R'\in
C_{w'}(n-1,k+1)$, it suffices to consider when $n=k+2$ by iterating
this argument.

In the case where $n=k+2$, observe that when $k\geq 5$, the elements
$X$ and $Y$ must contain some consecutive elements $j,j+1\in
[n-2]$. Then \cref{lem:reduction_remove2} implies that we can instead
consider the edge $X'\to Y'$ in $G_{R'}$, where $X',Y'$, and $G_{R'}$
are defined as in that lemma. Repeating this argument, we reduce to
the case where $1\leq k\leq 4$ and $n=k+2$, so $3\leq n \leq 6$. The
lemma can be verified directly for this case by the reader, or by a
computer verification over all $w \in \S_n$ and $R \in \C_w(n,n-1)$
with $3\leq n\leq 6$.  See appendix for verification details.
\end{proof}

\begin{corollary}\label{cor:partition}
For any $R\in \C_w(n,k+1)$, define $G_0,G_1$, and  $G_2$ to be the subgraphs of $G_R$ induced by $V_0(G_R)$, $V_1(G_R)$, and $V_2(G_R)$, respectively. If $G_0,G_1$, and $G_2$ are all acyclic, then $G_R$ is acyclic. 
\end{corollary}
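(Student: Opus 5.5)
The plan is to argue by contradiction, using \Cref{lem:partition} to confine any directed cycle to a single part of the partition \eqref{eq:partition}. Suppose $G_R$ contains a directed cycle $X^{(0)}\to X^{(1)}\to\cdots\to X^{(m)}=X^{(0)}$. Since $V_0(G_R)$, $V_1(G_R)$ and $V_2(G_R)$ partition $\Inv_k(w)$, each vertex $X^{(t)}$ lies in exactly one part; write $a_t$ for its index, so $X^{(t)}\in V_{a_t}(G_R)$.

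Because $R\in\C_w(n,k+1)$, \Cref{lem:partition} applies to every edge $X^{(t)}\to X^{(t+1)}$ of the cycle and gives $a_t\le a_{t+1}$ for all $t$. Following the cycle around yields
\[
a_0\le a_1\le\cdots\le a_m=a_0,
\]
so all the $a_t$ equal a common value $a\in\{0,1,2\}$. Every vertex of the cycle therefore lies in $V_a(G_R)$.

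Next, every edge of the cycle joins two vertices of $V_a(G_R)$, so it is an edge of the induced subgraph $G_a$. Hence the cycle is a directed cycle in $G_a$, contradicting the hypothesis that $G_0$, $G_1$ and $G_2$ are all acyclic. It follows that $G_R$ is acyclic.

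There is no real obstacle here: \Cref{lem:partition} does all of the work. The only point to check is that the definition of an induced subgraph retains every edge of $G_R$ between vertices of $V_a(G_R)$, so that the cycle genuinely lies in $G_a$.
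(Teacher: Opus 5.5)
Your proposal is correct and matches the paper's intended argument: the paper gives no separate proof, treating the corollary as immediate from \Cref{lem:partition}. As you show, the indices are weakly increasing around any directed cycle, so the cycle lies in a single $V_a(G_R)$ and hence in the induced subgraph $G_a$.
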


Finally, we turn our attention to the subgraph of $G_R$ induced by $V_1(G_R)$. For this, we will introduce an additional operation called truncation, which is a specific combination of deletion and contraction.

\begin{definition}\label{def:truncation}
    Let $w\in \S_n$, and let $m=w^{-1}(n)$.  Define
$t_w:\{w(m+1),\ldots,w(n)\}\to [n-m]$ to be the unique
order-preserving bijection. The \defn{truncation} of $w$ is the
permutation in $\S_{n-m}$ with window
notation \[\trunc{w}{n}=(t_w(w(m+1)),\ldots,t_w(w(n))).\] For any
element $X=[x_1,\ldots,x_{\ell-1},n]\subseteq
\{w(m),w(m+1),\ldots,w(n)\}$, define the \defn{truncation} of $X$ to
be \[T_w(X)=[t_w(x_1),t_w(x_2),\ldots,t_w(x_{\ell-1})].\] For any
$R\subseteq \Inv_{\ell+1}(w)$, the \defn{truncation} of $R$
is \[\trunc{R}{n}=\{T_w(X): X\in R,n\in X\}.\] 
\end{definition}

Observe that truncation of $R$, $w$, or $X$ is equivalent to first
performing deletion on the first $m-1$ elements in the window notation
of $w$, followed by a contraction on the largest element, which will
be the element $n-m+1$ in the first position.  Therefore,
\begin{equation}\label{eq:Twbijection}
T_w: \{X\in \Inv_{\ell+1}(w):n\in X\} \to \Inv_{\ell}(\trunc{w}{n})
\end{equation}
is a bijection and $\trunc{\Inv_{\ell+1}(w)
}{n}=\Inv_\ell(\trunc{w}{n})$ for any $\ell\geq 0$.

\begin{example}
    Consider $w=(2,5,6,3,4,1)\in \S_6$ where
$\Inv_{3}(w)=\{[1,3,5],[1,3,6],[1,4,5],[1,4,6]\}$.  Then,
$\trunc{w}{6}=(2,3,1)$ and
$\trunc{\Inv_{3}(w)}{6}=\{[1,2],[1,3]\}=\Inv_{2}(2,3,1)$.
\end{example}

We next will show that the subgraph of $G_R$ induced by $V_1(G_R)$ can be identified using truncation. Recall that the \defn{reverse} of a directed graph $G$ is the graph obtained by reversing the directions of all edges in $G$.

\begin{lemma}\label{lem:V1}
Let $m=w^{-1}(n)$, and $R\in \C_w(n,k+1)$.
If $R'=\Inv_{k}(\trunc{w}{n})\setminus (\trunc{R}{n})$, then $R'\in
\C_{\trunc{w}{n}}(n-m,k)$ and the subgraph of $G_R$ induced by
$V_1(G_R)$ is isomorphic as a directed graph to the reverse of
$G_{R'}$.
\end{lemma}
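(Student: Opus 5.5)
The plan is to use the bijection $T_w$ from \eqref{eq:Twbijection}, with $\ell=k$, as the vertex map. By \eqref{eq:Twbijection} it sends $V_1(G_R)=\{X\in\Inv_k(w): n\in X\}$ bijectively onto $\Inv_{k-1}(\trunc{w}{n})$, which is the vertex set of $G_{R'}$.

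\emph{Index bookkeeping.} Let $X=[x_1,\dots,x_{k-1},n]\in\Inv_k(w)$. Then $w^{-1}(x_1)>\dots>w^{-1}(x_{k-1})>m$, so every $x_i$ lies in positions $m+1,\dots,n$. On those positions $t_w$ is order preserving in both value and position. Now let $Z=[z_1,\dots,z_k,n]$ have $n$ as its last entry. For $i\le k$ we have $T_w(Z_i)=T_w(Z)_i$, while $Z_{k+1}=Z\setminus\{n\}$ is simply discarded. This is the truncated analogue of \Cref{lem:contract_n}.

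\emph{Edges.} Every edge of the induced subgraph on $V_1(G_R)$ joins $Z_i$ and $Z_{i+1}$ with $i\le k-1$, for some $Z\ni n$ of size $k+1$.
\begin{itemize}
\item Suppose $Z$ is a quasi-inversion whose failing pair is not $(z_k,n)$. Then all $z_j$ sit after position $m$. Hence $T_w(Z)$ is a $k$-quasi-inversion of $\trunc{w}{n}$ with the same failing index $i$, and conversely every such quasi-inversion of $\trunc{w}{n}$ arises this way. If instead the failing pair is $(z_k,n)$, then $P(Z)\cap\Inv_k(w)=\{Z_k,Z_{k+1}\}$, and $Z_{k+1}\notin V_1$, so that edge is not in the induced subgraph. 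The parity of $k-i$ used in $\P_w(n,k)$ is opposite to the parity of $(k-1)-i$ used in $\P_{\trunc{w}{n}}(n-m,k-1)$. So each quasi-inversion edge maps to the reversed quasi-inversion edge, exactly as in \Cref{lem:contract_n}(a).
\item Suppose $Z\in\Inv_{k+1}(w)$. Then $T_w(Z)\in\Inv_k(\trunc{w}{n})$. We have $Z\in R$ iff $T_w(Z)\in\trunc{R}{n}$ iff $T_w(Z)\notin R'$. So a full-antilex edge $Z_i\to Z_{i+1}$ corresponds to the full-lex edge $T_w(Z)_{i+1}\to T_w(Z)_i$ of $G_{R'}$, and symmetrically for full-lex edges.
\end{itemize}
Together these cases give the claimed isomorphism of the induced subgraph with the reverse of $G_{R'}$.

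\emph{Consistency of $R'$.} The same correspondence applied one level up, to $(k+2)$-sets containing $n$ and the poset $\P_w(n,k+1)$, shows that $T_w$ carries the quasi-inversion relations among elements of $\Inv_{k+1}(w)$ containing $n$ onto the reversed generating relations of $\P_{\trunc{w}{n}}(n-m,k)$. Every generating relation of $\P_{\trunc{w}{n}}(n-m,k)$ arises this way, by applying $T_w^{-1}$. The set $R$ is a lower order ideal of $\P_w(n,k+1)$, so $\trunc{R}{n}$ is closed downward under the reversed relations, i.e.\ it is an upper order ideal of $\P_{\trunc{w}{n}}(n-m,k)$. Its complement $R'$ is therefore a lower order ideal. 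For the MSZ condition, take $W\in\Inv_{k+1}(\trunc{w}{n})$ and let $Z=T_w^{-1}(W)\in\Inv_{k+2}(w)$. Since $R$ is consistent, $R\cap P(Z)$ is a prefix or suffix of $P(Z)$. Discarding $Z_{k+2}$, the only element of $P(Z)$ not containing $n$, keeps this property, by the argument of \Cref{lem:contract_n}(b). So $\trunc{R}{n}\cap P(W)$ is a prefix or suffix of $P(W)$, and taking complements inside $P(W)$ swaps prefixes and suffixes. Hence $R'\in\C_{\trunc{w}{n}}(n-m,k)$.

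The main obstacle is the parity and index bookkeeping. One must check that the failing index $i$ and the deleted index are preserved under $T_w$, and that the discarded element $Z_{k+1}$ never participates in a $V_1$ edge. I would verify both carefully against \Cref{lem:types.of.packets}(c).
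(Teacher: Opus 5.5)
Your proposal is correct and takes essentially the paper's route. You use $T_w$ as the vertex bijection. The parity flip, as in \Cref{lem:contract_n}, reverses quasi-inversion edges. Complementing the truncated set swaps full-lex and full-antilex edges, and it also turns an upper order ideal into a lower one while exchanging prefixes and suffixes.

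The only difference is packaging. The paper views truncation as a sequence of deletions followed by contraction of the largest value, and cites the deletion and contraction lemmas. You instead verify the index and parity correspondences for $T_w$ directly, which is, if anything, more explicit.
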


\begin{proof}
    Throughout, we will use the notation from \cref{def:truncation},
and let $w'=\trunc{w}{n}$. Additionally, we will use the fact that
truncation is equivalent to several deletions, followed by a
contraction on the largest element, and hence $T_{w}: \{X\in
\Inv_{\ell+1}(w):n\in X\} \to \Inv_\ell(w')$ is a bijection for all
$\ell\geq 0$.  Combining these facts with \cref{lemma:F_j_properties},
\cref{lem:contract_n}, and the assumption $R\in \C_w(n,k+1)$, we
conclude that $\trunc{R}{n}$ is an upper-order ideal of
$\P_{w'}(n-m,k)$ that satisfies the MSZ condition. Hence, its
complement $R'$ is a lower-order ideal that satisfies the MSZ
condition, so $R'\in \C_{w'}(n-m,k)$.

For the graph isomorphism, the map $T_w$ also gives a bijection
between $V_1(G_{R})=\{X \in \Inv_{k}(w): n \in X \}$ and $\Inv_{k-1}(w')$, which are the vertices in $G_{R'}$. Under this bijection, similar arguments using \cref{lemma:F_j_properties} and \cref{lem:contract_n} show that an edge $X\rightarrow Y$ in $G_R$ induced by ${\P_{w}(n,k)}$, $Z\notin R$, or $Z\in R$ respectively corresponds to an edge $T_w(X)\leftarrow T_w(Y)$ in $G_{R'}$ induced by $\P_{w'}(n-m,k-1)$, $T_w(Z)\in R'$, or $T_w(Z)\notin R'$. We conclude that the subgraph of $G_R$ induced by
$V_1(G_R)$ is isomorphic to the reverse of
$G_{R'}$.
\end{proof}

By \cref{cor:deletion_consistent}, the subgraph of $G_R$ induced by $V_0(G_R)\cup V_2(G_R)$ can be described in terms of deletion, and by \cref{lem:V1}, the subgraph induced by $V_1(G_R)$ can be described in terms of truncation. We illustrate the combined results of \cref{cor:deletion_consistent} and \cref{lem:V1} with an example.

\begin{example}\label{ex:deletioncontraction2}
We revisit $w$ from \cref{ex:partition}. Observe that $R\setminus 6=\Inv_4(\Fl_6(w))$ is a consistent subset of $\Inv_4(\Fl_6(w))$, and the graph $G_{R\setminus 6}$ is shown on the left in \cref{fig:deletioncontraction}. While $\trunc{R}{6}$ is not a consistent subset of $\Inv_3(\trunc{w}{6})$, its complement $\Inv_3(\trunc{w}{6})\setminus (\trunc{R}{6})=\{[1,2,4],[1,3,4]\}$ is, and the corresponding graph $G_{\Inv_3(\trunc{w}{6})\setminus (\trunc{R}{6})}$ is shown on the right in \cref{fig:deletioncontraction}. Partitioning $\Inv_3(w)$ as in \eqref{eq:partition} and comparing \cref{fig:Gr example2,fig:deletioncontraction}, we see that the subgraph of $G_R$ induced by $V_0(G_R)\cup V_2(G_R)$ is isomorphic to $G_{R\setminus 6}$, and the subgraph of $G_R$ induced by $V_1(G_R)$ is isomorphic to the reverse of $G_{\Inv_3(\trunc{w}{6})\setminus (\trunc{R}{6})}$.

    \begin{figure}[h!]
\centering 
    \begin{tikzpicture}
        \node (134) at (0,0) {$[1,3,4]$};
        \node (135) at (-1,1) {$[1,3,5]$};
        \node (124) at (1,1) {$[1,2,4]$};
        \node (125) at (0,2) {$[1,2,5]$};
        \node (13) at (3,0) {$[1,3]$};
        \node (12) at (3,1) {$[1,2]$};
        \node (15) at (5,0) {$[1,5]$};
        \node (14) at (5,1) {$[1,4]$};
        \node (25) at (8,0) {$[2,5]$};
        \node (24) at (7,1) {$[2,4]$};
        \node (35) at (9,1) {$[3,5]$};
        \node (34) at (8,2) {$[3,4]$};
        \draw[->] (134) edge node {} (135);
        \draw[->] (134) edge node {} (124);
        \draw[->] (135) edge node {} (125);
        \draw[->] (124) edge node {} (125);
        \draw[<-] (13) edge node {} (12);
        \draw[<-] (15) edge node {} (14);
        \draw[<-] (25) edge node {} (24);
        \draw[<-] (24) edge node {} (34);
        \draw[<-] (25) edge node {} (35);
        \draw[<-] (35) edge node {} (34);
        \draw[->,color=blue] (12) edge node {} (15);
        \draw[->,color=blue] (15) edge node {} (25);
        \draw[->,color=blue] (13) edge node {} (15);
        \draw[->,color=blue] (15) edge node {} (35);
        \draw[<-,color=red] (12) edge node {} (14);
        \draw[<-,color=red] (14) edge node {} (24);
        \draw[<-,color=red] (13) edge node {} (14);
        \draw[<-,color=red] (14) edge node {} (34);
    \end{tikzpicture}
    \caption{The graphs $G_{R\setminus 6}$ and $G_{\Inv_3(\trunc{w}{6})\setminus (\trunc{R}{6})}$ from \cref{ex:deletioncontraction2}. Quasi-inversion edges are shown in black, full lex edges are shown in blue, and full antilex edges are shown in red.}
    \label{fig:deletioncontraction}
\end{figure}
\end{example}

We conclude this section by proving \cref{lem:GRacyclic}, which states that $G_R$ is acyclic for all $R\in \C_w(n,k)$. Our approach uses induction with several key results from this section.

\begin{proof}[Proof of \cref{lem:GRacyclic}] Recall, the hypothesis
requires $k>1$ and $n>0$.  If $n<k$, then $\Inv_{k}(w)=\emptyset$ and
$\Inv_{k-1}(w)$ has at most one element so the equivalence claim can
be verified by definition.  Thus, we may assume $k\leq n$.  If $k=2$
and $R\in \C_w(n,2)$, then $G_R$ is an acyclic oriented complete graph
on $\Inv_1(w)=[n]$ by \Cref{rem:k=2graphs}.  Therefore, we may assume
by induction that $G_{R'}$ is acyclic for all permutations $w'\in
\S_{n'}$ and consistent sets $R'\in \C_{w'}(n',k')$ with $n'<n$ or
$k'<k$, where $3\leq k\leq n$.

Consider the partition of the vertices in $G_R$ from
\eqref{eq:partition}, and define their respective induced subgraphs
$G_0,G_1,$ and $G_2$. \cref{lem:V1} and the induction hypothesis imply
$G_1$ is acyclic. \cref{cor:deletion_consistent} and the induction
hypothesis imply that $G_0$ and $G_2$ are acyclic.  The result now
follows from \Cref{cor:partition} by induction.
\end{proof}

\section{Higher Bruhat Orders for Affine Permutations}\label{sec:affine}

In this section, we will generalize the definitions and some of the
results of the higher Bruhat orders given in \Cref{sec:non-longest} to
elements of the affine symmetric group.  Throughout this section
assume $k,n\in \mathbb{Z}$ are positive integers and $w \in
\affineS_{n}$, unless otherwise specified.   Since
$\affineS_{1}=\S_1$, we have already considered the $n=1$ case, so
assume throughout this section $n\geq 2$.

\subsection{Affine Higher Order Inversions/Quasi-inversions }

Recall the $k$-inversions for classical permutations are subsets
$[x_1,\dots , x_k] \in \binom{[n]}{k}$ such that $w^{-1}(x_{1})> \cdots >
w^{-1}(x_{k})$, where the square bracket notation means
$x_{1}<x_{2}<\cdots x_{k}$.  In this notation, $[x,y] \in
\binom{[n]}{2}$ is an inversion for $w$ whenever
$w^{-1}(x)>w^{-1}(y)$.  For an affine permutation $w \in
\affineS_{n}$, the inversions are defined similarly by pairs $(x,y)\in
[n] \times \mathbb{Z}$ such that $x<y$ and $w^{-1}(x)> w^{-1}(y)$ in
\eqref{eq:inversion.set.def}.  Furthermore, $(x,y) \in \Inv(w)$
implies $x$ and $y$ are not congruent modulo $n$ by the
$n$-periodicity of $w \in \affineS_{n}$.

In order to generalize the notion of $k$-inversions and
$k$-quasi-inversions to affine permutations accounting for
$n$-periodicity, let $\binom{\Z }{k}$ denote the $k$-subsets of $\Z$.
Define an equivalence relation $\sim_n$ on the elements of $\binom{\Z
}{k}$ where $ \{x_1< x_2 <\cdots< x_k \}\sim_n \{y_1< y_2< \cdots< y_k
\}$ if $k>1$ and there exists a single integer $m$ such that
$x_{i}=y_{i}+mn$ for all $1\leq i\leq k$. If $k=1$, then $\{x \}
\sim_n \{y \}$ if and only if $x = y$.  We use the square bracket
notation $[x_1,\dots , x_k]$ to denote the equivalence class of
$\{x_1< x_2< \cdots< x_k \} \in \binom{\Z }{k}$, listed in increasing
order as before.  Denote the equivalence classes of $k$-subsets of
$\Z$ with distinct elements modulo $n$ as
\[
\binom{\mathbb{Z}}{k}_n = \bigl\{ [x_1, \ldots, x_k] : x_i
\not\equiv x_j (\mathrm{mod\ } n)\text{ for $1 \le i < j \le k$} \bigr\}.
\]
Unless otherwise specified, the canonical representative of an
equivalence class will be chosen such that $1 \le x_1 \le n$ whenever
$k>1$.  Observe that for $k=0$, $\binom{\mathbb{Z}}{0}_n=\{\emptyset
\}$.  For $k=1$, $\binom{\Z}{1}_n$ is essentially the same as $\Z$
itself if we identify the equivalence class of a 1-subset $\{i \}$
with $i \in \Z$.  For all $0<k\leq n$, the size of
$\binom{\mathbb{Z}}{k}_n$ is infinite, and $\binom{\mathbb{Z}}{k}_n =
\emptyset$ for $k>n$.

\begin{definition}\label{def:congruent}
Two equivalence classes $X = [x_1, \ldots, x_k], Y = [y_1, \ldots,
y_k] \in \binom{\Z}{k}_n$ are \defn{congruent}, denoted $X \equiv Y$
if $x_i \equiv y_i \pmod{n}$ for all $1 \le i \le k$, assuming $X$ and
$Y$ are increasing ordered lists.  
\end{definition}

\begin{example}
If $n=3$, the equivalence class
\[
[2, 4]=\{\ldots, \{-4,-2 \}, \{-1,1 \}, \{2,4 \}, \{5,7 \}, \ldots\}
\in \binom{\Z}{2}_3
\]
so $[2,4]$ represents an infinite set of $2$-subsets of $\Z$. Also,
$[-4,-2] = [-1,1]=[2,4]=[5,7]$ as equivalence classes. They are all
congruent to $[2,7]$, but not equal. Note, $[2,4]$ and $[1,2]$ are not
congruent even though the set of congruence classes represented in
both are the same. The order matters.  The equivalence classes of $\{-4\}, \{-1 \},\{2
\},\{5 \}$ in $\binom{\Z}{1}_3$ are congruent but not equal for $n=3$.
\end{example}

Define the \defn{(affine) packet} for $X=[x_{1},x_{2},\dotsc ,x_{k}]
\in \binom{\Z}{k}_{n}$ as $P(X)=\{X_{1},\dotsc , X_{k} \}$ exactly as
in \cref{subsection:higher-bruhat-order-intro} provided $k\neq 2$.
However, in the case $k=2$, some care must be taken to make
well-defined statements using such packets since
$X=[x_{1},x_{2}]=[x_{1}+n,x_{2}+n]$ but $\{x_{1} \} \neq \{x_{1}+n
\}$.  The key thing to keep in mind is the $n$-periodicity of $w$ as a
total order on $\Z$, which implies $x_{1}<_{w}x_{2}$ if and only if
$x_{1}+mn<_{w}x_{2}+mn$ for all $m \in \Z$.  Therefore,
$[x_{1},x_{2}]$ is an inversion for $w$ if and only if
$x_{2}+mn<_{w}x_{1}+mn$ for any $m \in \Z$.

The following definitions are direct analogs of
\Cref{def:k.inversion.Sn} and \Cref{def:quasi.inversion}.  The key
difference is $\binom{[n]}{k}$ is replaced with 
$\binom{\Z}{k}_n$.

\begin{definition}\label{def:k.inversion.affine}
We say $X=[x_1,\ldots,x_k] \in
\binom{\Z}{k}_{n}$ is a $\boldsymbol{k}$-\defn{inversion} for $w$
provided $w^{-1}(x_1)>\cdots >w^{-1}(x_k)$.  The $\boldsymbol{k}$-\defn{inversion
set} of $w$ is
\begin{equation}\label{eq:affine.k.inversion.set.def}
\Inv_k(w)=\left\{[x_1,\ldots,x_k]\in \binom{\mathbb{Z}}{k}_n:
w^{-1}(x_1)>\ldots >w^{-1}(x_k) \right\}.
\end{equation}
Similarly, $X$ is a $\boldsymbol{k}$-\defn{quasi-inversion} of $w$ if all but exactly
one of the pairs $\{[x_i, x_j] : 1 \le i < j \le k\}$ is a 2-inversion
of $w$.  Let $\Qinv_{k}(w)$ be the set of $k$-quasi-inversions for $w$.
\end{definition}

Note that for $k>n$, we have that $\Inv_{k}(w)=\emptyset$, and
$\Inv_{0}(w)=\{\emptyset\}$.  Every $\{i\}
\in \binom{\Z}{1}_{n}$ is a 1-inversion of $w$, so $\Inv_{1}(w)=\binom{\Z}{1}_{n}$ is
infinite, while $|\Inv_{k}(w)|$ is finite for $k>1$.  For $k=2$,
\eqref{eq:affine.k.inversion.set.def} is equivalent to the inversion
set of $w$ as defined in \eqref{eq:inversion.set.def} assuming each
$[x,y] \in \Inv_{2}(w)$ is written in canonical form so $x \in [n]$.

Many of the statements from the classical permutation case in
\Cref{sec:non-longest} carry over easily to the affine permutation
case, but there are some caveats.  When no additional work is needed
to prove the affine analog, we will simply point out the notational
updates needed.  For example, to prove $[x_1,\ldots,x_k]\in
\binom{\mathbb{Z}}{k}_n$ is a $k$-inversion for $w$, it suffices to
show each adjacent $[x_{i},x_{i+1}] \in \Inv_{2}(w)$ as in
\Cref{lem:adjacent.pairs}.  The four possible intersection types for
$P(X)\cap \Inv_k(w)$ depending on size are characterized exactly as in
\Cref{lem:intersection_size} using the same proof simply by replacing
$\binom{[n]}{k}$ by $\binom{\mathbb{Z}}{k}_n$.  The properties of
$k$-quasi-inversions stated in \Cref{lem:types.of.packets} extend to
affine permutations in the same way.  However, for $k = 2$, the affine
generalization of \cref{lem:types.of.packets} says that the
2-quasi-inversions for $w$ are the elements in
$\binom{\mathbb{Z}}{2}_n \setminus \Inv_{2}(w)$, which is an infinite
set in the affine case.

\begin{example}\label{ex:m3m287}
    Consider the permutation $(-3, -2, 8, 7) \in \affineS_4$. It has
    inversion sets
    \begin{align*}
        \Inv_1(w) &\cong \Z\\
        \Inv_2(w) &= \left\{[1,3], [1, 4], [1, 7], [1,8], [2,3], [2,4], [2,7], [2,8], [3,4]\right\}\\
        \Inv_3(w) &= \left\{[1,3,4], [1,7,8], [2,3,4], [2,7,8]\right\}\\
        \Inv_4(w) &= \emptyset,
    \end{align*}
and quasi-inversion sets  
\begin{small}
\begin{align*}
\Qinv_{3}(w)&=\{[1,4,7],[1,3,8],[1,2,3],[1,6,7],[2,5,7],[1,2,7],[1,2,4],[1,6,8],[2,4,7],[2,5,8],[1,2,8],[2,3,8] \}\\
\Qinv_{4}(w)&=\{[1,2,3,4],[1,6,7,8],[2,5,7,8],[1,2,7,8] \}.
\end{align*}
\end{small}
\vspace{-1.25em}

\noindent 
For all $k>4$, $\Inv_{k}(w)=\Qinv_{k}(w)=\emptyset$.  Note that we
don't include $[7,8]$ in the list for $\Inv_2(w)$ since $[3,4]=[7,8]$
when $n=4$, but $[1,7,8]$ does appear in $\Inv_3(w)$ since $[1,7,8]
\neq [1,3,4]$.
\end{example}

In the affine case, there are additional properties of congruent
$k$-inversions that will be important in what follows. For example, if
$[x,y] \in \Inv_{2}(w)$, then switching the order of their congruence
classes results in $[y,x+bn]\not \in \Inv_{2}(w)$ for all integers $b$
such that $y<x+bn$ by $n$-periodicity of $w$.  The next lemma
generalizes this necessary condition to $k$-inversion sets.

\begin{lemma}\label{lem:congruence.order}
Let $X=[x_1,\ldots,x_k]$ and $Y=[y_1,\ldots,y_k]$.  If $X,Y \in
\Inv_{k}(w)$ and we have the equality of unordered
sets \begin{equation}\label{eq:sets.equal} \{x_1 (\textrm{mod }n),
\ldots, x_k (\textrm{mod }n)\} = \{y_1 (\textrm{mod }n), \ldots, y_k
(\textrm{mod }n)\},
\end{equation}
    then $X$ and $Y$ are congruent equivalence classes in $\binom{\Z
}{k}_{n}$.
\end{lemma}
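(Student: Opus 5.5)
The plan is to reduce the statement to a claim about the relative order, under $w$, of the congruence classes that appear in $X$ and $Y$. View $w$ as an $n$-periodic total order $<_w$ on $\Z$, with $a<_w b$ iff $w^{-1}(a)<w^{-1}(b)$. For $X\in\Inv_k(w)$ we have $x_k<_w x_{k-1}<_w\cdots<_w x_1$ and $x_1<x_2<\cdots<x_k$, with all $x_i$ distinct mod $n$; the same holds for $Y$. By \eqref{eq:sets.equal} there is a bijection $\pi$ of $[k]$ with $y_{\pi(i)}\equiv x_i \pmod n$. We must show $\pi$ is the identity, i.e.\ that $X$ and $Y$ list the residues in the same order.

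\emph{Key step (the case $k=2$).} Suppose $[x_1,x_2]$ and $[y_1,y_2]$ are both inversions with $y_1\equiv x_2$ and $y_2\equiv x_1$. Translating by a multiple of $n$ (allowed by $n$-periodicity of $w$), assume $y_1=x_2$. Then $y_2=x_1+bn$ with $x_2<x_1+bn$, hence $b\geq 1$. The discussion just before the lemma gives $[x_2,x_1+bn]\notin\Inv_2(w)$ for all such $b$. I would prove this directly: $x_1<x_2$ and $x_2<_w x_1$ give $w^{-1}(x_2)<w^{-1}(x_1)\le w^{-1}(x_1)+(b-1)n<w^{-1}(x_1+bn)$, using $w^{-1}(x_1+bn)=w^{-1}(x_1)+bn$. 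So $x_2<_w x_1+bn$, and $[x_2,x_1+bn]$ is not an inversion, a contradiction.

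\emph{General $k$.} Suppose $\pi\neq\mathrm{id}$. Then there exist $i<j$ with $\pi(i)>\pi(j)$. By \Cref{lem:adjacent.pairs} (affine version), $[x_i,x_j]\in\Inv_2(w)$ and $[y_{\pi(j)},y_{\pi(i)}]\in\Inv_2(w)$. The first pair has residues in the order $(x_i,x_j)$, while the second has residues in the order $(x_j,x_i)$ as an increasing list. This is exactly the situation ruled out by the $k=2$ step, so $\pi=\mathrm{id}$ and $x_i\equiv y_i$ for all $i$. That is precisely congruence in the sense of \Cref{def:congruent}.

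\emph{Main obstacle.} The only delicate point is the $k=2$ normalization: choosing the shift so that $y_1=x_2$ exactly, and handling the equivalence relation $\sim_n$ on $2$-subsets correctly. This works because translating both entries of a pair by a common multiple of $n$ preserves both inversion status and the class. For $k\ge 3$, sub-pairs are taken as actual integers inside fixed representatives of $X$ and $Y$, so no further normalization issue arises.
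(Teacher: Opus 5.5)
Your proof is correct and matches the paper's argument. The paper also defines the permutation $\pi$ that matches residues, takes an inversion $i<j$ of $\pi$, and gets a contradiction from having both $[x_i,x_j]$ and $[x_j,x_i+mn]$ (with $m>0$) in $\Inv_2(w)$. The only difference is that you check $[x_2,x_1+bn]\notin\Inv_2(w)$ directly from $n$-periodicity, where the paper cites \Cref{lem:inversion.sets}.
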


\begin{proof}
The case $k=1$ is immediate, so assume $k>1$. Since the entries of $X$
and $Y$ are each distinct modulo $n$, \eqref{eq:sets.equal} determines
a unique permutation $\pi\in \S_k$ such that $x_i\equiv y_{\pi(i)}
\pmod n$ for all $i\in [k]$. Suppose $i<j$ and $\pi(i)>\pi(j)$. Set
$q=\pi(i)$ and $p=\pi(j)$, so $p<q$, $x_i\equiv y_q \pmod n$, and
$x_j\equiv y_p \pmod n$. By the affine analogue of
\Cref{lem:adjacent.pairs}, both $[x_i,x_j]$ and $[y_p,y_q]$ are in
$\Inv_2(w)$. Since $[y_p,y_q]=[x_j,x_i+mn]$ for some $m>0$, this
contradicts \Cref{lem:inversion.sets}. Hence $\pi$ has no inversions,
so $\pi=\id$, and $X$ and $Y$ are congruent.
\end{proof}

For $0\leq i\leq k$, let $v_i^{(n,k)}$ denote the vector
$(0,\ldots,0,n,\ldots,n)\in\mathbb{Z}^k$ consisting of $i$ copies of
$0$ followed by $(k-i)$ copies of $n$.  When $n,k$ are unambiguous, we
write $v_i$ for $v_i^{(n,k)}$.  If
$X=[x_1,\ldots,x_k]\in\binom{\Z}{k}_n$ and there exists an $i \in [k-1
]$ such that $x_{i+1}-x_i>n$, define the \defn{compression across the
$i$th gap} by
\[
X-v_i := [x_1,\ldots,x_i, x_{i+1}-n,\ldots,x_k-n]\in\binom{\Z}{k}_n.
\]
In the next lemma, we show $\Inv_{k}(w)$ is \defn{closed under compression} as in
\Cref{lem:inversion.sets} Part (c).   

\begin{lemma}\label{lem:affine_n_shift}
Assume $k>1$, then the following hold.  
\begin{enumerate}[label=(\alph*)]
\item 
If $X=[x_1,\ldots,x_k]\in\Inv_k(w)$ and $x_{i+1}-x_i>n$ for some $i\in[k-1]$,
then $X-v_i\in\Inv_k(w)$.

\item 
For $X=[x_1,\ldots,x_k]\in\Inv_k(w)$, let $[y_1,\ldots,y_k]$ be the unique element of $\binom{\Z}{k}_n$
such that $y_1=x_1$, $y_j\equiv x_j\pmod n$ for all $j\in[k]$, and $y_{j+1}-y_j<n$ for all $j\in[k-1]$.
Then $[y_1,\ldots,y_k]\in\Inv_k(w)$.  
\end{enumerate}
\end{lemma}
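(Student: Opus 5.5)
The plan is to reduce everything to 2-inversions, using the affine analogue of \Cref{lem:adjacent.pairs}: $X\in\Inv_k(w)$ if and only if every adjacent pair $[x_j,x_{j+1}]$ lies in $\Inv_2(w)$. We then apply the compression property \Cref{lem:inversion.sets}\ref{part:inv.congruence} to the single pair that changes.

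For part (a), first check that $X-v_i$ is a genuine element of $\binom{\Z}{k}_n$. Since $x_{i+1}-n>x_i$, the list $x_1<\dots<x_i<x_{i+1}-n<\dots<x_k-n$ is still increasing, and residues mod $n$ are unchanged, so the entries remain distinct mod $n$. Next, the adjacent pairs $[x_j,x_{j+1}]$ with $j\neq i$ are unchanged: either both entries are untouched, or both are shifted by $-n$, which gives the same equivalence class in $\binom{\Z}{2}_n$. So these pairs are still in $\Inv_2(w)$. For the remaining pair $[x_i,x_{i+1}-n]$, apply \Cref{lem:inversion.sets}\ref{part:inv.congruence} to $[x_i,x_{i+1}]\in\Inv_2(w)$ with $e=1$. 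First translate to canonical form with $x_i\in[n]$, which is harmless by $n$-periodicity. Since $x_i<x_{i+1}-n<x_{i+1}$, this gives $[x_i,x_{i+1}-n]\in\Inv_2(w)$. By the adjacent-pairs criterion, $X-v_i\in\Inv_k(w)$.

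For part (b), build $[y_1,\dots,y_k]$ from $X$ by repeated compressions. Each compression across a gap with $x_{i+1}-x_i>n$ preserves $x_1$ and all residues. It also strictly decreases the sum $\sum_j x_j$ while keeping every $x_j\ge x_1$, so the process terminates. It terminates exactly when every consecutive gap is less than $n$. Gaps can never equal $n$, since the entries are distinct mod $n$. The result is the unique list with $y_1=x_1$, the prescribed residues, and all gaps in $(0,n)$. Uniqueness holds because each $y_{j+1}$ is forced to be the unique element of its residue class in $(y_j,y_j+n)$. By part (a) applied at each step, the final list lies in $\Inv_k(w)$.

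The main subtlety is bookkeeping with equivalence classes: making sure the shifted pairs are literally the same elements of $\binom{\Z}{2}_n$, and applying \Cref{lem:inversion.sets} in canonical form. No real obstacle is expected beyond that.
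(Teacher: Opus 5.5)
Your proof is correct and follows essentially the same route as the paper: reduce to adjacent pairs via the affine analogue of \Cref{lem:adjacent.pairs}, apply \Cref{lem:inversion.sets}(c) to the single altered pair $[x_i,x_{i+1}-n]$, and for (b) iterate compressions using part (a). Your explicit termination argument (the sum strictly decreases while all entries stay $\geq x_1$) and your uniqueness argument supply details that the paper leaves implicit.
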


\begin{proof}
For Part (a), observe that $x_1<\cdots<x_i<x_{i+1}-n<\cdots<x_k-n$ and
the values are pairwise noncongruent since $X \in \Inv_{k}(w)
\subseteq \binom{\Z }{k}_{n}$ and $x_{i+1}-x_i>n$, so
$X-v_i=[x_1,\ldots,x_i,x_{i+1}-n,\ldots,x_k-n] \in \binom{\Z
}{k}_{n}$.  Observe $x_{i+1}-x_i>n$ and $[x_{i},x_{i+1}] \in
\Inv_{2}(w)$ imply $[x_{i},x_{i+1}-n]\in \Inv_{2}(w)$ by
\Cref{lem:inversion.sets}(c).  Furthermore,
$X=[x_1,\ldots,x_k]\in\Inv_k(w)$, implies $[x_{1},x_{2}],
[x_{2},x_{3}],\dotsc , [x_{i-1},x_{i}] \in \Inv_{2}(w)$ and
$[x_{i+1}-n,x_{i+2}-n]=[x_{i+1},x_{i+2}], \dotsc ,
[x_{k-1}-n,x_{k}-n]=[x_{k-1},x_{k}] \in \Inv_{2}(w)$.  Thus, $X-v_i
\in \Inv_{k}(w)$ since every adjacent pair in $X-v_i$ corresponds with
a 2-inversion of $w$.

To prove Part (b), start from $X$ and repeatedly apply the compression
operation from Part (a) whenever some gap exceeds $n$.  The terminal
element $[y_1,\ldots,y_k]$ is uniquely characterized by $y_1=x_1$,
$y_j\equiv x_j\pmod n$, and $y_{j+1}-y_j<n$ for all $j$. By (a), every
intermediate element stays inside $\Inv_k(w)$, so the terminal element
lies in $\Inv_k(w)$.
\end{proof}

We will call $[y_1, \ldots, y_k]$ the \defn{compressed representative}
for the congruence class of $[x_1, \ldots, x_k]$ if the conditions in
\Cref{lem:affine_n_shift} Part (b) are satisfied.  Note, the set of
all compressed representatives for $k$-inversions for $w$ is 
completely determined by the $k$-cliques of the affine inversion graph
$I_{w^{-1}}$ from \Cref{def:BB.graph}.

In order to characterize the distinct congruence classes in
$\Inv_{k}(w)$, we introduce another partial order where $X=[x_1,
\ldots, x_i,x_{i+1}, \ldots, x_k]$ and $X+v_{i}:=[x_1, \ldots,
x_i,x_{i+1}+n, \ldots, x_k+n]$ form a covering relation.  This will
allow us to generalize \Cref{lem:inversion.sets} Part (c) to
$k$-inversions in \Cref{cor:congruence.lemma}.

\begin{definition}\label{def:congruence.poset}
Recall $v_i=v_i^{(n,k)}$ from above.  If $k>1$, the \defn{congruence
poset} $\boldsymbol{\CR(n,k)}$ is the set $\binom{\Z}{k}_n$ with order
relations defined as the transitive closure of the \defn{congruence
relations} given by
\begin{itemize}
\item $X <_{\CR(n,k)} X + v_i$ \text{ if $k-i$ is odd, or}
\item $X + v_i <_{\CR(n,k)} X$ \text{ if $k-i$ is even}
\end{itemize}
for all $X \in \binom{\Z}{k}_n$ and $0 < i < k$.  For $k>1$,  note $X+v_{0}=X+v_{k}=X$
as equivalence classes in $\binom{\Z}{k}_n$, but this is not true if $k=1$.
For $k=1$, the \defn{congruence poset} $\boldsymbol{\CR(n,1)}$ is the set $\binom{\Z}{1}_n\cong \Z$ with
order relations defined as the transitive closure of the \defn{congruence relations} given by
$X=\{x \}<_{\CR(n,1)}\{x+n\}=X+v_{0}$.
\end{definition}

\begin{example}\label{ex:cong.poset}
For $k=3,n=4$, we have $v_{1}=v_1^{(4,3)}=(0,4,4)$ and
$v_{2}=v_2^{(4,3)}=(0,0,4)$.  The compressed representative of
$[1,7,8]$ and $[1,7,12]$ is $[1,3,4]$.  Since $[1,7,8]=[1,3,4]+v_{1}$
and $k-1=2$ is even, $[1,7,8]<_{\CR(4,3)}[1,3,4]$.  On the other hand, 
$[1,7,8]<_{\CR(4,3)}[1,7,12]$ since $[1,7,12]=[1,7,8]+v_{2}$ and
$k-2=1$ is odd.  Note that $[1,7,12]$ and $[1,3,4]$ are not comparable in
$\CR(4,3)$.  
\end{example}

\begin{theorem}\label{cor:congruence.lemma}
For $k>1$ and $X=[x_{1},\dots ,x_{k}] \in \Inv_k(w)$, let $I(X) = \{Z \in \Inv_{k}(w): X
\equiv Z \}$.  The set $I(X)$ is an interval in $\CR(n,k)$ isomorphic
to a product of chains $[m_{1}]\times [m_{2}] \times \cdots \times
[m_{k-1}]$ containing the compressed representative of $X$ where
$m_{i}$ is the number of inversions for $w$ congruent to
$[x_{i},x_{i+1}]$.
\end{theorem}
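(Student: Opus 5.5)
Every element congruent to $X$ is obtained from the compressed representative of $X$ by adding a nonnegative combination of the vectors $v_1,\dots,v_{k-1}$. The plan is to show that the $k$-inversion condition cuts out a box in these coefficients, and that $\CR(n,k)$ restricted to the congruence class of $X$ is a product order with some coordinates reversed.

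\emph{Coordinates on a congruence class.} Let $Y=[y_1,\dots,y_k]$ be the compressed representative of $X$. By \Cref{lem:affine_n_shift}(b), $Y\in\Inv_k(w)$. Fix the residues of $X$, normalized so that the first entry lies in $[n]$. The elements of $\binom{\Z}{k}_n$ congruent to $X$ are then exactly the classes
\[
Y+\textstyle\sum_{i=1}^{k-1} a_i v_i, \qquad a=(a_1,\dots,a_{k-1})\in\Z_{\ge 0}^{k-1},
\]
and distinct vectors $a$ give distinct classes. The point is that the $i$th gap of such an element is $y_{i+1}-y_i+a_in$. Since $0<y_{i+1}-y_i<n$, the gaps determine $a$, and every $a_i\ge 0$ is forced by positivity of the gaps. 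Call this bijection $\phi$ from $\Z_{\ge0}^{k-1}$ to the congruence class of $X$.

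\emph{Which coordinates give inversions.} By the affine analogue of \Cref{lem:adjacent.pairs}, $\phi(a)\in\Inv_k(w)$ if and only if each adjacent pair $[y_i,\,y_{i+1}+a_in]\in\Inv_2(w)$. I then claim that the inversions of $w$ congruent to $[x_i,x_{i+1}]$ are exactly $[y_i,\,y_{i+1}+an]$ for $0\le a<m_i$:
\begin{itemize}
\item a congruent pair has the form $[y_i+bn,\,y_{i+1}+cn]$, and after normalizing its smaller entry it becomes $[y_i,\,y_{i+1}+an]$ with $a\ge 0$ (the value $a<0$ is excluded because the gap $y_{i+1}-y_i$ is less than $n$);
\item by \Cref{lem:inversion.sets}(c) the set of $a$ giving an inversion is downward closed, so it equals $\{0,\dots,m_i-1\}$.
\end{itemize}
Consequently $I(X)=\phi\bigl(\prod_i\{0,\dots,m_i-1\}\bigr)$, which contains $Y=\phi(0)$.

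\emph{The order on a congruence class.} Every congruence relation $Z\lessgtr Z+v_i$ preserves congruence. So any chain in $\CR(n,k)$ between congruent elements stays inside one congruence class, where it changes a single coordinate $a_i$ by $\pm1$ at each step. The direction is dictated by the parity of $k-i$: the relation is increasing in $a_i$ when $k-i$ is odd and decreasing when $k-i$ is even. Set $\varepsilon_i=+1$ if $k-i$ is odd and $\varepsilon_i=-1$ otherwise.
\begin{itemize}
\item The generating relations are exactly the covers of the product order on $\Z_{\ge0}^{k-1}$ in which coordinate $i$ is ordered by $\varepsilon_ia_i$.
\item Hence the transitive closure restricted to the class is this signed product order, a product of copies of $\mathbb{N}$ and $\mathbb{N}^{\mathrm{op}}$; it is clearly antisymmetric.
\end{itemize}
In this product order the box $\prod_i\{0,\dots,m_i-1\}$ is the interval between its two corners. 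The lower corner has $a_i=0$ when $\varepsilon_i=+1$ and $a_i=m_i-1$ when $\varepsilon_i=-1$; the upper corner is the reverse. The box is therefore an interval of $\CR(n,k)$ isomorphic to $[m_1]\times\cdots\times[m_{k-1}]$, and it contains $Y$.

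The step I expect to be the main obstacle is the third one. One must check that no relation in $\CR(n,k)$ between two members of $I(X)$ arises other than through the signed product order. This reduces to the observation that generating relations never leave a congruence class, together with the fact that the coordinates $a$ stay nonnegative automatically. The remaining care is bookkeeping: the identification $X+v_0=X+v_k=X$ as classes when $k>1$, and normalizing the first entry into $[n]$ consistently.
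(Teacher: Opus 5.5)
Your proposal is correct and follows essentially the same route as the paper. Both arguments use gap coordinates relative to the compressed representative $Y$, a coordinatewise criterion cutting out the box $\prod_i\{0,\dots,m_i-1\}$, and the fact that each congruence relation moves one gap coordinate in a parity-determined direction; the interval you obtain has the same two corners the paper calls $X_{\min}$ and $X_{\max}$.

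The differences are minor. You obtain the range $0\le a_i<m_i$ from the downward closure in \cref{lem:inversion.sets}(c) together with the counting definition of $m_i$, whereas the paper uses the explicit inequality $w^{-1}(y_i)>w^{-1}(y_{i+1})+na_i$ and a floor formula. You also state explicitly that chains of generating relations never leave a congruence class, which is what justifies restricting $\CR(n,k)$ to the class; the paper leaves this implicit.
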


\begin{proof}
We will first show that certain coordinates parameterize the whole
congruence class of $X$ in $\binom{\Z }{k}_{n}$. Let
$Y=[y_1,\ldots,y_k]\in \Inv_{k}(w)$ be the compressed representative
of $X$ from \Cref{lem:affine_n_shift} Part (b).  Let $Z \in \binom{\Z
}{k}_{n}$ be any element congruent to $X$ and $Y$.  Since $k>1$, we
may shift the values in $Z$ by a global multiple of $n$ so that its
representative $Z=[z_1,\ldots,z_k]$ satisfies $z_1=y_1$ and $z_j\equiv
y_j\pmod n$ for all $j$.  For each $i\in[k-1]$ define the \textit{gap
coordinate}
\[
a_i := \frac{(z_{i+1}-z_i)-(y_{i+1}-y_i)}{n} \in \Z.
\]
Each $a_{i}\in \Z_{\ge 0}$ since $z_j\equiv y_j\pmod n$,
$0<y_{i+1}-y_i<n$, and $z_{i+1}-z_i>0$.  Let $b_1:=0$ and
$b_j:=\sum_{i=1}^{j-1} a_i$ for $2\le j\le k$.  Then $z_j=y_j+nb_j$,
and therefore
\begin{equation}\label{eq:congruent.sum}
Z\;=\;Y+\sum_{i=1}^{k-1} a_i v_i.
\end{equation}
Conversely, any $(a_1,\ldots,a_{k-1})\in\Z_{\ge 0}^{k-1}$ determines a
distinct element congruent to $X$ by \eqref{eq:congruent.sum}.
Thus the congruence class of $X$ in $\binom{\Z}{k}_n$ is identified
with the gap coordinates in $\Z_{\ge 0}^{k-1}$.

To determine if $Z \in \Inv_{k}(w)$, it suffices to show each
$[z_{i},z_{i+1}] \in \Inv_{2}(w)$.  If $z_j=y_j+nb_j$ as in the
notation above, then $w^{-1}(z_j)=w^{-1}(y_j)+nb_j$ by $n$-periodicity
of $w$.  For each $i\in[k-1]$ we have
\begin{align*}
w^{-1}(z_i)>w^{-1}(z_{i+1})
&\iff w^{-1}(y_i)+nb_i > w^{-1}(y_{i+1})+nb_{i+1}\\
&\iff w^{-1}(y_i) > w^{-1}(y_{i+1}) + n a_i
\end{align*}
since $b_{i+1}=a_{i}+b_{i}$.  Thus $Z\in\Inv_k(w)$ if and only if
these inequalities hold for all $i \in [k-1]$, and each inequality
involves only $a_i$.

Similar to the Bj\"orner-Brenti graph from \Cref{def:BB.graph} and
\eqref{eq:BB.multiplicity}, define
\begin{equation}\label{eq:HB.multiplicity} 
m_i := \left|\left\lfloor \frac{w^{-1}(y_{i+1})-w^{-1}(y_{i})}{n}
\right\rfloor \right|\in\Z_{\ge 0}, \qquad \text{for all}\ 1\le i\le k-1.
\end{equation}
Then $w^{-1}(y_i)>w^{-1}(y_{i+1})+na_i$ holds if and only if
$[y_{i},y_{i+1}+na_{i}] \in \Inv_{2}(w)$ if and only if $0\le a_i<
m_i$. Therefore,
\[
I(X)
=\Bigl\{\,Y+\sum_{i=1}^{k-1} a_i v_i\;:\; 0\le a_i< m_i\ \text{for all }i\in[k-1]\,\Bigr\}.
\]

If $Z\mapsto(a_1,\ldots,a_{k-1})$ in gap coordinates, then adding $v_i$ to $Z$
increases $a_i$ by $1$ and fixes the other
coordinates.  By \Cref{def:congruence.poset}, $Z<Z+v_{i}$ when $k-i$
is odd corresponding with an increase in the $i$th coordinate, and
$Z<Z-v_{i}$ when $k-i$ is even corresponding with a decrease in the
$i$th coordinate.  Hence the induced subposet of $\CR(n,k)$ on $I(X)$
is an alternating product of finite chains on $m_i$ elements.  In
particular, $I(X)$ is isomorphic as an abstract poset to the product
of chains $[m_1]\times\cdots\times[m_{k-1}]$.  Moreover, if we set
\begin{equation}\label{eq:Xmin.max}
X_{\min}:=Y+\sum_{\substack{1\le i\le k-1\\ k-i\ \mathrm{even}}} (m_i-1) v_i,
\qquad
X_{\max}:=Y+\sum_{\substack{1\le i\le k-1\\ k-i\ \mathrm{odd}}} (m_i-1) v_i,
\end{equation}
then the gap coordinate description shows $I(X)$ is exactly the
interval $[X_{\min},X_{\max}]$ inside $\CR(n,k)$.  
\end{proof}

\begin{remark}
In addition to the compressed representative, each nonempty congruence
class $I(X)$ in $\Inv_{k}(w)$ has an \defn{expanded representative}
where each gap coordinate is maximized, provided $k>1$. The expanded
representative $Z=[z_{1},\dots ,z_{k}]=Y+\sum_{i=1}^{k-1} (m_i-1) v_i$
has the property that $0<w^{-1}(z_{i})-w^{-1}(z_{i+1})<n$.
\end{remark}

\subsection{Affine Permanent Posets}\label{sub:permanent.affine}

As in the classical permutation case, we define a permanent poset for
every affine permutation $w \in \affineS_{n}$ on $\Inv_{k}(w)$
generalizing the requirements on inversion sets of affine permutations
characterized by \Cref{lem:inversion.sets}.  One difference between
the classical permutation and affine permutation cases comes from the
third requirement in \Cref{lem:inversion.sets} on the inversions that
are congruent. For example, by \Cref{cor:characterize.ref.order}, we
know that in any reflection order for $w$, each congruent pair
$[x,y],[x,y+n] \in \Inv_{2}(w)$ will have $[x,y]$ before
$[x,y+n]$. This necessary condition on reflection orders is
generalized by the congruence relations in
\Cref{def:congruence.poset}.  Note the similarity in the parity
condition between congruence relations and quasi-inversion relations
in \cref{def:permanent.poset.}.  This little miracle allows many of
the proofs from \Cref{sec:non-longest} to carry over to the affine
case.

\begin{definition}\label{def:affine-permanent-poset}
Let $\boldsymbol{\P_w(n,k)}$ denote the \defn{permanent poset} on the set
$\Inv_{k}(w)$ with order relations given by the transitive closure of
the affine analog of the quasi-inversion relations of
\cref{def:permanent.poset.} for all $(k+1)$-quasi-inversions for $w$ and
the congruence relations of \Cref{def:congruence.poset} for all
congruent pairs of distinct elements $X, X + v_i \in \Inv_{k}(w)$.
\end{definition}

\begin{example}\label{ex:m3m287.perm} Recall the $k$-inversions and
$k$-quasi-inversions for $w=(-3, -2, 8, 7) \in \affineS_4$ from
\Cref{ex:m3m287}.  The Hasse diagrams for the permanent posets
$\P_w(4,2)$ and $\P_w(4,3)$ are shown in
\Cref{fig:affine-permanent-poset-example}.  Since $ \Inv_4(w) =
\emptyset$, $\P_w(4,4)=\P_w(4,5)=\P_w(4,6)=\cdots$ are all posets on
the empty set.
\end{example}

    \begin{figure}[!ht]
        \centering
\begin{tikzpicture}
        \node (24) at (0,0) {$[2,4]$};
        \node (23) at (2,0) {$[2,3]$};
        \node (28) at (0,2) {$[2,8]$};
        \node (14) at (0,1) {$[1,4]$};
        \node (27) at (2,2) {$[2,7]$};
        \node (13) at (2,1) {$[1,3]$};
        \node (17) at (2,3) {$[1,7]$};
        \node (18) at (0,3) {$[1,8]$};
        \node (34) at (-1.5,1) {$[3,4]$};
        
        \draw (23) edge node {} (13);
        \draw (23) edge node {} (28);
        \draw (27) edge node {} (13);
        \draw (27) edge node {} (17);
        \draw (13) edge node {} (18);
        \draw (14) edge node {} (17);
        \draw (28) edge node {} (18);
        \draw (14) edge node {} (28);
        \draw (24) edge node {} (14);
        \draw (24) edge node {} (27);
    \end{tikzpicture}
    \qquad \qquad \qquad     \qquad \qquad 
    \begin{tikzpicture}
        \node (134) at (0,2) {$[1,3,4]$};
        \node (278) at (0,1) {$[2,7,8]$};
        \node (178) at (0,0) {$[1,7,8]$};
        \node (234) at (0,3) {$[2,3,4]$};
        
        \draw (178) edge node {} (278);
        \draw (278) edge node {} (134);
        \draw (134) edge node {} (234);
    \end{tikzpicture}
        \caption{The Hasse diagrams of the permanent posets
	$\P_{w}(4,2)$ on $\Inv_{2}(w)$ (left)  and
	$\P_w(4,3)$ on $\Inv_{3}(w)$ (right) for the affine permutation $w = (-3,-2,8,7)$.}
    \label{fig:affine-permanent-poset-example}
    \end{figure}

\begin{remark}\label{rem:easy.pnks.affine}
For $k=1$, the permanent poset $\P_w(n,1)$ on $\Inv_{1}(w) \cong \Z$
is generated by the relations $\{x \}<_{\P_w(n,1)}\{y \}$ whenever
$x<y$ and $[x,y]$ is a quasi-inversion of $w$ or $y=x+n$.  By
$n$-periodicity of $w$, $[x_{1},x_{2}]$ is a quasi-inversion for $w$
if and only if $x_{1}+mn<_{w}x_{2}+mn$ for all $m \in \Z$.  Thus, the
quasi-inversion relations in $\P_w(n,1)$ are well-defined.  Since this
set of generating relations is isomorphic to a subset of the natural
total order on $\Z$, they generate a relation over $\Inv_{1}(w)$ that
is reflexive, anti-symmetric, and transitive, so $\P_w(n,1)$ is indeed
a poset.  Furthermore, when $k>n$, we know that $\Inv_k(w)=\emptyset$,
so $\P_w(n,k)$ is the empty poset.
\end{remark}

As we did for permutations in \Cref{lem:poset.packet.antichain}, we
will show that $\P_w(n,k)$ is a poset for $w\in \affineS_n$ and $2\leq
k\leq n$. Our approach will be similar, but we must make several
technical modifications to account for the fact that each element in
$\Inv_k(w)$ can be represented in multiple ways if $k>1$.

Let $\mathbb{Z}^{2\times k}$ be the abelian group of $2\times k$
matrices with entries in $\mathbb{Z}$ under addition. Let $N\in \mathbb{Z}^{2\times
k}$ be the matrix whose entries are all $n$, and let $(N)$ be the
cyclic subgroup of $\mathbb{Z}^{2\times k}$ generated by $N$.
Consider the map from $\Inv_k(w)$ to the quotient group
$\mathbb{Z}^{2\times k}/(N)$ defined by 
\begin{equation}\label{eq:affine_inversion_matrix}
    \wt{M}(Y) = \begin{bmatrix}
    w^{-1}(y_1) & w^{-1}(y_2) & \ldots & w^{-1}(y_{k-1}) & w^{-1}(y_k) \\
    y_1 & y_2 & \ldots &  y_{k-1} &
y_k \end{bmatrix}
\end{equation}
for all $Y=[y_1,\ldots,y_k]\in \Inv_k(w)$.

\begin{lemma}\label{lem:inversion_matrix}
 The map $\wt{M}:\Inv_k(w) \to \mathbb{Z}^{2\times k}/(N)$ is independent of the choice of representative for elements in $\Inv_k(w)$, and hence is a well-defined function.
\end{lemma}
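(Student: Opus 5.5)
The plan is to identify exactly how much freedom there is in choosing a representative of an element of $\Inv_k(w)$, and then check that this freedom is absorbed by the quotient by $(N)$. We treat $k=1$ and $k>1$ separately, since the equivalence relation $\sim_n$ on $\binom{\Z}{k}$ behaves differently in the two cases.

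\emph{Case $k=1$.} By definition, $\{x\}\sim_n\{y\}$ only when $x=y$. So every element of $\binom{\Z}{1}_n$ has a unique representative, and $\wt{M}$ is trivially well-defined, even before passing to the quotient.

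\emph{Case $k>1$.} Suppose $\{x_1<\cdots<x_k\}$ and $\{y_1<\cdots<y_k\}$ represent the same class $Y\in\Inv_k(w)$. By the definition of $\sim_n$ there is a single integer $m$ with $x_i=y_i+mn$ for all $i$. Shifting all entries by $mn$ preserves their relative order, so the $i$th entry of one listing corresponds to the $i$th entry of the other. This is what we need in order to compare the two matrices column by column.

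We now compare the two rows. The bottom row of the matrix built from the $x$'s is the bottom row built from the $y$'s plus $mn$ in every entry. For the top row we use $n$-periodicity of $w$: from $w(a+n)=w(a)+n$ for all $a$ we get $w^{-1}(b+n)=w^{-1}(b)+n$, and hence $w^{-1}(y_i+mn)=w^{-1}(y_i)+mn$ for every integer $m$. So the top row also increases by $mn$ in every entry.

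Together these show that the two matrices differ by exactly $mN$. This difference lies in $(N)$, so the two representatives give the same class in $\mathbb{Z}^{2\times k}/(N)$, and $\wt{M}$ is well-defined.

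No step here is a real obstacle. The one point needing care is the indexing observation in the case $k>1$: representatives are matched entrywise only after both are listed in increasing order. That is precisely why a shift by $mn$ moves every column by the same amount.
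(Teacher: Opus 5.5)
Your proposal is correct and follows essentially the same argument as the paper: $k=1$ is immediate, and for $k>1$ any two representatives differ by a global shift by $mn$. By $n$-periodicity of $w^{-1}$, this shift changes both rows by $mn$, so the two matrices differ by $mN\in(N)$.
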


\begin{proof}
For $k=1$, the statement is clear, so suppose $k >1$.  Alternate
expressions for $Y\in \Inv_k(w)$ have the form
$[y_1+mn,y_2+mn,\ldots,y_k+mn]$ where $m\in \mathbb{Z}$. The affine
permutation $w^{-1}\in \affineS_n$ is $n$-periodic by definition, so
$w^{-1}(y_i+mn)=w^{-1}(y_i)+mn$. Consequently, this choice of
representative for $Y$ results in \[\wt{M}(Y)=\begin{bmatrix}
    w^{-1}(y_1)+mn & w^{-1}(y_2)+mn & \ldots & w^{-1}(y_{k-1})+mn & w^{-1}(y_k)+mn \\
    y_1+mn & y_2+mn & \ldots & y_{k-1}+mn &
y_k+mn \end{bmatrix}.\]
This is equivalent to \eqref{eq:affine_inversion_matrix} as an element in $\mathbb{Z}^{2\times k}/(N)$. 
\end{proof}

Recall from \eqref{eq:Fi_Gi} that the homomorphisms
$F_j:\mathbb{Z}^{2\times k} \to \mathbb{Z}$ and
$G_j:\mathbb{Z}^{2\times k} \to \mathbb{Z}$ were defined by
\[
F_j(A)=(-1)^{k-j}(A_{1,j}-A_{2,j-1})\quad \text{ and }\quad
G_j(A)=(-1)^{k-j}(A_{2,j}-A_{1,j-1})
\]
for $2\leq j\leq k$.  Observe that both $F_{j}$ and $G_{j}$ map
elements in $(N)$ to $0$, so they descend to the quotient.  Let
\begin{align}\label{eq:FG.tilde}
\begin{split}
\wt{F}_j :\ &\mathbb{Z}^{2\times k}/(N) \to \mathbb{Z},\\
\wt{G}_j :\ &\mathbb{Z}^{2\times k}/(N) \to \mathbb{Z}
\end{split}
\end{align}
denote the group homomorphisms on the quotient derived from $F_{j}$
and $G_{j}$.  Then, we have the following analog of
\Cref{lem:diagonal_differences}.

\begin{lemma}\label{lem:affine_diagonal_differences}
Suppose $2 \le k \le n$. If $X <_{\P_w(n,k)} Y$, then $\wt{F}_j\circ
\wt{M}(X)\leq \wt{F}_j\circ \wt{M}(Y)$ and $\wt{G}_j\circ
\wt{M}(X)\leq \wt{G}_j\circ \wt{M}(Y)$ for any $j\in [k]\setminus
\{1\}$.  Furthermore, there exists $j\in [k]\setminus \{1\}$ such that
the inequality is strict for both $\wt{F}_j$ and $\wt{G}_j$.
\end{lemma}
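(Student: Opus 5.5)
It suffices to check the claim on the generating relations of $\P_w(n,k)$, since $\le$ is transitive and a strict increase in some coordinate along one generating step is preserved (all coordinates are weakly monotone along every step, and a chain contains at least one step). The generating relations come in two kinds: the affine quasi-inversion relations and the congruence relations of \Cref{def:congruence.poset}. For each kind I will compute $\wt{M}(X)-\wt{M}(Y)$ using convenient representatives. By \Cref{lem:inversion_matrix} and because $\wt{F}_j,\wt{G}_j$ are homomorphisms vanishing on $(N)$, the answer does not depend on the representatives chosen.

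For a quasi-inversion relation coming from $Z=[z_1,\dots,z_{k+1}]\in\Qinv_{k+1}(w)$ with $P(Z)\cap\Inv_k(w)=\{Z_i,Z_{i+1}\}$, I choose a single representative of $Z$ and take the induced representatives of $Z_i$ and $Z_{i+1}$. Their matrices differ only in column $i$, exactly as in \eqref{eq:quasi_matrix_difference}. The affine analogue of \Cref{lem:types.of.packets} gives $z_i<z_{i+1}$ and $w^{-1}(z_i)<w^{-1}(z_{i+1})$, so both nonzero entries are positive. The casework in the proof of \Cref{lem:diagonal_differences} then applies verbatim: $\wt{F}_i,\wt{F}_{i+1},\wt{G}_i,\wt{G}_{i+1}$ change with the sign dictated by the parity of $k-i$, all other coordinates are unchanged, and at least one of $j=i,i+1$ lies in $[k]\setminus\{1\}$ and gives a strict increase for both $\wt F_j$ and $\wt G_j$. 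For the $G$ functions, I will carry out explicitly the case left to the reader in the classical proof.

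The new case is a congruence relation between $X$ and $X+v_i$ with $0<i<k$. In this case
\[
\wt{M}(X+v_i)-\wt{M}(X)=\begin{bmatrix}0&\cdots&0&n&\cdots&n\\0&\cdots&0&n&\cdots&n\end{bmatrix},
\]
with the $n$'s in columns $i+1,\dots,k$, using $n$-periodicity $w^{-1}(x+n)=w^{-1}(x)+n$. For $j\ne i+1$, both diagonal entries used by $F_j$ and $G_j$ lie on the same side of the gap, so their difference is $0$. For $j=i+1$, we get $F_{i+1}=(-1)^{k-i-1}(n-0)$ and likewise $G_{i+1}=(-1)^{k-i-1}n$. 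When $k-i$ is odd, the relation is $X<X+v_i$ and the change is $+n>0$. When $k-i$ is even, the relation is $X+v_i<X$, and the change from $X+v_i$ to $X$ is $-(-n)=n>0$. So $j=i+1\in[k]\setminus\{1\}$ gives a strict increase for both functions, and all other coordinates are constant.

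The main obstacle I expect is bookkeeping rather than substance. In the quasi-inversion case the representative of $Z$ may not be canonical, and $Z_i$, $Z_{i+1}$ must be computed from the same representative so that the column-wise difference is valid in the quotient. This is exactly what passing to $\mathbb{Z}^{2\times k}/(N)$ handles. I will also double-check that the parity conventions in \Cref{def:congruence.poset} match the sign $(-1)^{k-j}$, which the computation above confirms, and that the edge cases $i=1$ and $i=k$ of the quasi-inversion relation still give a strict coordinate, which they do because one of $j=i$ or $j=i+1$ is always in range.
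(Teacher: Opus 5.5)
Your proposal is correct and follows the paper's proof essentially verbatim. You reduce to generating relations, reuse the column-$i$ computation from \Cref{lem:diagonal_differences} for quasi-inversion relations (taking all representatives from a single representative of $Z$, which the quotient by $(N)$ makes harmless), and compute $\wt{M}(X+v_i)-\wt{M}(X)$ directly for congruence relations, where only $j=i+1$ changes, by $(-1)^{k-i-1}n$. Your explicit argument that strictness at one generating step survives along a chain, and your worked $G_j$ case, fill in details the paper leaves implicit.
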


\begin{proof}
    It suffices to establish the claim for the generating relations in
$\P_w(n,k)$. The case of quasi-inversion relations for $X_{i}, X_{i+1}
\in \Inv_{k}(w)$ follows from the proof of
\cref{lem:diagonal_differences}, as the calculations there can be
viewed as choosing representative matrices in $\mathbb{Z}^{2\times k}$
to calculate and compare $\wt{F}_j\circ \wt{M}(X_i), \wt{F}_j\circ
\wt{M}(X_{i+1})$, $\wt{G}_j\circ \wt{M}(X_i)$, and $\wt{G}_j\circ
\wt{M}(X_{i+1})$.
    
    For the case of congruence relations, assume both $X,X+v_{i}\in
\Inv_k(w)$ for some $i\in [k-1]$. Without loss of generality, we may
assume both are denoted by their canonical representatives so their
smallest value is equal.  Then,
\begin{equation}\label{eq:congruence_matrix_difference}
\wt{M}(X+v_i)-\wt{M}(X)=
\begin{bmatrix}
0 & \cdots & 0 & n & \cdots & n\\
0 & \cdots & 0 & n & \cdots & n
\end{bmatrix} \in \mathbb{Z}^{2\times k}/(N), 
\end{equation}
where the first nonzero column is $i+1$, and
\[
\wt{F}_j\circ \wt{M}(X+v_i)-\wt{F}_j\circ \wt{M}(X) = \begin{cases}
(-1)^{k-(i+1)} n& j=i+1 \\
0& j \neq i+1.
\end{cases}
\]
When $k-i$ is odd, we have that $X<_{\P_w(n,k)} X+v_i$ and
$\wt{F}_{i+1}\circ \wt{M}(X+v_i)-\wt{F}_{i+1}\circ \wt{M}(X)=n>0$.  In
the case where $k-i$ is even, we have that $X+v_i<_{\P_w(n,k)} X$ and
$\wt{F}_{i+1}\circ \wt{M}(X+v_i)-\wt{F}_{i+1}\circ \wt{M}(X)=-n<0$.

Combined, we see that $\wt{F}_2\circ \wt{M},\ldots ,\wt{F}_k\circ
\wt{M}$ are monotone increasing functions with respect to the 
generating relations $X<_{\P_w(n,k)} Y$, and at least one of these
strictly increases.  A similar argument establishes the results for
$\wt{G}_2\circ \wt{M},\ldots ,\wt{G}_k\circ \wt{M}$.
\end{proof}

\begin{lemma}\label{lem:affine_diagonal_packets}
    Suppose $X\in \Inv_{k+1}(w)$ and $2\leq j\leq k$. If $k-j$ is
even, then
\begin{equation}\label{eq:affine_FG_on_packets} \begin{split}
  \wt{F}_j\circ \wt{M}(X_1)=\ldots=\wt{F}_j\circ \wt{M}(X_{j-1})<\wt{F}_j\circ \wt{M}(X_j)<\wt{F}_j\circ \wt{M}(X_{j+1})=\ldots = \wt{F}_j\circ \wt{M}(X_{k+1}),\\
    \wt{G}_j\circ \wt{M}(X_1)=\ldots=\wt{G}_j\circ
\wt{M}(X_{j-1})>\wt{G}_j\circ \wt{M}(X_j)>\wt{G}_j\circ
\wt{M}(X_{j+1})=\ldots = \wt{G}_j\circ
\wt{M}(X_{k+1}).  \end{split}
\end{equation}
If $k-j$ is odd, then the above inequalities are reversed.
\end{lemma}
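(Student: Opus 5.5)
The plan is to reduce to the classical computation in \cref{lem:diagonal_packets}. The only new issue is that elements of $\Inv_{k+1}(w)$ and $\Inv_k(w)$ are equivalence classes modulo global shifts by $n$, and $\wt{M}$ takes values in a quotient group. First I will fix a representative $X=[x_1,\ldots,x_{k+1}]$ of the given $(k+1)$-inversion. For each $i$, the $k$-subset obtained by deleting $x_i$ is a representative of $X_i\in\Inv_k(w)$. By \cref{lem:inversion_matrix}, evaluating $\wt{M}(X_i)$ on this representative gives the correct class in $\mathbb{Z}^{2\times k}/(N)$. Since $\wt{F}_j$ and $\wt{G}_j$ are induced from $F_j$ and $G_j$, which vanish on $(N)$, every value $\wt{F}_j\circ\wt{M}(X_i)$ equals $F_j$ applied to the honest integer matrix $M(X_i)$ built from these coherent representatives. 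Here $M$ is defined by the same formula as \eqref{eq:inversion_matrix}, with $w^{-1}$ now the $n$-periodic bijection on $\mathbb{Z}$.

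Next I will compute consecutive differences $M(X_i)-M(X_{i+1})$ in $\mathbb{Z}^{2\times k}$, exactly as in \eqref{eq:full_packet_difference1} and \eqref{eq:full_packet_difference2}. Deleting $x_i$ versus $x_{i+1}$ from the same representative changes only column $i$. That column becomes
\[
\begin{bmatrix} w^{-1}(x_{i+1})-w^{-1}(x_i)\\ x_{i+1}-x_i\end{bmatrix}.
\]
Its top entry is negative because $X\in\Inv_{k+1}(w)$ gives $w^{-1}(x_i)>w^{-1}(x_{i+1})$, and its bottom entry is positive because $x_i<x_{i+1}$ as integers in the chosen representative. Additivity of $F_j$ and $G_j$ then gives the following. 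The difference is $0$ unless column $i$ is one of the columns $j-1,j$ that $F_j$ and $G_j$ read. For $i=j-1$, $F_j$ picks up $(-1)^{k-j}(0-(x_j-x_{j-1}))$. For $i=j$, $F_j$ picks up $(-1)^{k-j}(w^{-1}(x_{j+1})-w^{-1}(x_j))$. The analogous signs hold for $G_j$. Both differences have the same sign, determined by the parity of $k-j$, which yields the chain of equalities and two strict inequalities in \eqref{eq:affine_FG_on_packets}, with the reversal when $k-j$ is odd.

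The only step needing care, and the one I expect to be the main (mild) obstacle, is the first. I must make sure that using one common representative of $X$ for all $X_i$ is legitimate, so that the differences are computed in $\mathbb{Z}^{2\times k}$ rather than only modulo $(N)$. In particular, for $i=1$ the deleted representative of $X_1$ need not be canonical, with smallest entry outside $[n]$. This is harmless by \cref{lem:inversion_matrix}, since any representative gives the same class and the functionals are invariant under adding multiples of $N$. With that settled, the rest is a verbatim copy of the proof of \cref{lem:diagonal_packets}.
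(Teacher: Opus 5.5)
Your proposal is correct and is essentially the paper's proof: the paper also reduces to the computation in \cref{lem:diagonal_packets} by choosing representative integer matrices for $\wt{M}$, using that $\wt{F}_j$ and $\wt{G}_j$ descend from $F_j$ and $G_j$. Your explicit check that one common representative of $X$ gives coherent representatives of every $X_i$, and hence that the differences can be computed in $\mathbb{Z}^{2\times k}$, is the point the paper leaves implicit.
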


\begin{proof}
    This follows from the same argument as in
\cref{lem:diagonal_packets}. The calculations there for each
$F_{j}\circ M$ can be viewed as choosing representative matrices in
$\mathbb{Z}^{2\times k}$ with which to calculate $\wt{F}_j\circ
\wt{M}$. An analogous argument can also be used for each
$\wt{G}_j\circ \wt{M}$.
\end{proof}

\begin{theorem}\label{lem:affine.poset.packet.antichain}
For any positive integers $k,n\in \mathbb{Z}$ and $w\in \affineS_n$,
we have that $\P_{w}(n,k)$ is a poset.  Furthermore, if $X\in
\Inv_{k+1}(w)$, then the elements in $P(X)$ form an antichain in
$\P_w(n,k)$.
\end{theorem}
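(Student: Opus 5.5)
The plan is to mirror the proof of \cref{lem:poset.packet.antichain}, replacing $F_j\circ M$ and $G_j\circ M$ by $\wt{F}_j\circ\wt{M}$ and $\wt{G}_j\circ\wt{M}$, which are well defined by \cref{lem:inversion_matrix}. We split by $k$. When $k>n$, the set $\Inv_k(w)$ is empty and there is nothing to prove.

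For $k=1$, \cref{rem:easy.pnks.affine} already shows that $\P_w(n,1)$ is a poset, since its generating relations $\{x\}<\{y\}$ all satisfy $x<y$ in $\Z$. For the antichain claim, take $X=[x_1,x_2]\in\Inv_2(w)$. We must check that $\{x_1\}$ and $\{x_2\}$ (for the chosen representative, and likewise for every shift by $mn$) are incomparable. Every generating relation $\{x\}<\{y\}$ has $x<y$ and $x<_w y$: for the congruence relation $y=x+n$ this uses $w^{-1}(x+n)=w^{-1}(x)+n$, and for a quasi-inversion it holds by definition. Hence $\{a\}<_{\P_w(n,1)}\{b\}$ forces both $a<b$ and $w^{-1}(a)<w^{-1}(b)$. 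Since $x_1<x_2$ but $w^{-1}(x_1)>w^{-1}(x_2)$, neither of the two orders is consistent with both conditions, so the pair is incomparable.

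For $2\le k\le n$, reflexivity and transitivity hold by construction. For antisymmetry we set $\wt{F}=\sum_{j=2}^k\wt{F}_j\circ\wt{M}$. By \cref{lem:affine_diagonal_differences}, each generating relation $X<Y$ (quasi-inversion or congruence) gives $\wt{F}(X)<\wt{F}(Y)$, so this strict inequality persists through the transitive closure and rules out any cycle.

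For the antichain claim with $2\le k\le n$, fix $X\in\Inv_{k+1}(w)$ and $1\le i<j\le k+1$, and copy the argument of \cref{lem:poset.packet.antichain} using \cref{lem:affine_diagonal_packets}. If $j\le k$, then according to the parity of $k-j$, the function $\wt{F}_j\circ\wt{M}$ and the function $\wt{G}_j\circ\wt{M}$ separate $X_i$ and $X_j$ in opposite directions. If $j=k+1$, we use $\wt{F}_k$ and $\wt{G}_k$ instead. By \cref{lem:affine_diagonal_differences}, both functions are weakly order preserving on $\P_w(n,k)$, so these opposite strict inequalities forbid $X_i<X_j$ and $X_j<X_i$.

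The main obstacle is checking that \cref{lem:affine_diagonal_packets} applies to the elements $X_i$ as genuine elements of $\Inv_k(w)$ in the affine setting. In particular, when $k+1=2$ or when representatives are shifted, the matrix differences must be computed with a single common representative of $X$ so that the quotient by $(N)$ causes no ambiguity. I would address this by always fixing one integer representative of $X$ and deriving every $X_i$ from it. The $k=1$ periodicity subtlety, where $\{x\}\ne\{x+n\}$, was handled separately above.
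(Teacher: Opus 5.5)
Your proposal is correct and follows the paper's proof: the case $2\le k\le n$ is exactly the transfer of the argument of \cref{lem:poset.packet.antichain} via $\wt{F}_j\circ\wt{M}$ and $\wt{G}_j\circ\wt{M}$ using \cref{lem:affine_diagonal_differences} and \cref{lem:affine_diagonal_packets}. The representative issue you flag is harmless, since these functions are already well defined on $\Inv_k(w)$ by \cref{lem:inversion_matrix}; for $k=1$ the paper just cites \cref{rem:easy.pnks.affine}, and your check that every generating relation respects both the order on $\Z$ and the order $w^{-1}(a)<w^{-1}(b)$ supplies the antichain claim that the remark leaves implicit.
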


\begin{proof}
For $k=1$ and $k>n$, the statement follows from
\Cref{rem:easy.pnks.affine}.  For $2\leq k\leq n$, the statement
follows from the same argument as in
\cref{lem:poset.packet.antichain}, replacing the functions $F_{j}\circ
M$ and $G_{j}\circ M$ with $\wt{F}_{j}\circ \wt{M}$ and $\wt{G}_{j}\circ
\wt{M}$, respectively. References to \cref{lem:diagonal_differences}
and \cref{lem:diagonal_packets} should be replaced with references to
\cref{lem:affine_diagonal_differences} and
\cref{lem:affine_diagonal_packets}, respectively.
\end{proof}

\begin{corollary}\label{cor:w.invariance.of.perm.poset}
For $v,w \in \affineS_{n}$ and $k>1$, we have $\P_{v}(n,k)=
\P_{w}(n,k)$ if and only if $\Inv_{k}(v)=\Inv_{k}(w)$.
\end{corollary}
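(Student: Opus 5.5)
The forward direction is immediate: $\P_v(n,k)$ and $\P_w(n,k)$ have underlying sets $\Inv_k(v)$ and $\Inv_k(w)$, so equal posets force equal $k$-inversion sets. The real content is the converse. Assuming $\Inv_k(v)=\Inv_k(w)$ with $k>1$, I will show that the generating relations of the two permanent posets coincide, so the transitive closures agree.

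The congruence relations of \Cref{def:congruence.poset} depend only on the set $\Inv_k(\cdot)$. A pair $X, X+v_i$ contributes a relation exactly when both lie in the $k$-inversion set, and its direction depends only on the parity of $k-i$. These relations are therefore identical for $v$ and $w$.

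The quasi-inversion relations need a little more care. A relation arises from $Z\in\binom{\Z}{k+1}_n$ that is a $(k+1)$-quasi-inversion with $P(Z)\cap\Inv_k(\cdot)=\{Z_i,Z_{i+1}\}$, and its direction depends only on $i$ and $k$. By the affine analogue of \Cref{lem:types.of.packets}(c), which the paper states carries over for $k+1>2$, i.e.\ $k\ge 2$, $Z$ is a quasi-inversion exactly when $P(Z)\cap\Inv_k(\cdot)$ consists of two adjacent packet elements $\{Z_i,Z_{i+1}\}$. This condition is phrased entirely in terms of the set $\Inv_k(\cdot)$. Hence the set of quasi-inversion relations among elements of $\Inv_k(v)=\Inv_k(w)$ is the same for $v$ and $w$.

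The main point to double-check is exactly this characterization when $k=2$. There $Z$ has size $3$, and $P(Z)\cap\Inv_2$ consisting of two adjacent elements should be equivalent to $Z$ being a $3$-quasi-inversion, by the affine version of \Cref{lem:intersection_size}. I will verify this directly. If two of the three pairs of $Z$ are inversions, then exactly one pair is not, so $Z$ is a quasi-inversion. Conversely, the two inverted pairs in a quasi-inversion are always adjacent in the packet: if $Z_1$ and $Z_3$ were both inversions, the chain of adjacent pairs would force $Z\in\Inv_3$ and hence $Z_2\in\Inv_2$, contradicting that $Z$ is a quasi-inversion.

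With both families of generating relations shown to be equal, $\P_v(n,k)=\P_w(n,k)$ as posets, which completes the converse.
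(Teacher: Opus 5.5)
Your proof is correct and is essentially the paper's argument. Both show that the congruence relations, and via the packet characterization of $(k+1)$-quasi-inversions in \Cref{lem:types.of.packets}(c) the quasi-inversion relations, depend only on the set $\Inv_k(\cdot)$, so the two permanent posets have the same generating relations. Your separate check for $k=2$ is valid but not needed: part (c) already applies to quasi-inversions of size $k+1=3$, and in that case $\Inv_2(v)=\Inv_2(w)$ forces $v=w$ anyway.
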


\begin{proof}
Since $\P_{w}(n,k)$ is a poset on $\Inv_{k}(w)$, it is clear that
$\P_{v}(n,k)= \P_{w}(n,k)$ implies $\Inv_{k}(v)=\Inv_{k}(w)$.
Conversely, we know $X \in \Qinv_{k+1}(w)$ if and only if $P(X) \cap
\Inv_{k}(w) =\{X_{i},X_{i+1} \} \neq P(X)$ for some $i$ by
\Cref{lem:types.of.packets} provided $k>1$.  Hence,
$\Inv_{k}(v)=\Inv_{k}(w)$ implies $\Qinv_{k+1}(v)=\Qinv_{k+1}(w)$, so
$\P_{v}(n,k)$ and $\P_{w}(n,k)$ have exactly the same quasi-inversion
relations.  Similarly, if $Y, Y+v_{i} \in \Inv_{k}(v)=\Inv_{k}(w)$,
then the corresponding congruence relation holds in both $\P_{v}(n,k)$
and $\P_{w}(n,k)$.  Since the permanent posets are generated by the
quasi-inversion relations and the congruence relations, the result
holds.
\end{proof}

\subsection{Affine Higher Bruhat Orders, Admissible Orders, and
Consistent Sets}\label{sub:afffine.higher}

With the affine analogs of $k$-subsets, $k$-inversion sets, and the
permanent posets $\P_{w}(n,k)$ in place, it is now straightforward to
give the affine analogs of the classical concepts from
Manin-Schechtman's original work for all $w \in \affineS_{n}$ as we did
for permutations.  The special cases of $k=1,2,3$ will be discussed
further below in connection with intervals $[\mathrm{id},w]$,
reflection orders, and commutation classes of reduced expressions.  

We first define the affine analogs of admissible orders for $w\in
\affineS_n$ and their reversal sets. These definitions directly
parallel those in \cref{def:admissible.BCL}.

\begin{definition}
    The $\boldsymbol{k}$-\defn{admissible orders}, denoted
$\boldsymbol{\A_w(n,k)}$, are the total orders $\rho$ on $\Inv_{k}(w)$
that are linear extensions of $\P_w(n,k)$ satisfying the
\defn{full-packet condition}:
\begin{quote}
$\rho|_{P(X)}$ is the full-packet in lex order
$(X_{k+1},\ldots,X_{2},X_1)$ or in antilex order
$(X_1,X_2,\ldots,X_{k+1})$ for every $X\in \Inv_{k+1}(w)$.
\end{quote}
The \defn{reversal
set} of an admissible order $\rho \in \A_w(n,k)$ is the set of
all $X\in \Inv_{k+1}(w)$ such that $P(X)$ appears in antilex order
in $\rho$.
\end{definition}

For $\rho \in \A_w(n,k)$ and $k>1$, if the packet of $X\in
\Inv_{k+1}(w)$ forms a saturated chain in $\rho$, then it is
\defn{flippable} as in \Cref{def:flippable}. The terminology for
flippable packets extends unchanged to the affine case as well.  Thus,
$\A_{w}(n,k)$ is closed under packet flip as shown in
\Cref{lem:packet.flips.Sn}, substituting
\Cref{lem:affine.poset.packet.antichain} for
\Cref{lem:poset.packet.antichain}.  Similarly, two elements $X,Y \in
\Inv_{k}(w)$ \defn{commute with respect to} $\boldsymbol{w}$ if they
do not belong to a common packet $P(Z)$ for any $Z \in \binom{\Z
}{k+1}_{n}$ and they are incomparable in $\P_{w}(n,k)$, exactly as in
\Cref{def:commutes.Sn}.  Then, for all $k>1$, the same proof as in
\Cref{lem:commutations} shows that $\A_{w}(n,k)$ is closed under
swapping adjacent elements that commute with respect to $w$.  Using
these definitions, we now define affine higher Bruhat orders for
elements in $\affineS_n$, paralleling \Cref{def:higher.B.Sn}.

\begin{definition}\label{def:affine.higher.B} 
    The \defn{affine higher Bruhat order}, denoted $\boldsymbol{\B_w(n,k)}$, is the poset on the
commutation equivalence classes of the admissible orders $\{[\rho]:
\rho \in \A_{w}(n,k) \}$ with relations $[\rho]\leq_{\B_w(n,k)}[\sigma]$
if $\sigma$ is obtained from $\rho$ by a sequence of commutations with
respect to $w$ and lex-to-antilex packet flips.   
\end{definition}

As in \Cref{def:classical.GR}, we associate a reversal graph to each
subset of the $k$-inversions of $w$.  For $k>1$ and $w \in \S_{n}$,
this is a direct generalization of the classical permutation case.  We
will discuss the differences for $k=1$ below.

\begin{definition}\label{def:affine.GR}
For any $R\subseteq \Inv_{k+1}(w)$, the affine analog of the
\defn{reversal graph} $G_R$ is defined to be the graph with vertex set
$\Inv_{k}(w)$ and directed edges as follows:
\begin{itemize}
\item $X \to Y$ if $X<_{\P_w(n,k)}Y$ is a quasi-inversion or
congruence relation.
\item $X_i\color{red}\to\color{black} X_{i+1}$ if $P(X)$ is a
full-packet for $w$ and $X\in R$, and
\item $X_{i+1}\color{blue}\to\color{black} X_i$ if $P(X)$ is a
full-packet for $w$ and $X\notin R$.
\end{itemize}
In the case $G_{R}$ is acyclic, let $(\Inv_{k}(w), \leq_{R})$ denote
the \defn{heap poset} defined as the transitive closure of relations
determined by the edges of $G_{R}$.
\end{definition}

Recall that in the classical case, any two vertices connected by an
edge in $G_{R}$ differ in exactly one position.  Here the same holds
for all except the congruence edges, where two vertices differ by a
vector of the form $(0,\dotsc , 0,n,\dotsc, n)$.

Finally, we define the affine analog of Ziegler's consistent sets,
paralleling \Cref{def:consistent.Z} and \Cref{def:consistent.Sn}. We
then show that the properties of consistent sets are necessary
conditions for reversal sets of admissible orders.

\begin{definition}\label{def:consistent.affine}
    A subset $R \subseteq \Inv_{k}(w)$ is \defn{consistent} with
respect to $w$ if it is a lower order ideal of $\P_{w}(n,k)$ that
satisfies the \defn{MSZ condition}, namely $P(X) \cap R$ is a
prefix or a suffix of $P(X)$ for all $X \in
\Inv_{k+1}(w)$. The $\boldsymbol{k}$-\defn{consistent set partial
order}, denoted $\boldsymbol{\C_w(n,k)}$, is again the partial order
on the $k$-consistent sets for $w$ ordered by single step inclusion.
\end{definition}

\begin{example}\label{ex:m3m287.admissible}
Continuing with \Cref{ex:m3m287} and \Cref{ex:m3m287.perm}, let $w =
(-3,-2,8,7)$.  From the Hasse diagram of $\P_{w}(4,2)$ shown in
\Cref{fig:affine-permanent-poset-example} and by verifying the MSZ
condition for each 3-inversion given in \Cref{ex:m3m287}, one can
verify that
\[
  \rho = ([2,3],[2,4],[3,4],[1,4],[1,3],[2,8],[2,7],[1,8],[1,7])
\]
is an admissible order in $\A_w(4,2)$.  Note, this is exactly the
reflection order for the reduced word $232124134$ for $w$.  The
reversal set of $\rho$ is $\Rev(\rho) = \{[1,7,8], [2,7,8],
[1,3,4]\}$. Observe that $\Rev(\rho) \subseteq\Inv_3(w)$ is a lower
order ideal of $\P_w(4,3)$ which satisfies the MSZ condition so it is
a 3-consistent set for $w$. Additionally, applying the braid move
$232124134\to 323124134$ corresponds to a lex-to-antilex packet flip
for $[2,3,4]$ on the first three positions of $\rho$, which gives rise
to another admissible order in $\A_w(n,2)$.  The pair $[2,7],[1,8]$
commute with respect to $w$ so swapping these two adjacent elements in
$\rho$ results in another admissible order in the equivalence class
$[\rho]$.  Note, $[2,3]$ and $[1,8]$ do not commute with respect to
$w$ since $[2,3]<_{\P_{w}(4,2)}[1,8]$, even though they are not both
contained in any packet.  The reversal graph $G_{\Rev(\rho )}$ has 9
vertices and 24 edges as shown in \Cref{fig:m3m287dot.pdf}.  It is
acyclic.  Its transitive reduction is shown in
\Cref{fig:Adm3m287dot.pdf_reduction}, which is equivalent to the Hasse
diagram of a poset with edges pointing in the increasing direction.
Observe that $\rho$ is a linear extension of this poset and every
admissible order in $\A_w(4,2)$ with the same reversal set is also a
linear extension of this poset.
\end{example}

\begin{figure}
\begin{center}
\begin{tikzpicture}
        \node[rectangle,draw=green!50!white,fill=green!25!white] (23) at (0,0) {$[2,3]$};
        \node[rectangle,draw=yellow!75!white,fill=yellow!50!white] (24) at (2,0) {$[2,4]$};
        \node[rectangle,draw=teal!50!white,fill=teal!25!white] (34) at (4,0) {$[3,4]$};
        \node[rectangle,draw=orange!50!white,fill=orange!25!white] (14) at (6,0) {$[1,4]$};
        \node[rectangle,draw=pink,fill=pink!75!white] (13) at (8,-1) {$[1,3]$};
        \node[rectangle,draw=yellow!75!white,fill=yellow!50!white] (28) at (8,1) {$[2,8]$};
        \node[rectangle,draw=green!50!white,fill=green!25!white] (27) at (10,1) {$[2,7]$};
        \node[rectangle,draw=orange!50!white,fill=orange!25!white] (18) at (10,-1) {$[1,8]$};
        \node[rectangle,draw=pink,fill=pink!75!white] (17) at (12,0) {$[1,7]$};
        \draw[->,color=red,thick] (23) edge node {} (24);
        \draw[->,color=red,thick] (24) edge node {} (34);
        \draw[->,color=blue,thick] (34) edge node {} (14);
        \draw[->,color=blue,thick] (14) edge node {} (13);
        \draw[->,color=black,thick] (14) edge node {} (28);
        \draw[->,color=blue,thick] (28) edge node {} (18);
        \draw[->,color=black,thick] (13) edge node {} (27);
        \draw[->,color=blue,thick] (28) edge node {} (27);
        \draw[->,color=black,thick] (13) edge node {} (18);
        \draw[->,color=black,thick] (27) edge node {} (17);
        \draw[->,color=blue,thick] (18) edge node {} (17);
        \draw[->,color=black,thick] (23) to [out=15,in=165] (28);
        \draw[->,color=black,thick] (23) to [out=30,in=150] (27);
        \draw[->,color=black,thick] (23) to [out=345,in=180] (13);
        \draw[->,color=black,thick] (23) to [out=315,in=240] (17);
        \draw[->,color=black,thick] (24) to [out=30,in=165] (27);
        \draw[->,color=black,thick] (24) to [out=15,in=180] (28);
        \draw[->,color=black,thick] (24) to [out=345,in=195] (14);
        \draw[->,color=black,thick] (24) to [out=330,in=210] (18);
        \draw[->,color=blue,thick] (34) to [out=330,in=195] (18);
        \draw[->,color=black,thick] (34) to (28);
        \draw[->,color=black,thick] (14) to (18);
        \draw[->,color=black,thick] (14) to (17);
        \draw[->,color=black,thick] (13) to (17);
    \end{tikzpicture}
\end{center}
\caption{The reversal graph $G_{R}$ on $\Inv_{2}(-3,-2,8,7)$ with $R =
\{[1,3,4], [2,7,8], [1,7,8]\}$.  When two vertices are congruent, they
are boxed by the same color.}  \label{fig:m3m287dot.pdf}
\end{figure}

\begin{figure}
\begin{center}
    \begin{tikzpicture}
        \node[rectangle,draw=green!50!white,fill=green!25!white] (23) at (0,0) {$[2,3]$};
        \node[rectangle,draw=yellow!75!white,fill=yellow!50!white] (24) at (2,0) {$[2,4]$};
        \node[rectangle,draw=teal!50!white,fill=teal!25!white] (34) at (4,0) {$[3,4]$};
        \node[rectangle,draw=orange!50!white,fill=orange!25!white] (14) at (6,0) {$[1,4]$};
        \node[rectangle,draw=pink,fill=pink!75!white] (13) at (8,-1) {$[1,3]$};
        \node[rectangle,draw=yellow!75!white,fill=yellow!50!white] (28) at (8,1) {$[2,8]$};
        \node[rectangle,draw=green!50!white,fill=green!25!white] (27) at (10,1) {$[2,7]$};
        \node[rectangle,draw=orange!50!white,fill=orange!25!white] (18) at (10,-1) {$[1,8]$};
        \node[rectangle,draw=pink,fill=pink!75!white] (17) at (12,0) {$[1,7]$};
        \draw[->,color=red,thick] (23) edge node {} (24);
        \draw[->,color=red,thick] (24) edge node {} (34);
        \draw[->,color=blue,thick] (34) edge node {} (14);
        \draw[->,color=blue,thick] (14) edge node {} (13);
        \draw[->,color=black,thick] (14) edge node {} (28);
        \draw[->,color=blue,thick] (28) edge node {} (18);
        \draw[->,color=black,thick] (13) edge node {} (27);
        \draw[->,color=blue,thick] (28) edge node {} (27);
        \draw[->,color=black,thick] (13) edge node {} (18);
        \draw[->,color=black,thick] (27) edge node {} (17);
        \draw[->,color=blue,thick] (18) edge node {} (17);
    \end{tikzpicture}
\end{center}
\caption{The transitive reduction of $G_{R}$ on $\Inv_{2}(-3,-2,8,7)$
with $R = \{[1,3,4], [2,7,8], [1,7,8]\}$.}
\label{fig:Adm3m287dot.pdf_reduction}
\end{figure}

\begin{lemma}\label{lem:reversal_is_consistent.affine}
For any $\rho\in \A_w(n,k)$, the reversal set $\Rev(\rho) \in \C_w(n,k+1)$. Furthermore, $\Rev(\rho)$ is constant on the equivalence class $[\rho]$, so $\Rev:\B_w(n,k) \to \C_w(n,k+1)$ is a well-defined order-preserving map.
\end{lemma}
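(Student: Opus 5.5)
The plan is to mirror the proofs of \Cref{lem:reversal_is_consistent} and \Cref{lem:reversal_is_consistent.Sn}, adding one new case for the congruence relations in the affine permanent poset. We must show three things: $\Rev(\rho)$ is a lower order ideal of $\P_w(n,k+1)$; it satisfies the MSZ condition; and $\Rev$ is constant on classes and order-preserving. Since $\P_w(n,k+1)$ is generated by quasi-inversion relations and congruence relations, it suffices to check downward closure along each type of generating relation separately.

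For the quasi-inversion relations, the argument of \Cref{lem:reversal_is_consistent} carries over essentially verbatim. Suppose $Z\in\Qinv_{k+2}(w)$ with $P(Z)\cap\Inv_{k+1}(w)=\{Z_i,Z_{i+1}\}$. The affine version of \Cref{lem:permanent_on_semipackets} gives the relations among $(Z_i)_1,(Z_{i+1})_1,(Z_{i+1})_{k+1},(Z_i)_{k+1}$ in $\P_w(n,k)$. Its proof only uses the parity condition and the affine analog of \Cref{lem:types.of.packets}, so it holds as stated. This yields the nesting inequalities \eqref{eq:nesting.1} and \eqref{eq:nesting.2} in $\rho$, and hence downward closure.

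For a congruence relation between $Y$ and $Y+v_i$ in $\Inv_{k+1}(w)$, $1\le i\le k$, I will compare the packets entrywise. Deleting an entry other than the $i$th or $(i+1)$st shows that $(Y+v_i)_h$ and $Y_h$ are congruent elements of $\Inv_k(w)$ differing by a shift across a single gap. Removing the first entry corresponds to $v_{i-1}^{(n,k)}$. Removing the last entry corresponds to $v_i^{(n,k)}$, with the parity $k-i$ flipped relative to $k+1-i$; when $i=k$ the two elements coincide as classes.

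Concretely, suppose $k+1-i$ is odd, so $Y<Y+v_i$, and $Y+v_i\in\Rev(\rho)$. I aim to show
\[(Y)_1\le_\rho (Y+v_i)_1<_\rho (Y+v_i)_{k+1}\le_\rho (Y)_{k+1}.\]
The first inequality uses the congruence relation $(Y)_1<(Y)_1+v_{i-1}$, whose parity $k-(i-1)$ is odd, or equality when $i=1$. The last inequality uses $(Y)_{k+1}+v_i<(Y)_{k+1}$, whose parity $k-i$ is even, or equality when $i=k$. Together these force $P(Y)$ into antilex order, so $Y\in\Rev(\rho)$. The even case is symmetric.

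The step I expect to be the main obstacle is the index bookkeeping: verifying that $v_i^{(n,k+1)}$ restricts to the correct $v$-vectors on the endpoint faces, and that the canonical representatives line up. I would handle this by fixing representatives with the same smallest entry, as in the proof of \Cref{lem:affine_diagonal_differences}.

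The MSZ condition follows as in \Cref{lem:reversal_is_consistent}. For $Z\in\Inv_{k+2}(w)$, the restriction of $\rho$ to the second packet $P^2(Z)$ depends only on the relative order of the $k+2$ entries of $Z$. Since every subset of $Z$ of size $k$ lies in $\Inv_k(w)$, this restriction is a classical admissible order on $\binom{[k+2]}{k}$. \Cref{thm:ziegler} then gives that $\Rev(\rho)\cap P(Z)$ is a prefix or a suffix of $P(Z)$.

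Finally, constancy on $[\rho]$ follows from the affine analog of \Cref{lem:commutations}: a commutation swaps two elements lying in no common packet, so every packet restriction is unchanged. Order preservation follows from the affine analog of \Cref{lem:packet.flips.Sn}, since a lex-to-antilex flip adds exactly one element to the reversal set. Each cover in $\B_w(n,k)$ therefore maps to a single step inclusion in $\C_w(n,k+1)$.
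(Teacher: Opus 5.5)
Your proposal is correct and follows the paper's proof essentially step for step. Quasi-inversion relations and the MSZ condition are handled exactly as in the classical case. Congruence relations are handled by the same nesting chain $(Y)_1\le_\rho (Y+v_i)_1<_\rho (Y+v_i)_{k+1}\le_\rho (Y)_{k+1}$. Constancy on classes and order-preservation come from the affine analogs of the commutation and packet-flip lemmas.

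One small correction: when $k=1$ (so necessarily $i=1$), the first comparison is not an equality, since $\{y\}\neq\{y+n\}$ in $\binom{\mathbb{Z}}{1}_n$. Your own formula still supplies it, though, as the $k=1$ congruence relation $\{y\}<_{\P_w(n,1)}\{y+n\}$. That is precisely the separate $k=1$ case the paper writes out.
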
   
\begin{proof}
By definition, $\Rev(\rho)$ is a subset of $\Inv_{k+1}(w)$.  Since the
MSZ condition does not involve any congruence relations, $\Rev(\rho)$
satisfies the MSZ condition by the same proof as in
\cref{lem:reversal_is_consistent.Sn}.  Therefore, to prove
$\Rev(\rho)$ is consistent it suffices to show that $\Rev(\rho)$ is a
lower order ideal of the permanent poset.

When $k=1$, the quasi-inversion part of the lower order ideal
claim follows as in \cref{lem:reversal_is_consistent.Sn}. For a
congruence relation $[x,y]<_{\P_w(n,2)}[x,y+n]$, membership of
$[x,y+n]$ in $\Rev(\rho)$ implies 
\[
\{y\}<_{\rho}\{y+n\}<_{\rho}\{x\},
\]
so $[x,y]\in\Rev(\rho)$. Thus the lower order ideal claim holds
for $k=1$, and for the remaining argument we may assume $k>1$.

Suppose $X,Y \in \Inv_{k+1}(w)$ are related by a generating relation
$X<_{\P_w(n,k+1)} Y$. If this relation is a quasi-inversion relation,
then the proof proceeds as in \cref{lem:reversal_is_consistent.Sn}.
Otherwise, $X$ and $Y$ are related by a congruence relation, so either
$Y = X + v_i^{(n,k+1)}$ where $(k+1)-i$ is odd or $Y = X -
v_i^{(n,k+1)}$ where $(k+1)-i$ is even. Consider the first case, where
$Y = X+v_i^{(n,k+1)}$ and $(k+1)-i$ is odd. If $i=k$, then
$Y_{k+1}=X_{k+1}$, and if $i=1$, then $X_{1}=Y_{1}$. When these do not
hold, we respectively have $Y_{k+1} = X_{k+1} + v_i^{(n,k)}$ and $Y_1=
X_1 + v_{i-1}^{(n,k)}$, so $Y_{k+1} <_{\P_w(n,k)} X_{k+1}$ and $X_1
<_{\P_w(n,k)} Y_1$ by the parity condition of the congruence
poset. Since $\rho$ is admissible, it is a linear extension of
$\P_w(n,k)$, and since $Y \in \Rev(\rho)$, we have $Y_1 <_\rho
Y_{k+1}$. Therefore, it follows that
\begin{equation}\label{eq:nesting.dueto.congruence.requirement}
X_1 \leq_\rho Y_1 <_\rho Y_{k+1} \leq_\rho X_{k+1},
\end{equation}
which implies $X \in \Rev(\rho)$ since the restriction of $\rho$ to
$P(X)$ must be the lex or antilex order. A similar nesting argument
applies in the second case where $Y = X - v_i^{(n,k+1)}$ and $(k+1)-i$
is even. Hence, $\Rev(\rho)$ is a lower order ideal of $\P_w(n,k+1)$.

The proof of \Cref{lem:commutations} extends to the affine case
verbatim to prove that reversal sets are constant on commutation
classes of admissible orders.  Hence, the map $\rho \mapsto
\Rev(\rho)$ descends to a well-defined map $[\rho] \mapsto
\Rev([\rho])$. Extending the proof of \cref{lem:packet.flips.Sn} to
the affine case verbatim shows that $\Rev$ is order-preserving.
\end{proof}

\begin{remark}\label{rem:nesting.congrence}
The nesting property in
\eqref{eq:nesting.dueto.congruence.requirement} is closely related to
the nesting property for quasi-inversions mentioned in
\Cref{rem:nested}.  In both cases, the parity condition is used in an
essential way for the lower order ideal argument.
\end{remark}

The following six statements are generalizations of
\Cref{lem:GR_to_admissible}, \Cref{thm:equiv_class.and.linear.exts},
\Cref{cor:reversals_unique}, \Cref{cor:B.equiv.acyclic}, 
\Cref{lem:not_consistent_cycle.Sn}, and \Cref{lem:suffix_set_consistent} to the affine permutation case.
For all $k>1$, $\Inv_{k}(w)$ is a finite set and the proofs of those
statements carry over directly using the affine analogs of
$\Inv_{k}(w)$, $\A_{w}(n,k)$, $\B_{w}(n,k)$, and $\P_{w}(n,k)$, though we will
give a proof of the affine generalization of \Cref{lem:suffix_set_consistent}
to specifically account for the inclusion of congruence relations.  See
\Cref{sub:firstAHB} below for more details on the $k=1$ case, from
which the proofs follow.

\begin{lemma}\label{lem:GR_to_admissible.affine}
Let $R\subseteq \Inv_{k+1}(w)$.  If $\rho\in \A_w(n,k)$ and
$\Rev(\rho)=R$, then $G_R$ is acyclic. Conversely, if $G_R$ is
acyclic, then
\[
    \{\rho \in \A_w(n,k):\Rev(\rho)=R\} =
\mathscr{L}(\Inv_k(w),\le_R).
\]
\end{lemma}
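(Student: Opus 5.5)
I will follow the proof of \Cref{lem:GR_to_admissible}, checking that each step survives the two affine changes. First, $\P_w(n,k)$ now also has congruence relations. Second, when $k=1$ the vertex set $\Inv_1(w)\cong\Z$ is infinite. For $k>1$, $\Inv_k(w)$ is finite and the argument is purely combinatorial. For $k=1$ I will read ``total order'' and ``linear extension'' on $\Z$ in the same sense used for admissible orders in $\A_w(n,1)$, and check that nothing in the argument uses finiteness.

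\emph{Forward direction.} Suppose $\rho\in\A_w(n,k)$ with $\Rev(\rho)=R$. I claim that $\rho$ respects every edge of $G_R$ in \Cref{def:affine.GR}.
\begin{itemize}
\item Quasi-inversion edges and congruence edges are generating relations $X<_{\P_w(n,k)}Y$. Since $\rho$ is a linear extension of $\P_w(n,k)$, it places $X$ before $Y$.
\item For each full-packet $P(X)$ with $X\in\Inv_{k+1}(w)$, the full-packet condition forces $\rho|_{P(X)}$ to be antilex exactly when $X\in R$ and lex otherwise. These are precisely the chains of red and blue edges placed on $P(X)$.
\end{itemize}
A total order respecting every edge of a directed graph rules out directed cycles, since a cycle would give $X<_\rho X$. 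Hence $G_R$ is acyclic, and $\rho\in\mathscr{L}(\Inv_k(w),\le_R)$.

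\emph{Converse.} Assume $G_R$ is acyclic, so $\le_R$ is a partial order. The forward direction already gives $\{\rho\in\A_w(n,k):\Rev(\rho)=R\}\subseteq\mathscr{L}(\Inv_k(w),\le_R)$. For the reverse inclusion, take $\rho\in\mathscr{L}(\Inv_k(w),\le_R)$.
\begin{itemize}
\item Every generating relation of $\P_w(n,k)$, quasi-inversion or congruence, is an edge of $G_R$. So $\le_R$ refines $\P_w(n,k)$, and $\rho$ is a linear extension of $\P_w(n,k)$.
\item For each $X\in\Inv_{k+1}(w)$, the edges on $P(X)$ form a chain $X_1\to\cdots\to X_{k+1}$ or its reverse. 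So $\rho|_{P(X)}$ is antilex if $X\in R$ and lex if $X\notin R$.
\end{itemize}
Therefore $\rho\in\A_w(n,k)$ and $\Rev(\rho)=R$.

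The only delicate point is the bookkeeping of representatives. Packets and congruence relations must be compared as equivalence classes in $\binom{\Z}{k}_n$. The same class may appear both as an element of a packet and as an endpoint of a congruence edge, and the edges must be well defined on classes rather than on chosen representatives. For $k\ge 2$ this holds because packets $P(X)$ for $X\in\binom{\Z}{k+1}_n$ are defined on classes, and congruence relations are defined via $X\pm v_i$ on classes.

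I expect the main obstacle to be the case $k=1$. There, $P([x,y])=\{\{x\},\{y\}\}$ depends on the chosen representative of $[x,y]$, since $\{x\}\neq\{x+n\}$. I will handle this by reading the edge condition for $[x,y]$ over all shifts $\{x+mn\},\{y+mn\}$ simultaneously. This is exactly how the full-packet condition for $\A_w(n,1)$ must be interpreted, and by $n$-periodicity the lex or antilex choice is the same for every shift. With that convention, the two inclusions go through verbatim, as in \Cref{sub:firstAHB}.
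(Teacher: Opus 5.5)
Your proposal is correct and takes essentially the paper's approach. The paper proves this by saying the proof of \Cref{lem:GR_to_admissible} carries over verbatim, which is exactly your edge-by-edge check with congruence edges treated alongside quasi-inversion edges as generating relations of $\P_w(n,k)$. Your handling of $k=1$, reading each full-packet edge for $[x,y]$ over all shifts $\{x+mn\},\{y+mn\}$, matches the convention the paper adopts in \Cref{sub:firstAHB}.
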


\begin{theorem}\label{thm:equiv_class.and.linear.exts.affine}
If $\rho \in \A_{w}(n,k)$ and $R=\Rev(\rho)$, then $G_{R}$ is acyclic
and its commutation equivalence class $[\rho]\in \B_w(n,k)$ is given
by
\begin{equation}\label{eq:rev_inverse.affine}
    [\rho]=\{\sigma \in \A_{w}(n,k): \Rev (\sigma )=R\}
          =\mathscr{L}(\Inv_k(w),\leq_R).
\end{equation}
\end{theorem}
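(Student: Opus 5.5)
The plan follows the proof of \Cref{thm:equiv_class.and.linear.exts} essentially line by line. It substitutes the affine analogs \Cref{lem:GR_to_admissible.affine}, \Cref{lem:affine.poset.packet.antichain}, and the affine version of \Cref{lem:commutations} (closure of $\A_w(n,k)$ under commutations with respect to $w$, with $\Rev$ constant). We treat $k>1$ first, where $\Inv_k(w)$ is finite, and comment on $k=1$ at the end.

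\emph{First inclusion.} If $\sigma\in[\rho]$, then $\sigma$ is reached from $\rho$ by adjacent swaps of elements commuting with respect to $w$. The affine version of \Cref{lem:commutations} gives $\sigma\in\A_w(n,k)$ and $\Rev(\sigma)=R$. Then \Cref{lem:GR_to_admissible.affine} gives that $G_R$ is acyclic and
\[
[\rho]\subseteq\{\sigma\in\A_w(n,k):\Rev(\sigma)=R\}=\mathscr{L}(\Inv_k(w),\le_R).
\]

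\emph{Reverse inclusion.} Take two linear extensions of the finite poset $(\Inv_k(w),\le_R)$. By \Cref{lem:linear_extensions}, they are connected by adjacent transpositions of elements incomparable in $\le_R$, so it suffices to show that incomparable $X,Y$ commute with respect to $w$. Since $\le_R$ contains $\P_w(n,k)$, the elements $X,Y$ are incomparable in $\P_w(n,k)$; note this now includes the congruence relations. It remains to show they lie in no common packet $P(Z)$, $Z\in\binom{\Z}{k+1}_n$. Suppose they did. The affine version of \Cref{lem:intersection_size} says $P(Z)\cap\Inv_k(w)$ is $\emptyset$, a singleton, an adjacent pair $\{Z_i,Z_{i+1}\}$, or all of $P(Z)$.
\begin{itemize}
\item If it is all of $P(Z)$, then $Z\in\Inv_{k+1}(w)$, and the full-packet edges of $G_R$ totally order $P(Z)$.
\item If it is an adjacent pair, then $Z$ is a quasi-inversion and a quasi-inversion edge joins $X$ and $Y$.
\end{itemize}
Either way $X,Y$ are comparable in $\le_R$, a contradiction. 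Hence every linear extension lies in $[\rho]$.

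\emph{Main obstacle.} The delicate point is checking that the affine packet and intersection facts hold verbatim. The concern is that $X,Y$ lie in a common packet only ``up to the $\sim_n$ identification,'' for instance for $k=2$ when two pairs share a value only after shifting by $n$. We handle this by working with canonical representatives in $\binom{\Z}{k}_n$. If $X$ and $Y$ lie in a common $P(Z)$, then $X\cup Y$ (for suitable representatives) is a $(k+1)$-subset with distinct residues, and the intersection-size argument applies unchanged via the affine \Cref{lem:adjacent.pairs}.

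\emph{The case $k=1$.} Here $\Inv_1(w)\cong\Z$ is infinite, so \Cref{lem:linear_extensions} does not apply directly. I would defer to the treatment in \Cref{sub:firstAHB}, where admissible orders in $\A_w(n,1)$ are identified with elements of $[\id,w]$. There no two singletons commute, since every pair lies in a common packet, and each class is a single total order; this matches $\mathscr{L}(\Inv_1(w),\le_R)$, because $G_R$ orients every pair and hence $\le_R$ is a total order.
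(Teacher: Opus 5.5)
Your proposal is correct and follows the paper's route. The paper asserts that the proof of \cref{thm:equiv_class.and.linear.exts} carries over verbatim for $k>1$, where $\Inv_k(w)$ is finite and $\le_R$ now also contains the congruence relations, and it defers $k=1$ to \cref{sub:firstAHB}, exactly as you do. One small slip in your $k=1$ paragraph: congruent singletons $\{i\},\{i+mn\}$ do not lie in a common packet, since no element of $\binom{\Z}{2}_n$ contains both; they are instead comparable in $\P_w(n,1)$ via congruence relations, so your conclusion that no two singletons commute still holds, as in \cref{cor:higher.bruhat.k=1}.
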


\begin{corollary}\label{cor:reversals_unique.affine}
Let $\rho,\sigma \in \A_{w}(n,k)$. Then, $\Rev(\rho)=\Rev(\sigma)$ if
and only if $[\rho]=[\sigma]$.
\end{corollary}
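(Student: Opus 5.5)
The plan is to derive \Cref{cor:reversals_unique.affine} directly from \Cref{thm:equiv_class.and.linear.exts.affine}, just as \Cref{cor:reversals_unique} followed from \Cref{thm:equiv_class.and.linear.exts} in the classical case. Both directions reduce to the displayed identity \eqref{eq:rev_inverse.affine}, so no new combinatorics is required. The only care needed is that the theorem's proof carries over in the affine setting, including the case $k=1$, where $\Inv_1(w)$ is infinite.

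For the direction $[\rho]=[\sigma]\Rightarrow \Rev(\rho)=\Rev(\sigma)$, I would use the affine version of \Cref{lem:commutations}. It was already noted before \Cref{def:affine.higher.B} to hold verbatim, and it is also recorded in \Cref{lem:reversal_is_consistent.affine}: a commutation with respect to $w$ swaps two elements lying in no common packet. Hence each restriction $\rho|_{P(Z)}$ is unchanged, so the reversal set is constant along commutations. Chaining along a sequence of commutations gives $\Rev(\rho)=\Rev(\sigma)$.

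For the converse, set $R=\Rev(\rho)=\Rev(\sigma)$. By \Cref{thm:equiv_class.and.linear.exts.affine} applied to $\rho$,
\[
[\rho]=\{\tau\in\A_w(n,k):\Rev(\tau)=R\}.
\]
Since $\sigma\in\A_w(n,k)$ and $\Rev(\sigma)=R$, it follows that $\sigma\in[\rho]$, so $[\sigma]=[\rho]$.

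The only genuine obstacle is whether the theorem itself is available uniformly, in particular its use of \Cref{lem:linear_extensions}. That lemma is stated for finite posets, and for $k=1$ the poset $\Inv_1(w)\cong\Z$ is infinite. For $k>1$ the set $\Inv_k(w)$ is finite, so the classical argument applies unchanged. For $k=1$ I would rely on the paper's remark that this case is handled in \Cref{sub:firstAHB}. Alternatively, I would argue that distinct singletons always share a packet and so never commute. Each class is then a singleton, $\rho$ is determined by the set of pairs $[x,y]$ it orders reversely, and equal reversal sets force $\rho=\sigma$.
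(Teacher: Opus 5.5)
Your proposal is correct and matches the paper. The paper obtains this corollary directly from \Cref{thm:equiv_class.and.linear.exts.affine}, using the affine version of the commutation lemma for the forward direction, and defers $k=1$ to \Cref{sub:firstAHB}, where every commutation class is a singleton. One small fix in your $k=1$ aside: congruent singletons $\{i\},\{i+mn\}$ lie in no common packet, but they still never commute because they are comparable in $\P_w(n,1)$.
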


\begin{corollary}\label{cor:B.equiv.acyclic.affine}
The reversal map induces a bijection
\[
Rev: \B_{w}(n,k) \to \{R \subseteq \Inv_{k+1}(w): G_{R} \text{ is
acyclic} \}, 
\]
where the inverse sends $R$ to the commutation class given by linear
extensions 
\[
    \mathscr{L}(\Inv_k(w),\le_R).
\]
\end{corollary}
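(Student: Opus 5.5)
The plan is to copy the proof of \Cref{cor:B.equiv.acyclic}, taking the two affine inputs \Cref{lem:reversal_is_consistent.affine} and \Cref{thm:equiv_class.and.linear.exts.affine} as given. We need three things: the map $[\rho]\mapsto \Rev([\rho])$ is well defined, it is injective, and its image is exactly the set of $R\subseteq\Inv_{k+1}(w)$ with $G_R$ acyclic, with the stated inverse.

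\emph{Well-definedness.} By \Cref{lem:reversal_is_consistent.affine}, $\Rev(\rho)$ is constant on each commutation class $[\rho]$. Its value is a subset of $\Inv_{k+1}(w)$. By \Cref{thm:equiv_class.and.linear.exts.affine}, the graph $G_{\Rev(\rho)}$ is acyclic. So the map lands in the target set.

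\emph{Injectivity.} This is \Cref{cor:reversals_unique.affine}: if two classes have the same reversal set, they are equal. Equivalently, by \eqref{eq:rev_inverse.affine}, each class is recovered from its reversal set as $\mathscr{L}(\Inv_k(w),\le_R)$, which also identifies the inverse.

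\emph{Surjectivity.} Suppose $G_R$ is acyclic. Then $\le_R$ is a partial order on $\Inv_k(w)$, and we need it to have a linear extension $\rho$. By \Cref{lem:GR_to_admissible.affine}, any such $\rho$ lies in $\A_w(n,k)$ and has $\Rev(\rho)=R$, so $R$ is the image of $[\rho]$.

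The only point needing care is the existence of that linear extension. For $k>1$, $\Inv_k(w)$ is finite, so a linear extension always exists.

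For $k=1$, $\Inv_1(w)\cong\Z$ is infinite. Here I would argue that a linear extension still exists, because $\le_R$ contains the relations $\{x\}<\{x+n\}$ and only finitely many other relations point against the natural order. Concretely, the finite set $R\subseteq\Inv_2(w)$ determines an affine permutation $u$ by \Cref{lem:inversion.sets}, and the total order on $\Z$ given by $u$ (window order) realizes all edges of $G_R$. The case $k=1$ is deferred in the paper to \Cref{sub:firstAHB}, so I would cite that discussion for it. Every other step is formal.
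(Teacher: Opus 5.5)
Your three-step structure is the paper's own argument. The paper only says that the classical proofs of \Cref{lem:GR_to_admissible}, \Cref{thm:equiv_class.and.linear.exts}, and \Cref{cor:B.equiv.acyclic} carry over because $\Inv_k(w)$ is finite for $k>1$, and it points to \Cref{sub:firstAHB} for $k=1$. Your well-definedness, injectivity, and surjectivity steps (finite poset, hence a linear extension exists, hence \Cref{lem:GR_to_admissible.affine} applies) are exactly that.

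The one step that does not work as written is your concrete $k=1$ argument.
\begin{itemize}
\item \Cref{lem:inversion.sets} does not say that an arbitrary finite $R\subseteq\Inv_2(w)$ is an inversion set. You would first have to derive its conditions (a)--(c) from acyclicity of $G_R$. This can be done: a violation of any of them produces a directed cycle of length three, possibly using a chain of congruence edges. But you have not done it.
\item Your remark that only finitely many relations point against the natural order is false. Each $[x,y]\in R$ contributes the edges $\{y+mn\}\to\{x+mn\}$ for every $m\in\Z$; there are only finitely many such edges up to translation by $n$.
\end{itemize}

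The cleanest fix avoids the inversion-set characterization entirely. Take $k=1$ and integers $i<j$.
\begin{itemize}
\item If $i\equiv j\pmod n$, the vertices $\{i\}$ and $\{j\}$ are joined in $G_R$ by a chain of congruence edges.
\item Otherwise $[i,j]$ is either an inversion or a quasi-inversion of $w$, so $\{i\}$ and $\{j\}$ are joined by a single full-packet or quasi-inversion edge.
\end{itemize}
Hence every pair of vertices is comparable in $\le_R$. If $G_R$ is acyclic, $\le_R$ is therefore already a total order on $\Z$, and it is its own unique linear extension. So \Cref{lem:GR_to_admissible.affine} applies directly and gives surjectivity. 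Alternatively, Szpilrajn's extension theorem supplies a linear extension of any partial order. This also matches the fact that commutation classes in $\B_w(n,1)$ are singletons.
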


\begin{corollary}\label{lem:not_consistent_cycle}
Assume $k>1$.    If $R\subseteq \Inv_k(w)$ is not consistent, then $G_R$ contains a cycle.
\end{corollary}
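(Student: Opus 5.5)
We follow the proof of \Cref{lem:not_consistent_cycle.Sn} verbatim, arguing by contrapositive. Suppose $k>1$, $R\subseteq \Inv_k(w)$, and $G_R$ is acyclic. Here $G_R$ is the affine reversal graph on $\Inv_{k-1}(w)$ from \Cref{def:affine.GR}. We will show that $R$ is consistent.

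First we apply \Cref{lem:GR_to_admissible.affine}, with $k-1$ in place of $k$. Acyclicity of $G_R$ gives $\{\rho\in\A_w(n,k-1):\Rev(\rho)=R\}=\mathscr{L}(\Inv_{k-1}(w),\leq_R)$. The point to check is that this set is nonempty, so that some admissible order $\rho$ with $\Rev(\rho)=R$ actually exists.

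\emph{Case $k>2$.} Then $\Inv_{k-1}(w)$ is finite, and any finite acyclic directed graph has a topological ordering. So $\mathscr{L}(\Inv_{k-1}(w),\leq_R)\neq\emptyset$.

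\emph{Case $k=2$.} Then the vertex set is $\Inv_1(w)\cong\Z$, which is infinite, and a ``linear extension'' is interpreted as in \Cref{sub:firstAHB}. There, $\A_w(n,1)$ corresponds to $n$-periodic total orders, that is, affine permutations $u\leq w$. We have two options. One is to cite the $k=1$ discussion of \Cref{sub:firstAHB}, which the paper says the proofs rely on. The other is to argue directly. When $G_R$ is acyclic, its edges are compatible with $n$-periodicity, so the induced relation on $\Z$ is a periodic partial order. The quasi-inversion and congruence edges, together with the full-packet edges between every pair $\{x\},\{y\}$ with $[x,y]\in\Inv_2(w)$, totally order every pair of residues in each finite window. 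A periodic total order is therefore forced, and it is an element of $\A_w(n,1)$ with reversal set $R$.

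Having obtained $\rho\in\A_w(n,k-1)$ with $\Rev(\rho)=R$, we invoke \Cref{lem:reversal_is_consistent.affine}. It says reversal sets of admissible orders lie in $\C_w(n,k)$, that is, they are lower order ideals of the affine permanent poset $\P_w(n,k)$ (including the congruence relations) and satisfy the MSZ condition. Hence $R$ is consistent. Taking the contrapositive, if $R$ is not consistent then $G_R$ contains a cycle.

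The main obstacle is the $k=2$ case, where $\Inv_1(w)$ is infinite. There, existence of an admissible order realizing $R$ is not the purely finite topological-sort argument. One must check that acyclicity of $G_R$ yields an $n$-periodic total order, equivalently an affine permutation whose inversion set is $R$. If the direct argument becomes delicate, the fallback is \Cref{lem:inversion.sets}: a directed cycle in $G_R$ restricted to three values $x<y<z$, or to a congruent pair $[x,y],[x,y+n]$, corresponds exactly to a violation of conditions (a)–(c) there, and these in turn match the MSZ and lower-ideal conditions for $k=2$.
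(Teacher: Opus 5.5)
Your proof is correct and follows the paper's route: the paper obtains this corollary by carrying over the proof of \Cref{lem:not_consistent_cycle.Sn} verbatim, so that acyclicity of $G_R$ gives an admissible order with reversal set $R$ via \Cref{lem:GR_to_admissible.affine}, and that reversal set is consistent by \Cref{lem:reversal_is_consistent.affine}, with the infinite $k=2$ case deferred to \Cref{sub:firstAHB}. Your explicit nonemptiness check fills in a step the paper leaves implicit: for $k>2$ a topological sort exists, and for $k=2$ every pair of integers is joined by a directed path in $G_R$, so its transitive closure is an $n$-periodic total order on $\Z$.
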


\begin{lemma}\label{lem:suffix_set_consistent.affine}
Assume $k>1$.  For every consistent set $R\in \C_w(n,k)$, the
corresponding suffix set
\[
S(R) = \{Y \in \Inv_{k+1}(w): Y_1\in R\} \in
\C_w(n,k+1).
\]
If $G_{S(R)}$ is acyclic, then $R$ is a lower order ideal in the
corresponding heap poset $(\Inv_{k}(w), \leq_{S(R)})$, and every lower
order ideal of $(\Inv_{k}(w), \leq_{S(R)})$ is in $\C_w(n,k)$.
\end{lemma}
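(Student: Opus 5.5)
We follow the proof of \Cref{lem:suffix_set_consistent}; the only new ingredient is the congruence relations in the affine permanent posets. There are three claims: $S(R)\in\C_w(n,k+1)$, $R$ is a lower order ideal of $(\Inv_k(w),\leq_{S(R)})$ when $G_{S(R)}$ is acyclic, and lower order ideals of that heap poset are consistent.

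\emph{Step 1: $S(R)$ is a lower order ideal of $\P_w(n,k+1)$.} It suffices to check each generating relation $X<_{\P_w(n,k+1)}Y$ with $Y\in S(R)$ and show $X\in S(R)$, that is, $X_1\in R$.

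For quasi-inversion relations the argument of \Cref{lem:suffix_set_consistent}(a) applies verbatim, working with representatives. When $i>1$ it gives $(X_{i+1})_1=(X_1)_i$ and $(X_i)_1=(X_1)_{i-1}$, and these are related in $\P_w(n,k)$ with the same parity. When $i=1$ the two elements are equal.

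For a congruence relation we have $Y=X\pm v_i^{(n,k+1)}$ with $1\le i\le k$. Deleting the smallest entry gives
\[
Y_1 = X_1 \pm v_{i-1}^{(n,k)} \quad\text{when } i\ge 2, \qquad Y_1=X_1 \quad\text{when } i=1,
\]
the latter up to a global shift by $n$, which is the same equivalence class. Since $k-(i-1)$ and $(k+1)-i$ have the same parity, the congruence relation between $X_1$ and $Y_1$ in $\P_w(n,k)$ points the same way as the one between $X$ and $Y$. Here $X_1,Y_1\in\Inv_k(w)$ by the affine \Cref{lem:adjacent.pairs}. Since $R$ is a lower order ideal, $Y_1\in R$ implies $X_1\in R$. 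This is exactly the nesting phenomenon of \Cref{rem:nesting.congrence}, now used on the first entry only.

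\emph{Step 2: the MSZ condition for $S(R)$.} This is unchanged: for $X\in\Inv_{k+2}(w)$ the elements $(X_h)_1$ lie in $P(X_1)$ with $X_1\in\Inv_{k+1}(w)$, and we apply \Cref{lem:prefix_suffix} as before.

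\emph{Step 3: $R$ is a lower order ideal of the heap poset.} Assume $G_{S(R)}$ is acyclic. It is enough to check each edge of $G_{S(R)}$. Quasi-inversion and congruence edges are relations of $\P_w(n,k)$, and $R$ is a lower order ideal there. Full-packet edges on $P(X)$ go in antilex order exactly when $X\in S(R)$, i.e.\ when $X_1\in R$. In that case the MSZ condition forces $R\cap P(X)$ to be a suffix; otherwise it is a prefix. Either way, it is downward closed along the edges within $P(X)$.

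\emph{Step 4: lower order ideals of the heap poset are consistent.} Let $Q$ be such an ideal. Since $(\Inv_k(w),\leq_{S(R)})$ contains $\P_w(n,k)$, including its congruence relations, $Q$ is a lower order ideal of $\P_w(n,k)$. Each full packet $P(X)$ is a chain in lex or antilex order in the heap poset, so $Q\cap P(X)$ is a prefix or suffix. This is the affine analog of \Cref{lem:suffix_set_consistent}(d) applied to $S(R)$.

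The only step needing care is the congruence case in Step 1: we must verify that $Y_1$ is a genuine element of $\Inv_k(w)$ related to $X_1$, and not merely equal to it after a global shift when $i=1$, and we must confirm the parity bookkeeping. Everything else transfers directly from the classical proof.
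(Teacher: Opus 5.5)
Your proposal is correct and follows essentially the same route as the paper: you reduce to the classical proof of \Cref{lem:suffix_set_consistent} and isolate the congruence generating relations as the only new check. There you note that $Y_1=X_1$ when $i=1$, and otherwise $Y_1=X_1\pm v_{i-1}^{(n,k)}$ with the parity identity $(k+1)-i=k-(i-1)$, which is exactly the paper's argument; your Steps 3 and 4 simply spell out the affine versions of Parts (b)--(d), which the paper cites without repeating.
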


\begin{proof}
Since $k>1$, the packet and second packet of any element in $\binom{\Z
}{k+2}_{n}$ is well-defined.  Therefore, arguments in the proof of
\Cref{lem:suffix_set_consistent}(a) extend directly to the affine
case after making the necessary updates to the notation. It remains to
check that if $X<_{\P_w(n,k+1)}Y=X\pm v_{i}$ is a generating relation in $\P_w(n,k+1)$,
then $Y_{1} \in R$ implies $X_{1}\in R$. This is trivial in the case $Y_1=X_1$. Otherwise,
the parity conditions in \Cref{def:congruence.poset} with $(k+1)-i=k-(i-1)$
 imply $X_1<_{\P_w(n,k)} Y_1=X_1\pm v_{i-1}$.  Thus, $R\in \C_w(n,k)$ implies $S(R)\in \C_w(n,k+1)$.  The
second part follows from the affine generalization of
\Cref{lem:suffix_set_consistent} Parts (b), (c), and (d).  
\end{proof}

The next lemma is the combined analog of
\cref{lem:admissible_to_consistent_set} and \Cref{thm:chains_to_Ank}
for affine permutations. Its proof also follows verbatim using the
affine permutation definitions for $\Inv_k(w)$ and $\P_w(n,k)$ with
the case of $k=1$ omitted.  Note, we have not shown that $\C_w(n,k)$
has a unique minimal element and a unique maximal element as in
\Cref{thm:Cnk_min_max}.  Hence, the map below is not yet known to be a
bijection.

\begin{proposition}\label{lem:affine_admissible_to_consistent_set}
Assume $k>1$, and suppose $\rho=(\rho_1,\rho_2,\ldots,\rho_{|\Inv_k(w)|})\in \A_w(n,k)$. 
Then for any $0\leq i\leq
|\Inv_k(w)|$,
\[
\{\rho_1,\rho_2,\ldots,\rho_i\}\in \C_w(n,k).
\]
Therefore, the successive prefix map
\[\mathcal{R}:
\A_w(n,k) \to \{\text{maximal chains of } \C_w(n,k) \}
\]
as defined in \cref{def:consistentchaintoadmissibleseq} extends to
affine permutations and is an injection for $w \in \affineS_{n}$.  Its
image is all maximal chains in $\C_w(n,k)$ beginning with $\emptyset$
and ending with $\Inv_k(w)$.
\end{proposition}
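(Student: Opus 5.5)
We follow the proofs of \cref{lem:admissible_to_consistent_set} and \Cref{thm:chains_to_Ank}, checking that the only new ingredient in the affine setting, the congruence relations in $\P_w(n,k)$, creates no difficulty. Since $k>1$, $\Inv_k(w)$ is finite, so $\rho$ is a finite list and every prefix $R_i=\{\rho_1,\dots,\rho_i\}$ is a finite subset of $\Inv_k(w)$.

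\textbf{Step 1: prefixes are consistent.} Fix $i$. $R_i$ is a lower order ideal of $\P_w(n,k)$: if $X<_{\P_w(n,k)}Y$ with $Y\in R_i$, then $X$ precedes $Y$ in $\rho$, because $\rho$ is a linear extension of $\P_w(n,k)$. This holds whether the generating relations are quasi-inversion or congruence relations, since only the linear-extension property is used. For the MSZ condition, take $X\in\Inv_{k+1}(w)$. Its packet is well-defined because $k+1>2$. The full-packet condition forces $\rho|_{P(X)}$ to be lex or antilex, so $R_i\cap P(X)$ is an initial segment of one of these orders, hence a prefix or suffix of $P(X)$. Therefore $R_i\in\C_w(n,k)$.

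\textbf{Step 2: $\mathcal{R}$ is a well-defined injection.} Consecutive prefixes satisfy $R_{i-1}\lessdot_1 R_i$ in single step inclusion. So $\mathcal{R}(\rho)$ is a saturated chain in $\C_w(n,k)$ running from $\emptyset$ to $\Inv_k(w)$. It is a maximal chain: it cannot be extended below $\emptyset$ or above $\Inv_k(w)$, and any refinement of $R_{i-1}\lessdot_1 R_i$ would need a set strictly between two sets whose sizes differ by one. The map is injective because $\rho_i$ is recovered as the unique element of $R_i\setminus R_{i-1}$.

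\textbf{Step 3: the image.} Let $\emptyset=R_0\lessdot_1\cdots\lessdot_1 R_N=\Inv_k(w)$ be a chain in $\C_w(n,k)$ with $N=|\Inv_k(w)|$. Define $\rho_i$ by $R_i\setminus R_{i-1}=\{\rho_i\}$; this gives a total order on $\Inv_k(w)$. To see that $\rho$ is a linear extension of $\P_w(n,k)$, suppose $\rho_a<_{\P_w(n,k)}\rho_b$. Since $\rho_b\in R_b$ and $R_b$ is a lower order ideal, $\rho_a\in R_b$, so $a<b$. For the full-packet condition, the MSZ condition at each $R_i$ says that the elements of $P(X)$ enter along a prefix or a suffix at every stage. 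Such a sequence of growing prefixes or suffixes can only list $P(X)$ in lex or antilex order; once the first two elements enter, the direction is fixed. Hence $\rho\in\A_w(n,k)$ and $\mathcal{R}(\rho)$ is the given chain. Combined with Step 2, the image is exactly the set of chains from $\emptyset$ to $\Inv_k(w)$.

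\textbf{Main obstacle.} The only real subtlety is the phrase ``maximal chains'': we do not know that every maximal chain starts at $\emptyset$ and ends at $\Inv_k(w)$, since \Cref{thm:Cnk_min_max} is unavailable in the affine case. The statement is therefore phrased about chains with these endpoints, and Step 3 should be argued only for such chains. Otherwise the argument is verbatim from the classical case, with affine packets used only for $k+1\ge 3$, where they behave as in the classical case.
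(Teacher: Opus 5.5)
Your proposal is correct and takes the same approach as the paper. The paper says the arguments of \cref{lem:admissible_to_consistent_set} and \Cref{thm:chains_to_Ank} carry over verbatim for $k>1$ and, since \Cref{thm:Cnk_min_max} is not available in the affine setting, states the image only as the maximal chains from $\emptyset$ to $\Inv_k(w)$; your Steps 1--3 write out that transfer, including the correct point that congruence relations enter only through the linear-extension property.
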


While the definitions and many lemmas carry over nicely from the
classical permutation case, the affine permutations do add some
complexity to the theory.  In particular, we conjecture that if $R \in
\C_w(n,k)$, then $G_R$ is acyclic. This is open for $k\geq 4$. See
\Cref{conj:affineG_R.advanced} for further discussion.  In the
subsections below, we verify the conjecture for all $k\leq 3$.  We
conclude here with two special cases.  In these two special cases, we
give a different proof of acyclicity from the previous approach.

\begin{lemma}\label{lem:empty_full_acyclic}
Both $\emptyset,\ \Inv_{k+1}(w) \in \C_w(n,k+1)$, and the reversal
graphs $G_\emptyset$ and $G_{\Inv_{k+1}(w)}$ on $\Inv_{k}(w)$ are
acyclic.
\end{lemma}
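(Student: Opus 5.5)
The plan is to prove acyclicity by exhibiting, for each of the two graphs, a function on $\Inv_k(w)$ that strictly increases along every directed edge. That rules out directed cycles even when $\Inv_k(w)$ is infinite, since for $k=1$ the functions below take values in $\Z$ and are injective.

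\emph{Consistency.} The empty set and $\Inv_{k+1}(w)$ are trivially lower order ideals of $\P_w(n,k+1)$. For every $X\in\Inv_{k+2}(w)$ the intersections with $P(X)$ are $\emptyset$ or $P(X)$, and both are prefixes. The cases $k>n$ and $k=n$ are trivial, because then $\Inv_{k+1}(w)=\emptyset$ and $G_R$ has at most quasi-inversion and congruence edges, which are handled by \cref{lem:affine.poset.packet.antichain}.

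\emph{The case $k=1$.} Here $\Inv_1(w)\cong\Z$. In $G_\emptyset$ there are three kinds of edges. The quasi-inversion edges go $\{x\}\to\{y\}$ with $x<y$, and the congruence edges go $\{x\}\to\{x+n\}$. The full-lex edges come from $X=[x,y]\in\Inv_2(w)$, where $X_2=\{x\}$ and $X_1=\{y\}$, so they go $\{x\}\to\{y\}$ with $x<y$. Hence the natural order on $\Z$ respects every edge of $G_\emptyset$.

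For $G_{\Inv_2(w)}$ I would order $\Z$ by $x\mapsto w^{-1}(x)$. Quasi-inversion edges $\{x\}\to\{y\}$ have $w^{-1}(x)<w^{-1}(y)$. Congruence edges satisfy $w^{-1}(x+n)=w^{-1}(x)+n$. Full-antilex edges go $\{y\}\to\{x\}$ with $w^{-1}(y)<w^{-1}(x)$. So every edge increases $w^{-1}$, and this graph is acyclic as well.

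\emph{The case $2\le k<n$.} I would use the homomorphisms $\wt F_j\circ\wt M$ and $\wt G_j\circ\wt M$ for $2\le j\le k$. Define
\[
H=\sum_{\substack{2\le j\le k\\ k-j\ \mathrm{odd}}}\wt F_j\circ\wt M+\sum_{\substack{2\le j\le k\\ k-j\ \mathrm{even}}}\wt G_j\circ\wt M,
\qquad
H'=\sum_{\substack{2\le j\le k\\ k-j\ \mathrm{odd}}}\wt G_j\circ\wt M+\sum_{\substack{2\le j\le k\\ k-j\ \mathrm{even}}}\wt F_j\circ\wt M.
\]

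Quasi-inversion and congruence edges are generating relations of $\P_w(n,k)$. By \cref{lem:affine_diagonal_differences}, every $\wt F_j\circ\wt M$ and $\wt G_j\circ\wt M$ weakly increases along such an edge, and at least one of them increases strictly. Therefore both $H$ and $H'$ strictly increase along these edges.

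For the full-packet edges I would use \cref{lem:affine_diagonal_packets}. Along $X_1,X_2,\ldots,X_{k+1}$, the function $\wt F_j\circ\wt M$ is constant except for strict jumps at $X_{j-1}\to X_j$ and $X_j\to X_{j+1}$. These jumps are increasing when $k-j$ is even and decreasing when $k-j$ is odd, and $\wt G_j\circ\wt M$ behaves oppositely. A lex edge $X_{i+1}\to X_i$ runs backwards along this sequence. For the $j$'s in $H$, the term chosen ($\wt F_j$ when $k-j$ is odd, $\wt G_j$ when $k-j$ is even) decreases in the forward direction, so it weakly increases along every lex edge. It strictly increases when $j\in\{i,i+1\}\cap[2,k]$, and this set is nonempty since $k\ge2$. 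So $H$ strictly increases on all edges of $G_\emptyset$. Antilex edges run forward, and the complementary choice $H'$ strictly increases along them, hence along every edge of $G_{\Inv_{k+1}(w)}$.

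The main point requiring care is this sign bookkeeping: confirming that the jump positions in \cref{lem:affine_diagonal_packets} cover every consecutive pair $X_i,X_{i+1}$, including the endpoints $i=1$ and $i=k$, and that the parity choices in $H$ and $H'$ are uniformly aligned with the lex and antilex directions respectively.
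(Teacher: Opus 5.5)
Your proposal is correct and is essentially the paper's proof. For $k\ge 2$, your $H$ and $H'$ are exactly the paper's $\wt{L}$ and $\wt{A}$: they increase strictly along permanent-poset edges by \cref{lem:affine_diagonal_differences} and along lex (resp.\ antilex) edges by \cref{lem:affine_diagonal_packets}. The only deviation is $k=1$, where the paper defers to \cref{cor:G-R.k=2.acyclic}, whose heap posets for $R=\emptyset$ and $R=\Inv_2(w)$ are the total orders of $\mathrm{id}$ and $w$; your direct check with the natural order on $\Z$ and with $x\mapsto w^{-1}(x)$ reaches the same conclusion without that forward reference.
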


\begin{proof}
By the same reasoning as in \Cref{ex:empty.full}, both $\emptyset$ and
$\Inv_{k+1}(w)$ are in $\C_w(n,k+1)$ for all $k\geq 1$. Acyclicity of
 $G_{\emptyset}$ and $G_{\Inv_{k+1}(w)}$ when $k=1$ will be 
established later in  \cref{cor:G-R.k=2.acyclic}, where an arbitrary 
$R\in \C_w(n,2)$ is considered. For acyclicity of
 $G_{\emptyset}$ and $G_{\Inv_{k+1}(w)}$ when $k>1$, we will use the functions
$\wt{M}, \wt{F}_{j}, \wt{G}_{j}$ defined in \eqref{eq:affine_inversion_matrix} 
and \eqref{eq:FG.tilde} restricted to $\Inv_{k}(w)$.

For $G_{\emptyset}$, define $\wt{L}: \Inv_{k}(w) \to \mathbb{Z}$ by
\begin{equation}\label{eq:lex.function}  
\wt{L}=(\wt{G}_k+\wt{F}_{k-1}+\wt{G}_{k-2}+\wt{F}_{k-3}+\ldots)\circ \wt{M},
\end{equation}
where the alternating sum terminates at $\wt{F}_{2}$ or
$\wt{G}_{2}$ depending on the parity of $k$.  By
\cref{lem:affine_diagonal_differences} and
\cref{lem:affine_diagonal_packets}, $\wt{L}(X)<\wt{L}(Y)$
for all permanent poset and full lex edges $X \to Y$ in $G_\emptyset$. 
As $G_\emptyset$ contains no full antilex edges, we conclude that
$\wt{L}$ is strictly increasing along all directed walks, so
there cannot be a directed cycle in $G_\emptyset$.

For $G_{\Inv_{k+1}(w)}$, similarly define the function
 $\wt{A}:\Inv_{k}(w) \to \Z$ by
\begin{equation}\label{eq:antilex.function}
\wt{A}=(\wt{F}_{k}+\wt{G}_{k-1}+\wt{F}_{k-2}+\wt{G}_{k-3}+\cdots)\circ \wt{M},
\end{equation}
where the alternating sum terminates at $\wt{F}_{2}$ or
$\wt{G}_{2}$ depending on the parity of $k$.  Again by
\cref{lem:affine_diagonal_differences} and
\cref{lem:affine_diagonal_packets}, $\wt{A}$ is strictly
increasing  along all permanent poset edges and full antilex
edges in $G_{\Inv_{k+1}(w)}$. As $G_{\Inv_{k+1}(w)}$ contains no full lex
edges, we similarly conclude $G_{\Inv_{k+1}(w)}$ is acyclic.
\end{proof}

\begin{corollary}\label{cor:empty_full_acyclic}
There exist $\rho, \sigma \in \A_{w}(n,k)$ such that $\Rev(\rho) =
\emptyset$ and $\Rev(\sigma) = \Inv_{k+1}(w)$.  Such $\rho$ can be
found by taking any linear extension of the heap poset
$(\Inv_{k}(w),\leq_{\emptyset})$, and similarly for $\sigma$ using
$(\Inv_{k}(w),\leq_{\Inv_{k+1}(w)})$.
\end{corollary}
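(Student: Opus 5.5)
This follows from \Cref{lem:empty_full_acyclic} combined with \Cref{lem:GR_to_admissible.affine}; nothing beyond unwinding these is needed.

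\textbf{Existence.} By \Cref{lem:empty_full_acyclic}, the graphs $G_\emptyset$ and $G_{\Inv_{k+1}(w)}$ on $\Inv_k(w)$ are acyclic. Their transitive closures are therefore the heap posets $(\Inv_k(w),\leq_\emptyset)$ and $(\Inv_k(w),\leq_{\Inv_{k+1}(w)})$.

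\textbf{The case $k>1$.} Here $\Inv_k(w)$ is finite, so each heap poset has at least one linear extension. Apply the converse direction of \Cref{lem:GR_to_admissible.affine} with $R=\emptyset$:
\[
\{\rho\in\A_w(n,k):\Rev(\rho)=\emptyset\}=\mathscr{L}(\Inv_k(w),\leq_\emptyset).
\]
Any linear extension $\rho$ of this poset is thus an admissible order with empty reversal set. The same argument with $R=\Inv_{k+1}(w)$ produces $\sigma$.

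I would also recall why that lemma holds in the affine setting, since the paper only asserts that the classical proof carries over. The heap poset contains every generating relation of $\P_w(n,k)$, including the congruence edges, so any linear extension is a linear extension of $\P_w(n,k)$. Every full packet is totally ordered in lex or antilex order according to whether its top set lies in $R$. Hence the full-packet condition holds and the reversal set is exactly $R$.

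\textbf{The case $k=1$ (main obstacle).} Here $\Inv_1(w)\cong\Z$ is infinite, so "linear extension" must be read as a total order on $\Z$. Two points need care.
\begin{enumerate}[label=(\roman*)]
\item Acyclicity of $G_\emptyset$ and $G_{\Inv_2(w)}$ is deferred in \Cref{lem:empty_full_acyclic} to \cref{cor:G-R.k=2.acyclic}. I would invoke that result as stated.
\item An infinite acyclic poset still has linear extensions, by Szpilrajn's theorem or directly by compactness.
\end{enumerate}
To avoid relying on the general theory, I would instead exhibit explicit linear extensions. For $R=\emptyset$, take the natural order on $\Z$, which corresponds to $\id$. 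For $R=\Inv_2(w)$, take the order given by $w$ itself, read as the one-line sequence $\dots,w(0),w(1),\dots$.

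I would then check these directly. Both respect quasi-inversion edges and the congruence edges $\{x\}<\{x+n\}$. The identity order puts every packet $P([x,y])$ in lex order, while $w$ puts every such packet in antilex order. So these orders lie in $\A_w(n,1)$ with reversal sets $\emptyset$ and $\Inv_2(w)$. Matching them to the $k=1$ framework of \Cref{sub:firstAHB} is the only delicate bookkeeping step.
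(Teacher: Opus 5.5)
Your proposal is correct and matches the paper's proof. The paper also combines the acyclicity of $G_\emptyset$ and $G_{\Inv_{k+1}(w)}$ from \cref{lem:empty_full_acyclic} with the converse direction of \cref{lem:GR_to_admissible}, in its affine form \cref{lem:GR_to_admissible.affine}. Your separate $k=1$ check via the orders of $\id$ and $w$ is a harmless addition. It agrees with \cref{cor:G-R.k=2.acyclic}, which shows these two heap posets are already total orders, so \emph{any} linear extension is the unique one you exhibit.
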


\begin{proof}
    This follows from the argument in \cref{lem:GR_to_admissible} applied to $G_{\emptyset}$ and $G_{\Inv_{k+1}(w)}$, where we use acyclicity from \cref{lem:empty_full_acyclic}. 
\end{proof}

\begin{corollary}\label{cor:w.invariance}
Let $v,w \in \affineS_{n}$.  If $k>1$ and $\Inv_{k}(v)=\Inv_{k}(w)$,
then $\A_{v}(n,k')= \A_{w}(n,k')$, \ $\B_{v}(n,k')= \B_{w}(n,k')$,
$\C_{v}(n,k')= \C_{w}(n,k')$, $\Inv_{k'}(v)=\Inv_{k'}(w)$, and
$\P_{v}(n,k')= \P_{w}(n,k')$ for all $k'\geq k$.
\end{corollary}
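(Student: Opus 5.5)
The plan is to prove everything at level $k'$ by induction on $k'\ge k$. The only input needed at each level is the equality $\Inv_{k'}(v)=\Inv_{k'}(w)$, because every object in the list is built from $\Inv_{k'}$, $\Inv_{k'+1}$, and the permanent poset on $\Inv_{k'}$.

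\textbf{Step 1: propagation of the inversion sets.} Assume $\Inv_{k'}(v)=\Inv_{k'}(w)$ with $k'\ge k>1$. I want $\Inv_{k'+1}(v)=\Inv_{k'+1}(w)$. By the affine analog of \Cref{lem:intersection_size}, a set $X\in\binom{\Z}{k'+1}_n$ lies in $\Inv_{k'+1}(w)$ exactly when $P(X)\subseteq\Inv_{k'}(w)$. The forward direction is immediate from \Cref{lem:adjacent.pairs}. For the reverse direction, $P(X)\cap\Inv_{k'}(w)$ is then all of $P(X)$, and $|P(X)|=k'+1\ge 3>2$ since $k'\ge 2$; so it is of type (d), which means $X\in\Inv_{k'+1}(w)$. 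The right-hand condition depends only on $\Inv_{k'}$, so $\Inv_{k'+1}(v)=\Inv_{k'+1}(w)$. The hypothesis $k>1$ is used exactly here: for $k'=1$ every packet of size $2$ lies in $\Inv_1=\binom{\Z}{1}_n$, so the criterion fails.

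\textbf{Step 2: permanent posets.} Once $\Inv_{k'}(v)=\Inv_{k'}(w)$ holds for all $k'\ge k$, \Cref{cor:w.invariance.of.perm.poset} gives $\P_v(n,k')=\P_w(n,k')$ for all $k'\ge k$. I will note in passing that \Cref{lem:types.of.packets}(c) identifies the quasi-inversions used there as those $(k'+1)$-sets whose packet meets $\Inv_{k'}$ in exactly two adjacent members. The congruence relations also depend only on the set $\Inv_{k'}$.

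\textbf{Step 3: the remaining objects.} I then unwind the definitions at each level $k'\ge k$.
\begin{enumerate}
\item $\A_{v}(n,k')=\A_w(n,k')$, since these are the linear extensions of $\P_\bullet(n,k')$ satisfying the full-packet condition over $\Inv_{k'+1}(\bullet)$, and both ingredients agree by Steps 1 and 2.
\item Reversal sets agree, as they are read off from the same packets.
\item Commutation with respect to $v$ or $w$ uses only packet membership and incomparability in $\P_\bullet(n,k')$. Hence $\sim_v$ and $\sim_w$ coincide, the packet flips coincide, and $\B_v(n,k')=\B_w(n,k')$.
\item Consistent sets are lower order ideals of $\P_\bullet(n,k')$ satisfying the MSZ condition over $\Inv_{k'+1}(\bullet)$, ordered by single step inclusion. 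Thus $\C_v(n,k')=\C_w(n,k')$.
\end{enumerate}

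The only point that is not pure bookkeeping is Step 1: the packet criterion must hold in the affine setting, including when $k'+1>n$ (both sides are then empty) and with congruence-class representatives handled correctly. I would check this against the affine version of \Cref{lem:adjacent.pairs} that the paper states is available.
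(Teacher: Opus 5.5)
Your proposal is correct and follows essentially the same route as the paper: for $k'>1$ the full-packet criterion ($P(X)\subseteq\Inv_{k'}(w)$ if and only if $X\in\Inv_{k'+1}(w)$) propagates the equality of inversion sets upward, and \Cref{cor:w.invariance.of.perm.poset} then identifies the permanent posets. The admissible orders, higher Bruhat orders, and consistent sets at level $k'$ depend only on that permanent poset and on $\Inv_{k'+1}$, so induction on $k'\geq k$ finishes; the one small point is that ``type (d) implies $X\in\Inv_{k'+1}(w)$'' comes from the proof of \Cref{lem:intersection_size}, namely that $X_1$ and $X_3$ are non-adjacent members of the packet, rather than from its statement.
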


\begin{proof}
For $k>1$, we know $\Inv_{k}(w)$ contains the full-packet $P(X)$ if
and only if $X \in \Inv_{k+1}(w)$.  Therefore,
$\Inv_{k}(v)=\Inv_{k}(w)$ implies $\Inv_{k'}(v)=\Inv_{k'}(w)$ for all
$k'\geq k$.  By \Cref{cor:w.invariance.of.perm.poset}, we know
$\P_{w}(n,k)$ only depends on $\Inv_{k}(w)$ for $k>1$.  By definition,
$\A_{w}(n,k), \B_{w}(n,k),$ and $\C_{w}(n,k)$ only depend on
$\P_{w}(n,k)$ and $\Inv_{k+1}(w)$.  Hence, the equalities all hold by
induction on $k'\geq k$.  
\end{proof}

\subsection{The First Affine Higher Bruhat Orders}\label{sub:firstAHB}

The admissible orders $\A_{w}(n,1)$, the higher Bruhat orders
$\B_{w}(n,1)$, and the consistent sets $\C_{w}(n,2)$ have natural
interpretations in terms of affine permutations, which we will
describe next. However, the definition of $\C_{w}(n,1)$ and the
full-packet condition for $\A_{w}(n,1)$ need to be handled carefully
because packets of 2-inversions are not well-defined.  We will remark
on this issue below.  Before we begin identifying the explicit
correspondences, we need to emphasize a key difference between the
classical permutation case and the affine case.

\begin{remark}
For $k=1$ and $w \in \S_{n}$, we are abusing the notation for
$\P_{w}(n,1)$, $\A_w(n,1)$, $\B_{w}(n,1)$, $\C_{w}(n,1)$, and $G_{R}$
as defined in \Cref{sec:non-longest}.  For classical permutations, we
originally defined $\Inv_{1}(w)=\binom{[n] }{1}$.  However, for all
affine permutations, we defined $\Inv_{1}(w)=\binom{\Z }{1}_{n}$,
which is infinite.  This abuse of notation is analogous to the use of
the finite one-line notation for a permutation $(w(1),\dots ,
w(n))\in \S_n$ becoming the window notation for an affine permutation
considered as a bijection on $\Z$.  For $k>1$, all of the definitions
carry over simply by identifying $\Inv_{k}(w) \subseteq \binom{[n]
}{k}$ with the naturally corresponding subset of $\binom{\Z }{k}_{n}$
whenever $w \in \S_{n}$.  Throughout this section, we have assumed $w$
is an affine permutation unless otherwise noted so we use the affine
variation of all definitions.
\end{remark}

By definition, an admissible order $\rho \in \A_{w}(n,1)$ is a linear
extension of $\P_{w}(n,1)$ on $\binom{\Z }{1}_{n} \cong \Z$ such that
for every $X=[x_{1},x_{2}] \in \Inv_{2}(w)$ we must have $P(X)$ listed
in lex order or antilex order in $\rho$.  However, $[x_{1},x_{2}]$
represents all subsets of the form $\{x_{1}+mn,x_{2}+mn\}$ for $m \in
\Z$.  Therefore, the full-packet condition for admissible orders in
$\A_{w}(n,1)$ requires a well-defined labeling on the packet of each
2-inversion of $w$ as ``lex'' or ``antilex'' using any one of its
equivalent representations.  So, the full-packet condition for admissible
orders in $\A_{w}(n,1)$ becomes
\begin{align}\label{eq:packet.condition.1}
&\{x_{1}+mn\}<_{\rho}\{x_{2}+mn \} \text{ for all  }m \in \Z \text{ and
} X\not \in \Rev(\rho ), \text{ or }\\
&\{x_{1}+mn\}>_{\rho}\{x_{2}+mn \} \text{ for all } m \in \Z \text{
and } X \in \Rev(\rho )\label{eq:packet.condition.2}
\end{align}
for all $[x_{1},x_{2}] \in \Inv_{2}(w)$.  Therefore, the packet of
$X=[x_1,x_2]\in\Inv_2(w)$ is \defn{flippable} in $\rho$ when $\{x_1+mn\}$ and
$\{x_2+mn\}$ are adjacent in $\rho$ for every $m\in\Z$, and a \defn{packet
flip} simultaneously reverses all these pairs. Similarly, for any
2-quasi-inversion $[y_{1},y_{2}]\in \Qinv_{2}(w)$, the corresponding
quasi-inversion relation implies
\begin{equation}\label{eq:quasiinversion.condition}
\{y_{1}+mn\}<_{\rho}\{y_{2}+mn \} \text{ for all } m \in \Z, 
\end{equation}
since $\rho$ is a linear extension of $\P_{w}(n,1)$ by definition and
\Cref{rem:easy.pnks.affine}.  For any $i \in \Z$, the congruence
relation $\{i\}<_{\P_{w}(n,1)} \{i+n \}$ implies
\begin{equation}\label{eq:congruence.condition}
\{i\}<_{\rho}\{i+n \}.
\end{equation}

As with the full-packet condition in $\A_{w}(n,1)$, there is a similar issue
with well-defined packet labels for $\C_{w}(n,1)$.  Recall $\C_w(n,1)$
is defined to be the lower order ideals of $\P_{w}(n,1)$ that satisfy
the MSZ condition ordered by single step inclusion.  For $R \in
\C_w(n,1)$, the MSZ condition becomes:
\begin{center}
$P(X) \cap R$ is a \textit{well-defined} prefix or a suffix of $P(X)$
for all $X \in \Inv_{2}(w)$.
\end{center}
The only way for $P([x,y]) \cap R$ to be well-defined when $w \neq
\id$ is for $R$ to be the empty set or all of $\Inv_{1}(w)$.
Therefore, we adopt the convention that
$\C_w(n,1)=\{\emptyset,\Inv_{1}(w)\}$, and these two elements are
incomparable under the single step inclusion order.  However, we note
that $\C_w(n,1)$ does not play a role in \Cref{thm:nonlongest} or its
conjectural generalization to affine permutations so there are some
choices in how to define it.

\begin{lemma}\label{lem:An1.weak.order.affine}
The admissible orders in $\A_{w}(n,1)$ are in bijection with the
affine permutations in the interval $[\mathrm{id},w]$ in the weak
order on $\affineS_{n}$.  The map sends $\rho \in \A_{w}(n,1)$ to the
unique affine permutation in $[\mathrm{id},w]$ with inversion set
given by $\Rev(\rho)$.  The inverse map sends $u \in [\mathrm{id},w]$
to $(\ldots , \{u(-1) \},\{u(0) \}, \{u(1) \}, \{u(2) \},\ldots)\in
\A_{w}(n,1)$.  
\end{lemma}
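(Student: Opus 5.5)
The plan is to build both maps explicitly and check that they are inverse to each other. The tools are the reformulated conditions \eqref{eq:packet.condition.1}--\eqref{eq:congruence.condition} and the characterization of affine inversion sets in \Cref{lem:inversion.sets}.

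\emph{The map $u\mapsto\rho_u$.} Given $u\in[\mathrm{id},w]$, set $\rho_u=(\ldots,\{u(0)\},\{u(1)\},\ldots)$. This is a total order on $\Inv_1(w)\cong\Z$ because $u$ is a bijection of $\Z$. It is the order $\{a\}<_{\rho_u}\{b\}$ if and only if $u^{-1}(a)<u^{-1}(b)$. For a congruence relation $\{i\}<\{i+n\}$, we have $u^{-1}(i+n)=u^{-1}(i)+n$, so \eqref{eq:congruence.condition} holds. For a $2$-quasi-inversion $[y_1,y_2]$ of $w$, the pair is not in $\Inv(w)\supseteq\Inv(u)$, so $u^{-1}(y_1+mn)<u^{-1}(y_2+mn)$ for all $m$, by $n$-periodicity. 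This gives \eqref{eq:quasiinversion.condition}, and so $\rho_u$ is a linear extension of $\P_w(n,1)$; here \Cref{rem:easy.pnks.affine} says these relations generate $\P_w(n,1)$. For $[x_1,x_2]\in\Inv_2(w)$, periodicity makes the relative order of $\{x_1+mn\},\{x_2+mn\}$ independent of $m$. Hence one of \eqref{eq:packet.condition.1} or \eqref{eq:packet.condition.2} holds, and $\rho_u\in\A_w(n,1)$ with $\Rev(\rho_u)=\Inv(u)\subseteq \Inv_2(w)$.

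\emph{The map $\rho\mapsto u_\rho$.} Given $\rho\in\A_w(n,1)$, set $R=\Rev(\rho)\subseteq\Inv_2(w)$. Then I check the three conditions of \Cref{lem:inversion.sets} for $R$.
\begin{itemize}
\item Finiteness is immediate, since $\Inv_2(w)$ is finite.
\item For conditions (a) and (b), take $x<y<z$ and note that $\rho$ totally orders $\{x\},\{y\},\{z\}$. A pair $[a,b]$ with $a<b$ lies in $R$ exactly when $\{b\}<_\rho\{a\}$: by \eqref{eq:packet.condition.1}--\eqref{eq:quasiinversion.condition} this holds for every pair in $\binom{\Z}{2}_n$, with quasi-inversions never reversed. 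So $R$, read on $\{x,y,z\}$, is the inversion set of a total order on three elements, and (a) and (b) follow.
\item For condition (c), take $[x,y]\in R$ and $x<y-en<y$. Then $\{y-en\}<_\rho\{y\}<_\rho\{x\}$, using \eqref{eq:congruence.condition} iterated and the fact that $\rho$ is a linear extension. The argument above then gives $[x,y-en]\in R$. Also $x\not\equiv y$, since $R\subseteq \Inv_2(w)$.
\end{itemize}
Therefore $R=\Inv(u_\rho)$ for a unique $u_\rho\in\affineS_n$, and $R\subseteq\Inv_2(w)$ gives $u_\rho\le w$.

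\emph{Inverse check.} From the first paragraph, $\Rev(\rho_u)=\Inv(u)$, so the composite $u\mapsto\rho_u\mapsto u_{\rho_u}$ is the identity. For the other composite, I must show that $\rho$ is determined by $\Rev(\rho)$. This is the main obstacle, because $\rho$ is an order on infinitely many elements, and the question is whether it is really of the form $(\ldots,\{u(0)\},\{u(1)\},\ldots)$, i.e.\ order-isomorphic to $\Z$. The plan is to show that $\rho$ and $\rho_{u_\rho}$ agree on every pair $\{a\},\{b\}$ with $a<b$, in three cases.
\begin{itemize}
\item If $a\equiv b$, the two orders agree by \eqref{eq:congruence.condition}.
\item If $[a,b]$ is a quasi-inversion of $w$, they agree by \eqref{eq:quasiinversion.condition}.
\item Otherwise $[a,b]\in\Inv_2(w)$, and both orders reverse the pair exactly when $[a,b]\in R=\Inv(u_\rho)$.
\end{itemize}
Since two total orders agreeing on all pairs are equal, $\rho=\rho_{u_\rho}$. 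This bypasses any need to argue about order type directly, and it gives the bijection.
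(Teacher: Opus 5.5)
Your proof is correct, but it goes in the opposite direction from the paper's. The paper starts from $\rho\in\A_w(n,1)$ and shows directly that $\rho$ is $n$-periodic of order type $\Z$:
\begin{itemize}
\item the well-defined labeling of each $[i,j]$ allows at most one representative of every other residue class between $\{i\}$ and $\{i+n\}$;
\item finiteness of $\Inv_2(w)$ (bounded differences, so quasi-inversion relations for far shifts) forces at least one.
\end{itemize}
It then reads off $u$ as the unique $n$ consecutive entries summing to $\binom{n+1}{2}$, and $\Inv(u)=\Rev(\rho)$ holds by construction. You instead get $u_\rho$ from the existence (``if'') direction of \Cref{lem:inversion.sets} applied to $R=\Rev(\rho)$. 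You then identify $\rho$ with $\rho_{u_\rho}$ pair by pair, so the order type of $\rho$ never needs to be analyzed.

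Your route is shorter and skips the window normalization. However, it relies on the nontrivial existence direction of the cited characterization of affine inversion sets. The finiteness hypothesis there does the work that the paper's ``at least one representative'' argument does by hand. The paper's route is self-contained and shows explicitly why admissible orders look like one-line notations.

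One small point to tighten. When you check conditions (a) and (b), you need $(a,b)\in R\iff\{b\}<_\rho\{a\}$ for all integer pairs $a<b$, not only those in $\binom{\Z}{2}_n$, because a triple $x<y<z$ may contain congruent entries. For congruent pairs both sides are false, by iterating \eqref{eq:congruence.condition}, just as you do later in the inverse check.
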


\begin{proof}
This proof is similar to \Cref{lem:An1.weak.order}, but we will spell
out the details since affine packets of 2-subsets need to be handled
carefully.  Let $\rho \in \A_{w}(n,1)$.  Together the full-packet
condition \eqref{eq:packet.condition.1},
\eqref{eq:packet.condition.2}, the quasi-inversion condition
\eqref{eq:quasiinversion.condition}, and the congruence condition
\eqref{eq:congruence.condition} imply that for every pair of integers
$i<j$, we have a well-defined label on $[i,j]$ as reversed or not in
$\rho$.  Therefore, we cannot have a chain $\{i \} <_{\rho } \{j \}
<_{\rho} \{j+n\} <_{\rho } \{i+n \}$ without having different labels
on $[i,j]=[i+n,j+n]$.  Thus, the well-defined labeling implies that
between $\{i \}$ and $\{i+n \}$ in $\rho$ there must be at most
one representative from each of the other congruence classes modulo $n$.

On the other hand, we claim there is at least one representative from
each of the other congruence classes modulo $n$ between $\{i \}$ and
$\{i+n \}$ in $\rho$.  To see this, fix $j\not\equiv i\pmod n$. Since
$\Inv_2(w)$ is finite, the differences $y-x$ for $[x,y]\in\Inv_2(w)$
are bounded.  The quasi-inversion relations therefore give
$\{j+mn\}<_{\rho}\{i\}$ for all sufficiently negative $m$ and
$\{i\}<_{\rho}\{j+mn\}$ for all sufficiently positive $m$.  Let $m_0$
be the least integer such that $\{i\}<_{\rho}\{j+m_0n\}$. Then
$\{j+(m_0-1)n\}<_{\rho}\{i\}<_{\rho}\{j+m_0n\}$, and the well-defined
property of inversion labelings gives $ \{j+m_0n\}<_{\rho}\{i+n\}$.
Thus there is exactly one representative of each other congruence
class between $\{i\}$ and $\{i+n\}$. Hence, the well-defined labeling
on all equivalence classes $[i,j] \in \binom{\Z }{2}_{n}$ implies
$\rho$ is $n$-periodic in the same sense as an affine permutation.

Since $\rho$ is $n$-periodic, some unique saturated chain $\{u(1)
\}<_{\rho } \{u(2) \}<_{\rho}\cdots <_{\rho } \{u(n) \}$ has the
property that $\sum_{j=1}^{n} u(j) =\binom{n+1 }{2}$.  Therefore,
$\rho$ maps uniquely to the affine permutation $u\in \affineS_{n}$
with window $(u(1),\dotsc ,u(n))$.  By construction $\Inv_{2}(u) =
\Rev (\rho) \subseteq \Inv_{2}(w)$, so $u \in [\mathrm{id},w]$ in weak
order.

Conversely, for each $u \in [\mathrm{id},w]$, we know $\Inv_{2}(u)
\subseteq\Inv_{2}(w)$ and $u^{-1}(j)<u^{-1}(i)$ for $i<j$ if and only
if $[i,j] \in \Inv_{2}(u)$.  Hence, $(\dots , \{u(-1) \},\{u(0) \},
\{u(1) \},\{u(2) \},\dots )$ is a linear extension of $\P_{w}(n,1)$
satisfying the full-packet condition, and hence is admissible for $w$.
Thus, 
\[
u \mapsto (\dots , \{u(-1) \},\{u(0) \}, \{u(1) \},\{u(2)
\},\dots )
\]
is a bijection from $[\mathrm{id},w]$ to $\A_{w}(n,1)$
such that $\Inv_{2}(u)=\Rev(\dots , \{u(-1) \},\{u(0) \}, \{u(1) 
\},\dots )$.
\end{proof}

\begin{corollary}\label{cor:higher.bruhat.k=1}
The affine higher Bruhat order $\B_{w}(n,1)$ is isomorphic to the
interval $[\mathrm{id},w]$ in weak order on $\affineS_{n}$ as posets.
\end{corollary}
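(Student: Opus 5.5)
The plan is to transport the proof of \Cref{lem:Bn1.weak.order} to the affine setting, using \Cref{lem:An1.weak.order.affine} in place of \Cref{lem:An1.weak.order}. Define $\Phi:\B_w(n,1)\to[\mathrm{id},w]$ on a class $[\rho]$ as the unique $u\in[\mathrm{id},w]$ with $\Inv_2(u)=\Rev(\rho)$, given by \Cref{lem:An1.weak.order.affine}.

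The first step is to show that every commutation class in $\A_w(n,1)$ is a singleton, so that \Cref{lem:An1.weak.order.affine} already identifies $\B_w(n,1)$ with $[\mathrm{id},w]$ as sets. Two elements $\{x\},\{y\}$ with $x\neq y$ always lie in a common packet, namely that of the $2$-subset they span. If $x\equiv y \pmod n$, then they are comparable via the congruence relations $\{x\}<\{x+n\}$ in $\P_w(n,1)$. Either way they do not commute with respect to $w$, so no commutations are possible.

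The next step is to match covers. A lex-to-antilex packet flip at $X=[x_1,x_2]\in\Inv_2(w)$, in the affine sense just defined, requires each pair $\{x_1+mn\},\{x_2+mn\}$ to be adjacent in $\rho$ and reverses all of them simultaneously. Under the identification $\rho=(\dots,\{u(1)\},\{u(2)\},\dots)$, adjacency of $\{x_1\}$ and $\{x_2\}$ means $x_1=u(i)$ and $x_2=u(i+1)$ for some $i$, with $u(i)<u(i+1)$ since the packet is in lex order. The simultaneous reversal of all translates is exactly the periodic one-line notation of $us_i$. Conversely, if $u\lessdot us_i\le w$, then $[u(i),u(i+1)]\in\Inv_2(us_i)\subseteq\Inv_2(w)$, so all translates are adjacent in $\rho_u$, the flip is available, and it produces $\rho_{us_i}$. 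Hence covers in $\B_w(n,1)$, which are exactly lex-to-antilex flips because commutation classes are trivial, correspond bijectively to weak-order covers $u\lessdot us_i$ inside $[\mathrm{id},w]$.

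Since the weak order on the interval is generated by its covers, and $\B_w(n,1)$ is by definition the transitive closure of flips, $\Phi$ and $\Phi^{-1}$ are both order-preserving. The main care point is the well-definedness of the flip. We must check that the result is again admissible: periodicity is preserved, packets other than $P(X)$ keep their relative order, and the permanent-poset relations are not violated. This last point follows because the flipped pair consists of adjacent, incomparable elements, as $\rho$ and the flipped order are both the orders of elements in $[\mathrm{id},w]$ by \Cref{lem:An1.weak.order.affine}.
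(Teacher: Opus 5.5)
Your proposal is correct and follows the paper's proof. You show commutation classes are trivial (non-congruent singletons share a packet, congruent ones are comparable via congruence relations), use \Cref{lem:An1.weak.order.affine} for the bijection, and match simultaneous lex-to-antilex flips with weak-order covers $u\lessdot us_i$, supplying details the paper leaves implicit. One small tidy-up: admissibility of the flipped order follows most directly from $\Inv_2(us_i)=\Inv_2(u)\cup\{X\}\subseteq\Inv_2(w)$ together with \Cref{lem:An1.weak.order.affine}, rather than through the incomparability remark.
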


\begin{proof}
For every pair of non-congruent integers $i<j$, we know $[i,j]$ is
either an inversion or a quasi-inversion for $w$. Furthermore, if
$i<j$ are congruent, then $\{i \}<_{\P_{w}(n,1)} \{j \}$.  Hence, no
two single element sets $\{i \},\{j \}\in \binom{\mathbb{Z}}{1}_n$
commute with respect to $w$, and all equivalence classes in
$\B_{w}(n,1)$ have size one.  Thus, $\A_{w}(n,1)$ and $\B_{w}(n,1)$
 are the same as sets.  Thus, the bijection in
\Cref{lem:An1.weak.order.affine} extends to a bijection
$[\mathrm{id},w] \to \B_{w}(n,1)$.  

To see the bijection $[\mathrm{id},w] \to \B_{w}(n,1)$ is a poset
isomorphism, recall the covering relations in weak order are of the
form $u \lessdot us_i$ provided $u(i)<u(i+1)$.  Applying $s_{i}$ on
the right is analogous to a well-defined lex-to-antilex packet flip on
the corresponding admissible order, and vice versa, which are the
corresponding covering relations in $\B_{w}(n,1)$.
\end{proof}

\begin{lemma}\label{lem:affine_Cwn2_bijection}
The consistent sets in $\C_w(n,2)$ are exactly the inversion sets of
affine permutations in the interval $[\mathrm{id},w]$ in the weak
Bruhat order on $\affineS_n$.  
\end{lemma}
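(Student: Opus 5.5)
We compare the two conditions defining $\C_w(n,2)$ (\Cref{def:consistent.affine}) with the three conditions of \Cref{lem:inversion.sets}, in both directions. The idea is that the MSZ condition on full packets, together with the quasi-inversion relations, encodes conditions (a) and (b) of \Cref{lem:inversion.sets}. The congruence relations in $\P_w(n,2)$ encode condition (c). Once this is in place, the containment $R\subseteq \Inv_2(w)$ gives $u\le w$ in weak order.

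First, let $u\in[\mathrm{id},w]$ and set $R=\Inv_2(u)\subseteq\Inv_2(w)$. We check that $R$ is consistent.
\begin{itemize}
\item \emph{MSZ condition.} For $X=[x,y,z]\in\Inv_3(w)$, the packet $P(X)=\{[y,z],[x,z],[x,y]\}$ is well-defined since $k=3\neq 2$. By \Cref{lem:inversion.sets}(a),(b), the intersection $R\cap P(X)$ is neither $\{[x,y],[y,z]\}$ nor $\{[x,z]\}$. Hence it is a prefix or suffix by \Cref{lem:prefix_suffix}.
\item \emph{Quasi-inversion relations.} Let $X=[x,y,z]$ be a $3$-quasi-inversion with $P(X)\cap\Inv_2(w)=\{X_i,X_{i+1}\}$. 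The third pair is not an inversion of $w$, so it is not an inversion of $u$. By the parity rule in \Cref{def:permanent.poset.}, the generating relation points toward $X_2=[x,z]$. The required implication ``$[x,z]\in R$ implies the other pair is in $R$'' is then exactly \Cref{lem:inversion.sets}(a), exactly as in the proof of \Cref{lem:consistent.k=2}.
\item \emph{Congruence relations.} For $k=2$ we have $v_1=(0,n)$ and $k-1$ odd, so the generating relation is $[x,y]<[x,y+n]$. Thus $[x,y+n]\in R$ implies $[x,y]\in R$, which is \Cref{lem:inversion.sets}(c) applied to $u$.
\end{itemize}
So $R$ is a lower order ideal of $\P_w(n,2)$ satisfying MSZ.

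Conversely, let $R\in\C_w(n,2)$. We verify (a)--(c) of \Cref{lem:inversion.sets} for all triples $x<y<z$. Condition (c) is immediate: $R\subseteq\Inv_2(w)$ has noncongruent entries, and iterating the congruence relations downward through $[x,y+en]$ gives the required memberships, since these pairs lie in $\Inv_2(w)$ by \Cref{lem:inversion.sets}(c) for $w$. For (a) and (b), the main obstacle is that $x,y,z$ need not be distinct mod $n$, so $[x,y,z]$ may not lie in $\binom{\Z}{3}_n$. We handle this by cases.

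\emph{Distinct residues.} We split by $|P([x,y,z])\cap\Inv_2(w)|$, which by \Cref{lem:intersection_size} is $0$, $1$, $2$ or $3$.
\begin{itemize}
\item Size $3$: the MSZ condition together with \Cref{lem:prefix_suffix} gives (a) and (b).
\item Size $2$: the quasi-inversion relation and the lower ideal property give (a). Condition (b) is vacuous here, since it would force $[x,z]$ into $\Inv_2(w)$ and hence size $3$.
\item Size $1$ or $0$: both conditions hold for $w$, and any violation for $R$ would require a $2$-element intersection or the missing pair, which is impossible.
\end{itemize}

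\emph{Coinciding residues.} Here $x\equiv z$ or similar, with $y$ in between. We reduce to condition (c) and \Cref{lem:inversion.sets} for $w$.
\begin{itemize}
\item If $z=x+mn$, then $(x,z)\notin R$ automatically, so (a) is vacuous. Condition (b) can fail only if $[x,y],[y,x+mn]\in R\subseteq\Inv_2(w)$, which contradicts \Cref{lem:inversion.sets} for $w$.
\item If $y\equiv x$ or $y\equiv z$, we use the congruence lower-ideal property to shift one element by a multiple of $n$. This reduces the claim to a pair relation already covered.
\end{itemize}
I expect this congruent-residue bookkeeping to be the delicate step.

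Having verified (a)--(c), \Cref{lem:inversion.sets} shows that $R$ is the inversion set of some affine permutation $u$. Since $R\subseteq\Inv_2(w)$, we get $u\le w$ in weak order, which completes the proof.
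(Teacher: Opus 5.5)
Your proposal is correct and follows essentially the same route as the paper: in both directions you match the lower-ideal and MSZ conditions of $\C_w(n,2)$ against conditions (a)--(c) of \Cref{lem:inversion.sets}, using the same case analysis on $|P([x,y,z])\cap \Inv_2(w)|$ and on congruent residues. The congruent-residue step you flag as delicate is actually immediate: replacing $[x,z]$ by $[x,z-mn]$ is exactly condition (c) for $R$, which you have already verified.
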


\begin{proof}
By definition, the elements of $\C_w(n,2)$ are lower order ideals of
$\P_w(n,2)$ satisfying the MSZ condition. Therefore, each $R\in
\C_w(n,2)$ is a subset of $\Inv_2(w)$ satisfying the following
conditions for each triple of integers $x<y<z$.  The justification
in each case follows the condition. 
\begin{enumerate}[label=(\alph*)]
\item $[x,z] \in R$ implies $[x,y] \in R$ or $[y,z] \in R$.   
\begin{itemize}
\item If $x,y,z$ are distinct modulo $n$ and $|P([x,y,z])\cap
\Inv_{2}(w)|=3$, then $[x,y,z]\in \Inv_3(w)$.  If in addition $[x,z]
\in R$, then either $[x,y] \in R$ or $[y,z] \in R$ since
$P([x,y,z])\cap R$ must be a prefix or suffix of $P([x,y,z])$ by the MSZ
condition.  
\item If $x,y,z$ are distinct modulo $n$ and $|P([x,y,z])\cap
\Inv_{2}(w)|=2$, then $P([x,y,z])\cap \Inv_{2}(w)$ must equal
$\{[x,y],[x,z] \}$ or $\{[y,z],[x,z] \}$ by \cref{lem:inversion.sets}
(a) and (b).  If $[x,z] \in R$, then either $[x,y] \in R$ or $[y,z]
\in R$ since $R$ is a lower order ideal of $\P_{w}(n,2)$, and $[x,z]$
would be greater than either $[x,y]$ or $[y,z]$ via a quasi-inversion
relation.  
\item If $x,y,z$ are distinct modulo $n$ and $|P([x,y,z])\cap
\Inv_{2}(w)|\leq 1$, then $[x,z] \not \in \Inv_{2}(w)$ by
\cref{lem:inversion.sets} Part (a), so $[x,z] \not \in R \subseteq
\Inv_{2}(w)$.
\item If $[x,z] \in R \subseteq \Inv_{2}(w)$, then $x,z$ are not
congruent by \cref{lem:inversion.sets} Part (c), so it remains to
consider the case where $y$ is congruent to one of $x$ or $z$.  If
$x<y$ are congruent, then $[x,z] \in \Inv_{2}(w)$ implies $[y,z] \in
\Inv_{2}(w)$ by \cref{lem:inversion.sets} Part (c), and the congruence
relations imply $[y,z]<_{\P_{w}(n,2)}[x,z]$. Hence, $[y,z] \in R$
since $R$ is a lower order ideal of $\P_{w}(n,2)$.  Similarly, if
$y,z$ are congruent, then $[x,y]<_{\P_{w}(n,2)}[x,z]$ so $[x,y] \in R$
by the lower order ideal condition.
\end{itemize}
\bigskip
\item $[x,y] \in R$ and $[y,z] \in R$ implies $[x,z] \in R$. 
\begin{itemize}
\item If $[x,y] \in \Inv_{2}(w)$ and $[y,z] \in \Inv_{2}(w)$, then
$[x,z] \in \Inv_{2}(w)$ by \cref{lem:inversion.sets}(b) and $[x,y,z]
\in \Inv_{3}(w)$.  Since $R$ satisfies the MSZ condition, $R \cap
P([x,y,z])$ is a prefix or suffix of $P([x,y,z])$.  Hence, $[x,y] \in R$
and $[y,z] \in R$ implies $[x,z] \in R$.
\end{itemize}
\bigskip
\item $[x,y]\in R$ implies $x,y$ are not congruent and $[x,y-en] \in
R$ for all $e \in \Z_{\geq 0}$ such that $x<y-en<y$.  
\begin{itemize}
\item If $[x,y] \in R \subseteq \Inv_{2}(w)$, then $x,y$ are not
congruent by $n$-periodicity of $w$.
\item By \cref{lem:inversion.sets}, $[x,y-en] \in \Inv_{2}(w)$ for all
$e \in \Z_{\geq 0}$ such that $x<y-en<y$. By congruence relations
$[x,y-en]<_{\P_{w}(n,2)} \dots <_{\P_{w}(n,2)} [x,y-n]<_{\P_{w}(n,2)}
[x,y]$ whenever $x< y-en<y$.  Hence $[x,y]\in R$ implies each such
$[x,y-en] \in R$ since $R$ is a lower order ideal in $\P_{w}(n,2)$.
\end{itemize}
\end{enumerate}
Conditions (a), (b), and (c) above are exactly the requirements $R$
must satisfy in order to be the inversion set of some affine
permutation $u\in \affineS_{n}$ by \cref{lem:inversion.sets}.
Furthermore, since $R \subseteq \Inv_{2}(w)$, there exists some $u \in
[\id,w]$ such that $R=\Inv_{2}(u)$ by the characterization of the weak
Bruhat order as containment of inversion sets.

Conversely, each element $u\in [\id,w]$ is determined by its
2-inversion set $\Inv_{2}(u)$, where $\Inv_{2}(u) \subseteq \Inv_2(w)$.
The MSZ condition holds for $\Inv_{2}(u)$ by \cref{lem:inversion.sets}
Parts (a) and (b).  \cref{lem:inversion.sets} Part(a) implies that
$\Inv_{2}(u)$ is a lower order ideal in the poset generated by the
quasi-inversion relations.  \cref{lem:inversion.sets} Part (c)
implies that $\Inv_{2}(u)$ is a lower order ideal in the poset on $\Inv_2(w)$
generated by the congruence relations.  Therefore, $\Inv_{2}(u)$ must
be a lower order ideal in $\P_w(n,2)$ since it is generated by these
quasi-inversion and congruence relations. Thus, $\Inv_{2}(u)$ satisfies
the conditions for consistent sets from \Cref{def:consistent.affine}, so 
the inversion sets for affine permutations in $[\id,w]$ are precisely the 
consistent sets in $\C_w(n,2)$. 
\end{proof}

\begin{corollary}\label{cor:affine_Cwn2_isomorphism}
The map $\Inv_{2}: [\mathrm{id},w] \to \C_w(n,2)$ is a poset
isomorphism.  Consequently,
\begin{equation}\label{eq:Cwn2_isom}
[\mathrm{id},w] \cong \B_{w}(n,1) \cong \C_w(n,2).
\end{equation}
All three are ranked posets, each has a unique minimal element
corresponding to the identity element with inversion set $\emptyset$,
and each has a unique maximal element corresponding to $w$ with
inversion set $\Inv_{2}(w)$.
\end{corollary}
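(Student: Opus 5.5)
The plan is to combine \Cref{lem:affine_Cwn2_bijection} with the fact that weak order on $\affineS_n$ is containment of inversion sets and has covering relations given by single step inclusion. By \Cref{lem:affine_Cwn2_bijection}, $\Inv_2$ maps $[\mathrm{id},w]$ onto $\C_w(n,2)$. Injectivity holds because affine permutations are determined by their inversion sets, as noted after \eqref{eq:inversion.set.def}. So $\Inv_2$ is a bijection, and it remains to match the two orders.

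To compare orders, I would check covering relations. In $[\mathrm{id},w]$, a cover $u\lessdot us_i$ with $\ell(us_i)=\ell(u)+1$ adds exactly one inversion, namely $(u(i),u(i+1))$. Hence $\Inv_2(u)\lessdot_1\Inv_2(us_i)$ is a single step inclusion, and both sets lie in $\C_w(n,2)$, so $\Inv_2$ is order-preserving.

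Conversely, suppose $R\lessdot_1 R\cup\{X\}$ in $\C_w(n,2)$. Write $R=\Inv_2(u)$ and $R\cup\{X\}=\Inv_2(v)$ with $u,v\in[\mathrm{id},w]$. Then $\Inv_2(u)\subseteq\Inv_2(v)$, so $u\le v$ in weak order, and $\ell(v)=\ell(u)+1$ because $\ell=|\Inv_2|$. Within the interval $[\mathrm{id},w]$ this comparison is therefore a cover. Since the relation on $\C_w(n,2)$ is the transitive closure of these single steps, $\Inv_2^{-1}$ is also order-preserving, and $\Inv_2$ is a poset isomorphism.

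For the chain \eqref{eq:Cwn2_isom}, I would combine this isomorphism with \Cref{cor:higher.bruhat.k=1}, which gives $[\mathrm{id},w]\cong\B_w(n,1)$. The remaining properties then only need to be verified on the interval $[\mathrm{id},w]$ and transported through the isomorphisms.
\begin{itemize}
\item Uniqueness of extremal elements: every $u$ in the interval satisfies $\mathrm{id}\le u\le w$, so $\mathrm{id}$ is the unique minimum and $w$ the unique maximum. Their inversion sets are $\emptyset$ and $\Inv_2(w)$.
\item Rankedness: I take $r(u)=\ell(u)=|\Inv_2(u)|$. Each cover raises $\ell$ by exactly one, since a maximal chain of $[\mathrm{id},w]$ corresponds to a reduced word and all reduced words have length $\ell(w)$.
\end{itemize}

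The only delicate point I expect is confirming that single step inclusion in $\C_w(n,2)$ agrees exactly with covering in the interval. This is settled by the length count above, because $|\Inv_2|$ equals length and a cover in weak order changes length by one.
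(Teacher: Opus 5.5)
Your proposal is correct and follows essentially the same route as the paper. You get the bijection from \Cref{lem:affine_Cwn2_bijection}, deduce that both $\Inv_2$ and its inverse preserve order because both orders are generated by single step inclusion of inversion sets, and then use \Cref{cor:higher.bruhat.k=1} to transport the rank and extremal-element properties from $[\mathrm{id},w]$; your cover-by-cover check using $\ell=|\Inv_2|$ simply spells out what the paper asserts holds ``by construction.''
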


\begin{proof}
The partial orders on both $[\id,w]$ and $\C_w(n,2)$ are generated by
single step inclusion.  Therefore, the bijection $[\id,w] \to
\C_w(n,2)$ mapping $v \to \Inv_{2}(v)$ and the inverse map are 
order-preserving by construction. Hence, this is an isomorphism of posets.  
By \Cref{cor:higher.bruhat.k=1}, $[\id,w] \to \B_w(n,1)$ is an
isomorphism of posets, so by transitivity all three posets are
isomorphic.  The statement of minimal and maximal elements now follows
from the isomorphism with the interval $[\id,w]$.
\end{proof}

Recall from \Cref{rem:k=2graphs} that for $w \in \S_{n}$ and each $R
\in \C_w(n,2)$, the reversal graph $G_{R}$ is a distinct acyclic
orientation of the complete graph on $[n]\cong
\Inv_{1}(w)$. Furthermore, the poset determined by $G_{R}$ is the
total order on $[n]$ determined by the unique permutation $u\in \S_n$
with $\Inv_{2}(u)=R$. One can think of $G_{R}$ as the
\textit{inversion graph} for $u$.  A similar phenomenon occurs for
affine permutations and affine reversal graphs, but the vertex set is
infinite.

\begin{corollary}\label{cor:G-R.k=2.acyclic}
For $R \in \C_w(n,2)$, the transitive closure of the affine reversal
graph $G_{R}$ is an acyclic orientation of the complete graph on
$\Inv_{1}(w)=\binom{\Z }{1}_{n}\cong \Z$, and the poset determined by
$G_{R}$ is the total order determined by the unique affine permutation
in $[\mathrm{id},w]$ with inversion set $R$.
\end{corollary}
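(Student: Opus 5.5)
The plan is to use the identification of $R$ with an affine permutation. By \Cref{lem:affine_Cwn2_bijection}, there is a unique $u \in [\mathrm{id},w]$ with $\Inv_2(u)=R$. The strategy is to show that every edge of $G_R$ points from $\{a\}$ to $\{b\}$ exactly when $u^{-1}(a)<u^{-1}(b)$. We will also show that every pair of distinct vertices is joined by at least one edge, and that all edges joining a given pair point the same way. Once this is done, $G_R$ is an orientation of the complete graph compatible with the total order $\prec$ on $\Z$ defined by $a\prec b$ iff $u^{-1}(a)<u^{-1}(b)$. Since $\prec$ is a total order, the orientation is acyclic, and its transitive closure is $\prec$. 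Equivalently, $\prec$ is the admissible order $(\dots,\{u(0)\},\{u(1)\},\dots)$ from \Cref{lem:An1.weak.order.affine}.

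First, we check that every edge of $G_R$ respects $\prec$, by going through the three edge types.

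\textbf{Congruence edges.} These are $\{i\}\to\{i+n\}$. By $n$-periodicity, $u^{-1}(i+n)=u^{-1}(i)+n$, so the edge respects $\prec$.

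\textbf{Quasi-inversion edges.} These are $\{y_1\}\to\{y_2\}$ with $y_1<y_2$ and $[y_1,y_2]\notin\Inv_2(w)$. Since $R\subseteq \Inv_2(w)$, we have $[y_1,y_2]\notin \Inv_2(u)$, so $u^{-1}(y_1)<u^{-1}(y_2)$.

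\textbf{Full-packet edges.} Let $X=[x_1,x_2]\in\Inv_2(w)$, read with a well-defined convention across all shifts by $n$, as in \eqref{eq:packet.condition.1}--\eqref{eq:packet.condition.2}. If $X\in R=\Inv_2(u)$, the edges are $\{x_2+mn\}\to\{x_1+mn\}$, and these respect $\prec$ because $X$ is an inversion of $u$. If $X\notin R$, the edges are $\{x_1+mn\}\to\{x_2+mn\}$, and these respect $\prec$ because $X$ is not an inversion of $u$.

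Second, we check completeness: for distinct integers $i<j$, some edge joins $\{i\}$ and $\{j\}$. If $i\equiv j \pmod n$, there is a chain of congruence edges from $\{i\}$ to $\{j\}$. This gives a path rather than a single edge, which suffices for the transitive closure to compare them, but not literally for completeness of $G_R$ itself. If $i\not\equiv j$, then $[i,j]$ is either a $2$-inversion of $w$, giving a full-packet edge, or a $2$-quasi-inversion of $w$, giving a quasi-inversion edge.

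So every pair of vertices is comparable in the transitive closure, and the statement should be read in that sense: the transitive closure of $G_R$ is a complete acyclic orientation, namely the total order $\prec$.

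The main obstacle is the affine bookkeeping for full-packet edges. We must confirm that the edges are consistently defined for every representative $\{x_1+mn,x_2+mn\}$ of the class $[x_1,x_2]$. This is exactly the well-definedness built into the definition, and $n$-periodicity of $u$ ensures that $\prec$ agrees with it for every $m$. Since $\prec$ is a strict total order, a directed cycle would force $a\prec a$, which is impossible; hence $G_R$ is acyclic and the induced poset is $\prec$.
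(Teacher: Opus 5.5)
Your proposal is correct and takes essentially the same route as the paper. Both take the unique $u\in[\mathrm{id},w]$ with $\Inv_2(u)=R$ from \cref{lem:affine_Cwn2_bijection} and check that $G_R$ orients every pair consistently with $u^{-1}(a)<u^{-1}(b)$: congruent pairs via chains of congruence edges, and noncongruent pairs via quasi-inversion or full-packet edges according to whether $[x,y]\in R$. You organize the check by edge type where the paper organizes it by pair, and like the paper you read completeness in the transitive-closure sense, since congruent vertices are joined only by paths.
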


\begin{proof}
By \cref{lem:affine_Cwn2_bijection}, if $R\in \C_w(n,2)$, then there
exists a unique $u\in [\id,w]$ such that $R=\Inv_2(u)$.  We claim the
transitive closure of $G_{R}$ is exactly the directed graph on
$\Inv_{1}(w)$ with edges $\{u(i) \} \to \{u(j) \}$ for all $i<j$.
Since $u$ is a bijection from $\Z$ to $\Z$, we can view it as a total 
order on $\mathbb{Z}$, so the statement will follow from this claim.

To verify the claim using \Cref{def:affine.GR}, let $x<y$ be integers.  Note that
if $x\equiv y \pmod n$ then there is a sequence of congruence edges
$\{x \} \to \{x +n\} \to \{x +2n\} \to \dots \to \{y \}$ in $G_{R}$ 
and $u^{-1}(x)<u^{-1}(y)$.  If $x \not \equiv y\pmod n$ then $[x,y]
\in \binom{\Z }{2}_{n}$ is either an inversion or a quasi-inversion
for $w$.  If $[x,y] \in \binom{\Z }{2}_{n} \setminus R$, there are
edges in $G_{R}$ of the form $\{x +mn\} \to\{y+mn \} $
and $u^{-1}(x+mn)<u^{-1}(y+mn)$ for all $m \in \Z$.  If $[x,y] \in R$,
there are edges in $G_{R}$ of the form $\{y +mn\}\to\{x+mn \} $ and
 $u^{-1}(y+mn)<u^{-1}(x+mn)$ for all $m \in \Z$. Thus, the claim holds.
\end{proof}

\begin{remark}\label{rem:compare.bb.GR}
For $w \in \affineS_{n}$, the information in the reversal graph
$G_{\Inv_2(w)}$ is equivalent to the inversion multigraph $I_{w^{-1}}$
from \Cref{def:BB.graph}, but the two graphs are not equal.  One could
think of $G_{\Inv_2(w)}$ as a \textit{blow-up} of $I_{w^{-1}}$ where
each vertex $x\in [n]$ of $I_{w^{-1}}$ is replaced by a chain of
congruent vertices
\[\ldots \to\{x-2n\}\to \{x-n\}\to \{x\}\to \{x+n\}\to \{x+2n\}\to \ldots. \]
The edges correspond as follows. Fix distinct $x,y\in [n]$. For each
$y'\equiv y \pmod n$ with $[x,y']\in \Inv_2(w)$, there is a
full-packet edge $\{y'\}\to \{x\}$ in $G_{\Inv_2(w)}$. Collapsing each
congruence chain modulo $n$ sends this edge to an edge $y\to x$ in
$I_{w^{-1}}$. Conversely, every edge $y\to x$ in $I_{w^{-1}}$ arises
uniquely in this way from some $[x,y']\in \Inv_2(w)$ written in
canonical notation with $x \in [n]$ and $y'\equiv y\pmod n$, as the
multiplicity of $y\to x$ from \cref{def:BB.graph} can be alternatively
calculated using
\[
m_{x,y}
=
\left|\{\,y'\equiv y \pmod n : [x,y']\in \Inv_2(w)\,\}
\right|.
\]
The quasi-inversion edges in $G_{\Inv_2(w)}$ involving the congruence
classes of $x$ and $y$ can be easily distinguished from full-packet
edges since they correspond with non-inversions for $w$, and these
remaining edges do not have an image in $I_{w^{-1}}$.  See
\cref{fig:GInv2} and compare with $I_{w^{-1}}$ in
\cref{fig:inversion_graph_example} for an example.
\end{remark}

\begin{figure}
\begin{tikzpicture}
        \node (1) at (0,0) {$1$};
	\node (5) at (0,1) {$5$};
	\node (9) at (0,2) {$9$};
	\node at (0,3) {$\vdots$};
	\node at (0,-1) {$\vdots$};
        \node (2) at (2,0) {$2$};
	\node (6) at (2,1) {$6$};
	\node (10) at (2,2) {$10$};
	\node at (2,3) {$\vdots$};
	\node at (2,-1) {$\vdots$};
        \node (3) at (4,0) {$3$};
	\node (7) at (4,1) {$7$};
	\node (11) at (4,2) {$11$};
	\node at (4,3) {$\vdots$};
	\node at (4,-1) {$\vdots$};
        \node (4) at (6,0) {$4$};
	\node (8) at (6,1) {$8$};
	\node (12) at (6,2) {$12$};
	\node at (6,3) {$\vdots$};
	\node at (6,-1) {$\vdots$};
        \draw[->] (5) to (4);
        \draw[->] (6) to (4);
        \draw[->] (7) to (4);
        \draw[->] (3) to (2);
        \draw[->] (3) to [out=195,in=345] (1);
        \draw[->] (11) to (4);
        \draw[->] (7) to (2);
\end{tikzpicture}
\caption{A portion of $G_{\Inv_2(w)}$ for $w=(1,7,2,0)\in \affineS_4$, where edges corresponding to $[x,y]\in \Inv_2(w)$ with $x\in [n]$ are shown. Note that all other edges involving $x\in [n]$ point from the smaller element to the larger one, so the shown edges can be easily identified in $G_{\Inv_2(w)}$.}
\label{fig:GInv2}
\end{figure}

\subsection{The Second Affine  Higher Bruhat Orders}\label{sub:secondAHB}

We will next consider our constructions in the case of $k=2$ and prove
\cref{thm:affine}.  \cref{lem:inversion.sets} and
\cref{cor:characterize.ref.order} will be used repeatedly throughout
this section.  We start by relating the admissible orders in
$\A_{w}(n,2)$ with reflection orders for $w$, as in
\Cref{lem:An2.weak.order}.

\begin{lemma}\label{cor:characterize.ref.order.new} 
 A total order $\rho = (\rho_{1},\rho_{2},\dotsc , \rho_{\ell})$ of
$\Inv_{2}(w)$ is in $\A_{w}(n,2)$ if and only if
\begin{enumerate}[label=(\alph*)]
\item for each $[x,y,z] \in \binom{\mathbb{Z}}{3}_n$, 
$\rho|_{P([x,y,z])\cap \Inv_2(w)}$ is either the restriction of the
lex order  to a prefix of $P([x,y,z])$ or the restriction of antilex
order to a suffix of $P([x,y,z])$, and 
\item for each pair $[x,y],[x,y+n] \in \Inv_2(w)$, the pair
$[x,y]<_{\rho}[x,y+n]$.  
\end{enumerate}
\end{lemma}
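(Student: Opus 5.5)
We unpack the definition of $\A_w(n,2)$ in the affine setting and match its pieces with conditions (a) and (b). By definition, $\rho\in\A_w(n,2)$ means that $\rho$ is a linear extension of $\P_w(n,2)$ and that $\rho|_{P(X)}$ is lex or antilex for every $X\in\Inv_3(w)$. Since $\P_w(n,2)$ is generated by the quasi-inversion relations from $3$-quasi-inversions and the congruence relations of \Cref{def:congruence.poset}, $\rho$ is a linear extension exactly when it respects each generating relation. So we aim to show:
\begin{itemize}
\item the full-packet condition together with the quasi-inversion relations is equivalent to (a);
\item the congruence relations are equivalent to (b).
\end{itemize}

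For (a), fix $X=[x,y,z]\in\binom{\Z}{3}_n$ and split by $|P(X)\cap\Inv_2(w)|\in\{0,1,2,3\}$, using the affine version of \Cref{lem:intersection_size}. If the size is $0$ or $1$, condition (a) is vacuous, and no generating relation or full-packet condition involves $X$. If the size is $3$, then $X\in\Inv_3(w)$, and (a) says $\rho|_{P(X)}$ is lex on the whole packet or antilex on the whole packet, which is exactly the full-packet condition. If the size is $2$, then by \Cref{lem:inversion.sets}(a),(b) the intersection is a prefix $\{X_3,X_2\}$ or a suffix $\{X_2,X_1\}$, i.e.\ of type $\{X_i,X_{i+1}\}$ with $i=2$ or $i=1$. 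The parity rule with $k=2$ gives:
\begin{itemize}
\item for $i=2$ ($k-i=0$ even), the relation $X_3<X_2$, which is lex order on the prefix;
\item for $i=1$ ($k-i=1$ odd), the relation $X_1<X_2$, which is antilex order on the suffix.
\end{itemize}
In both cases this is precisely what (a) demands. One subtlety needs a sentence: a given pair of $2$-inversions may lie in $P(X)$ for several representatives $X$, but the definitions are stated per class $X$, so the correspondence is clause by clause.

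For (b), the congruence relations on $\binom{\Z}{2}_n$ with $k=2$ have only $i=1$, with $k-i$ odd. So they read $[x,y]<[x,y+n]$ whenever both lie in $\Inv_2(w)$, which is (b) verbatim. By \Cref{cor:congruence.lemma}, or simply \Cref{lem:inversion.sets}(c), the chain of intermediate elements $[x,y+en]$ also lies in $\Inv_2(w)$. Hence transitivity introduces nothing beyond consecutive pairs, and respecting all generators is the same as respecting the pairs listed in (b).

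Combining the two parts, $\rho$ satisfies (a) and (b) if and only if it respects every generating relation of $\P_w(n,2)$ and the full-packet condition, i.e.\ $\rho\in\A_w(n,2)$. The main obstacle is the bookkeeping in (a): confirming that the parity convention assigns lex to prefixes and antilex to suffixes in the two-element case, and that the size-$2$ intersections of type $\{X_1,X_3\}$ cannot occur. The latter is excluded by \Cref{lem:inversion.sets}(a),(b), which is the affine analogue of the argument in \Cref{lem:An2.weak.order}. Comparing the resulting statement with \Cref{cor:characterize.ref.order} then identifies $\A_w(n,2)$ with the reflection orders.
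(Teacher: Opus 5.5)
Your approach is the same as the paper's. You match the size-$3$ intersections $P(X)\cap\Inv_2(w)$ with the full-packet condition and the size-$2$ intersections with the quasi-inversion relations via the parity rule: $i=2$ gives lex order on the prefix $\{X_3,X_2\}$, and $i=1$ gives antilex order on the suffix $\{X_2,X_1\}$. The congruence generators $[x,y]<[x,y+n]$ match (b) verbatim. Those parts are correct.

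\textbf{The gap.} Your claim that condition (a) is vacuous whenever $|P(X)\cap\Inv_2(w)|\le 1$ is false as stated. The singleton $\{X_2\}=\{[x,z]\}$ is neither a prefix nor a suffix of $P([x,y,z])$. The affine version of \Cref{lem:intersection_size} does not exclude it, since it allows $\{X_i\}$ for every $i$. If this intersection could occur, (a) would fail for every total order, and the ``only if'' direction would be false. So you must show it cannot occur, which is what the paper does explicitly.

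\textbf{The fix.} Use \Cref{lem:inversion.sets}(a): if $[x,z]\in\Inv_2(w)$, then $[x,y]$ or $[y,z]$ lies in $\Inv_2(w)$. Directly, if $w^{-1}(x)>w^{-1}(z)$, then either $w^{-1}(y)<w^{-1}(x)$ or $w^{-1}(y)>w^{-1}(x)>w^{-1}(z)$. You cite parts (a) and (b) of that lemma in the size-$2$ step, but only part (b), or \Cref{lem:intersection_size}, is needed there to exclude $\{X_1,X_3\}$. Part (a) is needed in the size-$1$ step.

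With that line added, the proof is complete. Your transitivity remark in (b) and the caveat about several representatives are unnecessary but harmless: a total order is a linear extension of a transitively generated order exactly when it respects the generating relations.
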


\begin{proof}
Assume $\rho \in \A_{w}(n,2)$.  Then $\rho$ is a linear extension of
$\P_{w}(n,2)$ that satisfies the full-packet condition.  The
congruence relations in $\P_{w}(n,2)$ guarantee that (b) holds. To see
(a) holds, consider $X=[x,y,z] \in \binom{\mathbb{Z}}{3}_n$.  If $X$
is a 3-quasi-inversion, then $P(X) \cap \Inv_{2}(w)=\{X_{i},X_{i+1}
\}$ for $i\in \{1,2\}$ by \Cref{lem:types.of.packets}. In either case,
$\rho$ restricted to $\{X_{i},X_{i+1} \}$ is a prefix in lex order or
a suffix in antilex order by the corresponding quasi-inversion
relation.  If $X\in \Inv_{3}(w)$, then $\rho|_{P(X)}$ is lex order or
antilex order on $P(X)$ by the full-packet condition. Next, observe
that if $[x,z] \in P([x,y,z])\cap \Inv_{2}(w)$, then at least one of
$[x,y]$ or $[y,z]$ is also in $\Inv_{2}(w)$ by
\cref{lem:inversion.sets}. This implies $[x,y,z]$ is either a
3-quasi-inversion or 3-inversion of $w$, and these cases have already
been covered. Hence, the remaining cases correspond to when
$P([x,y,z])\cap \Inv_{2}(w)$ is a prefix or suffix of $P([x,y,z])$ of
size 0 or 1, and these cases automatically satisfy the condition in
(a).

Conversely, if $\rho$ satisfies (a), then it satisfies the full-packet
condition and it is a linear extension of the poset
generated by the quasi-inversion relations.  If $\rho$ satisfies (b),
then it is a linear extension of the poset on $\Inv_{2}(w)$ generated
by the congruence relations.  Together, this implies $\rho \in
\A_{w}(n,2)$ by definition.
\end{proof}

\begin{lemma}\label{lem:Bwn2_and_graph}
The reflection orders for $w$ are exactly the admissible orders for
$w$ in $\A_{w}(n,2)$.  Furthermore, there are natural bijections
between maximal chains of $\C_w(n,2)$, reduced words for $w$, and the
admissible orders in $\A_w(n,2)$. Under the bijection between reduced
words and admissible orders, two reduced words
\begin{enumerate}[label=(\alph*)]
            \item are commutation equivalent if and only if their corresponding two admissible orders are commutation equivalent, and
            \item differ by a (directed) braid relation $i(i+1)i\to (i+1)i(i+1)$ if and only if their corresponding admissible orders differ by a lex-to-antilex packet flip. 
    \end{enumerate}

\end{lemma}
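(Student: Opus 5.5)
The first claim, that reflection orders for $w$ are exactly the admissible orders in $\A_w(n,2)$, will come from comparing \Cref{cor:characterize.ref.order.new} with \Cref{cor:characterize.ref.order}. Condition (b) is literally the same in both. For condition (a), I will check that the restriction to $P([x,y,z])\cap\Inv_2(w)$ being ``a lex prefix or an antilex suffix'' is equivalent to being an initial sequence of $((x,y),(x,z),(y,z))$ or of its reverse. The lex order on $P([x,y,z])$ is $(X_3,X_2,X_1)=([x,y],[x,z],[y,z])$, and antilex is its reverse. Using \Cref{lem:inversion.sets}, the only intersections that can occur are prefixes or suffixes of size $0,1,2,3$. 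This gives the translation, with one caveat: the triples in \Cref{cor:characterize.ref.order} must be distinct mod $n$, while $\binom{\Z}{3}_n$ already enforces this.

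Next come the bijections. Reduced words correspond to reflection orders by \Cref{def:reflection.order} and the discussion after it, which shows the map $\mathbf{i}\mapsto\rho(\mathbf{i})$ is injective and that every reflection order arises this way. Maximal chains of $\C_w(n,2)\cong[\id,w]$ (\Cref{cor:affine_Cwn2_isomorphism}) correspond to maximal chains of the weak order interval, and these are reduced words. Concretely, the chain $\Inv_2(w^{(1)})\subset\Inv_2(w^{(2)})\subset\cdots$ adds exactly $\rho_j$ at step $j$. So this bijection agrees with the successive prefix map of \Cref{lem:affine_admissible_to_consistent_set}, and I will note that it is now surjective because $[\id,w]$ has unique minimal and maximal elements.

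Parts (a) and (b) I will prove by transplanting the argument of \Cref{lem:Bn2.weak.order} essentially verbatim, with two affine modifications.

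First, in the commutation part, when $j_{r+1}\not\equiv j_r,j_r\pm1 \pmod n$ the four values $a,b,c,d$ are distinct. I also need $X,Y$ to lie in no common affine packet, which holds because packets of $2$-sets are determined by shared entries. Incomparability of $X$ and $Y$ in $\P_w(n,2)$ again follows because both orders are linear extensions. This now includes the congruence relations, which is automatic since admissibility already encodes them.

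Conversely, if adjacent $X,Y$ commute but $j_{r+1}\equiv j_r\pm1$, they share a value. In the affine setting I will have to check that ``sharing a value'' is meant up to a global shift by $n$. Since $Y$ contains $u(j_r)$ or $u(j_r+1)$ up to such a shift, $X\cup Y$ is a genuine $3$-set in $\binom{\Z}{3}_n$ whose packet contains both, which is a contradiction.

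Second, in the braid part the computation $u(p)<u(p+1)<u(p+2)$ and $Z=[u(p),u(p+1),u(p+2)]$ is unchanged, with indices taken mod $n$. The one step needing care is that $u(p+2)$ is read with $p+2$ as an integer, so the values may exceed $n$. The class $Z$ is still a well-defined element of $\Inv_3(w)$, because its packet consists of three inversions of $w$ that are pairwise non-congruent.

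I expect the main obstacle to be this representative bookkeeping. Equivalence classes in $\binom{\Z}{2}_n$ versus $\binom{\Z}{3}_n$ mean that sharing a value, or lying in a common packet, must be verified up to simultaneous translation by $n$. Once that is settled, the rest is routine.
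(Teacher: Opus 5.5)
Your proposal is correct and follows the paper's proof. Like the paper, you get the first claim by comparing \Cref{cor:characterize.ref.order} with \Cref{cor:characterize.ref.order.new}, you obtain the bijections via $\C_w(n,2)\cong[\id,w]$ and reduced words, and you prove (a) and (b) by rerunning the argument of \Cref{lem:Bn2.weak.order}. The one thing to state explicitly is that your ``shared value gives a common packet'' step and your claim $Z\in\Inv_3(w)$ both need $n\geq 3$: for $n=2$ the two adjacent reflections are congruent rather than lying in a common packet. As the paper does, you should dispose of $n=2$ separately, which is trivial because every element of $\affineS_2$ has a unique reduced word.
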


\begin{proof}
The proof that reflection orders and 2-admissible orders for $w$ are
the same follows directly by comparing
\Cref{cor:characterize.ref.order} and
\Cref{cor:characterize.ref.order.new}.  To prove the second statement,
observe that $\C_w(n,2)$ is isomorphic to the weak order interval
$[\id, w]$ by \cref{cor:affine_Cwn2_isomorphism}.  It is well-known
that there is a natural bijection between maximal chains of $[\id,w]$
and reduced words for $w$ since the covering relations in weak order
are of the form $w<ws_{i}$. The latter are in bijection with the
reflection orders, which we know are admissible orders by the first
statement. The correspondences in (a) and (b) now follow from the
arguments in \cref{lem:Bn2.weak.order} provided $n\geq 3$.  In the
case $n=2$, each affine permutation has a unique reduced word so the
final sentence holds trivially.   
\end{proof}

\begin{remark}
    While not explicitly stated in \cref{lem:Bwn2_and_graph}, one can show that the bijection between maximal chains of $\C_w(n,2)$ and admissible orders in $\A_w(n,2)$ is given by mapping
    any maximal chain
\[C=(\emptyset = R_{0}\lessdot_{1} R_1 \lessdot_{1} R_2 \lessdot_{1} \dots \lessdot_{1} R_{\ell(w)})\] in $\C_w(n,2)$
to
\[
\rho=(\rho_{1},\rho_{2},\dots, \rho_{\ell(w)})
\]
where $R_{i}\setminus R_{i-1}=\{\rho_i\}$ and $\ell(w)=|\Inv_{2}(w)|$. This map generalizes the
inverse successive prefix map from \cref{thm:chains_to_Ank} to affine
permutations in the case of $k=2$.
\end{remark}

We next turn our attention to $\B_w(n,2)$ and $\C_w(n,3)$. Paralleling
our approach in \cref{sec:non-longest}, we will state the following
theorem involving acyclicity of $G_R$ when $R\in \C_w(n,3)$, but we
postpone its proof until \cref{sec:affineGR} in order to state the
main results of this section.  Note, the corollary below is an affine
analog of \Cref{thm:Bnk_iso_Cnk} for the second higher Bruhat order.

\begin{theorem}\label{thm:affine_GR}
    For any $w\in \affineS_n$ and $R\subseteq \Inv_3(w)$, $G_R$ is acyclic if and only if $R\in \C_w(n,3)$. 
\end{theorem}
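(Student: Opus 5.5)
The forward direction is already available: by \Cref{lem:not_consistent_cycle}, if $R\subseteq\Inv_3(w)$ is not consistent then $G_R$ contains a cycle. So the work is to show that $G_R$ is acyclic for every $R\in\C_w(n,3)$. My plan is to avoid the classical deletion/contraction induction, which does not transfer cleanly because of congruence edges. Instead I will construct an admissible order directly from reduced words and then invoke \Cref{lem:GR_to_admissible.affine}. That lemma says acyclicity of $G_R$ is equivalent to the existence of some $\rho\in\A_w(n,2)$ with $\Rev(\rho)=R$. By \Cref{lem:Bwn2_and_graph}, such $\rho$ are exactly reflection orders of reduced words for $w$. Hence it suffices to show that every $R\in\C_w(n,3)$ is the reversal set of some reflection order.

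I will argue by induction on $\ell(w)$, building the reduced word from the right. Choose a right descent $s_i$ of $w$, so $w=us_i$ with $\ell(u)=\ell(w)-1$. Its last reflection is $t=[a,b]$, where $a=w(i+1)$ and $b=w(i)$ taken mod $n$. The elements of $\Inv_3(w)$ whose packet contains $t$ are the triples $[x,a,b]$, $[a,b,z]$, and $[a,y,b]$ (the last with $y$ between); the congruence shifts must be tracked here. Given $R$, I want to place $t$ last in some linear extension of $(\Inv_2(w),\le_R)$, i.e.\ to find $t$ maximal in the preorder. The cleaner route is to choose the descent adapted to $R$. Let $S\subseteq\Inv_2(w)$ be the set of $2$-inversions that are maximal with respect to $\P_w(n,2)$ together with the full-packet edges determined by $R$. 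Using consistency of $R$ (the MSZ prefix/suffix condition and the lower order ideal condition in $\P_w(n,3)$), I will show that some element of $\Inv_2(w)$ has no outgoing edge in $G_R$. Then I will show this element is the final reflection of a reduced word, meaning it comes from a right descent. The candidate is an inversion $[x,y]$ in which $x$ and $y$ sit in adjacent positions of $w$, with the choice among descents driven by $R$.

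Once such a $t$ is found, with $w=us_i$, I restrict: $R'=\{X\in R: t\notin P(X)\}$, transported to $\Inv_3(u)$ via the identification $\Inv_2(u)=\Inv_2(w)\setminus\{t\}$, which is realised through $s_i$ by translation. I then check that $R'\in\C_u(n,3)$. The MSZ condition restricts directly. The lower ideal condition holds because quasi-inversion and congruence relations in $\P_u(n,3)$ are inherited from $\P_w(n,3)$, except for new quasi-inversions created by deleting $t$, and those must be checked using consistency of $R$. By induction $R'=\Rev(\rho')$ for a reflection order $\rho'$ of $u$. Appending $t$ gives a reflection order $\rho$ of $w$, and its reversal set is $R'$ together with those packets containing $t$ that end up in antilex order. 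Whether a packet containing $t$ is antilex is forced by $t$ being last: $[x,a,b]$ is antilex iff it ends at $X_3$, etc. So I must choose $t$ so that this matches $R$ exactly.

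The main obstacle is this existence step: from an arbitrary consistent $R$, produce a right descent $t$ compatible with $R$. I would first try a sink-type argument. Consider the $n$ descent-adjacent pairs (a finite set up to periodicity) and orient "competition" between them by the packets of $R$. Then show via the prefix/suffix conditions in $\C_w(n,3)$ that this finite tournament-like relation has a sink. This mirrors Ziegler's single-step extension argument, and the analogous suffix-set technique of \Cref{lem:suffix_set_consistent.affine}. If a direct argument fails, the fallback is to reduce to the classical case: for fixed $R$, only finitely many congruence classes interact, so one can pass to a large finite window and apply \Cref{lem:GRacyclic} to a suitable classical permutation. That fallback requires checking that congruence edges become quasi-inversion edges under this unfolding.
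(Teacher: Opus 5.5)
\textbf{There is a genuine gap: the step you leave open is equivalent to the theorem itself.}

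The reduction itself is sound. Take a right descent $t=[w(i+1),w(i)]$ with $w=us_i$. Then $t$ is automatically maximal in $\P_w(n,2)$, since some reflection order ends with it. Also, $t$ can never be the middle element of a full packet, because no value occupies a position strictly between $i$ and $i+1$; so your case $[a,y,b]$ is vacuous. Hence ``$t$ can be placed last'' means exactly that $t$ is a sink of $G_R$: every $X\in\Inv_3(w)$ with $X_1=t$ lies outside $R$, and every $X$ with $X_3=t$ lies in $R$. Under these two conditions $R'=R\cap\Inv_3(u)$ is consistent for $u$. The only new quasi-inversions of $u$ are the $Z\in\Inv_4(w)$ with $t=[z_j,z_{j+1}]$, and for each $j=1,2,3$ the sink conditions together with MSZ on $P(Z)$ give the needed implication. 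So your induction does go through.

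The problem is that the existence of such a sink for every consistent $R$ is equivalent to acyclicity. If $G_R$ is acyclic, the last element of any linear extension of $(\Inv_2(w),\le_R)$ is such a sink (and every sink of $G_R$ is automatically a right descent). Conversely, your induction turns sinks into an admissible order with reversal set $R$. Your claim that the ``tournament among descents has a sink'' is asserted, not argued, and tournaments need not have sinks. Excluding a configuration in which every descent is blocked by an outgoing full-packet edge is the same problem as excluding cycles in $G_R$. The wrap-around of positions in the affine case also removes any obvious extremal choice of descent. You also cannot borrow \cref{lem:suffix_set_consistent.affine}: its conclusions are conditional on acyclicity of $G_{S(R)}$ for $S(R)\subseteq\Inv_4(w)$, which is not known.

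\textbf{The fallback does not work as stated either.}

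\cref{lem:finite_deletion} needs a window of pairwise non-congruent integers. To capture congruence edges you need repeated residues, and then several things break. Classes in $\Inv_2(w)$ generally have several representatives in the window, so they are not compatible in the sense of \cref{def:finite_deletion}. The flattened classical permutation acquires quasi-inversions with no counterpart for $w$ (\cref{example:extra.quasi.invs}). There is no isomorphism onto the reversal graph of a consistent set.

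More fundamentally, a cycle in $G_R$ lifts to the $n$-periodic cover as a path from a lift $\tilde X$ to some translate $\tilde X+mn$. Acyclicity of a classical reversal graph on a finite window contradicts nothing unless you first prove $m=0$.

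For contrast, the paper never constructs orders here. It shortens a hypothetical cycle to a rigid $4$-cycle in three moves:
\begin{itemize}
\item it discards congruence edges via \cref{lem:shorten1};
\item it rules out cycles of length at most $3$ by deleting to $\S_4$;
\item it applies the classical reachability-diameter bound after deleting to six pairwise non-congruent values.
\end{itemize}
That $4$-cycle is then ruled out by a direct case analysis.
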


\begin{corollary}\label{thm:affine_Bn2_Cn3}
     The map $\Rev_{n,2,w}: \B_w(n,2) \to
\C_w(n,3)$ is a poset isomorphism with inverse given
by
\begin{equation}\label{eq:affine_rev_inverse}
    \Rev_{n,2,w}^{-1}(R)=\mathscr{L}(\Inv_2(w),\leq_R).
\end{equation}
\end{corollary}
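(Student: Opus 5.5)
The plan is to copy the proof of \Cref{thm:Bnk_iso_Cnk}, replacing the classical inputs with their affine analogs and \Cref{lem:GRacyclic} with \Cref{thm:affine_GR}. First, we use \Cref{lem:reversal_is_consistent.affine}: for every $\rho\in\A_w(n,2)$ the set $\Rev(\rho)$ lies in $\C_w(n,3)$, and $\Rev$ is a well-defined order-preserving map on $\B_w(n,2)$. Next, \Cref{cor:B.equiv.acyclic.affine} shows that $\Rev$ is a bijection from $\B_w(n,2)$ onto $\{R\subseteq\Inv_3(w): G_R \text{ acyclic}\}$, with inverse $R\mapsto\mathscr{L}(\Inv_2(w),\leq_R)$. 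By \Cref{thm:affine_GR} this target set is exactly $\C_w(n,3)$. Hence $\Rev_{n,2,w}$ is a bijection onto $\C_w(n,3)$, with inverse given by \eqref{eq:affine_rev_inverse}.

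The map is order-preserving because each lex-to-antilex packet flip adds exactly one element to the reversal set; this is the affine version of \Cref{lem:packet.flips.Sn}, which the paper notes extends verbatim. So a cover in $\B_w(n,2)$ maps to a single step inclusion in $\C_w(n,3)$.

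The main step is showing that $\Rev^{-1}$ is order-preserving. Since $\C_w(n,3)$ is ordered by single step inclusion, it suffices to treat a cover $R\lessdot_1 R'=R\cup\{X\}$ with $R,R'\in\C_w(n,3)$ and $X\in\Inv_3(w)$. The graphs $G_R$ and $G_{R'}$ on $\Inv_2(w)$ differ only by reversing the two full-packet edges along $P(X)$. The quasi-inversion and congruence edges are common to both, since they depend only on $\P_w(n,2)$. Both graphs are acyclic by \Cref{thm:affine_GR}.

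We then argue, as before, that $P(X)$ is convex in the heap poset $(\Inv_2(w),\leq_R)$. Suppose some directed path in $G_R$ joined two elements of $P(X)$ through a vertex outside $P(X)$. Such a path contains a segment whose endpoints lie in $P(X)$ and whose interior vertices lie outside it, and none of its edges are full-packet edges of $P(X)$. That segment therefore also lies in $G_{R'}$. Combined with the reversed chain on $P(X)$, it produces a directed cycle in $G_{R'}$, which is a contradiction; the symmetric argument handles the other direction. This step needs the most care, but the argument is purely graph-theoretic and is unaffected by the extra congruence edges or by $\Inv_1$ being infinite, since $\Inv_2(w)$ is finite.

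Convexity of the saturated chain $P(X)$ lets us pick a linear extension $\rho$ of $\leq_R$ in which $P(X)$ appears consecutively in lex order. Reversing that block gives $\sigma$, which respects every edge of $G_{R'}$ and so is a linear extension of $\leq_{R'}$. By \Cref{lem:GR_to_admissible.affine}, $\sigma$ is admissible with reversal set $R'$. Therefore $\Rev^{-1}(R)=[\rho]\lessdot[\sigma]=\Rev^{-1}(R')$ in $\B_w(n,2)$, and $\Rev_{n,2,w}$ is a poset isomorphism.
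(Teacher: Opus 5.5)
Your proposal is correct and is essentially the paper's own proof. The paper adapts \Cref{lem:GR_to_admissible}, \Cref{cor:reversals_unique}, and \Cref{thm:Bnk_iso_Cnk} to the affine case with $k=2$, using \Cref{thm:affine_GR} in place of \Cref{lem:GRacyclic}, which is exactly your bijection, your order-preservation via packet flips, and your convexity-plus-flip argument for $\Rev^{-1}$. One small precision in the convexity step: a detour segment running against the chain order on $P(X)$ closes a directed cycle already in $G_R$ rather than in $G_{R'}$, but acyclicity rules out both, so the argument is unaffected.
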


\begin{proof}
    This follows by adapting the arguments from
\cref{lem:GR_to_admissible}, \cref{cor:reversals_unique}, and
\cref{thm:Bnk_iso_Cnk} for $k=2$ to the affine
case. \cref{thm:affine_GR} is used in place of \cref{lem:GRacyclic}.
\end{proof}

Next we show that $\B_w(n,2)\cong \C_w(n,3)$ has a unique minimal
element and a unique maximal element. Our methods in
\cref{sec:non-longest} for $w\in \S_n$ involved using the suffix set
of $R\in \C_w(n,k)$, which we showed is a consistent set $S(R)\in
\C_w(n,k+1)$. However, when $R\in \C_w(n,3)$, \cref{thm:affine_GR}
does not apply to $S(R)\subseteq \Inv_4(w)$, so we use a different
approach for $w\in \affineS_n$.  We begin with a notion of deletion
that extends the one in \cref{sub:G_R} to the affine setting.

\begin{definition}\label{def:finite_deletion}
    Let $A\subseteq \mathbb{Z}$ be a nonempty finite subset.  Define
$\bar{A}=\mathbb{Z}\setminus A$, and let $\fl_{\bar{A}}:A\to [|A|]$ be
the unique order-preserving bijection from $A$ to $\{1,2,\dots , |A|
\}$.  

\begin{enumerate}[label=(\alph*)]
\item For $w\in \affineS_n$, define $\Fl_{\bar{A}}(w)$ to be the permutation
in $\S_{|A|}$ obtained by restricting the one-line notation of $w$ to
the elements in $A$ and applying $\fl_{\bar{A}}$ to each entry.
\item An $\ell$-inversion $X\in \Inv_\ell(w)$ is \defn{compatible} with $A$
if it has a unique representative $[x_{1},\dots , x_{\ell}]$ such that
each $x_{i} \in A$. In this case, define 
\begin{equation}\label{eq:compatible.flatten}
\Fl_{\bar{A}}(X) = [\fl_{\bar{A}}(x_{1}),\dots , \fl_{\bar{A}}(x_{\ell})].
\end{equation}

\item For any $R\subseteq \Inv_\ell(w)$, the \defn{deletion of $R$ with
respect to $\bar{A}$} is \[R\setminus
\bar{A}=\{\Fl_{\bar{A}}(X):X\in R \text{ is compatible with
$A$}\}\]
\item For any $\rho\in \A_w(n,\ell)$, the \defn{deletion of $\rho$ with
respect to $\bar{A}$}, denoted $\rho\setminus \bar{A}$, is
the total order obtained by restricting $\rho$ to the elements
compatible with $A$ and then applying $\Fl_{\bar{A}}$ to these
elements.
\end{enumerate}
\end{definition}

\begin{example}\label{example:extra.quasi.invs}
Let $w=(-3,8,3,2) \in \affineS_{4}$.  If $A=\{1,2,3,8 \}$, then
$w'=\Fl_{\bar{A}}(w)=(4,3,2,1) \in \S_4$, and $\Fl_{\bar{A}}$ gives a
bijection from the $\ell$-inversions ($\ell$-quasi-inversions) for
$w$ that are compatible with $A$ to the $\ell$-inversions
($\ell$-quasi-inversions) for $w'$ for all $\ell\geq 0$. However, if
$B=\{1,2,3,4,8 \}$, then $\Fl_{\bar{B}}(w)=(4,5,3,2,1) \in \S_5$,
which has $[1,2,3,4,5]$ as a 5-quasi-inversion while $w=(-3,8,3,2)$
has no 5-quasi-inversions.
\end{example}

We will primarily use the above notion of deletion when $A$ contains
distinct values modulo $n$. In this case, the deletion of a consistent
set or admissible order is well-behaved, allowing us to translate
certain claims involving the affine higher Bruhat orders to
corresponding ones for permutations in $\S_n$. The next lemma
formalizes this.

\begin{lemma}\label{lem:finite_deletion}
Let $A$ be a nonempty set of integers of size $m$ that are
distinct modulo $n$. Define $w'=\Fl_{\bar{A}}(w)$.

\begin{enumerate}[label=(\alph*)]
\item If $\rho \in \A_{w}(n,k)$, then $\rho':=\rho\setminus \bar{A}\in
\A_{w'}(m,k)$ and $\Rev(\rho')=\Rev(\rho)\setminus \bar{A}$.  
\item If $R \in \C_w(n,k+1)$, then $R':=R\setminus \bar{A}\in
\C_{w'}(m,k+1)$ as a subset of $\binom{[m] }{k+1}$. Furthermore, the induced subgraph of $G_R$ on the
$k$-inversions of $w$ compatible with $A$ is isomorphic to $G_{R'}$ on
$\Inv_{k}(w')$.
\end{enumerate}
\end{lemma}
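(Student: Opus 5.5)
The plan is to first set up a dictionary between the compatible $\ell$-inversions of $w$ and the $\ell$-inversions of $w'$, and then transport each defining condition (permanent poset relations, full-packet condition, MSZ condition, reversal-graph edges) through $\Fl_{\bar A}$. Since the elements of $A$ are distinct modulo $n$, the restriction of the one-line notation of $w$ to $A$ is a finite word in which each value of $A$ occurs once. Moreover, $w^{-1}(a)$ determines the relative position of $a$.

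\textbf{Step 1: the dictionary.} I will show that $X\in\binom{\Z}{\ell}_n$ has at most one representative inside $A$. Two representatives differ by a global shift $mn$, which would force congruent elements of $A$. Next, $\fl_{\bar A}$ is order preserving on both values and positions. Hence a compatible $X$ lies in $\Inv_\ell(w)$ if and only if $\Fl_{\bar A}(X)\in\Inv_\ell(w')$, and the same equivalence holds for $\ell$-quasi-inversions, since both notions are defined through pairwise $2$-inversions. Every $[y_1,\dots,y_\ell]\in\binom{[m]}{\ell}$ lifts to a unique compatible element. Thus $\Fl_{\bar A}$ is a bijection from compatible $\ell$-(quasi-)inversions of $w$ onto those of $w'$ for every $\ell$. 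This bijection commutes with taking packets: $\Fl_{\bar A}(X)_i=\Fl_{\bar A}(X_i)$, and every element of $P(X)$ is again compatible.

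\textbf{Step 2: permanent posets.} A quasi-inversion relation between compatible elements comes from a compatible $(k+1)$-quasi-inversion with the same index $i$. By Step 1, it maps to the identical quasi-inversion relation for $w'$ under the same parity rule, and conversely. Congruence relations $X<X\pm v_i$ never join two compatible elements, because two congruent elements cannot both have representatives in $A$ with distinct residues. However, a chain of relations in $\P_w(n,k)$ between compatible elements may pass through incompatible elements, and this is the main obstacle. For the direction needed here, only the containment is required: every generating relation of $\P_{w'}(m,k)$ lifts to a generating relation of $\P_w(n,k)$. Therefore a linear extension $\rho$ of $\P_w(n,k)$ restricts to a linear extension of $\P_{w'}(m,k)$. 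Symmetrically, a lower order ideal $R$ of $\P_w(n,k+1)$ pulls back along generating relations of $\P_{w'}(m,k+1)$, so $R\setminus\bar A$ is a lower order ideal.

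\textbf{Step 3: conclusions.} For (a), each full packet $P(Y)$ of $w'$ lifts to a compatible full packet $P(X)$ of $w$, on which $\rho$ is lex or antilex. The restriction $\rho'$ therefore satisfies the full-packet condition, with antilex order exactly when $X\in\Rev(\rho)$. This gives $\rho'\in\A_{w'}(m,k)$ and $\Rev(\rho')=\Rev(\rho)\setminus\bar A$. For (b), the MSZ condition for $R'$ on $P(Y)$, $Y\in\Inv_{k+2}(w')$, is literally the MSZ condition for $R$ on the compatible lift of $Y$, since prefixes and suffixes correspond under Step 1. The graph isomorphism then follows edge type by edge type, as in \cref{cor:deletion_consistent}. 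Quasi-inversion edges correspond by Step 2. There are no congruence edges among compatible vertices. Full lex and full antilex edges between compatible vertices lie in the packet of their union, which is compatible, and they carry the same label since $X\in R$ if and only if $\Fl_{\bar A}(X)\in R'$.
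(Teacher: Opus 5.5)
Your proposal is correct and follows essentially the same route as the paper. In both, the order-preserving map $\Fl_{\bar{A}}$ gives a bijection on compatible (quasi-)inversions; each generating relation of $\P_{w'}(m,k)$ (necessarily a quasi-inversion relation, since $w'$ is classical) lifts to one of $\P_w(n,k)$, which yields the linear-extension and lower-order-ideal claims; the full-packet and MSZ conditions transfer packet by packet; and $G_R$ and $G_{R'}$ match edge type by edge type, with no congruence edges among compatible vertices.
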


\begin{proof}
It is straightforward to show that $\Fl_{\bar{A}}$ is a bijection
between the inversions in $\Inv_{\ell}(w)$ compatible with $A$ and the
inversions in $\Inv_{\ell}(w')$ for all $\ell\geq 1$, where we
consider $\Inv_{\ell}(w') \subseteq \binom{[m] }{\ell}$ as classical
subsets.  Furthermore, we have that $\Inv_{\ell}(w)\setminus
\bar{A}=\Inv_{\ell}(w')\subseteq \binom{[m] }{\ell}$.  Similarly, since
$A$ contains only noncongruent values modulo $n$, $\Fl_{\bar{A}}$ is a
bijection between the quasi-inversions in $\Qinv_{\ell}(w)$ compatible
with $A$ and the quasi-inversions in $\Qinv_{\ell}(w')$, and
$\Qinv_{\ell}(w)\setminus \bar{A}=\Qinv_{\ell}(w')$ for all $\ell\geq
1$.

For (a), we first show that $\rho'$ is a linear extension of $\P_{w'}(m,k)$. 
Every generating relation $X'<_{\P_{w'}(m,k)} Y'$ must be a quasi-inversion relation since $w'\in \S_m$ is 
a classical permutation. Let $X=\Fl_{\bar{A}}^{-1}(X')$ and $Y=\Fl_{\bar{A}}^{-1}(Y')$, which
are elements of $\Inv_k(w)$ compatible with $A$. As $\fl_{\bar{A}}:A\to [m]$
is an order-preserving bijection and quasi-inversions map to
quasi-inversions, we see that $X<_{\P_w(n,k)} Y$ is a quasi-inversion relation. Since $\rho$
is an admissible order of $\Inv_k(w)$, it must be that $X<_{\rho} Y$, so by the construction of the deletion,
we conclude $X'<_{\rho'} Y'$. As $X'<_{\P_{w'}(m,k)} Y'$ was an arbitrary generating
relation, we conclude $\rho'$ is a linear extension of $\P_{w'}(m,k)$. 

We next consider the restriction of $\rho'$ to the packet of any $X'\in \Inv_{k+1}(w')$.
Let $X=\Fl_{\bar{A}}^{-1}(X')$, which is an element of  $\Inv_{k+1}(w)$ compatible with $A$. 
Observe that $\Fl_{\bar{A}}(X_i)=X_i'$
for all $i\in [k+1]$. From this and the construction of the deletion, we see that if $\rho$ restricts to the lex (resp. antilex)
order on $P(X)$, then $\rho'$ restricts to the lex (resp. antilex) order on $P(X')$.
As $\rho$ is an admissible order, we conclude that
$\rho'\in \A_{w'}(m,k)$ and $\Rev(\rho')=\Rev(\rho)\setminus \bar{A}$. 

The claims in (b) are done similarly. We find the same correspondence between generating relations
$X'<_{\P_{w'}(m,k+1)} Y'$ and pulled-back $X<_{\P_w(n,k+1)} Y$ with $X=\Fl_{\bar{A}}^{-1}(X')$ and $Y=\Fl_{\bar{A}}^{-1}(Y')$,
 so $R\in \C_w(n,k+1)$ being a lower-order ideal of $\P_w(n,k+1)$ implies
 $R'$ is a lower-order ideal of $\P_{w'}(m,k+1)$. The correspondence between $P(X')\subseteq \Inv_{k+1}(w')$
 and $P(X)\subseteq \Inv_{k+1}(w)$ for  $X=\Fl_{\bar{A}}^{-1}(X')$ 
shows that if $R\cap P(X)$ is  a prefix (resp. suffix) of $P(X)$, then $R'\cap P(X')$ is a prefix (resp. suffix) of $P(X')$.
 Hence, $R'\in \C_{w'}(m,k+1)$.

Finally, $\Fl_{\overline{A}}$ gives a bijection between the vertices of
$G_R$ compatible with $A$ and the vertices of $G_{R'}$.
Observe both sets are finite, even if $k=1$.  
Since the
elements of $A$ are distinct modulo $n$, no congruence edge of $G_R$
has both endpoints compatible with $A$. The permanent poset argument
above shows that quasi-inversion edges are preserved under this
bijection. The correspondence between $X'\in R'$ (resp. $X'\notin R'$)
and $X=\Fl_{\bar A}^{-1}(X')\in R$ (resp.
$X=\Fl_{\bar A}^{-1}(X')\notin R$) shows that full antilex
(resp. full lex) edges are also preserved. Hence, 
$\Fl_{\overline A}$ is a graph isomorphism.
\end{proof}

Recall from \Cref{rem:nested} and \Cref{rem:nesting.congrence} that
the quasi-inversion relations and the congruence relations are related
to nested packets in admissible orders.  We formalize this in a
definition below in order to identify flippable packets in $[\rho] \in
\B_w(n,2)$.

\begin{definition}
    Let $\rho\in \A_w(n,k)$, and $X,Y\in \Inv_{k+1}(w)$ be distinct. The
packet $P(Y)$ is \defn{lex-nested} within $P(X)$ 
if
\begin{equation}\label{eq:lex-nested} X_{k+1}\leq _{\rho} Y_{k+1}<_{\rho}
Y_1\leq _{\rho} X_1.
\end{equation}
When there are no packets lex-nested within $P(X)$ and $\rho|_{P(X)}$
is in lex order, we call $P(X)$ \defn{lex-unnested} in $\rho$. We
similarly define \defn{antilex-nested} and \defn{antilex-unnested} by
replacing \eqref{eq:lex-nested}
with \begin{equation}\label{eq:antilex-nested} X_1\leq_{\rho} Y_1
<_{\rho} Y_{k+1} \leq_{\rho} X_{k+1}.  \end{equation}
\end{definition}

\begin{lemma} \label{lem:flippable_Bn2_Cn3}
For any $[\rho] \in \B_w(n,2)$ with $\Rev(\rho) \neq \Inv_3(w)$, there
exists $X \in \Inv_3(w) \setminus \Rev(\rho)$ such that $P(X)$ is
lex-to-antilex flippable in $[\rho]$.  Similarly, if $\Rev(\rho)\neq
\emptyset$ there exists an $X\in \Rev(\rho)$ such that $P(X)$ is
antilex-to-lex flippable in $[\rho]$.
\end{lemma}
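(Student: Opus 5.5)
Our plan is to use the nesting structure, mirroring the role of $S(R)$ in the classical case. Fix a representative $\rho$ of $[\rho]$ and let $R=\Rev(\rho)$. By \cref{thm:affine_Bn2_Cn3} (via \cref{thm:affine_GR}), $G_R$ is acyclic and $[\rho]=\mathscr{L}(\Inv_2(w),\leq_R)$. So $P(X)$ is flippable in $[\rho]$ if and only if $P(X)$ is a convex chain in the heap poset $(\Inv_2(w),\leq_R)$. In that case we can choose a linear extension where $X_3,X_2,X_1$ (or $X_1,X_2,X_3$) are consecutive.

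\textbf{Step 1: choose a packet that is minimal under nesting.} Among the $X\in\Inv_3(w)\setminus R$ (in lex order in $\rho$), pick one minimizing the number of positions strictly between $X_3$ and $X_1$ in $\rho$. Such an $X$ exists because $\Inv_3(w)$ is finite. If some $Y\notin R$ were lex-nested in $P(X)$, it would span a strictly shorter interval, unless $Y_3=X_3$ and $Y_1=X_1$. That equality forces $Y=X$, since $X_3\cup X_1=X$. Hence $P(X)$ is lex-unnested relative to the other lex packets.

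\textbf{Step 2: show a lex-unnested packet is flippable.} Suppose $P(X)$ is not convex in the heap poset. Then there is an element $Z$ with $X_3<_R Z<_R X_1$ and $Z\notin P(X)$, or with $Z$ between $X_3$ and $X_2$, or between $X_2$ and $X_1$. Take a directed path in $G_R$ leaving $P(X)$ and returning to it, and examine its first edge out and its last edge back in. Each edge of $G_R$ is a quasi-inversion edge, a congruence edge, or a full-packet edge.

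If the path contains a full lex edge of another packet $P(Y)$ whose whole span lies inside that of $P(X)$, this contradicts Step 1. To handle paths that do not directly exhibit such a $Y$, we reduce to the classical case. Let $A$ be the union of the values in $X$ and in the few 2-inversions on a shortest such path. Shortening the path as needed, we arrange that these values are distinct modulo $n$, so that no congruence edges appear. Then \cref{lem:finite_deletion} transports $\rho$, $R$, and the relevant induced subgraph to $w'\in\S_{|A|}$ with $R'\in\C_{w'}(|A|,3)$. There we invoke the classical result \cref{cor:flippable} together with \cref{lem:suffix_set.build.up}, giving a contradiction with minimality. This should work because $|A|$ is bounded, at most $6$ values for two 3-sets.

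\textbf{Main obstacle.} The main obstacle is the congruence edges. A path may pass through $[x,y]$ and $[x,y+n]$, so $A$ cannot be taken noncongruent. For this case we use \Cref{rem:nesting.congrence}. By \eqref{eq:nesting.dueto.congruence.requirement}, a congruence relation $X'<_{\P_w(n,3)}Y'$ between 3-inversions forces nesting of $P(Y')$ inside $P(X')$ in $\rho$. Likewise, a 2-element congruence relation $X_3<[\ast]<X_1$ inside the span forces some congruent 3-inversion, say $X\pm v_i$, to have its packet nested in $P(X)$. Using \cref{cor:congruence.lemma}, this congruent 3-inversion lies in $\Inv_3(w)$. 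If it is lex, it contradicts Step 1; if it is antilex, the lower-ideal property of $R$ (\cref{lem:reversal_is_consistent.affine}) forces $X\in R$, a contradiction.

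\textbf{Antilex case.} The antilex-to-lex statement follows symmetrically. Choose $X\in R$ whose span from $X_1$ to $X_3$ in $\rho$ is shortest, and run the same argument with lex and antilex interchanged.
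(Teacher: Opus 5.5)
Your Step 1 (a lex packet of minimal span, hence lex-unnested) agrees with the paper, and recasting flippability as convexity of $P(X)$ in the heap poset is fine. The gap is in Step 2, where you reduce a non-convexity witness to a classical permutation.

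\textbf{The path reduction is unsupported.} In the affine setting there is no a priori bound on the length of a shortest path in $G_R$ that leaves $P(X)$ and returns. \cref{lem:shorten5} is proved only for $w\in\S_n$. Invoking it would already require the values on the path to be pairwise noncongruent so that \cref{lem:finite_deletion} applies, which is circular. Each edge can introduce a new value, so ``at most $6$ values for two 3-sets'' has no basis. Likewise, ``shortening the path as needed, we arrange that these values are distinct modulo $n$'' is exactly the hard step, and it is asserted without argument. Congruent values can also occur in the interior of the path, not only as elements congruent to members of $P(X)$.

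\textbf{The classical lemmas do not give the contradiction.} Even after a successful reduction to $w'\in\S_m$, \cref{cor:flippable} and \cref{lem:suffix_set.build.up} show only that \emph{some} 3-inversion (one minimal for $\leq_{S(R')}$ outside $R'$) can be flipped. They do not show that your lex-unnested $P(X')$ can. ``Lex-unnested implies flippable'' is not among the classical results. The paper gets the $k=2$ instance from a finite check in $\S_4$, and \cref{rem:nonnested.does.not.always.imply.flippable} shows it fails for $k=3$, so it cannot follow formally from general-$k$ lemmas. To route through \cref{cor:flippable} you would need $R'\cup\{X'\}$ consistent. The lower-ideal half does follow from the nesting in \cref{rem:nested}, but the MSZ half on packets $P(W)$ with $W\in\Inv_4(w')$ needs its own argument.

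\textbf{How the paper proceeds.} It avoids paths and works locally in $\rho$ with adjacent transpositions.
\begin{enumerate}
\item An explicit case analysis shows no element congruent to a member of $P(X)$ lies strictly between $X_3$ and $X_1$. The forms $[x_1,x_2-mn]$ and $[x_2,x_3+mn]$ are excluded by $\P_w(n,2)$. Each of $[x_1,x_2+mn]$, $[x_1,x_3-mn]$, $[x_1,x_3+mn]$, $[x_2,x_3-mn]$ either produces a lex-nested packet ($X+mv_1$ or $X-mv_2$) or violates a quasi-inversion relation. Your congruence paragraph needs this analysis. Its antilex branch is not justified: the lower-ideal property propagates only downward, so it forces $X\in R$ only when the congruent 3-inversion lies above $X$ in $\P_w(n,3)$. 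It is the positions in $\rho$ that force the nested packet to be lex.
\item Let $Y$ be the element immediately after $X_3$ (or immediately before $X_1$). If $Y$ did not commute with it, then, being adjacent in a linear extension, the two would share a packet or be related by a single generating relation. Since they are noncongruent, that relation is a quasi-inversion, so either way they share a value. Hence $X_1,X_2,X_3,Y$ all lie in the second packet of one $Z\in\binom{\Z}{4}_n$ with pairwise noncongruent entries.
\item Deleting to $\S_4$ preserves lex-unnestedness, and a finite check in $\S_4$ shows $Y$ commutes. Iterating moves $X_3$ and $X_1$ next to $X_2$.
\end{enumerate}
Adjacency gives the bound of four values for free, which is exactly what your path-based reduction lacks.
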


\begin{proof}
    Suppose $\Rev(\rho)\neq \Inv_3(w)$, so there exists
$X=[x_1,x_2,x_3]\in \Inv_3(w)\setminus \Rev(\rho)$. Then $\rho$
restricts to the lex order on $P(X)$, so $[x_1,x_2]<_{\rho} [x_1,x_3]
<_{\rho} [x_2,x_3]$. We can assume without loss of generality that
$P(X)$ is lex-unnested in $\rho$, as otherwise, we can iteratively
replace $X$ with another $3$-inversion whose packet is lex-nested
within $P(X)$ in $\rho$. We will show that any elements between
$X_{3}=[x_1,x_2]$ and $X_{1}=[x_2,x_3]$ in $\rho$ other than
$X_{2}=[x_{1},x_{3}]$ can be commuted out of that interval so that
$P(X)$ is flippable in $[\rho]$.

We first show that any $Y$ satisfying $[x_1,x_2]<_{\rho} Y<_{\rho}
[x_2,x_3]$ cannot be congruent to an element in $P(X)$ without
contradicting the fact that $P(X)$ is lex-unnested in $\rho$.  Note
that $Y$ cannot have the form $[x_1,x_2-mn]$ or $[x_2,x_3+mn]$ 
for a positive integer $m$, as $[x_1,x_2]<_{\rho} Y<_{\rho} [x_2,x_3]$ 
would contradict the relations in $\P_w(n,2)$. The remaining four cases
 are detailed below. Throughout, assume $m$ is a positive integer.
\begin{itemize}

\item If $Y=[x_1,x_2+mn]=[x_1-mn,x_2]$, then $[x_1-mn,x_2,x_3]\in
\Inv_3(w)$ by the affine analog of \Cref{lem:adjacent.pairs}.
Furthermore, $[x_1,x_2]<_{\rho} Y<_{\rho} [x_2,x_3]$ implies that
$[x_1-mn,x_2,x_3]$ is lex-nested within $P(X)$ in $\rho$.

\item If $Y=[x_1,x_3-mn]=[x_1+mn,x_3]$, then the
congruence relations imply $[x_1,x_2]<_{\rho} Y<_{\rho}
[x_1,x_3]$. Consider the packet on $x_{1},x_{2},x_{3}-mn$. If
$x_3-mn<x_2$, then $[x_{3}-mn,x_{2}] \not \in \Inv_{2}(w)$ by
\cref{lem:inversion.sets}(c) since $[x_{2},x_{3}]\in \Inv_{2}(w)$.
Hence, $[x_1,x_3-mn,x_2]$ would have to be a quasi-inversion for $w$,
but the order of $[x_1,x_2]<_{\rho}Y$ contradicts the 
corresponding quasi-inversion relation.  Alternatively, if
$x_2<x_3-mn$, then $[x_{2},x_{3}-mn] \in \Inv_{2}(w)$ by
\cref{lem:inversion.sets}(c).  Since $[x_{1},x_{2}]$ and
$[x_{2},x_{3}-mn]$ are in $\Inv_{2}(w)$, we know $[x_1,x_2,x_3-mn]\in
\Inv_3(w)$.  Since $[x_1,x_2]<_{\rho} [x_2,x_3-mn]<_{\rho} [x_2,x_3]$,
we conclude that $P([x_1,x_2,x_3-mn])$ is lex-nested within $P(X)$ in
$\rho$.

\item If $Y=[x_1,x_3+mn]=[x_1-mn,x_3]$, then the relations in
$\P_w(n,2)$ imply that $X_2=[x_1,x_3]<_{\rho} Y <_{\rho}
[x_2,x_3]$. However, $[x_1-mn,x_3]<_{\rho} [x_2,x_3]$ implies that
$[x_1-mn,x_2,x_3]=[x_1,x_2+mn,x_3+mn]$ cannot be a quasi-inversion, so
this must be a $3$-inversion in lex order in $\rho$.  Hence,
$P([x_{1},x_{2}+mn,x_{3}+mn])$ is lex-nested within $P(X)$ in $\rho$.

\item If $Y=[x_2,x_3-mn]$, then $[x_1,x_2,x_3-mn] \in \Inv_3(w)$. From
$[x_1,x_2]<_{\rho} Y<_{\rho}[x_{2},x_{3}]$, we conclude $P([x_1,x_2,x_3-mn])$
is lex-nested within $P(X)$ in $\rho$.
\end{itemize}

Second, assume $[x_1,x_2]\lessdot_{\rho} Y<_{\rho} [x_1,x_3]$ and $Y$ is
not congruent to any element in $P(X)$. We claim that $Y$ commutes with $[x_1,x_2]$. 
Otherwise, $Y$ shares an element with $[x_1,x_2]$ and contains a second element
that is not congruent modulo $n$ to any element in
$\{x_1,x_2,x_3\}$. Consequently, we have that $X_1,X_2,X_3,Y\in P^2(Z)$ for some
$Z=[z_1,z_2,z_3,z_4]\in \binom{\mathbb{Z}}{4}_n$. Setting
$A=\{z_1,z_2,z_3,z_4\}$ and applying the deletion from
\cref{def:finite_deletion} results in permutation $w'\in \S_4$ and
admissible order $\rho'\in \A_{w'}(4,2)$ by
\cref{lem:finite_deletion}. Using a computer verification over all $w'
\in \S_{4}$ and all $\rho' \in \A_{w'}(4,2)$ such that some
lex-unnested $X' \in \Inv_{3}(w')\setminus \Rev (\rho')$ exists, we
find that if $X_3'\lessdot_{\rho'}Y'<_{\rho'}X_2'$, then $Y'$ 
and $X_3'$ commute with respect to $w'$. 
See appendix for verification details. Consequently, it cannot be that $Y$ 
shares an element with $[x_1,x_2]$, and hence $Y$ and $[x_1,x_2]$
commute with respect to $w$. Iterating this argument, we see that each $Y$ between
$[x_1,x_2]$ and $[x_1,x_3]$ in $\rho$ commutes with $[x_1,x_2]$.

Next, assume $[x_1,x_3]<_{\rho} Y\lessdot_{\rho} [x_2,x_3]$ and $Y$ is not 
congruent to any element in $P(X)$. Using a similar argument, we conclude
that $Y$ must commute with $[x_2,x_3]$. A deletion argument and computer verification
 are again used, where we verify that if $X_2'<_{\rho'}Y'\lessdot_{\rho'}X_1'$, then $Y'$ and 
  $X_1'$ commute with respect to $w'$. Iterating, we see that any $Y$ between $[x_1,x_3]$
 and $[x_2,x_3]$ in $\rho$ commutes with $[x_2,x_3]$. Combined, we conclude that 
$P(X)$ can be commuted into a saturated chain, so $P(X)$ is
flippable in $[\rho]$.

  Finally, the same argument is adapted for the analogous result when
$\Rev(\rho)\neq \emptyset$.  For this, we choose $X$ so that $P(X)$ is
antilex-unnested in $\rho$. We then rule out congruence relations and
use deletion to reduce to a corresponding check in $\S_4$.
\end{proof}

\begin{remark}\label{rem:nonnested.does.not.always.imply.flippable}
\cref{lem:flippable_Bn2_Cn3} shows that all lex-unnested and
antilex-unnested packets for $\rho\in \A_w(n,2)$ are flippable in $[\rho]\in \B_w(n,2)$.
However, a packet being unnested is no longer sufficient for being
flippable for $k\geq 3$.  For example, consider $n=5$, $k=3$,
$w=(5,4,3,2,1) \in \S_{5}$, and the admissible order
\[\rho = (123,124,125,134,135,145,234,235,245,345) \in
\A_{w}(5,3)
\]
with the empty reversal set.  Then $P(1245)$ is not flippable in
$[\rho]$ even though it is lex-unnested.  In this case, $P(1234)$ is
lex-unnested and flippable in $[\rho]$.  
\end{remark}

\begin{lemma}\label{cor:affine_min_max}
    The poset $\B_w(n,2)$ is a ranked poset of rank $|\Inv_3(w)|$ with a unique minimal element with reversal set $\emptyset$ and unique maximal element with reversal set $\Inv_3(w)$.
\end{lemma}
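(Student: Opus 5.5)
The plan is to follow the template of \cref{thm:Cnk_min_max}, with \cref{lem:flippable_Bn2_Cn3} taking the place of the suffix-set argument. The rank function will be $\rev([\rho])=|\Rev(\rho)|$. First I would check that this is well defined and changes by exactly one along each cover. Well-definedness follows from \cref{lem:reversal_is_consistent.affine}, since reversal sets are constant on commutation classes. The affine analog of \cref{lem:packet.flips.Sn} shows that a lex-to-antilex packet flip at $P(X)$ replaces $\Rev(\rho)$ by $\Rev(\rho)\cup\{X\}$. Because the covers in $\B_w(n,2)$ are exactly these flips on commutation classes, every cover raises $\rev$ by exactly one. This also confirms antisymmetry, so $\B_w(n,2)$ is a genuine poset. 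By \cref{cor:empty_full_acyclic} there are classes $[\rho_{\min}]$ and $[\rho_{\max}]$ with reversal sets $\emptyset$ and $\Inv_3(w)$. By \cref{cor:reversals_unique.affine} each of these classes is the unique class with its reversal set.

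Next I would prove that $[\rho_{\min}]$ is the unique minimal element. Take any class $[\rho]$ with $\Rev(\rho)\neq\emptyset$. \cref{lem:flippable_Bn2_Cn3} gives some $X\in\Rev(\rho)$ such that $P(X)$ is antilex-to-lex flippable in $[\rho]$. Flipping it produces a class $[\sigma]$ with $[\sigma]\lessdot[\rho]$ and $\Rev(\sigma)=\Rev(\rho)\setminus\{X\}$. So no class with nonempty reversal set is minimal. Inducting on $|\Rev(\rho)|$ then yields a saturated chain from $[\rho_{\min}]$ up to $[\rho]$ of length $|\Rev(\rho)|$, which means $[\rho_{\min}]\le[\rho]$ for every class. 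The dual half of \cref{lem:flippable_Bn2_Cn3} gives the same conclusion in the other direction. Every class with $\Rev(\rho)\neq\Inv_3(w)$ has a lex-to-antilex flip upward, and inducting on $|\Inv_3(w)|-|\Rev(\rho)|$ shows every class lies below $[\rho_{\max}]$.

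For rankedness, note that every saturated chain from $[\rho_{\min}]$ to $[\rho]$ changes $\rev$ by one at each step. Such a chain therefore has length $|\Rev(\rho)|$, so the rank of $[\rho]$ equals $\rev([\rho])$. The top element has rank $|\Inv_3(w)|$, which is finite because $k=3>1$. The step I expect to need the most care is the claim that flipping a packet in a representative really produces a cover between classes, and that the reversal set changes as stated. This is the affine packet-flip lemma, whose proof rests on \cref{lem:affine.poset.packet.antichain}: the packet is an antichain in $\P_w(n,2)$, so permuting it keeps a linear extension. The substantive existence of flippable packets has already been supplied by \cref{lem:flippable_Bn2_Cn3}, so this argument never needs the acyclicity result \cref{thm:affine_GR}.
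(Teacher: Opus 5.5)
Your proof is correct and has the same skeleton as the paper's: \cref{lem:flippable_Bn2_Cn3} shows that no class with reversal set other than $\emptyset$ (resp.\ $\Inv_3(w)$) is minimal (resp.\ maximal). In both arguments the ranking by $|\Rev|$ comes from covers being single packet flips.

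The one real difference is the uniqueness step. The paper deduces it from the isomorphism $\B_w(n,2)\cong\C_w(n,3)$ of \cref{thm:affine_Bn2_Cn3}, which rests on the acyclicity theorem \cref{thm:affine_GR}. You instead use only the injectivity of $\Rev$ from \cref{cor:reversals_unique.affine}, together with your explicit up/down flipping induction. Your version therefore shows, correctly, that this lemma does not depend on \cref{thm:affine_GR}.
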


\begin{proof}
    Let $[\rho]\in \B_w(n,2)$. \cref{lem:flippable_Bn2_Cn3} implies
that if $\Rev(\rho)\neq \Inv_3(w)$ (resp. $\Rev(\rho)\neq \emptyset$),
then $[\rho]$ cannot be maximal (resp. minimal). Hence, a maximal
element in $\B_w(n,2)$ must have reversal set $\Inv_3(w)$ and a
minimal element must have reversal set $\emptyset$. Uniqueness of
these elements follows from the isomorphism $\B_w(n,2) \cong
\C_w(n,3)$ in \cref{thm:affine_Bn2_Cn3}.  The fact that
$\B_w(n,2)\cong \C_w(n,3)$ are ranked by size follows from the same
argument as in \cref{thm:Cnk_min_max}.
\end{proof}

We next prove the correspondence between maximal chains in $\C_w(n,3)$
and admissible orders in $\A_w(n,3)$. We then conclude with a proof of
\cref{thm:affine}, from which \cref{cor:cGgraph} follows.

\begin{lemma}\label{lem:affine_chains_to_admissible_orders}
 There is a natural bijection between maximal chains of
$\C_w(n,3)$ and the admissible orders in $\A_w(n,3)$.
\end{lemma}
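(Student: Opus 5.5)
We will use the successive prefix map $\mathcal{R}$ of \cref{lem:affine_admissible_to_consistent_set} with $k=3$. That proposition already gives that $\mathcal{R}:\A_w(n,3)\to\{\text{maximal chains of }\C_w(n,3)\}$ is a well-defined injection. Its image consists of the maximal chains starting at $\emptyset$ and ending at $\Inv_3(w)$. So the only missing ingredient is that every maximal chain of $\C_w(n,3)$ runs from $\emptyset$ to $\Inv_3(w)$. For this it suffices to know that $\C_w(n,3)$ has unique minimal element $\emptyset$ and unique maximal element $\Inv_3(w)$.

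This follows from \cref{cor:affine_min_max} together with the isomorphism $\B_w(n,2)\cong\C_w(n,3)$ of \cref{thm:affine_Bn2_Cn3}. Under $\Rev_{n,2,w}$, the unique minimal and maximal classes of $\B_w(n,2)$ map to $\emptyset$ and $\Inv_3(w)$. Since the map is a poset isomorphism, these are the unique minimal and maximal elements of $\C_w(n,3)$.

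Next, let $C=(R_0\lessdot_1 R_1\lessdot_1\cdots\lessdot_1 R_N)$ be any maximal chain. Maximality of the chain means $R_0$ is minimal and $R_N$ is maximal in $\C_w(n,3)$. Hence $R_0=\emptyset$ and $R_N=\Inv_3(w)$, and since each step is single step inclusion, $N=|\Inv_3(w)|$. Define $\rho$ by $R_i\setminus R_{i-1}=\{\rho_i\}$. As in the proof of \cref{thm:chains_to_Ank}, $\rho$ is then a total order on $\Inv_3(w)$.

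We then check that $\rho$ is admissible. Each $R_i$ is a lower order ideal of $\P_w(n,3)$, so $\rho$ is a linear extension of $\P_w(n,3)$; here both the quasi-inversion and the congruence relations are handled by the same ideal argument. Each $R_i$ satisfies the MSZ condition for every $X\in\Inv_4(w)$, so along the chain the elements of $P(X)$ are added forming a prefix or a suffix at every stage. This forces $\rho|_{P(X)}$ to be the lex or antilex order. Hence $\rho\in\A_w(n,3)$ and $\mathcal{R}(\rho)=C$, which gives surjectivity.

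The main obstacle is the uniqueness of the extremal elements of $\C_w(n,3)$. In the affine setting this cannot be obtained through suffix sets, because acyclicity is not available in $\Inv_4(w)$. It rests instead on \cref{thm:affine_GR}, through \cref{thm:affine_Bn2_Cn3}, and on the flippability result \cref{lem:flippable_Bn2_Cn3}; we take all of these as given. The remaining steps are verbatim adaptations of the classical argument. The only point needing care is that $\Inv_3(w)$ is finite and the packets of $4$-inversions are well-defined since $k=3>1$, and both hold.
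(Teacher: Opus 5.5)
Your proposal is correct and follows the paper's proof. The paper also takes injectivity of $\mathcal{R}$ from \cref{lem:affine_admissible_to_consistent_set}, uses \cref{thm:affine_Bn2_Cn3} and \cref{cor:affine_min_max} to show every maximal chain of $\C_w(n,3)$ runs from $\emptyset$ to $\Inv_3(w)$, and then reruns the surjectivity argument of \cref{thm:chains_to_Ank} with $k=3$ and the affine permanent poset.
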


\begin{proof}
We now use the same map $\mathcal{R}$
from \cref{def:consistentchaintoadmissibleseq}.   \cref{lem:affine_admissible_to_consistent_set} shows
 $\mathcal{R}$ is a well-defined injective function. For surjectivity, we first note that
\cref{thm:affine_Bn2_Cn3,cor:affine_min_max}
show that maximal chains in $\C_w(n,3)$ all have the form
\[C=(\emptyset = R_{0}\lessdot_{1} R_1 \lessdot_{1} R_2 \lessdot_{1} \dots \lessdot_{1} R_{|\Inv_{3}(w)|}).\]
Now apply the surjectivity argument from \cref{thm:chains_to_Ank} in the special case of $k=3$, where
references to $\P_w(n,3)$ involve the generalization for affine permutations.
\end{proof}

\begin{proof}[Proof of \cref{thm:affine}] 
Part (a) follows from \cref{lem:An1.weak.order.affine} and \cref{cor:affine_Cwn2_isomorphism}, and the latter also implies part (b). The bijections in (c) are established in \cref{lem:Bwn2_and_graph}, and the correspondence established between braid relations and packet flips implies (d). Part (e) follows from combining \cref{thm:affine_Bn2_Cn3} and \cref{cor:affine_min_max}. Part (f) then follows by combining \cref{lem:Bwn2_and_graph} and \cref{cor:reversals_unique.affine}. Finally, (g) is given in \cref{lem:affine_chains_to_admissible_orders}.
\end{proof}

\begin{proof}[Proof of \cref{cor:cGgraph}]
By \cref{thm:affine}, we know $\cG(w)$ is isomorphic to the Hasse
diagram of $\C_w(n,3)$ as a directed graph, and the isomorphism sends
a commutation class of reduced words to the unique reversal set of the
corresponding reflection orders.  Thus, we will use the Hasse diagram of 
$\C_w(n,3)$ to prove these results about $\cG(w)$. 

For Part (a), \cref{thm:affine} shows that
$\C_w(n,3)$ is a ranked poset with a unique minimal element
 and a unique maximal element. This implies $\cG(w)$ is connected, as 
there must be a path from any vertex to the unique minimal or maximal
 element. Additionally, $\cG(w)$ is bipartite by using the even vs odd rank
 as the partition of the vertices. In general, if $\cG(w)$ is a path, then it is 
a tree. If the Hasse diagram of $\C_w(n,3)$ is a tree that is not a path,
then $\cG(w)$ has a vertex of degree at least three. At least two branches 
would have to merge in order to reach the unique minimal or maximal element
in $\C_w(n,3)$, producing an
undirected cycle. Thus, it must be that if $\cG(w)$ is a tree, then it is a path. 
We conclude Part (a) holds.

For Part (b), observe that the shortest path in the Hasse diagram of
$\C_w(n,3)$ from the minimal element $\emptyset$ to the maximal
element $\Inv_{3}(w)$ has length $|\Inv_{3}(w)|$ since the order
relation is single step inclusion.  Thus, the diameter of $\cG(w)$ is
at least $|\Inv_{3}(w)|$.  On the other hand, if $R,S \in \C_w(n,3)$,
then there is a path in the undirected Hasse diagram $\C_w(n,3)$ from
$R$ to $S$ through $\emptyset$ of length $|R|+|S|$ and a path of
length $2\cdot |\Inv_{3}(w)|-|R|-|S|$ through $\Inv_{3}(w)$.  One of these
two paths must have length less than or equal to
$|\Inv_{3}(w)|$. Thus, the diameter of the undirected Hasse diagram of
$\C_w(n,3)$ is $|\Inv_{3}(w)|$. In particular, the diameter is zero if and only if
$\Inv_3(w)=\emptyset$, which is precisely when $w$ is $321$-avoiding. 
We conclude Part (b) holds.  
\end{proof}

\subsection{Acyclicity of $G_R$ when $R\in \C_w(n,3)$}\label{sec:affineGR}

In this section, we will prove \cref{thm:affine_GR}. Throughout, fix $w\in
\affineS_n$, $ k=2$, and $R\in \C_w(n,3)$. In this case, $G_{R}$ is a graph
on $\Inv_{2}(w)$ that we will show is acyclic. 

When $w\in \S_n$, our approach in \Cref{sub:G_R} was to partition the
vertices of $G_R$ into three subsets $V_0(G_R)$, $V_1(G_R)$, and
$V_2(G_R)$, where $V_1(G_R)$ consisted of inversions containing
$n$. However, those methods and results do not immediately translate
to affine permutations, and examples like the graph in
\cref{fig:m3m287dot.pdf} show that we do not expect direct
generalizations of results like \cref{lem:partition} to hold. Hence,
we will need a different approach when $w\in \affineS_n$ and $R\in
\C_w(n,3)$.  We start by analyzing the congruence edges in any
potential cycle in $G_R$.  By a cycle, we will always mean a directed
cycle in $G_{R}$.

\begin{lemma}\label{lem:shorten1}
    Suppose $X,Y\in \Inv_2(w)$ are not congruent and $X\to Y\to Y+v_1\to \ldots \to Y+mv_1$ is a path in $G_R$ for some $m>0$. Then $X\to Y+mv_1$ is an edge in $G_R$.
\end{lemma}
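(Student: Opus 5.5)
By iterating, it suffices to treat the case $m=1$. Suppose we know that $X\to Y$ together with $Y\to Y+v_1$ forces $X\to Y+v_1$. Then $X\to Y+v_1\to Y+2v_1\to\cdots$ is again a path of the same form, with one fewer congruence step. The hypothesis still applies, because $Y+v_1\equiv Y$, so $X$ is not congruent to $Y+v_1$. Induction on $m$ then finishes. So fix $Y=[a,b]$, $Y+v_1=[a,b+n]=[a-n,b]$, both in $\Inv_2(w)$, and an edge $X\to Y$.

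\textbf{Setting up the cases.} Since $X\not\equiv Y$, the edge $X\to Y$ is not a congruence edge. It is therefore a quasi-inversion edge or a full-packet edge coming from some $Z\in\binom{\Z}{3}_n$ with $\{X,Y\}\subseteq P(Z)$. Hence $X$ and $Y$ share exactly one value, and I split on which one.

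\emph{Case A:} $X$ contains $a$, so $X=[a,c]$ or $[c,a]$ with $Z=\{a,b,c\}$. Here I would compare $Z$ with $Z'=\{a,b+n,c\}$, which contains both $X$ and $Y+v_1=[a,b+n]$.

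\emph{Case B:} $X$ contains $b$. Here I would use the representative $Y+v_1=[a-n,b]$ and compare $Z$ with $Z'=\{a-n,b,c\}$.

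In each case, the relative position of $c$ among the values determines which of $X_1,X_2,X_3$ is $X$, and which pairs of $P(Z')$ lie in $\Inv_2(w)$. I would get these memberships from \Cref{lem:inversion.sets}, using $[a,b],[a,b+n]\in\Inv_2(w)$ together with the type of $Z$ (quasi-inversion versus $3$-inversion).

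\textbf{Key steps, in order.}
\begin{enumerate}
\item Enumerate the possible orderings of $c$ relative to $a,b,b+n$ (respectively $a-n,a,b$). A few orderings are excluded outright: for instance, \Cref{lem:inversion.sets}(c) rules out a pair $[b,c]$ being an inversion when $c$ is congruent to $a$ in the wrong way.
\item For each remaining ordering, show that $Z'$ is a quasi-inversion or a $3$-inversion for $w$. Check, using the parity rule in \Cref{def:permanent.poset.}, that $X$ and $Y+v_1$ sit in $P(Z')$ in positions giving an edge in the same direction as $X\to Y$.
\item If $Z'$ is a quasi-inversion, the edge $X\to Y+v_1$ is a quasi-inversion edge and we are done.
\item If $Z'\in\Inv_3(w)$, the edge direction depends on whether $Z'\in R$. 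Determine this from whether $Z\in R$ (or from the quasi-inversion type of $Z$), using two facts. First, $Z$ and $Z'$ are related by a congruence relation $Z'=Z\pm v_i^{(n,3)}$ in $\P_w(n,3)$, so the lower order ideal property of $R$ gives one implication. Second, when $Z$ is a quasi-inversion or when the direction needs the converse, use the MSZ condition on an auxiliary $4$-set such as $\{a,b,b+n,c\}$ (or $\{a-n,a,b,c\}$), together with a quasi-inversion relation in $\P_w(n,3)$.
\end{enumerate}

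\textbf{Main obstacle.} I expect step 4 to be the hardest: the full-packet case where we must show $Z'\in R$ exactly when the orientation requires it. The congruence relation in $\P_w(n,3)$ only gives one direction of the implication between $Z\in R$ and $Z'\in R$. The reverse direction is exactly where a cycle could otherwise hide.

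My first attempt would be the nesting argument from \eqref{eq:nesting.dueto.congruence.requirement}: take an admissible-style ordering on $P(Z)\cup P(Z')$ and show that the wrong membership would force $Y+v_1$ before $Y$, contradicting $Y<_{\P_w(n,2)}Y+v_1$. If that does not cover all subcases, I would check the leftover configurations directly. They involve only finitely many relative patterns of four values modulo $n$, so they can be enumerated, in the spirit of the computer checks used in \Cref{lem:partition}.
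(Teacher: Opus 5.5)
Your skeleton matches the paper's proof. You split on which value $X$ shares with $Y$, pass to the shifted triple $Z'$ (the paper's $\{y,x_1+mn,x_2+mn\}$), decide quasi-inversion versus $3$-inversion with \Cref{lem:inversion.sets}, and move membership in $R$ along a congruence relation in $\P_w(n,3)$; the reduction to $m=1$ is fine. The gap is step 4, which is the entire content of the lemma: you leave it open and misdiagnose it. You expect cases that need the converse of the lower-order-ideal implication, or where a quasi-inversion $Z$ produces a $3$-inversion $Z'$, and none of the tools you reserve for them works:
\begin{itemize}
\item The sets $\{a,b,b+n,c\}$ and $\{a-n,a,b,c\}$ contain two congruent entries, so they are not in $\binom{\Z}{4}_n$ and carry no MSZ condition.
\item The nesting inequality \eqref{eq:nesting.dueto.congruence.requirement} presupposes an admissible order with reversal set $R$. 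That is exactly what acyclicity of $G_R$, the purpose of this lemma, is meant to supply, so using it is circular.
\item The deletion reduction behind the paper's computer checks needs values distinct modulo $n$, which $b$ and $b+n$ are not.
\end{itemize}

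The missing idea is that no such case arises: the parity convention for congruence relations makes the lower-order-ideal implication point the right way every time. Take your Case B, with $X=[c,b]$ and $Y+v_1=[a-n,b]$.

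If $c<a$, the edge is full lex, so $Z=[c,a,b]\notin R$. There are two subcases.
\begin{itemize}
\item If $a-n<c$, then $[a,c+n]\notin\Inv_2(w)$ by $n$-periodicity. So $Z'=[a-n,c,b]$ is a quasi-inversion whose relation is $X<Y+v_1$.
\item If $a-n>c$, then $Z'=[c,a-n,b]\in\Inv_3(w)$ lies above $Z$ via $Z<Z+v_2<Z+v_2-v_1=Z'$. Hence $Z'\notin R$, and $X\to Y+v_1$ is a full lex edge.
\end{itemize}

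If $a<c<b$, the edge is full antilex or a quasi-inversion edge.
\begin{itemize}
\item In the antilex case, $Z\in R$ and $Z'=[a-n,c,b]=Z+v_1$. Either $Z'$ is a quasi-inversion with relation $X<Y+v_1$, or it is a $3$-inversion below $Z$, hence in $R$, giving a full antilex edge $X\to Y+v_1$.
\item In the quasi-inversion case, $[a,c]\notin\Inv_2(w)$ forces $[a,c+n]\notin\Inv_2(w)$ by \Cref{lem:inversion.sets}(c). So $Z'$ is again a quasi-inversion with relation $X<Y+v_1$.
\end{itemize}

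Case A is the mirror image whenever $Z'$ is a $3$-inversion: in the lex case $Z'=Z+v_2$ lies above $Z\notin R$, and in the antilex case $Z'=Z+v_1-v_2$ lies below $Z\in R$.

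So a lex edge always comes with $Z\notin R$ and $Z'$ above $Z$, and an antilex edge always comes with $Z\in R$ and $Z'$ below $Z$. A quasi-inversion $Z$ always yields a quasi-inversion $Z'$. Only the lower-order-ideal property of $R$ is needed, exactly as in the paper; no converse and no MSZ condition enter.
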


\begin{proof}
Express $X=[x_1,x_2]$, so $X\to Y$ implies that $Y$ has either the
form $Y=[y,x_2]$ or $Y=[x_1,y]$. Consider when $Y=[y,x_2]$. We will
prove the result based on the type of edge between $X$ and $Y$, which
is not a congruence edge by hypothesis.
\begin{enumerate}[label=(\alph*)]

\item If $X\to Y$ is a full lex edge, then $x_1<y<x_2$ and
$[x_1,y,x_2]\in \Inv_3(w)\setminus R$. If $x_1+mn>y$, then $[x_1,y]\in
\Inv_2(w)$ implies $[y,x_1+mn]\notin \Inv_2(w)$ by
\cref{lem:inversion.sets}. This implies $[y,x_1+mn,x_2+mn]$ is a
quasi-inversion, and the corresponding edge in $G_R$ is $X\to
Y+mv_1=[y,x_{2}+mn]$ as claimed. If $x_1+mn<y$, then $[x_1,y]\in
\Inv_2(w)$ implies $[x_1+mn,y]\in \Inv_2(w)$ by
\cref{lem:inversion.sets}. Since $Y=[y,x_2]\in \Inv_2(w)$ and
$Y+mv_1=[y, x_2+mn]\in \Inv_2(w)$, we conclude
$[x_1,y,x_2+mn],[x_1+mn,y,x_2+mn]\in \Inv_3(w)$ by
\Cref{lem:adjacent.pairs}. We then have the congruence
relations \[[x_1,y,x_2]<_{\P_w(n,3)} [x_1,y,x_2+mn]<_{\P_w(n,3)}
[x_1,y-mn,x_2]=[x_1+mn,y,x_2+mn]. \] As $[x_1,y,x_2]\notin R$,
consistency implies $[x_1+mn,y,x_2+mn]\notin R$, so $X\to Y+mv_1$ is a
full lex edge in $G_R$.

\item If $X\to Y$ is a full antilex edge, then $[y,x_1,x_2]\in R$. We
now consider $[y-mn,x_1,x_2]=[y,x_1+mn,x_2+mn]$. If this is a
quasi-inversion, then the corresponding edge in $G_R$ has the form
$X\to Y+mv_1$. If this is a $3$-inversion, then
$[y,x_1+mn,x_2+mn]<_{\P_w(n,3)} [y,x_1,x_2]$, so $[y,x_1,x_2]\in R$
implies $[y,x_1+mn,x_2+mn]\in R$. We again have the edge $X\to
Y+mv_1$.

\item If $X\to Y$ is a quasi-inversion edge, then this must come from
the quasi-inversion $[y,x_1,x_2]$ and $[y,x_1]\notin \Inv_2(w)$.  The
latter implies $[y-mn,x_1]\notin \Inv_2(w)$ by
\cref{lem:inversion.sets}. Since $Y+mv_1\in \Inv_2(w)$, we conclude
that $[y-mn,x_1,x_2]=[y,x_1+mn,x_2+mn]$ is also a quasi-inversion, and
the corresponding edge in $G_R$ has the form $X\to Y+mv_1$.
\end{enumerate}
The case of $Y=[x_1,y]$ is verified similarly. 
\end{proof}

\begin{corollary}\label{cor:shorten1}
If there exists a cycle $C$ in $G_R$, then there exists a cycle on a subset of the vertices in $C$ that contains no congruence edges. 
\end{corollary}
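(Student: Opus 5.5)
Given a cycle $C$ in $G_R$ that contains a congruence edge, I will shorten it using \cref{lem:shorten1} and then argue by induction on the number of congruence edges in the cycle. Vertices of $G_R$ lie in $\Inv_2(w)$ with $k=2$, so the only congruence relations are by $v_1$. Each congruence edge is therefore $Z\to Z+v_1$ or $Z+v_1\to Z$; by \Cref{def:congruence.poset} with $k-i=1$ odd, it is always $Z\to Z+v_1$. Along $C$, I group the congruence edges into maximal runs $Y\to Y+v_1\to\cdots\to Y+mv_1$ with $m>0$. Such a run is preceded in $C$ by some vertex $X$ with $X\to Y$ not a congruence edge. Since the run is maximal, $X$ and $Y$ are not related by $\pm v_1$; I would also check that $X$ is not congruent to $Y$ at all, since every edge of $G_R$ between congruent vertices is a congruence edge.

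With this setup, \cref{lem:shorten1} gives an edge $X\to Y+mv_1$ in $G_R$. I replace the segment $X\to Y\to\cdots\to Y+mv_1$ of $C$ by this single edge. The result is a closed directed walk on a subset of the vertices of $C$ with strictly fewer congruence edges, and this new edge is itself not a congruence edge because $X\not\equiv Y+mv_1$. Any closed directed walk contains a directed cycle on a subset of its vertices, so I extract one; extraction does not increase the number of congruence edges. Iterating finitely many times yields a cycle with no congruence edges.

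I expect the degenerate cases to be the main obstacle. First, $C$ might consist entirely of congruence edges; this cannot happen, because congruence edges strictly increase $\wt{F}_2\circ\wt{M}$ by \cref{lem:affine_diagonal_differences}, so the cycle would not close. Second, after shortcutting, the walk might collapse to a single vertex when $X=Y+mv_1$. This is impossible because $X$ and $Y$ are not congruent. Finally, I must confirm that the predecessor $X$ of a maximal run is not congruent to $Y$; I will do this by noting that all quasi-inversion and full-packet edges change exactly one entry, to a value that is not congruent to the original.
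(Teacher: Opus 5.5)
Your proposal is correct and follows the paper's proof. You rule out a cycle made only of congruence edges (you via $\wt{F}_2\circ\wt{M}$, the paper via antisymmetry of $\P_w(n,2)$), then repeatedly use \cref{lem:shorten1} to collapse each run $X\to Y\to\cdots\to Y+mv_1$ into the single non-congruence edge $X\to Y+mv_1$. Your extra checks supply details the paper leaves implicit: that quasi-inversion and full-packet edges join non-congruent vertices, which gives both the lemma's hypothesis and the fact that the new edge is not a congruence edge.
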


\begin{proof}
    A cycle in $G_R$ cannot consist of only congruence edges, as this
    would contradict \cref{lem:affine.poset.packet.antichain}. Hence,
    it must contain at least one edge between non-congruent elements
    $X\to Y$. Whenever this is followed by congruence edges $Y\to
    Y+v_1\to \ldots Y+mv_1$, \cref{lem:shorten1} implies that there is
    a noncongruence edge $X\to Y+mv_1$ in $G_R$. Hence, we can iteratively replace sequences of congruence edges with a single non-congruence edge. This process eventually terminates since the number of vertices in the cycle decreases with each application of \cref{lem:shorten1}.
\end{proof}

For any potential cycle $C$ in $G_R$, we will use $|C|$ to denote its
length, which is its number of edges or equivalently its number of
vertices.  We next rule out cycles of short length.

\begin{lemma}\label{lem:no_short_cycles}
    The graph $G_R$ cannot contain a cycle $C$ with $|C|\leq 3$.
\end{lemma}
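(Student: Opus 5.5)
We first reduce, via \cref{cor:shorten1}, to cycles containing no congruence edges. The shortening procedure only passes to a subset of the vertices of the original cycle, so a cycle with $|C|\leq 3$ yields a congruence-free cycle of length at most $3$. A cycle of length $1$ is impossible, since $G_R$ has no loops: every edge joins distinct elements differing in one position or by a nonzero $v_1$.

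For length $2$ we need $X\to Y$ and $Y\to X$. Both edges join the same pair $\{X,Y\}=\{Z_1,Z_2\}$ or $\{Z_2,Z_3\}$ for the unique $Z=X\cup Y\in\binom{\Z}{3}_n$; uniqueness holds because $X,Y$ share exactly one value up to the representative choice. That $Z$ is either a quasi-inversion or a $3$-inversion, but not both, and in either case it contributes a single oriented edge. A quasi-inversion gives one parity-determined edge. A full packet gives a red or blue edge according to whether $Z\in R$, and never both. Hence no $2$-cycle exists.

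For length $3$, write the cycle as $X\to Y\to W\to X$ with pairwise noncongruent neighbors. Each pair of consecutive vertices shares a value, so two cases arise. In the first, all three $2$-sets lie in a single $3$-set $Z$, so $\{X,Y,W\}=P(Z)$. If $Z\in\Inv_3(w)$, the edges form a chain in lex or antilex order and cannot close a cycle. If $Z$ is a quasi-inversion, only two of the three lie in $\Inv_2(w)$, a contradiction. In the second case, the union of $X,Y,W$ has four residues, $\{z_1,z_2,z_3,z_4\}$. Here we choose representatives so that all three vertices are compatible with $A=\{z_1,\dots,z_4\}$. Finding such distinct residues modulo $n$ is the delicate point: going around the cycle may force a shift by a multiple of $n$. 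If the values are distinct mod $n$, \cref{lem:finite_deletion}(b) maps the cycle to a $3$-cycle in $G_{R'}$ with $R'\in\C_{w'}(4,3)$ and $w'\in\S_4$, which is impossible by \cref{lem:GRacyclic}.

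The main obstacle is the wraparound case. There the three vertices are congruent-distinct but their representatives involve, for example, both $z$ and $z+mn$, so they are not simultaneously compatible with a single set of distinct residues. I would handle this by explicit casework on which value is shifted, using \cref{lem:inversion.sets}(c) and the congruence relations in $\P_w(n,3)$. The aim is to show that one edge of the cycle implies an edge pointing the opposite way, in the same spirit as the proof of \cref{lem:shorten1}: replace $Z$ by $Z\pm mv_i$ and use that $R$ is a lower order ideal to transfer membership in $R$. If this casework becomes unwieldy, a fallback is to reduce to a finite computer check over small $\affineS_n$, as the paper does elsewhere.
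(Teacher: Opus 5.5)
Your treatment of lengths $1$ and $2$ is fine, but the length-$3$ argument has a genuine gap. The case you call the main obstacle is left to casework you never carry out, and a computer check over small affine symmetric groups is not a finite verification without a further reduction. The case split is also misplaced: the dichotomy ``all three vertices lie in one packet $P(Z)$'' versus ``four residues'' is not exhaustive, and the wraparound you worry about cannot occur in the four-residue case. Consecutive vertices share exactly one residue, and pairwise noncongruent $2$-inversions have distinct residue sets by \cref{lem:congruence.order}. So four residues force all three vertices to contain one common residue class $r$. Shift each vertex independently so that its entry in class $r$ is a fixed value $a$. This gives vertices $\{a,x\},\{a,y\},\{a,z\}$ with $a,x,y,z$ automatically distinct modulo $n$, and \cref{lem:finite_deletion} with \cref{lem:GRacyclic} finishes, exactly as in the paper. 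The configuration that genuinely involves a shift is the \emph{three}-residue one not contained in a single packet, for example $X=\{a,b\}$, $Y=\{a,c\}$, $W=\{c,b+mn\}$ with $m\neq 0$. Your proposal never addresses this case.

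The missing observation is that every non-congruence edge joins $Z_i$ and $Z_{i+1}$ for some $Z=[z_1,z_2,z_3]$. These share $z_3$ (when $i=1$) or $z_1$ (when $i=2$) in the same position, so the shared value is the maximum or minimum of the union. In the triangle configuration above, this requires $a$ to be extreme in $\{a,b,c\}$, $c$ to be extreme in $\{a,c,b+mn\}$, and $b$ to be extreme in $\{a,b,c-mn\}$. If $a$ is the minimum, then $c$ must be the maximum of $\{a,c,b+mn\}$ and $b$ the maximum of $\{a,b,c-mn\}$. That gives $b+mn<c$ and $c<b+mn$, and the case where $a$ is the maximum is symmetric. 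So no triangle $3$-cycle exists; taking $m=0$ recovers your single-packet case. Hence every $3$-cycle of pairwise noncongruent vertices has the form $[a,x]\to[a,y]\to[a,z]\to[a,x]$ or $[x,a]\to[y,a]\to[z,a]\to[x,a]$ with one common value $a$. This is precisely the form the paper's proof reduces to before deleting down to a permutation in $\mathfrak{S}_4$.
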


\begin{proof}
    It is clear that there are no cycles with $|C|=2$, as the
congruence relations, quasi-inversion relations, and full-packet
relations that generate edges in $G_R$ cannot create a 2-cycle by
construction. Now suppose there is a cycle $C$ with $|C|=3$.  Observe
that any two vertices connected by an edge in $G_{R}$ differ in
exactly one position since $k=2$.  Then this either has the form
$[a,x]\to[a,y]\to[a,z]\to [a,x]$ or $[x,a]\to[y,a]\to [z,a]\to
[x,a]$. If two of the elements in $\{x,y,z\}$ are congruent modulo
$n$, then \cref{cor:shorten1} implies that there exists a cycle of
length $2$, which cannot be true. Hence, the four elements
$\{a,x,y,z\}$ are all distinct modulo $n$ by
\cref{lem:inversion.sets}. Then we can view the elements of $C$ as
$2$-subsets of some $Z=[z_1,z_2,z_3,z_4]\in \binom{\Z}{4}_n$. Let
$A=\{z_1,z_2,z_3,z_4\}$, and construct $w'=\Fl_{\bar{A}}(w)\in \S_4$
and $R'=R\setminus \bar{A}$ as in \cref{def:finite_deletion}. Then
\cref{lem:finite_deletion} shows that a cycle $C$ of length $3$ in
$G_R$ implies the existence of a cycle of length $3$ in
$G_{R'}$. However, such a cycle cannot exist in $G_{R'}$ by
\cref{lem:GRacyclic}, as $w'\in \S_4$ and $R'\in \C_{w'}(4,3)$.
\end{proof} 

\cref{cor:shorten1} shows that if there is a cycle in $G_R$, there must be one with no congruence edges. The next results show the existence of cycles with additional properties.

\begin{lemma}\label{lem:shorten2}
    If there exists a cycle $C$ in $G_R$, then there exists a cycle $C'$ in which the following property holds: \[\text{for all $i\in [n]$, all elements of $\Inv_2(w)$ containing $i$ in $C'$ are consecutive.}\]
	Furthermore, when $C$ does not satisfy this property, $C'$ can be chosen so that $|C'|< |C|$. 
\end{lemma}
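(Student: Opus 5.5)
We would argue by strong induction on the length $|C|$, taking as the basic reduction a shortcut step that strictly shortens the cycle. By \Cref{cor:shorten1} we may assume from the start that $C$ has no congruence edges, since that passage only removes vertices. With $k=2$, every non-congruence edge $X\to Y$ of $G_R$ joins two $2$-inversions that share exactly one integer; we call it the pivot of the edge. The property in the statement concerns representatives in $[n]$, so we read ``contains $i$'' as containing some integer congruent to $i$ modulo $n$. Suppose the property fails for some $i$. Then $C$ contains two vertices $X,Y$ that both involve the residue $i$, and both arcs of $C$ between them contain a vertex not involving $i$.

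Normalize by translating representatives so that $X=[\ldots i\ldots]$ and $Y$ involves some $i+mn$. If $m=0$, then $X$ and $Y$ share the integer $i$. We want to show that either $X\to Y$ or $Y\to X$ is an edge of $G_R$, or that $X,Y$ together with the vertices on one of the arcs fit inside a packet of a single $4$-set. Writing $X=[i,a]$ or $[a,i]$ and $Y=[i,b]$ or $[b,i]$, the integers $i,a,b$ determine a triple $[\cdot,\cdot,\cdot]$ in $\binom{\Z}{3}_n$ (when $a\not\equiv b$). By \cref{cor:characterize.ref.order.new} and \Cref{lem:intersection_size}, if this triple is an inversion or a quasi-inversion, then $X$ and $Y$ are comparable via a full-packet or quasi-inversion edge. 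Whichever orientation this edge has, it splits $C$ into two shorter cycles, one through each arc, and at least one of them is a genuine directed cycle. That cycle is strictly shorter than $C$ and avoids at least one of the offending vertices.

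If the triple is neither an inversion nor a quasi-inversion, then by \cref{lem:inversion.sets} the inversion $[a,b]$ is absent and the two pairs $X,Y$ are ``incomparable types''. In that case we look at the arc from $X$ to $Y$. Its first and last edges pivot on the entries other than $i$, and we would try to show that this forces a local configuration on at most four residues. Exactly as in \Cref{lem:no_short_cycles}, we then delete to $\S_4$ or $\S_5$ via \cref{lem:finite_deletion}, and \cref{lem:GRacyclic} or a finite check yields a chord shortcutting the arc.

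The case $a\equiv b$ or $m\neq 0$ is handled with \Cref{lem:shorten1}-type arguments: we use congruence relations in $\P_w(n,2)$ and the lower-order-ideal property of $R$ in $\P_w(n,3)$ to replace $Y$ by a congruent vertex sharing the integer $i$ with $X$, which reduces to the previous case. Iterating the shortening, which must terminate since lengths strictly decrease and cycles have length at least $4$ by \Cref{lem:no_short_cycles}, yields $C'$ with the stated property.

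The main obstacle is the chord-producing step when $X$ and $Y$ share $i$ but $[a,b]$ is not an inversion. There no single packet relates $X$ and $Y$ directly, so we must exploit the intermediate arc. The first thing to try is to choose the pair $X,Y$ minimizing the length of one separating arc, so that the arc has one or two intermediate vertices, and then reduce by finite deletion to a computer-checkable case in $\S_5$.
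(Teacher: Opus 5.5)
The strategy of showing that a failure of the property forces a strictly shorter directed cycle is sound, and a chord between $X$ and $Y$ does produce one. The gap is your claim that when $X$ and $Y$ share the integer $z$ and the triple they span is a $3$-inversion, $X$ and $Y$ are joined by an edge. This is false when the shared value sits in different positions. Take $Z=[x,z,y]\in\Inv_3(w)$ with $X=[x,z]=Z_3$ and $Y=[z,y]=Z_1$. The full-packet edges of $Z$ join only $Z_1,Z_2$ and $Z_2,Z_3$, so there is no edge between $X$ and $Y$. There is only the path $X\to[x,y]\to Y$ when $Z\notin R$, or its reverse when $Z\in R$. The configuration genuinely occurs: the arc from $X$ to $Y$ can be $X\to[x,y]\to Y$ itself, and then splicing in $[x,y]$ makes no progress.

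This is the main case of the paper's proof, and it needs a further idea.
\begin{itemize}
\item Take $X$ to be the first vertex of its run of residue $z$. Its predecessor on $C$ then has the form $X'=[x,x']$.
\item If $[x,y]$ already lies on the arc from $Y$ back to $X$, the edge $[x,y]\to Y$ closes a shorter cycle.
\item Otherwise splice in $[x,y]$. Now $X'$ and $[x,y]$ share $x$ in the first position.
\item If $X'$ and $[x,y]$ are not congruent, some edge joins them, and \cref{lem:no_short_cycles} forces it to be $X'\to[x,y]$. This shortcuts $X'\to X\to[x,y]$.
\item If they are congruent, the congruence path must run from $X'$ to $[x,y]$, since the other direction yields a $2$-cycle via \cref{lem:shorten1}. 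Then \cref{lem:shorten1} replaces $X''\to X'\to X\to[x,y]$ by $X''\to[x,y]$.
\end{itemize}
Either way the resulting cycle is strictly shorter than $C$.

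The case you single out as the main obstacle cannot occur. Two distinct $2$-inversions sharing a value, with non-congruent remaining entries, put two inversions in one packet. By \cref{lem:intersection_size} the triple is therefore a $3$-quasi-inversion or a $3$-inversion. Your fallback of deleting to $\S_4$ or $\S_5$ would not work anyway. The arc of non-$z$ vertices between two runs can be arbitrarily long, so its entries need not lie in a bounded set of residues.

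Also, $m\neq 0$ is not a separate case, since you may shift the representative of $Y$. The genuinely separate case is $X\equiv Y$. There you orient the congruence path, say $X\to\cdots\to Y$, and apply \cref{lem:shorten1} to the predecessor $X'$ of $X$. This gives the chord $X'\to Y$, which strictly shortens $C$; replacing $Y$ by another congruent vertex is not what is needed.
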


\begin{proof}
    Suppose $C$ is a cycle in $G_R$ that does not satisfy the property
above. By \cref{lem:shorten1}, we can assume without loss of
generality that there are no congruence edges in $C$. For each $i\in
[n]$, let $C_i$ be the induced subgraph of $C$ on the vertices that
contain a value congruent to $i$ modulo $n$, and define $p_i(C)$ to be
the number of connected components of $C_i$. By assumption, $p_z(C)>1$
for some $z\in [n]$. Then let $X,Y\in C$ be elements containing a
value congruent to $z$ that do not appear in the same path in
$C_z$. Without loss of generality, we will assume that the elements
$X',Y'$ respectively preceding $X,Y$ in $C$ do not contain $z$.

First, suppose that $X$ and $Y$ are congruent. Then there exists a path
from one to the other in $G_R$, which we assume without loss of
generality is $X\to \ldots \to Y$. By \cref{lem:shorten1}, there exists an edge 
$X'\to Y$. Combined with the path from $Y$ to $X'$ in $C$, this
results in a cycle $C'$ where $|C'|<|C|$. Furthermore, we have that
 $p_z(C')<p_z(C)$ and $p_i(C')\leq p_i(C)$ for all $i\neq z$.

Second, suppose that $X$ and $Y$ are not congruent, but
$z$ appears in the same position of $X$ and
$Y$. As $X,Y\in \Inv_2(w)$ only differ in one position, there is a
directed edge between $X$ and $Y$, which we assume
without loss of generality is $X\to Y$. Then we
can combine this with the path from $Y$ to $X$ in $C$ to obtain a
cycle $C'$ with $|C'|<|C|$. We again have that
 $p_z(C')<p_z(C)$ and $p_i(C')\leq p_i(C)$ for all $i\neq z$.

Next, suppose that
$X$ and $Y$ are not congruent and 
that $z$ does not appear in the same position in $X$
and $Y$. Without loss of generality, we will assume $X=[x,z]$ and
$Y=[z,y]$. Then $[x,z,y]\in \Inv_3(w)$, so there is a directed path in
$G_{R}$ between $X$ and $Y$ oriented according to whether or not
$[x,z,y]$ is an element of $R$. We will detail the case where $[x,z,y]\notin R$, 
as $[x,z,y]\in R$ is similar. In the case $[x,z,y]\notin R$, the path 
between $X$ and $Y$ has the form $X=[x,z]\to [x,y]\to
[z,y]=Y$.  

If $[x,y]$ already lies on the
path from $Y$ to $X$ in $C$, then there must be some path in $C$ from $Y$ to $[x,y]$. 
Then we can form a cycle $C'$ using the path from $Y$ to $[x,y]$ along with the edge $[x,y]\to Y$. 
This cycle omits a path of elements containing $X$, so we know $|C'|<|C|$ and 
$p_z(C')<p_z(C)$. Furthermore, its construction comes from 
removing a portion of $C$, so it is clear that $p_i(C')\leq p_i(C)$ for
all $i\in [n]$.

We may therefore assume that $[x,y]$ does not already appear on the path from $Y$ to $X$ in $C$. 
Combining $X=[x,z]\to [x,y]\to
[z,y]=Y$ with the path from $Y$ to $X$ in $C$
produces a cycle $C'$ with $|C'|\leq |C|$. Furthermore, we have that $p_i(C')\leq p_i(C)$ for
all $i\in [n]$. Now consider the element
$X'$ preceding $X$ in both $C$ and $C'$.  As $C$ contains no
congruence edges and $X'$ does not contain $z$, $X'$ has the form
$X'=[x,x']$. We consider the relationship between this element and $[x,y]$. 
\begin{itemize}
\item Suppose $X'=[x,x']$ and $[x,y]$ are congruent. Then,
they are comparable in $\P_{w}(n,2)$, so
there is some path between 
these two elements in $G_R$. If this path of congruence edges
 has the form $[x,y]\to \ldots \to X'$, then there is a cycle of the form
$X\to [x,y]\to \ldots \to X'\to X$. \cref{lem:shorten1} then implies that 
$X\to X'$ is an edge, resulting in a cycle of length 2 that contradicts 
\cref{lem:no_short_cycles}. We conclude the path of congruence edges must have the form
$X'\to \ldots \to [x,y]$. Letting $X''$ be the element preceding $X'$ in $C$, \cref{lem:shorten1}
shows that the path $X''\to X'\to X\to [x,y]\to  Y$ in $C'$ can be replaced with $X''\to [x,y]\to Y$.
\item Suppose $X'=[x,x']$ and $[x,y]$ are not congruent.
Then, there exists a $3$-inversion or $3$-quasi-inversion with both in
its packet,
so there is an edge between them
 in $G_R$.  Since $X'\to X\to [x,y]$ is a path, 
\cref{lem:no_short_cycles} implies that this edge must have direction $X'\to
[x,y]$. Hence, we can replace $[x,x']\to[x,z]\to[x,y]$ in $C'$ with $[x,x']\to
[x,y]$.
\end{itemize}
 In both cases, this results in a cycle $C''$ where $|C''|<|C'|\leq |C|$. Furthermore,
 we have that $p_z(C'')<p_z(C')\leq p_z(C)$ and $p_i(C'')\leq p_i(C')\leq 
p_i(C)$ for all $i\neq z$. 
    
    Combined, we see that when $p_z(C)>1$, there is a cycle $D$ in
$G_{R}$ where $|D|<|C|$, $p_z(D)<p_z(C)$, and $p_i(D)\leq p_i(C)$ for
all $i\neq z$. We can now iterate this argument as needed until $D$
satisfies $p_i(D)\leq 1$ for all $i\in [n]$, so all elements of
$\Inv_2(w)$ in $D$ that contain each $i\in [n]$ are consecutive.
\end{proof}

\begin{lemma}\label{lem:shorten3}
    Suppose there exists a cycle $C$ in $G_R$ that does not contain
    congruence edges and $|C|\geq 4$. Suppose $C$ contains a path
    $X\to Y\to Z$ where all three elements contain a value congruent to  $i\in [n]$. Then there exists a cycle $C'$ in $G_R$ using a proper subset of the elements in $C$.
\end{lemma}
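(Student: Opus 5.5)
Since $k=2$ and $C$ has no congruence edges, consecutive vertices of $C$ differ in exactly one coordinate. We have a path $X\to Y\to Z$ in which all three vertices contain a value congruent to $i$. The plan is to produce a shortcut: either an edge $X\to Z$, or a shorter cycle on a subset of $\{X,Y,Z\}$ together with the rest of $C$.

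\textbf{Step 1: reduce to a common value.} Write $X=[x_1,x_2]$. The edge $X\to Y$ fixes one coordinate of $X$ and the edge $Y\to Z$ fixes one coordinate of $Y$. Case on which coordinates are fixed and where the value congruent to $i$ sits.
\begin{itemize}
\item If both edges fix a coordinate whose entry is congruent to $i$ and the positions agree, then $X,Y,Z$ share a common entry $a$ in the same position. So they have the form $[a,x],[a,y],[a,z]$ or $[x,a],[y,a],[z,a]$.
\item Otherwise the value congruent to $i$ moves position, or two of the elements are congruent. In the congruent subcases, comparability in $\P_w(n,2)$ yields congruence paths, and \cref{lem:shorten1} then gives a shortcut edge into a later vertex. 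The resulting cycle omits $Y$, as in the proof of \cref{lem:shorten2}.
\end{itemize}

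\textbf{Step 2: the main case, a common entry $a$.} Suppose the three vertices are $[a,x],[a,y],[a,z]$ (the other shape is symmetric). If two of $x,y,z$ are congruent, \cref{cor:shorten1} and \cref{lem:no_short_cycles} dispose of it, so assume $a,x,y,z$ are distinct modulo $n$. Then $X$ and $Z$ lie in the packet of a $3$-inversion or $3$-quasi-inversion, namely the one on $\{a,x,z\}$. By \cref{lem:inversion.sets}, and since both $X,Z\in\Inv_2(w)$, there is an edge between $X$ and $Z$ in $G_R$.
\begin{itemize}
\item If the edge is $Z\to X$, then $X\to Y\to Z\to X$ is a $3$-cycle, contradicting \cref{lem:no_short_cycles}.
\item Hence the edge is $X\to Z$. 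Replacing $X\to Y\to Z$ in $C$ by $X\to Z$ gives a cycle $C'$ on the vertex set of $C$ minus $Y$. Since $|C|\ge 4$, $C'$ has at least three vertices and is a genuine cycle.
\end{itemize}

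\textbf{Step 3: the main obstacle, the $i$-value changing position.} The hard case is when the entry congruent to $i$ is not the fixed coordinate in both edges. For example $X=[x,i']$, $Y=[i',y]$: this cannot come from one edge, since edges fix a coordinate. So the congruent values must appear as distinct integers $i'\neq i''$, for instance $X=[x,i']$, $Y=[x,i'']$ with $i'\equiv i''$, and then $Z$ contains $i''$ or yet another representative. Here I would first try:
\begin{itemize}
\item Use \cref{cor:congruence.lemma} and the parity of the congruence relations to orient the comparison between congruent elements.
\item Invoke \cref{lem:shorten1} to get an edge from the predecessor of $X$ directly to $Y$ or to $Z$, removing $X$ or $Y$ from the cycle.
\end{itemize}

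If a direct case analysis becomes unwieldy, the fallback is the finite-deletion reduction used in \cref{lem:no_short_cycles}. Pick representatives of the at most five values involved; when they are distinct modulo $n$, apply \cref{lem:finite_deletion} to transfer the local configuration to some $w'\in\S_m$ with $m\le 5$ and $R'\in\C_{w'}(m,3)$. There \cref{lem:GRacyclic} and a finite check give the needed edge $X'\to Z'$, which lifts back to $G_R$.
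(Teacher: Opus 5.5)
Your Step 2 is the paper's argument. When $X=[a,x]$ and $Z=[a,z]$ are not congruent, they are adjacent members of the packet on $\{a,x,z\}$, so an edge joins them. The orientation $Z\to X$ would close a $3$-cycle, so the edge is $X\to Z$, and splicing it in removes $Y$.

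The gap is Step 3, together with the first half of the second bullet of Step 1. You call the configuration in which the entry congruent to $i$ is not the shared coordinate ``the main obstacle'' and leave it as a tentative plan. That configuration cannot occur, and seeing why is the missing idea; it is the first sentence of the paper's proof. Because $C$ has no congruence edges, each edge of $C$ is a quasi-inversion or full-packet edge. Its endpoints are therefore $W_j$ and $W_{j+1}$ for some $W=[w_1,w_2,w_3]\in\binom{\mathbb{Z}}{3}_n$ whose entries are pairwise non-congruent. Such endpoints share exactly one entry, in the same position, and their two other entries are non-congruent to it and to each other. So if both endpoints contain a value $\equiv i$, that value is the shared entry. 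Apply this to $X\to Y$ and to $Y\to Z$, using that $Y$ has only one entry $\equiv i$. Then the three vertices can be written as $[a,x],[a,y],[a,z]$ or as $[x,a],[y,a],[z,a]$, which is your Step 2 and nothing else. Your example $X=[x,i']$, $Y=[x,i'']$ with $i'\equiv i''$ has $X\equiv Y$, so that edge would be a congruence edge, which the hypothesis excludes. Your fallback could not have handled this case anyway: \cref{lem:finite_deletion} requires the chosen values to be distinct modulo $n$, and that is exactly what fails in the configurations you describe.

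Within Step 2, the only congruence possible among $x,y,z$ is $x\equiv z$. Citing \cref{cor:shorten1} does not dispose of it, since $C$ already has no congruence edges. You need the orientation argument, which your Step 1 bullet only gestures at.

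For $k=2$, \cref{cor:congruence.lemma} makes the congruence class of $X$ a chain. So $X$ and $Z$ are joined in $G_R$ by a directed path of congruence edges of the form $U\to U+v_1$.
\begin{itemize}
\item If the path runs from $Z$ to $X$, apply \cref{lem:shorten1} to $Y\to Z\to\cdots\to X$ to get an edge $Y\to X$. This gives the $2$-cycle $X\to Y\to X$, which contradicts \cref{lem:no_short_cycles} (or is simply a cycle on a proper subset).
\item If the path runs from $X$ to $Z$, let $W$ be the predecessor of $X$ on $C$. Then $W\neq Z$ because $|C|\ge 4$, and $W\not\equiv X$ by the packet observation above. \cref{lem:shorten1} gives an edge $W\to Z$, and the resulting cycle omits both $X$ and $Y$, not just $Y$ as you state.
\end{itemize}
With Step 3 replaced by the packet observation and this case written out, your proof coincides with the paper's.
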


\begin{proof}
    For the edges $X\to Y$ and $Y\to Z$ to exist, each pair must contain $i$ in the same position, so $X$ and $Z$ contain $i$ in the same position. If $X$ and $Z$ are not congruent, then there exists an edge between $X$ and $Z$. By \cref{lem:no_short_cycles}, this edge must be $X\to Z$, so replacing $X\to Y\to Z$ in $C$ with $X\to Z$ results in a shorter cycle $C'$. If $X$ and $Z$ are congruent, then there is some path of congruence edges between them. If this path is $Z\to \ldots \to X$, then \cref{lem:shorten1} implies that $Y\to X$ is an edge in $G_R$, which results in a cycle of length $2$ that contradicts \cref{lem:no_short_cycles}. Consequently, the path must be $X\to \ldots \to Z$. Let $W\to X$ be the edge preceding $X\to Y\to Z$ in $C$. Observe that $W$ and $X$ cannot be congruent since $C$ contains no congruence edges, so $W$ and $Z$ also are not congruent. Using \cref{lem:shorten1}, the path $W\to X\to Y\to Z$ in $C$ can be replaced with $W\to Z$, again resulting in a shorter cycle $C'$.
\end{proof}

\begin{corollary}\label{lem:shorten4}
    Suppose there exists a cycle $C$ in $G_R$ where $|C|\geq 4$. Then
    there exists a cycle $C'$ with $|C'|\leq |C|$ satisfying the
    following properties.

    \begin{enumerate}[label=(\alph*)]
        \item $C'$ contains no congruence edges. 
        \item For all $i\in [n]$, either $C'$ does not contain any elements with $i$, or the elements containing $i$ consist of two consecutive elements in $C'$.
        \item As we progress along $C'$, the position that changes
	value alternates. In particular, the number of elements in
	$C'$ must be even.
     \item For any path on $1\leq \ell \leq |C'|-1$ vertices contained
     in $C'$, the entries that appear in these vertices, each
     represented by some $[x,y] \in \binom{\Z }{2}_{n}$, form $\ell+1$ distinct congruence classes modulo $n$. 
    \end{enumerate}
    Furthermore, if $C$ itself did not satisfy the above properties, then $|C'|<|C|$. 
\end{corollary}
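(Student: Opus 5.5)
Starting from $C$, we repeatedly replace the current cycle by one of strictly smaller length whenever one of properties (a)--(d) fails. Since lengths are positive integers, the process terminates. By \cref{lem:no_short_cycles}, every cycle along the way has length at least $4$, so the final cycle $C'$ satisfies $|C'|\le |C|$, with strict inequality whenever at least one replacement was made. The work is to show that a failure of each property allows such a shortening, with (d) deduced from (a)--(c).

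We first enforce (a) by \cref{cor:shorten1}. Each application of \cref{lem:shorten1} deletes the congruence vertices it bypasses, so the cycle gets strictly shorter whenever a congruence edge was present. Next we apply \cref{lem:shorten2}. It makes all vertices containing a representative of each class $i$ consecutive and strictly shortens the cycle if this failed. Its proof uses \cref{lem:shorten1}, so the resulting cycle again has no congruence edges; otherwise we reapply \cref{cor:shorten1}. For (b), a run of three or more consecutive vertices containing class $i$ is shortened by \cref{lem:shorten3}. A run of length exactly one is impossible: every edge joins vertices that differ in exactly one position, so they share a value, and also share its congruence class. Hence each class appearing in the cycle occupies a block of exactly two consecutive vertices, so the runs have length $\le 2$ and have length $2$ whenever they occur.

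Property (c) follows from (b). Suppose two consecutive edges $X\to Y\to Z$ both change the second position. Then $X,Y,Z$ all share the first entry, up to congruence, giving a run of length three and contradicting (b). So the changed position alternates, and since the cycle closes up, the length is even.

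For (d), walk along a path $V_1\to\dots\to V_\ell$ in $C'$. Each step keeps one entry and introduces a new one. The introduced entry cannot be congruent to an entry already seen: if it were, that class would occur in two non-consecutive blocks, or in a block of length $\ge 3$, contradicting (b), unless the path wraps all the way around the cycle, which is excluded by $\ell\le |C'|-1$. Counting gives $2$ classes for $V_1$ and one new class per step, for $\ell+1$ in total. The main obstacle is bookkeeping: each shortening step must not break the properties already established. We handle this by looping until all of (a)--(d) hold simultaneously, using strict decrease of length as the termination measure rather than trying to preserve the properties individually.
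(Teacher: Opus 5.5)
Your proposal is correct and is essentially the paper's argument. The paper takes a globally minimal-length cycle and shows, via \cref{lem:shorten1}, \cref{lem:shorten2} and \cref{lem:shorten3}, that any failure of (a) or (b) would give a shorter cycle; this is equivalent to your strict-descent loop. It then derives (c) from (b), and (d) from (a)--(c), just as you do.

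One justification needs repair. A class $i$ occurring in only one vertex $V$ is not ruled out merely because adjacent vertices share a value. The correct reason is that neither neighbor of $V$ contains class $i$, so both must share $V$'s other entry. That other class then occupies three consecutive vertices, and \cref{lem:shorten3} shortens the cycle; this is exactly how the paper handles that case.
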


\begin{proof}
As $G_R$ contains some cycle $C$, we consider a minimal length cycle
$C'$. If $C'$ does not satisfy (a), then applying 
\cref{lem:shorten1} to some congruence edge in $C'$ results in a shorter cycle, which would
contradict that $C'$ has minimal length. If $C'$ does not
 satisfy (b), then there are several cases.
\begin{itemize}
	\item If $C'$ contains some $i\in [n]$ that does not appear in a single path, then 
\cref{lem:shorten2} implies that there is a shorter cycle, which would 
contradict that $C'$
has minimal length. 
	\item If $C'$ contains some $i\in [n]$ in three or more consecutive inversions, then
\cref{lem:shorten3} implies there is a shorter cycle, resulting in the same contradiction. 
	\item If $C'$ contains $i\in [n]$ in only one inversion,
then the element $j\in [n]$ that appears with $i$ must remain
the same for three consecutive elements in $C'$, so \cref{lem:shorten3} again supplies the contradiction.
\end{itemize} 
Hence, $C'$ must satisfy properties (a) and (b), and from these
arguments, we also see that if $C$ does not satisfy these two
properties, then $|C'|<|C|$. Furthermore, since each edge of $C'$
changes exactly one position, two consecutive edges changing the same
position would leave the other residue class unchanged in three
consecutive vertices, contradicting (b). Thus the position that
changes alternates, and $|C'|$ is even, which proves Property (c).

For (d), assume $C'$ is a cycle in $G_{R}$ with $m=|C'|\geq 4$
vertices satisfying Properties (a), (b), and (c).  Consider a path with
 $\ell<m$ vertices in $C'$ starting at $X^{(1)}$.  Index all of the vertices in the
cycle as
\[X^{(1)}\to X^{(2)}\to \ldots \to X^{(m)}\to X^{(1)}.\]
If the path is the single vertex $X^{(1)}=[x_{1},y_{1}]\in \Inv_2(w)$,
then $x_{1}$ and $y_{1}$ cannot be congruent modulo $n$ by
\cref{lem:inversion.sets}(c).  By induction on $\ell$, assume entries
in the path $X^{(1)}\to \ldots X^{(\ell)}$ in $C'$ form $\ell+1$
distinct congruence classes, where $1\leq \ell\leq |C'|-2$. By (a), we
know that $X^{(\ell)}\to X^{(\ell+1)}$ cannot be a congruence edge. So
these vertices can be represented by elements in $\binom{\Z }{2}_{n}$
so that they share one entry $x$, and $X^{(\ell+1)}$ contains another
entry $y$ that is not equivalent modulo $n$ to either entry in
$X^{(\ell)}$.  By (b), only $X^{(\ell+1)}$ and $X^{(\ell+2)}$ contain
an entry congruent to $y$ modulo $n$.  If $\ell+2\leq |C'|$, we know
that $X^{(\ell+2)}\neq X^{(1)}$.  Consequently, no elements in
$\{X^{(1)},\ldots,X^{(\ell)}\}$ contain an entry congruent to $y$
modulo $n$.  We conclude that the entries among the subsets 
$\{X^{(1)},\ldots,X^{(\ell)},X^{(\ell+1)}\}$ represent $\ell+2$ distinct
congruence classes modulo $n$.
\end{proof}

It now suffices to show that there cannot exist cycles that satisfy
the properties in \cref{lem:shorten4}. Our approach will involve a
reduction to checking cycles of short length and then directly ruling
out such cycles.  Define the \defn{length of a path} in a directed
graph $G$ to be the number of edges. We say that a vertex $v$
 is \defn{reachable} from vertex $u$ if a directed path from $u$ to $v$ exists 
in $G$, allowing a path of length $0$ when $u=v$. In this case, the \defn{distance} 
from $u$ to $v$ is the minimum length over all directed paths from
$u$ to $v$. For a nonempty finite directed graph $G$,
its \defn{reachability diameter} is the maximum of these distances
over all reachable pairs. We define the reachability diameter of
the empty graph to be $0$.

\begin{lemma}\label{lem:shorten5}
    For any $w\in \S_n$ and $R\in \C_{w}(n,3)$, if there exists a path
of length $4$ in $G_{R}$ between $X,Y\in \Inv_2(w)$, then there exists
a path in $G_{R}$ between $X$ and $Y$ with length strictly less than
$4$.  Hence, the reachability diameter of $G_{R}$ is
at most 3 as a graph on $\Inv_{2}(w)$.
\end{lemma}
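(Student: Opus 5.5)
We plan to reduce the statement to a finite check on small symmetric groups via the deletion machinery of \cref{def:deletion_contraction} and \cref{cor:deletion_consistent}. The graph $G_R$ is finite and acyclic by \cref{lem:GRacyclic}, since $w\in\S_n$ and $R\in\C_w(n,3)$. Suppose $X=X^{(0)}\to X^{(1)}\to X^{(2)}\to X^{(3)}\to X^{(4)}=Y$ is a directed path of length $4$ in $G_R$ on $\Inv_2(w)$. Every edge changes exactly one of the two entries, so the vertices of the path involve at most $2+4=6$ distinct values of $[n]$. Let $A\subseteq[n]$ be this set of values, so $|A|\le 6$.

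Next we delete every $j\notin A$, one value at a time, using \cref{cor:deletion_consistent}. Each deletion produces a permutation $w'=\Fl_j(w)$ and a consistent set $R\setminus j\in\C_{w'}(n-1,3)$. It also identifies the induced subgraph of $G_R$ on $2$-inversions avoiding $j$ with $G_{R\setminus j}$. The original path avoids every deleted value, so after iterating it survives as a path of length $4$ in $G_{R''}$ for some $w''\in\S_{|A|}$ and $R''\in\C_{w''}(|A|,3)$. Conversely, any shorter path found in $G_{R''}$ between the images of $X$ and $Y$ lifts back through the chain of induced-subgraph isomorphisms to a path in $G_R$ of the same length. Therefore it suffices to prove the statement when $n\le 6$.

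The case $n\le 6$ is a finite verification. We enumerate all $w\in\S_n$ and all $R\in\C_w(n,3)$ with $n\le 6$, build $G_R$, compute all-pairs shortest directed distances, and confirm that every reachable pair is at distance at most $3$. This is done by computer, as the paper does elsewhere, for example in \cref{lem:partition}. The reachability-diameter claim follows directly. If the shortest path between a reachable pair had length $\ell\ge 4$, its initial subpath of length $4$ could be replaced by one of length less than $4$, contradicting minimality.

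The main obstacle is keeping the computer verification feasible while making sure the reduction is airtight. In particular, the lifted shorter path must exist in $G_R$ itself, not merely in the deleted graph. Induced subgraphs supply exactly this, because edges of $G_{R''}$ correspond to edges of $G_R$ among vertices avoiding the deleted values. If the $n=6$ enumeration proves too large, we can first apply \cref{lem:shorten4}-type normalizations. These would force the six values to be distinct and the changing positions to alternate along the path, which cuts down the cases that need checking.
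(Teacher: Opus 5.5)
Your proposal is correct and follows essentially the same route as the paper: a computer check (breadth-first search over all $w\in\S_n$ and $R\in\C_w(n,3)$ with $n\le 6$), combined with a deletion reduction to the at most six values appearing on the path. The only difference is that the paper deletes $\bar{A}$ in one step via \cref{lem:finite_deletion}, while you iterate \cref{cor:deletion_consistent}, which gives the same induced-subgraph isomorphism; your fallback via \cref{lem:shorten4}-type normalizations is unnecessary, and that corollary concerns minimal cycles rather than arbitrary paths anyway.
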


We note that the global bound of reachability diameter at most 3 is optimal.  For example,
if $w=(3,2,5,4,1)$, $R=\{[1,2,3] \}$, the shortest path from 
 $[2,3]$ to $[4,5]$ is $[2,3]\textcolor{red}{\to } [1,3]\to [1,5]
\textcolor{blue}{\to} [4,5]$.

\begin{proof}
For $n\leq 6$, we have computer verified the claim using breadth-first search
 on every connected component of $G_{R}$ for every $w\in
\S_n$ and $R\in \C_{w}(n,3)$.  This is a finite verification since we
only consider permutations in $\S_n$, not affine permutations.  See
appendix for verification details.

If $n>6$, let $P$ be a path of length 4 or more in $G_{R}$.  Each
vertex of $P$ is of the form $[x,y] \in \binom{[n] }{2}$. Each edge
changes exactly one value. Therefore, the first 5 vertices in $P$
contain at most 6 distinct values in $[n]$.  Let $A$ be the set of
values appearing in the first 5 vertices of $P$.  By
\cref{lem:finite_deletion}, $R'=R\setminus \bar{A}$ is consistent for
$w'=\Fl_{\bar{A}}(w) \in S_{m}$ with $m\leq 6$, and $G_{R'}$ is
isomorphic to the induced subgraph of $G_{R}$ on vertices compatible
with $A$.  Hence, by the computer verification, there exists a
strictly shorter path $P'$ in $G_{R'}$ that lifts to a shorter path
from the first to the fifth element of $P$ under the graph
isomorphism.  Applying this reduction recursively, one can reduce the
length of any path in $G_{R}$ to a path of length at most 3 from the
first to last elements.
\end{proof}

\begin{lemma}\label{lem:shorten6}
    If there exists a cycle $C$ in $G_R$, then there exists a cycle $C'$ where $|C'|= 4$.
\end{lemma}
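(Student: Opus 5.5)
Starting from an arbitrary cycle $C$ in $G_R$, I would first pass to a cycle of minimal length. By \cref{lem:no_short_cycles} it has length at least $4$. By \cref{lem:shorten4}, a minimal cycle $C$ satisfies properties (a)--(d): no congruence edges, each residue class appears in exactly two consecutive vertices or not at all, the changing position alternates (so $|C|$ is even), and any path on $\ell\le |C|-1$ vertices involves $\ell+1$ distinct residues. It remains to show that $|C|\ge 6$ is impossible, so that $|C|=4$.

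Suppose $|C|=m\ge 6$ with vertices $X^{(1)}\to\cdots\to X^{(m)}\to X^{(1)}$. I would consider the path $X^{(1)}\to\cdots\to X^{(5)}$ of length $4$ inside $C$. By property (d), its entries involve $6$ distinct congruence classes. Choose representatives realizing these entries consistently along the path. Since no congruence edges occur, consecutive vertices share an actual integer entry, so we can fix representatives step by step. Let $A$ be the resulting set of $6$ integers, which are distinct modulo $n$. By \cref{lem:finite_deletion}(b), $R'=R\setminus\bar A\in\C_{w'}(6,3)$ with $w'=\Fl_{\bar A}(w)\in\S_6$, and the induced subgraph of $G_R$ on vertices compatible with $A$ is isomorphic to $G_{R'}$. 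The path maps to a length-$4$ path in $G_{R'}$. \cref{lem:shorten5} then gives a path of length at most $3$ from $\Fl_{\bar A}(X^{(1)})$ to $\Fl_{\bar A}(X^{(5)})$, and pulling it back gives a path $Q$ in $G_R$ from $X^{(1)}$ to $X^{(5)}$ of length at most $3$ through vertices compatible with $A$. Splicing $Q$ with the remaining arc $X^{(5)}\to\cdots\to X^{(1)}$ of $C$ gives a closed walk of length at most $m-1$.

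The delicate point is that this closed walk must contain a genuine cycle. A closed directed walk always contains a directed cycle on a subset of its vertices, of length at most the walk's length, here $\le m-1<m$. This cycle is nondegenerate, since $G_R$ has no loops and \cref{lem:no_short_cycles} excludes length $2$ and $3$ anyway. So it is a shorter cycle than $C$, contradicting minimality. Hence $m\le 5$, and since $m$ is even and $m\ge4$, we get $m=4$.

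The step I expect to be the main obstacle is justifying that the representatives can be chosen consistently, so that all five vertices are compatible with a single set $A$ of six integers distinct modulo $n$. Property (d) gives six residues, and adjacent vertices share a literal integer because the edge is not a congruence edge. The concern is whether compatibility in \cref{def:finite_deletion} is exactly what makes \cref{lem:finite_deletion} transport the path. If compatibility fails at some vertex, I would instead shift the whole path by using \cref{lem:shorten1} to absorb congruence adjustments, at the cost of further shortening.
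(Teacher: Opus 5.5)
Your proposal is correct and follows the paper's proof: take a cycle with the properties of \cref{lem:shorten4}, use property (d) to place a five-vertex subpath on six integers distinct modulo $n$, delete down to $\S_6$ via \cref{lem:finite_deletion}, shorten that subpath with \cref{lem:shorten5}, and splice. The only difference is that the paper iterates the shortening where you invoke global minimality, which comes to the same thing. Your compatibility worry is unfounded: the six chosen integers are pairwise noncongruent modulo $n$, so each vertex of the subpath has a unique representative inside $A$, and no fallback via \cref{lem:shorten1} is needed.
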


\begin{proof}
    We can assume without loss of generality that $C$ is a minimal
length cycle in the induced subgraph of $G_{R}$ on vertices in $C$ and
it satisfies the properties in \cref{lem:shorten4}. Property (c)
implies that $C$ must contain an even number of elements, so it
suffices to consider when $|C|\geq 6$. Then $C$ contains a path
$X^{(1)}\to X^{(2)}\to \ldots \to X^{(5)} \to X^{(6)}$ in $G_R$. We
can assume that all six vertices are distinct, since $C$ is a minimal
length cycle in the induced subgraph.  \cref{lem:shorten4}(d) implies that
the entries of $X^{(1)}\to X^{(2)}\to \ldots \to X^{(5)}$ must form six
distinct congruence classes modulo $n$.  Because each distinct
congruence class appears in at most 2 consecutive vertices of this
path, we can view $X^{(1)}, X^{(2)}, \ldots ,X^{(5)}$ as $2$-subsets
of the canonical representative of some $Z=[z_1,z_2,\ldots,z_6]\in
\binom{\mathbb{Z}}{6}_n$. Letting $A=\{z_1,z_2,\ldots,z_6\}$,
construct $w'=\Fl_{\bar{A}}(w)\in \S_6$ and $R'=R\setminus \bar{A}$ as
in \cref{def:finite_deletion}. By \cref{lem:finite_deletion}, $R' \in
\C_{w'}(6,3)$, and the induced subgraph of $G_R$ on elements compatible
with $A$ is isomorphic to $G_{R'}$ on $\Inv_{2}(w')$. By \cref{lem:shorten5}, there
exists a path of length at most 3 in $G_{R'}$ from
$\Fl_{\bar{A}}(X^{(1)})$ to $\Fl_{\bar{A}}(X^{(5)})$, and this shorter
path lifts to a shorter path in $G_R$ from $X^{(1)}$ to
$X^{(5)}$ using the graph isomorphism. Iterating this argument as needed, we see that if $|C|\geq
6$, then there exists a shorter cycle $C'$ in $G_{R}$ with $|C'|\leq
5$.  Any cycle with exactly five elements does not satisfy the
properties in \cref{lem:shorten4}. Hence, when this occurs, we can
apply \cref{lem:shorten4} again to obtain a cycle $C''$ where
$|C''|\leq 4$. From \cref{lem:no_short_cycles}, we conclude there
exists a $C''$ in $G_{R}$ such that $|C''|=4$.
\end{proof}

\begin{proof}[Proof of \cref{thm:affine_GR}]
    \cref{lem:not_consistent_cycle} shows that if $R\subseteq
    \Inv_3(w)$ is not consistent, then $G_R$ contains a cycle. It
    remains to show that if $R\in \C_w(n,3)$, then $G_R$ is
    acyclic. By \cref{lem:shorten6}, it suffices to show that there
    are no cycles $C$ with $|C|=4$. Additionally, it suffices to
    consider cycles satisfying the properties in
    \cref{lem:shorten4}. By \cref{lem:empty_full_acyclic}, $C$
 cannot consist of only antilex and permanent poset edges,
as otherwise, this cycle would also appear in $G_{\Inv_3(w)}$. Similarly,
$C$ cannot consist of only lex edges, as this cycle would also appear in $G_{\emptyset}$. 
 Hence, there must be some lex edge in $C$ that is
    followed by a non-lex edge, which must be an antilex or
    quasi-inversion edge, as \cref{lem:shorten4} allows us to assume
    there are no congruence edges in $C$.   A contradiction is found
    in each case below.  

\bigskip 

\noindent \textbf{Case 1: The lex edge changes the second position and is followed by an antilex edge.}  By \cref{lem:shorten4}(b) and (c), the cycle must then have the form
    \begin{equation}\label{eq:lex_then_antilex}
        [x_2,x_3]\to [x_2,x_4]\to [x_1,x_4]\to [x_1,x_3']\to [x_2,x_3],
    \end{equation}
    where $x_1<x_2<x_3<x_4$ and $x_3'\equiv x_3\pmod n$. In this case,
    we see that $[x_2,x_3,x_4],[x_1,x_2,x_4]\in \Inv_3(w)$, so
    $[x_1,x_2,x_3,x_4]\in \Inv_4(w)$. Note that we can assume
    $x_3'\neq x_3$, because if $x_3=x_3'$, we can apply the same deletion argument from \cref{lem:no_short_cycles} to show that there is a cycle in $G_{R'}$ for some $w'\in \S_4$ and $R'\in \C_{w'}(4,3)$, which would contradict \cref{lem:GRacyclic}.
    \begin{enumerate}[label=(\alph*)]
        \item Consider when $x_3'=x_3+mn$ for some $m>0$. Consistency of $R$ with $[x_2,x_3,x_4]\notin R$ and $[x_1,x_2,x_4]\in R$ implies $[x_1,x_2,x_3]\in R$. However, this implies that there is a cycle $[x_2,x_3]\to [x_1,x_3] \to \ldots \to [x_1,x_3+mn]\to [x_2,x_3]$ in $G_R$. \cref{lem:shorten1} implies that $[x_2,x_3]\to [x_1,x_3+mn]$ is an edge in $G_R$, resulting in a cycle of length $2$ in $G_R$, which is not possible by \cref{lem:no_short_cycles}.
        \item Consider when $x_3'=x_3-mn$ for some $m>0$. Since $[x_1,x_3-mn],[x_3,x_4]\in \Inv_2(w)$, we know that $x_1<x_3-mn<x_4$. As $[x_1,x_4]\to [x_1,x_3-mn]$ cannot be a quasi-inversion edge, it must be an antilex edge, so $[x_1,x_3-mn,x_4]=[x_1+mn,x_3,x_4+mn]\in R$. We now consider two subcases based on the order of $x_2$ and $x_1+mn$.
    \begin{itemize}
        \item If $x_2<x_1+mn$, then $[x_2,x_1+mn,x_3,x_4+mn]\notin \Inv_4(w)$, as $[x_1,x_2]\in \Inv_2(w)$ implies $[x_2,x_1+mn]\notin \Inv_2(w)$. From $[x_2,x_3,x_4],[x_1+mn,x_3,x_4+mn]\in \Inv_3(w)$ we conclude  $[x_2,x_1+mn,x_3,x_4+mn]$ is a quasi-inversion, and the parity condition implies $[x_2,x_3,x_4+mn]<_{\P_w(n,3)} [x_{1}+mn,x_3,x_4+mn]$. However, $R$ is consistent, so $[x_1+mn,x_3,x_4+mn]\in R$ implies that $[x_2,x_3,x_4+mn]\in R$. Combined with $[x_2,x_3,x_4]\notin R$, this contradicts consistency of $R$. 
        \item If $x_2>x_1+mn$, then $[x_1,x_2]\in \Inv_2(w)$ implies $[x_1+mn,x_2]=[x_1,x_2-mn]\in \Inv_2(w)$ by \cref{lem:inversion.sets}. Combined with $[x_1,x_2,x_3,x_4]\in \Inv_4(w)$ and $[x_3,x_4+mn]\in \Inv_2(w)$, we conclude that $[x_1+mn,x_2,x_3,x_4+mn]\in \Inv_4(w)$. The edge $[x_2,x_3]\to [x_2,x_4]$ implies $[x_2,x_3,x_4]\notin R$, which in turn implies $[x_2,x_3,x_4+mn]\notin R$. The edge $[x_1,x_3-mn]\to [x_2,x_3]$ implies that $[x_1+mn,x_2,x_3]\notin R$. However, this combined with $[x_1+mn,x_3,x_4+mn]\in R$ contradicts the MSZ condition on $P([x_1+mn,x_2,x_3,x_4+mn])$.
    \end{itemize}
    \end{enumerate}

\noindent \textbf{Case 2: The lex edge changes  the second
position and is followed by a quasi-inversion edge.} As in Case 1, the
cycle must have the form described in \cref{eq:lex_then_antilex}.  In
particular, the hypotheses in this case imply that $[x_2,x_3,x_4]\in
\Inv_3(w) \setminus R$ and $[x_1,x_2]\notin \Inv_2(w)$. 
    \begin{enumerate}[label=(\alph*)]
        \item Consider when $x_3'=x_3+mn$ for some $m>0$. Since
	$[x_1,x_2]\notin \Inv_2(w)$, we have that $[x_1-mn,x_2]\notin
	\Inv_2(w)$ by \cref{lem:inversion.sets}, so $[x_1-mn,x_2,x_3]$
	must be a quasi-inversion. However, the direction of
	$[x_1-mn,x_3]=[x_1,x_3+mn]\to [x_2,x_3]$ contradicts the
	corresponding quasi-inversion relation from the parity
	condition since $x_{1}<x_{2}$. 
        \item Consider when $x_3'=x_3-mn$ for some $m>0$. We consider cases based on the relative order of $x_1+mn$ and $x_2$. 
    \begin{itemize}
        \item Suppose $x_1+mn<x_2$. Then the edge
	$[x_1+mn,x_3]=[x_1,x_3-mn]\to [x_2,x_3]$ implies that
	$[x_1+mn,x_2,x_3]$ cannot be a quasi-inversion. Hence,
	$[x_1+mn,x_2,x_3]\in \Inv_3(w)$, and since $[x_2,x_3,x_4]\in
	\Inv_3(w)$, we see that $[x_1+mn,x_2,x_3,x_4]\in
	\Inv_4(w)$. The known edges imply
	$[x_1+mn,x_2,x_3],[x_2,x_3,x_4]\in \Inv_3(w)\setminus R$, so
	the MSZ condition implies $[x_1+mn,x_3,x_4]\in
	\Inv_3(w)\setminus R$. However, this implies that there is a
	cycle $[x_1+mn,x_4]\to \ldots  \to [x_1,x_4]\to [x_1,x_3-mn]\to
	[x_1+mn,x_4]$. \cref{lem:shorten1} implies that $[x_1,x_3-mn]\to [x_1,x_4]$ is an edge
	in $G_R$, resulting in a cycle of length $2$ in $G_R$, which cannot exist
	by \cref{lem:no_short_cycles}.
	
        \item Suppose $x_1+mn>x_2$ and $[x_2,x_1+mn]\in\Inv_2(w)$. As $[x_1+mn,x_3]=[x_1,x_3-mn]\in \Inv_2(w)$ and $[x_2,x_3,x_4]\in \Inv_3(w)$, we conclude that $[x_2,x_1+mn,x_3,x_4]\in \Inv_4(w)$. The edge $[x_1+mn,x_3]=[x_1,x_3-mn]\to [x_2,x_3]$ implies that $[x_2,x_1+mn,x_3]\in R$, and combined with $[x_2,x_3,x_4]\in \Inv_3(w)\setminus  R$, the MSZ condition implies that $[x_1+mn,x_3,x_4]\in \Inv_3(w)\setminus R$. We then obtain the same cycle as in the preceding case.
        \item Suppose $x_1+mn>x_2$ and $[x_2,x_1+mn]\notin \Inv_2(w)$. In this case, $[x_1+mn,x_3]=[x_1,x_3-mn]\in \Inv_2(w)$ with $[x_2,x_3,x_4]\in \Inv_3(w)$ imply that $[x_1+mn,x_3,x_4]\in \Inv_3(w)$. Since $[x_1,x_4]\to [x_1,x_3-mn]$ cannot be a quasi-inversion edge, we also find that $[x_1,x_3-mn,x_4]\in R$. As $[x_1+mn,x_3,x_4]=[x_1,x_3-mn,x_4-mn]<_{\P_w(n,3)} [x_1,x_3-mn,x_4]$, we conclude $[x_1+mn,x_3,x_4]\in R$. Furthermore, $[x_2,x_1+mn,x_4]$ must be a quasi-inversion with relation $[x_1+mn,x_4]<_{\P_w(n,2)}[x_2,x_4]$, and combined with $[x_2,x_3,x_4]\in \Inv_3(w)\setminus R$, we find a cycle $[x_2,x_4]\to [x_3,x_4]\to [x_1+mn,x_4]\to [x_2,x_4]$ of length $3$, which contradicts \cref{lem:no_short_cycles}.
    \end{itemize}
    \end{enumerate}

\noindent \textbf{Case 3: The lex edge changes  the first
position and is followed by an antilex edge.} The cycle has the
form
\begin{equation}\label{eq:lex.antilex.edge}
[x_1,x_4]\to [x_2,x_4]\to [x_2,x_3]\to [x_1',x_3]\to
[x_1,x_4].
\end{equation}
where $x_1'\equiv x_1\pmod n$ and $x_1'\neq x_1$. Then
$[x_1,x_2,x_4],[x_2,x_3,x_4]\in \Inv_3(w)$ imply $[x_1,x_2,x_3,x_4]\in
\Inv_4(w)$. Furthermore, as $[x_1,x_2,x_4]\notin R$ and
$[x_2,x_3,x_4]\in R$, we have that $[x_1,x_2,x_3]\notin R$ by
consistency of $R$.
    \begin{enumerate}[label=(\alph*)]
        \item Suppose $x_1'=x_1+mn$ for some $m>0$.  We consider cases based on the relative order of $x_1+mn$ and $x_2$. 
        \begin{itemize}
            \item If $x_1+mn<x_2$, then $[x_1,x_2]\in \Inv_2(w)$
	    implies $[x_1+mn,x_2]\in \Inv_2(w)$ by
	    \cref{lem:inversion.sets}, which in turn implies
	    $[x_1+mn,x_2,x_3,x_4]\in \Inv_4(w)$. However, the edges in
	    \eqref{eq:lex.antilex.edge} indicate $[x_1+mn,x_2,x_3]\in
	    R$, and combined with $[x_1,x_2,x_3]\notin R$, this
	    contradicts consistency of $R$.

            \item If $x_1+mn>x_2$, then $[x_1,x_2]\in \Inv_2(w)$ implies $[x_2,x_1+mn]\notin \Inv_2(w)$. This then implies $[x_2,x_1+mn,x_3]$ is a quasi-inversion, which contradicts the direction of $[x_2,x_3]\to [x_1+mn,x_3]$.
        \end{itemize}
        \item Suppose $x_1'=x_1-mn$ for some $m>0$. In this case,
	$[x_1,x_3,x_4]\in \Inv_3(w)$ and $[x_1-mn,x_3]\in \Inv_2(w)$
	imply $[x_1-mn,x_3,x_4]\in \Inv_3(w)$. Notice that if
	$[x_1,x_3,x_4]\in R$, then there exists a cycle of the form
	$[x_1,x_4]\to [x_1,x_3]\to \ldots  \to [x_1-mn,x_3]\to [x_1,x_4]$. Then 
	\cref{lem:shorten1} implies $[x_1,x_4]\to [x_1-mn,x_3]$ is an edge, resulting in a 
	cycle of length $2$ that 
	would contradict \cref{lem:no_short_cycles}. Hence, it
	suffices to consider when $[x_1,x_3,x_4]\in \Inv_3(w)\setminus
	R$, and in this case, we conclude $[x_1-mn,x_3,x_4]\in
	\Inv_3(w)\setminus R$ since $R$ is a lower order ideal of
	$\P_w(n,3)$. We now consider subcases for $[x_1-mn,x_2,x_4]$,
	where $[x_{2},x_{4}],[x_{1}-mn,x_{4}] \in \Inv_{2}(w)$.
        \begin{itemize}
            \item If $[x_1-mn,x_2,x_4]$ is a quasi-inversion or a
	    $3$-inversion in $R$, then combined with $[x_2,x_3,x_4]\in
	    R$ and $[x_1-mn,x_3,x_4]\in\Inv_3(w)\setminus R$,
	    we find there exists a cycle of the form
	    $[x_1-mn,x_4]\to [x_3,x_4]\to [x_2,x_4] \to [x_1-mn,x_4]$, which again would contradict \cref{lem:no_short_cycles}.

           \item If $[x_1-mn,x_2,x_4]\in \Inv_3(w)\setminus R$, then
	    we also have that $[x_1-mn,x_2,x_3,x_4]\in
	    \Inv_4(w)$. However, the edge $[x_2,x_3]\to [x_1-mn,x_3]$
	    implies $[x_1-mn,x_2,x_3]\in R$.  By hypothesis,
	    $[x_1-mn,x_2,x_4]\in \Inv_3(w)\setminus R$ and
	    $[x_2,x_3,x_4]\in R$. Combined, these contradict the MSZ
	    condition on $R$.
        \end{itemize}
    \end{enumerate}

\noindent \textbf{Case 4: The lex edge changes the first position and
is followed by a quasi-inversion edge.} Then the cycle can be
expressed as
\begin{equation}\label{eq:lex.quasi.edge}
[x_1,x_3]\to [x_2,x_3]\to [x_2,x_4]\to [x_1',x_4]\to [x_1,x_3],
\end{equation}
where $x_1'\equiv x_1\pmod n$ and $x_1'\neq x_1$. In particular, we see that $[x_1,x_2,x_3]\in \Inv_3(w)\setminus R$ and $[x_3,x_4]\in \binom{\mathbb{Z}}{2}_n\setminus \Inv_2(w)$ where $x_3<x_4$. 
    \begin{enumerate}[label=(\alph*)]
        \item Suppose $x_1'=x_1+mn$ for some $m>0$. We consider cases based on the relative order of $x_1+mn$ and $x_2$.
        \begin{itemize}
            \item Suppose $x_1+mn<x_2$. Then $[x_1,x_2]\in \Inv_2(w)$
	    implies $[x_1+mn,x_2]\in \Inv_2(w)$, so the edge
$[x_2,x_4]\to [x_1+mn,x_4]$ comes from $[x_1+mn,x_2,x_4]\in R$. However, we analogously use $[x_1+mn,x_2],[x_2,x_3]\in \Inv_2(w)$ to conclude $[x_1+mn,x_2,x_3]\in \Inv_3(w)$. As $[x_1+mn,x_2,x_3,x_4]$ cannot be a $4$-inversion due to $[x_3,x_4]\notin \Inv_2(w)$, it must be a quasi-inversion, so $[x_1+mn,x_2,x_4]\in R$ implies $[x_1+mn,x_2,x_3]\in R$. However, we originally had that $[x_1,x_2,x_3]\in \Inv_3(w)\setminus R$, and since $[x_1,x_2,x_3]<_{\P_w(n,3)}[x_1,x_2-mn,x_3-mn]=[x_1+mn,x_2,x_3]$, this contradicts consistency of $R$. 
            \item Suppose $x_1+mn>x_2$. Then $[x_2,x_4]\to [x_1+mn,x_4]$ cannot be a quasi-inversion edge, implying $[x_2,x_1+mn,x_4]\in \Inv_3(w)$. However, this implies $[x_2,x_1+mn],[x_1,x_2]\in \Inv_2(w)$, which is a contradiction. 
        \end{itemize}
        \item Suppose $x_1'=x_1-mn$ for some $m>0$. As $[x_3,x_4]\notin \Inv_2(w)$, we know $[x_3,x_4+mn]\notin \Inv_2(w)$. Then $[x_1,x_3,x_4+mn]$ is a quasi-inversion, but this contradicts the direction of $[x_1-mn,x_4]\to [x_1,x_3]$. \qedhere 
    \end{enumerate}
\end{proof}

\subsection{Characterizing Totally Ordered $\C_w(n,k)$ for Small $k$}

In \cite[Theorem 6.12]{tenner}, Tenner characterized the permutations
$w\in \S_n$ such that the commutation class graph $\cG(w)$ as an
undirected graph is a path. These are the permutations that are
$4321$-avoiding where every pair of 3-inversions has two values in
common. \cref{thm:affine} shows that $\cG(w)$ being a path also
corresponds to when $\B_w(n,2)\cong \C_w(n,3)$ is a total order. In
this case, $\P_w(n,3)$ must also be a total order.

In this section, we will focus on characterizing when $\C_w(n,k)$ is a total order for small $k$. In the process, we will extend and strengthen Tenner's path graph characterization to $w\in \affineS_n$. We begin by showing $\C_w(n,k)$ is a total order if and only if $\P_w(n,k)$ is a total order. 

\begin{lemma}\label{lem:nonempty_notchain}
    Let $k\geq 2$. If $\Inv_{k+1}(w)\neq \emptyset$, then $\P_w(n,k)$ and $\C_w(n,k)$ are not total orders. 
\end{lemma}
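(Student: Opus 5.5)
If $X\in\Inv_{k+1}(w)$, then the full packet $P(X)$ consists of $k+1\geq 3$ distinct elements of $\Inv_k(w)$. By \Cref{lem:affine.poset.packet.antichain} (or \Cref{lem:poset.packet.antichain} when $w\in\S_n$), $P(X)$ is an antichain in $\P_w(n,k)$. A total order has no antichain with two or more elements, so $\P_w(n,k)$ is not a total order. This part is immediate.

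For $\C_w(n,k)$, I will exhibit two distinct consistent sets of the same size; under single step inclusion, two distinct sets of equal size are never comparable. The natural choice is a pair of sets that agree except that one contains $X_{k+1}$ and the other contains $X_1$.

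First consider a lex-type order. Take $\rho\in\A_w(n,k)$ with $\Rev(\rho)=\emptyset$; it exists by \Cref{cor:empty_full_acyclic}. In $\rho$ the packet $P(X)$ appears in lex order, so $X_{k+1}<_\rho X_1$. Let $I$ be the prefix of $\rho$ ending just before $X_{k+1}$, so $I\cup\{X_{k+1}\}$ is a prefix of $\rho$. By \Cref{lem:admissible_to_consistent_set}, or \Cref{lem:affine_admissible_to_consistent_set} in the affine case (where $k\geq2$ holds), every prefix of $\rho$ is consistent. Hence $I\cup\{X_{k+1}\}\in\C_w(n,k)$, and this set contains $X_{k+1}$ but not $X_1$.

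Next consider an antilex-type order. Take $\sigma$ with $\Rev(\sigma)=\Inv_{k+1}(w)$. Here $X_1$ precedes $X_{k+1}$, so some prefix $J$ of $\sigma$ contains $X_1$ but not $X_{k+1}$. I need $|J|=|I|+1$, so I cut $\sigma$ at length $|I|+1$. This cut only works if $X_1$ lies among the first $|I|+1$ entries of $\sigma$ and $X_{k+1}$ does not, and that is not guaranteed. This step is the main obstacle.

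My first workaround is a direct local construction. Start from a minimal consistent set $I'$ that omits all of $P(X)$. Then aim to show that both $I'\cup\{X_{k+1}\}$ and $I'\cup\{X_1\}$ are consistent. A prefix of $P(X)$ of size one is $\{X_{k+1}\}$ and a suffix of size one is $\{X_1\}$, so the MSZ condition for $P(X)$ itself is fine. The real work is to choose $I'$ so that $X_{k+1}$ and $X_1$ are both minimal in the complement, with respect to $\P_w(n,k)$ and to the other packets.

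If that proves awkward, I will use the fallback. The poset $\C_w(n,k)$ is defined by single step inclusion, so a total order on it would have to be a single chain $\emptyset\lessdot R_1\lessdot R_2\lessdot\cdots$. The singleton consistent sets are therefore exactly $\{\rho_1\}$ for the unique such chain. Consequently every admissible order must start with the same element. To reach a contradiction, I will compare $\rho$ and $\sigma$ via \Cref{lem:GR_to_admissible.affine}: these are linear extensions of the heap posets $\le_\emptyset$ and $\le_{\Inv_{k+1}(w)}$ respectively. I will then argue that the set of minimal elements cannot coincide for both heap posets when $P(X)$ is oriented oppositely in them. By the same token, the first element of any maximal chain must be common to all admissible orders, and the opposite orientation of $P(X)$ should prevent this.
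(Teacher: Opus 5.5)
Your argument that $\P_w(n,k)$ is not a total order is correct and matches the paper. For $\C_w(n,k)$, however, you do not reach a proof. Your first route stalls for the reason you note. The local construction, which needs a single consistent $I'$ to which both $X_{k+1}$ and $X_1$ can be added, is never justified. The fallback aims at a contradiction that is false in general. You correctly reduce to the claim that all admissible orders would have to share their first element. But the opposite orientation of $P(X)$ in the heap posets $\le_\emptyset$ and $\le_{\Inv_{k+1}(w)}$ does not prevent this. Take $k=2$ and $w=(4,2,0)=s_0s_1s_2s_1\in\affineS_3$, so that $w^{-1}=(-2,2,6)$. Then every reduced word of $w$ begins with $0$, and the reduced words are exactly $0121$ and $0212$. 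Here $\Inv_3(w)=\{[3,5,7]\}$, and the two admissible orders are $\rho=([3,4],[3,5],[3,7],[2,4])$ with $\Rev(\rho)=\emptyset$ and $\sigma=([3,4],[2,4],[3,7],[3,5])$ with $\Rev(\sigma)=\Inv_3(w)$. Both heap posets have the unique minimal element $[3,4]$. So $\{[3,4]\}$ is the only singleton in $\C_w(3,2)$, even though $\C_w(3,2)\cong[\mathrm{id},w]$ is not a chain.

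The missing idea is to detach the incomparable pair from $P(X)$ and look at the first index where the two orders disagree, rather than at the first index. Since $P(X)$ appears in opposite orders in $\rho$ and $\sigma$, we have $\rho\neq\sigma$. Let $i$ be the least index with $\rho_i\neq\sigma_i$. By \Cref{lem:affine_admissible_to_consistent_set}, which applies because $k\geq 2$, the prefixes $\{\rho_1,\dots,\rho_i\}$ and $\{\sigma_1,\dots,\sigma_i\}$ are consistent. They are distinct sets of equal size, hence incomparable under single step inclusion. This is the paper's proof. Equivalently, your fallback works once you run it at every rank rather than only rank one. A chain under single step inclusion has at most one consistent set of each size, which would force all admissible orders to have the same prefix sets and hence to coincide. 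In the example above, $i=2$, giving the incomparable pair $\{[3,4],[3,5]\}$ and $\{[3,4],[2,4]\}$.
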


\begin{proof}
    Suppose $k\geq 2$ and $\Inv_{k+1}(w)\neq \emptyset$. Then
    \cref{lem:affine.poset.packet.antichain} implies that $P(X)$ for
    any $X\in \Inv_{k+1}(w)$ is an antichain in $\P_w(n,k)$, so
    $\P_w(n,k)$ is not a total order since $k\geq 2$. Additionally,
    \cref{cor:empty_full_acyclic} implies that there exist
    $\rho,\sigma\in \A_w(n,k)$ such that $\Rev(\rho)=\emptyset$ and
    $\Rev(\sigma)=\Inv_{k+1}(w)$. As $\Inv_{k+1}(w)\neq \emptyset$,
    these admissible orders are distinct. Let $i$ be the first index
    where $\rho_i\neq \sigma_i$. By
    \cref{lem:affine_admissible_to_consistent_set},
    $R=\{\rho_1,\ldots,\rho_i\}$ and $R'= 
    \{\sigma_1,\ldots,\sigma_i\}$ are elements in $\C_w(n,k)$. As $R$
    and $R'$ differ by interchanging $\rho_i$ and $\sigma_i$, neither
    is a subset of the other. Consequently, these elements cannot be
    related by the single step inclusion order defining $\C_w(n,k)$. Hence, $\C_w(n,k)$ is also not a total order. 
\end{proof}

\begin{corollary}\label{lem:4321_totalorder}
    Let $k\geq 2$. Then $\C_w(n,k)$ is a total order if and only if $\P_w(n,k)$ is a total order. 
\end{corollary}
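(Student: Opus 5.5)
The proof splits according to whether $\Inv_{k+1}(w)$ is empty.

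\textbf{Case $\Inv_{k+1}(w)\neq\emptyset$.} By \cref{lem:nonempty_notchain}, neither $\P_w(n,k)$ nor $\C_w(n,k)$ is a total order. Hence both sides of the equivalence are false, and the statement holds.

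\textbf{Case $\Inv_{k+1}(w)=\emptyset$.} Here the MSZ condition in \Cref{def:consistent.affine} is vacuous. So $\C_w(n,k)$ is exactly the set of lower order ideals of $\P_w(n,k)$, ordered by single step inclusion. Since $k\geq 2$, the set $\Inv_k(w)$ is finite, so $\P_w(n,k)$ is a finite poset. (If $\Inv_k(w)=\emptyset$, both posets have at most one element and the claim is trivial.)

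I would then identify this order with the lattice of order ideals $J(\P_w(n,k))$, as in the remark before \cref{lem:suffix_set.build.up}. Given ideals $I\subseteq I'$, repeatedly adding a minimal element of $I'\setminus I$ to $I$ produces a chain of ideals. Each step is a single step inclusion, because $I\cup\{x\}$ is again a lower order ideal and hence consistent. Therefore single step inclusion generates ordinary inclusion on $\C_w(n,k)$.

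With this identification, I would prove both directions.

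If $\P_w(n,k)$ is a chain $X_1<\cdots<X_\ell$, its lower order ideals are exactly the prefixes $\{X_1,\dots,X_i\}$. These are nested, and consecutive ones differ by one element, so $\C_w(n,k)$ is a total order.

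Conversely, suppose $X,Y\in\Inv_k(w)$ are incomparable in $\P_w(n,k)$. Consider the principal lower ideals $\{Z: Z\leq X\}$ and $\{Z: Z\leq Y\}$. Both are consistent, and neither contains the other, since $X\not\leq Y$ and $Y\not\leq X$. By the identification with inclusion, they are incomparable in $\C_w(n,k)$, so $\C_w(n,k)$ is not a total order.

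The only step needing any care is showing that single step inclusion agrees with inclusion. It goes through here because every lower order ideal is consistent once the MSZ condition is vacuous, so the chain of ideals built above never leaves $\C_w(n,k)$.
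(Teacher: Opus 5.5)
Your proof is correct and follows the same route as the paper: \cref{lem:nonempty_notchain} forces $\Inv_{k+1}(w)=\emptyset$ under either hypothesis, after which the MSZ condition is vacuous and $\C_w(n,k)$ is just the poset of lower order ideals of the finite poset $\P_w(n,k)$. You also spell out details the paper leaves implicit, and these are all correct: single step inclusion generates inclusion on lower order ideals, the ideals of a chain are its prefixes, and incomparable elements give incomparable principal ideals.
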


\begin{proof}
By \cref{lem:nonempty_notchain}, either hypothesis requires that $\Inv_{k+1}(w)=\emptyset$. When this occurs, the MSZ condition for subsets of $\Inv_k(w)$ is trivial. Consequently, $\C_w(n,k)$ reduces to the lattice of lower-order ideals of $\P_w(n,k)$. Hence, each is a total order if and only if the other is a total order. 
\end{proof}

We now consider the case of $k=2$ in more detail. \cref{lem:4321_totalorder} and results from \cref{sub:secondAHB} allow us to obtain the following characterization. 

\begin{proposition}\label{thm:k=2_chain}
The following are equivalent for $w \in \affineS_{n}$:
\begin{enumerate}[label=(\alph*)]
\item $\P_w(n,2)$ is a total order,
\item $\C_{w}(n,2)$ is a total order,
\item there exists a unique element in $\A_{w}(n,2)$, 
\item $w$ has exactly one reduced word, and
\item $\Inv_3(w)=\emptyset$ and every 2-inversion can be expressed with a common value in some fixed position.
\end{enumerate}
\end{proposition}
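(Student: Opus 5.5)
We will prove the chain of equivalences (a)$\Leftrightarrow$(b)$\Leftrightarrow$(c)$\Leftrightarrow$(d) using the structural results already established for $k=2$, and then tie in (e) by a direct combinatorial argument.

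\textbf{The formal equivalences.}
\begin{itemize}
\item (a)$\Leftrightarrow$(b) is exactly \cref{lem:4321_totalorder} with $k=2$.
\item (c)$\Leftrightarrow$(d) follows from \cref{lem:Bwn2_and_graph}: it gives a bijection between reduced words for $w$ and $\A_w(n,2)$, so uniqueness on one side is uniqueness on the other.
\item (b)$\Rightarrow$(c): $\C_w(n,2)\cong[\id,w]$ by \cref{cor:affine_Cwn2_isomorphism}, and maximal chains of $\C_w(n,2)$ are in bijection with $\A_w(n,2)$ by \cref{lem:Bwn2_and_graph}. A total order has exactly one maximal chain.
\item (c)$\Rightarrow$(b): suppose $\C_w(n,2)$ is not a chain. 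Since it is a finite interval $[\id,w]$ with unique minimum and maximum, two incomparable elements $u,v$ each lie on some maximal chain from $\emptyset$ to $\Inv_2(w)$. Those chains must differ, so there would be two admissible orders.
\end{itemize}

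\textbf{Proving (b)$\Rightarrow$(e).} By \cref{lem:nonempty_notchain}, (a) or (b) forces $\Inv_3(w)=\emptyset$. For the common-value claim, use (d): the unique reduced word $i_1\cdots i_\ell$ admits no commutation or braid.
\begin{itemize}
\item Since no commutations are possible, consecutive letters must differ by $\pm1$ mod $n$.
\item Since no braids are possible and no letter repeats consecutively, the word is monotone: $i,i+1,i+2,\dots$ or $i,i-1,\dots$ (indices mod $n$). The pattern $i(i+1)i$ is ruled out because it would give a braid.
\item The reflection order for an increasing word $s_i s_{i+1}\cdots$ is $[w^{(j)}(i_j), w^{(j)}(i_j+1)]$. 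Computing directly, each reflection has the form $[u(i), \cdot\,]$ with the same smaller value $u(i)=i$ in the first position. For a decreasing word, the common value sits in the second position.
\end{itemize}
We will check this against \Cref{def:reflection.order}. Up to representatives in $\binom{\Z}{2}_n$, this yields (e).

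\textbf{Proving (e)$\Rightarrow$(a).} Assume every 2-inversion is $[x,y_j]$ with $x$ fixed in the first position and distinct values $y_1<y_2<\cdots$ (the other case is symmetric).
\begin{itemize}
\item Consider two of these, $[x,y_j]$ and $[x,y_{j'}]$ with $y_j<y_{j'}$.
\item If $y_j\equiv y_{j'}$, they are comparable via congruence relations.
\item Otherwise $[x,y_j,y_{j'}]$ is not a 3-inversion, since $\Inv_3(w)=\emptyset$. Its packet contains the two 2-inversions $[x,y_j]$ and $[x,y_{j'}]$, so it is a 3-quasi-inversion, and the parity condition makes the two comparable.
\end{itemize}
Hence $\P_w(n,2)$ is a chain.

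\textbf{Expected main obstacle.} The hardest step is the affine bookkeeping in (d)$\Rightarrow$(e). Indices are taken mod $n$, so a monotone word can wrap around the cycle, and we must confirm that the generated reflections genuinely share one fixed value in $\binom{\Z}{2}_n$ rather than merely a congruence class. The plan is an induction on $\ell$. Write $w=s_{i_1}w'$, where $w'$ has the reduced word $i_2 i_3\cdots$. Conjugating the reflections of $w'$ by $s_{i_1}$ preserves the common value, because $s_{i_1}$ fixes $x$ when the word moves away from $i_1$.
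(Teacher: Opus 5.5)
Your proposal is correct. For (a)$\Leftrightarrow$(b)$\Leftrightarrow$(c)$\Leftrightarrow$(d) and for (e)$\Rightarrow$(a) it follows the paper. Your (e)$\Rightarrow$(a) is actually a bit more careful: you treat congruent pairs $[x,y_j],[x,y_j+cn]$ explicitly, via congruence relations and \cref{lem:inversion.sets}(c), whereas the paper glosses over that case.

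The genuine difference is how you reach (e).

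\emph{The paper's route.} It proves (c)$\Rightarrow$(e) entirely inside the poset framework. It uses \cref{cor:empty_full_acyclic} to get $\Inv_3(w)=\emptyset$, so the unique admissible order $\rho$ is the unique linear extension of $\P_w(n,2)$. It then analyzes consecutive triples $[b,c]<_\rho[a,c]<_\rho[a,d]$ in $\rho$, showing that a switch in which position is held fixed forces $a\equiv b \pmod n$, and concludes that all inversions share one value.

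\emph{Your route.} You go through (d) instead. You use the monotone form of the unique reduced word, which the paper notes but never exploits, and read the shared value directly off \Cref{def:reflection.order}. This is more concrete and sidesteps the paper's rather terse local-to-global step. The paper's argument, in exchange, never has to compute reflections.

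\emph{Your anticipated obstacle.} It disappears once you lift the letters to integers $i_j=i+j-1$. Then $\rho_j=(w^{(j)}(i+j-1),w^{(j)}(i+j))$, and telescoping $s_m(m+1)=m$ gives
\[
w^{(j)}(i+j-1)=s_is_{i+1}\cdots s_{i+j-2}(i+j-1)=i
\]
for every $j$. This is an honest integer, not just a residue. For a decreasing word, lift to $i_j=i-j+1$, and the same telescoping gives $w^{(j)}(i-j+2)=i+1$.

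\emph{Small corrections.}
\begin{itemize}
\item Your induction sketch is slightly misstated: $s_{i_1}$ does not fix the tail's common value $i+1$; it sends it to $i$. The common value is transported rather than fixed, which still suffices, but the direct computation makes the induction unnecessary.
\item For $n=2$ there are no braid relations, so your ``no $i(i+1)i$'' justification does not apply. It is not needed, though: every reduced word alternates, hence is monotone after lifting, and the computation above goes through unchanged.
\end{itemize}
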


\begin{proof}
    The equivalence of (a) and (b) follows from
\cref{lem:4321_totalorder}. The equivalence of (b), (c), and a unique
reduced word follows from the bijections in
\cref{lem:Bwn2_and_graph}. Note that $w$ has only one reduced word if
and only if that reduced word can be written in the form
${i}(i+1)\cdots (i+j)$ or the reverse using the relation
$s_{i}=s_{i+n}$.  Any nonempty reduced word that cannot be written in
such a form must have either a commutation or braid relation, so a
unique reduced word can only occur in the two cases listed
above. Hence, we have the equivalence of (a), (b), (c), and (d). To
conclude, we will show that (e) implies (a), and (c) implies (e).
    
    Suppose (e) holds. If every element of $\Inv_2(w)$ can be
    expressed in the form $[a,x]$ for fixed $a\in \mathbb{Z}$, then
    all of these are comparable in $\P_w(n,2)$ since $\Inv_{3}(w)=\emptyset$. The same is true when every element of $\Inv_2(w)$ can be expressed in the form $[x,a]$ for fixed $a\in \mathbb{Z}$. From this, we see that (e) implies (a).

    Finally, assume (c) holds, and let $\rho\in \A_w(n,2)$ be this unique admissible order. By \cref{cor:empty_full_acyclic}, the uniqueness requires that $\Inv_3(w)=\emptyset$, so $\rho$ is the unique linear extension of $(\Inv_{2}(w),\leq_{\emptyset})=\P_w(n,2)$. If $\rho$ contains consecutive elements of the form
    \begin{equation}\label{eq:k=2_chain}
        [b,c]<_{\rho} [a,c] <_{\rho} [a,d],
    \end{equation}
    then this can only occur when $a<b<c<d$ and
    $w^{-1}(c)<w^{-1}(d)<w^{-1}(a)<w^{-1}(b)$. However, this shows
    that $[b,d]\in \Inv_2(w)$ and $[b,c]<_{\P_w(n,2)}
    [b,d]<_{\P_w(n,2)}[a,d]$.
The element $[b,d]$ must appear in \eqref{eq:k=2_chain}, since $\rho$
is a linear extension of $\P_w(n,2)$. Thus, $[b,d]=[a,c]$ as
equivalence classes, $a$ and $b$ are congruent, and $[b,c]=[a,c+mn]$
for some $m$.
A similar argument applies when there are consecutive elements of the form $[b,c]<_{\rho} [b,d]<_{\rho} [a,d]$. From this, we conclude that elements in $\rho$ can all be expressed in the form $[b,c]$ with $c$ fixed, or $[b,c]$ with $b$ fixed. Hence, (e) holds. 
\end{proof}

We now turn our attention to $\C_w(n,3)$ and $\P_w(n,3)$. We first show that a direct extension of the property in \cite[Theorem 6.12]{tenner} is sufficient for $\P_w(n,3)$ and $\C_w(n,3)$ to be a total order.

\begin{lemma}\label{lem:2common_to_path}
Suppose $\Inv_4(w)=\emptyset$ and any pair of elements in $\Inv_3(w)$ can be expressed with 2 values in common. Then $\P_w(n,3)$ is a total order. 
\end{lemma}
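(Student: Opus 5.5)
The plan is to show that any two distinct elements $X,Y\in\Inv_3(w)$ are comparable in $\P_w(n,3)$; antisymmetry is already known from \cref{lem:affine.poset.packet.antichain}. By hypothesis $X$ and $Y$ can be written with two common values. Since elements of $\binom{\Z}{3}_n$ are equivalence classes, we first fix representatives $X=[x_1,x_2,x_3]$ and $Y=[y_1,y_2,y_3]$ that share two values. We then split into cases by whether the remaining values are congruent modulo $n$.

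\textbf{Step 1 (the two non-shared values are congruent).} By \cref{lem:congruence.order}, $X\equiv Y$, so both lie in the congruence class $I(X)$. By \cref{cor:congruence.lemma}, $I(X)$ is an interval in $\CR(n,3)$ isomorphic to $[m_1]\times[m_2]$. Two values in common means that $X$ and $Y$ differ by a multiple of a single $v_i$. This holds because shifting the first entry of the canonical representative is the same as shifting the other two entries, i.e. a multiple of $v_1$. Hence $X$ and $Y$ lie in a single chain factor. Every intermediate element $X+jv_i$ lies in $\Inv_3(w)$ by the product description, and the congruence relations in $\P_w(n,3)$ connect consecutive elements, so $X$ and $Y$ are comparable.

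\textbf{Step 2 (the non-shared values are distinct modulo $n$).} Let $Z=X\cup Y\in\binom{\Z}{4}_n$, so that $X,Y\in P(Z)$. Since $\Inv_4(w)=\emptyset$, \cref{lem:intersection_size} forces $P(Z)\cap\Inv_3(w)$ to have size at most $2$. Because it contains $X$ and $Y$, it equals $\{Z_i,Z_{i+1}\}$ for some $i$, so $Z$ is a $4$-quasi-inversion. The quasi-inversion relation from the parity condition in \cref{def:affine-permanent-poset} then directly relates $X$ and $Y$.

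\textbf{Main obstacle.} The hardest part is Step 2 when $X$ and $Y$ occupy non-adjacent positions of $P(Z)$, for example $Z_1$ and $Z_3$. We must rule this out, and \cref{lem:intersection_size} does exactly that: two non-adjacent members of a packet in $\Inv_3(w)$ would force $Z\in\Inv_4(w)$. Some care is also needed to make sure the shared values can be realized by a single representative. If the common values appear only in shifted copies, we re-choose the representative using the global shift by $n$. Any residual mismatch is then a congruence shift, which is handled as in Step 1 by combining a congruence chain with a quasi-inversion relation via transitivity.
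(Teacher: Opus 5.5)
\textbf{Gap in Step 1.} Your case split matches the paper's: the congruent case is handled through the interval $I(X)$, and the non-congruent case through a $4$-quasi-inversion. Step 2 is correct as written. The gap is in Step 1, where you claim that two congruent $3$-inversions with two common values differ by a multiple of a single $v_i$. That claim is false.

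For congruent $X,Y$, a common value must sit in the same position in both, so there are three cases:
\begin{itemize}
\item sharing the last two entries gives $Y=X-cv_1$;
\item sharing the first two entries gives $Y=X+cv_2$;
\item sharing the first and third entries gives $Y=[x_1,x_2+cn,x_3]=X+c(v_1-v_2)$.
\end{itemize}
Your argument does not address the third case. No chain of congruence relations of a single type joins $X$ to $Y$ there. The closing sentences of your ``main obstacle'' paragraph do not cover it either. Once you fix representatives realizing the two common values, there is no residual mismatch. Also, ``combining a congruence chain with a quasi-inversion relation via transitivity'' is not a valid argument as stated.

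\textbf{How the paper handles it.} This is exactly the case the paper treats separately. It writes $Y=X+c_1v_1+c_2v_2$ with $c_1,c_2\neq 0$. The box description of $I(X)$ in gap coordinates from \cref{cor:congruence.lemma} then places $X+c_1v_1$ and $X+c_2v_2$ in $\Inv_3(w)$. In the relevant case $c_2=-c_1$, these two $3$-inversions share at most one value, which contradicts the hypothesis.

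\textbf{A direct repair.} You can also fix Step 1 without using the hypothesis, via the alternating orientation of the congruence relations. In $\CR(n,3)$ one has $Z+v_1<Z$ and $Z-v_2<Z$. So for $c>0$ the sequence
$X,\ X+v_1,\ \dots,\ X+cv_1,\ X+cv_1-v_2,\ \dots,\ X+cv_1-cv_2=Y$
is strictly decreasing. Its gap coordinates stay between those of $X$ and $Y$, so every term lies in the box $I(X)\subseteq\Inv_3(w)$. Hence $Y<_{\P_w(n,3)}X$, and symmetrically $X<_{\P_w(n,3)}Y$ when $c<0$. Either this argument or the paper's contradiction argument is needed to close Step 1.
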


\begin{proof}
Let $X,Y\in \Inv_3(w)$, which by assumption can be expressed with 2
values in common. First suppose that $X$ and $Y$ are congruent. If
$Y=X+cv_2$ or $Y=X+cv_1$ for some nonzero integer $c$, then it is
clear that $X$ and $Y$ are related in $\P_w(n,3)$. If
$Y=X+c_1v_1+c_2v_2$ for nonzero integers $c_1$ and $c_2$, then
$X+c_1v_1, X+c_2v_2\in \Inv_3(w)$ by \Cref{cor:congruence.lemma}. Since $c_1$ and $c_2$ are nonzero, it is straightforward to show that $X+c_1v_1$ and $X+c_2v_2$ cannot be expressed with $2$ values in common, which contradicts the hypotheses. Hence, it must be that this last case cannot occur, and $X$ and $Y$ are comparable in $\P_w(n,3)$.

Alternatively, suppose $X$ and $Y$ are not congruent. Since they can
be expressed with two values in common, they belong to the packet of
some $Z\in \binom{\mathbb{Z}}{4}_n$. As $X,Y\in \Inv_3(w)$ and
$\Inv_4(w)=\emptyset$, we conclude $Z\in \Qinv_4(w)$. Hence, $X$ and
$Y$ are related by a generating relation in $\P_w(n,3)$ by \Cref{def:affine-permanent-poset}. Combined, we
conclude that any pair of elements in $\Inv_3(w)$ is comparable in
$\P_w(n,3)$, so $\P_w(n,3)$ is a total order.
\end{proof}

A converse to \cref{lem:2common_to_path} in the affine case is not immediate due to the multiple types of relations in $\P_w(n,3)$ and congruence modulo $n$. However, a direct analysis of these relations allows us to characterize $\Inv_3(w)$ when $\P_w(n,3)$ is a total order. 

\begin{lemma}\label{lem:path_to_2common}
Suppose $\P_w(n,3)$ is a total order. Then there exists an $i \in [3]$ such that
\[
|\{X_{i}: X\in \Inv_{3}(w) \}|\leq 1.
\]
Equivalently, every
3-inversion can be represented using 2 fixed common values in the same two
positions $[3]\setminus \{i\}$.  
\end{lemma}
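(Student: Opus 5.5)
The plan is to exploit the fact that in a finite poset generated by a set of relations, every covering relation is one of the generating relations. First, since $\P_w(n,3)$ is a total order, \cref{lem:nonempty_notchain} with $k=3$ gives $\Inv_4(w)=\emptyset$. So the only generating relations of $\P_w(n,3)$ are quasi-inversion relations coming from $4$-quasi-inversions and congruence relations $X$ versus $X+v_1$ or $X+v_2$. Write $\Inv_3(w)$ as the chain $X^{(1)}\lessdot X^{(2)}\lessdot\cdots\lessdot X^{(m)}$. Each cover $X^{(t)}\lessdot X^{(t+1)}$ is then a generating relation, and if $m\le 1$ there is nothing to prove.

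Next I would attach to each cover a position $i\in[3]$ whose deletion is preserved by that cover.
\begin{itemize}
\item If the cover is the quasi-inversion relation between $Z_j$ and $Z_{j+1}$ for some $Z\in\Qinv_4(w)$, then $(Z_j)_j=Z_{j,j+1}=(Z_{j+1})_j$, so $i=j$.
\item If the cover is a congruence relation with $X+v_1$, then $X+v_1=[x_1-n,x_2,x_3]$, so $(X+v_1)_1=X_1$ and $i=1$.
\item If the cover is a congruence relation with $X+v_2$, then the first two entries are unchanged, so $i=3$.
\end{itemize}
If every cover in the chain carries the same label $i$, then $X_i$ is constant along the whole chain, which gives $|\{X_i : X\in\Inv_3(w)\}|\le 1$. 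The equivalent formulation follows immediately, since $X_i$ records the two fixed values in positions $[3]\setminus\{i\}$.

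The main obstacle is showing that two consecutive covers $X\lessdot Y\lessdot W$ cannot carry different labels $i\neq i'$. In that situation at most five distinct values (counted modulo $n$) occur among $X,Y,W$. The argument splits by edge type.

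\emph{Both covers are quasi-inversion relations and all entries are distinct modulo $n$.} Here I would apply the finite deletion of \cref{def:finite_deletion} to the set $A$ of these values. \Cref{lem:finite_deletion} transfers the configuration to some $w'\in\S_m$ with $m\le 5$. I would then show that $w'$ has a $3$-inversion (for instance one mixing the changed entries of $X$ and $W$) that is incomparable to $Y$, or has a $4$-inversion. Either outcome lifts back to $w$, and both contradict totality of $\P_w(n,3)$ or $\Inv_4(w)=\emptyset$. I would try to do this by a direct pattern argument using \cref{lem:adjacent.pairs} and \cref{lem:intersection_size}, falling back on a computer check over $\S_5$ if needed.

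\emph{At least one cover is a congruence relation.} I would argue directly with \cref{lem:inversion.sets}(c) and \cref{cor:congruence.lemma}. If the congruence class of some $X$ contains both $X+v_1$ and $X+v_2$ shifts, then $X+v_1$ and $X+v_2$ both lie in $\Inv_3(w)$. These two elements differ in two positions and are not congruent via a single generator, so they should be incomparable, contradicting totality. Mixed cases, where a congruence cover is followed by a quasi-inversion cover with a different label, I expect to handle by producing, via \cref{lem:inversion.sets}(c), a shifted quasi-inversion or $3$-inversion that breaks the chain. This is the same mechanism used in the proof of \cref{lem:shorten1}.
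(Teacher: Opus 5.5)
\textbf{Gap: the step that carries the lemma is not carried out.} Your reduction matches the paper's. Covers of $\P_w(n,3)$ are generating relations. Your label $i$ is the paper's ``type'': quasi-inversion or $v_1$-congruence for the first entry, quasi-inversion for the second, quasi-inversion or $v_2$-congruence for the third. A single label throughout gives the claim. So everything comes down to excluding a saturated chain $X\lessdot Y\lessdot W$ whose two covers carry different labels. That exclusion is the entire content of the lemma, and neither mechanism you propose for it works.
\begin{enumerate}
\item \emph{Incomparability does not lift through finite deletion.} \cref{lem:finite_deletion} only matches \emph{generating} relations among elements compatible with $A$. So relations in $\P_{w'}(m,3)$ lift to $\P_w(n,3)$, but not conversely: two compatible elements can be related in $\P_w(n,3)$ through a chain of incompatible $3$-inversions. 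An element incomparable to $Y$ in $\P_{w'}(m,3)$ therefore tells you nothing about $\P_w(n,3)$. Only your $4$-inversion outcome transfers.
\item \emph{Your congruence claim is false.} By the parity rule in $\CR(n,3)$ you have $X+v_1<X<X+v_2$, so these two elements are comparable. More generally, ``not related by a single generator'' never implies incomparability. Certifying incomparability would need invariants such as $\wt{F}_j\circ\wt{M}$ and $\wt{G}_j\circ\wt{M}$.
\item The mixed congruence/quasi-inversion cases are left unspecified.
\end{enumerate}

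\textbf{The missing idea: prove comparabilities, not incomparabilities.} Comparabilities are witnessed by generating relations, so they need no transport. Take the ``fourth corner'' $Y'$ you already suspected: in $X$, replace the entry changed by the second cover with the corresponding entry of $W$. Check that $Y'\in\Inv_3(w)$, and exhibit generating relations $X<Y'<W$. Then $Y'\notin\{X,Y,W\}$ lies in the interval from $X$ to $W$. Totality makes $Y'$ comparable to $Y$, which contradicts that both steps are covers.

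For example, suppose $Y=[y,x_2,x_3]$ and $W=[y,z,x_3]$. The two relations force $x_1<y<z<x_2<x_3$ and $w^{-1}(x_1)>w^{-1}(x_2)>w^{-1}(z)>w^{-1}(x_3)$. Hence $Y'=[x_1,z,x_3]\in\Inv_3(w)$. You get $X<Y'$ from the quasi-inversion $[x_1,z,x_2,x_3]$. You get $Y'<W$ either from the quasi-inversion $[x_1,y,z,x_3]$, or, if the first cover was a congruence, from $W=Y'-v_1$.

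This is the paper's proof. All six ordered pairs of distinct labels are handled the same way. Congruence covers need no separate treatment, since a label-$1$ or label-$3$ cover changes only that one entry whichever kind of relation it is. No deletion or computer check is required.
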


\begin{proof}
Assume $\P_w(n,3)$ is a total order.  By expressing $X+c_{1}v_{1}$ as
$X-(c_{1}n,0,0)$, we can categorize the generating relations in
$\P_w(n,3)$ into three types depending on if the first, second, or
third position changes.  Because the quasi-inversion and congruence
relations are a bit different, we can more concretely express them as
follows:
\begin{enumerate}
	\item quasi-inversion relations changing the first position and congruence relations involving $v_1$,
	\item quasi-inversion relations changing the second position, or
	\item quasi-inversion relations changing the third position and congruence relations involving $v_2$.
\end{enumerate}
If the total order $\P_w(n,3)$ involves only relations of one type,
then the result is clear. Hence, we assume by contradiction that $\P_w(n,3)$ contains relations of different types. Then there is some saturated chain $X\lessdot_{\P_w(n,3)} Y \lessdot_{\P_w(n,3)} Z$ in $\P_w(n,3)$ where the type of relation between $X$ and $Y$ differs from the one between $Y$ and $Z$.

We now consider all combinations of the relations above for the
saturated chain $X\lessdot_{\P_w(n,3)} Y \lessdot_{\P_w(n,3)} Z$.  Let
$X=[x_{1},x_{2},x_{3}]$.  Since $X\lessdot_{\P_w(n,3)} Y$, we may
express $Y$ using two values from $X$.  Similarly, we may express $Z$
using two values from $Y$.   In each case below, we show there is some
$Y'\in \Inv_3(w)\setminus \{X,Y,Z \}$ that must be in the interval
between $X$ and $Z$ in $\P_w(n,3)$.  We leave it to the reader to
check that each such $Y'$ is above $X$ and below $Z$ in $\P_w(n,3)$
via generating relations.

\begin{itemize}
	\item Suppose $Y=[y,x_2,x_3]$ and $Z=[y,z,x_3]$.  Then $Y$ and 
	$Z$ must form a quasi-inversion.  
	Combined with $X,Y, Z\in \Inv_3(w)$, we find
	$x_1<y<z<x_2<x_3$ and 
	\[w^{-1}(x_1)>w^{-1}(x_2)>w^{-1}(z)>w^{-1}(x_3),\]
so $Y'=[x_1,z,x_3]\in \Inv_3(w)$.

	\item Suppose $Y=[y,x_2,x_3]$ and $Z=[y,x_2,z]$.  Then,  $[x_1,x_2],[x_2,z]\in \Inv_2(w)$ imply
	$Y'=[x_1,x_2,z]\in \Inv_3(w)$.

	\item Suppose $Y=[x_1,y,x_3]$ and $Z=[z,y,x_3]$. In this case,
	 $Y$ and $Z$ are congruent or form a quasi-inversion.  
	In either case, combined with $X,Y,Z \in \Inv_3(w)$, we find  $x_1<z<y<x_2<x_3$
	and 
	\[w^{-1}(z)>w^{-1}(x_1)>w^{-1}(x_2)>w^{-1}(x_3),\]
so $Y'=[z,x_2,x_3]\in \Inv_3(w)$.

	\item Suppose $Y=[x_1,y,x_3]$ and $Z=[x_1,y,z]$. Then $X$ and
	$Y$ must form a quasi-inversion, and combined with $X,Z\in
	\Inv_3(w)$, we find  $x_1<y<x_2<x_3<z$ and \[w^{-1}(x_1)>w^{-1}(x_2)>w^{-1}(y)>w^{-1}(z),\]
so $Y'=[x_1,x_2,z]\in \Inv_3(w)$.

	\item Suppose $Y=[x_1,x_2,y]$ and $Z=[z,x_2,y]$.  Then, $[z,x_2],[x_2,x_3]\in \Inv_2(w)$ imply
	$Y'=[z,x_2,x_3]\in \Inv_3(w)$.

	\item Suppose $Y=[x_1,x_2,y]$ and $Z=[x_1,z,y]$.  In this
	case, either $X$ and $Y$ are congruent or form a
	quasi-inversion.   Combined with $X,Y,Z \in \Inv_3(w)$, we find
	$x_1<z<x_2<x_3<y$ and 
	\[w^{-1}(x_1)>w^{-1}(z)>w^{-1}(y) > w^{-1}(x_3),\]
	so $Y'=[x_1,z,x_3]\in \Inv_3(w)$.
\end{itemize}
Since $\P_w(n,3)$ is a total order, $Y$ and $Y'$ are related in $\P_w(n,3)$. This contradicts that $X\lessdot_{\P_w(n,3)} Y \lessdot_{\P_w(n,3)} Z$ is a saturated chain. 
\end{proof}

\begin{proposition}\label{thm:path_iff_2common}
The following are equivalent for $w \in \affineS_{n}$:
\begin{enumerate}[label=(\alph*)]
	\item $\C_w(n,3)$ is a total order,
	\item $\P_w(n,3)$ is a total order,
    \item there exists an $i \in [3]$ such that
$|\{X_{i}: X\in \Inv_{3}(w) \}|\leq 1$, 
	\item $\Inv_4(w)=\emptyset$ and every pair of elements in
	$\Inv_3(w)$ can be expressed with 2 values in common, and
	\item $\cG(w)$ is a path as an undirected graph.
\end{enumerate}
\end{proposition}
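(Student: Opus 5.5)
We will prove the cycle of implications (a)$\Leftrightarrow$(b), (b)$\Rightarrow$(c), (c)$\Rightarrow$(d), (d)$\Rightarrow$(b), and separately (a)$\Leftrightarrow$(e). Since $k=3\geq 2$, \cref{lem:4321_totalorder} gives (a)$\Leftrightarrow$(b) immediately. \cref{lem:path_to_2common} is exactly (b)$\Rightarrow$(c), and \cref{lem:2common_to_path} is exactly (d)$\Rightarrow$(b). For (a)$\Leftrightarrow$(e), \cref{thm:affine} identifies $\cG(w)$ with the Hasse diagram of $\C_w(n,3)$ as a directed graph. If $\C_w(n,3)$ is a total order, its Hasse diagram is a path. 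Conversely, if the undirected Hasse diagram is a path, then by \cref{cor:cGgraph}(a) and the uniqueness of the minimum and maximum, the two endpoints must be $\emptyset$ and $\Inv_3(w)$, since an interior source or sink vertex would be a second minimal or maximal element. Then every vertex lies on the unique chain between them, so $\C_w(n,3)$ is totally ordered.

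The remaining step is (c)$\Rightarrow$(d). Assume that every $X\in\Inv_3(w)$ has the same $X_i$; that is, all 3-inversions share two fixed values $a<b$ (as an element of $\binom{\Z}{2}_n$) in the positions $[3]\setminus\{i\}$. Then any two 3-inversions can be written with those two values in common, which gives the second half of (d).

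For $\Inv_4(w)=\emptyset$, suppose $Z\in\Inv_4(w)$. Then $P(Z)\subseteq\Inv_3(w)$, so all four elements $Z_1,\dots,Z_4$ have the same $i$-th deletion. Apply this to $Z_1$ and $Z_4$. If $i=2$, the common element is $(Z_1)_2=[z_2,z_4]$ versus $(Z_4)_2=[z_1,z_3]$. These are equal as elements of $\binom{\Z}{2}_n$ only if their entries agree up to a global shift by a multiple of $n$, and this is impossible because $z_1,\dots,z_4$ are distinct mod $n$. The cases $i=1$ and $i=3$ are handled in the same way, using that the four entries are pairwise noncongruent. This contradiction gives $\Inv_4(w)=\emptyset$.

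The main subtlety is the affine bookkeeping in this last step, since equality in $\binom{\Z}{2}_n$ is only up to a global shift. The distinct-residue argument handles it: the multiset of residues appearing in the two deleted pairs already differs, so no shift can make them equal.
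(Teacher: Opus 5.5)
Your proposal is correct and follows the paper's proof essentially verbatim. It uses the same chain (a)$\Leftrightarrow$(b), (b)$\Rightarrow$(c), (c)$\Rightarrow$(d), (d)$\Rightarrow$(b) via the same three lemmas, with (e)$\Leftrightarrow$(a) coming from the identification of $\cG(w)$ with the Hasse diagram of $\C_w(n,3)$. You only spell out two steps the paper leaves terse: why a full packet $P(Z)$ with $Z\in\Inv_4(w)$ violates (c), where your residue check on $Z_1$ and $Z_4$ is right, and why a path-shaped Hasse diagram with a unique source and a unique sink must be a chain.
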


\begin{proof}
	The equivalence of (a) and (b) is \cref{lem:4321_totalorder}.
\cref{lem:path_to_2common} shows that (b) implies (c). For (c) implies
(d), note that when $\Inv_4(w)$ is nonempty, the packet of any $X\in
\Inv_4(w)$ provides a counterexample to (c). Hence, when (c) holds,
$\Inv_4(w)$ must be empty, and the second condition in (d) is then
immediate.  \cref{lem:2common_to_path} shows that (d) implies (b).  By
\cref{thm:affine}, $\cG(w)$ is a path if and only if $\C_w(n,3)$ is a
total order proving (e) is equivalent to (a).
\end{proof}

\subsection{The Ultimate Affine  Higher Bruhat Orders}\label{sub:ultimateAHB}

Here we consider the case when $k=n$ and $k>n$.  Note, for $k\geq n$,
$w$ has no $(k+1)$-inversions due to $n$-periodicity.  However, $w$
can have any number of $n$-inversions.

One may identify the affine permutations with the alcoves in the
affine braid arrangement determined by hyperplanes with equations of
the form $x_{i}-x_{j}=c$ for all $c \in \mathbb{Z}$.  See \cite[Ch
4]{Hum} for an introduction using reflection groups and root systems.
The affine permutations for two adjacent alcoves differ by a simple
transposition.  Reduced words for $w$ correspond with alcove walks of
minimal length from the identity alcove to the alcove identified with
$w$.  In that context, the $n$-inversions of $w$ can then be
identified with the weights in the weight lattice in $\mathbb{R}^{n}$
inside the convex hull of the union of reduced alcove walks from the
identity alcove to the alcove indexed by $w$.  We note however that
the natural partial ordering on the weight lattice in $\mathbb{R}^{n}$
is not the same as the permanent poset $\P_{w}(n,n)$.

\begin{theorem}\label{thm:ultimateAHB}
    For $w\in \affineS_n$, the following hold.
\begin{enumerate}[label=(\alph*)]
\item If $\Inv_{n}(w)$ is nonempty, the permanent poset $\P_{w}(n,n)$ on $\Inv_{n}(w)$ is isomorphic
to a product of chains $[m_{1}]\times [m_{2}] \times \cdots \times
[m_{n-1}]$ where $m_{i}$ is the number of inversions for $w$ congruent
to $[y_{i},y_{i+1}]$ and $Y=[y_{1},\dots , y_{n}]$ is the compressed
representative of the unique congruence class in $\Inv_{n}(w)$.

\item The heap poset $(\Inv_n(w),\leq_{\emptyset})$ is $\P_{w}(n,n)$.  

\item The $n$-admissible orders are linear extensions of the permanent
poset, so 
\[
\A_w(n,n) =  \mathscr{L}(\P_{w}(n,n))= \mathscr{L}(\Inv_n(w),\leq_{\emptyset}).
\]
\item The consistent sets in $\C_w(n,n)$ are the lower order ideals in
the permanent poset $\P_{w}(n,n)$ ordered by either single step
inclusion or equivalently by inclusion.
\item There is a natural bijection between
    maximal chains of $\C_w(n,n)$ and the admissible orders in
    $\A_w(n,n)$.
\item For all $k\geq n$, the higher Bruhat order $\B_w(n,k)\cong 
\C_w(n,k+1)$ is the one element poset on $\emptyset$.
\end{enumerate}
\end{theorem}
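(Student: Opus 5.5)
The plan is to reduce everything to two observations about the case $k=n$: $\Inv_{n+1}(w)=\emptyset$, and all $n$-inversions are congruent to one another.

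\emph{Structural facts.} Elements of $\binom{\Z}{n+1}_n$ would need $n+1$ distinct residues mod $n$, so $\Inv_{n+1}(w)=\emptyset$. By the same count there are no $(n+1)$-quasi-inversions. Any $X,Y\in\Inv_n(w)$ use all $n$ residue classes, so \Cref{lem:congruence.order} makes them congruent. Hence $\Inv_n(w)$ is a single congruence class $I(X)$, whenever it is nonempty.

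\emph{Parts (a) and (b).} Since there are no quasi-inversion relations, the permanent poset $\P_w(n,n)$ is generated only by congruence relations between distinct congruent elements. By \Cref{cor:congruence.lemma}, $I(X)$ is an interval in $\CR(n,n)$ isomorphic to $[m_1]\times\cdots\times[m_{n-1}]$, with the $m_i$ as stated. The one point needing care is that the transitive closure of the congruence relations restricted to $\Inv_n(w)$ equals the induced order from $\CR(n,n)$. I would argue this via the gap coordinates in that proof. The set $I(X)$ is a box $\prod[0,m_i-1]$, and each generating relation changes one coordinate by one, with direction fixed by the parity of $n-i$. So any comparability between two points of the box is realized by unit steps that stay inside the box (the box is convex), and the product-of-chains structure follows. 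This convexity check is the only potential obstacle, and it is mild.

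For (b), $\Inv_{n+1}(w)=\emptyset$ means $G_\emptyset$ has no full-packet edges. Its edges are therefore exactly the congruence relations, so $(\Inv_n(w),\leq_\emptyset)=\P_w(n,n)$; acyclicity comes from \Cref{lem:empty_full_acyclic} or directly from (a).

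\emph{Parts (c)--(f).} For (c), the full-packet condition is vacuous, so $\A_w(n,n)$ is just the set of linear extensions of $\P_w(n,n)$, and (b) gives the second equality. For (d), the MSZ condition is vacuous, so $\C_w(n,n)$ consists of the lower order ideals of $\P_w(n,n)$. Single step inclusion and inclusion generate the same order on these, since $J(P)$ is ranked with covers given by adding one element (\cite[Prop. 3.4.5]{StanEC1}). For (e), maximal chains of $J(P)$ correspond to linear extensions of the finite poset $P$. I would record these as the successive prefix map of \Cref{def:consistentchaintoadmissibleseq}, or cite \Cref{lem:affine_admissible_to_consistent_set} together with the fact that $J(P)$ has unique bottom $\emptyset$ and top $\Inv_n(w)$. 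For (f), when $k\ge n$ we have $\Inv_{k+1}(w)=\emptyset$. Every $\rho\in\A_w(n,k)$ then has reversal set $\emptyset$, so by \Cref{cor:reversals_unique.affine} $\B_w(n,k)$ is a single class (nonempty by \Cref{cor:empty_full_acyclic}), and $\C_w(n,k+1)=\{\emptyset\}$.
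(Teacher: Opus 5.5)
Your proof is correct and follows essentially the same route as the paper. The fact $\Inv_{n+1}(w)=\emptyset$ eliminates all quasi-inversion and full-packet edges, and \Cref{lem:congruence.order} with \Cref{cor:congruence.lemma} gives the single congruence class and its product-of-chains structure. Parts (c)--(f) then follow because the full-packet and MSZ conditions are vacuous.

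The differences are minor. You check convexity of the box explicitly, which the paper leaves implicit when it calls $I(X)$ an interval of $\CR(n,n)$. In (f) you cite \Cref{cor:reversals_unique.affine}, where the paper applies (c) and \Cref{lem:linear_extensions} directly; that corollary is proved by the same argument.
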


Note the lower order ideals of a product of chains generalize the
notion of partitions inside a rectangle and plane partitions inside a
box.  See \cite{Early.2019} for history and combinatorial results on
lower order ideals in a product of chains.

\begin{proof}
For Part (a), assume $\Inv_{n}(w)$ is nonempty. Let $X \in \Inv_{n}(w)$.  By
\Cref{lem:congruence.order}, we know $\Inv_{n}(w) = I(X) = \{Z \in
\Inv_{n}(w): X \equiv Z \}$.  Thus, $\P_{w}(n,n)$ is the congruence
poset $\CR(n,n)$ restricted to $\Inv_{n}(w)$, which is isomorphic to a
product of chains by \Cref{cor:congruence.lemma}.

To prove Parts (b), (c), (d), and (e) observe that
$\Inv_{n+1}(w)=\emptyset$.  The edges in the reversal graph
$G_{\emptyset}$ on $\Inv_{n}(w)$ are exactly the congruence edges used
to generate $\P_{w}(n,n)$.  It follows that the corresponding heap
poset $(\Inv_n(w),\leq_{\emptyset})=\P_{w}(n,n)$, proving Part (b).
The full-packet condition for $n$-admissible orders is trivially
satisfied by every linear extension of $\P_{w}(n,n)$, so Part (c)
holds by the definition of admissible orders.  Similarly, the MSZ
condition is trivially satisfied for every lower order ideal of
$\P_{w}(n,n)$, hence Part (d) follows from the definition of
consistent sets.  For Part (e), the bijection between maximal chains
of $\C_w(n,n)$ and the admissible orders in $\A_w(n,n)$ is just the
standard bijection between maximal chains of lower order ideals in a
poset and its set of linear extensions.

To prove Part (f), apply Part (c) and \Cref{lem:linear_extensions} to
observe that any two $n$-admissible orders for $w$ can be obtained
from one another by a sequence of commutations.  Thus, $\A_{w}(n,n)$
has only one commutation class corresponding with the empty reversal
set, so $\B_{w}(n,n)$ is a one element poset.  Since
$\Inv_{k}(w)=\emptyset$ for all $k>n$, we observe that the empty list
is the only $k$-admissible order for $w$, $\B_{w}(n,k)$ is the one
element poset on the empty list, and $\C_{w}(n,k)=\{\emptyset \}$.   
\end{proof}

\begin{corollary}\label{cor:n=3}
    \cref{thm:nonlongest} holds for all $w\in \affineS_n$ and positive
integers $k$ when $n\leq 3$.
\end{corollary}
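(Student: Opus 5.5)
The plan is to verify the three parts of \cref{thm:nonlongest} separately for each value of $k$ relative to $n\leq 3$, using the results already proved for the small and large cases. Since $n\leq 3$, every positive integer $k$ falls into one of the ranges $k=1$, $k=2$, or $k\geq n$. The range $k\geq n$ overlaps with $k\le 2$ only when $n\leq 2$; there we choose whichever argument is convenient. Note that for $n=1$ we have $\affineS_1=\S_1$, which is already covered by \cref{thm:nonlongest}.

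For $k=1$, part (a) asks that $\B_w(n,1)\cong\C_w(n,2)$ via $\Rev$. This is \cref{thm:affine}(a) together with \cref{cor:affine_Cwn2_isomorphism}; we should check that the isomorphism there is exactly $\rho\mapsto\Rev(\rho)$, which is how the bijection in \cref{lem:An1.weak.order.affine} is built. Part (b) for $k=1$ is the ranked statement in \cref{cor:affine_Cwn2_isomorphism}, with extremes $\emptyset$ and $\Inv_2(w)$. Part (c) for $k=1$ is the bijection between maximal chains of $\C_w(n,2)$ and $\A_w(n,2)$ in \cref{lem:Bwn2_and_graph}. That bijection is realized by the successive prefix map, as in the remark following that lemma.

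For $k=2$, parts (a), (b), (c) are \cref{thm:affine}(e) and (g), resting on \cref{thm:affine_Bn2_Cn3}, \cref{cor:affine_min_max} and \cref{lem:affine_chains_to_admissible_orders}.

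For $k\geq 3$ we have $k\geq n$, so \cref{thm:ultimateAHB} applies. Part (f) of that theorem gives $\B_w(n,k)\cong\C_w(n,k+1)=\{\emptyset\}$, since $\Inv_{k+1}(w)=\emptyset$. This gives (a) and (b) trivially, with minimal and maximal elements coinciding at $\emptyset=\Inv_{k+1}(w)$.

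Part (c) for $k\geq 3$ needs a bijection between the single maximal chain $(\emptyset)$ and $\A_w(n,k+1)$. This set contains only the empty list, because $\Inv_{k+1}(w)=\emptyset$ for $k+1>n$. The remaining boundary case is $k=n$ with $n\le 2$, where part (c) concerns chains of $\C_w(n,n+1)=\{\emptyset\}$. The same trivial argument handles it, while \cref{thm:ultimateAHB}(e) separately covers the analogous statement for $\C_w(n,n)$.

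The main obstacle is bookkeeping rather than mathematics. We must make sure every pair $(n,k)$ with $n\le 3$ is covered by some cited result, including $n=2$ with $k=2$, where braid moves are absent but \cref{thm:affine} still applies. We must also confirm that each isomorphism cited is the reversal-set map required in part (a), rather than merely an abstract isomorphism.
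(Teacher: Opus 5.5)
Your proposal is correct and follows the same route as the paper, which simply cites \cref{thm:affine}, \cref{lem:affine_Cwn2_bijection}, and \cref{thm:ultimateAHB}, since for $n\le 3$ every positive $k$ satisfies $k\le 2$ or $k\ge n$. Your extra bookkeeping makes explicit what the paper leaves implicit: you check that each cited isomorphism is the reversal-set map, and you handle the trivial chain bijection when $\Inv_{k+1}(w)=\emptyset$.
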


\begin{proof}
This follows from \cref{thm:affine},
\Cref{lem:affine_Cwn2_bijection}, and \Cref{thm:ultimateAHB}.
\end{proof}

\section{Conclusion and Open Problems}\label{sec:open}

We have given a generalization of the classical admissible orders
$\A(n,k)$, higher Bruhat orders $\B(n,k)$, and consistent sets
$\C(n,k)$ for affine permutations $w\in \affineS_n$.  When $w\in
\S_n$, we extended both Ziegler's and Manin and Schechtman's classical
results, which are summarized in \cref{thm:nonlongest}. For arbitrary
$w\in \affineS_n$, \cref{thm:affine} gives analogous results for the
first and second higher Bruhat orders only, while
\Cref{thm:ultimateAHB} gives analogous statements for $k\geq n$.  We 
conjecture the analogy extends for all positive integers $k$.  We also hope
to further understand the structures defined here.

\subsection{Characterizing  Reversal Sets of Affine Admissible Orders} Our approach to
proving \cref{thm:nonlongest} relied on showing that $G_R$ is acyclic
for all $w\in \S_n$, $1\leq k\leq n$, and $R\in \C_w(n,k)$.  One key
step in the proof is providing a partition of the vertices of $G_{R}$
into sets $V_0(G_{R}), V_1(G_{R}), V_2(G_{R})$ depending on $R$ and
$n$.  The partition helps prove acyclicity of $G_{R}$ because edges
between these sets only go from $V_{i}(G_{R})$ to $V_{j}(G_{R})$ for
$i\leq j$.  The acyclicity of each $V_{i}(G_{R})$ is established by
induction using deletion and truncation to complete the proof.  

For
affine permutations, there is always some $z \in [n]$ such that if
$z+mn$ appears in $X=[x_{1},\dots , x_{k}] \in \Inv_{k}(w)$, then
it must be in the last position $x_{k}\equiv z$, for all $k\geq 1$.
Such a value $z$ is a source vertex of the affine inversion graph $I_{w^{-1}}$, with edges pointing down in the poset.
However, the straightforward generalization of $V_1(G_{R})$ using
$z(w)$ in place of $n$ leads to a set that is not necessarily convex
in the induced heap poset $(\Inv_{k}(w), \leq_{R})$.  For example,
if $k=2$, $n=3$, $w = (2, -2, 6)$, and $R=\emptyset$, then the Hasse
diagram of the heap poset $(\Inv_{2}(w), \leq_{R})$ is 
\[
[1,2] \to [1,3] \to [1,5]\to  [1,6]\to  [2,3]
\]
The inversions $[1,3], [2,3]$ both contain the maximal element
$z(w)=3$, but the interval between them contains $[1,5]$, so
$V_1(G_{R})$ is not convex in the heap poset. 
Consequently, paths can start in $V_1(G_R)$, leave $V_1(G_R)$, and return to
$V_1(G_{R})$ multiple times in the affine case.  Thus, a direct
generalization of our approach to the affine case will not
work.

\cref{conj:affineG_R.advanced} aims to characterize the
reversal sets of admissible orders and the acyclic $G_{R}$ graphs for
a given $w\in \affineS_n$ in another way. Once this is established,
our general approach for proving \cref{thm:nonlongest} in
\cref{sub:consistent.Sn} can be adapted for affine permutations.

\begin{conjecture}\label{conj:affineG_R.advanced}
   For $w\in \affineS_n$ and $R \subseteq \Inv_{k+1}(w)$,  the
   following are equivalent.
   \begin{enumerate}[label=(\alph*)]
\item There exists some $\rho \in \A_{w}(n,k)$ such that $R=\Rev
(\rho)$.
\item  $R \in \C_w(n,k+1)$.
\item The graph $G_R$ is an acyclic graph on $\Inv_{k}(w)$.
\item The preorder $(\Inv_k(w),\leq_R)$ is a poset, and 
\[
\{\rho \in \A_{w}(n,k): \Rev_{n,k,w}(\rho)=R
\} =\mathscr{L}(\Inv_k(w),\leq_R)
\]
is an element in $\B_w(n,k)$.
\end{enumerate}
\end{conjecture}

\Cref{lem:reversal_is_consistent.affine} shows every reversal set for
an admissible order is consistent, so (a) implies (b).
\Cref{lem:GR_to_admissible.affine}, \cref{thm:equiv_class.and.linear.exts.affine}, \cref{cor:reversals_unique.affine},
and \cref{cor:B.equiv.acyclic.affine} show that (a), (c), and (d) are equivalent.
Therefore, it only remains to prove (b) implies (a), (c), or (d) in
\Cref{conj:affineG_R.advanced}.

\Cref{conj:affineG_R.advanced} has been verified by computer for all
$3\leq k\leq n\leq 8$ and affine permutations $w \in \affineS_{n}$ up
to certain lengths, as indicated in \cref{tab:conjecture}.  While the
general case remains open, we showed acyclicity of $G_R$ when $R =
\emptyset$ and $R = \Inv_{k+1}(w)$ for all $1\leq k\leq n$ in
\Cref{lem:empty_full_acyclic}.  For all $k\geq n$, the conjecture
holds by \Cref{thm:ultimateAHB}.

\subsection{Geometric Interpretations} As noted in the
introduction, the classical higher Bruhat orders have connections to
geometric objects such as hyperplane arrangements, oriented matroids,
and zonotopal tilings. Hence, we pose the following problem.

\begin{problem}\label{prob:2}
    Find geometric interpretations of the admissible orders
$\A_w(n,k)$, higher Bruhat orders $\B_w(n,k)$, and the poset on
consistent sets $\C_w(n,k)$ for arbitrary $w\in \affineS_n$ and $k$.
\end{problem}

\subsection{Higher Inversion Sets} This paper relies on a careful
study of the $k$-inversion sets for classical and affine permutations.
These sets are natural generalizations of the $2$-inversion sets, but
 have not previously been extensively studied.  Recall that $2$-inversion
sets are characterized by \Cref{lem:inversion.sets}.  The next 
problems concern the $k$-inversion sets.

\begin{problem}\label{prob:3}
Let $1<k<n$, and let $R$ be a finite subset of $\binom{\Z }{k}_{n}$.
If $R=\Inv_{k}(w)$ for some $w \in \affineS_{n}$, then $R$ satisfies
certain conditions derived from \Cref{lem:adjacent.pairs},
\Cref{lem:intersection_size}, \Cref{lem:types.of.packets}, and
\Cref{cor:congruence.lemma}.  In general, these conditions are not 
sufficent to be the $k$-inversion set of a permutation. For example, 
when $n=5$ and $k=3$, we consider $R=\{[1,2,5],[1,3,4]\}$. Any permutation with 
these two inversions satisfies
$w^{-1}(1)>w^{-1}(2)>w^{-1}(5)$ and $w^{-1}(1)>w^{-1}(3)>w^{-1}(4)$. 
If $w^{-1}(2)>w^{-1}(3)$, then $[1,2,3]\in \Inv_3(w)$, and if
$w^{-1}(3)>w^{-1}(2)$, then $[1,3,5]\in \Inv_3(w)$. In either case, 
$\Inv_3(w)\neq R$. With this in mind, what additional properties must be 
added to obtain a characterization of all possible $\Inv_k(w)$ for $w\in \affineS_{n}$?
\end{problem}

\begin{problem}\label{prob:4}
If $w \in \S_n$, then the union of all $k$-inversion sets for $w$
forms a simplicial complex.  What nice properties does the inversion
simplicial complex for $w$ have for classical permutations?  In the
affine case we have a little wrinkle, as $\Inv_{1}(w)\cong \Z$ is
infinite and each of the finitely many 2-inversions $[x,y]$ covers an
infinite number of atoms $\{x+mn \}$ and $\{y+mn\}$. This structure is
not typical in algebraic combinatorics.  We could instead consider the
finite union of all $\Inv_{k}(w)$ for $k\geq 2$, but this is
technically not a simplicial complex unless $w = \id$.
\end{problem}

Enumerating Coxeter group elements by length is a classical
problem. In the case of $\affineS_n$ or $\S_n$, this corresponds to
enumerating possible $2$-inversion sets.  Using the latter view, we
pose several enumeration problems for general $k$-inversion sets.  The
following table gives the number of distinct $k$-inversion sets for
permutations in $\S_{n}$. The table read by rows for $1\leq k\leq n$ is \cite[A399980]{OEIS}.
The $k=2$ column is $n!$.  The $k=3$ and $k=4$ columns are
\cite[A399986,A399987]{OEIS}. For example, if $k=3$, $n=4$, there are 9
distinct $3$-inversion sets for all permutations in $\S_{4}$, namely
\[
\{\},\ \{[1, 2, 4]\},\ \{[2, 3, 4]\},\ \{[1, 2, 3]\},\ \{[1, 3, 4]\}, 
\]
\[
\{[1, 3, 4], [2, 3, 4]\},\ \{[1, 2, 4], [1, 3, 4]\},\ \{[1, 2, 3], [1, 2, 4]\}, 
\]
\[
\{[1, 2, 3], [1, 2, 4], [1, 3, 4], [2, 3, 4]\}.
\]

\begin{table}[h!]
\begin{tabular}{| c | c | c | c | c | c | c | c | c | c | c |c|}
\hline
$n,k$ & 1 & 2 & 3 & 4 & 5 & 6 & 7 & 8 & 9 & 10 & 11 \\ \hline 
1 &1 & 1 & 1& 1 & 1 & 1 & 1 & 1 & 1 & 1 & 1   \\ \hline 
2 & 1 & 2 & 1 & 1 & 1 & 1 & 1 & 1 & 1 & 1& 1  \\ \hline 
3 &1 & 6 & 2& 1 & 1 & 1 & 1 & 1 & 1 & 1 & 1  \\  \hline 
4 & 1 & 24 & 9 & 2& 1 & 1 & 1 & 1 & 1 & 1 & 1 \\  \hline 
5 & 1 & 120 & 52 & 11 & 2& 1 & 1 & 1 & 1 & 1  & 1 \\ \hline 
6 & 1 & 720 & 341 & 79 & 13 & 2& 1 & 1 & 1 & 1  & 1 \\ \hline 
7 & 1 & 5040 & 2540 & 633 & 108 & 15 & 2& 1 & 1 & 1  & 1 \\ \hline 
8 & 1 & 40320 & 21360 & 5700 & 967 & 141 & 17 & 2 & 1 & 1 & 1 \\ \hline 
9 & 1 & 362880 & 200536 & 57027 & 9704 & 1376 & 178 & 19 & 2& 1 & 1 \\ \hline 
10 & 1 & 3628800 & 2079837 & 625021 & 109342 & 14810 &1873 &219 & 21 & 2 & 1 \\ \hline  
11 & 1 & 39916800 & 23618672 & 7440523 & 1358486 & 178777 & 21375 & 2466 & 264 & 23 & 2\\ \hline  
\end{tabular}
\caption{The number of distinct $k$-inversion sets in $\S_n$, where $1\leq k,n\leq 11$.}
\label{tab:k-inversion-enumeration}
\end{table}

\begin{problem}
Is there a simple formula for the number of distinct $k$-inversion
sets over all permutations in $\S_n$ for $1\leq k\leq n$?  The table
suggests two additional problems related to this enumeration.
\begin{enumerate}[label=(\alph*)]
	\item The column corresponding to $3$-inversion sets
appears to align with the enumeration for certain Wilf equivalence
 classes of mesh patterns \cite[Table 12, Nr. 59 and 60]{hilmarsson}. Determine if there is a general
correspondence between $k$-inversion sets in $\S_n$ and appropriate Wilf-equivalence classes of mesh patterns.
	\item The diagonal corresponding to $k=n$ is constant when $n\geq 2$, the diagonal
corresponding to  $k=n-1$ is linear in $n$ when $n\geq 4$, and the diagonal where $k=n-2$ appears to be 
 quadratic in $n$ when $n\geq 6$. Determine if the diagonal corresponding to $k=n-d$ is eventually a polynomial
of degree $d$. 
\end{enumerate}
\end{problem}

\begin{problem}
As there are infinitely many permutations in $\affineS_n$, there are infinitely many possible 
$2$-inversion sets, so the previous problem is only posed for $\S_n$. However, there are finitely
many affine permutations of any given length $\ell$. For $k>2$ and 
$\ell\in \mathbb{N}$, determine how many 
distinct $k$-inversion sets in $\affineS_n$ have size $\ell$.
\end{problem}

\subsection{Enumeration of Higher Bruhat Orders} Enumeration of $\B_{w}(n,k)$ is a difficult
problem.  Even the exact enumeration of the number of commutation
classes of reduced words for $w_{0}\in \S_n$ is difficult
\cite[A006245]{OEIS}, corresponding to $|\B_{w_{0}}(n,2)|$ for $n\geq
1$.  See \cite{chau2024enumeratinghigherbruhatorders} for recent
progress and further history on this problem.  Since we have shown
that $\Rev: \B_{w}(n,k) \to \C_{w}(n,k+1)$ is injective for all
positive integers $k,n$ and $w \in \affineS_{n}$, the enumeration of
$\B_{w}(n,k)$ may be alternatively counted via consistent sets, namely
enumerating the lower order ideals $R$ of the corresponding permanent
poset $\P_{w}(n,k+1)$ satisfying the MSZ condition, and confirming
$G_{R}$ has at least one linear extension.  If $w$ is a classical
permutation, then we can skip the last confirmation by our main
theorem.  However, note $\P_{w_{0}}(n,k+1)$ is an antichain with
$\binom{n }{k+1}$ elements, so the number of lower order ideals is
$2^{\binom{n }{k+1}}$.  However,
\cite[Thm. 1.2]{chau2024enumeratinghigherbruhatorders} shows
$|\B_{w_{0}}(n,k)|=|\C_{w_{0}}(n,k+1)|=2^{\Theta(n^k)}$ as 
$n \to  \infty$ with $k\geq 2$ fixed,
while the lower order ideal condition alone gives the upper bound $2^{\binom{n }{k+1}}$,
so it must be the case that the MSZ restriction is significant.

\begin{problem}
Can one improve the bounds in
\cite[Thm. 1.2]{chau2024enumeratinghigherbruhatorders} using the MSZ
condition on lower order ideals of $\P_{w}(n,k+1)$?
\end{problem}

\begin{problem}
Suppose $w\in \affineS_n$ contains the permutation pattern $u\in \S_m$
with $m<n$ on values in a set $A \subseteq [n]$. Then for all $1 <
k\leq n$, the deletion map $R \mapsto R\setminus \bar{A}$ is an
order-preserving function from $\C_w(n,k)$ to $\C_u(m,k)$.  Is this
map surjective?  If so, then $|\C_w(n,k)|\geq |\C_u(m,k)|$, which
would generalize a theorem of Tenner \cite[Thm. 5.10]{tenner}.
\end{problem}

\section{Appendix}\label{sec:appendix}

Several lemmas in this paper require a finite computer verification.
The code can be found at \cite{code}.  Each verification is identified in
the code as follows.
\begin{enumerate}[label=(\alph*)]
\item \cref{lem:partition} claims every edge \(X\to Y\) in \(G_R\) has
      \(X\in V_a(G_R)\), \(Y\in V_b(G_R)\), and \(a\le b\). The proof reduces to checking over all $w \in \S_n$ and $R
\in \C_w(n,n-1)$ with $3\leq n\leq 6$. This is done in
``Partition edges test''.
\item The proof of \cref{lem:flippable_Bn2_Cn3} reduces to the following claims for $w\in \S_4$ and $\rho\in \A_w(4,2)$.
\begin{itemize}
	\item  If $P(X')$ is lex-unnested in $\rho$, then every element between $X_3'$ and $X_2'$ commutes with $X_3'$, and every element between $X_2'$ and $X_1'$ commutes with $X_1'$.
	\item If $P(X')$ is antilex-unnested in $\rho$, then every element between $X_1'$ and $X_2'$ commutes with $X_1'$, and every element between $X_2'$ and $X_3'$ commutes with $X_3'$.
\end{itemize}
These are respectively verified in ``Lex-unnested test'' and ``Antilex-unnested test''.
\item The proof of \cref{lem:shorten5} claims that for $w\in \S_n$ with $n\leq 6$, every directed path of length 4 in \(G_R\) between
      \(X,Y\in\operatorname{Inv}_2(w)\) can be replaced by a strictly shorter directed path. This is done in ``Diameter test.''
\item \cref{conj:affineG_R.advanced} involves acyclicity of $G_R$. The values in \cref{tab:conjecture} are tested in ``Conjecture test.'' Note that the claims in \cref{conjecture:main} then follow from our discussion of \cref{conj:affineG_R.advanced}.
\end{enumerate}

\section*{Acknowledgments}

We would like to thank Ben Elias for suggesting the problems in the
introduction to us and many helpful conversations along the way.  We
also thank Gaku Liu, Ricky Liu, Isabella Novik, Treanungkur Mal, Grace
O'Brien, Monica Vazirani, and Hailun Zhang for helpful conversations.
We are grateful to ChatGPT Pro 5.5 for editorial suggestions on this
manuscript.

\printbibliography

\end{document}